\documentclass[11pt]{article}
\usepackage[T1]{fontenc}
\usepackage{lmodern}

\usepackage{amsmath,amsthm,amssymb}
\usepackage[usenames,dvipsnames]{xcolor}
\usepackage{enumerate}
\usepackage{graphicx}
\usepackage{cite}
\usepackage{comment}
\usepackage{oands}
\usepackage{tikz}
\usepackage{changepage}
\usepackage{bbm}
\usepackage{mathtools}
\usepackage[margin=1in]{geometry}
\usepackage{enumitem} 

\usepackage[pdftitle={Directed distances in bipolar-oriented triangulations: exact exponents and scaling limit},
  pdfauthor={Ewain Gwynne and Jacopo Borga},
colorlinks=true,linkcolor=NavyBlue,urlcolor=RoyalBlue,citecolor=PineGreen,bookmarks=true,bookmarksopen=true,bookmarksopenlevel=2,unicode=true,linktocpage]{hyperref}

\setcounter{tocdepth}{2}

\theoremstyle{plain}
\newtheorem{thm}{Theorem}[section]

\newtheorem{lem}[thm]{Lemma}
\newtheorem{prop}[thm]{Proposition}

\newtheorem{conj}[thm]{Conjecture}

\def\@rst #1 #2other{#1}
\newcommand\MR[1]{\relax\ifhmode\unskip\spacefactor3000 \space\fi
  \MRhref{\expandafter\@rst #1 other}{#1}}
\newcommand{\MRhref}[2]{\href{http://www.ams.org/mathscinet-getitem?mr=#1}{MR#2}}

\theoremstyle{definition}
\newtheorem{defn}[thm]{Definition}
\newtheorem{remark}[thm]{Remark}

\newtheorem{prob}[thm]{Problem}

\numberwithin{equation}{section}

\def\alb#1\ale{\begin{align*}#1\end{align*}}
\def\allb#1\alle{\begin{align}#1\end{align}}
\newcommand{\eqb}{\begin{equation}}
\newcommand{\eqe}{\end{equation}}
\newcommand{\eqbn}{\begin{equation*}}
\newcommand{\eqen}{\end{equation*}}

\newcommand{\BB}{\mathbbm}
\newcommand{\ol}{\overline}
\newcommand{\ul}{\underline}
\newcommand{\op}{\operatorname}

\newcommand{\im}{\operatorname{Im}}
\newcommand{\re}{\operatorname{Re}}
\newcommand{\frk}{\mathfrak}
\newcommand{\eqD}{\overset{d}{=}}
\newcommand{\ep}{\varepsilon}

\newcommand{\wt}{\widetilde}
\newcommand{\wh}{\widehat}
\newcommand{\mcl}{\mathcal}

\newcommand{\bdy}{\partial}
\newcommand{\rng}{\mathring}

\newcommand{\el}{l}

\newcommand{\eps}{\varepsilon}


\newcommand{\uibot}{M_{-\infty,\infty}}

\let\originalleft\left
\let\originalright\right
\renewcommand{\left}{\mathopen{}\mathclose\bgroup\originalleft}
\renewcommand{\right}{\aftergroup\egroup\originalright}

\let\originalsim\sim
\renewcommand{\sim}{{\originalsim}}

\usepackage[capitalise,noabbrev]{cleveref}

\title{Directed distances in bipolar-oriented triangulations:\\
exact exponents and scaling limits}
 \date{ }
 \author{
\begin{tabular}{c} Jacopo Borga\\[-3pt]\small MIT \end{tabular} 
\begin{tabular}{c} Ewain Gwynne\\[-3pt]\small University of Chicago \end{tabular} 
}

\begin{document}

\newcommand{\XDP}{{\operatorname{XDP}}}
\newcommand{\bc}{{\operatorname{b}}}

\maketitle

\begin{abstract}
We study longest and shortest directed paths in the following natural model of directed random planar maps: the uniform infinite bipolar-oriented triangulation (UIBOT), which is the local limit of uniform bipolar-oriented triangulations around a typical edge.
We construct the Busemann function which measures directed distance to $\infty$ along a natural interface in the UIBOT. We show that in the case of longest (resp.\ shortest) directed paths, this Busemann function converges in the scaling limit to a $2/3$-stable Lévy process (resp.\ a $4/3$-stable Lévy process). 

We also prove up-to-constants bounds for directed distances in finite bipolar-oriented triangulations sampled from a Boltzmann distribution, and for size-$n$ cells in the UIBOT. These bounds imply that in a typical subset of the UIBOT with $n$ edges, longest directed path lengths are of order $n^{3/4}$ and shortest directed path lengths are of order $n^{3/8}$. These results give the scaling dimensions for discretizations of the (hypothetical) $\sqrt{4/3}$-directed Liouville quantum gravity metrics.

The main external input in our proof is the bijection of Kenyon-Miller-Sheffield-Wilson (2015). We do not use any continuum theory. 
We expect that our techniques can also be applied to prove similar results for directed distances in other random planar map models and for longest increasing subsequences in pattern-avoiding permutations. 
\end{abstract}

\tableofcontents

\bigskip
\noindent\textbf{Acknowledgments.} We thank Morris Ang, Olivier Bernardi, William Da Silva, Sayan Das, and Scott Sheffield for helpful discussions. We also thank Yuanzheng Wang for comments on an early draft of this paper.
E.G.\ was partially supported by NSF grant DMS-2245832. J.B.\ was partially supported by NSF grant DMS-2441646.

\section{Introduction}
\label{sec-intro}

\subsection{Overview} 
\label{sec-overview}

A planar map is a graph embedded in the complex plane $\BB C$, viewed modulo orientation-preserving homeomorphisms $\BB C\to\BB C$. 
Random planar maps have attracted an enormous amount of attention in probability and mathematical physics in recent years. One reason for this is that random planar maps are discrete analogs of \textbf{Liouville quantum gravity (LQG)}, a one-parameter family of random fractal surfaces, indexed by $\gamma \in (0,2]$, which arise in various areas of physics. 
Although the results of this paper are partially motivated by LQG, we will not use this theory in this paper; see~\cite{bp-lqg-notes,sheffield-icm,gwynne-ams-survey} for introductory expository articles. 

There is a vast literature concerning graph distances in random planar maps which are sampled uniformly from some finite set of possibilities. 
A major achievement in this area is the independent proofs by Le Gall~\cite{legall-uniqueness} and Miermont~\cite{miermont-brownian-map} of the following statement. If we take a uniform triangulation (or uniform $p$-angulations for even $p\geq 4$) with $n$ edges and scale distances by $n^{-1/4}$, then the resulting metric space converges in law as $n\to\infty$, with respect to the Gromov-Hausdorff topology, to a random metric space called the \textbf{Brownian map}. There are numerous extensions of the Brownian map convergence result to other models, including different local constraints on the maps and different underlying topologies; see, e.g., the survey~\cite{legall-brownian-geometry}. Miller and Sheffield~\cite{lqg-tbm1,lqg-tbm2} showed that the Brownian map is isomorphic to LQG with parameter $\gamma=\sqrt{8/3}$. 
 
In contrast, much less is known about graph distances in \textbf{decorated random planar maps}, where instead of sampling a planar map uniformly from a set of possibilities, we sample a random pair $(M,S)$ consisting of a planar map $M$ and some sort of decoration $S$ on $M$. Some interesting choices of decoration include, e.g., a spanning tree, a statistical physics model, or a bipolar orientation. When we sample a decorated planar map, the marginal law of $M$ is not uniform: rather, it is sampled with probability proportional to the partition function of the decoration (which, roughly speaking, is the weighted number of possibilities for $S$ on a given choice of $M$). 

For most interesting choices of decoration, it is believed that $M$ should have very different large-scale behavior as compared to uniform random planar maps. In many cases, the scaling limit of $M$ is believed to be described by LQG with parameter $\gamma \in (0,2]\setminus \{\sqrt{8/3}\}$ depending on the particular type of decoration (see~\cite{ghs-mating-survey} or~\cite[Section 4]{bp-lqg-notes} for an overview of conjectures of this type). Moreover, for a map with $n$ edges, the proper scaling for distances is believed to be $n^{-1/d_\gamma}$, where $d_\gamma$ is the Hausdorff dimension of the $\gamma$-LQG metric space. In some cases (including bipolar-oriented triangulations, as defined just below), this can be proven up to an $n^{o(1)}$ multiplicative error, see~\cite{ghs-map-dist,dg-lqg-dim}. However, the value of $d_\gamma$ is unknown for $\gamma\not=\sqrt{8/3}$ (we have $d_{\sqrt{8/3}}=4$), and computing it is one of the most important open problems in LQG theory. 


In this paper, we study \textbf{directed distances} (lengths of shortest and longest directed paths) in a natural model of directed random planar maps, namely uniform bipolar-oriented triangulations (Definition~\ref{def-bipolar} just below). This is the random planar map analog of studying \emph{directed} first- or last-passage percolation (as opposed to \emph{undirected} first-passage percolation). See Section~\ref{sect:fpp-lpp} for further discussion of the analogy between this paper and results for directed first- and last-passage percolation.

\begin{defn} \label{def-bipolar}
A \textbf{directed planar map} is a planar map together with an orientation on its edges. The orientation is \textbf{acyclic} if there are no directed cycles. The orientation is \textbf{bipolar} if it is acyclic and it has at most one \textbf{source} (vertex with only outgoing edges) and  at most one \textbf{sink} (vertex with only incoming edges).\footnote{For a finite planar map, an acyclic orientation always has at least one source and at least one sink, but for an infinite planar map the source, the sink, or both can be ``at $\infty$''.}
A \textbf{bipolar-oriented triangulation} is a bipolar-oriented planar map whose faces all have degree three, except possibly the external (unbounded) face.
See the left-hand side of Figure~\ref{fig-bipolar} for an illustration. 
\end{defn}

Whenever we speak about \emph{left} and \emph{right}, we always refer to left and right with respect to the orientation of the map (and not with respect to how we draw the map on the plane).

Bipolar orientations on planar graphs, often also called $st$-planar orientations, have been intensively studied in combinatorics. Indeed, they are related to the study of upward planar graph drawing and more general graph drawing on surfaces~\cite{tamassia2013handbook}, the study of the distributive lattice on the set of $st$-orientations of a fixed plane graph under cycle-flip moves~\cite{propp2002lattice,felsner2004lattice}, and the enumerative and bijective properties of various combinatorial structures, including Baxter permutations and non-intersecting lattice paths~\cite{fps-counting-bipolar,bbf-baxter}. Moreover, they have been used as bridges to study other structures on planar graphs, including (grand-)Schnyder woods~\cite{ffno-baxter-family,lsw-schnyder-wood,bernardi2025grand}.
Bipolar orientations enjoy the following properties: (i) given a finite bi-connected planar map and a designated source and sink pair on the outer face, there is always a way to orient the edges so that the result is a bipolar orientation~\cite{lempel1967algorithm}; (ii) every bipolar-oriented map can be drawn in the plane with every edge oriented from south-west to north-east~\cite{di1988algorithms} (see also Figure~\ref{fig-bipolar}); (iii) a bipolar orientation on a map induces a bipolar orientation on its dual map~\cite{de1995bipolar}.
 
A key tool in the study of bipolar-oriented random planar maps is the KMSW bijection of Kenyon, Miller, Sheffield, and Wilson~\cite{kmsw-bipolar}, which, when restricted to triangulations, encodes a bipolar-oriented triangulation in terms of a walk in $\BB Z^2$ with increments in 
\[\{(1,-1), (-1,0) , (0,1)\}.\] 
We review this bijection in Section~\ref{sec-kmsw}. 
It can be seen from the KMSW bijection that uniform bipolar-oriented triangulations should be in the universality class of LQG with $\gamma=\sqrt{4/3}$~\cite[Section 4]{kmsw-bipolar}.  

The main results of this paper show that, in apparent contrast to undirected distances, directed distances in uniform bipolar-oriented triangulations are exactly solvable.  
In particular, we first establish the existence of the \textbf{Busemann function}, which measures ``directed distances to $\infty$'' along a natural interface in the uniform infinite bipolar-oriented triangulation (UIBOT); see Theorem~\ref{thm-busemann}. We then show that the Busemann function converges in the scaling limit to a $2/3$-stable Lévy process in the case of longest directed paths, and to a $4/3$-stable Lévy process in the case of shortest directed paths (Theorem~\ref{thm-busemann-conv}). 

We also obtain the exact exponents for the typical lengths of shortest and longest directed paths in certain finite bipolar-oriented triangulations, with no $o(1)$ error in the exponent (Theorems~\ref{thm-boltzmann-ldp}, \ref{thm-cell-ldp}, and~\ref{thm-cell-sdp}). In particular, these results imply that in typical size-$n$ submaps of the UIBOT, the lengths of longest directed paths grow like $n^{3/4}$ and the lengths of shortest directed paths grow like $n^{3/8}$. Prior to this work, almost\footnote{It is possible to extract from~\cite[Corollary 1.14]{bgs-meander} that the length of the longest directed path in a uniform bipolar-oriented map with $n$ edges is typically of order $o(n)$.} no non-trivial bounds were known for directed distances in any model of bipolar-oriented random planar maps. 

The only inputs in our proof are the KMSW bijection and some elementary probabilistic facts. We do not use any continuum theory. See Section~\ref{sec-outline} for an outline. 

Our results can be viewed as a substantial first step toward obtaining a full scaling limit of (shortest and longest) directed distances in uniform bipolar-oriented triangulations, analogous to the scaling limit of undirected distances in uniform planar maps to the Brownian map~\cite{legall-uniqueness,miermont-brownian-map}. The limiting objects should be \textbf{directed versions of the $\sqrt{4/3}$-LQG metric} (one version for longest paths and one for shortest paths), or equivalently $\sqrt{4/3}$-LQG analogs of the \textbf{directed landscape} of~\cite{dov-dl}. Our results suggest that the scaling exponents for the longest (resp.\ shortest) directed path version of the $\sqrt{4/3}$-LQG metric should be $3/4$ (resp.\ $3/8$). These scaling exponents are the directed analogs of the (reciprocal of) the dimension of the undirected $\sqrt{4/3}$-LQG metric space. We discuss this in more detail in Section~\ref{sec-lqg}.

Moreover, our proof strategy is fairly general. It can potentially be extended to obtain similar results for, e.g., (a) directed distances in other random planar map models, including bipolar-oriented maps with other face degree distributions and planar maps decorated by Schnyder woods; (b) shortest directed paths hit in order by various paths on random planar maps, such as the contour exploration of a uniform spanning tree or a percolation interface; and (c) longest increasing subsequences in various types of random permutations, such as Baxter permutations, semi-Baxter permutations, and strong Baxter permutations. We have several works in progress along these lines with various coauthors. See Section~\ref{sec-other} for further discussion.

\begin{figure}[ht!]
\begin{center}
\includegraphics[width=0.52\textwidth]{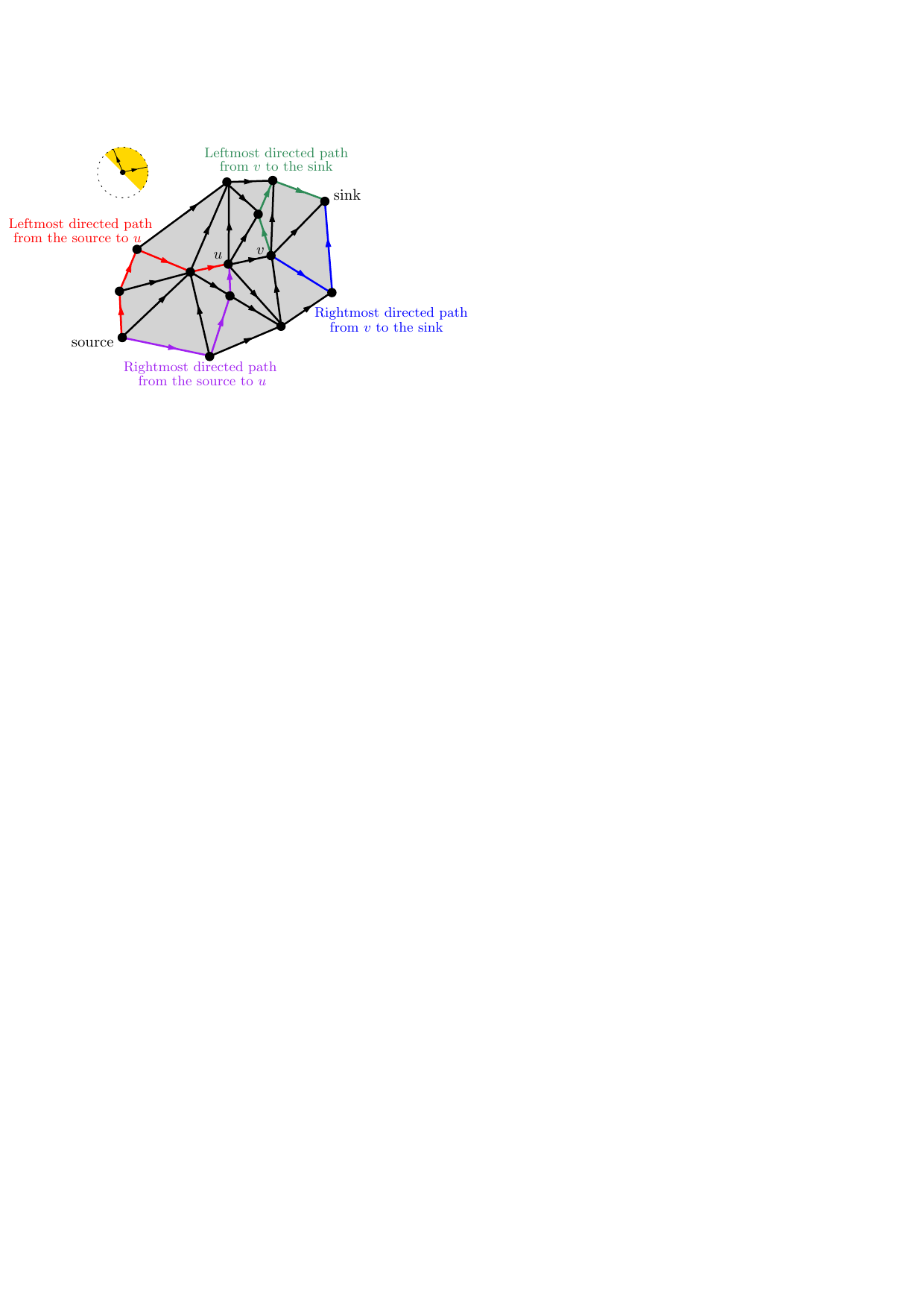}  
\includegraphics[width=0.47\textwidth]{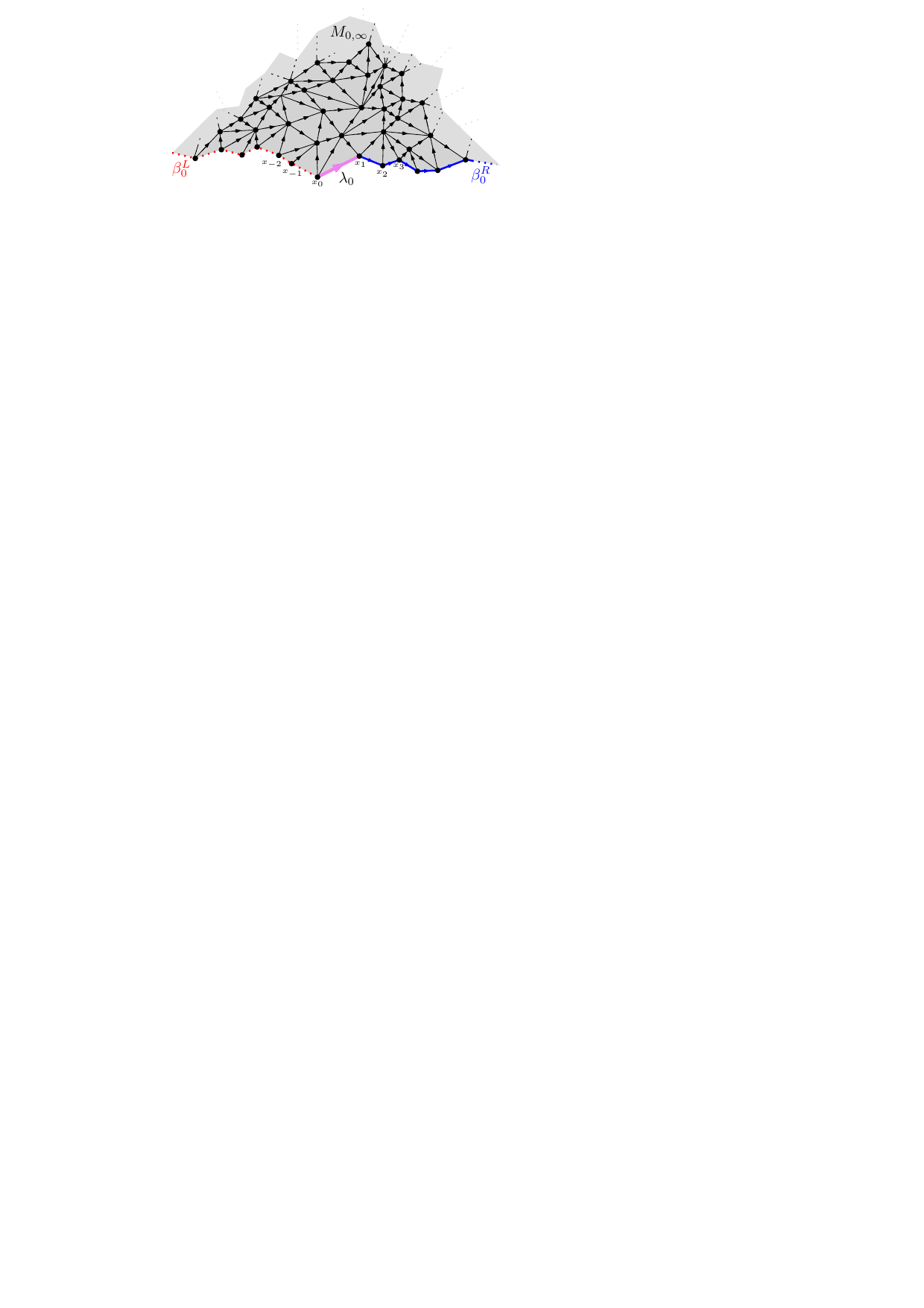}  
\caption{\label{fig-bipolar} 
\textbf{Left:}  A finite bipolar-oriented triangulation, drawn so that edges are directed from south-west to north-east (i.e.\ edges can point in the direction highlighted in yellow in the top-left circle).
The four colored directed path denote the leftmost/rightmost paths introduced just before Definition~\ref{defn-future-map}.
\textbf{Right:} A portion of the infinite rooted submap $(M_{0,\infty},\lambda_0)$ of the UIBOT introduced in Definition~\ref{defn-future-map}. Edges on the boundary of $M_{0,\infty}$ lying to the west of 
the root edge $\lambda_0$ are not considered to be part of $M_{0,\infty}$. See Definition~\ref{defn:planar-map-with-missing-edges} for a precise definition of planar map with missing edges.}
\end{center}
\end{figure}

\subsection{Busemann function on the UIBOT}
\label{sec-busemann-intro}

We will henceforth assume that all of the planar maps considered in this paper are equipped with an acyclic orientation (Definition~\ref{def-bipolar}). Submaps of a map $\frk m$ are always assumed to be equipped with the orientation they inherit from $\frk m$.

\begin{defn} \label{def-ldp} 
For a directed planar map $\frk m$ and vertices $x,y\in \frk m$, we write $\op{LDP}_{\frk m}(x,y)$ 
(resp.\ $\op{SDP}_{\frk m}(x,y)$) for the \textbf{length of the longest} (resp.\ \textbf{shortest}) \textbf{directed path} in $\frk m$ from $x$ to $y$, if at least one such path exists. 
If there is no directed path from $x$ to $y$, we instead write $\op{LDP}_{\frk m}(x,y) = -\infty$ and $\op{SDP}_{\frk m}(x,y) = \infty$. 
Many of our theorem statements and proofs are the same for both longest and shortest directed paths. To express this concisely, we write $\XDP \in \{\op{LDP} , \op{SDP}\}$ to refer to either shortest or longest directed paths.  
\end{defn}

The \textbf{uniform infinite bipolar-oriented triangulation} is an infinite bipolar-oriented triangulation\footnote{See Definition~\ref{defn-future-map} for the motivation for the notation $M_{-\infty,\infty}$.} $(M_{-\infty,\infty},\lambda_0 )$, equipped with a directed root edge $\lambda_0$, with no source or sink (the source is at $-\infty$ and the sink is at $\infty$). It arises as the (directed) Benjamini-Schramm~\cite{benjamini-schramm-topology} local limit of uniform bipolar-oriented triangulations with $n$ edges and a fixed number of left and right boundary edges, rooted at a uniformly chosen edge~\cite[Proposition 3.16]{ghs-map-dist}; see Definition~\ref{def-bs-topology}. 

As explained in~\cite[Section 1.2]{kmsw-bipolar}, for each vertex $v$ of $M_{-\infty,\infty}$, the edges of $M_{-\infty,\infty}$ incident to $v$ in cyclic order consist of a single group of incoming edges followed by a single group of outgoing edges. Hence, there is a unique outgoing edge from $v$ that is leftmost in the sense that the edge immediately to its left is directed toward $v$. 
By traversing the leftmost outgoing edge from $v$, then the leftmost outgoing edge from the endpoint of this edge, and so forth, we obtain the \textbf{leftmost directed path} $\beta$ from $v$ to $\infty$. The path $\beta$ has the property that each edge traversed by $\beta$ is the leftmost outgoing edge from its initial vertex. The leftmost directed path is called the ``northwest path'' in~\cite{kmsw-bipolar}. Since our convention is that edges go from south-west to north-east, instead of from south to north as in~\cite{kmsw-bipolar}, the leftmost directed path travels in the north direction in our setting. See the green path on the left-hand side of Figure~\ref{fig-bipolar} for an example (in the finite setting).

Similarly, we can define the \textbf{rightmost directed path} from $v$ to $\infty$, which travels in the east direction; see the blue path on the left-hand side of Figure~\ref{fig-bipolar} for an example. By performing the above procedure with the orientations of the edges reversed, then reversing the resulting path, we can also define the \textbf{leftmost} and \textbf{rightmost} directed paths from $-\infty$ to $v$. The time reversals of these paths travel in the west and south directions, respectively. See the red and purple paths on the left-hand side of Figure~\ref{fig-bipolar} for an example.

Let $\beta_0^R$ be the rightmost directed path from the terminal endpoint of the root edge $\lambda_0$ to $\infty$. 
Let $\beta_0^L$ be the leftmost directed path from $-\infty$ to the initial endpoint of $\lambda_0$. 

\begin{defn}\label{defn-future-map}
    Let $(M_{0,\infty},\lambda_0)$ be the rooted submap of $(M_{-\infty,\infty},\lambda_0)$ consisting of the vertices, edges, and faces of $M_{-\infty,\infty}$ that lie to the left of $\beta_0^R \cup \{\lambda_0\} \cup \beta_0^L$. We consider all of the vertices lying on $\beta_0^R \cup \{\lambda_0\} \cup \beta_0^L$, the edge $\lambda_0$, and the edges on $\beta_0^R$ to be part of $M_{0,\infty}$, but not the edges on $\beta_0^L$. 
    This means that $M_{0,\infty}$ is an infinite acyclically oriented triangulation with some missing\footnote{See Definition~\ref{defn:planar-map-with-missing-edges} for a precise definition of a planar map with missing edges.} boundary edges. 
\end{defn}

The reason for the name (and the reason why $M_{0,\infty}$ is natural) is that $M_{0,\infty}$ is exactly the submap of $M_{-\infty,\infty}$ explored by the KMSW procedure after time 0; see Lemma~\ref{lem-uibot-bdy}. 
See the right-hand side of Figure~\ref{fig-bipolar} for an illustration of $M_{0,\infty}$. 
 
Let $\{x_k\}_{k\in\BB Z}$ be the vertices on the boundary of $M_{0,\infty}$, enumerated in directed order (i.e.\ from west to east) so that $\lambda_0 = (x_0 ,x_1)$. All vertices of $M_{0,\infty}$, including the boundary vertices, have an outgoing edge (Lemma~\ref{lem-uibot-bdy}). In particular, there is always a directed path starting from each boundary vertex.
Our first theorem establishes the existence of the Busemann function for longest (and shortest) directed paths from the boundary vertices of $M_{0,\infty}$ to $\infty$. 
Heuristically speaking, we want the Busemann function to be the function $\mcl X : \BB Z\to\BB Z$ defined by 
\eqb  \label{eqn-busemann-heuristic} 
\mcl X(k) = \XDP_{M_{0,\infty}} (x_k,\infty) -  \XDP_{M_{0,\infty}} (x_0 , \infty)   .
\eqe  
This definition does not make literal sense since $\XDP_{M_{0,\infty}} (x_k,\infty)=\infty$. But we can make sense of~\eqref{eqn-busemann-heuristic} by, roughly speaking, taking a limit of $ \XDP_{M_{0,\infty}} (x_k,w) - \XDP_{M_{0,\infty}} (x_0,w)$ as $w\to\infty$.

\begin{thm}\label{thm-busemann}
Let $\XDP \in \{\op{LDP} , \op{SDP}\}$. 
Let $\{x_k\}_{k\in\BB Z}$ be the boundary vertices of $M_{0,\infty}$, as above.
There exists a unique function 
\[\mcl X :  \BB Z \to \BB Z,\] 
called the \textbf{Busemann function}, such that $\mcl X(0) = 0$ and the following is true. Let $k,k' \in \BB Z$ and let\footnote{Our convention is that $\BB N = \{1,2,\dots\}$ and $\BB N_0 = \{0,1,2,\dots\}$.}
$\{w_j\}_{j\in\BB N}$ be a sequence\footnote{The existence of at least one such sequence is ensured by Lemma~\ref{lem-uibot-bdy}: the rightmost directed paths in $M_{0,\infty}$ started from $x_k$ and $x_{k'}$ coalesce into each other, so we can take $\{w_j\}$ to be a sequence of points hit by both of these rightmost directed paths.} of distinct vertices of $M_{0,\infty}$ such that, for each $j\in\BB N$, each of $x_k$ and $x_{k'}$ can be joined to $w_j$ by a directed path in $M_{0,\infty}$. 
Then
\eqb \label{eqn-busemann-lim}
\mcl X(k') -\mcl X(k) = \XDP_{M_{0,\infty}}(x_{k'} , w_j) - \XDP_{M_{0,\infty}}(x_{k } , w_j) ,\quad \text{for each sufficiently large $j\in\BB N$.}
\eqe 
\end{thm}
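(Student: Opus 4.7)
The plan is to construct $\mcl X$ explicitly via the coalescence structure of rightmost directed paths in $M_{0,\infty}$, and then verify the defining property~(\ref{eqn-busemann-lim}) by a geometric ``funneling'' argument showing that directed paths from $x_k$ and $x_{k'}$ to a sufficiently far vertex must share a common last crossing point of the interface between their reachable regions. Uniqueness is immediate: applied with $k=0$ and any valid sequence $\{w_j\}$, (\ref{eqn-busemann-lim}) forces $\mcl X(k') = \XDP_{M_{0,\infty}}(x_{k'},w_j)-\XDP_{M_{0,\infty}}(x_0,w_j)$ for all sufficiently large $j$, so $\mcl X$ is determined by the normalization $\mcl X(0)=0$.

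For existence, for each pair $k<k'$ I let $y_{k,k'}$ denote the coalescence vertex of the rightmost directed paths $\beta_k^R$ and $\beta_{k'}^R$ (which exists by Lemma~\ref{lem-uibot-bdy}), set
\[
\Delta(k,k') := \XDP_{M_{0,\infty}}(x_{k'},y_{k,k'}) - \XDP_{M_{0,\infty}}(x_k,y_{k,k'}),
\]
verify the cocycle relation $\Delta(k,k')+\Delta(k',k'')=\Delta(k,k'')$ using the nested structure of coalescence points (specifically, $y_{k,k''}$ lies weakly beyond both $y_{k,k'}$ and $y_{k',k''}$ along the coalesced rightmost tail, so the relevant $\XDP$ values telescope), and then set $\mcl X(0):=0$ and $\mcl X(k')-\mcl X(k):=\Delta(k,k')$ for $k<k'$.

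The heart of the argument is a geometric ``funneling'' claim: the paths $\beta_k^R$, $\beta_{k'}^R$, together with the portion of the boundary of $M_{0,\infty}$ connecting $x_k$ to $x_{k'}$, enclose a finite subregion $R_{k,k'}\subset M_{0,\infty}$ whose unique top vertex is $y_{k,k'}$, in the sense that every directed path in $M_{0,\infty}$ from $x_k$ or $x_{k'}$ to a vertex outside $R_{k,k'}$ must pass through $y_{k,k'}$. Granting this, for any $w$ reachable from both $x_k,x_{k'}$ and lying outside $R_{k,k'}$,
\[
\XDP_{M_{0,\infty}}(x_k,w) = \XDP_{M_{0,\infty}}(x_k,y_{k,k'}) + \XDP_{M_{0,\infty}}(y_{k,k'},w),
\]
and likewise with $x_{k'}$ in place of $x_k$; subtracting the two identities, the common $\XDP_{M_{0,\infty}}(y_{k,k'},w)$ terms cancel, yielding exactly $\Delta(k,k')=\mcl X(k')-\mcl X(k)$. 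Since $R_{k,k'}$ is finite and the $w_j$ are distinct, $w_j\notin R_{k,k'}$ for all sufficiently large $j$, which proves~(\ref{eqn-busemann-lim}).

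The main obstacle I expect is establishing the funneling claim, i.e.\ that $y_{k,k'}$ is the unique exit of $R_{k,k'}$. This requires combining (a) the rightmost-maximality of $\beta_k^R$ and $\beta_{k'}^R$ to rule out transversal crossings of these path-boundaries by other directed paths, (b) acyclicity of the orientation to rule out re-entries into $R_{k,k'}$ after exiting, and (c) a planarity argument using the specific structure of bipolar-oriented triangulations near their enclosed subregions. A secondary technical step is handling $k<0$, where $x_k$ lies on $\beta_0^L$ and $\beta_k^R$ first leaves the boundary into the interior of $M_{0,\infty}$ before eventually coalescing with $\beta_0^R$; here Lemma~\ref{lem-uibot-bdy} again guarantees the required rightward coalescence.
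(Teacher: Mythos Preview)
Your funneling claim is false, and this is the essential gap. The coalescence point $y_{k,k'}$ of the \emph{rightmost} paths $\beta_k^R,\beta_{k'}^R$ is not, in general, a vertex through which every directed path from $x_k$ must pass. Rightmost-maximality only tells you that directed paths from $x_k$ stay weakly to the \emph{left} of $\beta_k^R$; it gives no control forcing them through any particular vertex of $\beta_k^R$. Concretely, take $k=0$, $k'=1$: both $\beta_0^R$ and $\beta_1^R$ run along the right boundary of $M_{0,\infty}$, so $y_{0,1}=x_1$ and your region $R_{0,1}$ degenerates to the single edge $(x_0,x_1)$. But the leftmost path $\beta_0$ from $x_0$ enters the interior and (whenever $x_1$ is not a cut vertex of $\wh M_{0,\infty}$, which has positive probability) reaches the first cut vertex $v_1=x_r$, $r>1$, without touching $x_1$. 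Since $v_1$ is also reachable from $x_1$ along the right boundary, you have a vertex $w=v_1$ reachable from both $x_0$ and $x_1$ for which the decomposition $\XDP_{M_{0,\infty}}(x_0,w)=\XDP_{M_{0,\infty}}(x_0,x_1)+\XDP_{M_{0,\infty}}(x_1,w)$ fails. The same obstruction persists for arbitrarily distant $w$, so the difference $\XDP(x_0,w)-\XDP(x_1,w)$ need not equal your $\Delta(0,1)$ even in the limit.

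What is actually needed is a vertex through which \emph{both} the leftmost and rightmost paths from $x_k$ (and from $x_{k'}$) pass---a cut vertex of the UIQBOT $\wh M_{\tau_k,\infty}$. The paper establishes that infinitely many such vertices exist (Proposition~\ref{prop-cut-vertex}) via a probabilistic argument on the KMSW encoding walk: the cut-vertex times form a renewal process, and a random-walk estimate shows $\sum_n \BB P[A_n\cap B_n]=\infty$. This input then feeds into Lemma~\ref{lem-infty-cut}, which plays exactly the role your funneling claim was meant to play. There is no purely deterministic planar-topology substitute: the existence of such pinch points is a genuinely random feature of the model, not a consequence of rightmost-path coalescence.
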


\begin{remark} \label{remark-busemann}
The term Busemann function originates from the work~\cite{busemann-geodesics}. 
Busemann functions are an important tool in the theory of first-passage percolation, in which setting they have been studied extensively; see~\cite[Chapter 5]{ahd-fpp-survey} for a survey. In the context of (exponential) last-passage percolation, the  existence of Busemann functions was established in~\cite{fp-second-class} and led to the construction of the stationary horizon~\cite{busani-busemann-lpp}. 
Recently, Busemann functions have also been important in the study of the directed landscape~\cite{dov-dl}, see~\cite{rv-infinite-geo, gz-difference-profile, bss-dl-semi} and the references therein. See Section~\ref{sect:fpp-lpp} for further discussion.
\end{remark}

To prove Theorem~\ref{thm-busemann}, we, roughly speaking, show that there are infinitely many vertices of $M_{0,\infty}$ that are each visited by every sufficiently long directed path started from $x_k$ or $x_{k'}$ (Lemma~\ref{lem-infty-cut}). 
In particular, this implies (Lemma~\ref{lem-geo-infty}) that there exists a (unique) leftmost infinite $\XDP$ geodesic started from each boundary vertex $x_k$ (Definition~\ref{def-infinite-geo}), and the leftmost infinite $\XDP$ geodesics from $x_k$ and $x_{k'}$ eventually coalesce into each other, i.e.\ we have coalescence of geodesics. But we currently do not have any quantitative bounds on the positions of the coalescence points. Lemma~\ref{lem-busemann-cls} expresses the Busemann function in terms of the coalescence times of the leftmost $\XDP$ geodesics.
Our next main result gives the scaling limit of the Busemann function. 

\begin{thm} \label{thm-busemann-conv}
The following two scaling limit results for the Busemann function $\mcl X$ introduced in Theorem~\ref{thm-busemann} hold with respect to the local Skorokhod topology on $\BB R$ (here, all stable processes are strictly stable):
\begin{itemize}
\item
In the LDP case, the re-scaled Busemann function $\{k^{-3/2} \mcl X( \lfloor k t \rfloor)\}_{t\in\BB R}$ converges in law  to a process $X : \BB R\to\BB R$ such that $-X|_{[0,\infty)}$ is a $2/3$-stable subordinator; and $X(-\cdot)|_{[0,\infty)}$ is a symmetric $2/3$-stable Lévy process independent of $X|_{[0,\infty)}$. 
\item  
In the SDP case, the re-scaled Busemann function $\{k^{-3/4} \mcl X(\lfloor k t \rfloor)\}_{t\in\BB R}$ converges in law  to a process $X : \BB R\to\BB R$ such that $X|_{[0,\infty)}$ is a $4/3$-stable Lévy process with only upward jumps; and $X(-\cdot)|_{[0,\infty)}$ is a symmetric $4/3$-stable Lévy process independent of $  X|_{[0,\infty)}$.
\end{itemize} 
\end{thm}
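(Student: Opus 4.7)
The plan is to reduce Theorem~\ref{thm-busemann-conv} to a classical functional limit theorem for i.i.d.\ heavy-tailed sums via the KMSW bijection. Let $W : \BB Z \to \BB Z^2$ be the bi-infinite random walk encoding the UIBOT, with i.i.d.\ uniform steps on $\{(1,-1),(-1,0),(0,1)\}$. First I would identify the boundary vertices $\{x_k\}_{k\geq 0}$ on $\beta_0^R \cup \{\lambda_0\}$ with ``marker times'' $T_0 < T_1 < \cdots$ of $W$ (the times at which a new vertex is appended to the rightmost path), and write the Busemann increment $D_k := \mcl X(k{+}1) - \mcl X(k)$ as a functional of the ``cell'' $C_k$ of $M_{0,\infty}$ enclosed between the leftmost $\XDP$ geodesics $\gamma_k, \gamma_{k+1}$ from $x_k, x_{k+1}$. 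By the coalescence established in the proof of Theorem~\ref{thm-busemann}, each $C_k$ is a.s.\ finite, and applying~\eqref{eqn-busemann-lim} with $w_j$ on the coalescence segment shows that $D_k$ depends only on the internal geometry of $C_k$. Since $C_k$ corresponds under KMSW to a disjoint excursion of $W$ between $T_k$ and $T_{k+1}$, the sequence $(C_k)_{k\geq 0}$ is i.i.d.

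The tail of $D_k$ would be obtained in two steps. First, the cell area $A_k := |C_k|$ satisfies $\BB P(A_k \geq A) \asymp A^{-1/2}$, which follows from a classical 1D random-walk first-passage estimate after projecting $W$ onto the coordinate controlling the appearance of new rightmost-path vertices. Second, by the cell-distance bounds in Theorems~\ref{thm-cell-ldp} and~\ref{thm-cell-sdp}, the Busemann contribution of a cell of area $A$ is of order $A^{3/4}$ for LDP and $A^{3/8}$ for SDP. Combining gives $\BB P(|D_k|\geq y)\asymp y^{-2/3}$ in the LDP case and $\asymp y^{-4/3}$ in the SDP case, matching the stable indices $\alpha=2/3$ and $\alpha=4/3$. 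The classical stable functional limit theorem then produces the scaling limits on $[0,\infty)$; in the SDP case ($\alpha>1$) one must additionally verify $\BB E[D_k]=0$, which should follow from a stationarity/re-rooting argument along the boundary of $M_{0,\infty}$.

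The one-sided versus two-sided nature of the limits is forced by a deterministic inequality. On the positive side, the directed edge $x_k\to x_{k+1}$ lies in $M_{0,\infty}$ for every $k\geq 0$ (it is either $\lambda_0$ or an edge of $\beta_0^R$), so
\[\op{LDP}_{M_{0,\infty}}(x_k,w)\geq 1+\op{LDP}_{M_{0,\infty}}(x_{k+1},w)\quad\text{and}\quad \op{SDP}_{M_{0,\infty}}(x_k,w)\leq 1+\op{SDP}_{M_{0,\infty}}(x_{k+1},w),\]
which give $D_k\leq -1$ for LDP (hence $-X|_{[0,\infty)}$ is a subordinator in the limit) and $D_k\geq -1$ for SDP (hence the limit is spectrally positive). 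On the negative side the edges of $\beta_0^L$ are absent from $M_{0,\infty}$ by Definition~\ref{defn-future-map}, so no such one-sided constraint holds and both signs of jumps survive at the heavy-tailed scale, producing the symmetric stable limits; independence of the two halves follows from the independence of $W$ before and after time $0$. The main obstacle I anticipate is the clean translation of $D_k$ into the geometry of $C_k$ in a way that makes Theorems~\ref{thm-cell-ldp} and~\ref{thm-cell-sdp} directly applicable, together with the verification that $\BB E[D_k]=0$ in the SDP case; the latter is necessary for a nondegenerate uncentered limit and is not obvious from the combinatorics.
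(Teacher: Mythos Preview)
Your proposal has a fundamental circularity: you invoke Theorems~\ref{thm-cell-ldp} and~\ref{thm-cell-sdp} to obtain the tail of the Busemann increments, but in the paper those theorems are \emph{consequences} of Theorems~\ref{thm-busemann-property} and~\ref{thm-busemann-tail} (see Section~\ref{sec-finite}, where the finite-cell estimates are derived from the Busemann function). The paper's actual route to Theorem~\ref{thm-busemann-conv} is the reverse of yours: it first proves the i.i.d.\ structure (Theorem~\ref{thm-busemann-property}) and the exact tail asymptotics (Theorem~\ref{thm-busemann-tail}) via a recursive equation obtained by comparing $\mcl X$ with the Busemann function $\mcl X^1$ of $M_{1,\infty}$ (Section~\ref{sec-recursive}), and only then applies the heavy-tailed functional CLT. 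The cell exponents $3/4$ and $3/8$ are outputs, not inputs.

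Even setting the circularity aside, several steps would not go through as written. The geodesic slice $C_k$ between $\gamma_k$ and $\gamma_{k+1}$ is \emph{not} the KMSW cell explored between the marker times $T_k$ and $T_{k+1}$: the leftmost $\XDP$ geodesic from $x_k$ can range arbitrarily far into the future of the KMSW exploration, so the slice area is not $T_{k+1}-T_k$ and the slices do not correspond to disjoint walk excursions. The independence of the $C_k$ is genuinely delicate; the paper obtains it through the conditional independence of Lemma~\ref{lem-lr-ind}, the UIQBOT analysis of Section~\ref{sec-uiqbot}, and a Hewitt--Savage argument (Proposition~\ref{prop-thin-ind}, Lemma~\ref{lem-geo-slice}), not from the walk's Markov property alone. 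Finally, your argument for the negative side shows only that both signs of jumps can occur, not that the limit is \emph{symmetric} stable; the paper needs the nontrivial reflection symmetry of the UIBHBOT (Proposition~\ref{prop-uihbot-sym}) for this, and the mean-zero property in the SDP case is extracted from the characteristic-function expansion (Proposition~\ref{prop:asympt-exp}) rather than a re-rooting argument.
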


In Theorem~\ref{thm-busemann-conv}, the stable processes are specified only up to a positive multiplicative constant. Theorem~\ref{thm-busemann-tail} below shows that, in each case (LDP or SDP), the multiplicative constants for $X|_{[0,\infty)}$ and  $X(-\cdot)|_{[0,\infty)}$ are the same.

Theorem~\ref{thm-busemann-conv} may be viewed as the $\sqrt{4/3}$-directed LQG analog of the description of the law of the Busemann function for the directed landscape restricted to a horizontal line, as given in~\cite{bss-dl-semi}. See the end of Section~\ref{sec-lqg} and Section~\ref{sect:fpp-lpp} for further discussion.
To prove Theorem~\ref{thm-busemann-conv}, we will establish the following five properties of the Busemann function, then apply the heavy-tailed functional central limit theorem. 

\begin{thm} \label{thm-busemann-property}
Let $\XDP \in \{\op{LDP} , \op{SDP}\}$ and let $\mcl X$ be the Busemann function as in Theorem~\ref{thm-busemann}. The following properties hold:
\begin{enumerate}[label=(\roman*),ref=(\roman*)]
\item \label{item-busemann-ind} \textbf{Independent increments.} The increments $\{\mcl X(k) - \mcl X(k-1)\}_{k\in\BB Z}$ are independent.
\item \label{item-busemann-stationary} \textbf{Stationary increments.} The increments $\{\mcl X(k) - \mcl X(k-1)\}_{k \geq 1}$ all have the same distribution, and the increments $\{\mcl X(k) - \mcl X(k-1)\}_{k \leq 0}$ all have the same distribution.
\item \label{item-busemann-pos} \textbf{Bounds on the increments for $k\geq 1$.} In the LDP case, for $k\geq 1$ we have the bound $\mcl X(k) - \mcl X(k-1)  \leq -1$ . In the SDP case, for $k\geq 1$, we have the bound $\mcl X(k) - \mcl X(k-1) \geq -1$. 
\item \label{item-busemann-sym} \textbf{Symmetry for $k\leq 0$.} For $k\leq 0$, we have $\mcl X(k)-\mcl X(k-1) \eqD \mcl X(k-1) - \mcl X(k)$.
\end{enumerate} 
\end{thm}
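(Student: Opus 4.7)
My plan is to derive properties (i)--(iv) from the KMSW bijection, which encodes the UIBOT $(M_{-\infty,\infty},\lambda_0)$ as a bi-infinite i.i.d.\ random walk $W = (W_n)_{n\in\BB Z}$ with uniform increments in $\{(1,-1),(-1,0),(0,1)\}$, together with the existence and coalescence of leftmost infinite $\XDP$ geodesics established in Lemmas~\ref{lem-infty-cut}--\ref{lem-busemann-cls}. The easiest property is \ref{item-busemann-pos}: for $k \geq 1$, the oriented boundary edge $(x_{k-1},x_k) \in \beta_0^R$ lies in $M_{0,\infty}$, so prepending it to any directed path from $x_k$ to a vertex $w \in M_{0,\infty}$ reachable from $x_k$ yields a directed path from $x_{k-1}$ to $w$ of length one greater. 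This gives $\op{LDP}_{M_{0,\infty}}(x_{k-1},w) \geq \op{LDP}_{M_{0,\infty}}(x_k,w) + 1$, and by taking a shortest path that begins with this edge, $\op{SDP}_{M_{0,\infty}}(x_{k-1},w) \leq \op{SDP}_{M_{0,\infty}}(x_k,w) + 1$. Applying \eqref{eqn-busemann-lim} with $w = w_j$ for large $j$ then delivers the claimed sign of the increment.

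For \ref{item-busemann-ind} and \ref{item-busemann-stationary}, I will use Lemma~\ref{lem-busemann-cls} to express $\mcl X(k) - \mcl X(k-1)$ in terms of the difference of $\XDP$ distances from $x_{k-1}$ and $x_k$ to the coalescence point of their leftmost infinite $\XDP$ geodesics $\gamma_{k-1}$ and $\gamma_k$. This exhibits the increment as a measurable function of the ``strip'' $S_k$ of $M_{0,\infty}$ bounded by $\gamma_{k-1}$ and $\gamma_k$ up to coalescence. The KMSW description of the boundary of $M_{0,\infty}$ (Lemma~\ref{lem-uibot-bdy}) identifies each right-boundary vertex $x_k$ ($k \geq 1$) with a specific step of $W$ after time $0$. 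I then plan to argue that $S_k$ is encoded by a block of walk steps beginning at the step that labels $x_{k-1}$ and ending at a stopping time of $W$ that records coalescence; the blocks corresponding to distinct $k \geq 1$ are disjoint and consecutive. Combined with the i.i.d.\ property of $W$ and the strong Markov property applied at each of these stopping times, this yields independence of the increments $\{\mcl X(k)-\mcl X(k-1)\}_{k \geq 1}$, and shift-invariance of $W$ gives stationarity. The analogous decomposition applied to the left-boundary vertices, using the reverse-time i.i.d.\ structure of $W$, handles $k \leq 0$.

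For \ref{item-busemann-sym}, I will exploit a law-preserving involution on $W$ that swaps the $(-1,0)$ and $(0,1)$ steps, which by the KMSW bijection corresponds to reflecting $M_{-\infty,\infty}$ along the axis determined by the $\beta_0^L$-side of the boundary. This involution preserves the law of $M_{0,\infty}$ and interchanges the roles of $x_{k-1}$ and $x_k$ for each $k \leq 0$ (unlike on the right boundary, this is possible because the edges of $\beta_0^L$ are not part of $M_{0,\infty}$, so no directional constraint is violated). Consequently the increment $\mcl X(k) - \mcl X(k-1)$ has the same law as its negative, which is the claimed symmetry.

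The main obstacle I foresee is the identification of the strips $S_k$ with disjoint consecutive blocks of steps of $W$. The leftmost $\XDP$ geodesics are defined intrinsically in terms of longest or shortest directed paths rather than by walk increments, so establishing that the coalescence time of $\gamma_{k-1}$ and $\gamma_k$ is a stopping time of $W$ (after the appropriate time shift) and that the structure of $S_k$ up to coalescence is measurable with respect to the steps of $W$ inside the corresponding block is the crux of the argument. Once this measurability is secured, \ref{item-busemann-ind} and \ref{item-busemann-stationary} follow from standard arguments for functionals of i.i.d.\ sequences, while \ref{item-busemann-pos} and \ref{item-busemann-sym} reduce to the elementary considerations above.
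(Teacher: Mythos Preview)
Your argument for \ref{item-busemann-pos} is correct and matches the paper's proof.

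For \ref{item-busemann-ind} and \ref{item-busemann-stationary}, however, there is a genuine gap in your plan to identify the geodesic slices $S_k$ with disjoint blocks of walk steps delimited by stopping times. The leftmost $\XDP$ geodesic from $x_k$ is defined as a longest or shortest directed path to $\infty$; whether a given edge belongs to it depends on the \emph{global} structure of $M_{0,\infty}$, hence on the entire future of the walk. In particular, the coalescence time of $\gamma_{k-1}$ and $\gamma_k$ is not a stopping time of $W$ in any natural filtration, and the slice $S_k$ is not measurable with respect to a consecutive block of steps. The paper obtains independence and stationarity by a completely different route: it shows (Proposition~\ref{prop-future-map-law}) that the submap $\wh M_{\tau_k,\infty}$ reachable from $x_k$ has the law of the UIQBOT; then (Proposition~\ref{prop-thin-ind}) that the left and right parts of the UIQBOT split by its leftmost $\XDP$ geodesic are \emph{unconditionally} independent. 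This last fact comes from the conditional independence of the two sides of a Boltzmann triangulation given the geodesic length (Lemma~\ref{lem-lr-ind}), pushed to the infinite-volume limit via backward martingale convergence and the Hewitt--Savage zero--one law applied to the i.i.d.\ pieces between cut vertices. The slices $G_k$ are then independent because $G_j$ for $j\le k$ is a function of the left part and $G_j$ for $j>k$ is a function of the right part (Lemma~\ref{lem-geo-slice}).

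For \ref{item-busemann-sym}, your proposed involution---swapping the $(-1,0)$ and $(0,1)$ steps---does not correspond to a reflection of $M_{-\infty,\infty}$: under the KMSW procedure these two step types build triangles in genuinely asymmetric configurations (one consumes an upper-left boundary edge, the other creates an upper-right missing edge), so the swap does not yield an orientation-reversing homeomorphism, nor does it preserve the submap $M_{0,\infty}$ or interchange the left-boundary vertices $x_{k-1}$ and $x_k$. The paper instead introduces an auxiliary infinite half-plane map, the UIBHBOT (Definition~\ref{def-uihbot}), shows that $M_{0,\infty}$ embeds into it (Lemma~\ref{lem-uihbot-future}), and proves that the UIBHBOT law is invariant under an orientation-reversing homeomorphism (Proposition~\ref{prop-uihbot-sym}). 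That symmetry is itself nontrivial: it is deduced from a combinatorial bijection on finite boundary-channeled bipolar-oriented triangulations (Lemma~\ref{lem-bdy-reverse}) together with a Benjamini--Schramm local limit argument (Proposition~\ref{prop-disk-bs-conv}).
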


The fifth property provides the following tail estimates for the increments of the Busemann function.

\begin{thm} \label{thm-busemann-tail}
Let $\XDP \in \{\op{LDP} , \op{SDP}\}$ and let $\mcl X$ be the Busemann function, as in Theorem~\ref{thm-busemann}. 
\begin{enumerate}[label=(v\alph*),ref=(v\alph*)]
 \item\label{item-busemann-tail-LDP} If $\XDP = \op{LDP}$, then there is a constant $c_1 > 0$ such that as $x\to\infty$, 
\begin{align} \label{eqn-busemann-tail-ldp}
&\BB P\left[ \mcl X(1) - \mcl X(0) \leq -x \right] = c_1 \, x^{-2/3} + o(x^{-2/3}) ,\notag\\
&\BB P\left[ |\mcl X(0) - \mcl X(-1)| \geq x \right] = 2c_1 \, x^{-2/3} + o(x^{-2/3}) .
\end{align} 
 \item\label{item-busemann-tail-SDP} If $\XDP = \op{SDP}$, then there is a constant $c_2 > 0$ such that as $x\to\infty$,
\begin{align} \label{eqn-busemann-tail-sdp}
&\BB P\left[ \mcl X(1) - \mcl X(0) \geq   x \right] = c_2 \, x^{-4/3} + o(x^{-4/3}),\notag\\  
&\BB P\left[ |\mcl X(0) - \mcl X(-1)| \geq x \right] = 2c_2 \, x^{-4/3} + o(x^{-4/3}) .
\end{align}  
Moreover, in this case $\BB E[\mcl X(k) - \mcl X(k-1)] = 0$ for all $k\in\BB Z$.  
\end{enumerate} 
\end{thm}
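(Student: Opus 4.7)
The plan is to express each Busemann increment as an explicit functional of the i.i.d.\ walk encoding the UIBOT via the KMSW bijection, and then read off the tails from classical random walk estimates.

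For the $k \geq 1$ side, I would combine Lemma~\ref{lem-busemann-cls} (which writes $\mcl X$ in terms of coalescence times of leftmost infinite $\XDP$ geodesics from the boundary vertices) with the KMSW bijection to identify $\mcl X(1) - \mcl X(0) + 1$ in the LDP case (respectively $\mcl X(1) - \mcl X(0) - 1$ in the SDP case) with a concrete functional of the forward KMSW walk $W$, whose increments are i.i.d.\ uniform on $\{(1,-1),(-1,0),(0,1)\}$. I expect this functional to be a weighted count of step types of $W$ up to a first hitting time (e.g., the time at which a certain linear combination of the coordinates of $W$ first returns to $0$), corresponding to the coalescence time of the leftmost $\XDP$ geodesic from $x_0$ with the one from $x_1$. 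The tail exponents should then follow from standard asymptotics for such hitting-time functionals of random walks: the $2/3$ exponent in the LDP case reflects a stopping time in the domain of attraction of a one-sided stable law (so the relevant functional, which accumulates many steps before coalescence, has a slowly decaying tail), while the $4/3$ exponent in the SDP case comes from the analogous but faster-decaying functional (shortest directed paths coalesce much sooner). The constants $c_1, c_2 > 0$ emerge from the local-limit density at $0$ of the associated stable scaling limit.

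For the $k \leq 0$ side I would invoke the symmetry in Theorem~\ref{thm-busemann-property}~\ref{item-busemann-sym} to reduce the two-sided tail of $\mcl X(0) - \mcl X(-1)$ to twice a one-sided tail. The main task is then to match this one-sided tail, constant included, with the corresponding $k \geq 1$ tail. This should come from a direct bijective identification of the walk functional encoding $\mcl X(0) - \mcl X(-1)$ (applied to the ``past'' half of the bi-infinite KMSW walk) with one of the same law as for $\mcl X(1) - \mcl X(0)$; the constants agree because both halves of the bi-infinite walk have the same i.i.d.\ law and the geometric definitions of the two increments are mirror images of each other along $\lambda_0$.

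For the zero-mean statement in the SDP case, the $4/3$ tail together with the lower bound $\mcl X(k) - \mcl X(k-1) \geq -1$ (Theorem~\ref{thm-busemann-property}~\ref{item-busemann-pos}) gives integrability, so $m := \BB E[\mcl X(1) - \mcl X(0)]$ is finite. Stationarity then yields $\BB E[\mcl X(k)] = m k$ for $k \geq 1$, and combining with a direct sublinear bound $\BB E[|\mcl X(k)|] = O(k^{3/4})$ (coming from walk estimates of the same type as those used in Theorems~\ref{thm-boltzmann-ldp}--\ref{thm-cell-sdp}) forces $m = 0$. The main obstacle will be identifying the walk functionals with enough precision to pin down the exact constants $c_1, c_2$ (and not merely the exponents), and in particular verifying that the $k \leq 0$ side constant matches the $k \geq 1$ side constant; the exponents themselves should fall out of fairly standard hitting-time asymptotics once the KMSW reduction is in place.
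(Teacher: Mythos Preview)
Your proposal has a genuine gap at its core: you assume that the Busemann increment $\mcl X(1)-\mcl X(0)$ can be identified with a simple walk functional such as a weighted step-count up to a first hitting time of a linear combination of the KMSW coordinates. No such identification is available, and there is no reason to expect one. The leftmost $\XDP$ geodesic from $x_0$ is defined by a global optimization over all directed paths; its coalescence with the geodesic from $x_1$ is not encoded by any obvious first-passage event for $\mcl Z$. Lemma~\ref{lem-busemann-cls} does express the increment as $\theta_k-\theta_k^-$, but these coalescence times are themselves complicated functionals of the map, not of the raw walk. Your derivation of the exponents (``standard hitting-time asymptotics'', ``domain of attraction of a one-sided stable law'') is then circular: you are asserting the answer rather than producing a mechanism that singles out $2/3$ and $4/3$.

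The paper's route is entirely different and does not attempt any such direct identification. Instead, it exploits the one-step stationarity $M_{1,\infty}\eqD M_{0,\infty}$: performing a single backward KMSW step relates $(\mcl X(-1),\mcl X(1))$ to explicit expressions in $(\mcl X^1(-2),\mcl X^1(-1),\mcl X^1(1),\mcl X^1(2))$ (Lemmas~\ref{lem:LDP-relating-X1-X} and~\ref{lem:SDP-relating-X1-X}), which together with the independent/stationary increments and symmetry from Theorem~\ref{thm-busemann-property} yields a recursive equation for the pair of densities $(f,g)$. Passing to characteristic functions turns this into a quadratic in $\widehat F^-$ with coefficients in $F$, whose discriminant analysis forces $F$ to satisfy a cubic $F^3-KF^2+3F-1=0$; the dominant balance in this cubic is what pins down the exponents $2/3$ (LDP) and $4/3$ (SDP). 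A Tauberian theorem then converts the characteristic-function asymptotics into the stated tails, and the factor-of-two matching between the $k\geq 1$ and $k\leq 0$ constants comes out of the joint equation for $F$ and $G$, not from any bijective symmetry of the two walk halves (indeed $f$ and $g$ have different supports and are not related by any reflection). In the SDP case an additional geometric argument is needed to rule out a spurious $2/3$ branch of the functional equation, and the zero-mean claim is read off from $F'(0)=G'(0)=0$, not from a separate sublinear bound.
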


The proofs of Theorems~\ref{thm-busemann-property} and~\ref{thm-busemann-tail} constitute the bulk of the work in this paper. See Section~\ref{sec-outline} for a detailed outline. 
To briefly summarize, the independent increments property~\ref{item-busemann-ind} of Theorem~\ref{thm-busemann-property} will eventually follow from the fact that an $\XDP$ geodesic cuts a finite bipolar-oriented triangulation into two pieces that are conditionally independent given its length (Lemma~\ref{lem-lr-ind}). The stationary increments property~\ref{item-busemann-stationary}  will ultimately be a consequence of the stationarity of $M_{-\infty,\infty}$ under the KMSW procedure (Section~\ref{sec-busemann-ind}). The bounds~\ref{item-busemann-pos} for $k\geq 1$ are immediate from the fact that the right boundary of $M_{0,\infty}$ is a directed path in $M_{0,\infty}$. The symmetry~\ref{item-busemann-sym} for $k\leq 0$ comes from a non-trivial ``local reflection symmetry'' at the edges along the left boundary of $M_{0,\infty}$ (Section~\ref{sec-busemann-sym}). Theorem~\ref{thm-busemann-tail} is obtained by analyzing a recursive relation for the laws of $\mcl X(1) - \mcl X(0)$ and $\mcl X(0) - \mcl X(-1)$ which comes from applying one step of the KMSW procedure (Section~\ref{sec-recursive}).
In principle, it may be possible to extract from this recursive relation a complete characterization of the laws of $\mcl X(1)-\mcl X(0)$ and $\mcl X(0) - \mcl X(-1)$, up to a single unknown constant. But we only need the leading-order tail asymptotics.

\subsection{Directed distances in finite bipolar-oriented triangulations} 
\label{sec-finite-intro}

From the existence and scaling limit of the Busemann function on the UIBOT, one can deduce up-to-constants estimates for directed distances in various types of finite random bipolar-oriented triangulations. 
We will state and prove results to this effect in two settings: Boltzmann bipolar-oriented triangulations with fixed right boundary length (Section~\ref{sec-boltzmann-intro}) and submaps of the UIBOT explored by $n$ steps of the KMSW procedure (Section~\ref{sec-cell-intro}). We decided to work with these models of finite bipolar orientations in order to illustrate the main ideas while keeping the exposition concise. 
We expect that one can also obtain analogous results for, e.g., uniform bipolar-oriented triangulations with specified left boundary length, right boundary length, and total number of edges. However, proving such results would require some technical estimates for conditioned random walks, so we do not carry it out here (the estimates we need do not seem to immediately follow from the results of~\cite{dw-cones}).

\subsubsection{Directed distances in Boltzmann bipolar-oriented triangulations} 
\label{sec-boltzmann-intro}

\begin{defn} \label{def-bipolar-bdy}
For $\el , r \in \BB N$, a bipolar-oriented planar map with \textbf{left boundary length} $\el$ and \textbf{right boundary length} $r$ is a finite bipolar-oriented planar map $\frk m$ with the following properties. The source and the sink both lie on the boundary of the external face, and the oriented path from the source to the sink on the boundary of the external face, with the external face immediately to its left (resp.\ right), has length $\el$ (resp.\ $r$).
\end{defn}

For example, the bipolar-oriented triangulation on the left-hand side of Figure~\ref{fig-bipolar} has left boundary length 5 and right boundary length 4. 

\begin{defn}\label{def-boltzmann-right} \leavevmode\par

\begin{itemize}
\item
The \textbf{Boltzmann distribution} on bipolar-oriented triangulations \textbf{with right boundary length} $r\in\BB N$ is the law of the random bipolar-oriented triangulations $M(r)$ such that for each bipolar-oriented triangulation $\frk m$ with $r$ right boundary edges and $n\in\BB N$ total edges, 
\eqb  \label{eqn-boltzmann-right}
\BB P\left[ M(r) = \frk m \right] = \frac{3^{-n}}{C_r},  
\eqe 
where $C_r > 0$ is a normalizing constant.
\item 
The \textbf{Boltzmann distribution} on bipolar-oriented triangulations \textbf{with right boundary length} $r$ \textbf{and a marked left boundary vertex} is the law of the random pair $(M(r) , z(r))$, where $M(r)$ is a bipolar-oriented triangulation with $r$ right boundary edges and $z(r)$ is a vertex on its left boundary, such that the following is true. For each bipolar-oriented triangulation $\frk m$ with $r$ right boundary edges and $n\in\BB N$ total edges, and each vertex $\frk z$ on its left boundary, 
\eqb  \label{eqn-boltzmann-right-marked}
\BB P\left[ (M(r) , z(r))  = (\frk m , \frk z) \right] = \frac{3^{-n}}{\wt C_r}, 
\eqe  
where $\wt C_r > 0$ is a normalizing constant. 
\end{itemize}
\end{defn}

If $(M(r) , z(r))$ is sampled from the Boltzmann distribution on bipolar-oriented triangulations with right boundary length $r$ and a marked left boundary vertex, then the marginal law of $M(r)$ is equal to the Boltzmann distribution~\eqref{eqn-boltzmann-right} weighted by a constant times the number of possibilities for $z(r)$, which is equal to the left boundary length of $M(r)$ plus one.

It follows from the KMSW bijection that the distributions in Definition~\ref{def-boltzmann-right} are well-defined, in the sense that the total mass of the measure on triangulations is finite; see Lemmas~\ref{lem-kmsw-boltzmann} and~\ref{lem-boltzmann-finite}.

The normalization factor $3^{-n}$ in~\eqref{eqn-boltzmann-right} and~\eqref{eqn-boltzmann-right-marked} comes directly from the KMSW encoding: a bipolar-oriented triangulation with $n$ edges is encoded by a length-$n$ walk with \emph{three} possible step types, so a uniformly sampled encoding has probability $(1/3)^n$. Thus a map with $n$ edges naturally receives weight $3^{-n}$.
It should be possible to also argue that this choice is \emph{critical}. Let $a_{n,r}$ be the number of bipolar-oriented triangulations with right boundary length $r$ and $n$ edges. Via the KMSW representation, $a_{n,r}$ is the number of length-$n$ walks constrained to stay in a cone, and results on random walks in cones (see~\cite[Theorem~1]{dw-cones}) should yield $a_{n,r}=c_r\,3^{n}\,n^{-\alpha}\,(1+o(1))$ as $n\to\infty$, for some $\alpha>1$ and constant $c_r>0$. Consequently, under the Boltzmann weight $3^{-n}$, we obtain $\mathbb{P}\left(\#\mcl E(M(r))=n\right)\ \propto\ a_{n,r}\,3^{-n}\ \asymp\ n^{-\alpha}$. So the size distribution has a power-law tail.

Our main theorem for directed distances in Boltzmann bipolar-oriented triangulations is as follows. 

\begin{thm} \label{thm-boltzmann-ldp}
For $r\in\BB N$, let $(M(r) ,z(r))$ be sampled from the Boltzmann distribution on bipolar-oriented triangulations with right boundary length $r$ and a marked left boundary vertex (Definition~\ref{def-boltzmann-right}). 
Let $\op{Source}(r)$ and $\op{Sink}(r)$ denote the source and sink vertices of $M(r)$. 
For each $\delta \in (0,1)$, there exists $A = A(\delta)  > 1$ such that for each $r\in \BB N$, it holds with probability at least $1-\delta$ that the following is true.  
\begin{itemize}
\item $A^{-1} r^{3/2} \leq  \op{LDP}_{M(r)} \left(  \op{Source}(r)  , \op{Sink}(r)  \right) \leq A r^{3/2}$. 
\item $ \op{SDP}_{M(r)} \left(  \op{Source}(r)  , \op{Sink}(r)  \right) \leq A r^{3/4}$.
\item There exists a vertex $y$ on the right boundary of $M(r)$ such that 
\eqbn
\op{SDP}_{M(r)} \left(  \op{Source}(r)  , y \right) \geq A^{-1} r^{3/4} .
\eqen
\end{itemize}
\end{thm}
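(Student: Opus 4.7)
The plan is to transfer the scaling estimates for the Busemann function (Theorems~\ref{thm-busemann-conv} and~\ref{thm-busemann-tail}) to the finite Boltzmann map via a KMSW-based coupling. Since both the Boltzmann measure on bipolar-oriented triangulations with right boundary $r$ and the UIBOT are encoded by walks with i.i.d.\ uniform increments in $\{(1,-1),(-1,0),(0,1)\}$ (differing only in boundary conditions), I would realize $(M(r), z(r))$ as the sub-map of $M_{0,\infty}$ explored by the UIBOT's KMSW procedure up to the first time $T_r$ when the explored region has right boundary length $r$. Under this coupling, $\op{Sink}(r) = x_r$ and $\op{Source}(r) = x_{-\el}$ for a random $\el \geq 0$, and directed paths in $M(r)$ are directed paths in $M_{0,\infty}$.

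For the LDP upper bound, path concatenation gives $\op{LDP}_{M_{0,\infty}}(x_{-\el}, w) \geq \op{LDP}_{M_{0,\infty}}(x_{-\el}, x_r) + \op{LDP}_{M_{0,\infty}}(x_r, w)$ for any $w$ reachable from $x_r$; taking $w \to \infty$ and applying Theorem~\ref{thm-busemann} yields $\op{LDP}_{M(r)}(\op{Source}(r), \op{Sink}(r)) \leq \mcl X(-\el) - \mcl X(r)$. Combining Theorems~\ref{thm-busemann-property}\ref{item-busemann-ind}, \ref{item-busemann-stationary} and~\ref{thm-busemann-tail}\ref{item-busemann-tail-LDP} (tail exponent $2/3$) with standard heavy-tailed sum bounds yields $|\mcl X(-\el) - \mcl X(r)| \leq A r^{3/2}$ with high probability. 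The analogous triangle inequality for $\op{SDP}$ gives the lower bound $\op{SDP}_{M_{0,\infty}}(x_{-\el}, x_k) \geq \mcl X(-\el) - \mcl X(k)$ in the limit; combining with the $4/3$-stable Lévy scaling of $\mcl X$ from Theorem~\ref{thm-busemann-conv} (SDP case), with probability $\geq 1-\delta$ one can choose $k^* \in \{1,\ldots,r\}$ such that $\mcl X(-\el) - \mcl X(k^*) \geq c\, r^{3/4}$, yielding the required $y := x_{k^*}$ on the right boundary.

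The two remaining bounds---the LDP lower bound $\geq A^{-1} r^{3/2}$ and the SDP upper bound $\leq A r^{3/4}$---are more delicate because the triangle inequality points in the wrong direction. For the LDP lower bound, I would show that the upper bound $\mcl X(-\el) - \mcl X(r)$ is attained up to constants by exhibiting a concrete long directed path from source to sink in $M(r)$: e.g., the leftmost directed path from $\op{Source}(r)$, which by the coalescence structure of leftmost directed geodesics (established in the proof of Theorem~\ref{thm-busemann}) must eventually reach the sink. Combined with the $2/3$-stable subordinator convergence from Theorem~\ref{thm-busemann-conv} (LDP case), this produces a directed path of length $\geq A^{-1} r^{3/2}$. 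For the SDP upper bound, I would construct an explicit short directed path from source to sink by concatenating short segments lying along coalescent leftmost SDP geodesics started from $x_{-\el}$ and (a time-reversed analog started from) $x_r$, meeting at a common interior vertex.

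\textbf{Main obstacle.} The main difficulty lies in the LDP lower bound and the SDP upper bound: in both cases one must exhibit explicit paths rather than use Busemann increments directly. This requires careful analysis of how the coalescence structure of leftmost directed (or SDP) geodesics from Theorem~\ref{thm-busemann} interacts with the finite sub-map $M(r)$ defined by the stopping time $T_r$, together with quantitative control on the coalescence times, which are not addressed by Theorem~\ref{thm-busemann} itself.
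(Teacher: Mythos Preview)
Your coupling is the principal gap. Stopping the UIBOT's KMSW procedure at the first time the explored region has right boundary length $r$ does \emph{not} produce a sample from the Boltzmann law with a marked left boundary vertex. By Lemma~\ref{lem-kmsw-boltzmann}, the Boltzmann map corresponds to the KMSW walk conditioned on $\{\mcl L(n)\geq 0\ \forall n\leq \tau\}$, and the marked-vertex version requires an additional size-biasing by the left boundary length. The paper handles this correctly via Lemma~\ref{lem-uiqbot-boltzmann}: it embeds $(M(r),z(r))$ inside the \emph{UIQBOT} $\wh M_{0,\infty}\subset M_{0,\infty}$ (whose encoding walk $\wh{\mcl Z}$ is already conditioned on $\wh{\mcl L}\geq 0$), with source $x_0$ and sink $x_r$. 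In particular, your identification ``$\op{Source}(r)=x_{-\ell}$ for random $\ell$'' is wrong---under the unconditioned walk the lower-left boundary consists of missing edges, so the resulting map is not even bipolar.

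For the two hard bounds you correctly flag, your proposed mechanisms do not work and differ substantially from the paper's. For the LDP lower bound, the leftmost directed path from the source is \emph{not} an LDP geodesic and there is no reason its length scales like $r^{3/2}$. The paper (Lemma~\ref{lem-boltzmann-ldp-lower}) instead proves an absolute continuity statement: after the conditioned walk $\wh{\mcl Z}$ reaches $\wh\tau^R_{\lfloor r/2\rfloor}$, a segment of length $\asymp r^2$ has law absolutely continuous w.r.t.\ the unconditioned walk with a controllable Radon--Nikodym derivative (via Cauchy--Schwarz), which lets one import the cell lower bound of Theorem~\ref{thm-cell-ldp}. For the SDP upper bound (Lemma~\ref{lem-boltzmann-sdp-upper}), the paper first uses Theorem~\ref{thm-cell-sdp} to find a short directed path from $x_0$ to a point on the segment $J_r$ of the left boundary past $\wh z(r)$, then invokes the forward/reverse symmetry of the Boltzmann law (reversing edge orientations and reflecting preserves the law of $(\wh M(r),\wh z(r))$) to get a short path from the complementary segment $I_r$ to $x_r$; these two paths must cross by planarity, yielding the concatenated short path. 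Your ``coalescent leftmost SDP geodesics'' sketch misses both the symmetry argument and the role of the marked vertex $\wh z(r)$ in splitting the left boundary.
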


Thanks to the result in~\eqref{eq:edges-in-Mr}~of~Lemma~\ref{lem-uiqbot-boltzmann}, by possibly increasing $A$, we can arrange that with probability at least $1-\delta$,
\begin{equation*}
    A^{-1}r^{2} \leq \#\mcl E( M(r)) \leq Ar^2 
\end{equation*}
where $\mcl E(M(r))$ is the set of edges of $M(r)$.
Therefore, Theorem~\ref{thm-boltzmann-ldp} implies that LDP distances in $M(r)$ (resp.\ SDP distances in $M(r)$) are typically comparable to $\#\mcl E( M(r))^{3/4}$ (resp.\ $\#\mcl E( M(r))^{3/8}$); see also Theorems~\ref{thm-cell-ldp}~and~\ref{thm-cell-sdp} below.

We expect that also $\op{SDP}_{M(r)} \left(  \op{Source}(r)  , \op{Sink}(r)  \right) \geq A^{-1} r^{3/4}$ with high probability when $A$ is large, but we do not prove this here. The reason we work with the Boltzmann distribution with a marked left boundary vertex in Theorem~\ref{thm-boltzmann-ldp} is for technical convenience: weighting the law of $M(r)$ by its left boundary length allows us to relate it more directly to the uniform infinite quarter-plane bipolar-oriented triangulation of Definition~\ref{def-uiqbot}. We expect that Theorem~\ref{thm-boltzmann-ldp}, in the case where we do not have a left marked boundary vertex, can be deduced from the current version of Theorem~\ref{thm-boltzmann-ldp} together with some estimates for the KMSW encoding walk. However, for the sake of brevity, we do not carry this out here.

\subsubsection{Directed distances in size-$n$ KMSW cells} 
\label{sec-cell-intro}
 
For $n\in\BB N$, let $M_{0,n}$ be the submap\footnote{To be precise, this is a map with some missing edges on the boundary following our Definition~\ref{defn:planar-map-with-missing-edges}. See the precise  definition of the KMSW procedure in Definition~\ref{def-kmsw} for more details.} of the UIBOT $\uibot$ (Section~\ref{sec-busemann-intro}) explored by the KMSW procedure between time 0 and time $n$. In other words, $M_{0,n}$ is the directed map obtained by applying the procedure of Definition~\ref{def-kmsw} to the walk $\mcl Z|_{[0,n]}$, where $\mcl Z$ is a walk on $\BB Z^2$ with i.i.d.\ increments sampled uniformly from $\{(1,-1) , (-1,0) ,(0,1)\}$. The map $M_{0,n}$ is not bipolar-oriented since, for a vertex on the boundary of $M_{0,n}$, it could be that all of the incoming (or outgoing) edges for this vertex are not contained in $M_{0,n}$; see, for instance, the map $\frk{m}_6$ in Figure~\ref{fig-kmsw}. However, $M_{0,n}$ is believed to exhibit the same behavior in the bulk as a uniform bipolar-oriented triangulation of size $n$. In terms of LQG, the map $M_{0,n}$ is the discrete analog of a unit-size mated-CRT map cell. 

Our results for $M_{0,n}$ are as follows. 

\begin{thm} \label{thm-cell-ldp}
For each $\delta \in (0,1)$, there exists $A = A(\delta) >1$ such that for each $n\in\BB N$, it holds with probability at least $1-\delta$ that 
\eqb  \label{eqn-cell-ldp}
 A^{-1} n^{3/4} \leq \sup\left\{ \op{LDP}_{M_{0,n}}(x,y)  : x,y\in\mcl V(M_{0,n}) \right\} \leq A n^{3/4} ,   
\eqe 
where here we recall that $\op{LDP}_{M_{0,n}}(x,y) = -\infty$ if there is no directed path in $M_{0,n}$ from $x$ to $y$. 
\end{thm}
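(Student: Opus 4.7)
The bound~\eqref{eqn-cell-ldp} splits into an upper and a lower half, which I would handle by different means.

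\emph{Upper bound.} The plan is to use the Busemann function of Theorem~\ref{thm-busemann} together with its scaling limit Theorem~\ref{thm-busemann-conv}. First I would argue that for any $x,y \in \mcl V(M_{0,n})$ with $\op{LDP}_{M_{0,n}}(x,y)>-\infty$, a longest directed path from $x$ to $y$ in $M_{0,n}$ can be extended inside $M_{0,\infty}$ by prepending a directed path from a boundary vertex $x_k\in\beta_0^L$ to $x$, and appending a directed path from $y$ to a boundary vertex $x_{k'}\in\beta_0^R$. Using super-additivity of $\op{LDP}$ under concatenation, together with the characterization of $\mcl X$ from Theorem~\ref{thm-busemann}, this produces the key inequality
\[
\op{LDP}_{M_{0,n}}(x,y)\;\leq\;\op{LDP}_{M_{0,\infty}}(x_k,x_{k'})\;\leq\;\mcl X(k)-\mcl X(k').
\]
Via the KMSW bijection, the set of indices $j$ for which $x_j\in\mcl V(M_{0,n})$ is controlled by the extrema of the encoding walk $\mcl Z|_{[0,n]}$, so standard random-walk estimates give $|k|,|k'|\leq B\sqrt n$ with probability at least $1-\delta/4$ for a constant $B=B(\delta)$. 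Theorem~\ref{thm-busemann-conv} then yields $\sup_{|j|\leq B\sqrt n}|\mcl X(j)|\leq An^{3/4}$ with probability at least $1-\delta/4$ (upon enlarging $A$), which combined with the displayed inequality delivers the upper bound.

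\emph{Lower bound.} The plan is to extract from $M_{0,n}$, with probability bounded below uniformly in $n$, a sub-map distributed approximately as a Boltzmann bipolar-oriented triangulation with right boundary length of order $\sqrt n$, and then to invoke Theorem~\ref{thm-boltzmann-ldp}. Concretely, I would define a random event $E_n$, measurable with respect to $\mcl Z|_{[0,n]}$, on which there is a subinterval $[\tau,\sigma]\subseteq[0,n]$ with $\sigma-\tau\asymp n$ such that $\mcl Z$ on $[\tau,\sigma]$ is a closed excursion staying in an appropriate cone, with displacement of order $\sqrt n$. Via the KMSW bijection, on $E_n$ the sub-map of $M_{0,n}$ explored during $[\tau,\sigma]$ is a bipolar-oriented triangulation with right boundary length $r\asymp\sqrt n$; a local absolute-continuity comparison between the unconditioned walk in the cone and the Boltzmann-weighted walk identifies its conditional law, up to a bounded density, with that of $M(r)$ in Definition~\ref{def-boltzmann-right}. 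Theorem~\ref{thm-boltzmann-ldp} applied to this sub-map then furnishes a directed path of length at least $A'^{-1}r^{3/2}\asymp n^{3/4}$ lying inside $M_{0,n}$, giving the desired lower bound on $\sup_{x,y}\op{LDP}_{M_{0,n}}(x,y)$.

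\emph{Main obstacle.} The lower bound is the harder half: one has to choose $E_n$ so that (i) $\BB P[E_n]$ is bounded below uniformly in $n$, (ii) on $E_n$ the extracted sub-map is genuinely a bipolar-oriented triangulation with right boundary length $\asymp\sqrt n$, and (iii) its conditional law admits a tractable comparison with the Boltzmann distribution permitting the use of Theorem~\ref{thm-boltzmann-ldp}. For the upper bound, the only technical point is to verify that the path-extension procedure above lands on the infinite boundaries $\beta_0^L,\beta_0^R$ at indices of absolute value at most $B\sqrt n$, which in turn requires a careful description of the KMSW frontier at time $n$ and of how leftmost incoming and rightmost outgoing paths propagate through $M_{0,n}$.
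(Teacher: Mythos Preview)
Your upper bound sketch is close in spirit to the paper's argument, though the paper executes it slightly differently. Rather than routing both endpoints to the boundary of $M_{0,\infty}$, the paper sends $x$ to a vertex $x_\ell$ on the \emph{lower} boundary (on $\partial M_{0,\infty}$) and $y$ to a vertex $x_k^n$ on the \emph{upper} boundary (on $\partial M_{n,\infty}$), and then controls the discrepancy using \emph{two} Busemann functions, $\mcl X$ for $M_{0,\infty}$ and $\mcl X_n$ for $M_{n,\infty}$, together with the identity $x_{\sigma_0^R}=x_{\sigma_n^R}^n$ at the merge point of the two right boundaries. Your single-Busemann approach requires verifying that the rightmost path from an arbitrary $y\in M_{0,n}$ in $M_{0,\infty}$ first meets $\beta_0^R$ at an index $k'\le\sigma_0^R$; this is true but is precisely the kind of frontier-propagation statement you flag as the ``only technical point,'' and it needs the same KMSW boundary analysis the paper uses anyway. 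Either route works; the two-Busemann version is a bit cleaner because the path from $y$ to the upper boundary can be built entirely inside $M_{0,n}$.

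Your lower bound, however, is circular. In this paper Theorem~\ref{thm-boltzmann-ldp} is \emph{deduced from} Theorem~\ref{thm-cell-ldp} (see Section~\ref{sec-boltzmann}, in particular Lemma~\ref{lem-boltzmann-ldp-lower}, which explicitly invokes Theorem~\ref{thm-cell-ldp}), so you cannot use it as an input here. The absolute-continuity comparison you propose would, even if carried out, only reduce the problem to an LDP lower bound for the Boltzmann map --- which itself has no independent proof in the paper. The paper's actual lower bound is much more direct and avoids any submap extraction: one shows (Lemma~\ref{lem-long-path}) that $M_{0,\tau_{-k}}$ contains a directed path of length at least $\mcl X(-\ell)-\mcl X(-k)$ for any $0\le\ell<k$, by following the leftmost LDP geodesic $P_{x_{-\ell}}$ until it exits $M_{0,\tau_{-k}}$. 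Then, since the increments $\mcl X(-k+1)-\mcl X(-k)$ are i.i.d.\ with a $2/3$-tail (Theorem~\ref{thm-busemann-tail}), with high probability one of the first $\asymp n^{1/2}$ increments exceeds $A^{-1}n^{3/4}$, and $\tau_{-k}\le n$ for that $k$. This yields the long path inside $M_{0,n}$ directly from the Busemann function tail, with no reference to Boltzmann maps.
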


The map $M_{0,n}$ has a \textbf{lower boundary}, defined as its intersection with the boundary of $M_{0,\infty}$, and an \textbf{upper boundary} defined as the intersection of its boundary with the submap of $\uibot$ explored by the KMSW procedure after time $n$. See Definition~\ref{defn:bound-edges} for an alternative definition. 
  
\begin{thm} \label{thm-cell-sdp}
For each $\delta \in (0,1)$, there exists $A = A(\delta) > 1$ 
such that for each $n\in\BB N$,
it holds with probability at least $1-\delta$ that the following is true. 
\begin{itemize}
\item For every vertex $x$ on the lower boundary of $M_{0,n}$, there exists a vertex $y$ on the upper boundary of $M_{0,n}$ such that $\op{SDP}_{M_{0,n}}(x , y) \leq A n^{3/8}$.
\item For every vertex $y$ on the upper boundary of $M_{0,n}$, there exists a vertex $x$ on the lower boundary of $M_{0,n}$ such that $\op{SDP}_{M_{0,n}}(x , y) \leq A n^{3/8}$. 
\item There exists a vertex $y$ on the lower boundary of $M_{0,n}$, lying to the right of the root edge $\lambda_0=(x_0,x_1)$, such that $\op{SDP}_{M_{0,n}}(x_0 , y) \geq A^{-1} n^{3/8} $. 
\end{itemize}
\end{thm}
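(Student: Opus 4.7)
My plan is to prove the three bullets separately, using the scaling limit of the Busemann function (Theorem~\ref{thm-busemann-conv}) together with the KMSW encoding of $M_{0,n}$. As setup, recall that via the KMSW bijection $M_{0,n}$ is encoded by the walk $\mathcal Z|_{[0,n]}$, whose mean-zero $\BB Z^2$-valued increments have bounded variance; standard random-walk estimates then yield, for any $\delta>0$, a constant $C=C(\delta)>0$ such that with probability at least $1-\delta/10$ each of the lower and upper boundaries of $M_{0,n}$ contains an arc of length at least $C^{-1}\sqrt n$ and has total length at most $C\sqrt n$.

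For the third bullet, I invoke Theorem~\ref{thm-busemann-conv}: in the SDP case $k^{-3/4}\mcl X(\lfloor kt\rfloor)|_{t\in[0,1]}$ converges in law to a spectrally positive $4/3$-stable L\'evy process $X|_{[0,1]}$. Since between jumps $X$ decreases continuously, its infimum over $[0,1]$ is strictly negative almost surely, so $\BB P[\inf_{t\in[0,1]}X(t)\leq -\varepsilon]\to 1$ as $\varepsilon\to 0^+$. Hence for suitable $A>0$, with probability at least $1-\delta$ there exists $k\in[1, C\sqrt n]$ such that $x_k$ lies on the lower boundary of $M_{0,n}$ and $\mcl X(k)\leq -A^{-1}n^{3/8}$. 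Combining~\eqref{eqn-busemann-lim} with the triangle inequality $\op{SDP}_{M_{0,\infty}}(x_0,w_j)\leq \op{SDP}_{M_{0,\infty}}(x_0,x_k)+\op{SDP}_{M_{0,\infty}}(x_k,w_j)$ yields $\op{SDP}_{M_{0,\infty}}(x_0,x_k)\geq -\mcl X(k)\geq A^{-1}n^{3/8}$, and this lower bound transfers to $\op{SDP}_{M_{0,n}}(x_0,x_k)$ because directed paths in the sub-map $M_{0,n}$ are also directed paths in $M_{0,\infty}$.

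For the first bullet, the idea is to follow the leftmost infinite SDP geodesic $\gamma_k$ from each lower-boundary vertex $x_k$ (Lemma~\ref{lem-geo-infty}); let $y_k$ denote the first vertex of $\gamma_k$ that leaves $M_{0,n}$, which necessarily lies on the upper boundary. Then $\op{SDP}_{M_{0,n}}(x_k,y_k)$ equals the length of the initial segment of $\gamma_k$ up to $y_k$. To bound this length uniformly in $k$ by $An^{3/8}$, I would extend $\mcl X$ to interior vertices of $M_{0,\infty}$ by the same coalescence procedure, so that this length equals the Busemann difference $\mcl X(k)-\mathcal X(y_k)$, and then invoke the spatial Markov property of the KMSW procedure at time $n$: conditional on the upper boundary length, the continuation of $\uibot$ past $M_{0,n}$ is a fresh copy of the UIBOT, and the restriction of $\mathcal X$ to the upper boundary of $M_{0,n}$ is distributed as its boundary Busemann function shifted by a random constant. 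Applying Theorem~\ref{thm-busemann-conv} separately to the lower-boundary Busemann function and to the continuation's boundary Busemann function, each over $\Theta(\sqrt n)$ indices, then bounds the required oscillation by $O(n^{3/8})$ with the desired probability. The second bullet is symmetric: run a backwards leftmost SDP geodesic from each upper-boundary vertex until it exits $M_{0,n}$ through the lower boundary, using the analogous structure for the reverse exploration.

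The main obstacle lies in the upper-bound bullets. The technical crux is (a)~extending the Busemann function to interior vertices of $M_{0,\infty}$ and verifying the identity $\op{SDP}(v,w)=\mathcal X(v)-\mathcal X(w)$ along leftmost SDP geodesics, and (b)~formalizing the spatial Markov property of the KMSW procedure at the deterministic time $n$ in a form sharp enough to decouple the upper-boundary Busemann values from the lower-boundary ones up to a random shift. The third bullet, by contrast, reduces cleanly to the Busemann scaling limit plus the triangle inequality, and the boundary-length and Skorokhod-tightness inputs are standard.
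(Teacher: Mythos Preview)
Your approach is essentially the same as the paper's, and your third-bullet argument matches Lemma~\ref{lem-cell-sdp-lower} almost verbatim (the paper phrases the triangle inequality as Lemma~\ref{lem-sdp-tri}). For the upper bounds, the paper likewise follows an infinite SDP geodesic from each lower-boundary vertex until it exits $M_{0,n}$ and controls its length via Busemann functions on both boundaries. The two technical ``obstacles'' you flag are simpler than you suggest: there is no need to extend $\mcl X$ to interior vertices---the paper instead fixes a reference vertex $w$ far enough out and uses the additivity $\op{SDP}(x,w)=\op{SDP}(x,z)+\op{SDP}(z,w)$ along the geodesic (your equation for the segment length then becomes $\mcl X(\ell)-\mcl X_n(k)$ plus a shift, where $\mcl X_n$ is the Busemann function of $M_{n,\infty}$); and the ``spatial Markov property'' you invoke is just the unconditional stationarity $M_{n,\infty}\eqD M_{0,\infty}$, no conditioning required. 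The one point you leave implicit is how to bound the shift $\op{SDP}_{M_{0,\infty}}(x_0,w)-\op{SDP}_{M_{n,\infty}}(x_0^n,w)$: the paper does this by observing that the right boundaries of $M_{0,\infty}$ and $M_{n,\infty}$ merge after $O(\sqrt n)$ steps, so the two SDP values agree at the merge point (equation~\eqref{eqn-cell-sdp-agree}), and the shift is then controlled by the same $O(n^{3/8})$ Busemann fluctuation bound you already apply. The second bullet follows from the first by the distributional symmetry $\mcl Z(n-\cdot)-\mcl Z(n)\eqD(\mcl R,\mcl L)$, which is a little cleaner than setting up a reverse geodesic explicitly.
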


\begin{remark} \label{remark-nontrivial}
Prior to this work, the only known non-trivial bounds for undirected graph distances in random planar maps in the $\gamma$-LQG universality class, $\gamma\not=\sqrt{8/3}$, were obtained using LQG methods (see~\cite[Section 5.2]{ghs-mating-survey} for an explanation of how this is done).  
By (crudely) upper-bounding undirected graph distance by directed graph distance, Theorem~\ref{thm-boltzmann-ldp} (in the SDP case) and Theorem~\ref{thm-cell-sdp} provide the first non-trivial bounds for undirected distances in such maps which can be proven using only discrete arguments. 
However, known upper bounds for the dimension of LQG~\cite[Corollary 2.5]{gp-lfpp-bounds} show that the dimension of $\sqrt{4/3}$-LQG satisfies $0.304 \leq 1/d_{\sqrt{4/3}} \leq 0.326$. Hence the directed graph distance exponent $3/8$ provides a worse bound for the undirected graph distance exponent than what one can get from LQG. 
\end{remark}

\subsection{Outline}
\label{sec-outline}

In Section~\ref{sec-related}, we discuss possible extensions to other models, relationships to directed versions of Liouville quantum gravity, and analogies to first- and last-passage percolation. This section exists only to provide additional context and is not needed for our proofs.

In Section~\ref{sec-prelim}, we review the KMSW bijection~\cite{kmsw-bipolar}, which will be the main tool in our proofs.
In particular, we explain that the UIBOT can be encoded via the KMSW bijection by a bi-infinite random walk $\mcl Z : \BB Z\to\BB Z^2$ whose increments are i.i.d.\ uniform samples from $\{(1,-1), (-1,0) , (0,1)\}$ (Proposition~\ref{prop-kmsw-uibot}). 
We also make the following observation (Lemma~\ref{lem-lr-ind}). Suppose that $M$ is a Boltzmann bipolar-oriented triangulation with specified left and right boundary lengths, and let $P$ be the leftmost $\XDP$ geodesic in $M$ from its source vertex to its sink vertex (Definition~\ref{def-geodesic}). Then the submaps of $M$ lying to the left and right of $P$ are conditionally independent given the length of $P$. This observation will ultimately be the source of the independent increments property of the Busemann function (Theorem~\ref{thm-busemann-property}). We also note that Lemma~\ref{lem-lr-ind} is not true for undirected graph distance geodesics, which is one reason why our proofs do not work for undirected distances.

In Section~\ref{sec-uiqbot}, we study the \textbf{uniform infinite quarter-plane bipolar-oriented triangulation (UIQBOT)} (Definition~\ref{def-uiqbot}), which is an infinite bipolar-oriented triangulation with infinite left and right boundary lengths, with a finite source and a sink at $\infty$. We show in Proposition~\ref{prop-cut-vertex} that a.s.\ there are infinitely many \textbf{cut vertices} $v$ of the UIQBOT with the property that removing $v$ disconnects the UIQBOT. The existence of such vertices will be very useful for the proof of Theorem~\ref{thm-boltzmann-ldp}, since long directed paths are forced to pass through them. The proof of Proposition~\ref{prop-cut-vertex} is based on estimates for the KMSW encoding walk. We also show in Proposition~\ref{prop-thin-ind} that the submaps of the UIQBOT lying to the left and right of its leftmost $\XDP$ geodesic from the source to $\infty$ are independent. This is done by applying the aforementioned conditional independence statement for finite bipolar-oriented triangulations (Lemma~\ref{lem-lr-ind}) and comparing such finite bipolar-oriented triangulations to the UIQBOT. However, some non-trivial work is involved to get rid in the limit of the conditioning on the length of the $\XDP$ geodesic (we use an argument based on the Hewitt-Savage zero-one law).

In Section~\ref{sec-busemann}, we prove Theorems~\ref{thm-busemann} and~\ref{thm-busemann-property} on the existence and basic properties of the Busemann function. To do this, we show that if $(\uibot,\lambda_0)$ is the UIBOT, then the submap $\wh M_{0,\infty}$ of $M_{0,\infty}$ induced by the vertices of $M_{0,\infty}$ which are reachable by a directed path started from the initial vertex of $\lambda_0$ has the law of the UIQBOT (Proposition~\ref{prop-future-map-law}). This allows us to apply the result on the existence of cut vertices in the UIQBOT (Proposition~\ref{prop-cut-vertex}) to show that infinite $\XDP$ geodesics in $M_{0,\infty}$ exist, and any two such geodesics have to have infinitely many points in common. This leads easily to the proof of Theorem~\ref{thm-busemann}. 

We deduce the stationarity and independent increments properties of Theorem~\ref{thm-busemann-property} using the independence property from Proposition~\ref{prop-thin-ind} and the stationarity of the law of the KMSW encoding walk $\mcl Z$. In particular, we show (Lemma~\ref{lem-geo-slice}) that $M_{0,\infty}$ can be decomposed into countably many independent ``slices'' bounded by leftmost $\XDP$ geodesics started from the boundary vertices $\{x_k\}_{k\in\BB Z}$. We remark that decompositions of planar maps by geodesic slices have been used extensively in probability and combinatorics, see, e.g.,~\cite{miermont-compact-survey,bgm-tight-map,ab-hypermap-slice} and the references therein. To our knowledge this is the first use of such decompositions in the setting of bipolar-oriented maps. 

To prove the symmetry property $\mcl X(0)-\mcl X(-1) \eqD \mcl X(-1)-\mcl X(0)$, we introduce (Definition~\ref{def-uihbot}) an infinite directed planar map with boundary, which we call the uniform infinite boundary-channeled half-plane bipolar-oriented triangulation (UIBHBOT). The UIBHBOT describes the local behavior of $M_{0,\infty}$ near a typical edge on its left boundary. We deduce the symmetry of $\mcl X$ from a non-obvious reflection symmetry of the UIBHBOT (Proposition~\ref{prop-uihbot-sym}). This reflection symmetry, in turn, follows from a symmetry of finite boundary-channeled bipolar-oriented triangulations (Lemma~\ref{lem-bdy-reverse}) and a local limit argument (Proposition~\ref{prop-disk-bs-conv}).

In Section~\ref{sec-recursive}, we deduce Theorem~\ref{thm-busemann-tail} from Theorem~\ref{thm-busemann-property}. This is the part of the argument that leads to the exact exponents in our theorem statements. If the reader takes Theorems~\ref{thm-busemann} and~\ref{thm-busemann-property} as a black box, Section~\ref{sec-recursive} can be read independently from the rest of the paper. The idea of the proof is as follows. Let $M_{1,\infty}$ be the map obtained from $M_{0,\infty}$ by applying one step of the KMSW procedure. We know that $M_{1,\infty} \eqD M_{0,\infty}$, so the Busemann function $\mcl X^1$ associated with $M_{1,\infty}$ has the same law as $\mcl X$. Furthermore, we can explicitly express $\mcl X$ in terms of $\mcl X^1$ (Lemmas~\ref{lem:LDP-relating-X1-X}~and~\ref{lem:SDP-relating-X1-X}). Using this and the properties in Theorem~\ref{thm-busemann-property}, we obtain in Propositions~\ref{prop:LDP-recursive-equation}~and~\ref{prop:SDP-recursive-equation} a recursive equation satisfied by the probabilities
\eqbn
f(x) := \BB P\left[ \mcl X(0) - \mcl X(-1) = x \right] \quad \text{and} \quad g(y):= \BB P\left[ \mcl X(1) - \mcl X(0) = y \right] .
\eqen
By multiplying by $e^{i t x+i s y}$ and summing over $x$ and $y$, we convert this equation into a functional equation for the characteristic functions $F$ and $G$ of $\mcl X(1)-\mcl X(0)$ and $\mcl X(0) -\mcl X(-1)$ (see~\eqref{eq:big-rel} and \eqref{eq:big-rel2}). By analyzing the leading-order behavior of the terms in this equation, we can determine the asymptotic behavior of these characteristic functions as $t,s\to 0$.\footnote{
In the LDP case, the equation uniquely determines the leading order asymptotics of the characteristic functions as $t\to 0$. In the SDP case, matters are slightly more complicated. There are two possible leading order asymptotics, depending on the value of an a priori unknown parameter which we call $\kappa$. The $\kappa=0$ case leads to the correct $x^{-4/3}$ tail in Theorem~\ref{thm-busemann-tail}; the $\kappa > 0$ case instead leads to an $x^{-2/3}$ tail. We rule out the $\kappa > 0$ case by showing that it violates the trivial $n^{1/2}$ upper bound for SDPs in a size-$n$ KMSW cell.} 
By a Tauberian theorem (Proposition~\ref{prop:Tauberian}), this leads to Theorem~\ref{thm-busemann-tail}. Theorems~\ref{thm-busemann-property} and~\ref{thm-busemann-tail} immediately imply Theorem~\ref{thm-busemann-conv} by a standard heavy-tailed functional central limit theorem (see, e.g.,~\cite{js-limit-thm}).  

In Section~\ref{sec-finite}, we deduce our results for finite-volume planar maps from Theorems~\ref{thm-busemann-conv} and~\ref{thm-busemann-tail}. The results for the KMSW cell $M_{0,n}$ (Theorems~\ref{thm-cell-ldp} and~\ref{thm-cell-sdp}) follow from the results for the Busemann function and relatively straightforward geometric arguments. To prove Theorem~\ref{thm-boltzmann-ldp}, we find a submap $\wh M(r)$ of $M_{0,\infty}$ that has the same law as the map $M(r)$ of Theorem~\ref{thm-boltzmann-ldp} (see Lemma~\ref{lem-uiqbot-boltzmann}). We then compare $\wh M(r)$ to the size-$n$ cell $M_{0,n}$, with $n$ chosen to be comparable to $r^2$. 

In Section~\ref{sec-open-problems}, we discuss some open problems. In Appendix~\ref{sect:proofTau}, we prove the Tauberian-type result for characteristic functions (Proposition~\ref{prop:Tauberian}) needed in Section~\ref{sec-recursive}.

\begin{remark} \label{remark-separable}
The recent paper~\cite{abbds-separable-lis} obtains the exponent for longest increasing subsequences in Brownian separable permutons using a recursive equation. However, the proof strategy in~\cite{abbds-separable-lis} has essentially nothing in common with the proofs in this paper. In the setting of~\cite{abbds-separable-lis}, the recursive equation arises from an encoding of the permuton in terms of a labeled tree, rather than a stationarity property as in this paper. Moreover, in~\cite{abbds-separable-lis}, to extract the exponent from the recursive equation one has to show a priori that the distribution of the longest increasing subsequence has a regularly varying tail. In contrast, in this paper, we derive the exact tail behavior (Theorem~\ref{thm-busemann-tail}) directly from the recursive equation, with no a priori assumptions. 
See Remark~\ref{remark-critical-metric} for some discussion about the results of~\cite{abbds-separable-lis} in the context of directed LQG metrics.
\end{remark}

\section{Related models and future outlook}
\label{sec-related}

In Sections~\ref{sec-other} and~\ref{sec-special-bipolar}, we discuss other models to which the techniques of this paper might be applicable. In Section~\ref{sec-lqg}, we discuss conjectures related to directed versions of the LQG metric, which, in the special case when $\gamma=\sqrt{4/3}$ and $\wt\theta=\pi/2$, should describe the scaling limit of the directed distances considered in this paper. In Section~\ref{sect:fpp-lpp}, we discuss some parallels between the results of this paper and the study of directed first- and last-passage percolation and the directed landscape. Nothing in this section is needed to understand the proofs of our main results. 

\subsection{Other discrete models}
\label{sec-other}

It is likely possible to extend our techniques to obtain similar results for a variety of other discrete models. One natural class of models to consider is \textbf{bipolar-oriented planar maps with more general face degree distributions} (not just triangulations), e.g., bipolar-oriented quadrangulations and bipolar-oriented maps with unconstrained face degrees. Such models can also be encoded via the KMSW bijection, and we expect that results similar to those in this paper (with the same exponents) hold in this more general setting. 

A key technical difficulty in extending our results to this more general setting is as follows. If $M$ is not a triangulation, the map $M_{0,\infty}$ considered in Section~\ref{sec-busemann-intro} can have vertices on its left boundary that are not incident to any non-missing edge of $M_{0,\infty}$. This makes it unclear how to properly define the Busemann function. 
One possible way around this is to modify the KMSW bijection and the definition of $M_{0,\infty}$. 
Alternatively, one could try to compare directed distances in general bipolar-oriented maps to directed distances in bipolar-oriented triangulations. A possible strategy is to use a strong coupling of the KMSW encoding walks, as was used for undirected graph distances in~\cite{ghs-map-dist} (see also Problem~\ref{prob-universal}). 

A \textbf{Baxter permutation} of size $n\in\BB N$ is a permutation $\sigma \in \mcl S_n$ for which there do not exist indices $1 \leq i < j < k \leq n$ such that $\sigma(j+1) < \sigma(i) < \sigma(k) < \sigma(j)$ or $\sigma(j ) < \sigma(k) < \sigma(i) < \sigma(j+1)$. Bipolar-oriented planar maps with general face degree distribution, with $n$ total edges, are in bijection with Baxter permutations of size $n$~\cite{bbf-baxter}. Under this bijection, the longest increasing subsequence of the permutation corresponds to the longest directed path in the bipolar-oriented map. This leads to the following conjecture, which seems to be within reach given the results of this paper.

\begin{conj} \label{conj-baxter}
Let $\sigma_n$ be a uniform Baxter permutation of size $n$. Then the longest increasing subsequence in $\sigma_n$ grows like $n^{3/4}$ as $n\to\infty$.
\end{conj}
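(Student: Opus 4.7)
The plan is to use the bijection of~\cite{bbf-baxter} between Baxter permutations of size $n$ and bipolar-oriented planar maps with $n$ edges (allowing arbitrary face degrees). Under this bijection, the length of the longest increasing subsequence of $\sigma_n$ equals $\op{LDP}_{M(\sigma_n)}(\op{Source},\op{Sink})$, where $M(\sigma_n)$ is the map associated to $\sigma_n$. A uniform Baxter permutation thus corresponds to a uniformly sampled bipolar-oriented map with $n$ edges, and the conjecture reduces to showing that the source-to-sink longest directed path in this map has length of order $n^{3/4}$ with high probability.

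First I would extend the infinite-volume framework of this paper from triangulations to bipolar-oriented maps with arbitrary face degrees. The KMSW bijection of~\cite{kmsw-bipolar} applies in this generality, encoding such maps via walks in $\BB Z^2$ with an explicit (though more complicated) step distribution. This allows one to define an analog $\wt M_{-\infty,\infty}$ of the UIBOT as the local limit of uniform finite bipolar-oriented maps, encoded by a bi-infinite random walk $\wt{\mcl Z}$ with i.i.d.\ increments having the appropriate step distribution. One then follows the strategy of Sections~\ref{sec-uiqbot}--\ref{sec-busemann} to construct a Busemann function $\wt{\mcl X}$ for LDPs on $\wt M_{-\infty,\infty}$ and verify analogs of Theorem~\ref{thm-busemann-property}: independent and stationary increments, a deterministic upper bound on $\wt{\mcl X}(k)-\wt{\mcl X}(k-1)$ for $k\geq 1$, and symmetry for $k \leq 0$. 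Applying the recursive equation technique of Section~\ref{sec-recursive}, I would extract the $2/3$ tail exponent for $\wt{\mcl X}(1)-\wt{\mcl X}(0)$, giving a $2/3$-stable scaling limit and hence $n^{3/4}$ estimates on LDPs in the infinite-volume model.

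To transfer to the conditioned uniform ensemble, I would follow the pattern of Section~\ref{sec-finite}: compare $M(\sigma_n)$ locally to $\wt M_{-\infty,\infty}$ via the associated KMSW walks. The uniform map corresponds to a bridge-like walk of length $n$ constrained to stay in a cone and to terminate at a prescribed endpoint, and on scales smaller than $n$ this walk is absolutely continuous to the unconstrained walk via standard random walk in cones estimates (in the spirit of~\cite{dw-cones}). This should produce matching bounds $\op{LDP}_{M(\sigma_n)}(\op{Source},\op{Sink}) \asymp n^{3/4}$ with high probability, by arguments parallel to those used to prove Theorem~\ref{thm-cell-ldp} and Theorem~\ref{thm-boltzmann-ldp} in the current paper.

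The main obstacle is the technical issue flagged in Section~\ref{sec-other}: when face degrees exceed three, the analog of $M_{0,\infty}$ can have boundary vertices incident to no non-missing edge, obstructing the direct definition of the Busemann function at those vertices. Possible workarounds include grouping each such isolated boundary vertex with its nearest non-isolated boundary neighbor, modifying the KMSW exploration so that no isolated boundary vertices are produced, or carrying out the entire argument only for triangulations and transferring LDP estimates to the general case via a strong coupling of KMSW encoding walks analogous to the approach of~\cite{ghs-map-dist}. A secondary difficulty is adapting the recursive equation analysis of Section~\ref{sec-recursive} to the more complex KMSW step distribution for general face degrees: there are more possible step types and more cases in the recursion, but the overall structure of the argument, and in particular the extraction of the $2/3$ exponent from the leading-order asymptotics of the characteristic functions, should persist unchanged.
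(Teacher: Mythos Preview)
The statement you are addressing is a \emph{conjecture} in the paper, not a theorem; the paper explicitly does not prove it and instead presents it as a problem ``within reach given the results of this paper.'' There is therefore no proof in the paper to compare against.

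Your proposed strategy is essentially the one the paper itself sketches in Section~\ref{sec-other}: extend the KMSW/Busemann framework to general face degrees, and alternatively transfer from triangulations via a strong coupling of encoding walks as in~\cite{ghs-map-dist}. You have also correctly identified the principal obstruction the paper flags, namely that for non-triangulations the left boundary of $M_{0,\infty}$ can contain vertices with no non-missing incident edge, so the Busemann function is not a priori defined there.

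One point to be cautious about: you list the symmetry property~\ref{item-busemann-sym} among the items you would ``verify,'' but the paper's proof of that property (Section~\ref{sec-busemann-sym}) is specific to triangulations, going through the boundary-channeled maps of Definition~\ref{def-reverse-map} and Lemma~\ref{lem-bdy-reverse}. Whether an analogous reflection symmetry holds for the uniform general-face-degree model is not established in the paper, and Section~\ref{sec-special-bipolar} notes that for other (biased) step distributions symmetry genuinely fails. So this step is not routine and would require its own argument. Likewise, the recursive-equation analysis of Section~\ref{sec-recursive} relies on the fact that one backward KMSW step has only three cases; with unbounded face degrees the recursion becomes qualitatively more complicated, and extracting the $2/3$ exponent is not a mere bookkeeping exercise. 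The strong-coupling route you mention may therefore be the more tractable of your two options.
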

 
It may also be possible to extend our results to \textbf{biased bipolar-oriented planar maps} whose face degree distributions are biased based on the number of edges on the left and right boundaries of the faces. As explained in~\cite[Remark 5]{kmsw-bipolar}, such maps can be in the universality class of $\gamma$-LQG for any $\gamma \in (0,\sqrt 2)$. In particular, we expect that such maps have different exponents for directed distances compared to uniform bipolar-oriented triangulations. See Section~\ref{sec-special-bipolar} just below for a special class of biased bipolar-oriented planar maps that seem especially amenable to our techniques. 

More generally, our techniques can also potentially be applied to directed distances in other random planar map models. 
Indeed, the independence property for the maps to the left and right of an $\XDP$ geodesic seems to require that the orientation on our planar map is, in some sense, compatible with how the map is sampled (see, e.g., Lemma~\ref{lem-lr-ind}). Once we have this, the only other crucial tool in our proofs is the mating-of-trees bijection. 
For example, our techniques can likely be generalized to random planar maps decorated by \textbf{Schnyder woods}~\cite{lsw-schnyder-wood}, or more generally, grand-Schnyder woods~\cite{bernardi2025grand}. 
Another class of problems to which our techniques might apply is determining the minimal length of a path hit in order by some natural path on a random planar map. Some interesting examples include the contour exploration of a \textbf{uniform spanning tree}~\cite{mullin-maps,bernardi-maps,shef-burger} or a \textbf{percolation interface} (here, the peeling process would take the place of the mating-of-trees bijection~\cite{angel-peeling}). One could also attempt to extend our methods to determine the longest increasing subsequence exponent for other models of pattern-avoiding permutations which are related to random planar maps and LQG, such as \textbf{semi-Baxter and strong-Baxter permutations}~\cite{borga-strong-baxter}. 

The authors are currently working on two papers in the above direction. One such paper (which is also joint with Yuanzheng Wang) studies shortest directed paths in spanning-tree decorated maps. Another paper (which is also joint with Leonardo Bonanno) studies longest and shortest directed paths in triangulations decorated by Schnyder woods.

\subsection{Bipolar-oriented maps where faces have at most two right boundary edges}
\label{sec-special-bipolar}

Many of our proofs work for a more general class of models of bipolar-oriented maps than just triangulations. Indeed, consider a model of bipolar-oriented maps with a face degree distribution such that each of the faces is allowed to have at most two right boundary edges, and the number of left boundary edges has finite moments of all positive orders. 
The infinite-volume version of such a map (analogous to the UIBOT) is the rooted map $(\uibot,\lambda_0)$ which can be constructed by applying the KMSW procedure (Definition~\ref{def-kmsw}) to a bi-infinite walk $\mcl Z = (\mcl L , \mcl R) : \BB Z\to\BB Z^2$ with i.i.d.\ increments, whose increment distribution $\nu(x,y) = \BB P[\mcl Z(j) - \mcl Z(j-1)=(x,y)]$ satisfies the following conditions.
\begin{itemize}
\item The distribution $\nu$ is supported on $\{(1,-1)\} \cup \{(0, j)\}_{j\geq 1} \cup \{(-1, j)\}_{j\geq 0}$. 
\item The expectation of a sample from $\nu$ is $(0,0)$.
\item The second coordinate of a sample from $\nu$ has finite moments of all positive orders. 
\end{itemize}
Equivalently, $(\uibot,\lambda_0)$ is the local limit (around a uniform edge) of finite bipolar-oriented maps sampled from a probability measure where each face with $i\in \{1,2\}$ right boundary edges and $j \geq 1$ left boundary edges is assigned weight $\nu(i-1,j-1)$. 

If we define $\uibot$ as above and $M_{0,\infty}$ analogously to Definition~\ref{defn-future-map}, then every face of $M_{0,\infty}$ has at most one boundary edge which is not in $M_{0,\infty}$. So, every vertex on the boundary of $M_{0,\infty}$ is the initial vertex of an infinite directed path in $M_{0,\infty}$. Moreover, $\mcl L$ has the law of a lazy random walk. Of particular interest is the case where 
\eqb \label{eqn-schnyder}
\nu(1,-1) = \frac12 ,\quad \nu(-1,j) = 2^{-j-2} , \quad \forall j \geq 0, 
\eqe
which is closely related to uniformly sampled triangulations decorated by a Schnyder wood (see, for instance, \cite[Proposition 7.1]{ffno-baxter-family} or~\cite[Section 3.3.4]{ghs-map-dist}). Such triangulations are in the universality class of $\gamma$-LQG for $\gamma=1$~\cite{lsw-schnyder-wood}. In general, by varying $\nu$, one can get models of bipolar-oriented planar maps of the above type which belong to the $\gamma$-LQG universality class for any $\gamma \in (0,\sqrt{4/3}]$. The value of $\gamma$ is determined by the correlation $\rho$ of the two coordinates of a sample from $\nu$ via the formula $\rho = -\cos(\pi\gamma^2/4)$, see~\cite[Theorem 1.9]{wedges}. 

Our proofs of Theorem~\ref{thm-busemann} and Properties~\ref{item-busemann-ind} through~\ref{item-busemann-pos} of Theorem~\ref{thm-busemann-property} carry over essentially\footnote{The description of the KMSW encoding walk $\wh{\mcl Z}'$ for $\wh M_{0,\infty}'$ in Lemma~\ref{lem-quadrant-walk} is not as nice for a general choice of $\nu$, but we still get the same description for the law of $\wh{\mcl Z}$ and the fact that $\wh{\mcl Z}$ and $\wh{\mcl Z}'$ are independent, which is all that is needed for our proofs.} verbatim to the above setting. The moment hypothesis is needed to apply the results of~\cite{dw-limit} (see Lemma~\ref{lem-cond-walk-bm}). Moreover, if one assumes appropriate analogs of Theorems~\ref{thm-busemann-property} and~\ref{thm-busemann-tail}, the proofs of our results in the finite-volume setting (Theorems~\ref{thm-boltzmann-ldp}--\ref{thm-cell-sdp}) also carry over without change. 

However, Property~\ref{item-busemann-sym} of Theorem~\ref{thm-busemann} is not true in the above more general setting. This makes it more difficult to derive tail asymptotics as in Theorem~\ref{thm-busemann-tail} from the recursive equation which comes from one step of the KMSW procedure. 
In general, we expect that the scaling limit of the Busemann function $\mcl X|_{(-\infty,0]}$ could be an asymmetric stable Lévy process. 

We plan to investigate bipolar-oriented planar maps of the above type further in future work.

\subsection{Directed LQG metrics}
\label{sec-lqg}

Directed distances in bipolar-oriented random planar maps are believed to be discrete analogs of directed versions of the LQG metric, in the same way that undirected distances in random planar maps are discrete analogs of the undirected LQG metric (as constructed in~\cite{dddf-lfpp,gm-uniqueness}). In this subsection, we explain what we mean by this, assuming that the reader has some basic familiarity with the theory of LQG and SLE. A reader who is not familiar with these topics can skip this subsection.
See Figure~\ref{fig-directed-lqg-graph} for a visual summary of this section.

\begin{figure}[ht!]
\begin{center}
\includegraphics[width=0.49\textwidth]{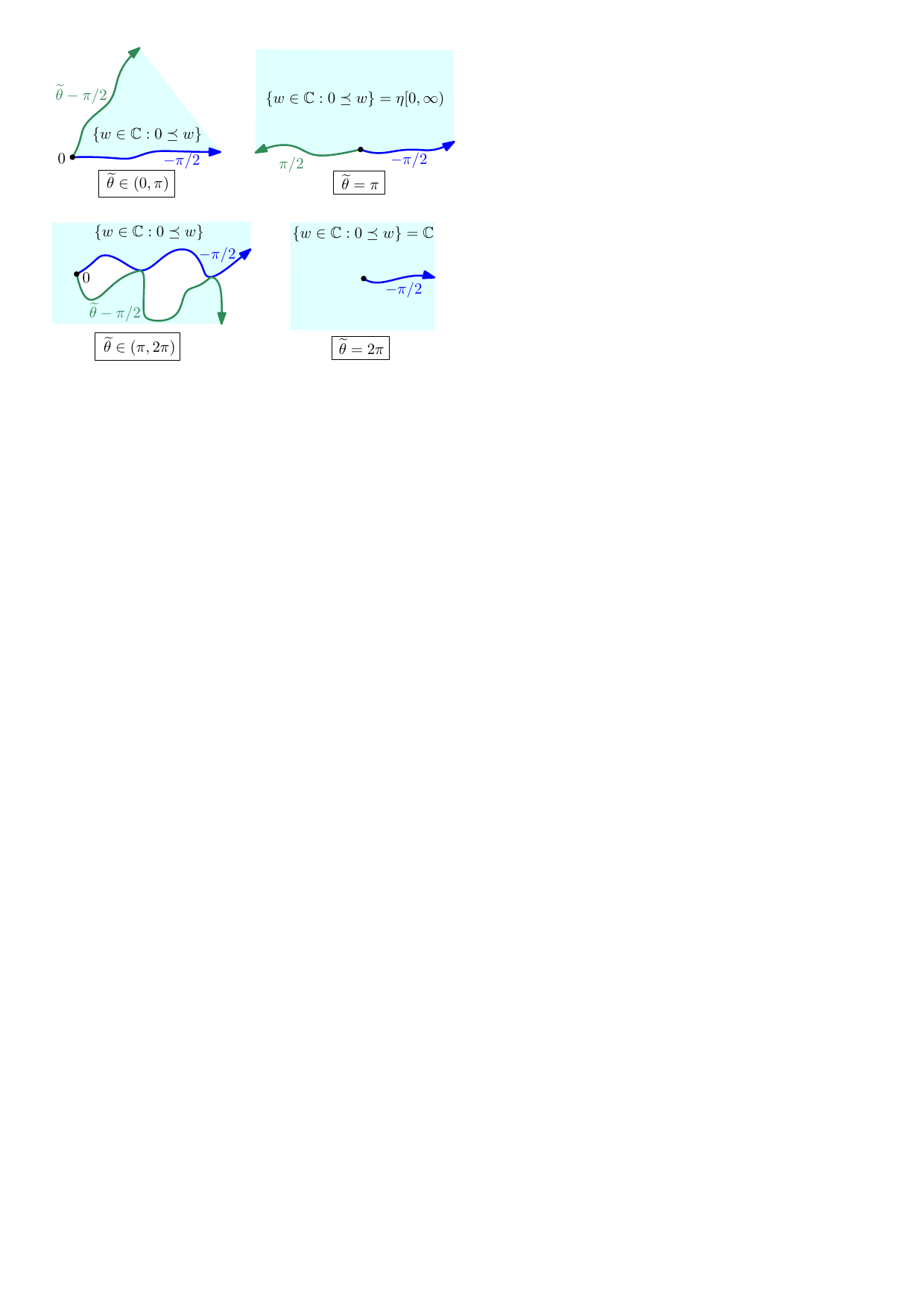}  
\includegraphics[width=0.49\textwidth]{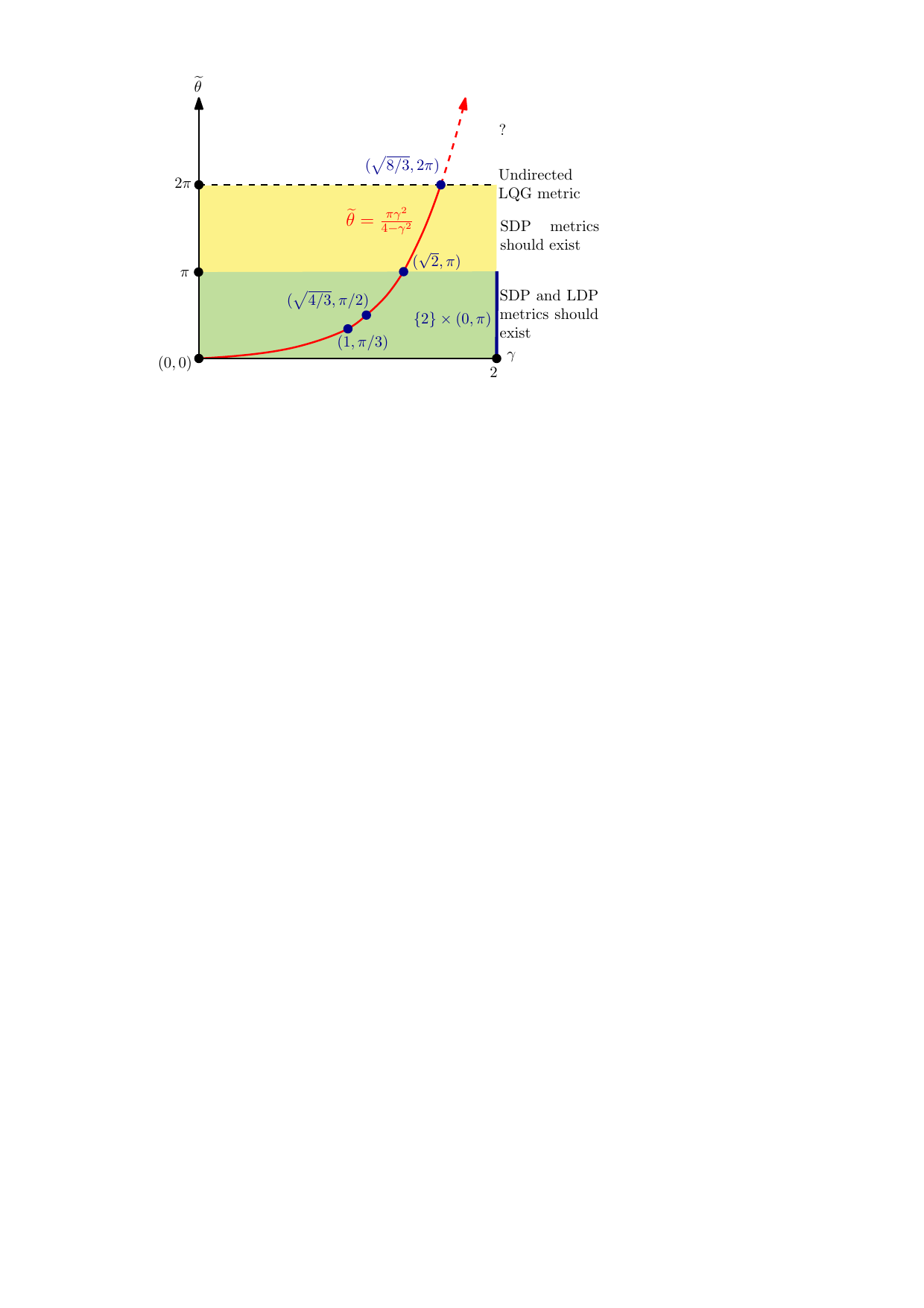}  
\caption{\label{fig-directed-lqg-graph} 
\textbf{Left:} The blue regions are the sets of points reachable by directed paths started from the origin for the hypothetical directed LQG metric in the cases when $\wt\theta \in (0,\pi)$, $\wt\theta =\pi$, $\wt\theta \in (\pi,2\pi)$, and $\wt\theta =2\pi$. The colored curves are flow lines (in the sense of Imaginary Geometry~\cite{ig4}) of a whole plane-GFF, with angles $-\pi/2$ and $\wt\theta-\pi/2$. In particular, they are each SLE$_{\gamma^2}(\gamma^2-2)$ curves.
\textbf{Right:} Graph of the possible parameter values for the directed LQG metrics. The parameter value $\wt\theta = 2\pi$ (dashed horizontal line) corresponds to the undirected LQG metric. When $\wt\theta \in (0,\pi)$ (green region), both shortest-path and longest-path directed LQG metrics should exist. When $\wt\theta \in [\pi,2\pi]$ (yellow region), only the shortest-path directed LQG metric should exist. We do not know whether there is any notion of directed LQG metrics when $\wt\theta > 2\pi$.
The red curve corresponds to parameter values where we expect additional solvability. The point $(\sqrt{4/3},\pi/2)$ corresponds to the scaling limit of directed distances in bipolar-oriented planar maps (i.e., the setting considered in this paper). The point $(\sqrt{8/3},2\pi)$ corresponds to the scaling limit of undirected distances on uniform planar maps (i.e., Brownian surfaces). The points $(\sqrt 2 , \pi)$ and $(1,\pi/3)$ correspond to other models which we expect can be analyzed using the techniques of this paper (spanning tree decorated maps and Schnyder-wood decorated maps, respectively); see Section~\ref{sec-other}. The segment $\{2\}\times (0,\pi)$ corresponds to the $\gamma\to 2$ case, which is closely related to Brownian separable permutons as studied in~\cite{abbds-separable-lis} (see Remark~\ref{remark-critical-metric}).}  
\end{center}
\end{figure}

Let $\gamma \in (0,2)$, $\kappa = 16/\gamma^2 > 4$, and $\wt\theta \in [0,2\pi]$. Our parameter $\wt\theta$ corresponds to $\theta+\pi/2$ in the setting of~\cite{borga-skew-permuton}.  

Let $\Phi$ be the random generalized function on $\BB C$ corresponding to the $\gamma$-quantum cone~\cite[Definition 4.10]{wedges}. Then $\Phi$ is a minor modification of the whole-plane Gaussian free field (GFF). When $\gamma = \sqrt{4/3}$, the LQG surface described (heuristically) by the Riemannian metric tensor $e^{\gamma \Phi} \, (dx^2 + dy^2)$ on $\BB C$ is the continuum analog of the UIBOT (see, e.g.,~\cite[Section 4]{kmsw-bipolar}). 

Independently of $\Phi$, let $\eta$ and $\wt\eta$ be a pair of whole-plane space-filling SLE$_\kappa$ curves from $\infty$ to $\infty$ in $\BB C$. We assume that $\eta$ and $\wt\eta$ are coupled together so that they are space-filling counterflow lines of a common whole-plane GFF $\Psi$ in the sense of Imaginary Geometry~\cite{ig4}, with angles $0$ and $\wt\theta - \pi$, respectively. In the case when $\gamma = \sqrt{4/3}$ ($\kappa = 12$), the curve $\eta$ is the continuum analog of the gold Peano path on the right-hand side of Figure~\ref{fig-bipolar-boundaries} (see the discussion just above Definition~\ref{defn:KMSW-inv} for a precise definition). 
In the case when $\gamma=\sqrt{4/3}$ and $\wt\theta=\pi/2$, the curve $\wt\eta$ is the continuum analog of the ``dual'' Peano path of the same UIBOT, defined by considering the upper-right and lower-left trees instead of the upper-left and lower-right trees~\cite{ghs-bipolar}. Directed paths in the UIBOT are visited in order by both the primal and dual Peano paths. 

When $\wt\theta \in (0,\pi]$, we say that points $z,w \in \BB C$ are \textbf{ordered}, denoted $z\preceq w$, if they are hit in order by both $\eta$ and $\wt \eta$, i.e., there exists times $t_1 < t_2$ and $\wt t_1 < \wt t_2$ such that $\eta(t_1) =  \wt\eta(\wt t_1)  = z$ and $\eta(t_2) = \wt\eta(\wt t_2) = w$. When $\wt\theta \in (\pi,2\pi]$, we instead say that $z\preceq w$ if $z$ and $w$ are hit in order by either $\eta_1$ or $\eta_2$, i.e., either there exists times $t_1 < t_2$ such that $\eta(t_1) =  z$ and $\eta(t_2)   = w$ or the same is true with $\wt\eta$ in place of $\eta$. This makes it so that the set of points $w\in \BB C$ with $0\preceq w$ is bounded by two flow lines of $\Psi$ of angles $-\pi/2$ and $\wt\theta - \pi/2$, respectively. 
We note that $\preceq$ is not a partial order on $\BB C$ since there can exist pairs of distinct points $z,w\in \BB C$ such that $z\preceq w$ and $w\preceq z$.\footnote{When $\wt\theta\in (0,\pi]$, such pairs of points have zero Lebesgue measure since Lebesgue-a.e.\ point of $\BB C$ is hit only once by each of $\eta$ and $\wt\eta$. But, the set of such pairs does not have zero Lebesgue measure when $\theta \in (\pi,2\pi]$. } 
See the left-hand side of Figure~\ref{fig-directed-lqg-graph} for an illustration. 

We say that a path $P : [a,b] \to\BB C$ is \textbf{directed} if $P(s) \preceq P(t)$ for each $s,t\in [a,b]$ with $s < t$.

In the case when $\gamma=\sqrt{4/3}$  and $\wt\theta = \pi/2$, the relation $\preceq$ is the continuum analog of the partial ordering of the vertices of a bipolar-oriented planar map induced by the edge orientations. For general $\gamma \in (0,2)$ and $\wt\theta \in (0,\pi)$,  the relation $\preceq$ is closely related to the so-called \textbf{skew Brownian permutons}~\cite{borga-skew-permuton}. 
When $\theta=\pi$, it follows from the definition of space-filling SLE$_\kappa$~\cite[Theorem 1.16]{ig4} that $\eta=\wt\eta$, so a path is directed if and only if it is hit in order by $\eta$. 
When $\theta = 2\pi$, the curves $\eta$ and $\wt\eta$ are time reversals of each other, so we get the trivial ordering where $z\preceq w$ for every $z,w\in\BB C$. That is, the case when $\wt\theta=2\pi$ corresponds to undirected distances.


\begin{conj} \label{conj-directed}
For each $\gamma \in (0,2)$ and $\wt\theta \in (0,\pi)$, there exist two distinct random functions $\frk L_\Phi^{\op{LDP}}$ and $\frk L_\Phi^{\op{SDP}}$, called the \textbf{longest-path and shortest-path directed LQG metrics}, where for each $z,w\in \BB C$, the distance $\frk L_\Phi^{\op{LDP}}(z,w)$ (resp.\ $\frk L_\Phi^{\op{SDP}}(z,w)$) is a limit of regularized versions of the longest (resp.\ shortest) LQG length of a directed path from $z$ to $w$. 
In the case when $\gamma=\sqrt{4/3}$ and $\wt \theta=\pi/2$, these directed LQG metrics describe the scaling limits of longest and shortest directed paths, respectively, in uniform bipolar-oriented triangulations.
The shortest-path directed LQG metric $\frk L_\Phi^{\op{SDP}}$ also exists for $\gamma \in (0,2)$ and $\wt\theta \in [\pi , 2\pi]$ (and it coincides with the undirected $\gamma$-LQG metric when $\wt\theta=2\pi$).
\end{conj}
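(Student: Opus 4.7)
The plan is to follow the general strategy that succeeded for the undirected LQG metric in \cite{dddf-lfpp,gm-uniqueness}, namely directed Liouville first-passage percolation (directed LFPP), but with substantial modifications to handle the SLE-based partial pre-order $\preceq$. First I would fix a mollification $\Phi_\eps = \Phi * \rho_\eps$ of the $\gamma$-quantum cone field and, \emph{conditionally on the independent pair $(\eta,\wt\eta)$}, define the directed $\eps$-LFPP distances
\[
D_\eps^{\op{LDP}}(z,w) \;=\; \sup_P \int_0^1 e^{\xi\Phi_\eps(P(t))}\,|P'(t)|\,dt,
\qquad
D_\eps^{\op{SDP}}(z,w) \;=\; \inf_P \int_0^1 e^{\xi\Phi_\eps(P(t))}\,|P'(t)|\,dt,
\]
where in each case the sup/inf is over piecewise smooth paths $P$ from $z$ to $w$ that are directed with respect to $\preceq$, and $\xi=\xi(\gamma,\wt\theta)$ is a parameter to be determined. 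The correct $\xi$ and normalizing constant $a_\eps$ should be forced by the KPZ-style scaling exponent: in the LDP case with $\gamma=\sqrt{4/3},\ \wt\theta=\pi/2$, the exponent $3/4$ proved in Theorems~\ref{thm-busemann-tail} and~\ref{thm-cell-ldp} of this paper predicts the value; for SDP, the value is predicted by $3/8$.

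Next I would prove tightness of $a_\eps^{-1}D_\eps^{\op{LDP}}$ and $a_\eps^{-1}D_\eps^{\op{SDP}}$ with respect to a suitable topology on lower/upper semicontinuous functions on $\BB C\times\BB C$, by adapting the multiplicative-chaos/percolation tightness argument of~\cite{dddf-lfpp}. The key new input is directed RSW-type estimates: one must show that, uniformly in $\eps$, with positive probability there exist directed paths of controlled LFPP length crossing \emph{directed} quads (regions bounded by two flow-line segments, as in the left panel of Figure~\ref{fig-directed-lqg-graph}). Here I would exploit the conformal invariance of $(\Phi,\Psi)$ and the explicit SLE$_{\gamma^2}(\gamma^2-2)$ description of the boundaries of directed cells, so that the geometry of directed quads is tractable.

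Then I would prove uniqueness of subsequential limits by writing down a directed version of the axiomatic characterization of~\cite{gm-uniqueness}: any subsequential limit $\frk L_\Phi^{\XDP}$ must be (i) measurable with respect to $(\Phi,\eta,\wt\eta)$, (ii) satisfy the LQG coordinate change under locally adding $\frac{2}{\gamma}\log|f'|$ for conformal maps $f$ that respect the ordering $\preceq$, (iii) behave correctly under restriction to directed subregions, and (iv) interact with the mollified field as a weak limit. The uniqueness argument should then proceed by a suitable analog of the Gwynne--Miller bi-Lipschitz-to-isometry bootstrap, where the role of the Weyl scaling is replaced by scaling along directed cells. Finally, for the identification in the specific case $\gamma=\sqrt{4/3},\ \wt\theta=\pi/2$, I would combine Theorems~\ref{thm-cell-ldp}--\ref{thm-cell-sdp} with the mating-of-trees embedding of the UIBOT into the peanosphere to deduce that the limit of rescaled $M_{0,n}$ graph distances agrees with $\frk L_\Phi^{\XDP}$ restricted to one space-filling SLE cell.

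The hard part will be uniqueness, and within that, the axiom stating how $\frk L_\Phi^{\XDP}$ transforms under conformal maps, because $\preceq$ depends on the \emph{independent} field $\Psi$ driving $\eta,\wt\eta$, so one cannot locally modify $\Phi$ without also having to describe how the directed structure is preserved. A second serious obstacle is the regime $\wt\theta\in(\pi,2\pi]$ where $\preceq$ is not antisymmetric on a set of positive Lebesgue measure: in this case SDP distance can be zero between genuinely distinct points, so one has to work with a pseudo-metric and quotient appropriately, and the tightness proof must handle the resulting degeneracies. The phase transition at $\wt\theta=\pi$ (where $\eta=\wt\eta$, so LDP becomes trivially infinite for most pairs and only SDP survives) would likely have to be analyzed separately, matching the boundary of the green and yellow regions in Figure~\ref{fig-directed-lqg-graph}.
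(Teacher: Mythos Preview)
The statement you are attempting to prove is labeled as a \emph{Conjecture} in the paper, not a theorem. The paper explicitly states just after it that ``the directed LQG metrics $\frk L_\Phi^{\op{LDP}}$ and $\frk L_\Phi^{\op{SDP}}$ have not been constructed,'' and the paper makes no attempt to prove Conjecture~\ref{conj-directed}. There is therefore no paper proof to compare against; your proposal is not a proof but an outline of a possible research program toward an open problem.

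As a research outline, your plan follows the natural template of \cite{dddf-lfpp,gm-uniqueness}, and you correctly identify the two hardest obstacles: the RSW-type crossing estimates for \emph{directed} quads, and the uniqueness/axiomatic step when the ordering $\preceq$ is governed by the independent field $\Psi$. However, neither of these has any analog in the existing literature, and you have not indicated any concrete mechanism for either. In particular: (i) the undirected RSW arguments rely on unconstrained path topology, whereas here paths must respect the random SLE-based ordering, so the usual gluing and symmetry arguments do not obviously apply; (ii) the Weyl-scaling axiom in \cite{gm-uniqueness} is what makes the bi-Lipschitz-to-isometry bootstrap work, and you have not proposed a substitute that is compatible with the directed structure. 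Your proposal should be read as a list of subproblems, not as a proof sketch with identifiable gaps to fill.
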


The reason why there is not an interesting longest-path directed LQG metric for $\wt\theta\in [\pi,2\pi]$ is that in this case, the space-filling SLE$_\kappa$ curves $\eta$ and $\wt\eta$ are themselves directed paths.
 
Although the directed LQG metrics $\frk L_\Phi^{\op{LDP}}$ and $\frk L_\Phi^{\op{SDP}}$ have not been constructed, our results give information about how these directed metrics should behave when $\gamma = \sqrt{4/3}$ and $\wt\theta = \pi/2$. 
By Theorems~\ref{thm-cell-ldp} and~\ref{thm-cell-sdp}, for $\gamma=\sqrt{4/3}$ and $\wt\theta = \pi/2$, the scaling exponents for $\frk L_\Phi^{\op{LDP}}$ and $\frk L_\Phi^{\op{SDP}}$ should be $3/4$ and $3/8$, respectively, in the sense that scaling LQG areas by $C$ corresponds to scaling $\frk L_\Phi^{\op{LDP}}$-distances by $C^{3/4}$ and $\frk L_\Phi^{\op{SDP}}$-distances by $C^{3/8}$. 
These scaling exponents are the directed analogs of $1/d_{\sqrt{4/3}}$, where $d_{\sqrt{4/3}}$ is the Hausdorff dimension of the undirected $\sqrt{4/3}$-LQG metric space. 
We do not have predictions for the scaling exponents for directed LQG distances for other values of $\gamma$ and $\theta$, but we obtain some bounds in the forthcoming work~\cite{bdg-skew-lis} (joint with Sayan Das). 
 
Assume that the whole-plane space-filling SLE$_\kappa$ curve $\eta$ is parametrized so that the $\gamma$-LQG area of $\eta[a,b]$ is $b-a$ for each $-\infty  <a < b < \infty$. 
Theorem~\ref{thm-busemann-conv} suggests that for $\gamma=\sqrt{4/3}$ and $\wt \theta=\pi/2$, for each $t \in \BB R$, the Busemann function for $\frk L_\Phi^{\op{LDP}}$ (resp.\ $\frk L_\Phi^{\op{SDP}}$) along the boundary of $\eta(-\infty,t]$ should be a $2/3$- (resp.\ $4/3$-) stable Lévy process. It is an interesting open question to determine the joint law of such processes for different values of $t$; see Problem~\ref{prob-busemann-joint}. 

Directed LQG metric should have additional ``solvability'' properties in the special case when
\eqb  \label{eqn-special-theta}
\wt\theta = \wt\theta_*(\gamma) := \frac{\pi \gamma^2}{4-\gamma^2} .
\eqe  
Note that by~\cite[Theorems 1.2 and 1.5]{wedges}, \eqref{eqn-special-theta} is equivalent to the condition that the LQG surface obtained by restricting $\Phi$ to $\{w\in\BB C \,:\, 0\preceq w\}$ is an LQG wedge of weight $\gamma^2/2$ (log singularity $2/\gamma+\gamma/2$). In the forthcoming work~\cite{bdg-skew-lis}, we will prove several special symmetry and solvability properties for the ordering $\preceq$ when $\wt\theta=\wt\theta_*(\gamma)$, working exclusively in the continuum. 
 
At least in the case when $\gamma \in (0,\sqrt 2]$, i.e.\ $\wt\theta_*(\gamma) \in (0,\pi]$, we expect that when $\wt\theta = \wt\theta_*(\gamma)$, the Busemann functions for $\frk L_\Phi^{\op{LDP}}$ and $\frk L_\Phi^{\op{SDP}}$ along the interface $\bdy \eta(-\infty,t]$ should be a stable L\'evy process (with index and skewness parameters depending on $\gamma$ and the choice of LDP or SDP).  See Remark~\ref{remark-special-theta} just below for a heuristic justification of this. We do not have any reason to believe that the laws of the Busemann functions for the directed LQG metrics have a nice description when $\wt\theta\not=\wt\theta_*(\gamma)$.
 

We have $\wt\theta_*(\sqrt{4/3}) = \pi/2$, which corresponds to the conjectural scaling limit of directed distances in bipolar-oriented triangulations. Moreover, $\wt\theta_*(\sqrt{8/3}) = 2\pi$, which corresponds to the fact that undirected distances in uniform planar maps are solvable. Additional special cases include $\wt\theta_*(\sqrt 2) = \pi$ (corresponding to paths hit in order by the contour exploration on a spanning-tree decorated map) and $\wt\theta_*(1) = \pi/3$ (corresponding to directed distances in planar maps decorated by Schnyder woods). We plan to investigate these latter two cases in future work, see Section~\ref{sec-other}. When $\gamma > \sqrt{8/3}$, we have $\wt\theta_*(\gamma)  > 2\pi$. We do not know whether it is possible to make sense of a directed variant of the LQG metric with $\wt\theta > 2\pi$. 

\begin{remark} \label{remark-special-theta}
Here is a heuristic justification of why $\wt\theta=\wt\theta_*(\gamma)$ is special, at least in the case when $\gamma \in (0,\sqrt{4/3}]$ (a similar but slightly more involved justification also works for $\gamma \in (\sqrt{4/3},\sqrt 2]$). The argument uses results from Sections~\ref{sec-uiqbot} and~\ref{sec-busemann}, so the reader may want to return to this remark after reading those sections.
Consider an infinite bipolar-oriented map $\uibot^\nu$ with a biased face degree distribution, encoded by a bi-infinite walk with increment distribution $\nu$ as in Section~\ref{sec-special-bipolar}.  
By varying $\nu$, we can get walks whose coordinates have any correlation in $(-1,-1/2]$, and hence maps in the universality class of $\gamma$-LQG for any $\gamma\in (0,\sqrt{4/3}]$. 

As explained in Section~\ref{sec-special-bipolar}, the proofs of the independent and stationary increments properties for the Busemann function in Theorem~\ref{thm-busemann-property} carry over essentially verbatim to $\uibot^\nu$. 
Hence, if $\XDP$ distances on $\uibot^\nu$ converge to a directed LQG metric $\frk L_\Phi^{\XDP}$, it should hold for each $t \in \BB R$ that the Busemann function of $\frk L_\Phi^{\XDP}$ on $\bdy \eta((-\infty,t])$ is a stable L\'evy process. 
We will argue that the scaling limit of $\XDP$ distances on $\uibot^\nu$ should in fact correspond to $\wt\theta = \wt\theta_*(\gamma)$. In fact, we suspect that one gets $\wt\theta=\wt\theta_*(\gamma)$ for the scaling limit of directed distances in bipolar-oriented planar maps encoded by the KMSW bijection for \emph{any} face degree distribution with a sufficiently light tail (such planar maps can be in the universality class of $\gamma$-LQG for any $\gamma \in (0,\sqrt 2)$~\cite[Remark 5]{kmsw-bipolar}), but we will not justify this here.

Let $\wh{\mcl Z}^\nu : \BB N_0 \to \BB Z$ be a bi-infinite walk with increment distribution $\nu$ started from $(0,0)$ and conditioned so that its first coordinate stays non-negative.
Let $\wh M^\nu$ be the infinite planar map obtained from $\wh{\mcl Z}^\nu$ via the KMSW procedure (i.e., the analog of the UIQBOT of Section~\ref{sec-uiqbot}). 
It follows from~\cite[Theorems 1.3 and 3.5]{ag-disk} that an LQG wedge of weight $\gamma^2/2$ can be encoded by a 2d Brownian motion of correlation $-\cos(\pi\gamma^2/4)$ conditioned to stay in the right half-plane. Hence, $\wh M^\nu$ is a discrete analog of an LQG wedge of weight $\gamma^2/2$. 

Define $M_{0,\infty}^\nu$ as in Definition~\ref{defn-future-map} but with $\uibot^\nu$ in place of $\uibot$. By the analog of Proposition~\ref{prop-future-map-law} for $\uibot^\nu$, the submap of $M_{0,\infty}^\nu$ induced by the set of vertices reachable by a directed path in $M_{0,\infty}^\nu$ started from the initial vertex of the root edge has the same law as $\wh M^\nu$. By the previous paragraph, for the scaling limit of $\XDP$ distances on $\uibot^\nu$, the set of points reachable by a directed path from the origin should be an LQG wedge of weight $\gamma^2/2$. As explained just after~\eqref{eqn-special-theta}, this is equivalent to $\wt\theta=\wt\theta_*(\gamma)$.  
\end{remark}

\begin{remark} \label{remark-critical-metric}
If we fix $\theta \in (0,\pi)$ and send $\gamma\to 2$, we expect that the longest-path directed LQG metrics $\frk L_\Phi^{\op{LDP}}$ should converge, in some sense, to a random directed metric $\wh{\frk L}^{\op{LDP}}$ depending on $\theta$. The longest directed paths for $\wh{\frk L}^{\op{LDP}}$ should be closely related to longest increasing subsequences (LIS) in  Brownian separable permutons~\cite{bassino-separable-permuton}.\footnote{The Brownian separable permutons depend on a parameter $q \in (0,1)$ which should be related to $\wt\theta$ by the formula $q  = ( \pi -  \wt\theta)/\pi$.
This relation is obtained by sending $\gamma\to 2$ in~\cite[Theorem 1.1]{asy-p-theta} and recalling that $\theta=\wt\theta -\pi/2$.} Substantial progress on constructing $\wh{\frk L}^{\op{LDP}}$ has been made in the recent paper~\cite{abbds-separable-lis}, which computes (as a function of $p=1-q$) the growth exponent for the length of the LIS in permutations sampled from the skew Brownian permuton and shows that the length of the LIS, re-scaled appropriately, converges in law to a non-constant random variable. As explained in Remark~\ref{remark-separable}, the techniques of~\cite{abbds-separable-lis} are completely different from those of the present paper.
  
The limiting metric $\wh{\frk L}^{\op{LDP}}$ should have a coupling with $\gamma=2$-LQG decorated by CLE$_4$ due to the critical mating-of-trees theorem of~\cite{ahps-critical-mating}. Consequently, it can be interpreted as a directed version of the 2-LQG metric. However, in contrast to the directed metrics in Conjecture~\ref{conj-directed}, the directed metric $\wh{\mcl L}^{\op{LDP}}$ should not be defined on $\BB C$, but rather on a tree-like space obtained by identifying each CLE$_4$ loop to a point. See~\cite[Section 1.3]{abbds-separable-lis} for further discussion.

We are less certain about the behavior of the shortest-path directed LQG metrics $\frk L_\Phi^{\op{SDP}}$ as $\gamma\to 2$. 
We expect that, at least for $\wt\theta \in (0,\pi)$, the scaling exponent converges to $1/2$ as $\gamma \to 2 $ with $\wt\theta$ fixed, but we are not sure whether there is an interesting limiting directed metric.
\end{remark}


\subsection{Parallels with directed first- and last-passage percolation}
\label{sect:fpp-lpp}

We explain why directed distances on uniform bipolar-oriented planar maps are analogous to directed first- or last-passage percolation on $\BB Z^2$; see also Figure~\ref{fig-bipolar-map-LPP}. We will then discuss some parallels between directed LQG metrics and the directed landscape. 

In two-dimensional directed first- or last-passage percolation, each vertex $(x,y) \in \BB Z^2$ is assigned an i.i.d.\ non-negative random weight $\omega_{x,y}$. Fix now a reference point $(\ol x,\ol y)$. A directed path from $(x,y)$ to $(\ol x,\ol y)$ is a sequence of lattice points that starts at $(x,y)$, ends at $(\ol x,\ol y)$, and moves only in unit steps either to the right or upward. The passage time $T(\pi)$ of a directed path $\pi$ is the sum of the random weights of the lattice points it visits. For a point $(x,y)$, the last- (resp.\ first-) passage time is defined as
    \begin{equation}\label{eq:LPPFPP-defn}
       \op{LPT}(x,y)=\max_{\pi:(x,y)\to (\ol x,\ol y)} T(\pi)\qquad\text{and}\qquad \op{FPT}(x,y)=\min_{\pi:( x,y)\to(\ol x,\ol y)} T(\pi).
    \end{equation}
The quantities $\op{LDP}$ and $\op{SDP}$ from Definition~\ref{def-ldp} are the planar map analogs of $\op{LPT}$ and $\op{FPT}$. In the case of bipolar-oriented triangulations, we do not need extra randomness, such as the weights \(\omega_{x,y}\) used in the lattice setting, since directed distances are already random due to the map and orientation being random. On the other hand, we expect that adding i.i.d.\ weights to the vertices (with sufficiently light tails) would not change the limiting behavior of directed distances; see for instance~\cite{curien-legall-fpp} for a similar property for undirected planar map distances.

The case when the vertex weights $\omega_{x,y}$ are exponentially distributed (henceforth \textbf{exponential LPP/FPP}) is more solvable than other cases. Exponential LPP has been studied more extensively than directed exponential FPP, so we henceforth focus on this case.
The existence of the Busemann function for exponential LPP (i.e., the analog of our Theorem~\ref{thm-busemann}) was established in~\cite{fp-second-class}. 
In LPP, each semi-infinite geodesic is associated with a direction $\xi\in\BB R$, so there is a one-parameter family of Busemann functions, one for each $\xi $. This is in contrast to the case of the UIBOT, where there is only one ``direction'' for semi-infinite geodesics (see, e.g., Lemma~\ref{lem-infty-cut}), hence only one Busemann function. 

The $1/3$ shape fluctuation exponent for exponential LPP was first obtained in~\cite{johansson-shape-fluc} using the RSK correspondence. This is roughly analogous to the $3/4$ scaling exponents for LDP distances in the setting of the present paper. 
The Busemann function for exponential LPP has i.i.d.\ increments, analogously to the case of the UIBOT (Theorem~\ref{thm-busemann-property}). This was shown in~\cite{sepp-corner-growth} using the so-called Burke property from~\cite{bcs-burke-property}. 
It was shown in~\cite{busani-busemann-lpp} that the scaling limit of the Busemann function on a horizontal line is the so-called \textbf{stationary horizon}, a family of coupled Brownian motions indexed by the direction $\xi$. 
This is analogous to our Theorem~\ref{thm-busemann-conv}, but in our setting the scaling limit is a stable Lévy process instead of a family of Brownian motions. 

\begin{figure}[ht!]
\begin{center}
\includegraphics[width=0.95\textwidth]{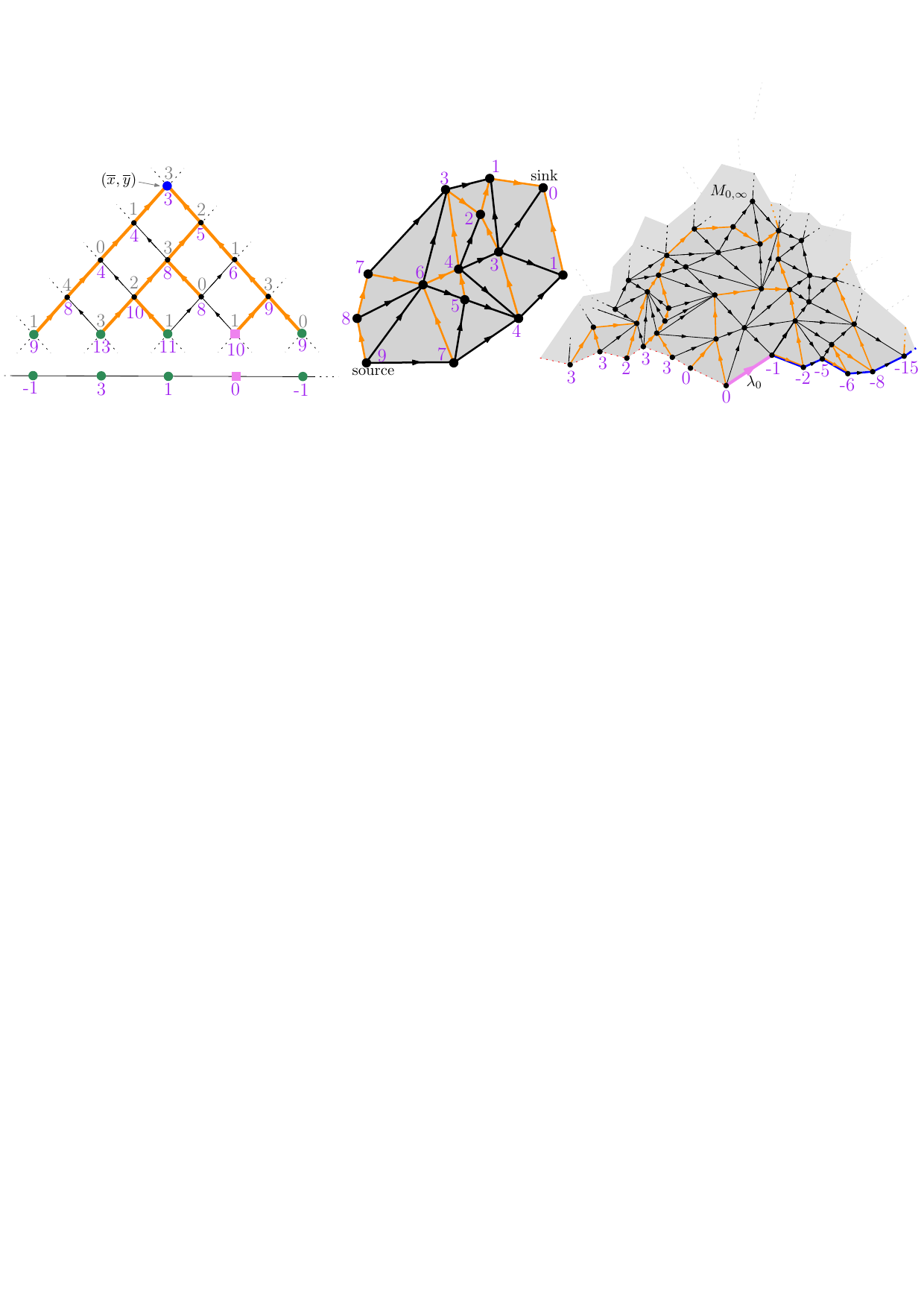}
\caption{\label{fig-bipolar-map-LPP} 
\textbf{Left:} Last passage percolation on $\BB Z^2$. A portion of $\BB Z^2$, rotated counterclockwise by 45 degrees, is shown. A reference point $(\ol x, \ol y)$ is shown in blue. The grey labels associated with each vertex $(x,y)$ are the weights $\omega_{x,y}$, while the purple labels are the last-passage times $\op{LPT}(x,y)$ introduced in \eqref{eq:LPPFPP-defn}. The labels on the bottom of the picture are the $\op{LPT}(x,y)$ times of the corresponding green vertices renormalized so that the $\op{LPT}$ time for the squared violet vertex is zero. The orange paths are $\op{LPT}$-geodesics, i.e.\ directed paths that realize the $\op{LPT}$ times. 
\textbf{Middle:} A bipolar-oriented triangulation with its vertices labeled by their $\op{LDP}$ distance to the sink vertex, as introduced in Definition~\ref{def-ldp}. The orange paths are  $\op{LDP}$ geodesics from each vertex to the sink. \textbf{Right:} A portion of the infinite rooted submap $(M_{0,\infty},\lambda_0)$ of the UIBOT introduced in Definition~\ref{defn-future-map}. The boundary vertices are labeled in purple by the values of the Busemann function $\mcl X$ for $\op{LDP}$ introduced in Theorem~\ref{thm-busemann}. These labels are the analogue of the bottom labels in the picture on the left when one sends the blue reference point $(\ol x, \ol y)$ to infinity (recall that the limiting labels depend on the direction $\xi$ in which we send $(\ol x, \ol y)$ to infinity compared to the squared violet vertex).  The orange paths are  $\op{LDP}$ geodesics from the boundary vertices to the sink ``at infinity'' in the sense of Definition \ref{def-infinite-geo}. The two models in the middle and right picture are a natural version of last-passage percolation on planar maps, one in finite and one in infinite volume.}
\end{center}
\end{figure}

The full limit of exponential LPP when we re-center and re-scale appropriately (analogous to our Conjecture~\ref{conj-directed}) was obtained in~\cite{dv-dl-lis}. 
The limiting object is the \textbf{directed landscape}, a random directed metric $\frk L : \BB R^2 \times \BB R^2 \to \BB R \cup \{-\infty\}$ constructed in~\cite{dov-dl}. It is closely connected to the KPZ equation~\cite{kpz-fluctuation}. Intuitively speaking, for $x,y\in\BB R$ and $t < s$, $\frk L(x,t ; y,s)$ describes the length of the longest directed path from $(x,t)$ to $(y,s)$ in a certain random geometry, where here a path is considered to be directed if its second coordinate is increasing (we set $\frk L(x,t ; y , s) = -\infty$ if $s < t$). See, e.g.,~\cite{ganguly-dl-survey,gm-dl-survey} for expository articles on the directed landscape. 

Directed first-passage percolation is believed to converge (under appropriate re-centering and re-scaling) to $-\frk L$. This is in contrast to the setting of uniform bipolar-oriented triangulations, where LDP and SDP distances are believed to have fundamentally different scaling limits (see Conjecture~\ref{conj-directed}). 

It is shown in~\cite[Corollary 3.22]{rv-infinite-geo} that the Busemann function in the directed landscape on a fixed horizontal line $ \BB R \times \{t\} $, with geodesics going in a fixed direction, is a Brownian motion. The joint law of the Busemann functions on $\BB R\times \{t\}$, for all directions, has the law of the stationary horizon~\cite[Theorem 5.3(iii)]{bss-dl-semi}(see the above discussion on Busemann functions in different directions for exponential LPP).  The coupling for the Busemann functions with different values of $t$ is described in terms of the KPZ fixed point~\cite[Theorem 5.1(iv)]{bss-dl-semi}). 

For the directed $\sqrt{4/3}$-LQG metrics as in Conjecture~\ref{conj-directed}, the interface $\bdy \eta(-\infty,t]$ (which, for $t=0$, is the continuum analog of the discrete interface $\bdy M_{0,\infty}$) plays a role analogous to $\BB R\times \{t\}$. 
Theorem~\ref{thm-busemann-conv} suggests that the Busemann function along this interface for a fixed value of $t$ is a certain stable Lévy process.
We do not yet have a $\sqrt{4/3}$-LQG analog of the KPZ dynamics which governs how these Busemann functions evolve when $t$ varies (see Problem~\ref{prob-busemann-joint}). 
 
We expect that the directed LQG metrics have deterministic limits when $\gamma\to 0$, but it is natural to also look at the second order behavior. We have the following tantalizing question. 

\begin{prob} \label{prob-dl}
Do the directed LQG metrics of Conjecture~\ref{conj-directed} converge, in some sense, to the directed landscape when we send $\gamma\to 0$ (with some appropriate behavior for $\wt\theta$) and re-center and re-scale appropriately?
\end{prob} 

A possible guess for the appropriate behavior of $\wt\theta$ in Problem~\ref{prob-dl} is to take $\wt\theta = \wt\theta_*(\gamma)$ as in~\eqref{eqn-special-theta} and send $\gamma\to 0$. Since $\wt\theta_*(\gamma) \to 0$, directed paths in this setting will converge to straight lines, but the second-order fluctuations around these straight lines could be interesting. If the directed LQG metric does indeed converge to the directed landscape in this regime, then we would have, conjecturally, a one-parameter family of solvable models which interpolate between the directed landscape at $\gamma=0$ and Brownian surfaces at $\gamma=\sqrt{8/3}$ (see the solid red curve in Figure~\ref{fig-directed-lqg-graph}, right).

\section{KMSW bijection and conditional independence property}
\label{sec-prelim}

Sections~\ref{sec-kmsw} and~\ref{sec-kmsw-random} contain a review of the Kenyon-Miller-Sheffield-Wilson~\cite{kmsw-bipolar} (KMSW) bijection and its versions for various particular directed triangulations. Section~\ref{sect-cond-indp} gives a conditional independence statement when we cut a Boltzmann bipolar-oriented triangulation by an $\XDP$ geodesic (Lemma~\ref{lem-lr-ind}). This statement is a key ingredient in the proof of the independent increments property in Theorem~\ref{thm-busemann-property}. Section~\ref{sec-kmsw-left} discusses a specialization of the KMSW bijection to so-called boundary-channeled bipolar-oriented triangulations, which will be needed for the proof of the symmetry property in Theorem~\ref{thm-busemann-property}.

\subsection{The Kenyon-Miller-Sheffield-Wilson bijection} 
\label{sec-kmsw}

In this subsection, we only consider deterministic objects. Recall that we assume all planar maps in this paper are equipped with an acyclic orientation of their edges and are drawn in the plane with the edges oriented from south-west to north-east. 

For a planar map $\frk m$, the \textbf{external face} is the unbounded complementary connected component of the union of the edges of $\frk m$. This is well-defined since we are viewing planar maps modulo orientation-preserving homeomorphisms $\BB C \to \BB C$ (instead of $\BB C\cup\{\infty\} \to \BB C\cup\{\infty\}$), recall Section~\ref{sec-overview}. 
We write $\mcl V(\frk m)$, $\mcl E(\frk m)$, and $\mcl F(\frk m)$ for the sets of vertices, edges, and non-external faces of $\frk m$, respectively. 
  
\begin{defn}\label{defn:planar-map-with-missing-edges}
    A \textbf{planar map with missing edges} is a planar map $\frk m$ with two types of edges: \textbf{missing} and \textbf{non-missing} edges (we often refer to non-missing edges simply as edges). The missing edges are always on the boundary of the external face of $\frk m$ and are \emph{not} considered to be part of $\mcl E(\frk m)$. 
    A \textbf{directed planar map with missing edges}  is a planar map $\frk m$ with missing edges equipped with an orientation of the edges in $\mcl E(\frk m)$ that is acyclic. In particular, missing-edges are \emph{not} oriented.
    See the left-hand side of Figure~\ref{fig-bipolar-boundaries} for an example.
\end{defn}

The planar maps $\frk m$ with missing edges considered in this paper will always have the boundary of their external face divided into four (possibly empty)  sets of consecutive all missing or all non-missing edges. When the four sets are all non-empty, they alternate between missing and non-missing types. See again the map on the left-hand side of Figure~\ref{fig-bipolar-boundaries} for an example.

\begin{defn}\label{defn:bound-edges}
    Given an oriented planar map $\frk m$ with missing edges, make the following definitions (see the map on the left-hand side of Figure~\ref{fig-bipolar-boundaries}). 
\begin{itemize}
    \item The \textbf{upper-left} (resp.\ \textbf{lower-right}) \textbf{boundary} of $\frk m$ is defined to be the set of edges $e\in\mcl E(\frk m)$ lying on the boundary of the external face of $\frk m$  which are oriented so that the external face of $\frk m$ is  immediately to the left (resp.\ right) of $e$. For all of the maps we consider, the upper-left and lower-right boundaries of $\frk m$ will be (possibly empty) simple directed paths.
    \item The \textbf{upper-right} (resp.\ \textbf{lower-left}) \textbf{boundary} of $\frk m$ is defined to be the set of consecutive \emph{missing} edges $e\notin\mcl E(\frk m)$ between the terminal (resp.\ initial) vertex of the upper-left boundary and the terminal (resp.\ initial) vertex of the lower-right boundary.  
\end{itemize}
When the orientation of  $\frk m$ is bipolar and there are no missing edges, we simply call the upper-left boundary \textbf{left} boundary and the lower-right boundary \textbf{right} boundary.
\end{defn}
Our oriented planar maps (with possibly missing edges) will often have a single distinguished edge on the upper-left boundary of the map, called the \textbf{active edge} of the map. Note that, in particular, the active edge is in $\mcl E(\frk m)$.

\begin{figure}[t]
\begin{center}
\includegraphics[width=1\textwidth]{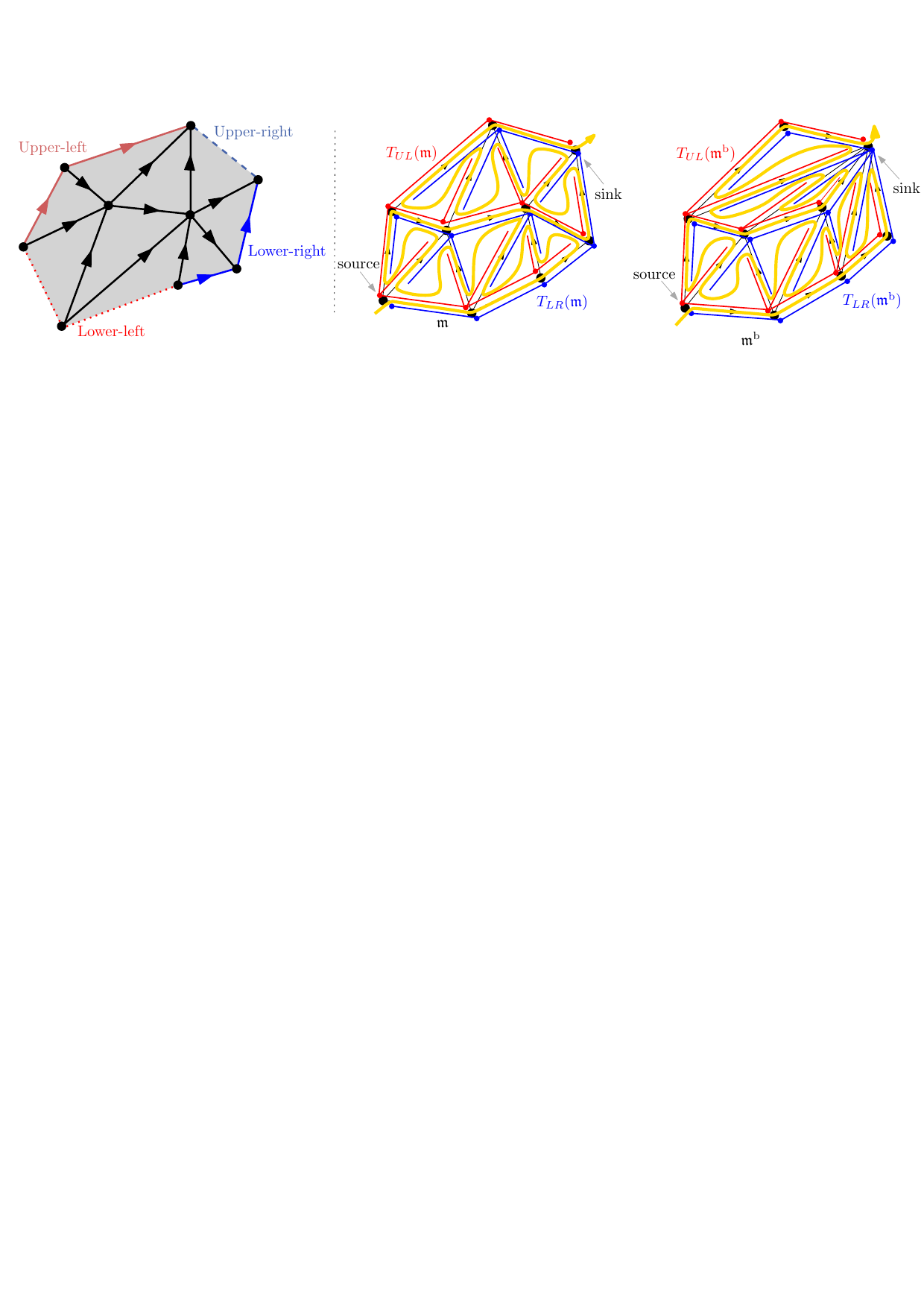}  
\caption{\label{fig-bipolar-boundaries} 
\textbf{Left:} An oriented planar triangulation with missing edges. The missing edges are the dotted (non-oriented) edges on the external face. The upper-left/lower-left/upper-right/lower-right boundaries of the triangulation on the left are shown in darkred/red/lightblue/blue. Note that the upper-right and lower-left boundaries are formed by missing edges. \textbf{Right:} The first bipolar-oriented triangulation $\frk m$ is the  map on the left picture where we fixed an orientation of the missing edges (turning them into non-missing edges) to obtain a bipolar orientation. The second map $\frk m^{\bc}$ is a boundary-channeled bipolar-oriented triangulation (Definition~\ref{def-reverse-map}). The trees $T_{UL}(\frk m)$ and $T_{LR}(\frk m)$ (resp. $T_{UL}(\frk m^{\bc})$ and $T_{LR}(\frk m^{\bc})$) introduced in the description of the inverse KMSW procedure (below Lemma~\ref{lem-kmsw-bdy}) are  shown in red and blue in both maps.  The corresponding Peano paths are shown in gold. Note that the right-most branch of $T_{UL}(\frk m)$ has two edges that correspond to the first two right-boundary edges of $\frk m$, while the right-most branch of $T_{UL}(\frk m^\bc)$ has four edges that correspond to the four (and so all) right-boundary edges of $\frk m^\bc$.
}
\end{center}
\end{figure}

\medskip
  
A key tool in the study of bipolar-oriented planar maps is the \textbf{Kenyon-Miller-Sheffield-Wilson (KMSW)} bijection~\cite{kmsw-bipolar}, which encodes a bipolar-oriented planar map by means of a walk on $\BB Z^2$. We will only need the case of oriented triangulations, in which case the encoding walk has increments in\footnote{The map obtained via the KMSW procedure in our setting is reflected so that the left and right boundaries are swapped as compared to the map in~\cite{kmsw-bipolar}. 
This is to make it so that the first (resp.\ second) coordinate of the encoding walk corresponds to the left (resp.\ right) boundary length, as in the continuum setting of~\cite{wedges}. Moreover, in this paper, we decided to draw all the maps with the orientation going from west to east (instead of from south to north as in~\cite{kmsw-bipolar}) in order to have horizontal (instead of vertical) boundaries in the figures.
} 
$\left\{(1,-1), (-1,0) , (0,1)\right\}$.
The following definition explains how to construct a map from a walk. 

\begin{figure}[t]
\begin{center}
\includegraphics[width=1\textwidth]{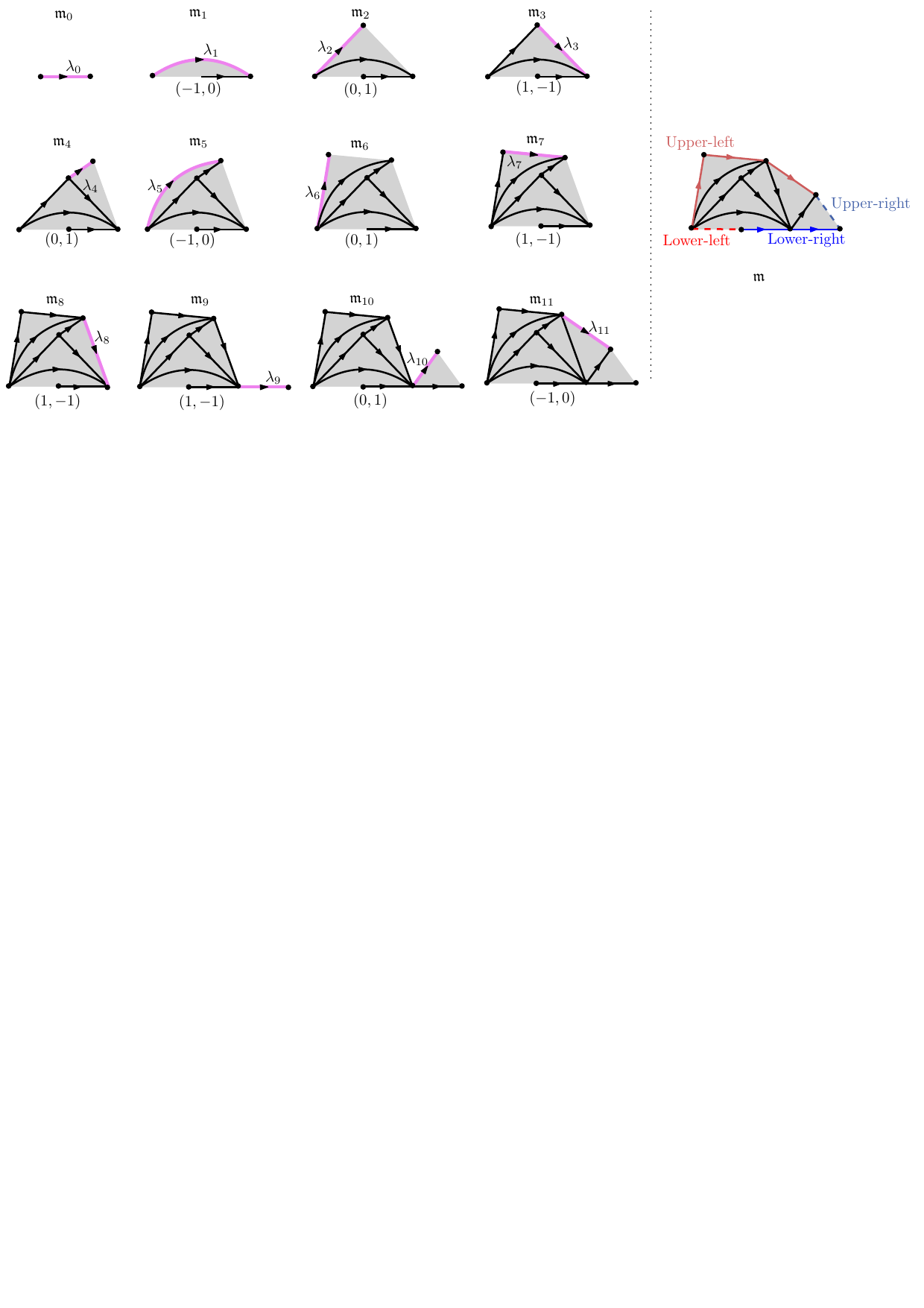}  
\caption{\label{fig-kmsw}  
\textbf{Left:} The directed triangulations with missing edges $\frk{m}_0,\dots, \frk{m}_{11}$ built from the KMSW procedure for a walk with 11 steps. For convenience, the lower boundary edges are drawn along a horizontal line. The steps of the walk are shown underneath the maps. The active edges $\lambda_{j}$ are shown in violet. All edges are oriented from west to east.
\textbf{Right:} The final map $\frk{m} = \frk{m}_{11}$ with its four special boundary segments shown in different colors as in Figure~\ref{fig-bipolar-boundaries}. Note that edges along the upper-left and lower-right boundary belong to $\frk{m}$, but edges along the lower-left and lower-right boundary do not.} 
\end{center}
\end{figure}

\begin{defn}[KMSW procedure] \label{def-kmsw}
Let $\frk Z : [0,n]\cap\BB Z \to\BB Z^2$ be a two-dimensional walk with increments 
\[\frk Z(j) -\frk Z(j-1) \in \Big\{(1,-1), (-1,0) , (0,1)\Big\}.\] 
The \textbf{KMSW procedure} builds a triangulation $\frk m = \frk m_n$ (possibly with some missing edges), equipped with an acyclic orientation, from $\frk Z$ through the following inductive procedure. See Figure~\ref{fig-kmsw} for an illustration. 

Let $(\frk m_0,\lambda_0)$ be the map consisting of a single directed active edge $\lambda_0$. The only non-empty boundaries of this map are the upper-left and lower-right boundaries (which coincide). We assume that this single edge is drawn in the plane so that it is oriented from west to east.

Inductively, suppose $j\in [1,n]\cap\BB Z$ and a directed map $(\frk m_{j-1},\lambda_{j-1})$ with an active edge $\lambda_{j-1} \in \mcl E(\frk m_{j-1})$ (and possibly missing edges) have been defined and drawn in the plane so that all its edges are oriented from south-west to north-east. 
We let $(\frk m_j,\lambda_{j})$ be the directed triangulation with an active edge (and possibly missing edges) obtained from $(\frk m_{j-1},\lambda_{j-1})$ as follows:
\begin{itemize}
\item If $\frk Z(j) - \frk Z(j-1) = (1,-1)$, we construct $(\frk m_j,\lambda_{j})$ in the following way:
\begin{itemize}
    \item If the upper-right boundary of $\frk m_{j-1}$ is not empty, we consider the missing edge of the upper-right boundary of $\frk m_{j-1}$ immediately to the east of $\lambda_{j-1}$ and construct $(\frk m_j,\lambda_{j})$ by adding this edge  to the set of edges of $\frk m_{j-1}$, orienting this edge from west to east and setting $\lambda_{j}$ equal to this new edge. See, for instance, how the triangulation $\frk m_3$ is obtained from the triangulation $\frk m_2$ in \cref{fig-kmsw}.
    
    \item Otherwise, we construct $(\frk m_j,\lambda_{j})$ by adding to the set of edges of $\frk m_{j-1}$ a new active edge $\lambda_{j}$ oriented from west to east, whose initial vertex coincides with the terminal vertex of $\lambda_{j-1}$. Note that $\lambda_{j}$ does not lie on the boundary of any triangle of $\frk m_{j}$. See, for instance, how the triangulation $\frk m_9$ is obtained from the triangulation $\frk m_8$ in \cref{fig-kmsw}.
\end{itemize}

\item If $\frk Z(j) - \frk Z(j-1)= (-1,0)$, we construct $(\frk m_j,\lambda_j)$ by adding to $\frk m_{j-1}$ a new triangular face $t_j$ with a new directed edge $\lambda_{j}$ on the boundary of $t_j$ with the following properties:  The edges $\lambda_{j-1}$ and $\lambda_{j}$ both lie on the boundary of $t_j$, and their terminal vertices coincide. Moreover,
\begin{itemize}
    \item if the upper-left boundary of $\frk m_{j-1}$, excluding $\lambda_{j-1}$, is not empty,  the third edge on the boundary of $t_j$ is the edge of the upper-left boundary of $\frk m_{j-1}$, which lies immediately to the west of $\lambda_{j-1}$. In particular, the initial vertex of this edge coincides with the initial vertex of $\lambda_{j}$. See, for instance, how the triangulation $\frk m_5$ is obtained from the triangulation $\frk m_4$ in \cref{fig-kmsw};
    \item otherwise, the third edge on the boundary $t_j$ is a new \emph{missing} edge,  with one vertex equal to the initial vertex of $\lambda_{j-1}$. Moreover, the other vertex of this missing edge coincides with the initial vertex of $\lambda_{j}$. See, for instance, how the triangulation $\frk m_1$ is obtained from the map $\frk m_0$ in \cref{fig-kmsw};
\end{itemize}

\item If $\frk Z(j) - \frk Z(j-1) = (0,1)$, we construct $\frk m_j$ by adding to $\frk m_{j-1}$ a new triangle $t_j$ and a new directed edge $\lambda_{j}$ with the following properties. The edges $\lambda_{j-1}$ and $\lambda_{j}$ both lie on the boundary of $t_j$, and their initial vertices coincide. The third edge on the boundary of $\lambda_{j}$ is a new \emph{missing} edge, whose initial vertex coincides with the terminal vertex of $\lambda_{j}$.  See, for instance, how the triangulation $\frk m_2$ is obtained from the triangulation $\frk m_1$ in \cref{fig-kmsw};
\end{itemize}
We finally set $\frk m = \frk m_n$. 
\end{defn}

The map $\frk m$ produced by the KMSW procedure depends only on the increments of $\frk Z$, so it is unchanged if we replace $\frk Z$ by $\frk Z  + z$ for any $z\in\BB Z^2$.

\begin{remark} \label{remark-kmsw-infinite} 
Let $n\in\BB Z$. The KMSW procedure can still be defined if $\frk Z : \BB Z_{\geq n} \to \BB Z^2$  with increments 
 in $\{(1,-1), (-1,0) , (0,1)\}$. In this case, we take $\frk{m}_{n,\infty}$ to be the increasing union of the triangulations $(\frk{m}_j)_{j\in\BB Z_{\geq n}}$ built during the KMSW procedure of Definition~\ref{def-kmsw}, where the maps are labeled so that $(\frk{m}_n,\lambda_n)$ denotes the initial map consisting of a single active oriented edge $\lambda_n$. Moreover, we set the edge in $\frk{m}_{n,\infty}$ corresponding to the initial active edge $\lambda_n$ to be the root edge of $\frk{m}_{n,\infty}$. The boundary of $\frk{m}_{n,\infty}$ is divided into four (possibly infinite or possibly empty) intervals of consecutive edges: the upper-left (resp.\ lower-left, lower-right, upper-right) boundary of $\frk m_{n,\infty}$ is the set of edges that belong to the upper-left (resp.\ lower-left, lower-right, upper-right) boundary of $\frk m_j$ for infinitely many $j$.

The KMSW procedure can also be defined for bi-infinite walks $\frk Z : \BB Z \to \BB Z^2$ with increments 
 in $\{(1,-1), (-1,0) , (0,1)\}$. In this case, we take  $\frk{m}_{-\infty,\infty}$ to be the union of the triangulations $(\frk{m}_{n,\infty})_{n\in\BB Z}$ -- note that these triangulations increase when $n$ decreases -- where the maps $\frk{m}_{n,\infty}$ are obtained from $\frk Z |_{[n,\infty)}$ using the KMSW procedure described in the previous paragraph. 
 Moreover, we set the edge in $\frk{m}_{-\infty,\infty}$ corresponding to the active edge $\lambda_0$ to be the root edge of $\frk{m}_{-\infty,\infty}$. 
 The boundary of $\frk{m}_{-\infty,\infty}$ is divided into four (possibly-infinite and possibly-empty) intervals of consecutive edges: the upper-left (resp.\ lower-left, lower-right, upper-right) boundary of $\frk m_{-\infty,\infty}$ is the set of edges that belong to the upper-left (resp.\ lower-left, lower-right, upper-right) boundary of $\frk{m}_{n,\infty}$ for infinitely many $n$.  Note that it is possible for all four boundary intervals of $\frk{m}_{-\infty,\infty}$ to be empty; in this case, $\frk{m}_{-\infty,\infty}$ has no boundary.
 \end{remark}

The following is the special case of~\cite[Theorem 2]{kmsw-bipolar} when we restrict to triangulations.

\begin{thm}[\cite{kmsw-bipolar}]\label{thm-kmsw}
Let $\el ,r , n \in \BB N$. The KMSW procedure restricts to a bijection between the following two sets.
\begin{itemize}
\item Walks with increments in $\{(1,-1) , (-1,0) , (1,0)\}$ which start at $(0,0)$, end at $(\el -1 ,-r + 1)$, have $n-1$ total steps, and stay in $[0,\infty) \times [-r+1,\infty)$. 
\item Bipolar-oriented triangulations with $n$ total edges,\footnote{We stress that these triangulations have no missing edges.} $\el $ left boundary edges, and $r $ right boundary edges (Definition~\ref{def-bipolar-bdy}). 
\end{itemize}
\end{thm}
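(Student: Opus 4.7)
The plan is to prove the bijection by verifying the two directions separately and then showing they are mutually inverse. The forward direction is already the KMSW procedure of Definition~\ref{def-kmsw}, so the work lies in showing that when the input walk satisfies the stated constraints the output has no missing edges and indeed has $\el$ left and $r$ right boundary edges. The inverse direction requires defining an explicit procedure to read off a walk from a bipolar-oriented triangulation, which I will do via the Peano path winding between the two spanning trees $T_{UL}(\frk m)$ and $T_{LR}(\frk m)$ illustrated in Figure~\ref{fig-bipolar-boundaries}.

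For the forward direction, I would proceed by induction on $j \in [0,n-1]\cap \BB Z$, maintaining a precise invariant relating $\frk Z(j)$ to the boundary lengths of $\frk m_j$. Roughly, I expect to show that the first coordinate of $\frk Z(j)$ records the number of edges on the upper-left boundary of $\frk m_j$ strictly to the west of $\lambda_j$ (counting both missing and non-missing edges with appropriate signs), while the second coordinate records the negative number of missing edges on the upper-right boundary of $\frk m_j$. Each of the three increment types $(1,-1)$, $(-1,0)$, $(0,1)$ updates the boundary bookkeeping in the unique way consistent with this invariant and the case split in Definition~\ref{def-kmsw}. The constraint that the walk stays in $[0,\infty)\times[-r+1,\infty)$ then precisely ensures that at every step the upper-left boundary has enough edges and the upper-right boundary has at most $r-1$ missing edges, so that the procedure is well-defined and the terminal map has no missing edges.

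For the inverse direction, given a bipolar-oriented triangulation $\frk m$ with $\el$ left and $r$ right boundary edges, I would define $T_{UL}(\frk m)$ by taking, at each non-source vertex $v$, the leftmost edge incoming to $v$, and $T_{LR}(\frk m)$ by taking, at each non-sink vertex $v$, the rightmost edge outgoing from $v$. Standard arguments (which I would include as a lemma) show that these are spanning trees, and that their union together with the right boundary of $\frk m$ gives a covering of $\mcl E(\frk m)$ such that the primal and dual combinatorics interleave. I would then traverse $\frk m$ along the associated Peano path, which visits each edge of $\frk m$ exactly once, and record a step of type $(-1,0)$ when the path crosses a $T_{UL}$ edge, a step of type $(0,1)$ when it crosses a $T_{LR}$ edge not on the right boundary, and a step of type $(1,-1)$ when it crosses a right boundary edge. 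A careful count shows the resulting walk has length $n-1$, starts at $(0,0)$, ends at $(\el-1,-r+1)$, and stays in $[0,\infty)\times [-r+1,\infty)$.

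The main obstacle will be verifying that the forward and inverse procedures are mutually inverse. This amounts to a careful simultaneous induction: after $j$ steps of the KMSW procedure, the partial Peano path of the final map $\frk m$ restricted to edges already present in $\frk m_j$ coincides with the recorded walk $\frk Z|_{[0,j]}$, and the active edge $\lambda_j$ is precisely the edge where the Peano path will next cross a tree edge. The delicate point is the case analysis around the active edge: one must show that each of the three local operations in Definition~\ref{def-kmsw} (attaching a new triangle either ``above'' or ``to the right'' of $\lambda_{j-1}$, or advancing along the upper-right missing-edge/adding a new active edge) corresponds exactly to one combinatorial move of the Peano path in $\frk m$, including the correct handling of missing edges during intermediate stages. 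Once this case analysis is completed, the bijection follows.
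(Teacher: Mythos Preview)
The paper does not prove this theorem. It is stated with the attribution ``\cite{kmsw-bipolar}'' and introduced as ``the special case of~[Theorem~2, kmsw-bipolar] when we restrict to triangulations.'' The paper provides the forward procedure (Definition~\ref{def-kmsw}) and states the inverse procedure (Definition~\ref{defn:KMSW-inv}) via heights in the trees $T_{UL}(\frk m)$ and $T_{LR}(\frk m)$, but then simply asserts: ``By~[Theorem~2, kmsw-bipolar], the function $\frk m\to\frk Z$ of Definition~\ref{defn:KMSW-inv} is the inverse of the bijection from Theorem~\ref{thm-kmsw}.'' So there is nothing in the paper to compare your proposal to beyond the statement of the inverse map.

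That said, a comment on your inverse direction: your rule for reading off step types is not well-posed as written. In the paper's construction (and in \cite{kmsw-bipolar}), the trees $T_{UL}(\frk m)$ and $T_{LR}(\frk m)$ each contain \emph{every} edge of $\frk m$; they are obtained by disconnecting certain edge-vertex incidences, not by partitioning the edge set. So ``record $(-1,0)$ when the path crosses a $T_{UL}$ edge'' does not distinguish anything. The correct inverse (Definition~\ref{defn:KMSW-inv}) records, for the $(j+1)$th edge $\lambda_{j+1}$ in Peano order, the height of its initial vertex in $T_{UL}$ as $\frk L(j)$ and the height of its terminal vertex in $T_{LR}$ (shifted by $-r+1$) as $\frk R(j)$; the increment type is then determined by how these heights change from $\lambda_j$ to $\lambda_{j+1}$. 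Your overall plan---verify the forward map lands in the right set, build an inverse via the Peano path, and check they compose to the identity by a simultaneous induction on the active edge---is the correct architecture, but you would need to replace your step-reading rule with this height-based one (or something provably equivalent) before the argument can be completed.
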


Note that the walk used in Figure~\ref{fig-kmsw} is not a valid walk for the theorem above, since the walk does not stay in the requested domain.
In the setting of Definition~\ref{def-kmsw}, the lengths (and several other properties) of the four special boundary segments of $\mathfrak{m}$ can be recovered from the encoding walk as follows.

\begin{lem} \label{lem-kmsw-bdy}
Let $\frk{Z}=(\frk{L},\frk{R}) : [0,n]\cap\BB Z \to \BB Z^2$ be a walk with increments in $\{(1,-1), (-1,0) , (0,1)\}$ and started at $\frk{Z}(0) = (0,0)$ and let $\frk{m}$ be its associated map via the KMSW procedure as in Definition~\ref{def-kmsw}. 
Define the four boundary segments of $\frk{m}$ as done in Definition~\ref{defn:bound-edges}. 
\begin{enumerate}[label=(\roman*),ref=(\roman*)]
    \item \label{item-lower-left} The time $\min\left\{ m \in \BB N : \frk L (m) = -k \right\}$, is the time when the $k$th lower-left (missing) boundary edge of $\frk{m}$ (counting starting from the initial vertex of $\lambda_0$ and moving along the lower-left boundary)  is added to $\frk{m}$ by the KMSW procedure. Moreover, this is the first time that the west vertex of this new edge is the initial vertex of an active edge (and all the other times the same fact happens are the times $m$ such that $\frk L (m)= -k$ and $\frk L (j) \geq  -k$ for all $j\leq m$).
 
    \item \label{item-lower-right} The time $\min\left\{ m \in \BB N : \frk R (m) = -k \right\}$,  is the time when the $k+1$th (counting respecting the orientation) lower-right boundary edge of $\frk{m}$ is added to $\frk{m}$ by the KMSW procedure. Moreover, this is the first time that the terminal vertex of this new edge is the terminal vertex of an active edge (and all the other times the same fact happens are the times $m$ such that $\frk R (m)= -k$ and $\frk R (j) \geq  -k$ for all $j\leq m$).
    
    \item \label{item-upper-left} The $k$th smallest $m\in\BB N$ such that ${\frk L} (j) \geq {\frk L} (m)+1$ for all $j\in [m+1,n]\cap\BB Z$ is the time when the $k$th (counting respecting the orientation)  upper-left boundary edge of $\frk{m}$ is added to $\frk{m}$ by the KMSW procedure. Moreover, for every time $m'\leq m$ such that ${\frk L} (m')={\frk L} (m)$ and ${\frk L} (j) \geq {\frk L} (m')$ for all $j\in [m', m]\cap\BB Z$, the corresponding active edge $\lambda_{m'}$ has the same initial vertex as the upper-left boundary edge $\lambda_{m}$. 
    
    \item \label{item-upper-right} The $k$th smallest $m\in\BB N$ such that ${\frk R} (j) \geq {\frk R} (m-1)+1$ for all $j\in [m,n]\cap\BB Z$ is the time when the $k$th upper-right (missing) boundary edge of $\frk{m}$ (counting starting from the terminal vertex of the lower-right boundary and moving along the upper-right boundary) is added to $\frk{m}$ by the KMSW procedure.  
\end{enumerate}
In particular, denoting the set of edges on the upper-left, upper-right, lower-left, and lower-right boundary segments by $\ol\bdy_L \frk{m}$, $\ol\bdy_R \frk{m}$, $\ul\bdy_L \frk{m}$, and $\ul\bdy_R \frk{m}$, respectively, we have that
\allb \label{eqn-kmsw-bdy}
\# \ol \bdy_L \frk{m} &= \frk{L}(n) - \min_{j\in [0,n]\cap\BB Z} \frk{L}(j)+ 1  ,\quad \qquad
\# \ol \bdy_R \frk{m} = \frk{R}(n) - \min_{j\in [0,n]\cap\BB Z} \frk{R}(j) ,\notag\\ 
\# \ul \bdy_L \frk{m} &=   - \min_{j\in [0,n]\cap\BB Z} \frk{L}(j) , \quad 
\qquad\qquad\qquad\# \ul \bdy_R \frk{m} =   - \min_{j\in [0,n]\cap\BB Z} \frk{R}(j) + 1 .
\alle 

\end{lem}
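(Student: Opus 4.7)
The plan is to prove the lemma by induction on $n$, carefully tracking how the four boundary segments evolve with each increment of $\frk Z$. I would begin with the boundary-length formulas~\eqref{eqn-kmsw-bdy}. At $n=0$, $\frk m_0$ consists of the single edge $\lambda_0$ lying on both the upper-left and lower-right boundaries, so $\#\ol\bdy_L\frk m_0 = \#\ul\bdy_R\frk m_0 = 1$ and the other two are empty, matching the formulas. For the inductive step I would analyze each increment type. Step $(1,-1)$ grows the upper-left by one; if the upper-right is nonempty (Case~A) it consumes a missing upper-right edge, while if it is empty (Case~B, which happens iff $\frk R(j-1) = \min_{[0,j-1]} \frk R$) it adjoins a ``free'' edge that lies simultaneously on the upper-left and lower-right. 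Step $(-1,0)$ replaces two (Case~A) or one (Case~B, which happens iff $\frk L(j-1) = \min_{[0,j-1]} \frk L$) upper-left edges with a new active edge, and Case~B also creates a new missing lower-left edge. Step $(0,1)$ creates a new missing upper-right edge and swaps the active edge. In each case the change in the right-hand side of the formula matches the left-hand side.

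This case analysis immediately yields (i) and (ii): lower-left (resp.\ lower-right) missing edges are added only at Case~B of step $(-1,0)$ (resp.\ $(1,-1)$), which occur precisely at the first-passage times of $\frk L$ (resp.\ $\frk R$) to the value $-k$. The description of the ``other times the same fact happens'' follows from the same analysis together with the observation that the active edge's initial (resp.\ terminal) vertex coincides with the given endpoint of the $k$-th lower-left (resp.\ lower-right) edge exactly when $\frk L$ (resp.\ $\frk R$) is at its running minimum.

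For (iii) and (iv) I would maintain as an inductive invariant the structural description
\[
\ol\bdy_L\frk m_j = \{\lambda_m : m \in S_j\}, \qquad S_j := \{m \in [0,j]\cap\BB Z : \frk L(i) \geq \frk L(m)+1 \text{ for all } i \in [m+1,j]\cap\BB Z\},
\]
listed west-to-east by increasing $m$, together with the fact that the position of $\lambda_m$ in this list is $\frk L(m) - \min_{[0,m]}\frk L + 1$ and does not change while $\lambda_m$ survives. An analogous invariant holds for $\ol\bdy_R$ in terms of $\frk R$. The inductive step is a direct verification: step $(1,-1)$ appends $j$ to $S_{j-1}$; step $(0,1)$ swaps $j{-}1$ for $j$; and step $(-1,0)$ removes $j-1$ (and in Case~A also the next upper-left edge west of it, which corresponds to removing the second-largest element of $S_{j-1}$) and adds $j$. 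Specializing to $j=n$ gives (iii) and (iv).

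The most delicate part is the addendum to (iii) concerning the initial vertex of $\lambda_{m'}$. I would prove it by induction on $m - m'$, analyzing the first step after $m'$. If this step is $(0,1)$, then $\lambda_{m'+1}$ shares an initial vertex with $\lambda_{m'}$ by construction, and the interval $[m'+1, m]$ satisfies the hypothesis, so the inductive hypothesis applies. If instead the step is $(1,-1)$, let $m''$ be the first time $> m'$ with $\frk L(m'') = \frk L(m')$; then $m'' \leq m$ and the step at $m''$ must be $(-1,0)$. The position-tracking from the previous paragraph shows that at time $m''-1$, $\lambda_{m'}$ is still on $\ol\bdy_L\frk m_{m''-1}$ immediately west of $\lambda_{m''-1}$, so step $m''$ is Case~A and $\lambda_{m''}$ inherits its initial vertex from $\lambda_{m'}$; applying the inductive hypothesis on $[m'', m]$ concludes. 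I expect this position-tracking step to be the main obstacle, as it requires combining the structural invariant from the previous paragraph with a careful reading of the Case~A triangle-addition step.
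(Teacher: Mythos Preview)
Your proposal is correct and would work. Both you and the paper prove the boundary-length formulas by analyzing how each step type modifies the four boundaries, so the core case analysis is shared. Where you diverge is in organization and in the handling of (iii) and its addendum.

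The paper first packages the step-by-step changes into the single identity $\frk L(j) = \#\ol\bdy_L \frk m_j - \#\ul\bdy_L \frk m_j - 1$ (and its $\frk R$-analogue), then reads off the boundary-length formulas and (i)--(ii) from it. For (iii), rather than maintaining your set $S_j$ inductively, the paper applies the already-proved lower-left count to the \emph{suffix} walk $\frk Z|_{[m+1,n]}$: the edge $\lambda_m$ survives on the upper-left of $\frk m_n$ exactly when the map built from the suffix has empty lower-left boundary, which by the formula is the stated condition on $\frk L$. For the vertex addenda to (i) and (iii), the paper interprets $\frk L(m) - \min_{[0,m]}\frk L$ directly as the distance along the current upper-left boundary from its westmost vertex to the initial vertex of $\lambda_m$, so the ``same initial vertex'' claim becomes the observation that this distance is zero at the relevant times---avoiding your separate induction on $m-m'$.

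Your approach is more explicit and self-contained (the description of $S_j$ is a useful invariant), but requires verifying more by hand---in particular, that the $\frk L$-values on $S_j$ are consecutive integers and that ordering by $m$ agrees with ordering by $\frk L$-value, both of which you implicitly need for the ``immediately west of $\lambda_{m''-1}$'' step in your addendum argument. The paper's route is more economical: once the identity is in hand, (iii) reduces to (i) on a suffix, and the vertex-position claims follow from a one-line distance calculation rather than a nested induction.
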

\begin{proof}
We prove the statements for the lower-left and upper-left boundaries. The proof for the lower-right and upper-right boundaries are similar.

For $j\in [0,n]\cap\BB Z$, let $\frk m_j$ be the time-$j$ map in the KMSW procedure, as in Definition~\ref{def-kmsw}. By considering each of the three possible values of $\frk Z(j) -\frk Z(j-1)$ in Definition~\ref{def-kmsw}, we see that for $j\geq 1$,
\begin{align}
    \frk L(j)  - \frk L(j-1)&= (\# \ol \bdy_L \frk{m}_j  - \# \ol \bdy_L \frk{m}_{j-1}) - (\# \ul \bdy_L \frk{m}_{j}  - \# \ul \bdy_L \frk{m}_{j-1})\notag\\
    &= (\# \ol \bdy_L \frk{m}_j  - \# \ul \bdy_L \frk{m}_j) - (\# \ol \bdy_L \frk{m}_{j-1}  - \# \ul \bdy_L \frk{m}_{j-1}) ,
\end{align}
and $\frk L(0)=0= (\# \ol \bdy_L \frk{m}_0  - \# \ul \bdy_L \frk{m}_0)-1 $.
(We also have the analogous relation with $\frk R$ in place of $\frk L$ and ``right'' in place of ``left'', but with starting value $\frk R(0)=0= (\# \ol \bdy_R \frk{m}_0  - \# \ul \bdy_R \frk{m}_0)+1$.) Summing these relations over $i\in [0,j]\cap\BB Z$ gives 
\eqb \label{eqn-kmsw-bdy-diff}
\frk L(j) = \# \ol \bdy_L \frk{m}_j  - \# \ul \bdy_L \frk{m}_j-1 \quad (\text{and} \quad 
\frk R(j) = \# \ol \bdy_R \frk{m}_j  - \# \ul \bdy_R \frk{m}_j+1.)
\eqe

By construction, the lower-left boundary length $\ul\bdy_L \frk m_j$ is non-decreasing in $j$ (once a lower-left boundary edge is added, it can never be removed). 
If $j\in [0,n]\cap\BB Z$ such that $\# \ul \bdy_L \frk{m}_j >  \# \ul \bdy_L \frk{m}_{j-1}$, then at time $j$ we must add a triangle to $\frk m_{j-1}$ which has a new missing edge that becomes part of the lower-left boundary of $\frk m_j$ and a new active edge whose initial vertex $v$ coincides with one vertex of the new missing edge (the one farthest from $\lambda_0$). By Definition~\ref{def-kmsw}, this happens if and only if $\frk Z(j) - \frk Z(j-1) = (-1,0)$ and the upper-left boundary of $\frk m_{j-1}$ consists of the single edge $\lambda_{j-1}$. By~\eqref{eqn-kmsw-bdy-diff}, this is the case if and only if $\frk L(j) = - \# \ul \bdy_L \frk{m}_j = \min_{i \in [0,j]\cap\BB Z} \frk L(i)$.  
 
Hence, the time $\tau_{-k}$ at which $\frk L$ attains the running minimum $-k$ corresponds to the times at which the $k$th lower-left boundary edge is added to the map. 
In particular, this gives the first part of Assertion~\ref{item-lower-left} and the formula for $\#\ul\bdy_L \frk m$ in~\eqref{eqn-kmsw-bdy}. The formula for $\# \ol\bdy_L \frk m$ in~\eqref{eqn-kmsw-bdy} follows from the formula for $\#\ul\bdy_L \frk m$ and~\eqref{eqn-kmsw-bdy-diff}. 
 
We will now prove the rest of Assertion~\ref{item-lower-left}.
The time $\tau_{-k}$ is obviously the first time that the vertex $v$ (which is the west vertex of the $k$th lower-left boundary edge) is the initial vertex of an active edge. 
For all subsequent times $m\geq \tau_{-k}$ such that $\frk{L}(j)\geq -k$ for all $j\leq m$, we have that $\# \ul \bdy_L \frk{m}_m = - \min_{i \in [0,m]\cap\BB Z} \frk L(i)$ since $\ul\bdy_L \frk m_j$ is non-decreasing in $j$, and so by \eqref{eqn-kmsw-bdy-diff} and~\eqref{eqn-kmsw-bdy} for $\ul\bdy_L \frk m_m$, we have that 
\begin{equation}\label{eq:vertex-bnd-times}
    \# \ol \bdy_L \frk{m}_j -1=\frk{L}(m)+ \# \ul \bdy_L \frk{m}_m =\frk{L}(m)-\min_{j\in [0,m]\cap\BB Z} \frk{L}(j)=\frk{L}(m)-(-k)
\end{equation} 
is equal to the distance along the upper-left boundary between $v$ and the initial vertex of $\lambda_m$. Hence, the vertex $v$ is still part of the upper-left boundary of the corresponding map $\frk{m}_m$ and we have that when $\frk{L}(j)\geq -k$ for all $j\leq m$ and $\frk{L}(m) = -k$, the initial vertex of $\lambda_m$ must be the vertex $v$. These are the only times this happens. Indeed, at the first subsequent time $m\geq \tau_{-k}$ such that $\frk{L}(m)= -(k+1)$, using the same argument in the paragraph below \eqref{eqn-kmsw-bdy-diff}, we have that the vertex $v$ is no longer part of the upper-left boundary of the corresponding map $\frk{m}_m$.  
This completes the proof of Assertion~\ref{item-lower-left}.

We now consider Assertion~\ref{item-upper-left}. By the construction in Definition~\ref{def-kmsw}, for $m\in [0,n]\cap\BB Z$, the edge $\lambda_m$ is part of the upper-left boundary of $\frk m$ if and only if $\frk Z(m) - \frk Z(m-1) = (1,-1)$ and the map obtained by applying the KMSW procedure to $\frk Z|_{[m+1,n]}$ has no lower-left boundary edges. By~\eqref{eqn-kmsw-bdy} for the lower-left boundary of this map, this is the case if and only if $ \frk L  (j) \geq  \frk L (m)+1$ for all $j\in [m+1,n]\cap\BB Z$. This gives the first part of Assertion~\ref{item-upper-left}. The statement about the initial vertex of $\lambda_m$ follows from a similar argument as in the case of Assertion~\ref{item-lower-left}. 
\end{proof}

We now describe the inverse KMSW procedure, i.e., the procedure that, given a bipolar-oriented triangulation $\frk m$ with missing edges, returns the corresponding encoding walk  $\frk Z$ with increments in $\{(1,-1) , (-1,0) , (1,0)\}$. We stress that the inverse KMSW procedure will be defined only for bipolar-oriented maps, in particular, maps having only non-missing edges.
In what follows, \textbf{disconnecting} an edge from a vertex means deleting the connection between the edge and the vertex, resulting in an edge that is missing one of its vertices. See the right-hand side of Figure~\ref{fig-bipolar-boundaries} for examples of this procedure.

Assume that $\frk m$ has $n$ total edges. For each vertex $v$ of $\frk m$, we disconnect from $v$ every incoming edge to $v$ (including the missing ones) except for the leftmost incoming edge to $v$. This turns the map $\frk m$ into a plane tree $T_{UL}(\frk m)$ rooted at the source, which we call the \textbf{upper-left tree} of $\frk m$ (see the right-hand side of \cref{fig-bipolar-boundaries}). The tree $T_{UL}(\frk m)$ contains every edge of $\frk m$. We denote by $\lambda_1,\dots,\lambda_n$ the edges of $\frk m$ in the order that they are first visited by the counterclockwise contour exploration of $T_{UL}(\frk m)$.
The tree $T_{LR}(\frk m)$, called the \textbf{lower-right tree}, can be obtained similarly from $\frk m$ by disconnecting every outgoing edge from each vertex but the rightmost, and is rooted at the sink. The following  facts hold (see~\cite[Section 2.1]{kmsw-bipolar}):
The clockwise contour exploration of $T_{LR}(\frk m)$ also visits the edges of $\frk m$ in the order $ \lambda_{1},\ldots,\lambda_n$. Moreover, one can draw $T_{UL}(\frk m)$ and $T_{LR}(\frk m)$ in the plane, one next to the other, in such a way that the interface between the two trees traces a path, called the \textbf{Peano path},\footnote{The Peano path coincides with the counter-clockwise contour exploration of $T_{UL}(\frk m)$.} from the source to the sink visiting edges $\lambda_1,\ldots, \lambda_{n}$ in this order. Moreover, every branch of $T_{UL}(\frk m)$ (resp.\
 $T_{LR}(\frk m)$) is a leftmost (resp.\ rightmost) directed path from the source (resp.\ to the sink). 

\begin{defn}[Inverse KMSW procedure]\label{defn:KMSW-inv}
Let $n\geq 1$ and $\frk m$ be a bipolar-oriented triangulation with $n$ total edges, with $\el$ edges on its left boundary and $r$ edges on its right boundary. We define the corresponding KMSW walk $\frk  Z = (\frk  L(j),\frk  R(j))_{0\leq j\leq n-1}$ as follows: for $0\leq j\leq n-1$, 
\begin{itemize}
    \item $\frk  L(j)$ is the height in the tree $T_{UL}(\frk  m)$ of the initial vertex of $\lambda_{j+1}$ (i.e.\ its distance in $T_{UL}(\frk  m)$ from the source);
    \item $\frk  R(j)$ is the height in the tree $T_{LR}(\frk m)$ of the terminal vertex of $\lambda_{j+1}$ (i.e.\ its distance in $T_{LR}(\frk m)$ from the sink) minus $r-1$.
\end{itemize}
\end{defn}

By~\cite[Theorem 2]{kmsw-bipolar}, the function $\frk m\to \frk Z$ of Definition~\ref{defn:KMSW-inv} is the inverse of the bijection from Theorem~\ref{thm-kmsw}. That is, if $\frk m$ and $\frk Z$ are as in Definition~\ref{defn:KMSW-inv}, then $\frk Z$ is a walk with increments in $\{(1,-1) , (-1,0) , (1,0)\}$ which starts at $(0,0)$, ends at $(\el -1 ,-r + 1)$, has $n-1$ total steps, and stay in $[0,\infty) \times [-r+1,\infty)$; and applying the KMSW procedure to $\frk Z$ gives $\frk m$.   

Recall the definition of leftmost and rightmost directed paths from Section~\ref{sec-busemann-intro}. The following lemma describes the boundary of the infinite map $(\frk{m}_{n,\infty},\lambda_n)$ introduced in Remark~\ref{remark-kmsw-infinite}.

\begin{lem} \label{lem-uibot-bdy}
Let $\frk Z : \BB Z\to\BB Z^2$ be a bi-infinite walk with increments in $\{(1,-1), (-1,0) , (0,1)\}$, normalized so that $\frk Z(0) = (0,0)$.
For $n\in\BB Z$, let $(\frk{m}_{n,\infty},\lambda_n)$ be the rooted submap of the UIBOT obtained by applying, as detailed in  Remark~\ref{remark-kmsw-infinite}, the KMSW procedure to $\frk Z|_{[n,\infty)}$. 

The boundary of $\frk{m}_{n,\infty}$ is the union of the root edge $\lambda_n$, the rightmost directed path in $\frk{m}_{-\infty,\infty}$ from the terminal vertex of $\lambda_n$ to $\infty$, and the leftmost directed path in $\frk{m}_{-\infty,\infty}$ from $-\infty$ to the initial vertex of $\lambda_n$. The edge $\lambda_n$ and the edges on the rightmost directed path are part of $\frk{m}_{n,\infty}$ and form the right boundary of $\frk{m}_{n,\infty}$, whereas the edges of the leftmost directed path are not, i.e.\ are missing edges, and form the left boundary of $\frk{m}_{n,\infty}$. 

Furthermore, every vertex on the left boundary of $\frk{m}_{n,\infty}$, including the initial vertex of $\lambda_n$, has at least one outgoing edge but no incoming edges (recall that missing edges are not oriented), while every other vertex of $\frk{m}_{n,\infty}$ has at least one incoming and one outgoing edge. Finally, every pair of rightmost directed paths $P_x$ and $P_{x'}$ started from a pair of boundary vertices $x$ and $x'$ of $\frk{m}_{n,\infty}$ coalesce into each other, i.e.\ there is some $s,s'>0$ such that $P_x(s+t)=P_{x'}(s'+t)$ for all $t\geq 0$.
\end{lem}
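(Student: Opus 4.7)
The plan is to apply Lemma~\ref{lem-kmsw-bdy} to the finite truncations $\frk m_{n, n+k}$ and let $k \to \infty$. Write $\frk Z = (\frk L, \frk R)$, shifted so that $\frk Z(n) = (0,0)$. Since the increments of $\frk Z$ are i.i.d.\ uniform on $\{(1,-1),(-1,0),(0,1)\}$, each of $\frk L$ and $\frk R$ is a recurrent mean-zero random walk on $\BB Z$, so $\liminf_{j\to\infty}\frk L(j) = \liminf_{j\to\infty}\frk R(j) = -\infty$ and $\limsup_{j\to\infty}\frk L(j) = \limsup_{j\to\infty}\frk R(j) = +\infty$ almost surely.

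By Lemma~\ref{lem-kmsw-bdy}\ref{item-upper-left}--\ref{item-upper-right}, an edge can remain on the upper-left (resp.\ upper-right) boundary of $\frk m_{n, n+k}$ for every sufficiently large $k$ only if $\frk L$ (resp.\ $\frk R$) stays strictly above some finite value forever after a finite time, a zero-probability event given the liminf property. Hence the upper-left and upper-right boundaries of $\frk m_{n,\infty}$ are almost surely empty. On the other hand, by~\eqref{eqn-kmsw-bdy} the lower-left missing and lower-right non-missing boundaries of $\frk m_{n, n+k}$ have lengths $-\min_{[n,n+k]} \frk L$ and $-\min_{[n,n+k]} \frk R + 1$, both tending to $+\infty$. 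Therefore the boundary of $\frk m_{n,\infty}$ splits into an infinite non-missing lower-right segment (containing $\lambda_n$) and an infinite missing lower-left segment, meeting only at the initial vertex of $\lambda_n$.

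To identify these segments with the stated directed paths, I would use two structural properties of the KMSW procedure, both verifiable by direct inspection of Definition~\ref{def-kmsw}: (P1) at every intermediate time $j$, the lower-right boundary of $\frk m_{n,j}$ is a simple directed path starting at the initial vertex of $\lambda_n$; and (P2) for every vertex $v \in \frk m_{-\infty,\infty}$, the edges incident to $v$ are added by the bi-infinite KMSW procedure at a consecutive block of times, with the incoming edges (in counterclockwise cyclic order around $v$) strictly preceding the outgoing edges (also in counterclockwise cyclic order). By (P2), the last outgoing edge from $v$ added by the procedure is the rightmost outgoing edge from $v$; by Lemma~\ref{lem-kmsw-bdy}\ref{item-lower-right} this last outgoing edge is precisely the next lower-right boundary edge after $v$ first appears on the lower-right boundary. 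Iterating from the terminal vertex of $\lambda_n$ identifies the lower-right boundary with $\{\lambda_n\}$ together with the rightmost directed path from the terminal of $\lambda_n$ to $\infty$. The symmetric argument using the incoming-edge half of (P2) and Lemma~\ref{lem-kmsw-bdy}\ref{item-lower-left} identifies the lower-left missing boundary with the leftmost directed path from $-\infty$ to the initial vertex of $\lambda_n$.

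The remaining statements follow quickly. For a vertex $v$ on the lower-left boundary, Lemma~\ref{lem-kmsw-bdy}\ref{item-lower-left} shows that $v$ first appears as the initial vertex of an active edge at some time $\geq n$, so every outgoing edge from $v$ lies in $\frk m_{n,\infty}$ and there is at least one such edge; by (P2), every incoming edge to $v$ is added strictly earlier, hence at a time $< n$, and therefore does not lie in $\frk m_{n,\infty}$. Any other vertex of $\frk m_{n,\infty}$ is created at some time $j \geq n$ as an endpoint of $\lambda_j$, and (P2) yields both an incoming and outgoing edge in $\frk m_{n,\infty}$ by the same analysis. For coalescence, every rightmost directed path in $\frk m_{n,\infty}$ from a boundary vertex is a branch of the subgraph obtained from $\frk m_{-\infty,\infty}$ by keeping only the rightmost outgoing edge at each vertex; since the lower-right boundary identified above is the unique branch of this subgraph escaping to $\infty$, any two such rightmost paths coalesce into the lower-right boundary, and hence into each other. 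The main obstacle is establishing property (P2), which requires a careful combinatorial check that the KMSW procedure processes edges at each vertex in an order consistent with the cyclic order of the bipolar orientation.
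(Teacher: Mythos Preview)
Your approach is valid in outline but takes a substantially different route from the paper, and your key property (P2) is both stronger than needed and not quite correct as stated.

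The paper's proof is a three-line appeal to the \emph{inverse} KMSW procedure (Definition~\ref{defn:KMSW-inv}). The essential fact is that every branch of the upper-left tree $T_{UL}(\frk m)$ is a leftmost directed path from the source, and every branch of the lower-right tree $T_{LR}(\frk m)$ is a rightmost directed path to the sink. Since the Peano path (which is exactly the sequence $\lambda_0,\lambda_1,\dots$) is the contour of both trees, the lower-left boundary of each $\frk m_{n,j}$ is automatically a branch of $T_{UL}$ (hence a leftmost path) and the lower-right boundary is a branch of $T_{LR}$ (hence a rightmost path). Coalescence of rightmost paths is then immediate: they are all branches of the single tree $T_{LR}$. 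No auxiliary property like (P2) is needed.

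Your (P2) is an attempt to rederive this tree structure from the forward procedure. As written it is false: the edges incident to a fixed vertex are \emph{not} added at a consecutive block of times (already for the source vertex, outgoing edges can be separated by long excursions of the Peano path into subtrees). What survives is the weaker statement that the rightmost outgoing edge from $v$ is the \emph{last} edge incident to $v$ visited by the Peano path, and the leftmost incoming edge is the \emph{first}---but this is precisely what the two trees encode, and proving it from scratch amounts to reproving the inverse bijection. You would save considerable effort by invoking Definition~\ref{defn:KMSW-inv} directly.

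Your recurrence argument (showing the upper-left and upper-right boundaries of $\frk m_{n,\infty}$ are a.s.\ empty via $\liminf \frk L = \liminf \frk R = -\infty$) is correct and addresses a point the paper leaves implicit. This is a genuine contribution of your write-up, though it is needed only because the lemma is stated for the random walk underlying the UIBOT rather than for an arbitrary bi-infinite walk.
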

\begin{proof}
This follows combining:  ($i$) the construction  in Remark~\ref{remark-kmsw-infinite}   of $\frk{m}_{n,\infty}$ as the increasing union of the triangulations $(\frk{m}_j)_{j\in\BB Z_{\geq n}}$ built during the KMSW procedure; ($ii$) the construction of $\frk{m}_{-\infty,\infty}$ in the same remark; and ($iii$) the description of the inverse KMSW procedure in Definition~\ref{defn:KMSW-inv}, recalling that every branch of $T_{UL}(\frk m_j)$ (resp.\
 $T_{LR}(\frk m_j)$) is a leftmost (resp.\ rightmost) directed path from the source (resp.\ to the sink).
\end{proof}

\subsection{KMSW encodings of random bipolar-oriented triangulations} 
\label{sec-kmsw-random}
 
Recall from Section~\ref{sec-busemann-intro} that the uniform infinite bipolar-oriented triangulation (UIBOT) is the directed, edge-rooted infinite bipolar-oriented triangulation $(\uibot,\lambda_0)$ obtained as the Benjamini-Schramm~\cite{benjamini-schramm-topology} local limit in law of uniform bipolar-oriented triangulations with $n$ edges and a fixed number of left and right boundary edges, rooted at a uniformly chosen edge. 
We recall the definition of this topology for directed planar maps.

\begin{defn} \label{def-bs-topology}
Let $(M,e)$ and $(\wt M , \wt e)$ be directed planar maps, each equipped with a (directed) root edge. 
For $r \in \BB N $, define the graph-distance ball $\mcl B_r(M)$ to be the directed submap of $M$ consisting of the vertices of $M$ which lie at (undirected) graph distance at most $r$ from the initial vertex of $e$, together with the (directed) edges whose vertices both lie in this vertex set and the faces whose boundary vertices all lie in this vertex set. 
Similarly define $\mcl B_r(\wt M)$. The \textbf{Benjamini-Schramm local distance} between $(M,e)$ and $(\wt M , \wt e)$ is 
\eqbn
D_{\op{loc}}((M,e) , (\wt M , \wt e)) = \inf\left\{ 2^{-r} : \mcl B_r(M) \cong \mcl B_r(\wt M)\right\} 
\eqen
where $\cong$ means that the two balls are isomorphic as directed planar maps.  
\end{defn}

The UIBOT can be described in terms of the KMSW procedure as follows (see~\cite[Proposition 3.16]{ghs-map-dist} for the case of bipolar-oriented maps with unconstrained face degrees -- the triangulation case can be treated identically). 

\begin{prop}[\cite{ghs-map-dist}] \label{prop-kmsw-uibot}
Let $\mcl Z : \BB Z\to\BB Z^2$ be a bi-infinite random walk whose increments are i.i.d.\ uniform samples from $\{(1,-1), (-1,0) , (0,1)\}$, normalized so that $\mcl Z(0) = (0,0)$. Applying the KMSW procedure to $\mcl Z$, as detailed in  Remark~\ref{remark-kmsw-infinite}, gives the UIBOT $\uibot$.
\end{prop}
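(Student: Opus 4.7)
The plan is to transport the local convergence statement from the level of maps to the level of encoding walks, where it becomes a classical ``environment seen from a uniform typical point'' statement for random walks conditioned in a cone. This is exactly the strategy of~\cite[Proposition 3.16]{ghs-map-dist} in the setting of unconstrained face degrees; the triangulation case I will address here requires only cosmetic changes.

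First, fix $\el,r \in \BB N$ and let $M_n$ be a uniform bipolar-oriented triangulation with $n$ edges, left boundary length $\el$, and right boundary length $r$. By Theorem~\ref{thm-kmsw} and Definition~\ref{defn:KMSW-inv}, the unrooted KMSW encoding walk $\frk Z_n$ is uniform over walks of length $n-1$ with steps in $\{(1,-1),(-1,0),(0,1)\}$ going from $(0,0)$ to $(\el-1,-r+1)$ and staying in the quadrant $[0,\infty)\times [-r+1,\infty)$. Rooting $M_n$ at a uniformly chosen edge $\lambda_0$ corresponds to picking a uniform time $k \in \{0,1,\dots,n-1\}$ (the time when $\lambda_0$ is added during the KMSW procedure) and passing to the recentered walk
\[
\wt{\frk Z}_n(j) \, := \, \frk Z_n(k+j) - \frk Z_n(k), \qquad j \in [-k,\, n-1-k]\cap \BB Z.
\]

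Next, I will argue that, as $n \to \infty$ (with $\el,r$ fixed and then sent to infinity in a subsequent limit, or using a diagonal argument), the recentered walk $\wt{\frk Z}_n$ converges in distribution, with respect to the local topology on bi-infinite $\BB Z^2$-valued paths, to the bi-infinite i.i.d.\ walk $\mcl Z$ of the statement. The point is that the increments of $\frk Z_n$ are centered (uniform on $\{(1,-1),(-1,0),(0,1)\}$), so after choosing $k$ uniformly, with probability tending to $1$ both endpoints and the cone walls are far outside any fixed window $[-T,T]$ of the recentered walk. On such windows, the conditioning therefore decouples, and the restriction of $\wt{\frk Z}_n$ to $[-T,T]$ converges to the unconditioned i.i.d.\ law. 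The required uniformity for this decoupling is supplied by the invariance-principle and local-limit estimates for random walks in cones of~\cite{dw-cones}.

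Finally, I will transfer the convergence back to the map level using the locality of the KMSW procedure. Inspecting the inductive construction in Definition~\ref{def-kmsw} together with the boundary-length identities of Lemma~\ref{lem-kmsw-bdy}, the graph-distance ball $\mcl B_R(M_n,\lambda_0)$ around the root edge is a deterministic function of $\wt{\frk Z}_n$ restricted to a window of random but (uniformly in $n$) tight length around $0$. Hence local convergence $\wt{\frk Z}_n \to \mcl Z$ yields Benjamini-Schramm local convergence $(M_n,\lambda_0) \to \uibot$, where $\uibot$ is the map produced by applying the KMSW procedure to $\mcl Z$ as described in Remark~\ref{remark-kmsw-infinite}. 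The main obstacle is the previous step: one must verify that the cone conditioning and endpoint conditioning become locally trivial around a uniform typical time, uniformly in $n,\el,r$, and this is where the input from~\cite{dw-cones} is essential. The final transfer to maps is then a soft consequence of the strictly local nature of the KMSW construction.
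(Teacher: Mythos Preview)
Your proposal is correct and follows essentially the same approach as the paper, which does not give its own proof but simply cites \cite[Proposition 3.16]{ghs-map-dist} and remarks that the triangulation case can be treated identically. One minor point: in the paper's definition of the UIBOT the boundary lengths $\el,r$ are held fixed while $n\to\infty$, so no subsequent limit or diagonal argument in $\el,r$ is needed.
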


We have the following description of the KMSW encoding walk for Boltzmann bipolar-oriented planar maps with a given right boundary length (Definition~\ref{def-boltzmann-right}).

\begin{lem} \label{lem-kmsw-boltzmann}
Let $  r \in \BB N$. Let $\mcl Z = (\mcl L , \mcl R) : \BB N_0 \to\BB Z^2$ be a walk started from $(0,0)$ with i.i.d.\ increments sampled uniformly from $\{(1,-1), (-1,0) , (1,0)\}$. Let $\tau$ be the smallest $n\in\BB N$ for which $\mcl R(n) = -r$ and let
\eqbn
E := \left\{ \mcl L(n) \geq 0,\: \forall n \in [0,\tau]\cap\BB Z\right\} .
\eqen 
Let $M$ be the map obtained by applying the KMSW procedure to $\mcl Z|_{[0,\tau-1]}$.  
The conditional law of $M$ given $E$ is that of a Boltzmann bipolar-oriented triangulation with right boundary length $r$ (Definition~\ref{def-boltzmann-right}).  
\end{lem}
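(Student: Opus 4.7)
The plan is to apply Theorem~\ref{thm-kmsw} to identify $\{M = \frk m\} \cap E$ with a precisely described trajectory of $\mcl Z$, and then read off the probability from the fact that $\mcl Z$ has i.i.d.\ uniform increments.

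First I would check that on $E$ the walk $\mcl Z|_{[0,\tau-1]}$ is precisely one of the walks appearing on the left-hand side of the KMSW bijection in Theorem~\ref{thm-kmsw}. The definition of $E$ gives $\mcl L(j) \geq 0$ for $j \in [0,\tau]$. Since the only increment that decreases $\mcl R$ is $(1,-1)$, the minimality of $\tau$ forces both $\mcl Z(\tau) - \mcl Z(\tau-1) = (1,-1)$ and $\mcl R(j) \geq -r+1$ for $j \in [0,\tau-1]$, with $\mcl R(\tau-1) = -r+1$. Writing $\ell := \mcl L(\tau)$, we have $\mcl L(\tau-1) = \ell - 1 \geq 0$ (by $E$), so $\ell \geq 1$, and $\mcl Z|_{[0,\tau-1]}$ has $\tau - 1$ steps, starts at $(0,0)$, ends at $(\ell-1, -r+1)$, and stays in $[0,\infty) \times [-r+1,\infty)$---exactly the left-hand side of Theorem~\ref{thm-kmsw} with parameters $(\ell, r, \tau)$. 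In particular $M$ is a bipolar-oriented triangulation with no missing edges, $\tau$ total edges, $\ell$ left boundary edges, and $r$ right boundary edges.

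Next I would evaluate $\BB P[M = \frk m, E]$ for a fixed bipolar-oriented triangulation $\frk m$ with $n = \#\mcl E(\frk m)$ and right boundary length $r$. Let $\frk z : [0, n-1] \cap \BB Z \to \BB Z^2$ be its KMSW encoding from Definition~\ref{defn:KMSW-inv}, so that $\frk z$ starts at $(0,0)$, ends at $(\ell-1, -r+1)$, and stays in $[0,\infty) \times [-r+1, \infty)$, where $\ell$ is the left boundary length of $\frk m$. By the injectivity of the KMSW bijection one has the set-theoretic equality
\eqbn
\{M = \frk m\} \cap E = \bigl\{\mcl Z|_{[0,n-1]} = \frk z\bigr\} \cap \bigl\{\mcl Z(n) - \mcl Z(n-1) = (1,-1)\bigr\} .
\eqen
Indeed, the right-hand side forces $\tau = n$ (by minimality of $\tau$ and the fact that $\mcl R(n) = -r$) and $\mcl L(n) = \ell \geq 1$, so $E$ holds; conversely, if $M = \frk m$ and $E$ hold, then the number of edges of $M$ forces $\tau = n$, and $\mcl Z|_{[0,n-1]} = \frk z$ by the uniqueness of the KMSW encoding of $\frk m$. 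Because $\mcl Z$ has i.i.d.\ uniform increments on a three-element set, this event has probability $3^{-n}$.

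Finally, $\BB P[E] > 0$ (for example, the trajectory of $r$ consecutive $(1,-1)$ steps lies in $E$ and has probability $3^{-r}$) and $\tau < \infty$ almost surely because $\mcl R$ is a zero-mean, bounded-increment one-dimensional random walk and is therefore recurrent. Dividing yields
\eqbn
\BB P[M = \frk m \mid E] = \frac{3^{-n}}{\BB P[E]} ,
\eqen
which matches~\eqref{eqn-boltzmann-right} with $C_r = \BB P[E]$. The whole argument is essentially bookkeeping against Theorem~\ref{thm-kmsw}, so the only point requiring care is aligning the walk endpoint and the first passage at $\tau$ with the parameters of the bijection.
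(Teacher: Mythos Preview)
Your proof is correct and follows essentially the same approach as the paper: both identify $\{M = \frk m\}$ (equivalently, $\{M = \frk m\} \cap E$) with the event $\{\mcl Z|_{[0,n-1]} = \frk z\} \cap \{\mcl Z(n) - \mcl Z(n-1) = (1,-1)\}$ via the KMSW bijection, compute its probability as $3^{-n}$, and divide by $\BB P[E]$. Your version is slightly more explicit about the endpoint bookkeeping and the positivity of $\BB P[E]$, but the argument is the same.
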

\begin{proof}
Let $\frk m$ be a bipolar-oriented triangulation with $\el$ left boundary edges, $r$ right boundary edges, and $n$ total edges. 
Let $\frk Z$ be the corresponding encoding walk under the KMSW bijection as in Theorem~\ref{thm-kmsw}. Then $\frk Z$ is a walk that starts at $(0,0)$, ends at $(\el-1,-r+1)$, has $n-1$ total steps, and stays in $[0,\infty) \times [-r+1,\infty)$. 
We have $M = \frk m$ if and only if $\mcl Z|_{[0,\tau-1]} = \frk Z$. This is the case if and only if $\mcl Z|_{[0,n-1]} = \frk Z$ and $\mcl Z(n) - \mcl Z(n-1) = (1,-1)$ (in which case $E$ occurs).
Since the increments of $\mcl Z$ are i.i.d.,  
\eqbn
\BB P[M = \frk m] 
= \BB P\left[ \mcl Z|_{[0,n-1]} = \frk Z ,\, \mcl Z(n) - \mcl Z(n-1) = (1,-1) \right] 
= \left( \frac13 \right)^n . 
\eqen
Since $\{M = \frk m\} \subset E$, it follows that
\eqbn
\BB P[M = \frk m \,|\, E]  
= \BB P[E]^{-1} \left( \frac13 \right)^n   
\eqen
as required. 
\end{proof}

As a consequence of Lemma~\ref{lem-boltzmann-finite}, we see that the first Boltzmann distribution defined in Definition~\ref{def-boltzmann-right} is well-defined, in the sense that it has finite total mass. We can also establish the analogous statement for the second distribution defined in Definition~\ref{def-boltzmann-right}.

\begin{lem} \label{lem-boltzmann-finite}
Assume that we are in the setting of Lemma~\ref{lem-kmsw-boltzmann}. 
Then 
\eqb \label{eqn-boltzmann-finite}
\BB E\left[ \mcl L(\tau) + 1  \,|\, E \right] < \infty .
\eqe 
In particular, the Boltzmann distribution on bipolar-oriented triangulations with right boundary length $r$ and a marked boundary edge (Definition~\ref{def-boltzmann-right}) is well-defined.
\end{lem}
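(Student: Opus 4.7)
The plan is to convert the finiteness of $\wt C_r$ into a moment bound on the stopping time $\tau$, and then control that moment via a cone-survival estimate for the two-dimensional random walk $\mcl Z$.

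By Lemma~\ref{lem-kmsw-boltzmann}, for every bipolar-oriented triangulation $\frk m$ with right boundary length $r$ we have $\BB P[M = \frk m] = 3^{-n(\frk m)}$, and on the event $\{M = \frk m\}$ the left-boundary length $\ell(\frk m)$ equals $\mcl L(\tau)$ (because the KMSW encoding walk reaches $(\ell(\frk m) - 1, -r + 1)$ at time $\tau - 1$ and then takes the step $(1,-1)$). Summing over the marked vertex gives
\begin{equation*}
\wt C_r = \sum_{\frk m} (\ell(\frk m) + 1)\, 3^{-n(\frk m)} = \BB E\big[(\mcl L(\tau) + 1)\mathbf{1}_E\big].
\end{equation*}
Since every increment of $\mcl L$ is at most $+1$ and $\mcl L(0) = 0$, we have $\mcl L(\tau) \leq \tau$, so it suffices to show $\BB E[\tau \mathbf{1}_E] < \infty$, equivalently that $\sum_{n \geq 0} \BB P[\tau > n, E]$ converges.

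On the event $\{\tau > n\} \cap E$ the walk $\mcl Z = (\mcl L, \mcl R)$ stays inside the cone $K := [0, \infty) \times [-r+1, \infty)$ throughout the times $[0, n]$: the first coordinate by $E$, and the second because $\tau$ has not yet occurred. The increments of $\mcl Z$ have mean zero, finite variance, and covariance matrix $\Sigma = \tfrac{1}{3}\left(\begin{smallmatrix} 2 & -1 \\ -1 & 2 \end{smallmatrix}\right)$; a direct calculation shows that whitening by $\Sigma^{-1/2}$ sends $K$ to a planar cone of opening angle $\arccos(1/2) = \pi/3$. By the random-walk-in-cone survival estimates of Denisov-Wachtel~\cite{dw-cones},
\begin{equation*}
\BB P[\tau > n, E] \leq \BB P\big[\mcl Z(k) \in K,\ \forall k \in [0, n]\big] \leq C\, n^{-3/2}.
\end{equation*}
Summing yields $\BB E[\tau \mathbf{1}_E] < \infty$, hence $\wt C_r < \infty$, which gives the lemma.

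The only genuinely nontrivial ingredient is the cone-survival bound; everything else is bookkeeping. Any polynomial decay rate faster than $n^{-1}$ would suffice for the conclusion, so in principle one could bypass the explicit cone-angle computation; the exponent $3/2$ obtained here is consistent with the asymptotics $a_{n,r} \asymp 3^n n^{-\alpha}$ for some $\alpha > 1$ mentioned after Definition~\ref{def-boltzmann-right}. A minor subtlety is that $\mcl Z$ starts on the boundary of $K$ rather than in its interior; this only sharpens the bound and can be handled rigorously by first conditioning on the first step that moves $\mcl Z$ into the interior of $K$, from which point the standard Denisov-Wachtel estimate applies directly.
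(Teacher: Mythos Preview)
Your proof is correct, but the paper takes a much shorter and more elementary route. Instead of bounding $\mcl L(\tau)$ by $\tau$ and invoking the Denisov--Wachtel cone-exit estimate, the paper observes that $Y(n) := (\mcl L(n)+1)\,\BB 1_{\{\sigma>n\}}$, where $\sigma=\min\{n:\mcl L(n)=-1\}$, is a non-negative martingale for the lazy simple random walk $\mcl L$ (this is just the standard $h$-function for the walk killed at $-1$). Since $E=\{\sigma>\tau\}$, Fatou and optional stopping give
\[
\BB E\big[(\mcl L(\tau)+1)\,\BB 1_E\big]=\BB E[Y(\tau)]\le \liminf_{n\to\infty}\BB E[Y(\tau\wedge n)]=\BB E[Y(0)]=1,
\]
which is stronger than mere finiteness and uses no external input beyond the martingale property. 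Your approach, by contrast, imports a substantial result (Denisov--Wachtel) and requires the side discussion about starting on the boundary of the cone; it does have the merit of exhibiting the exponent $3/2$ explicitly and connecting to the heuristic $a_{n,r}\asymp 3^n n^{-\alpha}$ mentioned after Definition~\ref{def-boltzmann-right}, but for the bare finiteness statement it is considerable overkill.
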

\begin{proof}
Let $\sigma := \min\{n\geq 0 : \mcl L(n) = -1\}$, so that $E = \{\sigma > \tau\}$. Let $Y(n) := (\mcl L(n)+1) \BB 1_E$. One can check from the definition that $Y$ is a martingale w.r.t.\ the filtration generated by $\mcl Z|_{[0,n]}$ (see~\cite{bd-conditioning} for a much more general statement). 
The time $\tau$ is an a.s.\ finite stopping time for this filtration, and $Y$ is always non-negative. By Fatou's lemma and the optional stopping theorem, 
\alb
\BB E\left[ (\mcl L(\tau) +1) \BB 1_E \right] 
= \BB E\left[ Y(\tau) \right] 
\leq \lim_{n\to\infty} \BB E\left[ Y(\tau\wedge n) \right] 
= \BB E[Y(0)] = 1. 
\ale
This gives~\eqref{eqn-boltzmann-finite}. As explained just after Definition~\ref{def-boltzmann-right}, the Boltzmann distribution on bipolar-oriented triangulations with right boundary length $r$ and a marked left boundary vertex is obtained by weighting the Boltzmann distribution on bipolar-oriented triangulations with right boundary length $r$ by the left boundary length plus one. By Lemma~\ref{lem-kmsw-boltzmann}, this corresponds to weighting the conditional law of $\mcl Z|_{[0,\tau]}$ given $E$ by $\mcl L(\tau) + 1$. This is a valid weighting by~\eqref{eqn-boltzmann-finite}.
\end{proof}

\subsection{Conditional independence for Boltzmann bipolar-oriented triangulations}\label{sect-cond-indp}

Recall the notation $\op{LDP}$ and $\op{SDP}$ from Definition~\ref{def-ldp}.

\begin{defn} \label{def-geodesic}  
Let $\XDP \in \{\op{LDP}, \op{SDP}\}$. 
Let $\frk{m}$ be a finite planar map equipped with an acyclic orientation of its edges and let $x,y\in\mcl V(\frk{m})$. 
We say that a directed path $\frk{p} : [1,|\frk{p}|]\cap\BB Z \to \mcl E(\frk{m})$ from $x$ to $y$ is an \textbf{$\XDP$ geodesic} if the length $|\frk{p}|$ of $\frk{p}$ is equal to $\XDP_\frk{m}(x,y)$ (Definition~\ref{def-ldp}). We say that $\frk{p}$ is the \textbf{leftmost $\XDP$ geodesic} from $x$ to $y$ if every $\XDP$ geodesic in $\frk{m}$ from $x$ to $y$ lies weakly to the right of $\frk{p}$ if we stand at $x$ and look toward $y$. That is, every $\XDP$ geodesic from $x$ to $y$ lies in the submap of $\frk{m}$ bounded by $\frk{p}$, the rightmost directed path started from $x$, and the rightmost directed path ending at $y$ (including the vertices and edges lying on the three paths).
\end{defn}
 
It is easy to see that if there is a directed path from $x$ to $y$, then there is a unique leftmost $\XDP$ geodesic from $x$ to $y$. 

\begin{defn} \label{def-boltzmann-lr}
For $\el , r \in \BB N$, let $\mcl M_{\el ,r}$ be the set of finite bipolar-oriented triangulations with boundary having $\el$ edges on their left boundary and $r$ edges on their right boundary. 
We define the \textbf{Boltzmann distribution} on bipolar-oriented triangulations \textbf{with left boundary length $\el$ and right boundary length $r$} exactly as in Definition~\ref{def-boltzmann-right}, but with both the left and right boundary lengths fixed. That is, it is the law of a random bipolar-oriented triangulation $M(\el,r)$ such that for every bipolar-oriented triangulation $\frk{m}$ with $\el$ left boundary edges, $r$ right boundary edges, and $n$ total edges,  
\eqb \label{eqn-boltzmann}
\BB P\left[ M(\el,r) = \frk m \right] = \frac{3^{-n} }{C_{\el,r}},    
\eqe
where $C_{\el,r}$ is a normalizing constant. Equivalently, $M(\el,r)$ is a sample from the conditional law of a Boltzmann bipolar-oriented triangulation with $r$ right boundary edges (Definition~\ref{def-boltzmann-right}) given that its left boundary length is $\el$. 
\end{defn}

The following statement is the starting point of the proof that the Busemann function of Theorem~\ref{thm-busemann} has independent increments; see Figure~\ref{fig-fig-bipolar-map-cond-ind}.

\begin{figure}[t]
\begin{center}
\includegraphics[width=0.35\textwidth]{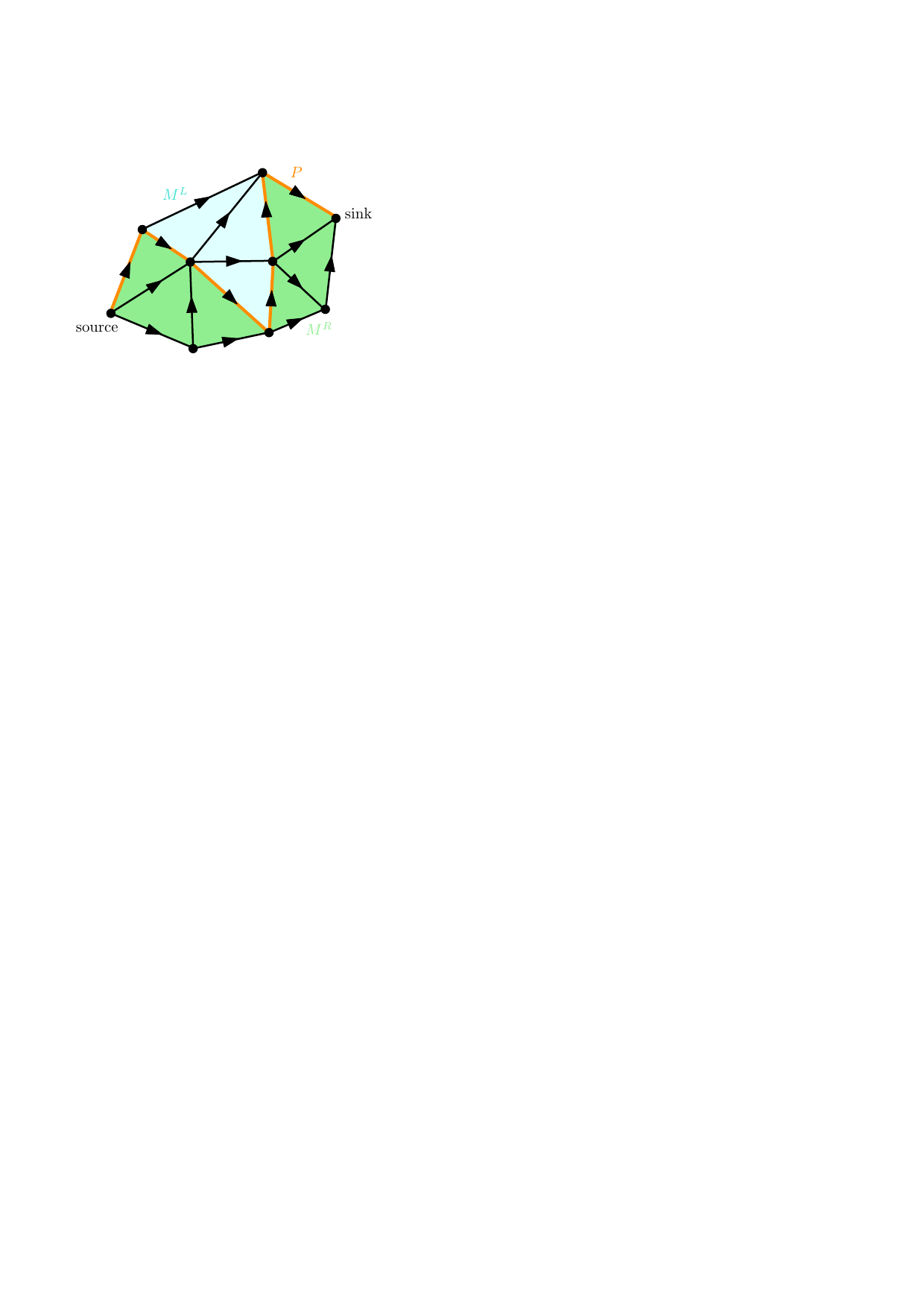}  
\caption{\label{fig-fig-bipolar-map-cond-ind}  
A Boltzmann bipolar-oriented triangulation $M(\el,r)$ with left boundary length $\el=3$ and right boundary length $r=4$. In orange, the leftmost $\op{LDP}$ geodesic $P$ from the source to the sink, as introduced in Definition~\ref{def-geodesic}. The map $M^L$ (resp.\ $M^R$) is the submap of $M(\el,r)$ consisting of the vertices, edges, and faces of $M(\el,r)$ in the lightblue (resp.\ lightgreen) region including the vertices and edges which lie on the orange leftmost $\XDP$ geodesic $P$. Note that the lightblue map $M^L$ does not contain any other geodesic from the source to the sink other than $P$, while the lightgreen map $M^R$ contains additional geodesics. By Lemma~\ref{lem-lr-ind}, the submaps $M^L$ and $M^R$ are conditionally independent given the length of $P$.}
\end{center}
\end{figure}
 
\begin{lem} \label{lem-lr-ind}
Fix $\XDP \in \{\op{LDP} , \op{SDP}\}$ and $\el,r\in\BB N$. Let $M(\el,r)$ be a Boltzmann bipolar-oriented triangulation with left boundary length $\el$ and right boundary length $r$. 
Let $P$ be the leftmost $\XDP$ geodesic from the source of $M(\el,r)$ to the sink of $M(\el,r)$ (Definition~\ref{def-geodesic}). 
Let $M^L$ (resp.\ $M^R$) be the submap of $M(\el,r)$ consisting of the vertices, edges, and faces of $M(\el,r)$ that lie to the left (resp.\ right) of $P$, including the vertices and edges that lie on $P$. 
Then $M^L$ and $M^R$ are conditionally independent given $|P| = \XDP_{M(\el,r)}(\mathrm{source}, \mathrm{sink})$.
\end{lem}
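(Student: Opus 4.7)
The plan is to prove the conditional independence by exhibiting an explicit bijection (cut-and-paste along $P$) under which the Boltzmann weight $3^{-n}$ factorizes, and then observing that the factorization of the joint law forces conditional independence.

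First, I would verify the following deterministic cut-and-paste bijection. For any $k \in \BB N$, triangulations $\frk m$ with left boundary length $\ell$, right boundary length $r$, total $n$ edges, and leftmost $\XDP$ geodesic $P$ from source to sink of length $k$ are in bijection with pairs $(\frk m^L, \frk m^R)$ of bipolar-oriented triangulations, where $\frk m^L$ has left boundary length $\ell$ and right boundary length $k$, and $\frk m^R$ has left boundary length $k$ and right boundary length $r$, subject to the following constraints coming from the ``leftmost $\XDP$'' condition applied to $P$:
\begin{itemize}
\item $A(\frk m^L, k)$: in $\frk m^L$, every directed source-to-sink path $Q$ must lie weakly to the right of the right boundary (this is a constraint only in the $\op{LDP}$ case: in the $\op{SDP}$ case it is automatic). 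Because any such $Q$ already lies in $\frk m^L$, equivalently $Q$ must equal the right boundary. In particular, the right boundary is the unique $\XDP$ geodesic from source to sink in $\frk m^L$.
\item $B(\frk m^R, k)$: in $\frk m^R$, every directed source-to-sink path has length $\leq k$ in the $\op{LDP}$ case, or length $\geq k$ in the $\op{SDP}$ case (otherwise $P$ would not realize the $\XDP$ distance in the glued map).
\end{itemize}
Conversely, gluing any $(\frk m^L, \frk m^R)$ satisfying $A$ and $B$ along their common length-$k$ boundary produces a map $\frk m$ in which the common path is the leftmost $\XDP$ geodesic. The edges of $\frk m$ decompose as the edges of $\frk m^L$ plus those of $\frk m^R$ minus the $k$ common boundary edges, so $n = n^L + n^R - k$.

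Second, I apply this bijection to compute, for admissible $(\frk m^L, \frk m^R)$ with common length-$k$ boundary,
\begin{equation*}
\BB P\left[ M^L = \frk m^L,\ M^R = \frk m^R,\ |P| = k \right]
= \frac{3^{-n}}{C_{\ell,r}}
= \frac{3^{k}}{C_{\ell,r}}\cdot 3^{-n^L} \BB 1\{A(\frk m^L,k)\}\cdot 3^{-n^R} \BB 1\{B(\frk m^R,k)\}.
\end{equation*}
The right-hand side is a product of a function of $k$, a function of $\frk m^L$, and a function of $\frk m^R$.

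Third, summing this factorized expression over $\frk m^R$ (resp.\ $\frk m^L$) gives the marginal mass $\BB P[M^L = \frk m^L, |P|=k]$ (resp.\ $\BB P[M^R = \frk m^R, |P|=k]$), and dividing by $\BB P[|P|=k]$ yields
\begin{equation*}
\BB P\left[ M^L = \frk m^L,\, M^R = \frk m^R \,\big|\, |P| = k \right]
= \BB P\left[ M^L = \frk m^L \,\big|\, |P| = k \right]\cdot \BB P\left[ M^R = \frk m^R \,\big|\, |P| = k \right],
\end{equation*}
which is the claimed conditional independence.

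The main obstacle is the bookkeeping in Step 1: one must verify carefully that the ``leftmost $\XDP$'' condition on $P$ in the glued map decouples cleanly into separate constraints on $\frk m^L$ and $\frk m^R$ that do not refer to each other. The crucial geometric input is that any directed source-to-sink path in $\frk m^L$ (respectively, $\frk m^R$) is also such a path in the glued $\frk m$ and so must respect the ``weakly-to-the-right-of-$P$'' property; since $\frk m^L$ lies weakly to the left of $P$, this forces $Q = P$, so the constraints $A$ and $B$ depend only on their own halves and the number $k$. Once this decoupling is established, Steps 2 and 3 are routine. Note that the analogous argument fails for undirected graph distance geodesics because the cut-and-paste bijection is not well-defined: a geodesic in the glued map could use edges alternately from $\frk m^L$ and $\frk m^R$, which is impossible for a directed path constrained by the orientation.
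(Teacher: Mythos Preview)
Your proposal is correct and follows essentially the same route as the paper: set up the cut-and-glue bijection, use the edge-count identity $n = n^L + n^R - k$ to factor the Boltzmann weight $3^{-n}$, and read off conditional independence. One small slip in your statement of $A(\frk m^L,k)$: the constraint should be that every \emph{$\XDP$ geodesic} (not every directed source-to-sink path) in $\frk m^L$ coincides with the right boundary---i.e., the right boundary is the unique $\XDP$ geodesic, exactly as you write in the last sentence of that bullet and exactly as the paper formulates it.
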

\begin{proof}
Fix $s \geq \max\{\el,r\}$ (if $\XDP = \op{LDP}$) or $s \leq \min\{\el,r\}$ (if $\XDP = \op{SDP}$). We will show that $M^L$ and $M^R$ are conditionally independent given $\{|P| = s\}$. 
Let $\mcl M^L_{\el,s}$ be the set of bipolar-oriented triangulations with $\el$ edges on their left boundary and $s$ edges on their right boundary, with the further property that the right boundary is the unique $\XDP$ geodesic from the source to the sink.
Let $\mcl M^R_{s,r}$ be the set of bipolar-oriented triangulations with $s$ edges on their left boundary and $r$ edges on their right boundary, with the further property that the left boundary is an $\XDP$ geodesic from the source to the sink (not necessarily unique). The asymmetry in the definitions is because we consider the leftmost $\XDP$ geodesic. Finally, let $\mcl M_{\el,r,s}$ be the set of all bipolar-oriented triangulations in $\mcl M_{\el,r}$ with the property that $\XDP(\mathrm{source} , \mathrm{sink})= s$. 
 
Consider the map $\Phi$ from $M^L_{\el,s} \times M^R_{s,r}$ to $M_{\el,r }$ that takes $(\frk m^L , \frk m^R) \in M^L_{\el,s} \times M^R_{\el , s}$ to the map $\Phi(\frk m^L,\frk m^R) \in M_{\el,r}$ obtained by identifying the right boundary of $\frk m^L$ with the left boundary of $\frk m^R$. One can easily check that $\Phi$ is a bijection from $M^L_{\el,s} \times M^R_{  s, r}$ to $\mcl M_{\el,r,s}$. The inverse bijection is given by cutting an element of $\mcl M_{\el,r,s}$ along its leftmost $\XDP$ geodesic from the source to the sink. 

If $\frk m \in \mcl M_{\el,r,s}$ and $(\frk m^L , \frk m^R) = \Phi^{-1}(\frk m)$, then the edge set of $\frk m$ is the union of the edge sets of $\frk m^L$ and $\frk m^R$. This is a disjoint union, except that the $s$ edges along the leftmost LDP geodesic of $\frk m$ are counted twice, hence
\eqb \label{eqn-edgescount}
\# \mcl E(\frk m) 
=\# \mcl E(\frk m^L)+\# \mcl E(\frk m^R)-s.  
\eqe
From this, we obtain  
\alb
\BB P\left[ M(\el,r)  = \frk m \,\Big|\, |P| = s \right] 
 \stackrel{\eqref{eqn-boltzmann}}{=} \frac{3^{- \# \mcl E(\frk m) } }{C_{\el, r} \BB P[|P| = s]}    
 \stackrel{\eqref{eqn-edgescount}}{=}   \frac{3^s }{C_{\el, r} \BB P[|P| = s]}   3^{- \# \mcl E(\frk m^L) } 3^{- \#\mcl E(\frk m^R)}.
\ale
This is the product of a constant times something depending only on $\frk m^L$ times something depending only on $\frk m^R$, which gives the desired conditional independence. 
\end{proof}

\begin{remark} \label{remark-ind-general}
Exactly the same proof gives analogs of Lemma~\ref{lem-lr-ind} for bipolar-oriented planar maps with other face degree distributions. This includes the biased face degree distributions considered in~\cite[Remark 5]{kmsw-bipolar} (which are expected to be in the universality class of $\gamma$-LQG for $\gamma \in (0,\sqrt 2)$ with $\gamma$ depending on the particular model).
\end{remark}

\subsection{KMSW bijection for boundary-channeled bipolar-oriented triangulations}
\label{sec-kmsw-left}

Recall from Definition~\ref{def-boltzmann-lr} that for $\el , r \in\BB N$, we denote by $\mcl M_{\el,r}$ the set of bipolar-oriented triangulations with $\el$ left boundary edges and $r$ right boundary edges. In this section, we discuss a special subset of $\mcl M_{\el,r}$ that will be used in Section~\ref{sec-uihbot-sym} for the purpose of proving the symmetry property from Theorem~\ref{thm-busemann-property}. The reader can skip this section on a first read. 

\begin{defn} \label{def-reverse-map}
 Let $\mcl M_{\el,r}^\bc$ be the set of bipolar-oriented triangulations $\frk m \in \mcl M_{\el,r}$ with the following property: each vertex on the boundary of $\frk m$, except for the sink vertex, has no incoming edges other than boundary edges. Equivalently, every boundary vertex, other than the source and the sink, has exactly one incoming edge. We call the maps in $\mcl M_{\el,r}^\bc$ \textbf{boundary-channeled} bipolar-oriented triangulations with left boundary length $\el$ and right boundary length $r$.
\end{defn}

The reason for our interest in $\mcl M_{\el,r}^\bc$ is the following symmetry, which will be used in Section~\ref{sec-uihbot-sym} to prove Property~\ref{item-busemann-sym} of Theorem~\ref{thm-busemann}. 
 
\begin{lem} \label{lem-bdy-reverse}
Let $\mcl M_{\el,r}^\bc$ be as above. For $\frk m \in \mcl M_{\el,r}^\bc$, let $\frk x_0,\dots,\frk x_r$ be the vertices on the right boundary of $\frk m$, enumerated in order so that $\frk x_0$ is the source and $\frk x_r$ is the sink. For $k\in \{1,\dots,r -1\}$, let $\Phi_k(\frk m)$ be the directed planar map obtained from $\frk m$ by reversing the orientation of the edges $(\frk x_0 , \frk x_1) , \dots , (\frk x_{k-1} , \frk x_k)$. Then $\Phi_k$ is a bijection from $\mcl M_{\el,r}^\bc$ to $\mcl M_{\el + k,  r - k}^\bc$. 
\end{lem}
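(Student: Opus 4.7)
The map $\Phi_k$ only changes the orientation of $k$ edges, leaving the underlying planar graph and face structure intact. Therefore the content of the claim is that (a) $\Phi_k(\frk m)$ is acyclic with a unique source and sink, (b) $\Phi_k(\frk m)$ is boundary-channeled, and (c) the new boundary lengths are $\el + k$ on the left and $r - k$ on the right; bijectivity then follows by constructing an explicit inverse. The first step is to identify the new source and sink. A direct inspection using the boundary-channeled hypothesis on $\frk m$ shows that in $\frk m' := \Phi_k(\frk m)$ the vertex $\frk x_k$ has no incoming edges (its unique incoming edge $(\frk x_{k-1}, \frk x_k)$ has been reversed, and the boundary-channeled hypothesis forbids any other incoming edges at $\frk x_k$), while every other vertex retains at least one incoming edge, and symmetrically $\frk x_r$ remains the unique sink. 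Tracing the external face from $\frk x_k$ then gives (c): the new left boundary is $\frk x_k \to \frk x_{k-1} \to \dots \to \frk x_0 \to \dots \to \frk x_r$ of length $\el+k$, and the new right boundary is $\frk x_k \to \frk x_{k+1} \to \dots \to \frk x_r$ of length $r-k$.

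\textbf{Acyclicity.} The main obstacle is verifying that $\frk m'$ is acyclic. The plan is to argue by contradiction: suppose $C$ is a directed cycle in $\frk m'$. Since $\frk m$ is acyclic, $C$ must use some reversed edge, and hence meets the set $V := \{\frk x_0, \dots, \frk x_k\}$. Here is where the boundary-channeled property is crucial: in $\frk m$, each $\frk x_i$ for $1 \leq i \leq k$ has exactly one incoming edge, namely $(\frk x_{i-1}, \frk x_i)$, and $\frk x_0$ has none. After reversal, for $0 \leq i \leq k-1$ the only incoming edge to $\frk x_i$ in $\frk m'$ is the reversed edge $(\frk x_{i+1}, \frk x_i)$ (which sits inside $V$), and $\frk x_k$ has no incoming edges at all; in particular, no edge of $\frk m'$ enters $V$ from outside $V$. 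Taking $j$ maximal with $\frk x_j \in C$: if $j = k$ then $C$ cannot enter $\frk x_k$, a contradiction; if $j < k$, the only incoming edge at $\frk x_j$ comes from $\frk x_{j+1} \in V$, contradicting the maximality of $j$. This establishes acyclicity, and combined with the uniqueness of source and sink, $\frk m'$ is bipolar.

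\textbf{Boundary-channeled property and bijection.} To verify that $\frk m'$ is boundary-channeled, I would check each non-sink boundary vertex: the vertices $\frk x_0, \dots, \frk x_{k-1}$ inherit a single new incoming edge on the new left boundary from the acyclicity analysis above, while the vertices $\frk x_{k+1}, \dots, \frk x_{r-1}$ and the original left-boundary vertices of $\frk m$ are unchanged and still have their unique incoming edge on the boundary. For bijectivity, the plan is to define $\Psi_k : \mcl M_{\el+k, r-k}^\bc \to \mcl M_{\el, r}^\bc$ by reversing the first $k$ edges of the left boundary; an identical argument with the roles of left and right swapped shows $\Psi_k$ is well-defined. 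By construction, the first $k$ edges of the left boundary of $\Phi_k(\frk m)$ are precisely the reversed edges $(\frk x_1, \frk x_0), \dots, (\frk x_k, \frk x_{k-1})$, so $\Psi_k \circ \Phi_k = \op{id}$ and symmetrically $\Phi_k \circ \Psi_k = \op{id}$, completing the proof.
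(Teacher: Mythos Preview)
Your proof is correct and follows essentially the same approach as the paper's: identify $\frk x_k$ as the new source (using that its unique incoming edge has been reversed), keep $\frk x_r$ as the sink, verify acyclicity via the boundary-channeled hypothesis, and exhibit the inverse by reversing the first $k$ edges on the other side. The only cosmetic difference is in the acyclicity step: the paper argues that a hypothetical cycle must contain a non-boundary edge whose terminal vertex is a non-sink boundary vertex (contradicting the definition of $\mcl M_{\el,r}^\bc$), whereas you observe that no edge of $\frk m'$ enters $V=\{\frk x_0,\dots,\frk x_k\}$ from outside and run a maximal-index argument --- both are immediate consequences of the same fact that each $\frk x_i$ ($0\le i\le k$) has at most one incoming edge in $\frk m'$, coming from $\frk x_{i+1}$.
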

\begin{proof}
We claim that $\Phi_k(\frk m)$ is a bipolar-oriented map with source $\frk x_k$ and sink $\frk x_r$. Since $\Phi_k$ does not change the orientation of any non-boundary edge, this implies that $\Phi_k$ maps $\mcl M_{\el,r}^\bc$ to $\mcl M_{\el+k,r-k}^\bc$. It is clear that $\Phi_k$ has an inverse map (defined in the same way with left and right interchanged), so this will prove the lemma.

It remains to prove the claim. Since $\Phi_k$ does not alter the orientations of any edges incident to the sink $\frk x_r$, there are no outgoing edges from $\frk x_r$ on $\Phi_k(\frk m)$. 
Since $\frk x_k$ does not have any non-boundary incoming edges on $\frk m$ (by the definition of $\mcl M_{\el,r}^\bc$), it follows that $\frk x_k$ has no incoming edges at all on $\Phi_k(\frk m)$. 
Every  boundary vertex of $\Phi_k(\frk m)$ other than $\frk x_k$ and $\frk x_r$ has at least one incoming edge and at least one outgoing edge, namely the two boundary edges incident to it. Since $\Phi_k$ does not change the orientations of the non-boundary edges, every non-boundary vertex of $\Phi_k$ still has at least one incoming and at least one outgoing edge. 

It remains to show that the orientation on $\Phi_k(\frk m)$ is acyclic. Suppose by way of contradiction that there are edges $e_1,\dots,e_m$ which form a directed cycle on $\Phi_k(\frk m)$. Since the orientation on $\frk m$ is acyclic, at least one of the edges $e_1,\dots,e_m$ must be one of the boundary edges whose orientation we changed. The boundary of $\Phi_k(\frk m)$ is not a directed cycle, so at least one of $e_1,\dots,e_m$ must be a non-boundary edge. Moreover, none of these edges can be incident to the sink $\frk x_r$ since it only has incoming edges incident to it. Hence, by re-labeling, we can assume without loss of generality that $e_1$ is a non-boundary edge and $e_2$ is a boundary edge. But then $e_1$ is non-boundary edge whose terminal vertex is a boundary vertex different from $\frk x_r$. This contradicts the fact that $\frk m \in \mcl M_{\el,r}^\bc$. 
\end{proof}

We now explain how elements of $\mcl M_{\el,1}^\bc$ can be obtained by applying the KMSW procedure to certain specific walks.

\begin{lem} \label{lem-kmsw-left} 
Let $\el , n \in \BB N$. The KMSW procedure restricts to a bijection between the following two sets:
\begin{itemize}
\item Walks $\frk Z^{\bc}_{\el}$ with increments in $\{(1,-1) , (-1,0) , (1,0)\}$ and $n$ total steps, which start at $(\el-1,0)$ and end at $(0 , 0 )$, and stay in $[0,\infty) \times [0,\infty)$.
\item Maps $\frk m'$ of $\mcl M_{\el,1}^\bc$ having $n-1$ non-boundary edges, i.e.\ $N=n+l$ total edges, with all of its left boundary edges except for the left boundary edge whose terminal vertex is the sink vertex declared to be missing.
\end{itemize} 
\end{lem}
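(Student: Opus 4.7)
My plan is to verify directly that the KMSW procedure of Definition~\ref{def-kmsw}, applied to $\frk Z^\bc_\el$, produces a map with the claimed structure, and that this correspondence is bijective.

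After shifting $\frk Z^\bc_\el$ so that it starts at the origin (which leaves the output unchanged since the procedure depends only on the increments of the walk), the translated walk has $\frk L$ going from $0$ to $-(\el-1)$ with minimum $-(\el-1)$, and $\frk R$ going from $0$ to $0$ with minimum $0$. Lemma~\ref{lem-kmsw-bdy} then yields $\#\ol\bdy_L \frk m' = 1$, $\#\ol\bdy_R \frk m' = 0$, $\#\ul\bdy_L \frk m' = \el - 1$, $\#\ul\bdy_R \frk m' = 1$. By Lemma~\ref{lem-kmsw-bdy}~\ref{item-lower-right}, the unique lower-right boundary edge of $\frk m'$ is the initial edge $\lambda_0$, whose initial and terminal vertices are the source and sink of $\frk m'$, respectively. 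The $\el-1$ missing lower-left edges and the single non-missing upper-left edge together form a path of length $\el$ from the source to the sink, so completing the missing edges with the source-to-sink orientation yields a bipolar-oriented triangulation $\frk m \in \mcl M_{\el,1}$ with $n - 1$ non-boundary edges and $N = n + \el$ total edges.

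The heart of the argument is to verify the boundary-channeled property, namely that every left boundary vertex $v_k$ with $1 \leq k \leq \el - 1$ has no non-missing incoming edges in $\frk m'$ (equivalently, after filling in, has no incoming edges in $\frk m$ other than the boundary edge $v_{k-1} \to v_k$). I would establish this by induction on the steps of the KMSW procedure: once $v_k$ is created at the step where $\frk L$ first reaches $-k$---as the \emph{initial} vertex of the corresponding active edge, per Lemma~\ref{lem-kmsw-bdy}~\ref{item-lower-left}---it never becomes the terminal of any subsequent active edge. The case analysis uses the three possibilities for the terminal of a newly added active edge in Definition~\ref{def-kmsw}: propagation from the previous active edge (for $(-1,0)$ steps), the east endpoint of a pre-existing upper-right missing edge (for $(1,-1)$ steps with non-empty upper-right boundary), or a freshly created vertex (in the remaining cases). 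Since each upper-right missing edge was itself created at an earlier $(0,1)$ step with a newly introduced vertex as its east endpoint, a careful trace rules out $v_k$ appearing as a terminal in any of these situations. Consequently, in $\frk m'$ the only non-missing edges incident to $v_k$ have $v_k$ as initial, which is exactly the boundary-channeled condition.

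Bijectivity then follows from standard considerations: injectivity is immediate from the deterministic nature of the forward KMSW procedure, and surjectivity follows by inverting the procedure one step at a time, which is well-defined thanks to the boundary-channeled property and produces a walk with the required increments staying in $[0,\infty)^2$ with the prescribed endpoints. The main technical obstacle is the boundary-channeled verification above, which requires careful bookkeeping of how terminal vertices of active edges evolve throughout the KMSW procedure.
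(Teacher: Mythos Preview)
Your approach is genuinely different from the paper's. Rather than tracing the forward KMSW procedure, the paper composes two bijections already in hand. First, it observes via the tree $T_{UL}$ (Definition~\ref{defn:KMSW-inv}) that $\overline{\frk m}\in\mcl M_{1,r}^\bc$ holds precisely when the rightmost branch of $T_{UL}(\overline{\frk m})$ is the entire right boundary; this forces the first $r-1$ steps of the standard KMSW encoding of $\overline{\frk m}$ to be $(1,-1)$, and stripping those steps yields a quadrant walk from $(r-1,0)$ to $(0,0)$. Second, it invokes the reflection bijection $\Phi_{\el-1}$ of Lemma~\ref{lem-bdy-reverse} to transport between $\mcl M_{\el,1}^\bc$ and $\mcl M_{1,\el}^\bc$. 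Bijectivity is then automatic since each ingredient is a bijection. Your direct route is more self-contained (no tree interpretation, no $\Phi$), while the paper's is shorter and dovetails with the later use of $\Phi$ in the symmetry argument of Section~\ref{sec-uihbot-sym}.

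There is, however, a concrete error in your case analysis. At a $(0,1)$ step $j$, the newly created upper-right missing edge has the freshly introduced vertex (namely the terminal of $\lambda_j$) as its \emph{west} endpoint, while its \emph{east} endpoint is the terminal of $\lambda_{j-1}$ --- you have these swapped. So when a later $(1,-1)$ step fills this edge in, the terminal of the new active edge is not a fresh vertex but rather the terminal of a strictly earlier active edge. Fortunately your induction still closes with this correction: by the inductive hypothesis, that earlier terminal is not any $v_k$. Your surjectivity sketch is also thin; the step-by-step inversion does work (the type of the last step is read off from whether the current active edge bounds a triangle on its right, and if so which vertex it shares with the other non-missing edge of that triangle), but this deserves to be spelled out rather than asserted.
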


\begin{proof}
    Let $\widetilde{\frk{m}}$ be any bipolar-oriented triangulation. By the definition of the tree $T_{UL}(\widetilde{\frk{m}})$ (above Definition~\ref{defn:KMSW-inv}), we have that the rightmost branch of $T_{UL}(\widetilde{\frk{m}})$ coincides with the right boundary of $\widetilde{\frk{m}}$ if and only if every right boundary vertex of $\widetilde{\frk{m}}$, other than the source and the sink, has exactly one incoming edge (necessarily a boundary edge); see the right-hand side of Figure~\ref{fig-bipolar-boundaries}.

    In particular, by Definition~\ref{defn:KMSW-inv} and Theorem~\ref{thm-kmsw}, given $\overline{\frk m}\in \mcl M_{1,r}^\bc$ with $N$ total edges, the corresponding KMSW encoding walk $\overline{\frk Z}$  is a walk with increments in $\{(1,-1) , (-1,0) , (1,0)\}$ and $N-1$ total steps, starting at $(0,0)$ and ending at $(0,-(r-1))$, staying in $[0,\infty) \times [-(r-1),\infty)$; with the additional constraint that the first $r$ steps are $(1,-1)$-steps. Removing the first $r-1$ steps from $\overline{\frk Z}$ and shifting the walk so that it ends at $(0,0)$, we obtain a walk
    $\overline{\frk Z}^{\bc}_{r}$ with increments in $\{(1,-1) , (-1,0) , (1,0)\}$ and $N-r$ total steps,
    starting at $(r-1,0)$ and ending at $( 0 , 0)$,  staying in $[0,\infty) \times [0,\infty)$.
    Moreover, the triangulation $\overline{\frk m}$ with all its right boundary edges declared to be missing except for the edge having the sink vertex as the terminal vertex,  coincides with the triangulation obtained from $\overline{\frk Z}^{\bc}_{r}$ by applying the KMSW procedure.

    Now, given $\frk m\in \mcl M_{\el,1}^\bc$, we can apply the bijection $\Phi_{\el-1}$ from Lemma~\ref{lem-bdy-reverse}, to obtain a triangulation $\Phi_{\el-1}(\frk m)\in \mcl M_{1,\el}^\bc$. Using the bijection in the last paragraph, we obtain an encoding walk $\frk Z^{\bc}_{\el}$ with increments in $\{(1,-1) , (-1,0) , (1,0)\}$ and $N-\el=n$ total steps,
    starting at $(\el-1,0)$ and ending at $( 0 , 0)$,  staying in $[0,\infty) \times [0,\infty)$. Moreover, the triangulation $\Phi_{\el-1}(\frk m)$ with all its right boundary edges declared to be missing except for the edge having the sink vertex as the terminal vertex,  coincides with the triangulation $\frk m'$ obtained from $\frk Z^{\bc}_{\el}$ by applying the KMSW procedure. This gives the lemma statement.
\end{proof}

\section{Cut vertices and geodesics in the UIQBOT}
\label{sec-uiqbot}

\subsection{Definitions and results}
\label{sec-uiqbot-def}

Throughout this section, we fix $\XDP \in \{\op{LDP} , \op{SDP}\}$. 

Let $\mcl Z=(\mcl L , \mcl R) : \BB N_0 \to \BB Z^2$ be a walk whose increments are i.i.d.\ uniform samples from the set of steps $\{( 1, -1), (-1,0) , (0, 1)\}$. 
For $z=(x,y)\in\BB Z^2$, we write $\BB P_z$ for the law of $\mcl Z$ starting from $z$.
Let $\wh{\mcl Z} = (\wh{\mcl L} , \wh{\mcl R})$ be the random walk obtained from $\mcl Z$ by conditioning $\mcl L$ to stay non-negative for all time in the sense of~\cite{bd-conditioning}. To describe this process more explicitly, for $z\in \BB N_0 \times \BB Z$, we write $\wh{\BB P}_z$ for the law of $\wh{\mcl Z}$ starting from $z$. 
The process $\wh{\mcl Z}$ is a Markov process, and for each $n \geq 0$, the $\wh{\BB P}_z$-law of $\wh{\mcl Z}|_{[0,n]}$ is obtained by weighting the $\BB P_z$-law of $\mcl Z|_{[0,n]}$ by  
\eqb \label{eqn-cond-walk-rn}
(x+1)^{-1} ( \mcl L(n) + 1)  \BB 1\{\mcl L(j) \geq 0 ,\, \forall j\in [0,n]\cap\BB Z\}  . 
\eqe 
In this section, we will study the following infinite bipolar-oriented triangulation. 

\begin{defn} \label{def-uiqbot}
Assume that $\wh{\mcl Z}$ starts from $(0,0)$, i.e.\ we are working under $\wh{\BB P}_{(0,0)}$. Let $\wh M$ be the infinite bipolar-oriented triangulation obtained from $\wh{\mcl Z}$ via the KMSW procedure (Definition~\ref{def-kmsw} and Remark~\ref{remark-kmsw-infinite}). We call $\wh M$ the \textbf{uniform infinite quarter-plane bipolar-oriented triangulation (UIQBOT)}. The root edge of $\wh M$ is the edge $\wh \lambda_0$ where the KMSW procedure starts.
\end{defn}

The map $\wh M$ is equipped with an acyclic orientation with a unique source, which is the initial vertex of the root edge of $\wh M$, and no sink (the sink is ``at $\infty$'').
We stress that the map $\wh M$ has no missing edges (as opposed to $M_{0,\infty}$); this follows from Assertions~\ref{item-lower-left}~and~\ref{item-upper-right} of Lemma~\ref{lem-kmsw-bdy}.
Analogously to Definition~\ref{defn:bound-edges},
we define the \textbf{left} (resp.\ \textbf{right}) \textbf{boundary} of $\wh M$ to be the path on the boundary of the external face of $\wh M$ from the source vertex to $\infty$ with the external face immediately to the left (resp.\ right). 
For later convenience, we note that the left (resp.\ right) boundary edges of $\wh M$ are upper-left (resp.\ lower-right) boundary edges according to Definition~\ref{defn:bound-edges}.

The reason for the name ``quarter plane'' is that we will eventually show that the UIQBOT has the same law as the ``first quadrant'' in the UIBOT bounded by the rightmost directed path from the terminal vertex of the root edge (i.e.\ the right boundary of $M_{0,\infty}$) and the leftmost directed path from the initial vertex of the root edge (Proposition~\ref{prop-future-map-law}). 

We expect that $\wh M$ arises as the Benjamini-Schramm local limit around the source vertex of Boltzmann bipolar-oriented triangulations with given left and right boundary lengths (see~\eqref{eqn-boltzmann}) as the left and right boundary lengths approach $\infty$. We will not prove this here, but see Lemma~\ref{lem-past-map}.

The first main result of this subsection is that the UIQBOT a.s.\ has infinitely many cut vertices, defined as follows. 

\begin{defn} \label{def-cut-vertex}
A vertex $v$ of $\wh M$ is a \textbf{cut vertex} of $\wh M$ if $v$ lies on both the left and right boundaries of $\wh M$. Equivalently, $v$ is a cut vertex if and only if either $v$ is the source vertex or removing $v$ disconnects $\wh M$.  
\end{defn}

\begin{prop} \label{prop-cut-vertex} 
Almost surely, the UIQBOT $\wh M$ has infinitely many cut vertices. 
\end{prop}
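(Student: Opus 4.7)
I will give a sufficient condition for a vertex to be a cut vertex in terms of the encoding walk $\wh{\mcl Z}=(\wh{\mcl L},\wh{\mcl R})$, then show this condition holds for infinitely many indices by combining the strong Markov property with a Doob $h$-transform computation and a Borel-Cantelli argument.

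\textbf{A sufficient condition.} For $j\ge 1$ let $\sigma_j:=\min\{m\ge 0:\wh{\mcl R}(m)=-j\}$; this is a.s.\ finite because $\wh{\mcl R}\to-\infty$ under the conditioning. Denote by $y_j$ the terminal vertex of $\wh\lambda_{\sigma_j}$, which by Assertion~(ii) of Lemma~\ref{lem-kmsw-bdy} is the $(j{+}1)$-st vertex on the right boundary of $\wh M$. I will show that the event
\[
C_j:=\{\wh{\mcl L}(i)\ge \wh{\mcl L}(\sigma_j)+1 \text{ for all } i>\sigma_j\}
\]
implies $y_j$ is a cut vertex: on $C_j$ the step at time $\sigma_j{+}1$ must be $(1,-1)$ (since $\wh{\mcl L}$ changes by at most one), so the initial vertex of $\wh\lambda_{\sigma_j+1}$ equals $y_j$; then with $k:=\wh{\mcl L}(\sigma_j)+1$ and $T_k:=\max\{i:\wh{\mcl L}(i)=k\}$, the cluster-identification in Assertion~(iii) of Lemma~\ref{lem-kmsw-bdy} matches $y_j$ with the left-boundary vertex $x_k$, the common initial vertex of all active edges at times $m\in[\sigma_j+1,T_k]$ with $\wh{\mcl L}(m)=k$. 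So it suffices to show $C_j$ occurs for infinitely many $j$ almost surely.

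\textbf{Computing the probability by $h$-transform.} By the strong Markov property of $\wh{\mcl Z}$ at $\sigma_j$, and since the future law of $\wh{\mcl L}$ depends only on its current value,
\[
\wh{\BB P}[C_j\mid \mcl F_{\sigma_j}]=p_+(l)\cdot q_{l+1},\qquad l:=\wh{\mcl L}(\sigma_j),
\]
where $p_+(l)=(l{+}2)/(3(l{+}1))$ is the probability of a $(1,-1)$-step from state $l$ and $q_m:=\wh{\BB P}_m[\wh{\mcl L}(i)\ge m\ \forall i\ge 0]$. Since $\wh{\mcl L}$ is the Doob $h$-transform of the lazy simple random walk by the harmonic function $h(l)=l{+}1$, the return probability $u(l):=\wh{\BB P}_l[\text{hit } l{-}1]$ satisfies $u(l)=l/(l{+}1)$ (the substitution verifies the one-step recursion), giving $q_m=1/(m{+}1)$ and therefore
\[
\wh{\BB P}[C_j\mid \mcl F_{\sigma_j}]=\frac{1}{3(\wh{\mcl L}(\sigma_j)+1)}.
\]

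\textbf{Conclusion via Borel-Cantelli.} Diffusive scaling for the two-dimensional conditioned walk $\wh{\mcl Z}$ (in the spirit of Lemma~\ref{lem-cond-walk-bm} proved later) yields tightness of $\wh{\mcl L}(\sigma_j)/j$, so there is $C>0$ with $\wh{\BB P}[\wh{\mcl L}(\sigma_j)\le Cj]\ge 1/2$ for all large $j$; hence $\sum_j \wh{\BB P}[C_j\mid \mcl F_{\sigma_j}]=\infty$ almost surely. To feed this into Levy's conditional Borel-Cantelli lemma I will truncate $C_j$ to the finite-horizon event $C_j^{(N_j)}$ imposing the height condition only on $[\sigma_j+1,N_j]$ with $N_j:=\sigma_j+j^3$; the explicit return-probability bound $q_m=1/(m{+}1)$ controls the error $\wh{\BB P}[C_j^{(N_j)}\setminus C_j\mid \mcl F_{\sigma_j}]=o(1/j)$, so conditional BC applies to $(C_j^{(N_j)})$ and yields $C_j$ infinitely often almost surely. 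The main obstacle will be exactly this last step: because $C_j$ is an infinite-horizon event, conditional Borel-Cantelli cannot be applied to it directly, and one must carefully control the truncation error using the explicit $h$-transform formulas (alternatively, one can invoke a 0-1 law for tail events of the transient Markov chain $\wh{\mcl Z}$).
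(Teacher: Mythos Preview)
Your characterization of cut vertices via the event $C_j$ and your computation $\wh{\BB P}[C_j\mid \mcl F_{\sigma_j}] = \tfrac{1}{3(\wh{\mcl L}(\sigma_j)+1)}$ are correct and essentially match the paper's Lemmas~\ref{lem-cut-equiv} and~\ref{lem-walk-pos-prob}; the paper likewise establishes the expected-sum divergence $\sum_n \wh{\BB P}[A_n\cap B_n]=\infty$ by a diffusive-scaling argument (Lemma~\ref{lem-walk-bdy-sum}).

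The gap is in your final step. The conditional Borel--Cantelli argument does not close as written. First, with $N_j=\sigma_j+j^3$ one typically has $N_{j-1}\gg \sigma_j$ (since $\sigma_j-\sigma_{j-1}$ is only of order $j$), so the truncated events $C_j^{(N_j)}$ are not adapted to any filtration $(\mcl G_j)$ for which $\BB P[C_j^{(N_j)}\mid\mcl G_{j-1}]$ coincides with the quantity $\tfrac{1}{3(\wh{\mcl L}(\sigma_j)+1)}$ you computed; taking $\mcl G_j=\mcl F_{N_j}$ forces you to condition on $\mcl F_{N_{j-1}}$, which already sees well past $\sigma_j$. Second, tightness of $\wh{\mcl L}(\sigma_j)/j$ only yields $\sum_j \wh{\BB P}[C_j]=\infty$ (divergence in expectation), not the almost-sure divergence $\sum_j \wh{\BB P}[C_j\mid\mcl F_{\sigma_j}]=\infty$ that L\'evy's lemma requires.

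Your parenthetical about a 0--1 law is the right escape, and this is exactly what the paper does. The key observation (Lemma~\ref{lem-renewal}) is that on the event that a cut occurs at time $T_m$, the shifted walk $(\wh{\mcl Z}(\cdot+T_m)-\wh{\mcl Z}(T_m))|_{[0,\infty)}$ again has law $\wh{\BB P}_{(0,0)}$, so the successive cut times form a renewal sequence. Consequently, if $\BB P[\#\{\text{cut times}\}<\infty]>0$ then this count is geometric with finite mean, contradicting $\BB E[\#\{\text{cut times}\}]=\sum_n\wh{\BB P}[A_n\cap B_n]=\infty$. The renewal argument only needs divergence in expectation, which is precisely what the scaling input delivers, and it sidesteps the filtration and a.s.-divergence issues entirely.
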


We will prove Proposition~\ref{prop-cut-vertex} in Section~\ref{sec-cut-vertex} using estimates for the walk $\wh{\mcl Z}$. 
Proposition~\ref{prop-cut-vertex} will play an essential role in the proof of the existence of the Busemann function (Theorem~\ref{thm-busemann}). The reason is that the existence of cut vertices forces any two sufficiently long directed paths in $\wh M$ to have a vertex in common. Taking these paths to be $\XDP$ geodesics will allow us to verify the condition in~\eqref{eqn-busemann-lim} of Theorem~\ref{thm-busemann}.

\begin{remark}
Proposition~\ref{prop-cut-vertex} may be surprising in light of known facts in the continuum. 
Indeed, from mating of trees theory (see, e.g.,~\cite[Theorems 1.3 and 3.5]{ag-disk}) the map $\wh M$ is the discrete analog of a weight $\gamma^2/2 = 2/3$ LQG wedge, with $\gamma=\sqrt{4/3}$. The walk $\wh{\mcl Z}$ is the discrete analog of the left/right boundary length process for an SLE$_{12}(2;0)$ curve on this LQG wedge. Weight $\gamma^2/2$ is exactly the critical weight for an LQG wedge to have cut points: an LQG wedge of weight $W \geq \gamma^2/2$  is homeomorphic to the half-plane, whereas an LQG wedge of weight $W < \gamma^2/2$ is homeomorphic to a countable string of domains that are homeomorphic to a disk with two marked points, glued together end-to-end~\cite[Sections 4.2 and 4.4]{wedges}. Intuitively, the reason why our infinite planar map $\wh M$ has cut vertices whereas its continuum counterpart is homeomorphic to the half-plane is that the cut vertices of $\wh M$ are very sparse, so they disappear when one passes to the scaling limit.
\end{remark}

As an easy consequence of Proposition~\ref{prop-cut-vertex}, we get that infinite $\XDP$ geodesics in the UIQBOT exist.
The following definition generalizes Definition~\ref{def-geodesic} to infinite planar maps. 

\begin{defn} \label{def-infinite-geo}
Let $\frk{m}$ be an infinite planar map equipped with an acyclic orientation. A directed path $\frk{p} : \BB N \to \mcl E(\frk{m})$ started from $x\in\mcl V(\frk{m})$ is an \textbf{$\XDP$ geodesic from $x$ to $\infty$} if for any integers $1 \leq a \leq b < \infty$, the path $\frk{p}|_{[a,b]}$ is the longest (if $\XDP = \op{LDP}$) or shortest (if $\XDP = \op{SDP}$) directed path between its starting and terminal vertices. We say that $\frk{p}$ is a \textbf{leftmost $\XDP$ geodesic from $x$ to $\infty$} if any other infinite $\XDP$ geodesic from $x$ to $\infty$ lies (weakly) to the right of $\frk{p}$. That is, any such $\XDP$ geodesic is contained in the submap of $\frk{m}$ lying to the right of $\frk{p}$ and to the left of the rightmost directed path started from $x$ (including the vertices and edges on these paths). 
\end{defn}

\begin{lem} \label{lem-geo-exist}
Let $\wh M$ be the UIQBOT as in Definition~\ref{def-uiqbot}. For each $x\in \mcl V(\wh M)$, there is a (unique) leftmost LDP geodesic from $x$ to $\infty$ in $\wh M$. 
\end{lem}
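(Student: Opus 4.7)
The plan is to build the leftmost LDP geodesic from $x$ to $\infty$ by taking a consistent limit of leftmost LDP geodesics from $x$ to successive cut vertices. By Proposition~\ref{prop-cut-vertex}, I can enumerate the cut vertices of $\wh M$ in acyclic order as $v_0, v_1, v_2,\ldots$, with $v_0$ the source. Each cut vertex $v$ partitions $\mcl V(\wh M)$ into disjoint past and future pieces $\op{Past}(v)$ and $\op{Future}(v)$ meeting only at $v$, since by Definition~\ref{def-cut-vertex} there is no edge between $\op{Past}(v)\setminus\{v\}$ and $\op{Future}(v)\setminus\{v\}$. Consequently, any directed path in $\wh M$ that starts in $\op{Past}(v)\setminus\{v\}$ and ends in $\op{Future}(v)\setminus\{v\}$ must pass through $v$. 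I fix $n_0$ large enough that $x \in \op{Past}(v_{n_0})$, which is possible because only finitely many $v_n$ can lie in $\op{Past}(x)$.

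For each $n \geq n_0$, the set $\op{Past}(v_n)$ spans a finite submap of $\wh M$ (enclosed in the planar embedding by the finite segments of the left and right boundaries of $\wh M$ ending at $v_n$, and $\wh M$ is locally finite), so only finitely many directed paths in $\wh M$ go from $x$ to $v_n$. Definition~\ref{def-geodesic} therefore supplies a unique leftmost LDP geodesic $P_n$ from $x$ to $v_n$ in $\wh M$. Since $v_n$ is a cut vertex separating $x$ from $v_{n+1}$, every directed path from $x$ to $v_{n+1}$ passes through $v_n$, giving
\[\op{LDP}_{\wh M}(x, v_{n+1}) = \op{LDP}_{\wh M}(x, v_n) + \op{LDP}_{\wh M}(v_n, v_{n+1}),\]
and identifying $P_{n+1}$ as the concatenation of $P_n$ with the leftmost LDP geodesic from $v_n$ to $v_{n+1}$ (the concatenation stays simple because the acyclic orientation forbids repeated vertices). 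In particular $P_n \subset P_{n+1}$, so $P := \bigcup_{n \geq n_0} P_n$ is a well-defined infinite directed path from $x$ to $\infty$.

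To verify that $P$ is an LDP geodesic in the sense of Definition~\ref{def-infinite-geo}, I note that any finite subpath $P|_{[a,b]}$ sits inside some $P_n$, and any strictly longer directed path between $P(a)$ and $P(b)$ in $\wh M$ could be spliced with the portions of $P_n$ before $P(a)$ and after $P(b)$ to produce a directed path from $x$ to $v_n$ longer than $P_n$, contradicting optimality. For the leftmost property, any competing LDP geodesic $P'$ from $x$ to $\infty$ must pass through every $v_n$ for $n \geq n_0$ by the cut property above, and $P'|_{[x,v_n]}$ is itself an LDP geodesic from $x$ to $v_n$, hence lies weakly to the right of $P_n$; letting $n \to \infty$ yields that $P'$ lies weakly to the right of $P$, and uniqueness is immediate from the construction. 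The main conceptual obstacle is organizing the infinite-volume ``leftmost'' notion consistently, and this is precisely what the cut vertices from Proposition~\ref{prop-cut-vertex} accomplish, reducing the problem to a tower of finite-volume leftmost geodesic statements.
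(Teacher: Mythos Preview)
Your proof is correct and follows essentially the same approach as the paper: enumerate the cut vertices supplied by Proposition~\ref{prop-cut-vertex}, take leftmost LDP geodesics from $x$ to successive cut vertices, observe that these are consistent (since any directed path to a later cut vertex must pass through earlier ones), and take the union. The paper's presentation is slightly terser---it appeals directly to uniqueness of finite-volume leftmost geodesics rather than spelling out the geodesic and leftmost verifications---but the content is the same.
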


Lemma~\ref{lem-geo-exist} will be proven at the end of Section~\ref{sec-cut-vertex}.
The second main result of this section is the following proposition, which extends Lemma~\ref{lem-lr-ind} to the infinite map $\wh M$. 

\begin{prop} \label{prop-thin-ind}
Let $\wh P$ be the leftmost $\XDP$ geodesic from the source vertex to $\infty$ in the UIQBOT $\wh M$, as provided by Lemma~\ref{lem-geo-exist}. 
Let $\wh M^L$ and $\wh M^R$ be the submaps of $\wh M$ lying (weakly) to the left and right of $\wh P$, respectively. 
Then $\wh M^L$ and $\wh M^R$ are independent. 
\end{prop}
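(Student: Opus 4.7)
My plan is to prove Proposition~\ref{prop-thin-ind} by approximating $\wh M$ by finite Boltzmann bipolar-oriented triangulations with large fixed boundary lengths, applying the conditional independence statement from Lemma~\ref{lem-lr-ind}, and then using a Hewitt-Savage argument to remove the conditioning on the geodesic length in the limit.

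The first step is to realize the UIQBOT $\wh M$ as the Benjamini-Schramm local limit, viewed from the source vertex, of Boltzmann bipolar-oriented triangulations $M(\el,r)$ with left boundary length $\el$ and right boundary length $r$, as $\el, r\to\infty$ along appropriate sequences. By the KMSW bijection, $M(\el,r)$ is encoded by a walk with increments in $\{(1,-1),(-1,0),(0,1)\}$ from $(0,0)$ to $(\el-1,-r+1)$ staying in $[0,\infty)\times[-r+1,\infty)$, so this local convergence reduces to convergence of initial walk segments, which follows from standard local limit theorems for random walks conditioned to stay in a cone, such as those of Denisov--Wachtel~\cite{dw-cones,dw-limit}. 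Under this walk convergence, the leftmost $\XDP$ geodesic $P$ from source to sink in $M(\el,r)$ converges locally to $\wh P$, and hence the pair $(M^L(\el,r),M^R(\el,r))$ converges locally in distribution to $(\wh M^L,\wh M^R)$.

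The second step is to transfer the finite conditional independence to the limit. Fix cylinder events $A^L, A^R$ determined by finite neighborhoods of the source in $\wh M^L,\wh M^R$. Lemma~\ref{lem-lr-ind} yields the decomposition $\BB P[M^L(\el,r)\in A^L, M^R(\el,r)\in A^R] = \sum_s \BB P[|P|=s]\, \BB P[M^L(\el,r)\in A^L \mid |P|=s]\, \BB P[M^R(\el,r)\in A^R \mid |P|=s]$. I would then argue that, as $\el,r\to\infty$, the two conditional probabilities on the right-hand side each converge uniformly over $s$ (in a typical range) to the unconditional limits $\BB P[\wh M^L \in A^L]$ and $\BB P[\wh M^R \in A^R]$. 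Combined with the local convergence from Step~1, this gives $\BB P[\wh M^L\in A^L, \wh M^R\in A^R]=\BB P[\wh M^L\in A^L]\,\BB P[\wh M^R\in A^R]$, the desired independence.

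The hard part will be justifying this asymptotic $s$-uniform convergence of the conditional laws, which is where the Hewitt-Savage zero-one law enters. Intuitively, $A^L$ and $A^R$ depend only on the initial portion of the KMSW encoding walk, whereas $|P|=s$ is a global functional depending on the walk's overall profile; using the $h$-transform description of $\wh{\mcl Z}$ in terms of the unconditional i.i.d.\ walk $\mcl Z$, tail-type conditioning should not affect local events in the limit. A natural route to implement this is to combine Proposition~\ref{prop-cut-vertex} with a regeneration decomposition of $\wh{\mcl Z}$: the leftmost $\XDP$ geodesic is forced to pass through every cut vertex, so $\wh M$ splits into a sequence of (essentially) exchangeable slices to which the Hewitt-Savage zero-one law applies directly.
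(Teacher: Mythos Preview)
Your proposal has the right high-level ingredients (Lemma~\ref{lem-lr-ind}, the cut-vertex structure, and Hewitt--Savage), but the assembly has a gap and an unnecessary detour.

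The detour is the local-limit route. You propose to realize $\wh M$ as a Benjamini--Schramm limit of $M(\el,r)$ as $\el,r\to\infty$; the paper explicitly does \emph{not} prove this (see the remark after Definition~\ref{def-uiqbot}). Instead, the paper works entirely inside $\wh M$: it observes (Lemma~\ref{lem-past-map}) that the submap $\wh M(T_{m_k})$ explored by $\wh{\mcl Z}$ up to the last cut time before $\wh\tau_k^R$ already has, conditionally on its boundary lengths, the Boltzmann law of Definition~\ref{def-boltzmann-lr}. This lets you apply Lemma~\ref{lem-lr-ind} directly to a submap of $\wh M$, with no local-limit machinery needed.

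The genuine gap is your ``$s$-uniform convergence of the conditional laws'' step. You write that the conditional probabilities $\BB P[M^L(\el,r)\in A^L\mid |P|=s]$ should converge, uniformly in $s$ over a typical range, to $\BB P[\wh M^L\in A^L]$. This is not obvious: conditioning on the exact value of the global quantity $|P|$ could in principle bias the local structure near the source, and you give no mechanism to rule this out. Your final paragraph gestures toward the right fix (regeneration at cut vertices and Hewitt--Savage), but does not connect it to the uniformity claim. The paper's solution is different in structure: it defines a decreasing filtration $\mcl F_k=\sigma(\wh{\mcl Z}(\cdot+T_{m_k})|_{[0,\infty)},\,\XDP(\wh\lambda_0^-,\wh\lambda_{T_{m_k}}^-))$, shows (Lemma~\ref{lem-finite-ind}) that $\wh M^L(T_{m_k})$ and $\wh M^R(T_{m_k})$ are conditionally independent given $\mcl F_k$, proves (Lemma~\ref{lem-filtration-mono}) that $\bigcap_k\mcl F_k$ is trivial via Hewitt--Savage applied to the i.i.d.\ inter-cut-vertex walk segments, and then concludes by the \emph{backward martingale convergence theorem}. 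This sidesteps any uniformity-in-$s$ issue entirely: the conditioning is absorbed into $\mcl F_k$ and then sent to triviality as $k\to\infty$.
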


Proposition~\ref{prop-thin-ind} will be proven in Section~\ref{sec-thin-ind}.

\subsection{Existence of cut vertices in the UIQBOT}
\label{sec-cut-vertex}
  
In this subsection, we will prove Proposition~\ref{prop-cut-vertex}. 
For $n\in\BB N_0$, let $\wh\lambda_n$ be the time-$n$ active edge of the KMSW procedure for $\wh M $ (Definition~\ref{def-kmsw}). 
Just below, we will prove the following statement using basic observations about the KMSW procedure. 

\begin{lem} \label{lem-cut-equiv}
For $n\in \BB N_0$, let $\wh\lambda^-_n$ be the initial vertex of $\wh\lambda_n$. The following are equivalent. 
\begin{itemize}
\item $\wh\lambda^-_n$ is a cut vertex for $\wh M$ and $\wh\lambda^-_j \not= \wh\lambda^-_n$ for each $j\leq n-1$. 
\item The event $A_n\cap B_n$ occurs, where 
\eqb  \label{eqn-walk-bdy-event}
A_n := \left\{ \wh{\mcl R}(n) \leq \wh{\mcl R}(j) -1 ,\: \forall j \leq  n-1 \right\} \quad \text{and} \quad 
B_n := \left\{ \wh{\mcl L}(j) \geq \wh{\mcl L}(n)   ,\: \forall j \geq n   \right\} .
\eqe  
\end{itemize}
\end{lem}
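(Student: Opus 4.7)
The plan is to prove both implications by translating $A_n$ and $B_n$ into boundary conditions on the vertex $\wh\lambda_n^-$ via Lemma~\ref{lem-kmsw-bdy}, then using the requirement ``$\wh\lambda_j^- \neq \wh\lambda_n^-$ for $j \leq n-1$'' to pin down the step type at time $n$.

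My first preliminary step is to translate $B_n$ into the statement ``$\wh\lambda_n^-$ lies on the left boundary of $\wh M$.'' The infinite-map analogue of Lemma~\ref{lem-kmsw-bdy}(iii) characterizes the upper-left boundary edges of $\wh M$ as the $\wh\lambda_m$ with $\wh{\mcl L}(j) > \wh{\mcl L}(m)$ for all $j > m$, and says $\wh\lambda_j^- = \wh\lambda_m^-$ iff $j \leq m$, $\wh{\mcl L}(j) = \wh{\mcl L}(m)$, and $\wh{\mcl L}(i) \geq \wh{\mcl L}(j)$ on $[j,m]$. Under $B_n$, taking $m$ to be the largest time at or after $n$ at which $\wh{\mcl L}$ equals $\wh{\mcl L}(n)$ (a.s.\ finite because $\wh{\mcl L} \to +\infty$ a.s.\ under $\wh{\BB P}_{(0,0)}$) produces an upper-left boundary edge $\wh\lambda_m$ with $\wh\lambda_m^- = \wh\lambda_n^-$, and the converse implication is immediate from the same characterization. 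My second preliminary step is to note that $A_n$ is equivalent to ``step $n$ of $\wh{\mcl Z}$ is $(1,-1)$ and $\wh{\mcl R}(n-1) = \min_{j \leq n-1} \wh{\mcl R}(j)$,'' since $\wh{\mcl R}$ can only decrease along $(1,-1)$-steps. By Lemma~\ref{lem-kmsw-bdy}(ii) this is exactly the condition that a new right-boundary edge is added at time $n$, so $A_n$ forces $\wh\lambda_n^- = \wh\lambda_{n-1}^+$ to be the preceding right-boundary vertex of $\wh M$.

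For the forward direction, these two translations immediately put $\wh\lambda_n^-$ on both boundaries, hence make it a cut vertex. For the first-time condition, step $n = (1,-1)$ under $A_n$ gives $\wh{\mcl L}(n-1) = \wh{\mcl L}(n) - 1$, so a coincidence $\wh\lambda_j^- = \wh\lambda_n^-$ for some $j < n$ would, by the characterization above applied at a common upper-left boundary edge $\wh\lambda_m$ with $m \geq n$, force $\wh{\mcl L}(i) \geq \wh{\mcl L}(n)$ for every $i \in [j,m]$; but $i = n-1$ violates this, a contradiction.

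For the converse, $B_n$ follows from left-boundary membership. To obtain $A_n$ I would case-split on step $n$. If step $n = (0,1)$, then $\wh\lambda_n^- = \wh\lambda_{n-1}^-$, immediately violating first-time. If step $n = (-1,0)$, then in $\wh M$ this falls in the ``upper-left boundary of $\wh M_{n-1}$ except $\wh\lambda_{n-1}$ is non-empty'' sub-case of Definition~\ref{def-kmsw} (because $\wh{\mcl L} \geq 0$ rules out the other sub-case), and by construction $\wh\lambda_n^-$ equals the initial vertex of some upper-left boundary edge $\wh\lambda_{m^*}$ of $\wh M_{n-1}$ with $m^* \leq n-1$; hence $\wh\lambda_{m^*}^- = \wh\lambda_n^-$, again violating first-time. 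The remaining possibility that $\wh\lambda_n^-$ is the source forces $n = 0$ (else $j = 0$ gives $\wh\lambda_0^- = \wh\lambda_n^-$), at which $A_0 \cap B_0$ holds vacuously. Hence step $n = (1,-1)$, and combining ``$\wh\lambda_n^-$ lies on the right boundary'' with Lemma~\ref{lem-kmsw-bdy}(ii) yields $\wh{\mcl R}(n-1) = \min_{j \leq n-1} \wh{\mcl R}(j)$, completing $A_n$. The main technical point is the step-$(-1,0)$ sub-case: one must carefully verify, using the finite form of Lemma~\ref{lem-kmsw-bdy}(iii) applied to the partial map $\wh M_{n-1}$, that the KMSW recipe identifies $\wh\lambda_n^-$ with $\wh\lambda_{m^*}^-$ for some earlier active edge $\wh\lambda_{m^*}$.
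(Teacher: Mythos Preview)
Your argument is correct and follows essentially the same approach as the paper: both proofs reduce the equivalence to the boundary characterizations in Lemma~\ref{lem-kmsw-bdy}(ii) and (iii), with $A_n$ corresponding to right-boundary membership and $B_n$ to left-boundary membership. The paper is extremely terse (``the converse is checked similarly''), and your case-split on the step type at time $n$ is a natural and correct way to fill that in.

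Two small remarks. First, for the forward first-time condition you route the argument through $B_n$ and the left-boundary characterization (getting a contradiction at $i=n-1$), whereas the paper attributes it to $A_n$ and Lemma~\ref{lem-kmsw-bdy}(ii); both routes are valid. Second, you use Lemma~\ref{lem-kmsw-bdy}(iii) as an ``if and only if'' characterization of the times $m'$ with $\wh\lambda_{m'}^-=\wh\lambda_m^-$. As literally stated, (iii) only gives the ``if'' direction, but the full characterization is indeed established in the proof of Lemma~\ref{lem-kmsw-bdy} (by the same argument as for Assertion~(i)), so this is fine.
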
  

\begin{proof}
Assume that $A_n\cap B_n$ occurs. By Assertion~\ref{item-lower-right} of Lemma~\ref{lem-kmsw-bdy} (recall also Remark~\ref{remark-kmsw-infinite}), the occurrence of $A_n$ implies that $\wh\lambda^-_n$ lies on the right boundary of $\wh M$ and $\wh\lambda^-_j \not=\wh\lambda^-_n$ for each $j\leq n-1$. Furthermore, by Assertion~\ref{item-upper-left} of Lemma~\ref{lem-kmsw-bdy}, 
the occurrence of $B_n$ implies that $\wh\lambda^-_n$ lies on the left boundary of $\wh M$. The converse is checked similarly.
\end{proof}
 
By Lemma~\ref{lem-cut-equiv}, Proposition~\ref{prop-cut-vertex} is equivalent to the statement that a.s.\ there are infinitely many $n\in\BB N$ such that $A_n\cap B_n$ occurs. To prove this statement, we will first observe that the set of such $n$ is a discrete-time renewal process (Lemma~\ref{lem-renewal}). We will then show using basic estimates for random walk that $\sum_{n=1}^\infty \BB P[A_n\cap B_n] = \infty$ (Lemma~\ref{lem-walk-bdy-sum}). This implies that a.s.\ $\sum_{n=1}^\infty \BB 1_{A_n\cap B_n} = \infty$ by a standard argument for discrete-time renewal processes.

\begin{lem} \label{lem-renewal} 
Let $T_0 = 0$ and for $m \in\BB N$, let $T_m$ be the $m$th smallest time $n \in \BB N $ for which $A_n\cap B_n$ occurs, or $T_m = \infty$ if there are fewer than $m$ such times. Equivalently, by Lemma~\ref{lem-cut-equiv}, $T_m$ is the $m$th smallest $n\in\BB N$ for which $\wh\lambda^-_n$ is a cut vertex of $\wh M$ which is visited by $\wh\lambda^-$ for the first time at time $n$. 
On the event $\{T_m < \infty\}$, the conditional law of $(\wh{\mcl Z} (\cdot +T_m)- \wh{\mcl Z} (T_m))|_{[0,\infty)}$ given $\wh{\mcl Z} |_{[0,T_m]}$ is the same as the law $\wh{\BB P}_{(0,0)}$ of $\wh{\mcl Z}$. 
\end{lem}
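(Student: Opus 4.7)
The plan is to reduce the claim, which concerns the non-stopping-time $T_m$, to repeated applications of the strong Markov property of $\wh{\mcl Z}$ at a sequence of actual stopping times. Let $S_1 < S_2 < \cdots$ be the strict lower record times of $\wh{\mcl R}$, i.e., the $n$ for which $A_n$ holds. Each $S_i$ is a stopping time, and $T_m = S_{i_m}$ where $i_m$ is the $m$th index $i$ for which the future-measurable event $B_{S_i}$ also occurs. It therefore suffices to establish the following displayed claim: for each $i \geq 1$, conditionally on $\sigma(\wh{\mcl Z}|_{[0,S_i]})$ with $\wh{\mcl Z}(S_i) = (x,y)$ and on $B_{S_i}$, the shifted walk $\wh{\mcl Z}(\cdot + S_i) - (x,y)$ has law $\wh{\BB P}_{(0,0)}$.

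By the strong Markov property of $\wh{\mcl Z}$ at the stopping time $S_i$, the conditional law of $\wh{\mcl Z}(\cdot + S_i)$ given the past is $\wh{\BB P}_{(x,y)}$, and $B_{S_i}$ becomes the event $\{\wh{\mcl L}(k) \geq x,\, \forall k \geq 0\}$ under this law. The displayed claim then reduces to two computations about $\wh{\BB P}_{(x,y)}$. First, the survival probability satisfies
\begin{equation*}
\wh{\BB P}_{(x,y)}\!\left[\wh{\mcl L}(k) \geq x, \: \forall k \geq 0\right] = \frac{1}{x+1},
\end{equation*}
which follows from optional stopping applied to the bounded non-negative martingale $k \mapsto 1/(\wh{\mcl L}(k)+1)$; this is a martingale under $\wh{\BB P}_{(x,y)}$ because $\wh{\mcl L}$ is the Doob $h$-transform of a lazy random walk by the harmonic function $h(j) = j+1$, as encoded in~\eqref{eqn-cond-walk-rn}. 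Second, conditionally on this survival event, the process $\wh{\mcl Z}(\cdot) - (x,y)$ has law $\wh{\BB P}_{(0,0)}$. To see this, fix $m \geq 0$ and a bounded measurable function $f$ of $(\wh{\mcl Z}(\cdot) - \wh{\mcl Z}(0))|_{[0,m]}$. The Markov property at time $m$ together with the previous formula applied to the tail gives
\begin{equation*}
\BB E_{\wh{\BB P}_{(x,y)}}\!\left[f \cdot \BB 1_{\{\wh{\mcl L} \geq x \text{ forever}\}}\right] = \BB E_{\wh{\BB P}_{(x,y)}}\!\left[f \cdot \frac{\wh{\mcl L}(m)-x+1}{\wh{\mcl L}(m)+1}\, \BB 1\{\wh{\mcl L}(k) \geq x, \: \forall k \leq m\}\right].
\end{equation*}
Converting the right-hand side to an expectation under $\BB P_{(x,y)}$ via~\eqref{eqn-cond-walk-rn} produces a factor of $(x+1)^{-1}$ and cancels the denominator $\wh{\mcl L}(m)+1$; after the spatial shift $(x,y)\mapsto(0,0)$, the numerator $\wh{\mcl L}(m)-x+1$ becomes $\mcl L(m)+1$, which together with the indicator is precisely the Radon-Nikodym density of $\wh{\BB P}_{(0,0)}$ against $\BB P_{(0,0)}$. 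Dividing by the survival probability $1/(x+1)$ yields $\BB E_{\wh{\BB P}_{(0,0)}}[f]$.

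Partitioning $\{T_m < \infty\}$ into the disjoint events $\{T_m = S_i\}_{i\geq 1}$ and summing the displayed claim over $i$ gives the lemma. The main subtlety is the second computation above, which genuinely uses the $h$-transform structure of $\wh{\mcl Z}$: the analog for the unconditioned walk $\mcl Z$ is false, since $\mcl L$ is recurrent and no strict running minimum of $\mcl L$ can be a lower bound forever.
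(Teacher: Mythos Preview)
Your overall strategy matches the paper's, and your explicit $h$-transform computation verifying that $\wh{\BB P}_{(x,y)}$ conditioned on $\{\wh{\mcl L}\geq x\text{ forever}\}$ equals a shift of $\wh{\BB P}_{(0,0)}$ is correct and in fact fills in a step the paper only asserts. However, the final sentence ``partitioning $\{T_m<\infty\}$ into $\{T_m=S_i\}$ and summing the displayed claim over $i$'' has a genuine gap.

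The event $\{T_m=S_i\}$ is not just $B_{S_i}$: it is $B_{S_i}$ together with the requirement that exactly $m-1$ of $B_{S_1},\dots,B_{S_{i-1}}$ hold. Each $B_{S_j}$ for $j<i$ constrains $\wh{\mcl L}$ at all times $\geq S_j$, so this extra condition is not in $\sigma(\wh{\mcl Z}|_{[0,S_i]})$, and your displayed claim (which conditions on $\sigma(\wh{\mcl Z}|_{[0,S_i]})$ and $B_{S_i}$) does not directly yield the conditional law given $\sigma(\wh{\mcl Z}|_{[0,S_i]})$ and $\{T_m=S_i\}$. The missing observation --- which is exactly what the paper isolates in its second bullet point --- is that \emph{on} $B_{S_i}$, each $B_{S_j}$ for $j<i$ becomes equivalent to the past-measurable condition $\wh{\mcl L}(k)\geq\wh{\mcl L}(S_j)$ for all $k\in[S_j,S_i]$, since $B_{S_i}$ already forces $\wh{\mcl L}(k)\geq\wh{\mcl L}(S_i)\geq\wh{\mcl L}(S_j)$ for $k\geq S_i$. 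Hence $\{T_m=S_i\}=B_{S_i}\cap\wt E_i$ with $\wt E_i\in\sigma(\wh{\mcl Z}|_{[0,S_i]})$, and your displayed claim applied with the past test function multiplied by $\BB 1_{\wt E_i}$ then finishes the argument. With this one sentence added, your proof is complete and essentially equivalent to the paper's.
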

\begin{proof}
We have $T_m = n$ if and only if the following is true:
\begin{itemize}
\item $\wh{\mcl R} $ attains a running minimum at time $n$, i.e., $A_n$ occurs. 
\item There are exactly $m$ times $j \in [1,n] \cap\BB Z$ such that $\wh{\mcl R} $ attains a running minimum at time $j$ and $\wh{\mcl L} (i) \geq \wh{\mcl L}(j)$ for each $i \in [j,n]\cap\BB Z$.
\item $\wh{\mcl L} (j) \geq \wh{\mcl L} (n)  $ for each $j\geq n $, i.e., $B_n$ occurs.  
\end{itemize}
The first two itemized conditions depend only on $\wh{\mcl Z} |_{[0,n]}$. Hence, conditioning on $\wh{\mcl Z} |_{[0,n]}$ and $\{T_m = n\}$ is the same as conditioning on $\wh{\mcl Z} |_{[0,n]}$ and the event $B_n$. By the Markov property, the conditional law of $(\wh{\mcl Z} (\cdot + n)  - \wh{\mcl Z} (n) )|_{[0,\infty)}$ given $\wh{\mcl Z} |_{[0,n]}$ and $\{T_m = n\}$ is that of a random walk started at $(0,0)$ with the same increment distribution as $\mcl Z$ conditioned so that its first component stays non-negative. That is, its conditional law is the same as the law $\wh{\BB P}_{(0,0)}$ of $\wh{\mcl Z} $. 
\end{proof}

As noted above, to show that a.s.\ there exist infinitely many $n$ such that $A_n\cap B_n$ occurs, we want to show that $\sum_{n=1}^\infty \BB P[A_n\cap B_n] = \infty$. To accomplish this, we will prove a lower bound for the conditional probability of $B_n$ given $A_n$ (Lemma~\ref{lem-walk-pos-prob}) and then combine this with a lower bound for the number of values of $n$ for which $A_n$ occurs (Lemma~\ref{lem-walk-bdy-sum}).

\begin{lem} \label{lem-walk-pos-prob}
Recall that $\wh{\BB P}_{(x,y)}$ denotes the law of the (conditioned) walk $\wh{\mcl Z}$ started from $(x,y)$.
For $x\geq 0$ and $s \in [0,x]\cap\BB Z$,
\eqbn
\wh{\BB P}_{(x,y)}\left[ \wh{\mcl L}(j)  \geq x  - s , \, \forall j \geq 0 \right] 
= \frac{ s+1}{x+1}. 
\eqen 
\end{lem}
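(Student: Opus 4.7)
The plan is to use the Radon-Nikodym derivative formula in~\eqref{eqn-cond-walk-rn} to reduce the computation to the unconditioned walk $\mcl Z$ under $\BB P_{(x,y)}$, and then to evaluate the resulting expectation using the standard martingale associated with the walk $\mcl L$ killed upon going below $x-s$. Note that the event and the weight in~\eqref{eqn-cond-walk-rn} depend only on $\mcl L$, which is consistent with the fact that the claimed answer $\frac{s+1}{x+1}$ does not depend on $y$.

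First I would fix $n\in\BB N$ and observe that, since $x-s\geq 0$, the event $\{\mcl L(j)\geq x-s,\,\forall j\in [0,n]\cap\BB Z\}$ is contained in $\{\mcl L(j)\geq 0,\,\forall j\in [0,n]\cap\BB Z\}$. Hence, by~\eqref{eqn-cond-walk-rn},
$$\wh{\BB P}_{(x,y)}\!\left[\wh{\mcl L}(j)\geq x-s,\,\forall j\in [0,n]\cap\BB Z\right] = \frac{1}{x+1}\,\BB E_{(x,y)}\!\left[(\mcl L(n)+1)\,\BB 1\{\mcl L(j)\geq x-s,\,\forall j\in [0,n]\cap\BB Z\}\right].$$
I would then split $\mcl L(n)+1 = (\mcl L(n)-(x-s)+1) + (x-s)$ to write the right-hand side as a sum of two terms. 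The first term is $\BB E_{(x,y)}[M_n]$, where
$$M_n := (\mcl L(n)-(x-s)+1)\,\BB 1\{\mcl L(j)\geq x-s,\,\forall j\in [0,n]\cap\BB Z\}$$
is exactly the martingale underlying the $h$-transform in~\eqref{eqn-cond-walk-rn}, shifted down by $x-s$: indeed, the first-coordinate process $\mcl L-(x-s)$ is again a walk with i.i.d.\ increments uniform on $\{-1,0,+1\}$, and $M_n$ is the corresponding harmonic-function-weighted indicator for this walk being killed on hitting $-1$. Hence $\BB E_{(x,y)}[M_n]=M_0=s+1$ for every $n$, and the second term equals $(x-s)\,\BB P_{(x,y)}[\mcl L(j)\geq x-s,\,\forall j\in [0,n]\cap\BB Z]$.

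Finally, I would let $n\to\infty$. On the left-hand side, the events decrease to $\{\wh{\mcl L}(j)\geq x-s,\,\forall j\geq 0\}$, so monotone convergence gives the probability we want. On the right-hand side, the first contribution remains equal to $s+1$, while the second contribution vanishes: the walk $\mcl L$ has i.i.d.\ mean-zero bounded increments, hence is recurrent on $\BB Z$, so $\BB P_{(x,y)}[\mcl L(j)\geq x-s,\,\forall j\in [0,n]\cap\BB Z]\to 0$ as $n\to\infty$. Dividing by $x+1$ yields the claimed identity. The only nontrivial step is the martingale property of $M_n$ at the killing boundary $\mcl L(n)=x-s$, which reduces to the elementary identity $\frac{1}{3}\cdot 2 + \frac{1}{3}\cdot 1 + \frac{1}{3}\cdot 0 = 1$; this is the only step that could cause trouble if the increment distribution were different, but here it is immediate.
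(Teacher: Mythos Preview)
Your proof is correct and takes a genuinely different route from the paper's. The paper evaluates the Radon--Nikodym formula~\eqref{eqn-cond-walk-rn} at the stopping time $\sigma_k = \min\{n : \mcl L(n) \geq k\}$, where $\mcl L(\sigma_k)+1 = k+1$ is deterministic; this reduces the computation to the classical Gambler's ruin probability $\BB P_{(x,y)}[\mcl L(j)\geq x-s,\,\forall j\in[0,\sigma_k]] = (s+1)/(k-x+s+1)$, and then sends $k\to\infty$, invoking~\cite[Theorem 1]{bd-conditioning} to justify the limit on the conditioned side. You instead evaluate at a fixed time $n$, split the weight $\mcl L(n)+1$ into the shifted harmonic piece and a constant, and use the martingale identity $\BB E[M_n]=s+1$ directly together with recurrence of the unconditioned walk. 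Your approach is slightly more self-contained: it avoids both the Gambler's ruin formula and the external reference, and it sidesteps the (minor) issue of applying~\eqref{eqn-cond-walk-rn} at a stopping time rather than a fixed time. The paper's approach has the virtue that the computation after applying~\eqref{eqn-cond-walk-rn} is a single classical identity rather than a splitting-plus-martingale argument.
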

\begin{proof}
For $k\in \BB N$, let
\eqbn
\wh\sigma_k = \min\left\{n\in\BB N : \wh{\mcl L}(n) \geq k \right\}, 
\eqen
and similarly define $\sigma_k$ with the unconditioned walk $\mcl L$ in place of $\wh{\mcl L}$. 
By the definition of $\wh{\mcl Z}$ from~\eqref{eqn-cond-walk-rn}, for $k\geq x+1$, 
\alb
\wh{\BB P}_{(x,y)}\left[ \wh{\mcl L}(j)  \geq x - s, \, \forall j \in [0,\wh\sigma_k] \cap \BB Z \right] 
&= \frac{1}{x+1} \BB E_{(x,y)}\left[  ( \mcl L(\sigma_k) + 1)  \BB 1\left\{  {\mcl L}(j)  \geq x - s , \, \forall j \in [0,\sigma_k] \cap \BB Z \right\} \right]  \\
&= \frac{k+1}{x + 1} \BB P_{(x,y)}\left[  {\mcl L}(j)  \geq x - s , \, \forall j \in [0,\sigma_k] \cap \BB Z  \right].  
\ale 
Since $\mcl L$ is a lazy random walk, by the standard Gambler's ruin formula, the probability in the last line above is equal to
$(s+1) / (k-x+s+1)$.
Sending $k\to\infty$ and using~\cite[Theorem 1]{bd-conditioning} concludes the proof. 
\end{proof}

We will need the following fact, which is a special case of the much more general result~\cite[Theorem 2]{dw-limit}. 

\begin{lem}[\cite{dw-limit}] \label{lem-cond-walk-bm}
Under $\wh{\BB P}_{(0,0)}$, the re-scaled walk $\{n^{-1/2} \wh{\mcl Z}(\lfloor t n \rfloor) \}_{t \geq 0}$ converges in law with respect to the topology of uniform convergence on compact subsets of $[0,\infty)$. The limiting process is $\sqrt{2/3}$ times a 2d Brownian motion of correlation $-1/2$ started from $(0,0)$ and conditioned to stay in the right half-plane, i.e., it is described as follows. Let $\mcl A$ be a 3d Bessel process and let $\mcl B$ be an independent standard linear Brownian motion.
Then consider the process $\sqrt{2/3} (\mcl A , \frac{\sqrt3}{2} \mcl B - \frac12 \mcl A)$. 
\end{lem}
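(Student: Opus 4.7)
My plan is to verify that this is a direct application of \cite[Theorem 2]{dw-limit} to the specific increment distribution of $\mcl Z$, followed by a change of coordinates to express the limiting conditioned Brownian motion in the stated form.

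First I would compute the first and second moments of a single step $\mcl Z(1) - \mcl Z(0)$, which is uniform on $\{(1,-1),(-1,0),(0,1)\}$. The mean of each coordinate is $\tfrac13(1-1+0) = 0$ and $\tfrac13(-1+0+1) = 0$, so the walk is centered. Direct computation gives variances $\op{Var}(\mcl L(1)) = \tfrac13(1+1+0) = 2/3$ and $\op{Var}(\mcl R(1)) = 2/3$, and covariance $\op{Cov}(\mcl L(1),\mcl R(1)) = \tfrac13(-1+0+0) = -1/3$. Hence the covariance matrix of a single step is
\eqbn
\Sigma = \begin{pmatrix} 2/3 & -1/3 \\ -1/3 & 2/3 \end{pmatrix},
\eqen
corresponding to correlation $-1/2$.

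Next I would verify the hypotheses of \cite[Theorem 2]{dw-limit} in our setting. That theorem establishes convergence of a centered random walk conditioned to stay in a cone to the corresponding conditioned Brownian motion, under mild moment and non-degeneracy assumptions. Here the cone is simply the right half-plane $[0,\infty) \times \BB R$, and the conditioning is exactly the Doob $h$-transform described in \eqref{eqn-cond-walk-rn} (which coincides with the conditioning in \cite{bd-conditioning}, compatible with the Denisov-Wachtel framework restricted to a half-plane). The increments are bounded, so all moment conditions hold trivially, and the covariance matrix $\Sigma$ is non-degenerate. Consequently, the rescaled process $\{n^{-1/2} \wh{\mcl Z}(\lfloor t n \rfloor)\}_{t \geq 0}$ converges in law (uniformly on compacts) to a 2d Brownian motion $W = (W_1,W_2)$ with covariance matrix $\Sigma$, started at $(0,0)$ and conditioned so that $W_1$ stays non-negative.

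It remains to identify this conditioned Brownian motion with $\sqrt{2/3}\bigl(\mcl A,\, \tfrac{\sqrt 3}{2}\mcl B - \tfrac12 \mcl A\bigr)$. Using the Cholesky-type decomposition of $\Sigma$, I would write the unconditioned Brownian motion as
\eqbn
W = \sqrt{2/3}\, \bigl(A',\; -\tfrac12 A' + \tfrac{\sqrt 3}{2} B'\bigr),
\eqen
where $A'$ and $B'$ are independent standard linear Brownian motions; one verifies this reproduces $\Sigma$. Since the conditioning $\{W_1 \geq 0 \text{ for all } t\}$ is measurable with respect to $A'$ alone, it does not affect the law of $B'$, and by the classical Doob transform (see, e.g., Revuz-Yor, the construction of BM conditioned to stay positive), conditioning $A'$ to stay non-negative turns it into a 3d Bessel process $\mcl A$. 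Setting $\mcl B := B'$, which is still a standard Brownian motion independent of $\mcl A$, we obtain exactly the description in the statement. The main technical point to be careful about is that the Doob-transform conditioning at the level of the random walk $\wh{\mcl Z}$ defined via \eqref{eqn-cond-walk-rn} is indeed the discrete analog of the Brownian conditioning used in the limit — this compatibility is precisely what \cite[Theorem 2]{dw-limit} provides.
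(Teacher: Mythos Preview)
Your proposal is correct and follows essentially the same approach as the paper: both invoke \cite[Theorem 2]{dw-limit} with the cone equal to the right half-plane, and both account for the covariance structure of the increments. The paper's proof is terser---it simply says to apply the theorem after a linear transformation making the coordinates uncorrelated, with $\sqrt{2/3}$ arising from the variance---whereas you spell out the covariance computation and the Cholesky-type identification of the limit explicitly; the content is the same.
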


\begin{proof}
Apply~\cite[Theorem 2]{dw-limit} with the cone equal to the half-plane $\{(x,y) \in \BB R^2 : x \geq 0\}$, to the random walk obtained by applying a linear transformation to make the coordinates of $\mcl Z$ uncorrelated. The constant $\sqrt{2/3}$ appears because the variance of each coordinate of $\mcl Z$ is $2/3$.
\end{proof}

\begin{lem} \label{lem-walk-bdy-sum} 
With $A_n$ and $B_n$ as in~\eqref{eqn-walk-bdy-event},
\eqbn
\sum_{n=1}^\infty \wh{\BB P}_{(0,0)} [A_n\cap B_n] = \infty. 
\eqen
\end{lem}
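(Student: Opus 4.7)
The plan is to reduce the sum to an expectation over the unconditioned walk $\mcl Z$, identify it as a tail sum for a simple counting random variable, and apply the invariance principle from Lemma~\ref{lem-cond-walk-bm}.

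First, by the Markov property of $\wh{\mcl Z}$ and Lemma~\ref{lem-walk-pos-prob} with $s=0$, we have $\wh{\BB P}[B_n \mid \sigma(\wh{\mcl Z}|_{[0,n]})] = 1/(\wh{\mcl L}(n)+1)$. Combining this with the Radon-Nikodym density in~\eqref{eqn-cond-walk-rn} (which exactly cancels the factor $1/(\wh{\mcl L}(n)+1)$) gives
\eqbn
\wh{\BB P}[A_n \cap B_n] = \wh{\BB E}\!\left[\frac{\BB 1_{A_n}}{\wh{\mcl L}(n)+1}\right] = \BB P\!\left[A_n \cap \{\mcl L(j)\geq 0,\ \forall j\in[0,n]\cap\BB Z\}\right].
\eqen
Letting $\tau := \inf\{n\geq 1 : \mcl L(n)<0\}$ so that the conditioning event above equals $\{\tau>n\}$, and summing over $n$ via Tonelli,
\eqbn
\sum_{n=1}^\infty \wh{\BB P}[A_n\cap B_n] = \BB E\!\left[\sum_{n=1}^{\tau-1}\BB 1_{A_n}\right] = \BB E[m_\tau],
\eqen
where $m_\tau := \max\bigl(0,\,-\min_{j\in[0,\tau-1]}\mcl R(j)\bigr)$ counts the strict running-minimum epochs of $\mcl R$ in $[1,\tau-1]$. (The identification uses that $\mcl R$ has $\pm 1,0$ steps with $\mcl R(0)=0$, so each strictly smaller integer value is first reached at exactly one strict new record.) It therefore suffices to show $\BB E[m_\tau]=\infty$.

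I will show $\BB P[m_\tau\geq m]\geq c/m$ for some $c>0$ and all sufficiently large $m$, whence $\BB E[m_\tau]=\sum_m \BB P[m_\tau\geq m]=\infty$. Set $N:=4m^2$, so that $\BB P[m_\tau\geq m] \geq \BB P[\min_{j\leq N}\mcl R(j)\leq -m,\ \mcl L\geq 0 \text{ on } [0,N]]$. For any $M>0$, on the event $\{\mcl L(N)\leq M\sqrt N\}$ the density $\mcl L(N)+1$ in~\eqref{eqn-cond-walk-rn} is at most $M\sqrt N+1$, so
\eqbn
\wh{\BB P}\!\left[\min_{j\leq N}\wh{\mcl R}(j)\leq -m,\ \wh{\mcl L}(N)\leq M\sqrt N\right] \leq (M\sqrt N+1)\,\BB P\!\left[\min_{j\leq N}\mcl R(j)\leq -m,\ \mcl L\geq 0 \text{ on } [0,N]\right].
\eqen
By Lemma~\ref{lem-cond-walk-bm}, $(N^{-1/2}\wh{\mcl Z}(\lfloor tN\rfloor))_{t\in[0,1]}$ converges in law to $\sqrt{2/3}(\mcl A,\mcl U)$ with $\mcl U := \tfrac{\sqrt 3}{2}\mcl B - \tfrac12\mcl A$; since $\omega\mapsto(\min_{t\in[0,1]}\omega_2(t),\,\omega_1(1))$ is continuous at continuous paths, the Portmanteau theorem yields
\eqbn
\liminf_{m\to\infty}\wh{\BB P}\!\left[\min_{j\leq N}\wh{\mcl R}(j)\leq -m,\ \wh{\mcl L}(N)\leq M\sqrt N\right] \geq \BB P\!\left[\min_{t\in[0,1]}\mcl U(t)<-\tfrac12\sqrt{3/2},\ \mcl A(1)<M\sqrt{3/2}\right].
\eqen
The right-hand side is strictly positive for $M$ large enough, since $\mcl U$ is a non-trivial continuous process with $\mcl U(0)=0$ and negative drift (so the min event has positive probability) and $\BB P[\mcl A(1)<M\sqrt{3/2}]\to 1$ as $M\to\infty$. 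Fixing such an $M$ and using $\sqrt N=2m$ then gives $\BB P[m_\tau\geq m]\geq c/m$ for all large $m$, as required.

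The main technical subtlety is the change-of-measure step: the density $\mcl L(N)+1$ is typically of order $\sqrt N$ rather than $O(1)$, so the naive bound $\mcl L(N)+1\leq N$ would only yield $\BB P[m_\tau\geq m]\gtrsim 1/m^2$, which is summable and thus insufficient. Restricting to $\{\wh{\mcl L}(N)\leq M\sqrt N\}$ caps the density at the correct natural order $\sqrt N$, and this is precisely what converts the $O(1)$ limiting probability into the needed $\asymp 1/m$ lower bound on the unconditioned event.
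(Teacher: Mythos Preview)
Your proof is correct and follows a genuinely different route from the paper's.

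The paper works entirely under $\wh{\BB P}$: it restricts to the event $F_n := A_n\cap\{\wh{\mcl L}(n)\in[n^{1/2}/100,100\,n^{1/2}]\}$, uses Lemma~\ref{lem-walk-pos-prob} to get $\wh{\BB P}[B_n\,|\,F_n]\gtrsim n^{-1/2}$, and then applies the invariance principle (Lemma~\ref{lem-cond-walk-bm}) to show that $\sum_{n=N/2}^N \wh{\BB P}[F_n]\gtrsim N^{1/2}$, so every dyadic block $[N/2,N]$ contributes a uniform constant to the sum.

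Your approach instead exploits the exact cancellation between the conditional probability $1/(\wh{\mcl L}(n)+1)$ from Lemma~\ref{lem-walk-pos-prob} and the Radon--Nikodym density~\eqref{eqn-cond-walk-rn} to obtain the clean identity $\sum_n \wh{\BB P}[A_n\cap B_n]=\BB E[m_\tau]$ under the \emph{unconditioned} walk. This reduction is more structural and conceptually appealing than the paper's dyadic counting. You then re-enter the conditioned world, bounding $\BB P[m_\tau\ge m]$ from below via the same invariance principle, now used through the density cap $\mcl L(N)+1\le M\sqrt N+1$. So the two proofs ultimately rely on the same two ingredients (Lemmas~\ref{lem-walk-pos-prob} and~\ref{lem-cond-walk-bm}) but organize them differently: the paper stays on one side of the change of measure throughout, while you cross it twice to get a tighter intermediate identity. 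One small remark: calling $\mcl U$ a process ``with negative drift'' is informal, but the positivity you need is immediate since $\mcl U(1)$ has full support on $\BB R$ conditional on $\mcl A$.
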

\begin{proof}
For $n\in\BB N$, let 
\eqb  \label{eqn-running-min-reg}
F_n := A_n \cap \left\{ \wh{\mcl L} (n) \in \left[ \frac{1}{100} n^{1/2} , 100 n^{1/2} \right]  \right\} .
\eqe 
In words, $F_n$ is the event that $\wh{\mcl R}$ attains a running minimum at time $n$ and $\wh{\mcl L} (n)$ is comparable to $n^{1/2}$. 

By the Markov property, if we condition on $\wh{\mcl Z}|_{[0,n]}$, then the conditional law of 
$\wh{\mcl Z}(\cdot + n)  |_{[0,\infty)}$ is $\wh{\BB P}_{\wh{\mcl Z}(n)}$.  By Lemma~\ref{lem-walk-pos-prob} applied with $s=0$, the conditional probability given $\wh{\mcl Z}|_{[0,n]}$ that the event $B_n$ occurs is $1/ (\wh{\mcl L}(n)+1) $.  
Since $\wh{\mcl L} (n) \leq 100  n^{ 1/2}$ on $F_n$,
\eqb \label{eqn-pos-given-reg}
\wh{\BB P}_{(0,0)}\left[ B_n \,|\, F_n   \right] \geq \frac{1}{101} n^{-1/2} .
\eqe

We will now argue that there are many times $n$ for which $\wh{\BB P}_{(0,0)} [F_n]$ is bounded below.   
For $N\in\BB N$, let $E_N$ be the event that the following is true:
\begin{enumerate}
\item $\wh{\mcl L} (n) \in \left[ \frac{1}{100} n^{1/2} , 100 n^{1/2} \right]$ for each $n\in [N/2,N] \cap\BB Z$;
\item $\min_{n \in [0,N/2]\cap\BB Z} \wh{\mcl R} (n)  \geq - N^{1/2}$;
\item $\wh{\mcl R} (N) \leq -2 N^{1/2}$. 
\end{enumerate}
The limiting process in Lemma~\ref{lem-cond-walk-bm} has a positive chance to stay arbitrarily close (in the uniform distance) to any given deterministic path $[0,1]\to [0,\infty)\times \BB R$ for one unit of time.
By Lemma~\ref{lem-cond-walk-bm}, there is a universal constant $p\in (0,1)$ such that $\wh{\BB P}_{(0,0)} [E_N] \geq p$ for each $N \geq 100$, say (we can decrease $p$ to deal with finitely many small values of $N$).
If $E_N$ occurs, then there are at least $N^{1/2}$ times $n\in [N/2,N]$ at which $\wh{\mcl R} $ attains a running minimum. By~\eqref{eqn-running-min-reg} and the definition of $E_N$, the event $F_n$ occurs for each of these times $n$. Consequently,
\eqb  \label{eqn-min-count}
p N^{1/2} 
\leq \wh{\BB P}_{(0,0)} [E_N] N^{1/2} 
\leq \wh{\BB E}_{(0,0)} \left[ \sum_{n=\lceil N/2 \rceil }^N  \BB 1_{F_n} \right] 
=  \sum_{n=\lceil N/2 \rceil }^N \wh{\BB P}_{(0,0)} \left[  F_n \right]  .
\eqe 

Recall that $F_n\subset A_n$. 
By combining~\eqref{eqn-pos-given-reg} and~\eqref{eqn-min-count}, we find that there is a universal constant $c > 0$ such that for every $N \geq 100$, 
\eqbn
 c \leq \sum_{n=\lceil N/2 \rceil }^N \wh{\BB P}_{(0,0)} \left[ B_n\,|\, F_n \right]\, \wh{\BB P}_{(0,0)}[F_n] 
 \leq  \sum_{n=\lceil N/2 \rceil }^N \wh{\BB P}_{(0,0)}\left[ A_n \cap B_n \right]  . 
\eqen
Applying this with $N = 2^k$ and summing over $k$ concludes the proof. 
\end{proof}

\begin{proof}[Proof of Proposition~\ref{prop-cut-vertex}] 
Let $S $ be the set of $n\in\BB N_0$ for which $A_n \cap B_n$ occurs. By Lemma~\ref{lem-cut-equiv}, it suffices to show that $\wh{\BB P}_{(0,0)}$-a.s.\ $\# S = \infty$.  
With $T_m$ as in Lemma~\ref{lem-renewal}, 
\eqbn
\# S = \min\left\{ m \in \BB N : T_m = \infty \right\}. 
\eqen
By Lemma~\ref{lem-renewal}, if $\wh{\BB P}_{(0,0)}[\# S < \infty] > 0$, then $\wh{\BB P}_{(0,0)}[T_1 = \infty] > 0$ and for each $m\in\BB N$, on the event $\{T_{m-1} < \infty\}$, 
\eqb  \label{eqn-renewal-geometric}
\wh{\BB P}_{(0,0)}\left[ T_m = \infty \,\middle|\, \mcl Z|_{[0,T_{m-1}]} \right]  = \wh{\BB P}_{(0,0)}[T_1 = \infty] > 0 .
\eqe 
By~\eqref{eqn-renewal-geometric}, if $\wh{\BB P}_{(0,0)}[\# S < \infty] > 0$, then $\# S$ has a geometric distribution with positive success probability, so $\wh{\BB E}_{(0,0)}[\# S] < \infty$. 
But, Lemma~\ref{lem-walk-bdy-sum} implies that $\wh{\BB E}_{(0,0)}[\# S] = \infty$. 
Hence we conclude that  $\wh{\BB P}_{(0,0)}[\# S < \infty] = 0$. 
\end{proof}

We conclude this section by providing the proof of Lemma~\ref{lem-geo-exist} regarding the existence of infinite geodesics in $\wh M$.

\begin{proof}[Proof of Lemma~\ref{lem-geo-exist}]
Define the times $T_m$ as in Lemma~\ref{lem-renewal}, so that $\wh\lambda^-_{T_m}$ is the $m$th cut vertex of $\wh M$ (we know that $T_m < \infty$ a.s.\ for every $m \in\BB N$ by Proposition~\ref{prop-cut-vertex}). 

Fix $x\in \mcl V(\wh M)$  and consider the infinite rightmost directed path from $x$ in $\wh M$. This path has to visit a cut vertex. Let $m_0$ be the smallest $m\in\BB N$ for which this path visits the cut vertex $\wh\lambda^-_{T_m}$.
For $m\geq m_0$, let $\wh P_m$ be the leftmost $\XDP$ geodesic from $x$ to $\wh\lambda^-_{T_m}$ (Definition~\ref{def-geodesic}).   By the definition of a cut vertex, for $m' \geq m \geq m_0$, the path $\wh P_{m'}$ must visit the vertex $\wh\lambda^-_{T_m}$. By the uniqueness of leftmost $\XDP$ geodesics between finite points, the portion of $\wh P_{m'}$ up to the time when it hits $\wh\lambda^-_{T_m}$ coincides with $\wh P_m$. Hence we can define a leftmost $\XDP$ geodesic $\wh P $ from $x$ to $\infty$ by declaring that $\wh P $ coincides with $\wh P_m$ until the first time when it hits $\wh\lambda^-_{T_m}$, for each $m\geq m_0$. This leftmost $\XDP$ geodesic is unique since the leftmost $\XDP$ geodesic from $x$ to $\wh\lambda^-_{T_m}$ is unique for each $m\geq m_0$. 
\end{proof}

\subsection{Independence in the UIQBOT}
\label{sec-thin-ind}

In this subsection, we will only consider walks started from $(0,0)$, so we just write $\BB P = \wh{\BB P}_{(0,0)}$. 
The idea of the proof of Proposition~\ref{prop-thin-ind} is as follows. Let $\wh P$ be the leftmost $\XDP$ geodesic from the source vertex to $\infty$ in the UIQBOT $\wh M$, as provided by Lemma~\ref{lem-geo-exist}. Using the existence of cut vertices of $\wh M$ (Proposition~\ref{prop-cut-vertex}), we can find arbitrarily large finite submaps of $\wh M$ that have the law of a Boltzmann bipolar-oriented triangulation conditional on their left and right boundary lengths (Lemma~\ref{lem-past-map}). By Lemma~\ref{lem-lr-ind}, the leftmost $\XDP$ geodesic $\wh P$ of $\wh M$ divides each of these submaps into two pieces that are conditionally independent given the length of the portion of $\wh P$ in the submap (Lemma~\ref{lem-finite-ind}). It remains to send the size of the submap to $\infty$ and get rid of the conditioning. We do this using a combination of the backward martingale convergence theorem and the Hewitt-Savage zero-one law (the latter is applied to the i.i.d.\ submaps of $\wh M$ between its successive cut vertices). 

Define the times $T_m$ as in Lemma~\ref{lem-renewal}, so that $\wh\lambda^-_{T_m}$ is the $m$th cut vertex of $\wh M$. 
For $k \in\BB N_0$, let 
\eqb  \label{eqn-R-inf}
\tau_k^R  = \min\left\{ n\in\BB N : \wh{\mcl R}(n) = -k \right\},
\eqe 
so that $\wh\lambda_{\tau_k^R}$ is the $k+1$th edge on the right boundary of $\wh M$ thanks to  Assertion~\ref{item-lower-right} in Lemma~\ref{lem-kmsw-bdy}. 
For $k\in\BB N_0$, let
\eqb  \label{eqn-last-cut} 
m_k := \max\left\{ m \,:\, T_m \leq \tau_k^R \right\} .
\eqe 
That is, $\wh\lambda^-_{T_{m_k}}$ is the last cut vertex along the right boundary of $\wh M$ which comes before $\wh\lambda_{\tau_k^R}$ in directed order. 
Let 
\[\wh M( T_{m_k} ) := \wh M_{0, T_{m_k}-1}\] 
 be the submap of $\wh M$ obtained by applying the KMSW procedure to $\wh Z|_{[0,T_{m_k}-1]}$. 
Then $\wh M(T_{m_k})$ is a bipolar-oriented triangulation with the same source as $\wh M$ and sink equal to $\wh\lambda^-_{T_{m_k}}$. 
See Figure~\ref{fig-past-map} for an illustration.

\begin{figure}[ht!]
\begin{center}
\includegraphics[width=0.6\textwidth]{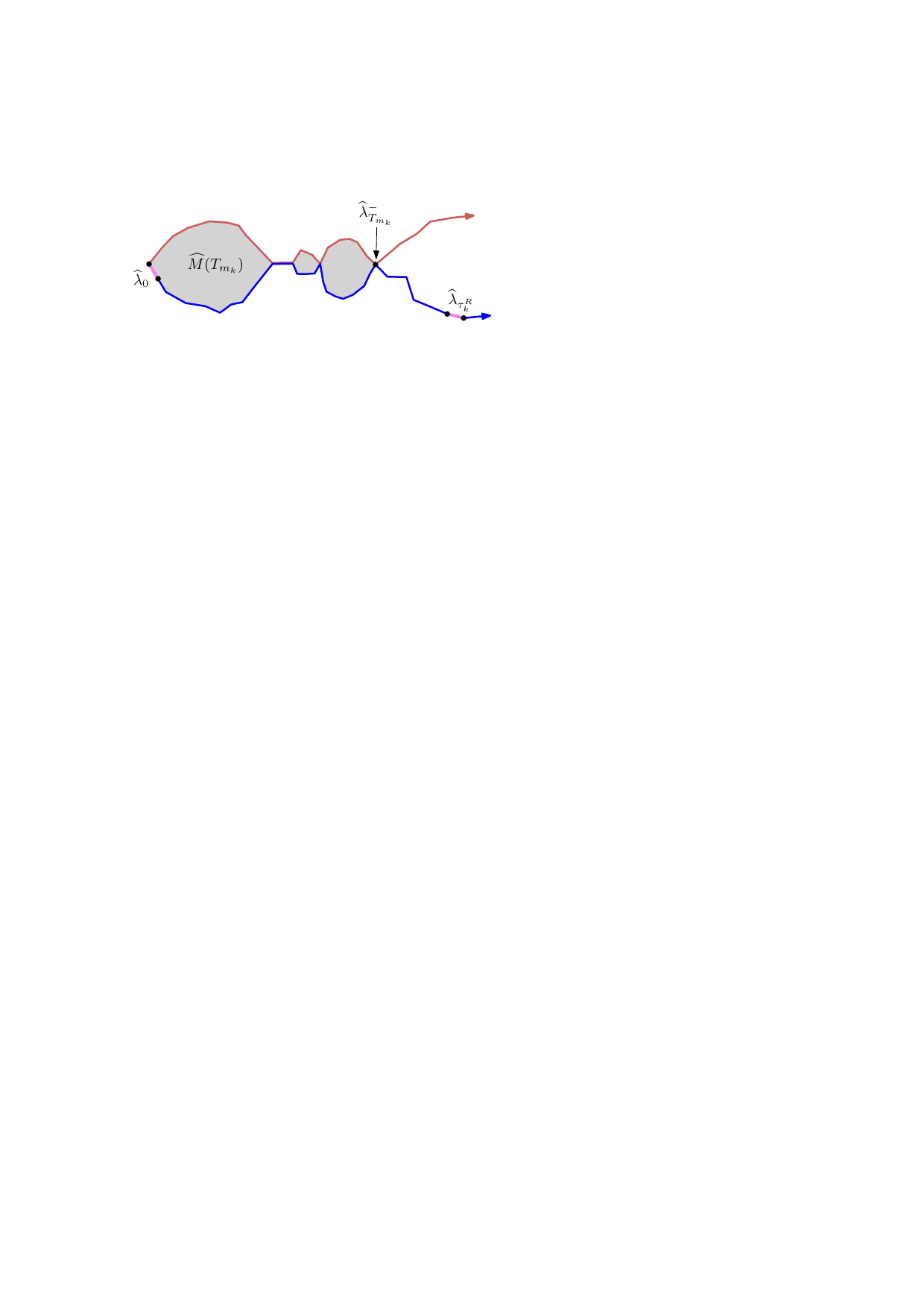}  
\caption{\label{fig-past-map}  
Illustration of the map $\wh M(T_{m_k})$, in grey, defined just after~\eqref{eqn-last-cut}. We show in Lemma~\ref{lem-past-map} that the conditional law of $\wh M(T_{m_k})$ given its left and right boundary lengths is that of a Boltzmann bipolar-oriented triangulation with given left and right boundary lengths (Definition~\ref{def-boltzmann-lr}). 
} 
\end{center}
\end{figure}

\begin{lem} \label{lem-past-map} 
Let $k \in \BB N$ and let $\wh M( T_{m_k} )$ be the map defined just above. If we condition on $\wh{\mcl Z}( \cdot + T_{m_k})|_{[0 ,\infty)}$ (which in particular determines $\wh{\mcl Z}(T_{m_k})$), then the conditional law of $\wh{\mcl Z}|_{[0,T_{m_k}]}$ is that of a random walk with the law of $\mcl Z$ started at $(0,0)$, conditioned to first hit $\BB R \times [\wh{\mcl R}(T_{m_k})   ,\infty)$ at $( \wh{\mcl L}(T_{m_k})   ,\wh{\mcl R}( T_{m_k}) )$ and to stay in $[0,\infty) \times  [\wh{\mcl R}(T_{m_k})   ,\infty)$ until this hitting time. 
Under the same conditioning, the conditional law of $\wh M(T_{m_k})$ is that of a Boltzmann bipolar-oriented triangulation with left and right boundary lengths $\wh{\mcl L}(T_{m_k}) $ and $-\wh{\mcl R}(T_{m_k}) $, respectively (Definition~\ref{def-boltzmann-lr}). 
\end{lem}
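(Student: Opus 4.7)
The plan is to compute the conditional density of $\wh{\mcl Z}|_{[0, T_{m_k}]}$ given $\xi := \wh{\mcl Z}(\cdot + T_{m_k})|_{[0,\infty)}$ via a past-future factorization, and then translate the resulting walk-level description into a map-level one using the KMSW bijection (Theorem~\ref{thm-kmsw}).

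First I would factorize the event $\{T_{m_k} = n\}$ as a past event intersected with a future event. By Lemma~\ref{lem-cut-equiv}, $n$ is a cut vertex time iff $A_n \cap B_n$ holds, where $A_n$ (the strict running minimum of $\wh{\mcl R}$ at time $n$) is measurable with respect to $\wh{\mcl Z}|_{[0,n]}$ and $B_n$ is measurable with respect to $\wh{\mcl Z}(\cdot + n)|_{[0,\infty)}$. Once $A_n$ holds, the running minimum of $\wh{\mcl R}$ up to any later time $j > n$ is achieved in $[n, j]$, so for such $j$ one has $A_j \Leftrightarrow \wh{\mcl R}(j) < \min_{i \in [n, j-1]} \wh{\mcl R}(i)$; together with the analogous observation that $\tau_k^R$ is also determined by the shifted walk, this yields $\{T_{m_k} = n\} = A_n \cap \mcl F(\xi)$ for some future-measurable event $\mcl F$.

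Next I would apply the Markov property of $\wh{\mcl Z}$~\cite{bd-conditioning} together with the Radon-Nikodym relation~\eqref{eqn-cond-walk-rn} to write, for any $n$-step walk $\wh{\frk z} : [0,n] \to \BB Z^2$ with $\wh{\frk z}(0) = (0,0)$,
\begin{equation*}
\wh{\BB P}_{(0,0)}\left[\wh{\mcl Z}|_{[0,n]} = \wh{\frk z}\right] = 3^{-n} (\wh{\frk z}(n)_1 + 1) \BB 1\{\wh{\frk z}_1(j) \geq 0,\ \forall j \leq n\}.
\end{equation*}
Conditioning on $\xi$ fixes $\wh{\frk z}(n) = \xi(0)$ and absorbs the factor $(\wh{\frk z}(n)_1 + 1)$ into the normalization. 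Combining with the factorization from the previous paragraph, the conditional density of $\wh{\mcl Z}|_{[0, T_{m_k}]}$ given $\xi$ is proportional to $3^{-n}\BB 1\{\wh{\frk z}(0) = (0,0),\ \wh{\frk z}(n) = \xi(0),\ A_n,\ \wh{\frk z}_1 \geq 0\}$. Because steps of $\mcl Z$ decrement the second coordinate by at most one, $A_n$ forces the final step to be $(1,-1)$ and forces the walk to stay in $[0,\infty) \times [\wh{\mcl R}(T_{m_k}) + 1, \infty)$ up to time $n - 1$; this matches the description of the conditional law of $\wh{\mcl Z}|_{[0, T_{m_k}]}$ in the lemma.

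Finally I would translate this to the map statement. The restriction $\wh{\mcl Z}|_{[0, T_{m_k} - 1]}$ is distributed as a walk of $T_{m_k} - 1$ steps from $(0,0)$ to $(\wh{\mcl L}(T_{m_k}) - 1, \wh{\mcl R}(T_{m_k}) + 1)$ staying in $[0,\infty) \times [\wh{\mcl R}(T_{m_k}) + 1, \infty)$, weighted by its $\mcl Z$-probability $3^{-(T_{m_k}-1)} \propto 3^{-T_{m_k}}$. By Theorem~\ref{thm-kmsw}, such walks are in bijection with bipolar-oriented triangulations having $T_{m_k}$ total edges, $\wh{\mcl L}(T_{m_k})$ left boundary edges and $-\wh{\mcl R}(T_{m_k})$ right boundary edges; the induced law on maps, weighted by $3^{-\#\mcl E}$, is exactly the Boltzmann distribution of Definition~\ref{def-boltzmann-lr}. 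The main delicate step is the past-future factorization in the first paragraph, which relies on the observation that once $A_n$ occurs the past contributes a barrier $\min_{j \leq n-1}\wh{\mcl R}(j) > \wh{\mcl R}(n)$ that makes all later running minima (and hence the cut-vertex structure after time $n$) a function of $\xi$ alone.
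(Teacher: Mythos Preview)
Your proposal is correct and follows essentially the same approach as the paper. The paper organizes the argument slightly differently: rather than factorizing $\{T_{m_k}=n\}$ for general $n$, it observes that on $\{\wh{\mcl R}(T_{m_k})=-r\}$ one has $T_{m_k}=\tau_r^R$ (a genuine stopping time), then applies the strong Markov property at $\tau_r^R$ directly and invokes Lemma~\ref{lem-kmsw-boltzmann} for the map translation. Your explicit density computation via~\eqref{eqn-cond-walk-rn} and the bijection of Theorem~\ref{thm-kmsw} amounts to the same thing, with the cancellation of the $(\wh{\frk z}(n)_1+1)$ factor playing the role of undoing the $h$-transform.
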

\begin{proof} 
Define the times $\tau_k^R$ as in~\eqref{eqn-R-inf}. 
By the definition of $T_m$ from Lemma~\ref{lem-renewal}, $\{T_m\}_{m\in\BB N_0} \subset \{\tau_k^R\}_{k\in\BB N_0}$. 
Fix $\el , r \geq 1$ with $r \leq k$. Define the event 
\eqbn
E_{\el,r}=E_{\el,r}(k) := \left\{\wh{\mcl Z}(T_{m_k}) = (\el  , -r)\right\} = \left\{T_{m_k} = \tau_r^R ,\, \wh{\mcl L}(\tau_r^R)  =\el  \right\} .
\eqen
By unpacking the definitions of $\tau_r^R$ and $T_{m_k}$ and noting that $T_{m_k} \leq \tau_k^R$ by definition, we see that $E_{\el,r}$ is the same as the event that the following is true: 
\begin{itemize}
\item $\wh{\mcl L}(\tau_r^R) = \el $;
\item $\wh{\mcl L}(j) \geq \el $ for each $j \geq \tau_r^R$ (combined with the first item, this is the event $B_{\tau_r^R}$ in \eqref{eqn-walk-bdy-event} and, by the definitions of $\tau_r^R$ and $T_m$, this ensures that $\tau_r^R = T_m$ for some $m\in \BB N_0$); 
\item there are no numbers $s\in [r+1,k ]\cap\BB Z$ such that $\wh{\mcl L}(n) \geq \wh{\mcl L}(\tau_s^R)$ for each $n \geq \tau_s^R  $ (by~\eqref{eqn-last-cut}, this ensures that $T_{m_k} = \tau_r^R$, instead of $T_{m_k} = \tau_s^R$ for some  $s\in[r+1,k ] \cap\BB Z$ ). 
\end{itemize}
The first itemized condition depends only on $\wh{\mcl Z}(\tau_r^R)$ and the other two itemized conditions depend only on $\wh{\mcl Z}(\cdot + \tau_r^R)|_{[0 ,\infty)}$. By the strong Markov property, $\wh{\mcl Z}|_{[0, \tau_r^R]}$ is conditionally independent from $\wh{\mcl Z}(\cdot + \tau_r^R)|_{[0 ,\infty)}$ given $\wh{\mcl Z}(\tau_r^R)$. Hence, on the event $E_{\el,r}$, the conditional law of $\wh{\mcl Z}|_{[0,\tau_r^R]} =  \wh{\mcl Z}|_{[0,T_{m_k}]}$ given $\wh{\mcl Z}(\cdot + \tau_r^R)|_{[0 ,\infty)}$ is the same as the conditional law of $\wh{\mcl Z}|_{[0,\tau_r^R]}$ given only $\{\wh{\mcl L}(\tau_r^R) = \el\}$. By the definition~\eqref{eqn-R-inf} of $\tau_r^R$, we get the desired description of the conditional law of $\wh{\mcl Z}|_{[0,T_{m_k}]}$. The description of the conditional law of $\wh M(T_{m_k}) $ then follows by applying Lemma~\ref{lem-kmsw-boltzmann} and conditioning on the left boundary length. 
\end{proof}

By Definition~\ref{def-cut-vertex} of a cut vertex, the leftmost $\XDP$ geodesic  $\wh P$ from the source vertex to $\infty$ in $\wh M$ must pass through the cut vertex $\wh\lambda^-_{T_m}$ for each $m\in\BB N$.  
Note that $\wh\lambda^-_{T_0} = \wh\lambda^-_0$ is the source vertex of $\wh M$. 
For $k \geq 1$, define the $\sigma$-algebra  
\eqb \label{eqn-cut-filtration} 
\mcl F_k := \sigma\left(   \wh{\mcl Z}( \cdot + T_{m_k})|_{[0 ,\infty)} ,\, \XDP_{\wh M}\left(\wh\lambda^-_0 ,  \wh\lambda^-_{T_{m_k} } \right) \right). 
\eqe

\begin{lem} \label{lem-finite-ind}
Let $\wh M^L(T_{m_k})$ and $\wh M^R(T_{m_k})$ be the submaps of $\wh M(T_{m_k})$ lying to the left and right of $\wh P$, respectively, both including the vertices and edges which lie on $\wh P \cap \wh M(T_{m_k})$. Then $\wh M^L(T_{m_k})$ and $\wh M^R(T_{m_k})$ are conditionally independent given the $\sigma$-algebra $\mcl F_k$ of~\eqref{eqn-cut-filtration}.
\end{lem}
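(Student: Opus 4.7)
The plan is to reduce this infinite-volume conditional independence to the finite-volume statement of Lemma~\ref{lem-lr-ind}, using Lemma~\ref{lem-past-map} to control the conditional law of $\wh M(T_{m_k})$.

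\textbf{Step 1: Reduce $\wh P$ to a leftmost geodesic inside $\wh M(T_{m_k})$.} First I would show that the portion $\wh P(k)$ of $\wh P$ up to the first visit to $\wh\lambda^-_{T_{m_k}}$ is exactly the leftmost $\XDP$ geodesic from the source to the sink of the finite map $\wh M(T_{m_k})$. Since the times $T_m$ are defined via Lemma~\ref{lem-cut-equiv}, $\wh\lambda^-_{T_{m_k}}$ is a cut vertex of $\wh M$ in the sense of Definition~\ref{def-cut-vertex}. From the description of $\wh M(T_{m_k})$ as the map explored by the KMSW procedure up to time $T_{m_k}$ (together with Lemma~\ref{lem-uibot-bdy} and the boundary description in Lemma~\ref{lem-kmsw-bdy}), removing $\wh\lambda^-_{T_{m_k}}$ disconnects $\wh M$ into a ``past'' component which coincides with $\wh M(T_{m_k})\setminus \{\wh\lambda^-_{T_{m_k}}\}$ and a ``future'' component which lies in $\wh M\setminus \wh M(T_{m_k})$. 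Consequently, every directed path in $\wh M$ from $\wh\lambda^-_0$ to $\wh\lambda^-_{T_{m_k}}$ is contained in $\wh M(T_{m_k})$, and every infinite directed path from $\wh\lambda^-_0$ must pass through $\wh\lambda^-_{T_{m_k}}$. Using the uniqueness of leftmost $\XDP$ geodesics between finite points in any given planar map, this forces $\wh P(k)$ to be the leftmost $\XDP$ geodesic in $\wh M(T_{m_k})$ from its source to its sink, and also gives $\wh P \cap \wh M(T_{m_k}) = \wh P(k)$, so $\wh M^L(T_{m_k})$ and $\wh M^R(T_{m_k})$ coincide with the submaps of $\wh M(T_{m_k})$ lying weakly to the left and right of $\wh P(k)$.

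\textbf{Step 2: Apply Lemmas~\ref{lem-past-map} and~\ref{lem-lr-ind}.} By Lemma~\ref{lem-past-map}, the conditional law of $\wh M(T_{m_k})$ given $\wh{\mcl Z}(\cdot + T_{m_k})|_{[0,\infty)}$ is that of a Boltzmann bipolar-oriented triangulation with left boundary length $\wh{\mcl L}(T_{m_k})$ and right boundary length $-\wh{\mcl R}(T_{m_k})$ (both of which are determined by $\wh{\mcl Z}(T_{m_k})$, hence measurable with respect to $\wh{\mcl Z}(\cdot + T_{m_k})|_{[0,\infty)}$). Applying Lemma~\ref{lem-lr-ind} to this conditional law, with $\XDP$-geodesic equal to $\wh P(k)$ by Step 1, we get that the submaps of $\wh M(T_{m_k})$ lying to the left and right of $\wh P(k)$ are conditionally independent given
$\wh{\mcl Z}(\cdot + T_{m_k})|_{[0,\infty)}$ and $|\wh P(k)| = \XDP_{\wh M(T_{m_k})}(\wh\lambda^-_0, \wh\lambda^-_{T_{m_k}}) = \XDP_{\wh M}(\wh\lambda^-_0, \wh\lambda^-_{T_{m_k}})$, where the last equality again uses the cut vertex property from Step 1. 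This conditioning is precisely $\mcl F_k$, so the claim follows.

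\textbf{Main obstacle.} The subtle point is the cut vertex argument in Step 1: verifying that the ``past'' side of the cut vertex coincides with the KMSW-explored region $\wh M(T_{m_k})\setminus\{\wh\lambda^-_{T_{m_k}}\}$, so that every directed path from the source to $\wh\lambda^-_{T_{m_k}}$ (and hence the leftmost $\XDP$ geodesic computed in $\wh M$ and in $\wh M(T_{m_k})$) is the same object. Once this combinatorial identification is done, the rest of the argument is a direct transfer of Lemma~\ref{lem-lr-ind} through the Markov-type decomposition provided by Lemma~\ref{lem-past-map}.
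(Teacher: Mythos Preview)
Your proposal is correct and follows essentially the same approach as the paper: first use the cut vertex property of $\wh\lambda^-_{T_{m_k}}$ to identify the portion of $\wh P$ inside $\wh M(T_{m_k})$ with the leftmost $\XDP$ geodesic from source to sink of that finite map, then combine Lemma~\ref{lem-past-map} (Boltzmann conditional law) with Lemma~\ref{lem-lr-ind} (conditional independence given geodesic length). The paper's proof is more terse about Step~1, simply asserting that since $\wh P$ must visit the cut vertex, the restricted path is the leftmost geodesic in $\wh M(T_{m_k})$ with length $\XDP_{\wh M}(\wh\lambda^-_0,\wh\lambda^-_{T_{m_k}})$; your more explicit justification of the topological identification is a reasonable elaboration but not a different idea.
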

\begin{proof}
Recall that the source of $\wh M(T_{m_k})$ is the same as the source $\wh\lambda^-_0$ of $\wh M$, and the sink of $\wh M(T_{m_k})$ is the vertex $\wh\lambda^-_{T_{m_k}}$.  
Since $\wh P$ must visit the cut vertex $\wh\lambda^-_{T_{m_k} }$, the portion of $\wh P$ which lies in $\wh M(T_{m_k})$ is the leftmost $\XDP$ geodesic from the source to the sink of $\wh M(T_{m_k})$, and the length of this portion of $\wh P$ is $\XDP_{\wh M}(\wh\lambda^-_0 ,  \wh\lambda^-_{T_{m_k} } )$.
The lemma then follows by combining the description of the conditional law of $\wh M(T_{m_k})$ given $ \wh{\mcl Z}( \cdot + T_{m_k})|_{[0 ,\infty)}$ from Lemma~\ref{lem-past-map} with the conditional independence statement of Lemma~\ref{lem-lr-ind}.  
\end{proof}

\begin{lem} \label{lem-filtration-mono}
The $\sigma$-algebras $\mcl F_k$ of~\eqref{eqn-cut-filtration} are decreasing in $k$, i.e., $\mcl F_{K} \subset\mcl F_k$ whenever $K  > k$. Moreover, any event in the intersection $\sigma$-algebra $\bigcap_{k \geq 0} \mcl F_k$ has probability zero or one. 
\end{lem}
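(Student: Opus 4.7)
I will show that both generators of $\mcl F_K$ are $\mcl F_k$-measurable when $K>k$. Writing $r_j := \wh{\mcl R}(T_{j-1}) - \wh{\mcl R}(T_j) \geq 1$ for the $\wh{\mcl R}$-drop in the $j$th slice, the definition of $m_k$ gives $\wh{\mcl R}(T_{m_k}) = -\sum_{j=1}^{m_k} r_j \in [-k, 0]$, and $T_{m_k}$ is a strict running minimum of $\wh{\mcl R}$. Consequently $\tau_K^R - T_{m_k}$ is the first hitting time of $-K$ by the shifted walk $\wh{\mcl Z}(\cdot + T_{m_k})$, hence $\mcl F_k$-measurable. Moreover, for $n > T_{m_k}$ the event $A_n \cap B_n$ from Lemma~\ref{lem-cut-equiv} is determined by the shifted walk: $B_n$ depends only on the future of $n$, and in the presence of $A_{T_{m_k}}$ (which forces $\wh{\mcl R}(j) \geq \wh{\mcl R}(T_{m_k}) + 1$ for $j < T_{m_k}$), the condition $A_n$ is equivalent to its restriction to $j \in [T_{m_k}, n-1]$. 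Hence the cut-vertex times $T_{m_k+1} < T_{m_k + 2} < \cdots$, and in particular $T_{m_K}$ (the largest such time $\leq \tau_K^R$), are $\mcl F_k$-measurable, so $\wh{\mcl Z}(\cdot + T_{m_K})|_{[0,\infty)}$ is a further shift of $\wh{\mcl Z}(\cdot + T_{m_k})|_{[0,\infty)}$ and lies in $\mcl F_k$. For the distance, every directed path from $\wh\lambda_0^-$ to $\wh\lambda^-_{T_{m_K}}$ must pass through the intermediate cut vertex $\wh\lambda^-_{T_{m_k}}$, so
\[
\XDP_{\wh M}\bigl(\wh\lambda_0^-, \wh\lambda^-_{T_{m_K}}\bigr) = \XDP_{\wh M}\bigl(\wh\lambda_0^-, \wh\lambda^-_{T_{m_k}}\bigr) + \XDP_{\wh M}\bigl(\wh\lambda^-_{T_{m_k}}, \wh\lambda^-_{T_{m_K}}\bigr);
\]
the first summand is $\mcl F_k$-measurable by definition, and the second is an $\XDP$ distance in the submap between the two cut vertices, which is produced by running the KMSW procedure on $\wh{\mcl Z}|_{[T_{m_k}, T_{m_K}]}$ and is therefore also $\mcl F_k$-measurable.

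\textbf{Triviality via Hewitt--Savage.} The plan is to recast $\mcl F_k$ in terms of i.i.d.\ renewal blocks and apply the Hewitt--Savage zero-one law. Inductively applying Lemma~\ref{lem-renewal}, the blocks
\[
W_m := \bigl(\wh{\mcl Z}(\cdot + T_{m-1}) - \wh{\mcl Z}(T_{m-1})\bigr)\big|_{[0,\, T_m - T_{m-1}]}, \qquad m \geq 1,
\]
are i.i.d.\ and jointly generate $\sigma(\wh{\mcl Z}|_{[0,\infty)})$. Let $d(W_j) := \XDP_{\wh M}(\wh\lambda^-_{T_{j-1}}, \wh\lambda^-_{T_j})$, which is a deterministic function of the block $W_j$ since the slice between these two cut vertices is the map produced by KMSW from $W_j$. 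By the additivity established above, $\XDP_{\wh M}(\wh\lambda_0^-, \wh\lambda^-_{T_{m_k}}) = \sum_{j=1}^{m_k} d(W_j)$, while $\wh{\mcl Z}(\cdot + T_{m_k})|_{[0,\infty)}$ is determined by the tail $(W_{m_k+1}, W_{m_k+2}, \dots)$ together with $\wh{\mcl Z}(T_{m_k}) = \sum_{j=1}^{m_k} (W_j)_{\mathrm{end}}$. Also, $m_k = \max\{m \geq 0 : \sum_{j=1}^m r_j \leq k\}$ is a deterministic function of $(W_j)_{j \geq 1}$.

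\textbf{Exchangeability argument.} Fix $N \in \BB N$ and let $\pi$ be a permutation of $\{1,\dots,N\}$ acting on $(W_j)_{j \geq 1}$ by permuting the first $N$ entries and leaving the rest fixed. Since $\pi$ preserves every partial sum $\sum_{j=1}^m r_j$ with $m \geq N$, on the ($\pi$-invariant) event $\{m_k \geq N\}$ the index $m_k$ itself is $\pi$-invariant; consequently so are the tail $(W_{m_k+1}, W_{m_k+2}, \dots)$ and the symmetric sums $\sum_{j=1}^{m_k}(W_j)_{\mathrm{end}}$ and $\sum_{j=1}^{m_k} d(W_j)$. Thus every generator of $\mcl F_k$ is $\pi$-invariant on $\{m_k \geq N\}$, giving $E \cap \{m_k \geq N\} = \pi^{-1}(E) \cap \{m_k \geq N\}$ for each $E \in \mcl F_k$. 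By Proposition~\ref{prop-cut-vertex} we have $m_k \to \infty$ a.s., so $\BB P(m_k \geq N) \to 1$ as $k \to \infty$, and any $E \in \bigcap_k \mcl F_k$ satisfies $\BB P(E \triangle \pi^{-1}(E)) = 0$. As $N$ and $\pi$ were arbitrary, $E$ lies in the exchangeable $\sigma$-algebra of the i.i.d.\ sequence $(W_j)$, so $\BB P(E) \in \{0,1\}$ by Hewitt--Savage. The main subtlety throughout is bookkeeping: one must verify that $m_k$ itself becomes $\pi$-invariant on $\{m_k \geq N\}$, since otherwise the tail appearing in the generators of $\mcl F_k$ would shift under $\pi$ and the invariance argument would collapse.
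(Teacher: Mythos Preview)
Your proof is correct and follows essentially the same approach as the paper's: the monotonicity comes from recovering $T_{m_K}$ and the intermediate $\XDP$ distance from the shifted walk $\wh{\mcl Z}(\cdot+T_{m_k})|_{[0,\infty)}$ together with additivity through cut vertices, and the triviality comes from writing the generators of $\mcl F_k$ as functionals of the i.i.d.\ renewal blocks $(W_m)$ and applying Hewitt--Savage. Your treatment is in fact a bit more explicit than the paper's in two places: you spell out why $A_n$ for $n>T_{m_k}$ is determined by the shifted walk (via $A_{T_{m_k}}$), and you make the conditioning on $\{m_k\ge N\}$ explicit rather than speaking informally of ``permutations of the first $m_k$ blocks.''
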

\begin{proof}
The monotonicity is proven essentially by inspection, but we give the details. Let $K > k$. Define the walk 
\eqbn
\wh{\mcl Z}' = (\wh{\mcl L}',\wh{\mcl R}') := \wh{\mcl Z}(\cdot + T_{m_k})|_{[0 ,\infty)} .   
\eqen
By definition, $\wh{\mcl Z}'$ is $\mcl F_k$-measurable.
The time $T_{m_{K}} - T_{m_k}$ is determined by $\wh{\mcl Z}'$: indeed, to recover $T_{m_{K}} - T_{m_k}$ from $\wh{\mcl Z}'$, we first let $\acute\tau_K^R$ be the smallest $n\in\BB N$ for which $\wh{\mcl R}'(n)  = -K$. Then $T_{m_{K}} - T_{m_k}$ is the largest $n\leq \acute\tau_K^R$ at which the event $A_n\cap B_n$ of~\eqref{eqn-walk-bdy-event} occurs with $\wh{\mcl Z}'$ in place of $\wh{\mcl Z}$. 
We also note that $\wh\lambda^-_{T_{m_k}}$ is the source vertex for the map constructed from $\wh{\mcl Z}'$ via the KMSW procedure. 
From this, it follows that $\XDP_{\wh M}( \wh\lambda^-_{T_{m_k}} , \wh\lambda^-_{T_{m_K}}    )$ is determined by $\wh{\mcl Z}'$. 
Since $\wh P$ must pass through each of the cut vertices $\wh\lambda^-_{T_m}$, 
\eqb \label{eqn-filtration-ldp}
\XDP_{\wh M}\left(\wh\lambda^-_0  , \wh\lambda^-_{T_{m_K}}  \right) =\XDP_{\wh M}\left(\wh\lambda^-_0  , \wh\lambda^-_{T_{m_k}} \right) +\XDP_{\wh M}\left(\wh\lambda^-_{T_{m_k}}  , \wh\lambda^-_{T_{m_K}} \right) .
\eqe 
Since $\XDP_{\wh M}( \wh\lambda^-_{T_{m_k}} , \wh\lambda^-_{T_{m_K}}    )$ is determined by $\wh{\mcl Z}'$, it follows from~\eqref{eqn-filtration-ldp} and the definition~\eqref{eqn-cut-filtration} of $\mcl F_k$ that $\XDP_{\wh M}\left(\wh\lambda^-_0  , \wh\lambda^-_{T_{m_K}}  \right)$ is $\mcl F_k$-measurable. Clearly, $\wh{\mcl Z}(\cdot + T_{m_K})|_{[0 ,\infty)}$ is also $\mcl F_k$-measurable. Therefore, $\mcl F_K \subset\mcl F_k$. 

To prove the tail triviality, we will argue using the Hewitt-Savage zero-one law. We will apply this result to the collection of i.i.d.\ random variables $(Z_j)_{j\in\BB N}$, given by the walk segments 
\eqb \label{eqn-cut-increments}
Z_j:=\left(\wh{\mcl Z}(\cdot + T_{j-1}) - \wh{\mcl Z}(T_{j-1})\right)|_{[0,T_j - T_{j-1}]}  ,\quad j \in \BB N,
\eqe
which are i.i.d.\ by Lemma~\ref{lem-renewal}.
The $j$th walk segment $Z_j$ determines the submap of $\wh M$ between the cut vertices $\wh \lambda^-_{T_{j-1}}$ and $\wh\lambda^-_{T_j}$ via the KMSW procedure, so in particular it determines $\XDP_{\wh M}\left(\wh\lambda^-_{T_{j-1}}  , \wh\lambda^-_{T_j} \right)$.
Since the $\XDP$ geodesic $\wh P$ must pass through each of the cut vertices $\wh\lambda^-_{T_m}$,
\eqb
\wh{\mcl Z}(T_m) = \sum_{j=1}^m (\wh{\mcl Z}(T_j) - \wh{\mcl Z}(T_{j-1}) )  \quad \text{and} \quad 
\XDP_{\wh M}\left(\wh\lambda^-_0  ,  \wh\lambda^-_{T_m} \right) = \sum_{j=1}^m \XDP_{\wh M}\left(\wh\lambda^-_{T_{j-1}}  , \wh\lambda^-_{T_j} \right) .
\eqe
Consequently, $\wh{\mcl Z}(T_m)$ and $ \XDP_{\wh M}(\wh\lambda^-_0 ,  \wh\lambda^-_{T_m} )$ remain unchanged if we make any finite permutation of the first $m$ walk segments $(Z_j)_{j\in[1,m]\cap \BB Z}$.
This implies that $\wh{\mcl Z}( \cdot + T_{m })|_{[0 ,\infty)}$ also remains unchanged if we make any finite permutation of the first $m$ walk segments $(Z_j)_{j\in[1,m]\cap \BB Z}$. 
The $\sigma$-algebra $\mcl F_k$ of~\eqref{eqn-cut-filtration}  is generated by  $\wh{\mcl Z}( \cdot + T_{m })|_{[0 ,\infty)}$ and $ \XDP_{\wh M}(\wh\lambda^-_0 ,  \wh\lambda^-_{T_m} )$ and with $m = m_k$, so any $\mcl F_k$-measurable event is invariant under finite permutations of the first $m_k$ walk segments $(Z_j)_{j\in[1,m_k]\cap \BB Z}$; indeed, such permutations cannot change the value of $m_k$. 
Since $m_k\to\infty$ as $k\to\infty$ by Proposition~\ref{prop-cut-vertex}, any $\bigcap_{k=0}^\infty \mcl F_k$-measurable event must be invariant under finite permutations of the walk segments $(Z_j)_{j\in\BB N}$.
Since these walk segments are i.i.d., the Hewitt-Savage zero-one law implies that any such event has probability zero or one. 
\end{proof}

\begin{figure}[ht!]
\begin{center}
\includegraphics[width=0.6\textwidth]{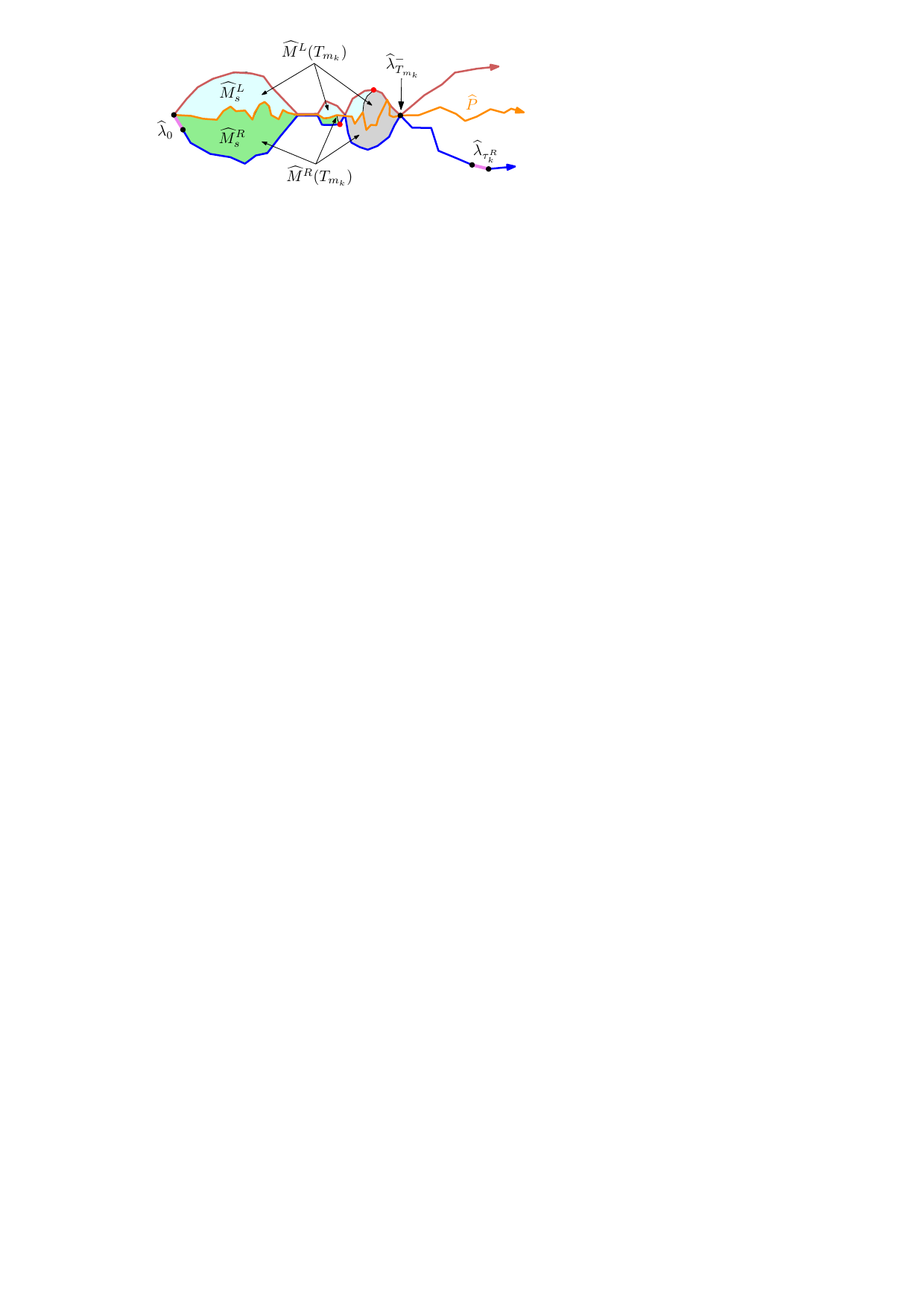}  
\caption{\label{fig-thin-ind}  
Illustration for the proof of Proposition~\ref{prop-thin-ind}. To get that the submaps $\wh M^L$ and $\wh M^R$ lying (weakly) to the left and right of $\wh P$ are independent, we first fix $s$ and send $k\to\infty$ to deduce the independence of $\wh M_s^L$ and $\wh M_s^R$; highlighted in light blue and green, respectively (the red vertices are the $s$th vertices along the left and right boundaries of $\wh M$). We then send $s\to\infty$. 
}
\end{center}
\end{figure}

\begin{proof}[Proof of Proposition~\ref{prop-thin-ind}]
See Figure~\ref{fig-thin-ind} for an illustration. 
Fix $s\in\BB N$. Let $\wh M_s^L$ be the submap of $\wh M^L$ consisting of all vertices of $\wh M^L$ which can be joined to the $s$th vertex on the left boundary of $\wh M^L$ by a directed path in $\wh M^L$, and all edges and faces of $\wh M^L$ whose vertices are all of this type. Similarly define $\wh M_s^R$ with ``right'' in place of ``left''. On the event 
\eqb \label{eqn-min-coord}
E_k = E_k(s) := \left\{\min\{ \wh{\mcl L}(T_{m_k}), - \wh{\mcl R}(T_{m_k}) \} \geq s+1 \right\} ,
\eqe
the cut vertex $\wh\lambda^-_{T_{m_k}}$ comes after the $s$th vertex on both the left and right boundaries of $\wh M$. Hence on $E_k$, we have $\wh M_s^L \subset \wh M^L(T_{m_k})$ and $\wh M_s^R \subset \wh M^R(T_{m_k})$, where we are using the same notation as in Lemma~\ref{lem-finite-ind}. Moreover, on $E_k$ the map $\wh M_s^L$ is determined by $\wh M^L(T_{m_k})$: indeed, the definition of $\wh M_s^L$ with $\wh M^L(T_{m_k})$ in place of $\wh M^L$ gives us the same map. The same is true for $\wh M^R_s$. 
 
Let $\mcl F_k$ be as in~\eqref{eqn-cut-filtration} and note that the event $E_k$ of~\eqref{eqn-min-coord} is $\mcl F_k$-measurable. 
By Lemma~\ref{lem-finite-ind} and the previous paragraph, we get that on $E_k$, the maps $\wh M_s^L$ and $\wh M_s^R$ are conditionally independent given $\mcl F_k$.
Hence, if $X^L$ and $X^R$ are bounded real-valued random variables which are a.s.\ given by measurable functions of $\wh M_s^L$ and $\wh M_s^R$, respectively, then 
\eqb \label{eqn-use-cond-ind}
\BB E\left[ X^L X^R \,\middle|\, \mcl F_k \right] \BB 1_{E_k}  
= \BB E\left[ X^L \,\middle|\, \mcl F_k \right] \BB E\left[ X^R \,\middle|\, \mcl F_k \right]  \BB 1_{E_k}   .
\eqe

By Lemma~\ref{lem-filtration-mono} and the backward martingale convergence theorem, we get the a.s.\ convergence $\BB E[X^L X^R \,|\, \mcl F_k] \to \BB E[X^L X^R]$ as $k\to\infty$, and similarly for the other two conditional expectations in~\eqref{eqn-use-cond-ind}. By the definition of $T_m$ in the statement of Lemma~\ref{lem-renewal}, $\min\{ \wh{\mcl L}(T_{m_k}), - \wh{\mcl R}(T_{m_k})\}$ is strictly increasing in $k$, hence goes to $\infty$ as $k\to\infty$. Therefore, $\BB P[E_k] \to 1$ as $k\to\infty$. Taking the limit as $k\to\infty$ in~\eqref{eqn-use-cond-ind} therefore gives $\BB E\left[ X^L X^R \right]  = \BB E\left[ X^L   \right] \BB E\left[ X^R  \right] $. Since this holds for any possible choices of $X^L$ and $X^R$, we get that $\wh M_s^L$ and $\wh M_s^R$ are independent. As $s\to\infty$, $\wh M_s^L$ and $\wh M_s^R$ increase to $\wh M^L$ and $\wh M^R$, respectively. Sending $s\to\infty$ now concludes the proof.
\end{proof}

\section{Existence and basic properties of the Busemann function}
\label{sec-busemann}

\subsection{Setup and outline}
\label{sec-busemann-setup}

In this section, we will prove Theorems~\ref{thm-busemann} and~\ref{thm-busemann-property}. 
We consider the following setup.
Fix $\XDP \in \{\op{LDP} , \op{SDP}\}$. 
Let $\mcl Z = (\mcl L , \mcl R) : \BB Z^2 \to \BB Z$ be the encoding walk for the UIBOT $(\uibot,\lambda_0)$ as in Proposition~\ref{prop-kmsw-uibot}. For $n\in\BB Z$, let
\eqbn
\lambda_n = (\lambda^-_n , \lambda^+_n)
\eqen
be the active edge of the KMSW procedure at time $n$, with the indexing chosen so that $\lambda_0  $ is the root edge. 

For $n\in\BB Z$, let $M_{n,\infty}$  be the infinite acyclic-oriented triangulation with missing edges obtained by applying the KMSW procedure to $(\mcl Z(\cdot+n) - \mcl Z(n))|_{[n,\infty)}$.  By Remark~\ref{remark-kmsw-infinite}, $\uibot$ can be recovered from $(M_{n,\infty})_{n\in\BB Z}$ by taking their union.  Boundary edges to the west of $\lambda_n$ are not part of $M_{n,\infty}$, i.e.\ are missing edges.

Let $\wh M_{n,\infty}$ be the submap of $M_{n,\infty}$ induced by the vertices of $M_{n,\infty}$ which are reachable by a directed path in $M_{n,\infty}$ started from the initial vertex $\lambda^-_n$. That is, $\wh M_{n,\infty}$ consists of this vertex set, all of the edges of $M_{n,\infty}$ whose vertices are both in this vertex set, and all of the faces of $\wh M_{n,\infty}$ whose three boundary vertices are all in this vertex set.

The left and right boundaries of $\wh M_{n,\infty}$ are the leftmost and rightmost directed paths in $M_{n,\infty}$ from $\lambda^-_n$ to $\infty$ (note that these two paths are \emph{not} disjoint in general). In particular, $\wh M_{n,\infty}$ has no missing edges. 
The rightmost directed path in $M_{n,\infty}$ from $\lambda^-_n$ to $\infty$ is just the set of non-missing boundary edges of $M_{n,\infty}$, i.e.\ all the boundary edges to the east of $\lambda_n$, including $\lambda_n$ itself.
We denote the leftmost directed path from $\lambda^-_n$ to $\infty$ in $M_{n,\infty}$ by 
\eqb  \label{eqn-north-flow-line}
\beta_n : \BB N \to \mcl E(M_{n,\infty}) ,  
\eqe 
noting that such a path exists thanks to the final claim in Lemma~\ref{lem-uibot-bdy}.
By definition, all of the edges of $\beta_n$ are included in $\wh M_{n,\infty}$. 

Let $\wh M_{n,\infty}'$ be the submap of $M_{n,\infty}$ obtained by removing from $M_{n,\infty}$ all of the vertices, edges, and faces of $\wh M_{n,\infty}$ (including the ones on $\beta_n$). Then $\wh M_{n,\infty}'$ is a directed triangulation with missing edges (every boundary edge of $\wh M_{n,\infty}'$ is a missing edge). See Figure~\ref{fig-flow-line-def} for an illustration of the above definitions.

\begin{figure}[ht!]
\begin{center}
\includegraphics[width=0.8\textwidth]{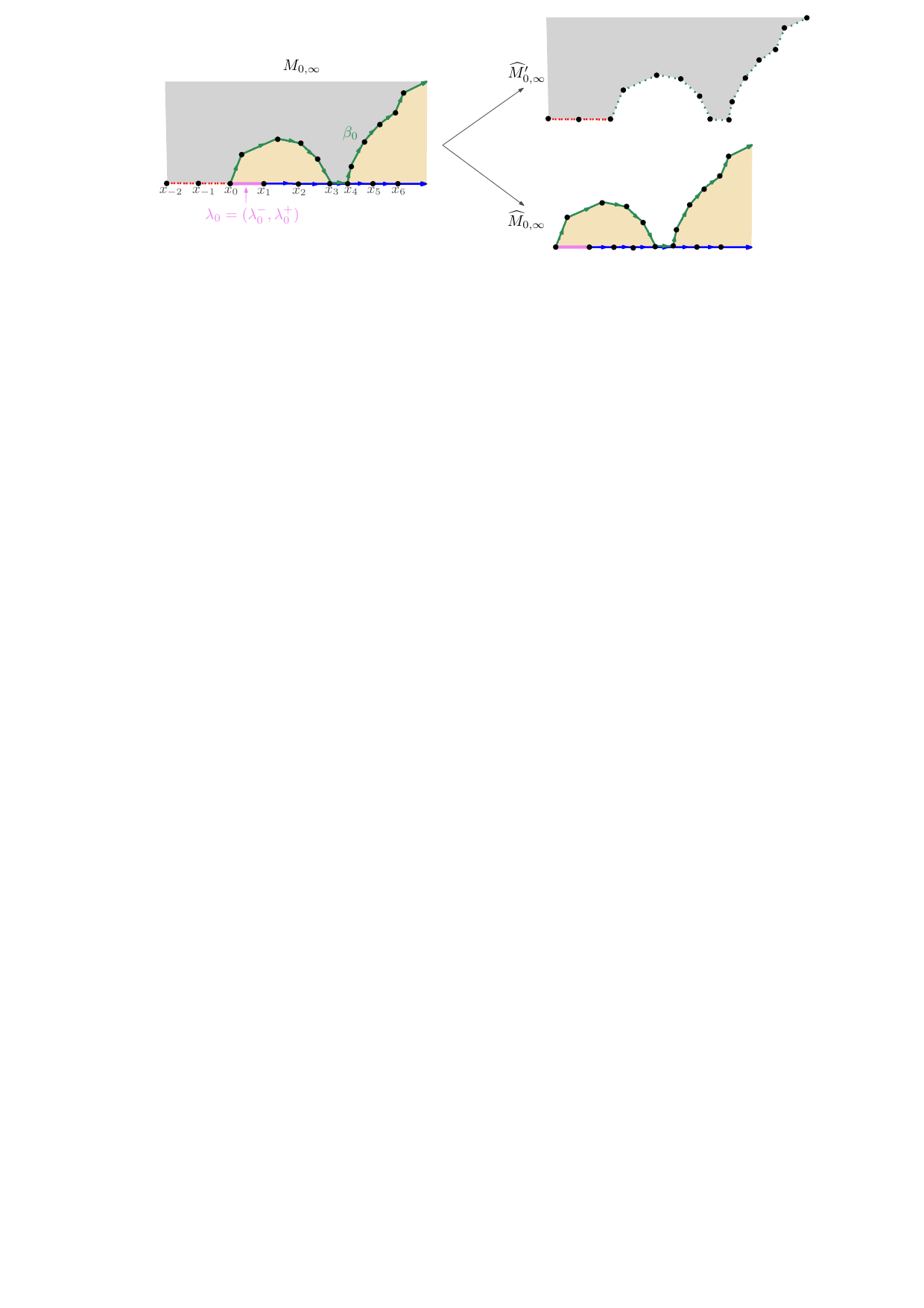}  
\caption{\label{fig-flow-line-def}  
Illustration of the definitions of the maps $\wh M_{n,\infty}$, $\wh M_{n,\infty}'$ obtained from $M_{n,\infty}$ and $\beta_n$, in the case when $n =0$. The edges on $\beta_0$ belong to $\wh M_{0,\infty}$ but not to $\wh M_{0,\infty}'$. 
}
\end{center}
\end{figure}

The following proposition makes the connection between the results of Section~\ref{sec-uiqbot} and the UIBOT. 
 
\begin{prop} \label{prop-future-map-law}
For each $n\in\BB Z$, the maps $\wh M_{n,\infty}$ and $\wh M_{n,\infty}'$ are independent. Furthermore, the map $\wh M_{n,\infty}$ has the same law as the UIQBOT $\wh M$ of Definition~\ref{def-uiqbot}.  
\end{prop}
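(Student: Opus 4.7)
By the translation invariance of the walk $\mcl Z$, it suffices to treat the case $n=0$. I would first observe that, by Lemma~\ref{lem-uibot-bdy}, $\wh M_{0,\infty}$ is itself an infinite acyclic-oriented triangulation with no missing edges: its source is $x_0=\lambda_0^-$, its sink is at $\infty$, its left boundary is $\beta_0$, and its right boundary is the rightmost directed path in $M_{0,\infty}$ from $x_0$ to $\infty$. Hence a natural infinite-volume extension of the inverse KMSW procedure of Definition~\ref{defn:KMSW-inv} encodes $\wh M_{0,\infty}$ by a walk $\wh{\mcl Z}^\star:\BB N_0\to\BB Z^2$ with increments in $\{(1,-1),(-1,0),(0,1)\}$ starting at $(0,0)$ and staying in $[0,\infty)\times\BB Z$.

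The central step is then to identify $\wh{\mcl Z}^\star$ as an explicit function of $\mcl Z|_{[0,\infty)}$ with the same law as the conditioned walk $\wh{\mcl Z}$ of Definition~\ref{def-uiqbot}. The edges of $\wh M_{0,\infty}$ form a subsequence $\{\lambda_{j_k}\}_{k\in\BB N_0}$ of the Peano sequence $\{\lambda_j\}_{j\in\BB N_0}$ of $M_{0,\infty}$, namely those $j$ for which the initial vertex of $\lambda_j$ is reachable from $x_0$ by a directed path. Using Lemma~\ref{lem-kmsw-bdy}, this reachability condition can be rephrased intrinsically in terms of the running minima of $\mcl L|_{[0,j]}$, and $\wh{\mcl Z}^\star(k)$ can be written in terms of $\mcl Z(j_k)$ shifted by those running minima. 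Then, for any admissible trajectory $\gamma$ of $\wh{\mcl Z}^\star$, I would compute $\BB P[\wh{\mcl Z}^\star|_{[0,N]}=\gamma]$ by summing over all trajectories of $\mcl Z|_{[0,\infty)}$ that induce $\gamma$; the ``pruned'' pieces of $\mcl Z$ (during which $\mcl L$ lies strictly below the running minimum of its past) telescope into the $h$-transform weight of~\eqref{eqn-cond-walk-rn}, yielding $\wh{\mcl Z}^\star\eqD\wh{\mcl Z}$ and hence $\wh M_{0,\infty}\eqD\wh M$ via the KMSW procedure.

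For the independence, $\wh M_{0,\infty}'$ is determined exactly by the ``excursion data'' pruned during the extraction of $\wh{\mcl Z}^\star$, consisting of one excursion of $\mcl Z$ per left boundary vertex $x_{-k}$ of $M_{0,\infty}$; the ``flap'' of $\wh M_{0,\infty}'$ attached at $x_{-k}$ is encoded by the $k$th such excursion via the KMSW procedure applied to it. By the strong Markov property of $\mcl Z$, these excursion pieces are i.i.d.\ and jointly independent of $\wh{\mcl Z}^\star$, which gives the claimed independence of $\wh M_{0,\infty}$ and $\wh M_{0,\infty}'$. The principal obstacle is the law identification $\wh{\mcl Z}^\star\eqD\wh{\mcl Z}$, a 2d discrete analog of recovering a random walk conditioned to stay positive from an unconditioned walk by excising its below-minimum excursions; the main delicacy in the 2d setting is ensuring that all of the second-coordinate $\mcl R$ increments accumulated during a pruned excursion get absorbed cleanly into the excursion data, rather than contaminating the extracted main walk.
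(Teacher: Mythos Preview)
Your high-level plan (extract a sub-walk $\wh{\mcl Z}^\star$ encoding $\wh M_{0,\infty}$, identify its law, and argue independence of the leftover) matches the paper's, but two of your concrete steps are wrong in ways that would block the proof. First, the criterion ``$\lambda_j\in\wh M_{0,\infty}$'' cannot be phrased purely via running minima of $\mcl L|_{[0,j]}$. The paper shows (Lemma~\ref{lem-flow-line-times}) that the exploration alternates: it \emph{leaves} $\wh M_{0,\infty}$ at the first time $\mcl L$ drops below its value at the previous entry, but it \emph{re-enters} at the first time $\mcl R$ drops below its value at the previous exit. So the set $\{j:\lambda_j\in\wh M_{0,\infty}\}$ is a union of intervals $[N_k^R,N_k^L-1]$ with $N_k^L$ governed by $\mcl L$ and $N_{k+1}^R$ governed by $\mcl R$; during an ``outside'' interval $\mcl L$ may well rise back above any previous level without the procedure returning. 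Correspondingly, the pieces of $\wh M_{0,\infty}'$ are not in bijection with left boundary vertices $x_{-k}$ of $M_{0,\infty}$ (many such vertices can be created during a single outside excursion), but rather with crossings of $\beta_0$.

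Second, the law identification is not a telescoping-into-$h$-transform computation. The paper concatenates the inside segments $\mcl Z|_{[N_k^R,N_k^L]}$ (resp.\ outside segments) to obtain two \emph{unconditioned} walks $\rng{\mcl Z},\rng{\mcl Z}'$ with the law of $\mcl Z$, independent by the strong Markov property. The actual encoding walk $\wh{\mcl Z}$ for $\wh M_{0,\infty}$ differs from $\rng{\mcl Z}$ by replacing, at each running-minimum time of $\rng{\mcl L}$, the step $(-1,0)$ by $(1,-1)$---this substitution arises because the terminal step of each inside excursion is overwritten by the initial step of the next one (Lemma~\ref{lem-quad-kmsw}). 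That step-flipping is exactly a discrete Pitman transform, and a short bijective lemma (Lemma~\ref{lem-lazy-walk-flip}) shows that flipping $-1$'s to $+1$'s at running minima of a lazy $\{-1,0,1\}$ walk produces the walk conditioned to stay non-negative in the sense of~\eqref{eqn-cond-walk-rn}. This is the missing mechanism; it also cleanly resolves your worry about the $\mcl R$-increments, since the replacement $(-1,0)\mapsto(1,-1)$ is precisely what transfers the ``extra'' $\mcl R$-decrement across the gap.
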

 
We will prove Proposition~\ref{prop-future-map-law} in Section~\ref{sec-future-map-law} using the KMSW bijection. 
 
As in Theorem~\ref{thm-busemann}, we let $\{x_k\}_{k\in\BB Z}$ be the boundary vertices of $M_{0,\infty}$, enumerated in directed order in such a way that $x_0 = \lambda_0^-$ and $x_1 = \lambda_0^+$ are the initial and terminal vertices of $\lambda_0$, respectively. 
For $k\in\BB Z$, let 
\eqb \label{eqn-bdy-vertex-hit}
\tau_k := \min\left\{ n\in\BB N_0 : \text{$x_k$ is the initial vertex of $\lambda_n$} \right\} .
\eqe
Recalling Definition~\ref{defn:bound-edges}, it is easy to see from the Assertions~\ref{item-lower-left} and~\ref{item-lower-right} of Lemma~\ref{lem-kmsw-bdy} (note that in Assertions~\ref{item-lower-left} we are counting in the reversed order compared to how boundary vertices are enumerated in $M_{0,\infty}$; this is why we have $ \mcl L(n) = k $ instead of $  \mcl L(n) = -k$ in the last case of \eqref{eqn-bdy-vertex-kmsw}) that
\eqb \label{eqn-bdy-vertex-kmsw}
\tau_k = \begin{cases}
0 ,\quad &k = 0 \\
\min\left\{ n\in\BB N : \mcl R(n) = -k   \right\} ,\quad & k\geq 1 \\
\min\left\{ n\in\BB N : \mcl L(n) = k   \right\} ,\quad & k\leq -1  .
\end{cases}
\eqe
In particular, $\tau_k$ is increasing for $k\geq 0$ and decreasing for $k\leq 0$.
 
In Section~\ref{sec-busemann-exist}, we will prove Theorem~\ref{thm-busemann} using the existence of cut vertices in the UIQBOT (Proposition~\ref{prop-cut-vertex}). More precisely, we will force long directed paths started from boundary vertices of $M_{0,\infty}$ to pass through the cut vertices of one of the maps $\wh M_{\tau_k,\infty}$ (which has the law of the UIQBOT by Proposition~\ref{prop-future-map-law} and the strong Markov property of $\mcl Z$). 
   
In Section~\ref{sec-busemann-ind}, we will use the left and right independence property of   Proposition~\ref{prop-thin-ind} (applied to the UIQBOTs $\wh M_{\tau_k}$), together with the strong Markov property of $\mcl Z$, to prove the independent and stationary increments properties in Theorem~\ref{thm-busemann-property} (Properties~\ref{item-busemann-ind} and~\ref{item-busemann-stationary}). Property~\ref{item-busemann-pos} follows from elementary monotonicity considerations. 

What remains is then to prove Property~\ref{item-busemann-sym}. To do this, in Section~\ref{sec-busemann-sym} we introduce a new infinite acyclically-oriented triangulation with boundary, which we call the uniform infinite boundary-channeled half-plane bipolar-oriented triangulation (UIBHBOT; Definition~\ref{def-uihbot}). The UIBHBOT describes the local limit of uniformly sampled finite boundary-channeled triangulations (Definition~\ref{def-reverse-map}) around a uniform boundary edge (Proposition~\ref{prop-disk-bs-conv}). Moreover, the UIBHBOT admits submaps  which have the same law as $M_{0,\infty}$ (Lemma~\ref{lem-uihbot-future}). Using the symmetry in Lemma~\ref{lem-bdy-reverse}, one can show that the law of the UIBHBOT is invariant under applying an orientation-reversing homeomorphism from $\BB C\to\BB C$ (Proposition~\ref{prop-uihbot-sym}). Combining these last two facts leads to Property~\ref{item-busemann-sym}.

\subsection{Existence of the Busemann function}
\label{sec-busemann-exist}

\begin{figure}[ht!]
\begin{center}
\includegraphics[width=0.9\textwidth]{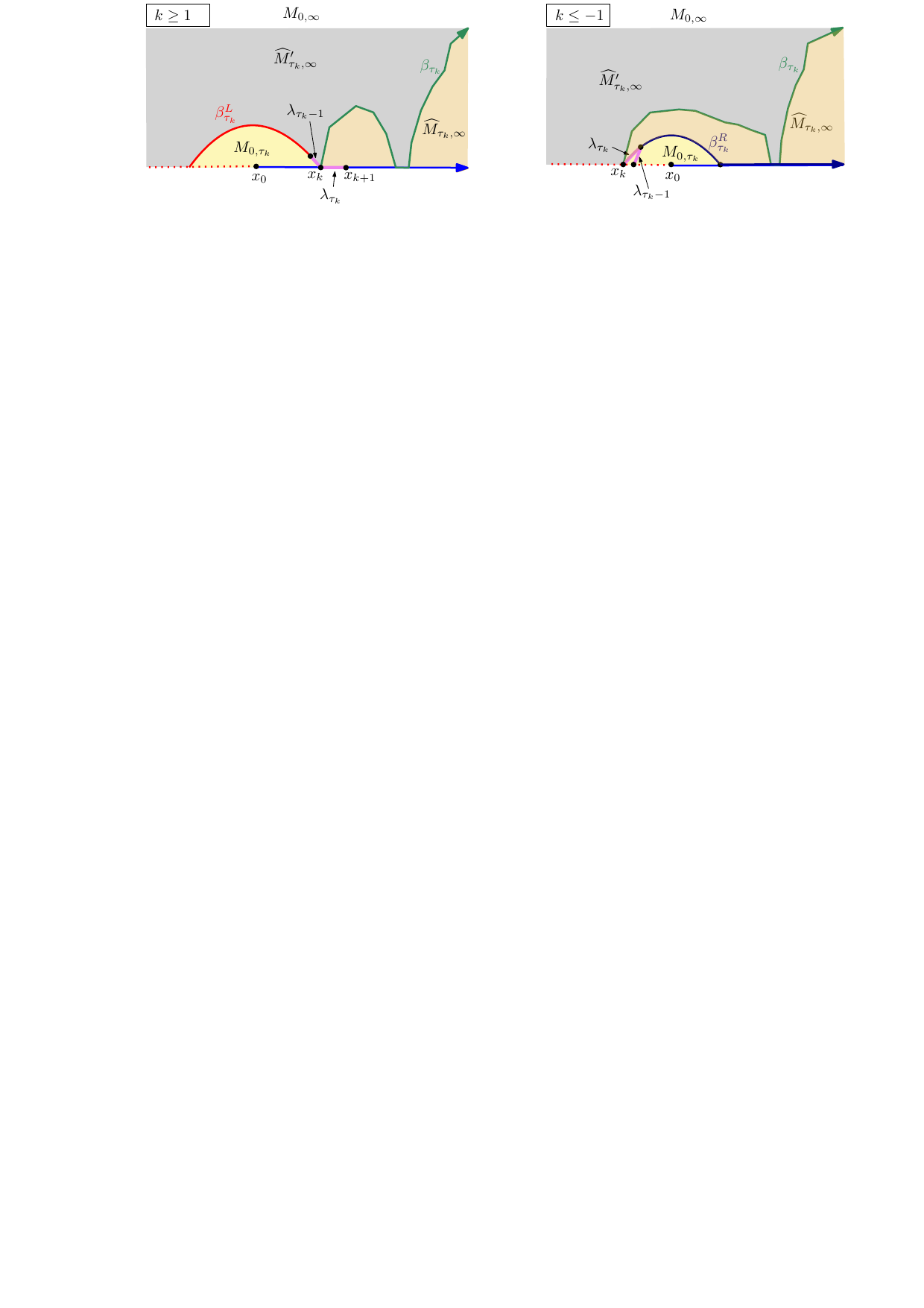}  
\caption{\label{fig-directed-contain}  
Illustration of the proof of Lemma~\ref{lem-directed-contain} in the case when $k\geq 1$ (left) and $k\leq -1$ (right). 
}
\end{center}
\end{figure}
 
\begin{lem} \label{lem-directed-contain}
Define $\tau_k$ as in~\eqref{eqn-bdy-vertex-hit}. 
For each $k\in\BB Z$, the map $M_{\tau_k,\infty}$ has the same law as the map $M_{0,\infty}$ and the map $\wh M_{\tau_k,\infty}$ has the same law as the UIQBOT (Definition~\ref{def-uiqbot}). Furthermore, every directed path in $M_{0,\infty}$ started from $x_k$ is contained in the map $\wh M_{\tau_k,\infty}$ defined just above~\eqref{eqn-north-flow-line}. 
\end{lem}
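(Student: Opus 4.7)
My plan is to establish the three claims in sequence, using the strong Markov property for the two distributional statements and a planarity/acyclicity argument for the containment statement.

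For the first claim, by~\eqref{eqn-bdy-vertex-kmsw} the time $\tau_k$ is a stopping time for the filtration generated by $\mcl Z|_{[0,\infty)}$: it is the first hitting time of $-k$ by $\mcl R$ for $k\geq 1$, the first hitting time of $k$ by $\mcl L$ for $k\leq -1$, and $\tau_0=0$. Since the increments of $\mcl Z$ are i.i.d., the strong Markov property gives $(\mcl Z(\cdot+\tau_k)-\mcl Z(\tau_k))|_{[0,\infty)}\eqD\mcl Z|_{[0,\infty)}$, and applying the KMSW procedure of Remark~\ref{remark-kmsw-infinite} to each walk produces $M_{\tau_k,\infty}$ and $M_{0,\infty}$ respectively. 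For the second claim, $\wh M_{n,\infty}$ is a deterministic function of $M_{n,\infty}$ (the submap induced by vertices reachable from the root's initial vertex), so $\wh M_{\tau_k,\infty}\eqD\wh M_{0,\infty}$, and Proposition~\ref{prop-future-map-law} identifies the latter with the UIQBOT.

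For the third claim, the key preliminary observation is that the non-missing edges of $M_{\tau_k,\infty}$, viewed inside $\uibot$, are exactly $\{\lambda_j:j\geq\tau_k\}$; hence the outgoing edges of $x_k$ in $M_{0,\infty}$ coincide with its outgoing edges in $M_{\tau_k,\infty}$ (both equal $\{\lambda_j:j\geq\tau_k,\,\lambda_j^-=x_k\}$, because $\tau_k$ is by definition the first $j\geq 0$ with $\lambda_j^-=x_k$). By Lemma~\ref{lem-uibot-bdy} applied to $M_{\tau_k,\infty}$, the map $M_{\tau_k,\infty}$ is bounded inside $\uibot$ by the rightmost directed path $\beta_k^R$ from $x_k$ to $\infty$ (non-missing edges) and the leftmost directed path $\beta_k^L$ from $-\infty$ to $x_k$ (missing edges of $M_{\tau_k,\infty}$), which meet only at $x_k$. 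Let $P$ be any directed path in $M_{0,\infty}$ starting at $x_k$, and suppose for contradiction that $P$ contains a first edge $(u,v)$ with $u\in\mcl V(M_{\tau_k,\infty})$ but $v\notin\mcl V(M_{\tau_k,\infty})$. By planarity, $(u,v)$ must share a vertex with $\beta_k^R\cup\beta_k^L$. If this vertex lies on $\beta_k^R$, then either $v\in\beta_k^R\subset M_{\tau_k,\infty}$, contradicting the choice of $(u,v)$, or $u\in\beta_k^R$ and $(u,v)$ escapes $\beta_k^R$ to the right, contradicting the rightmost property of $\beta_k^R$. If the shared vertex with $\beta_k^L$ is $v$, then $v\in\beta_k^L\subset M_{\tau_k,\infty}$, again a contradiction. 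If it is $u\neq x_k$, then $u$ is a strict directed predecessor of $x_k$ along $\beta_k^L$, and concatenating the prefix of $P$ from $x_k$ to $u$ with the tail of $\beta_k^L$ from $u$ to $x_k$ produces a directed cycle, contradicting acyclicity. The only remaining case is $u=x_k$, but then $(x_k,v)$ is an outgoing edge of $x_k$ and hence lies in $\mcl E(M_{\tau_k,\infty})$ by the preliminary observation, forcing $v\in\mcl V(M_{\tau_k,\infty})$, a contradiction. Hence $P$ lies entirely in $M_{\tau_k,\infty}$, and since $P$ itself witnesses that every vertex of $P$ is reachable from $x_k$ by a directed path in $M_{\tau_k,\infty}$, we conclude $P\subset\wh M_{\tau_k,\infty}$.

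The main obstacle will be the combinatorial/topological step in the third claim: one must carefully distinguish between missing and non-missing edges along the two boundaries of $M_{\tau_k,\infty}$ (which, for $k\geq 1$, do not coincide with the boundaries of $M_{0,\infty}$) and use acyclicity to rule out a directed path from $x_k$ sneaking out across the left boundary $\beta_k^L$. The first two claims are routine applications of the strong Markov property and Proposition~\ref{prop-future-map-law} once $\tau_k$ is recognized as a stopping time.
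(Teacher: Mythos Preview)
Your handling of the two distributional claims is correct and matches the paper. For the containment claim, you take a unified planarity/acyclicity argument rather than the paper's case split into $k\geq 1$ (where the key obstruction is the left boundary $\beta_{\tau_k}^L$) and $k\leq -1$ (where it is the right boundary $\beta_{\tau_k}^R$). Your unified approach is valid and arguably cleaner, since your three cases---$u=x_k$, $u\in\beta_k^R\setminus\{x_k\}$, $u\in\beta_k^L\setminus\{x_k\}$---cover both signs of $k$ at once.

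There is, however, a genuine gap in the framing. Your contradiction hypothesis is ``first edge $(u,v)$ with $u\in\mcl V(M_{\tau_k,\infty})$ but $v\notin\mcl V(M_{\tau_k,\infty})$'', and the contradiction you derive shows only that every \emph{vertex} of $P$ lies in $\mcl V(M_{\tau_k,\infty})$. This does not yield ``$P$ lies entirely in $M_{\tau_k,\infty}$'' in the sense you need for the last sentence (where you invoke $P$ as a directed path \emph{in} $M_{\tau_k,\infty}$). The point is that for $k\geq 1$ there exist edges of $\mcl E(M_{0,\infty})\setminus\mcl E(M_{\tau_k,\infty})$ with both endpoints in $\mcl V(M_{\tau_k,\infty})$: the edges of $\beta_{\tau_k}^L$ near $x_k$ are non-missing in $M_{0,\infty}$ (since $x_k$ is on the right boundary there) but missing in $M_{\tau_k,\infty}$, and more generally edges of $M_{0,\tau_k-1}$ can join two vertices on $\beta_{\tau_k}^L$.

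The fix is immediate with the ingredients you already have. Re-frame the contradiction as ``first edge $(u,v)$ of $P$ with $(u,v)\notin\mcl E(M_{\tau_k,\infty})$''. Then $u$ is reached from $x_k$ by edges of $M_{\tau_k,\infty}$, so $u\in\mcl V(M_{\tau_k,\infty})$; since $(u,v)$ is incident to $u$ but not in $\mcl E(M_{\tau_k,\infty})$, $u$ lies on the boundary $\beta_k^L\cup\beta_k^R$. Your three cases then dispose of this directly: for $u=x_k$ the preliminary observation gives $(u,v)\in\mcl E(M_{\tau_k,\infty})$; for $u\in\beta_k^R\setminus\{x_k\}$ the rightmost property forces every outgoing edge of $u$ to lie weakly left of $\beta_k^R$, hence in $\mcl E(M_{\tau_k,\infty})$; and for $u\in\beta_k^L\setminus\{x_k\}$ your directed-cycle argument (which, as you may have noticed, never actually uses the hypothesis $v\notin\mcl V(M_{\tau_k,\infty})$) gives the contradiction.
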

\begin{proof}
See Figure~\ref{fig-directed-contain} for an illustration of the proof.
By~\eqref{eqn-bdy-vertex-kmsw}, each $\tau_k$ is a stopping time for $\mcl Z$. 
By the strong Markov property, the map $M_{\tau_k,\infty}$ (which is determined by $(\mcl Z(\cdot+\tau_k) - \mcl Z(\tau_k))|_{[0,\infty)}$) has the same law as $M_{0,\infty}$. By this and Proposition~\ref{prop-future-map-law}, $\wh M_{\tau_k,\infty}$ has the law of the UIQBOT. 

It remains to prove the statement about directed paths. This statement holds by definition for $k=0$, so we only need to prove it for $k\not=0$. 
We first argue that for $k\in\BB Z \setminus \{0\}$, every directed path in $M_{0,\infty}$ started from $x_k$ is contained in $M_{\tau_k,\infty}$. Let $P$ be such a directed path. 

\smallskip

First, consider the case when $k \geq 1$.
Let $\beta_{\tau_k}^L$ be the path formed by the missing edges on the boundary of $M_{\tau_k,\infty}$ lying to the west of $\lambda_{\tau_k}^- = x_k$. 
By Lemma~\ref{lem-uibot-bdy}, $\beta_{\tau_k}^L$ is the leftmost directed path in $M_{-\infty,\infty}$ ending at $x_k$ and every vertex on $\beta_{\tau_k}^L$ has no incoming edges in $M_{\tau_k,\infty}$. Since $\beta_{\tau_k}^L$ is the leftmost directed path ending at $x_k$, all of the edges of $M_{0,\infty}\setminus M_{\tau_k,\infty}$ incident to $x_k$ must be oriented toward $x_k$. Therefore, the first step of $P$ must be from $x_k$ to a vertex of $M_{\tau_k,\infty}$, and $P$ cannot visit any vertex on $\beta_{\tau_k}^L$ after its first step. The path $\beta_{\tau_k}^L$ disconnects $M_{0,\infty}\setminus M_{\tau_k,\infty}$ from $\infty$ in $M_{0,\infty}$. Hence $P$ cannot enter $M_{0,\infty}\setminus M_{\tau_k,\infty}$, so $P$ must be contained in $M_{\tau_k,\infty}$.


\smallskip

Next, consider the case when $k \leq -1$. Let $\beta_{\tau_k}^R$ be the path formed by the edges in the  boundary of $M_{\tau_k,\infty}$ lying to the east of the terminal vertex $\lambda_{\tau_k}^+$ of $\lambda_{\tau_k}$ (see the dark blue path on the right-hand side of Figure~\ref{fig-directed-contain}). By Lemma~\ref{lem-uibot-bdy}, $\beta_{\tau_k}^R$ is the rightmost directed path in $M_{0,\infty}$ started from $\lambda^+_{\tau_k} $. It follows that $ P$ cannot cross $\beta_{\tau_k}^R$: indeed, otherwise concatenating part of $\beta_{\tau_k}^R$ with the portion of $ P$ lying to the right of $\beta_{\tau_k}^R$ would yield a directed path started from $\lambda^+_{\tau_k}$ which lies partially to the right of $\beta_{\tau_k}^R$. 

By \eqref{eqn-bdy-vertex-hit}, $\tau_k$ is the \emph{first} non-negative time at which $x_k$ is the initial vertex of $\lambda_n$, and $M_{0,\infty}$ does not contain the edges along its left boundary since they are missing. 
Consequently, there are no non-missing edges of $M_{0,\tau_k-1} = M_{0,\infty} \setminus M_{ \tau_k ,\infty}$ incident to $x_k$. 
In particular, the first edge of $P$ is not in $M_{0,\tau_k-1}$. Since $P$ is a directed path and the orientation on $M_{0,\infty}$ is acyclic, $P$ can visit $x_k$ only once. It follows that $P$ cannot visit a edge of $M_{0,\tau_k-1}$ without crossing $\beta_{\tau_k}^R$. We already saw that $ P$ cannot cross $\beta_{\tau_k}^R$, so $ P$ cannot visit a edge of $ M_{0,\tau_k-1}$.  

\smallskip

Recall that $\wh M_{\tau_k,\infty}$ is the submap of $M_{\tau_k,\infty}$ induced by the set of vertices of $M_{\tau_k,\infty}$ reachable by a directed path in $M_{\tau_k,\infty}$ started from $x_k$. Since every such directed path  in $M_{0,\infty}$ started from $x_k$ is contained in $M_{\tau_k,\infty}$, any such directed path is in fact contained in $\wh M_{\tau_k,\infty}$. 
\end{proof}

The following lemma tells us that any finite collection of sufficiently long directed paths started from vertices on the boundary of $M_{0,\infty}$ must have a vertex in common. This will be the key tool for the proof of the existence of the Busemann function (Theorem~\ref{thm-busemann}).

\begin{lem} \label{lem-infty-cut}
Let $\{x_k\}_{k\in\BB Z}$ be the sequence of boundary vertices as in Theorem~\ref{thm-busemann}.  
For each $k , k'\in\BB Z$ with $k < k'$, there exists a sequence of distinct vertices $\{v_m\}_{m\in\BB N}$ of $M_{0,\infty}$ with the following property. Let $\el \in [k,k']\cap\BB Z$ and let $\{w_j\}_{j\in\BB N}$ be any sequence of distinct vertices of $M_{0,\infty}$ which can each be reached by a directed path in $M_{0,\infty}$ started from $x_\el$. Then for each $m\in\BB N$, it holds for each sufficiently large $j\in\BB N$ that every directed path in $M_{0,\infty}$ from $x_\el$ to $w_j$ visits $v_m$. 
\end{lem}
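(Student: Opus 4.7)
My plan is to produce $\{v_m\}_{m\in\BB N}$ as vertices that are \emph{simultaneously} cut vertices of each map $\wh M_{\tau_\el, \infty}$ for $\el\in[k,k']$, where ``cut vertex'' is taken in the sense of Definition~\ref{def-cut-vertex}. The crucial reduction is the following. By Lemma~\ref{lem-directed-contain}, every directed path in $M_{0,\infty}$ started from $x_\el$ lies inside $\wh M_{\tau_\el, \infty}$, which has the law of the UIQBOT. If $v\neq x_\el$ is a cut vertex of $\wh M_{\tau_\el,\infty}$, then by Definition~\ref{def-cut-vertex} removing $v$ splits $\wh M_{\tau_\el,\infty}$ into a finite ``before-$v$'' component containing $x_\el$ and an infinite ``past-$v$'' component, and any directed path from $x_\el$ to a vertex in the past-$v$ component must traverse $v$. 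Since the before-$v$ component is finite and the $w_j$'s are distinct, all but finitely many of the $w_j$'s must lie in the past-$v$ component. It therefore suffices to exhibit an infinite sequence of vertices each of which is a cut vertex of \emph{every} $\wh M_{\tau_\el, \infty}$ for $\el\in[k,k']$.

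To construct such vertices I will work with the two boundaries of $\wh M_{\tau_\el, \infty}$: the right boundary is the rightmost directed path $\alpha_\el$ from $x_\el$ to $\infty$, and the left boundary is the leftmost directed path $\beta_\el$ from $x_\el$ to $\infty$ inside $M_{\tau_\el,\infty}$; cut vertices are exactly the points of $\alpha_\el\cap\beta_\el$. By Lemma~\ref{lem-uibot-bdy}, the paths $\{\alpha_\el\}_{\el\in[k,k']}$ pairwise coalesce, so past a finite initial segment they coincide with a common infinite path $\alpha^*$. The next goal is to prove an analogous coalescence for the leftmost paths $\{\beta_\el\}_{\el\in[k,k']}$, yielding a common tail $\beta^*$. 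Once both coalescences are in hand, I fix any $\el^*\in[k,k']$ (for concreteness $\el^*=k$) and invoke Proposition~\ref{prop-cut-vertex}: the map $\wh M_{\tau_{\el^*}, \infty}$ almost surely has infinitely many cut vertices, and all but finitely many of them lie past both coalescence thresholds. Each such vertex automatically lies on $\alpha^*\cap\beta^*\subset\alpha_\el\cap\beta_\el$ for every $\el\in[k,k']$, hence is a cut vertex of each $\wh M_{\tau_\el,\infty}$ simultaneously. Enumerating these common cut vertices will give the desired $\{v_m\}_{m\in\BB N}$, and the conclusion of the lemma then follows immediately from the reduction in the first paragraph applied to $v=v_m$.

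The main obstacle will be proving the leftmost coalescence used in the second paragraph. Rightmost coalescence is already supplied by Lemma~\ref{lem-uibot-bdy}, but the leftmost case is more delicate because each $\beta_\el$ is defined inside a different submap $M_{\tau_\el,\infty}$, and the leftmost outgoing edge at $x_\el$ inside $M_{\tau_\el,\infty}$ can differ from the leftmost outgoing edge at $x_\el$ in the full UIBOT (since some edges incident to $x_\el$ lie on the missing left boundary of $M_{\tau_\el,\infty}$). To bypass this, I will note that after its first step, $\beta_\el$ coincides with a leftmost-outgoing path in the UIBOT itself, because at any interior vertex the incident edges are shared between $M_{\tau_\el,\infty}$ and the full UIBOT. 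This reduces the problem to the intrinsic statement that any two leftmost-outgoing paths in the UIBOT coalesce, which follows from the standard planarity argument---two leftmost paths cannot cross, since a crossing vertex would force them to share the leftmost outgoing edge there---together with ruling out two infinite leftmost paths that remain planarly disjoint forever. I would handle the latter either by duality, applying Lemma~\ref{lem-uibot-bdy} to the dual bipolar orientation of the UIBOT (see~\cite{de1995bipolar}), or by a direct analysis of the $\mcl L$-coordinate of the encoding walk $\mcl Z$.
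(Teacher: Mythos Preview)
Your reduction---find vertices that are simultaneously cut vertices of each $\wh M_{\tau_\el,\infty}$ for $\el\in[k,k']\cap\BB Z$, then invoke Lemma~\ref{lem-directed-contain} and the finiteness of the ``before-$v$'' component---is correct and matches the paper. The divergence is in how you obtain common cut vertices.

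You want to first prove that the leftmost paths $\{\beta_\el\}_{\el\in[k,k']}$ coalesce into a common tail $\beta^*$, then take cut vertices of $\wh M_{\tau_k,\infty}$ past both coalescence thresholds. But leftmost coalescence is precisely the hard step, and neither of your suggested arguments is a proof: the duality route requires identifying leftmost primal paths with rightmost paths in a dual map for which an analog of Lemma~\ref{lem-uibot-bdy} holds, and the ``direct analysis of $\mcl L$'' is a placeholder. So the proposal has a genuine gap exactly where you flag the main obstacle.

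The paper sidesteps this by using only the non-crossing half, which (as you note) is straightforward once one knows that each $\beta_\el$ is, after its first step, a leftmost path in a common ambient map. Since $x_\el$ lies east of $x_k$ and $\beta_\el$ cannot cross $\beta_k$, the path $\beta_\el$ is sandwiched between $\beta_k$ on its left and the right boundary $\alpha_\el$ on its right. Now take any cut vertex $v_m$ of $\wh M_{\tau_k,\infty}$ lying on the common tail of the right boundaries (there are infinitely many such $v_m$ by Proposition~\ref{prop-cut-vertex}): at $v_m$ the paths $\beta_k$ and $\alpha_\el$ meet, pinching the strip between them to a point, so the infinite path $\beta_\el$ trapped in that strip must also pass through $v_m$. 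Hence $v_m\in\beta_\el\cap\alpha_\el$ is a cut vertex of every $\wh M_{\tau_\el,\infty}$. Leftmost coalescence then falls out as a corollary rather than being needed as an input.
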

\begin{proof}
The proof is based on the fact that the UIQBOT has infinitely many cut vertices (Proposition~\ref{prop-cut-vertex}).
Let $\tau_k$ be as in~\eqref{eqn-bdy-vertex-hit}. 
By Lemma~\ref{lem-directed-contain}, the map $\wh M_{\tau_k,\infty}$ has the same law as the UIQBOT.

Let $k , k' \in \BB Z$ with $k < k'$. 
Since the KMSW procedure only explores finitely many edges between times $\tau_k$ and $\tau_{k'}$, the right boundaries of the maps $M_{\tau_\el ,\infty}$ for $\el \in [k,k']\cap\BB Z$ (equivalently, the right boundaries of the maps $\wh M_{\tau_\el,\infty}$ for $\el\in [k,k']\cap\BB Z$) coincide except for finitely many edges. 
In particular, there is an infinite directed path $\beta_{\tau_k , \tau_{k'}}^R$ which is part of the right boundary of $\wh M_{\tau_\el,\infty}$ for each $\el \in [k,k']\cap\BB Z$. Almost surely, the right boundary of $\wh M_{\tau_k,\infty}$ has infinitely many cut vertices (Proposition~\ref{prop-cut-vertex}). So, a.s.\ there are infinitely many cut vertices of $\wh M_{\tau_k,\infty}$ which lie on $\beta_{\tau_k , \tau_{k'}}^R$. Let $\{v_m\}_{m\in\BB N}$ be the ordered sequence of these cut vertices. 
 
For $\el \in [k,k']\cap\BB Z$, the leftmost directed path $\beta_{\tau_\el }$ of~\eqref{eqn-north-flow-line} (which is the left boundary of $\wh M_{\tau_{\el},\infty}$) cannot cross $\beta_{\tau_k}$ (which is the left boundary of $\wh M_{\tau_k,\infty}$). Hence every cut vertex of $\wh M_{\tau_k,\infty}$ which lies on $\beta_{\tau_k,\tau_{k'}}^R$ (i.e., each of the vertices $v_m$) is also a cut vertex of $\wh M_{\tau_\el,\infty}$.
By Lemma~\ref{lem-directed-contain}, every directed path in $M_{0,\infty}$ started from $x_\el$ must be contained in $\wh M_{\tau_\el,\infty}$. For each $m\in\BB N$, there are only finitely many vertices of $\wh M_{\tau_\el,\infty}$ which are not disconnected from $x_\el$ when we remove the cut vertex $v_m$. It follows that there are only finitely many vertices of $M_{0,\infty}$ which are reachable by a directed path started from $x_\el$ in $M_{0,\infty}$ which does not visit $v_m$. If $\{w_j\}_{j\in\BB N}$ is as in the lemma statement, then if $j$ is sufficiently large, $w_j$ is not equal to any of these finitely many vertices, for any $\el \in [k,k']\cap\BB Z$. This implies the lemma statement. 
\end{proof}

\begin{proof}[Proof of Theorem~\ref{thm-busemann}]
Let $\XDP \in \{\op{LDP} , \op{SDP}\}$ and $K\in\BB N$. By Lemma~\ref{lem-infty-cut}, there exists a vertex $z = z(K)$ of $  M_{0,\infty}$ with the following property. Let $k \in [-K,K]\cap\BB Z$ and let $\{w_j\}_{j\in\BB N}$ be a sequence of distinct vertices of $M_{0,\infty}$ which can each be reached from $x_k$ by a directed path in $M_{0,\infty}$.   
Then, it holds for each sufficiently large $j\in\BB N$ that every directed path in $M_{0,\infty}$ from $x_k$ to $w_j$ visits $z$. Indeed, we can take $z$ to be one of the vertices $v_m$ in Lemma~\ref{lem-infty-cut} (with $-K$ and $K$ in place of $k$ and $k'$). 
We define
\eqb \label{eqn-busemann-vertex}
\mcl X(k) := \XDP_{M_{0,\infty}} (x_k , z) - \XDP_{M_{0,\infty}} (x_0 , z) ,\quad \forall k \in [-K,K]\cap\BB Z .
\eqe

We claim that the function defined in~\eqref{eqn-busemann-vertex} satisfies the defining property~\eqref{eqn-busemann-lim} from the theorem statement for all $k,k' \in [-K,K]\cap\BB Z$.  
Indeed, let $\{w_j\}_{j\in\BB Z}$ be a (new) sequence of distinct vertices which can be reached by a directed path in $M_{0,\infty}$ started from $x_k$ or started from $x_{k'}$  
Then for each sufficiently large $j$, each $\XDP$ geodesic in $M_{0,\infty}$ from $x_k$ to $w_j$ visits $z$, which implies that
\eqbn
\XDP_{M_{0,\infty}} (x_k , w_j) = \XDP_{M_{0,\infty}} (x_k ,z) + \XDP_{M_{0,\infty}} (z , w_j)
\eqen
The analogous relation also holds with $k'$ in place of $k$. By subtracting these two relations, we obtain that for each sufficiently large $j$, 
\eqbn
\mcl X(k') -\mcl X(k) = \XDP_{M_{0,\infty}} (x_{k'} , w_j) - \XDP_{M_{0,\infty}} (x_k , w_j),
\eqen
which is~\eqref{eqn-busemann-lim}. 

It is clear that there is a at most one function satisfying $\mcl X(0) = 0$ and~\eqref{eqn-busemann-lim} with $k,k'$ restricted to lie in $ [-K,K]\cap\BB Z$. Since we have constructed such a function on $[-K,K] \cap\BB Z$ for an arbitrary $K\in\BB N$, we have proven the theorem statement. 
\end{proof}

\subsection{Independent and  stationary increments}
\label{sec-busemann-ind}

In this section, we will prove Properties~\ref{item-busemann-ind} through~\ref{item-busemann-pos} of Theorem~\ref{thm-busemann-property}. We first observe that leftmost $\XDP$ geodesics to $\infty$ in $M_{0,\infty}$ (Definition~\ref{def-infinite-geo}) exist and satisfy a coalescence property.

\begin{lem} \label{lem-geo-infty}
Let $\{x_k\}_{k\in\BB Z}$ be the ordered sequence of boundary vertices as in Theorem~\ref{thm-busemann}. 
Almost surely, for each $k\in\BB Z$, there exists a (unique) leftmost $\XDP$ geodesic $P_{x_k}$ from $x_k$ to $\infty$ in $M_{0,\infty}$. These leftmost $\XDP$ geodesics coalesce, in the sense that for any $k,k'\in\BB Z$, there exists $\theta, \theta' \in \BB N$ such that $ P_{x_k}(t + \theta) = P_{x_{k'}}(t + \theta')$ for each $t \in \BB N$.  
\end{lem}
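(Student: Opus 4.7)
\medskip

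The plan is to reduce both existence/uniqueness and coalescence to results already established for the UIQBOT, using the cut-vertex sequences provided by Lemma~\ref{lem-infty-cut}.

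For existence and uniqueness, I would fix $k \in \BB Z$ and invoke Lemma~\ref{lem-directed-contain}, which simultaneously says that (a) $\wh M_{\tau_k,\infty}$ has the law of the UIQBOT and (b) every directed path in $M_{0,\infty}$ starting from $x_k$ is actually contained in $\wh M_{\tau_k,\infty}$. Lemma~\ref{lem-geo-exist} applied to the UIQBOT $\wh M_{\tau_k,\infty}$, whose source vertex is $x_k$, then produces a unique leftmost $\XDP$ geodesic $P_{x_k}$ from $x_k$ to $\infty$ inside $\wh M_{\tau_k,\infty}$. Since any competing directed path in $M_{0,\infty}$ from $x_k$ is also a directed path in $\wh M_{\tau_k,\infty}$, this $P_{x_k}$ is automatically the leftmost $\XDP$ geodesic from $x_k$ to $\infty$ in $M_{0,\infty}$ as well.

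For coalescence, fix $k < k'$ and let $\{v_m\}_{m\in\BB N}$ be the vertex sequence from Lemma~\ref{lem-infty-cut} (constructed from the cut vertices of $\wh M_{\tau_k,\infty}$ that lie on the common right boundary segment shared by all $\wh M_{\tau_\ell,\infty}$ with $\ell\in[k,k']\cap\BB Z$). The first step is to show that each $P_{x_\ell}$ visits every $v_m$: applying Lemma~\ref{lem-infty-cut} to the sequence $\{w_j\}_{j\in\BB N} := \{P_{x_\ell}(j)\}_{j\in\BB N}$, for all sufficiently large $j$ every directed path from $x_\ell$ to $P_{x_\ell}(j)$ visits $v_m$, and taking $P_{x_\ell}|_{[1,j]}$ itself as such a path forces $v_m \in P_{x_\ell}(\BB N)$. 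Denote by $\theta_\ell(m)$ the time at which $P_{x_\ell}$ hits $v_m$. The second step is to show that the continuations $P_{x_k}(\cdot + \theta_k(m))$ and $P_{x_{k'}}(\cdot + \theta_{k'}(m))$ coincide. I would argue that the submap of $M_{0,\infty}$ induced by vertices reachable from $v_m$ via directed paths depends only on $v_m$ (not on $k$ or $k'$), and that since $v_m$ is a cut vertex and the orientation is acyclic, any directed path from $x_\ell$ to any vertex in this common ``after-$v_m$'' submap must traverse $v_m$. This gives the additivity $\XDP_{M_{0,\infty}}(x_\ell, w) = \theta_\ell(m) + \XDP(v_m, w)$ for every such $w$, so the tail of $P_{x_\ell}$ is an $\XDP$ geodesic from $v_m$ to $\infty$ in the common submap; if it were not leftmost, concatenating $P_{x_\ell}|_{[1,\theta_\ell(m)]}$ with a strictly-more-left alternative would contradict the leftmost property of $P_{x_\ell}$. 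By uniqueness of the leftmost $\XDP$ geodesic from $v_m$ to $\infty$ in this common submap (Lemma~\ref{lem-geo-exist}), the two continuations must coincide, which is exactly the coalescence claim with $\theta = \theta_k(m)$ and $\theta' = \theta_{k'}(m)$ for any $m$.

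The main obstacle is the additivity argument at $v_m$: one must carefully verify that every directed path from $x_\ell$ to a vertex $w$ in the after-$v_m$ submap passes through $v_m$, so that both LDP and SDP distances split as $\XDP(x_\ell, w) = \XDP(x_\ell, v_m) + \XDP(v_m, w)$. This relies on $v_m$ being a cut vertex of $\wh M_{\tau_\ell,\infty}$ (which Lemma~\ref{lem-infty-cut} ensures via construction) together with the acyclicity of the orientation ruling out any ``detour'' around $v_m$. Once this additivity is in hand, the leftmost-tail argument is standard, and the conclusion follows immediately.
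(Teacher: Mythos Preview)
Your proposal is correct and takes essentially the same approach as the paper: existence and uniqueness are reduced to Lemma~\ref{lem-geo-exist} via Lemma~\ref{lem-directed-contain}, and coalescence is obtained from the fact that both geodesics must pass through the cut-vertex sequence $\{v_m\}$ of Lemma~\ref{lem-infty-cut}. The paper's proof is terser on the coalescence step---after observing that $P_{x_k}$ and $P_{x_{k'}}$ share a vertex, it simply writes ``by the uniqueness of leftmost $\XDP$ geodesics, $P_{x_k}$ and $P_{x_{k'}}$ coincide after the first time they meet''---whereas you spell out the additivity and tail-concatenation argument underlying that sentence.
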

\begin{proof}
Define $\{\tau_k\}_{k\in\BB Z}$ as in~\eqref{eqn-bdy-vertex-hit}.
Since $\wh M_{\tau_k,\infty} $ has the law of the UIQBOT (Lemma~\ref{lem-directed-contain}), Lemma~\ref{lem-geo-exist} implies that there is a unique leftmost $\XDP$ geodesic $P_{x_k}$ from $x_k$ to $\infty$ in $\wh M_{\tau_k,\infty}$. Since every directed path from $x_k$ to $\infty$ in $M_{0,\infty}$ is contained in $\wh M_{\tau_k}$ (Lemma~\ref{lem-directed-contain}), the path $P_{x_k}$ is in fact the unique leftmost $\XDP$ geodesic from $x_k$ to $\infty$ in $M_{0,\infty}$. 
 
By Lemma~\ref{lem-infty-cut}, if we define the sequence of vertices $\{v_m\}_{m\in\BB N}$ for $x_k$ and $x_{k'}$ as in that lemma, then each of $P_{x_k}$ and $P_{x_{k'}}$ must visit $v_m$ for every $m\in\BB N$.  
In particular, $P_{x_k}$ and $P_{x_{k'}}$ have at least one vertex in common. 
By the uniqueness of leftmost $\XDP$ geodesics, $P_{x_k}$ and $P_{x_{k'}}$ coincide after the first time they meet. 
This gives the coalescence property.   
\end{proof}

The following lemma is the main input in the proofs of the independent and stationary increments properties in Theorem~\ref{thm-busemann-property} (Properties~\ref{item-busemann-ind} and~\ref{item-busemann-stationary}).
It is a consequence of Proposition~\ref{prop-thin-ind}.

\begin{figure}[ht!]
\begin{center}
\includegraphics[width=0.95\textwidth]{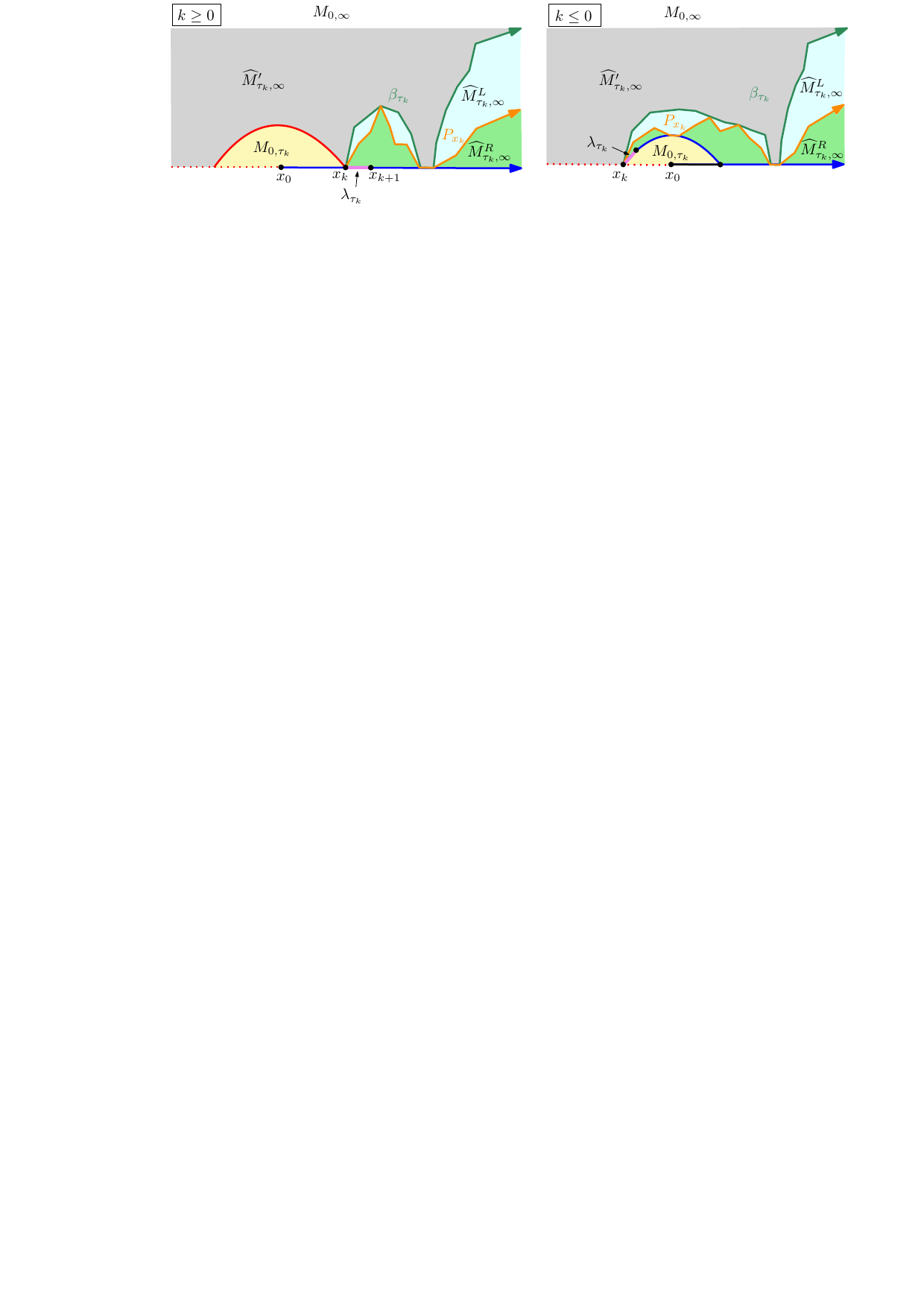}  
\caption{\label{fig-geo-infty-ind}  
Illustration of the submaps used in the proof of Lemma~\ref{lem-geo-infty-ind} in the case when $k\geq 0$ (left) and the case when $k\leq 0$ (right). 
}
\end{center}
\end{figure}

\begin{lem} \label{lem-geo-infty-ind}
For $k\in\BB Z$, let $\rng M_{x_k,\infty}^L$ (resp.\ $\rng M_{x_k,\infty}^R$) be the submap of $M_{0,\infty}$ lying to the left (resp.\ right) of the leftmost $\XDP$ geodesic $P_{x_k}$ of Lemma~\ref{lem-geo-infty}, including the vertices and edges lying on $P_{x_k}$.
Then $\rng M_{x_k,\infty}^L$ and $\rng M_{x_k,\infty}^R$ are independent. Furthermore, if $k\geq 0$ then $\rng M_{x_k,\infty}^R \eqD \rng M_{x_0,\infty}^R$ and if $k\leq 0$ then $\rng M_{x_k,\infty}^L \eqD \rng M_{x_0,\infty}^L$. 
\end{lem}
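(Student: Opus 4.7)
The plan is to combine the strong Markov property of the encoding walk $\mcl Z$ at the stopping time $\tau_k$ (almost surely finite by~\eqref{eqn-bdy-vertex-kmsw}), Proposition~\ref{prop-future-map-law} applied to $M_{\tau_k,\infty}$, and Proposition~\ref{prop-thin-ind} applied to the UIQBOT $\wh M_{\tau_k,\infty}$. The strong Markov property at $\tau_k$ yields that the past map $M_{0,\tau_k}$ (built by the first $\tau_k$ steps of the KMSW procedure, hence a measurable function of $\mcl Z|_{[0,\tau_k]}$) is independent of the future map $M_{\tau_k,\infty}$. Proposition~\ref{prop-future-map-law} further decomposes $M_{\tau_k,\infty}$ into the independent pieces $\wh M_{\tau_k,\infty}$ and $\wh M_{\tau_k,\infty}'$. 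Hence $\wh M_{\tau_k,\infty}$, $\wh M_{\tau_k,\infty}'$, and $M_{0,\tau_k}$ are mutually independent. By Lemma~\ref{lem-directed-contain}, $\wh M_{\tau_k,\infty}$ has the UIQBOT law; moreover, since every directed path in $M_{0,\infty}$ starting from $x_k = \lambda_{\tau_k}^-$ is contained in $\wh M_{\tau_k,\infty}$, the path $P_{x_k}$ is also the leftmost $\XDP$ geodesic from $x_k$ to $\infty$ inside $\wh M_{\tau_k,\infty}$. Applying Proposition~\ref{prop-thin-ind} then gives the independence of the left and right submaps $\wh M_{\tau_k,\infty}^L$ and $\wh M_{\tau_k,\infty}^R$ of $\wh M_{\tau_k,\infty}$ cut by $P_{x_k}$. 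Combining, the four random maps $\wh M_{\tau_k,\infty}^L$, $\wh M_{\tau_k,\infty}^R$, $\wh M_{\tau_k,\infty}'$, and $M_{0,\tau_k}$ are mutually independent.

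Next, I would carry out a geometric decomposition of $M_{0,\infty}$ relative to $P_{x_k}$. Since $P_{x_k} \subset \wh M_{\tau_k,\infty}$, cutting $\wh M_{\tau_k,\infty}$ along $P_{x_k}$ contributes $\wh M_{\tau_k,\infty}^L$ to the left side and $\wh M_{\tau_k,\infty}^R$ to the right side. The complement $\wh M_{\tau_k,\infty}'$ lies to the northwest of the leftmost directed path $\beta_{\tau_k}$ from $x_k$ (the left boundary of $\wh M_{\tau_k,\infty}$), and therefore always on the left side of $P_{x_k}$. The location of $M_{0,\tau_k}$ depends on the sign of $k$: for $k \geq 1$, the vertex $x_k$ lies on the right boundary of $M_{0,\infty}$ and $M_{0,\tau_k}$ attaches to $x_k$ on its western side via the right-boundary segment $x_0, x_1, \ldots, x_k$, so $M_{0,\tau_k}$ lies to the left of $P_{x_k}$; for $k \leq -1$, the vertex $x_k$ lies on the (missing) left boundary of $M_{0,\infty}$ and $M_{0,\tau_k}$ attaches to $x_k$ on its eastern side via the lower-left missing edge $(x_k, x_{k+1})$ added at step $\tau_k$, so $M_{0,\tau_k}$ lies to the right of $P_{x_k}$; for $k = 0$, the past $M_{0,0} = \{\lambda_0\}$ is trivial. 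Consequently, for $k \geq 0$ we have $\rng M_{x_k,\infty}^R = \wh M_{\tau_k,\infty}^R$ and $\rng M_{x_k,\infty}^L$ is obtained by gluing $\wh M_{\tau_k,\infty}^L$, $\wh M_{\tau_k,\infty}'$, and $M_{0,\tau_k}$, whereas for $k \leq 0$ we have $\rng M_{x_k,\infty}^L = \wh M_{\tau_k,\infty}^L \cup \wh M_{\tau_k,\infty}'$ and $\rng M_{x_k,\infty}^R$ is obtained by gluing $\wh M_{\tau_k,\infty}^R$ and $M_{0,\tau_k}$.

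In each case, $\rng M_{x_k,\infty}^L$ and $\rng M_{x_k,\infty}^R$ are measurable functions of disjoint subcollections of the four mutually independent blocks from the first paragraph, giving the desired independence. For the equalities in distribution, note that for $k \geq 0$ the map $\rng M_{x_k,\infty}^R = \wh M_{\tau_k,\infty}^R$ is a measurable function of $\wh M_{\tau_k,\infty}$ alone that does not depend on $k$, and $\wh M_{\tau_k,\infty}$ has the UIQBOT law for every $k \geq 0$, so $\rng M_{x_k,\infty}^R \eqD \rng M_{x_0,\infty}^R$. Symmetrically, for $k \leq 0$ the map $\rng M_{x_k,\infty}^L = \wh M_{\tau_k,\infty}^L \cup \wh M_{\tau_k,\infty}'$ is a measurable function of $M_{\tau_k,\infty}$ alone that does not depend on $k$, and the strong Markov property gives $M_{\tau_k,\infty} \eqD M_{0,\infty}$, so $\rng M_{x_k,\infty}^L \eqD \rng M_{x_0,\infty}^L$. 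The main obstacle is the geometric decomposition in the middle paragraph: one must inspect the KMSW step at time $\tau_k$ to verify which side of $P_{x_k}$ each independent block lies on, in particular that for $k \geq 1$ the active edge $\lambda_{\tau_k}$ is a newly added right-boundary edge attached to $x_k$ from the west, while for $k \leq -1$ the new lower-left missing edge attaches the past to $x_k$ from the east.
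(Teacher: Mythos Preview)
Your proposal is correct and follows essentially the same approach as the paper: establish mutual independence of the four blocks $\wh M_{\tau_k,\infty}^L$, $\wh M_{\tau_k,\infty}^R$, $\wh M_{\tau_k,\infty}'$, $M_{0,\tau_k}$ via the strong Markov property, Proposition~\ref{prop-future-map-law}, and Proposition~\ref{prop-thin-ind}, then sort the blocks to the left or right of $P_{x_k}$ according to the sign of $k$. One small bookkeeping point the paper makes explicit for $k\geq 1$: the active edge $\lambda_{\tau_k}=(x_k,x_{k+1})$ lies in $M_{0,\tau_k}$ but is the first edge of the right boundary of $\wh M_{\tau_k,\infty}$ (hence already in $\wh M_{\tau_k,\infty}^R$), so the left piece is built from $M_{0,\tau_k-1}$ rather than $M_{0,\tau_k}$; this does not affect your independence or stationarity arguments.
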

\begin{proof}
See Figure~\ref{fig-geo-infty-ind} for an illustration.

Define $\tau_k$ as in~\eqref{eqn-bdy-vertex-hit}. Let $M_{0,\tau_k}$ be the submap of $M_{0,\infty}$ obtained by applying the KMSW procedure to $\mcl Z|_{[0,\tau_k]}$ (Definition~\ref{def-kmsw}). By~\eqref{eqn-bdy-vertex-kmsw}, each $\tau_k$ is a stopping time for $\mcl Z$. Since $M_{0,\tau_k}$ is determined by $\mcl Z|_{[0,\tau_k]}$ and $M_{\tau_k,\infty}$ is determined by $(\mcl Z(\cdot+\tau_k) - \mcl Z(\tau_k))|_{[0,\infty)}$, the maps $M_{0,\tau_k}$ and $M_{\tau_k,\infty}$ are independent.

Define the maps $\wh M_{\tau_k,\infty}$ and $\wh M_{\tau_k,\infty}'$ as in the discussion just above Proposition~\ref{prop-future-map-law}. By Proposition~\ref{prop-future-map-law} and Lemma~\ref{lem-directed-contain}, the maps $\wh M_{\tau_k,\infty}$ and $\wh M_{\tau_k,\infty}'$ are independent and their joint law does not depend on $k$. Since the maps $\wh M_{\tau_k,\infty}$ and $\wh M_{\tau_k,\infty}'$ are a.s.\ determined by $M_{\tau_k,\infty}$, combining this with the previous paragraph shows that the triple $(M_{0,\tau_k} , \wh M_{\tau_k,\infty} , \wh M_{\tau_k,\infty}')$ is independent.

By Lemma~\ref{lem-directed-contain}, $P_{x_k}$ is the leftmost $\XDP$ geodesic from $x_k$ to $\infty$ in $\wh M_{\tau_k,\infty}$. 
By Lemma~\ref{lem-directed-contain}, the map $\wh M_{\tau_k,\infty}$ has the same law as the UIQBOT. 
Hence, Proposition~\ref{prop-thin-ind} implies the following. Let $\wh M_{\tau_k,\infty}^L$ (resp.\ $\wh M_{\tau_k,\infty}^R$) be the submap of $\wh M_{\tau_k,\infty}$ consisting of the vertices, edges, and faces lying to the left (resp.\ right) of $P_{x_k}$, including the vertices and edges lying on $P_{x_k}$. Then $\wh M_{\tau_k,\infty}^L$ and $\wh M_{\tau_k,\infty}^R$ are independent. By combining this with the previous paragraph, we get that the 4-tuple $(M_{0,\tau_k} , \wh M_{\tau_k,\infty}^L , \wh M_{\tau_k,\infty}^R , \wh M_{\tau_k,\infty}')$ is independent. Moreover, the laws of $\wh M_{\tau_k,\infty}^L ,\wh M_{\tau_k,\infty}^R$, and $\wh M_{\tau_k,\infty}'$ do not depend on $k$. 

When $k\geq 0$, the map $\rng M_{x_k,\infty}^R$ is equal to $\wh M_{\tau_k,\infty}^R$. The map $\rng M_{x_k,\infty}^L$ is obtained by removing the edge $(x_k , x_{k+1})$ from $M_{0,\tau_k}$  to get $M_{0,\tau_k-1}$, then identifying the maps $M_{0,\tau_k-1}$, $\wh M_{\tau_k,\infty}^L$, and $\wh M_{\tau_k,\infty}'$ together along their boundaries. In particular, the upper boundary of $M_{0,\tau_k-1}$ (Definition~\ref{def-kmsw}) is identified with a segment of the left boundary of $\wh M_{\tau_k,\infty}'$ starting from the vertex $x_k$, and the right boundary of $\wh M_{\tau_k,\infty}'$ is identified with the left boundary of $\wh M_{\tau_k,\infty}^L$. By the independence statement from the previous paragraph and the fact that the law of $\wh M_{\tau_k,\infty}^R$ does not depend on $k$, we get the lemma statement in the case when $k \geq 0$. 

In the case when $k\leq 0$, the map $\rng M_{x_k,\infty}^L$ is obtained by identifying the right boundary of $\wh M_{\tau_k}'$ with the left boundary of $\wh M_{\tau_k,\infty}^L$. The map $\rng M_{x_k,\infty}^R$ is obtained by identifying the upper boundary of $M_{0,\tau_k}$ with a segment of the right boundary of $\wh M_{\tau_k,\infty}^R$ started at $x_k$. Thus the lemma statement similarly follows in this case.
\end{proof}

For $k\in\BB Z$, let $P_{x_k}$ be the leftmost $\XDP$ geodesic from $x_k$ to $\infty$ in $M_{0,\infty}$, as in Lemma~\ref{lem-geo-infty}.
We will now use Lemma~\ref{lem-geo-infty-ind} to decompose $M_{0,\infty}$ into countably many independent ``geodesic slices'' lying between successive leftmost $\XDP$ geodesics $P_{x_{k-1}}$ and $P_{x_k}$. For $k \in \BB Z$, let $\theta_k^- , \theta_k \in \BB N_0$ be the smallest numbers such that
\eqb \label{eqn-coalesce}
P_{x_{k-1}}(t + \theta_k^-) = P_{x_k}(t + \theta_k) ,\quad \forall t \in \BB N .
\eqe
Such numbers exist by Lemma~\ref{lem-geo-infty}.

\begin{figure}[ht!]
\begin{center}
\includegraphics[width=0.6\textwidth]{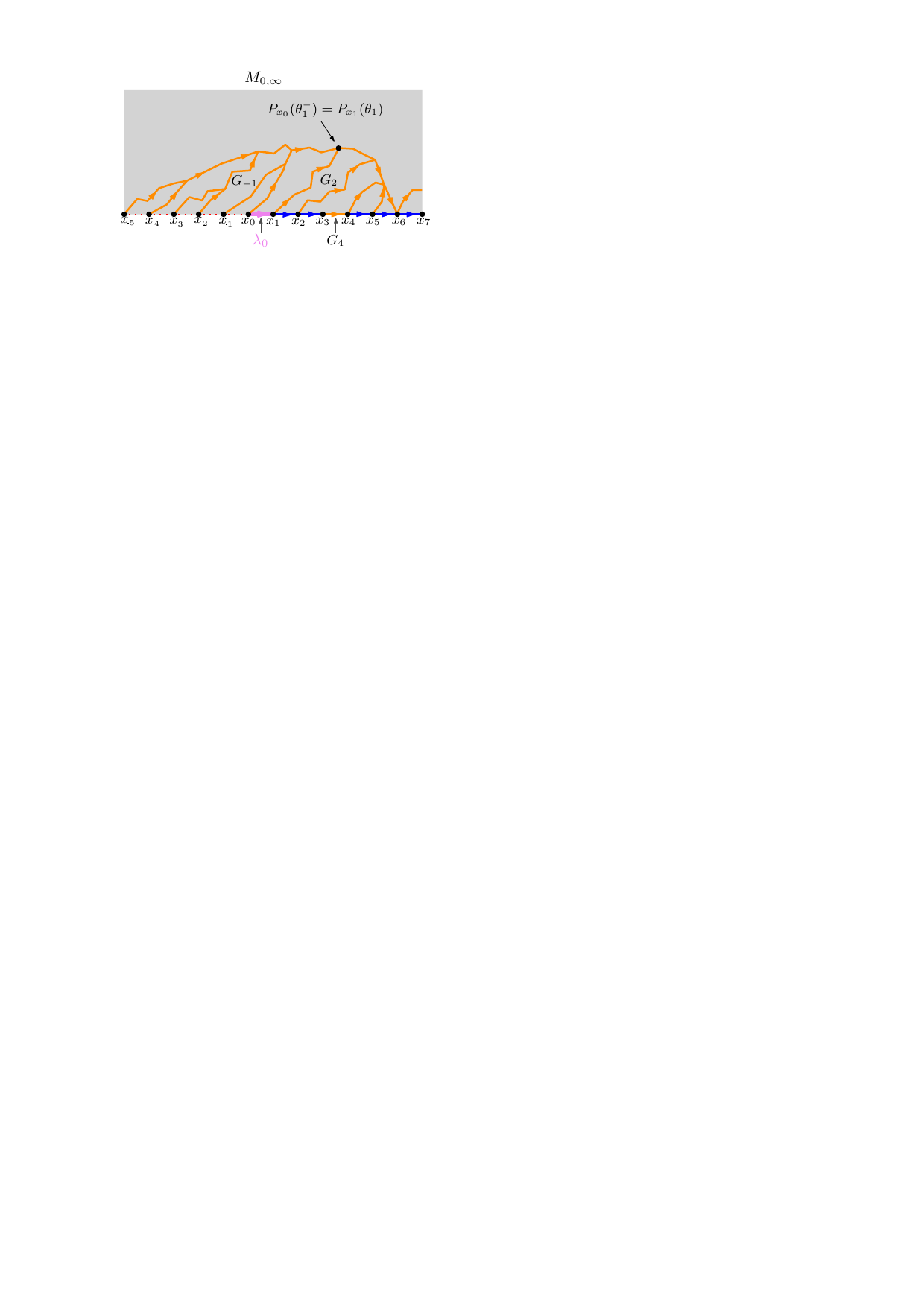}  
\caption{\label{fig-geo-slice}  
Illustration of the $\XDP$ geodesic slices $G_k$ defined in Lemma~\ref{lem-geo-slice} (regions between the orange paths). For $k\geq 0$, $G_k$ can consist of a single edge (which means that $\theta_k^- = 1$ and $\theta_k = 0$) as shown for $G_4$, but for $k \leq -1$ this is not possible since $M_{0,\infty}$ does not include the boundary edges to the left of $x_0$.
}
\end{center}
\end{figure}

\begin{lem} \label{lem-geo-slice}
Let $G_k$ be the (directed) submap of $M_{0,\infty}$ consisting of the vertices, edges, and faces of $M_{0,\infty}$ which lie between $P_{x_{k-1}}([1,\theta_k^-])$ and $P_{x_k}([1,\theta_k])$, including the vertices and edges which lie on these $\XDP$ geodesic segments. We call $G_k$ an \textbf{$\XDP$ geodesic slice} (see Figure~\ref{fig-geo-slice}). The maps $\{G_k\}_{k\in\BB Z}$ are independent. Furthermore, the maps $\{G_k\}_{k \geq 1}$ all have the same law, and the maps $\{G_k\}_{k \leq 0}$ all have the same law. 
\end{lem}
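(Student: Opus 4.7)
The plan is to deduce both assertions from Lemma~\ref{lem-geo-infty-ind}, combined with the monotonicity that $P_{x_k}$ lies weakly to the left of $P_{x_{k'}}$ whenever $k \leq k'$ (which follows from the leftmost property of $\XDP$ geodesics together with the coalescence statement in Lemma~\ref{lem-geo-infty}).

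\textbf{Independence.} I would prove by induction on $m$ that for any indices $k_1 < \cdots < k_m$, the slices $G_{k_1},\ldots,G_{k_m}$ are mutually independent. For the inductive step, apply Lemma~\ref{lem-geo-infty-ind} at index $k_m-1$: for each $i<m$ the right boundary $P_{x_{k_i}}$ of $G_{k_i}$ lies weakly to the left of $P_{x_{k_m-1}}$, so $G_{k_i}\subset\rng M_{x_{k_m-1},\infty}^L$; meanwhile $G_{k_m}$ has $P_{x_{k_m-1}}$ as its left boundary and is therefore contained in $\rng M_{x_{k_m-1},\infty}^R$. The independence of these two submaps (Lemma~\ref{lem-geo-infty-ind}) then gives that $G_{k_m}$ is independent of $(G_{k_1},\ldots,G_{k_{m-1}})$, which combined with the inductive hypothesis yields the desired joint independence.

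\textbf{Identical distribution.} For the case $k\geq 1$, I would exhibit a deterministic Borel map $\Phi$, not depending on $k$, such that $G_k=\Phi(\rng M_{x_{k-1},\infty}^R)$. The map $\Phi$ takes an abstract quarter-plane-like acyclically oriented planar map with a distinguished starting vertex $y$ on its left boundary and a distinguished second lower boundary vertex $y'$, computes the leftmost $\XDP$ geodesic from $y'$ inside the map, and outputs the region bounded on the left by the left boundary and on the right by this geodesic, up to the point of coalescence. Applied to $\rng M_{x_{k-1},\infty}^R$ (whose left boundary is $P_{x_{k-1}}$, starting vertex is $x_{k-1}$, and second lower boundary vertex is $x_k$), $\Phi$ returns exactly $G_k$, after identifying the geodesic computed inside $\rng M_{x_{k-1},\infty}^R$ with $P_{x_k}$. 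Since Lemma~\ref{lem-geo-infty-ind} gives $\rng M_{x_{k-1},\infty}^R \eqD \rng M_{x_0,\infty}^R$ for every $k\geq 1$, this implies $G_k \eqD G_1$. The case $k\leq 0$ is symmetric, using $\rng M_{x_k,\infty}^L$ in place of $\rng M_{x_{k-1},\infty}^R$ and the second vertex along the upper (left) boundary.

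The main obstacle is verifying that the leftmost $\XDP$ geodesic from $x_k$ computed intrinsically inside $\rng M_{x_{k-1},\infty}^R$ coincides with $P_{x_k}$ as defined in $M_{0,\infty}$. This requires an uncrossing argument: any directed path from $x_k$ in $M_{0,\infty}$ that wanders to the left of $P_{x_{k-1}}$ must cross $P_{x_{k-1}}$ at two vertices $v$ (exit) and $u$ (re-entry), and replacing the excursion between these vertices with the segment of $P_{x_{k-1}}$ from $v$ to $u$ produces a competing directed path with the same endpoints. Using that $P_{x_{k-1}}$ is itself an $\XDP$ geodesic together with the coalescence of Lemma~\ref{lem-geo-infty} (which ensures finite segments to compare), one shows that such a replacement would yield either a strictly better (shorter for SDP, longer for LDP) directed path between two vertices on $P_{x_{k-1}}$, contradicting that $P_{x_{k-1}}$ is an $\XDP$ geodesic, or a competing leftmost $\XDP$ geodesic from $x_{k-1}$ lying strictly to the left of $P_{x_{k-1}}$, contradicting the leftmost property of $P_{x_{k-1}}$.
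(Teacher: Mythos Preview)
Your approach is essentially the same as the paper's: both derive independence and stationarity directly from Lemma~\ref{lem-geo-infty-ind} by observing that each $G_k$ is determined by $\rng M_{x_{k-1},\infty}^R$ (for $k\geq 1$) or $\rng M_{x_k,\infty}^L$ (for $k\leq 0$) in a $k$-independent way, and that slices on opposite sides of any $P_{x_k}$ are measurable with respect to the corresponding independent halves. The paper states the non-crossing of leftmost $\XDP$ geodesics as a fact without the detailed uncrossing argument you sketch, and packages independence via the one-line observation that $\{G_j\}_{j\leq k}$ is independent of $\{G_j\}_{j>k}$ for every $k$ rather than induction, but the substance is identical.
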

\begin{proof}
Define the maps $\rng M_{x_k,\infty}^L$ and $\rng M_{x_k,\infty}^R$ as in Lemma~\ref{lem-geo-infty-ind}. Since leftmost $\XDP$ geodesics cannot cross each other, for $k\in\BB Z$, the maps $\{G_j\}_{j \leq k }$ are a.s.\ determined by $\rng M_{x_{k },\infty}^L$ and the maps $\{G_j\}_{j \geq k+1}$ are a.s.\ determined by $\rng M_{x_k,\infty}^R$. By the independence part of Lemma~\ref{lem-geo-infty-ind}, the sequence of maps $\{G_j\}_{j\leq k }$ is independent from the sequence of maps $\{G_j\}_{ j\geq k+1}$. Since this holds for every $k\in\BB Z$, the sequence of maps $\{G_k\}_{k\in\BB Z}$ are independent from each other.
To get the stationarity property, we observe that $G_k$ is determined by each of $\rng M_{x_k,\infty}^L$ and $\rng M_{x_{k-1},\infty}^R$ in a manner which does not depend on $k$. The stationarity property for $G_k$ therefore follows from the stationarity part of Lemma~\ref{lem-geo-infty-ind}. 
\end{proof}

The Busemann function can be expressed in terms of the geodesic slices $G_k$ as follows.

\begin{lem} \label{lem-busemann-cls}
Let $\theta_k^-$ and $\theta_k$ be as in~\eqref{eqn-coalesce}. Then the Busemann function (Theorem~\ref{thm-busemann}) satisfies
\eqb \label{eqn-busemann-cls} 
\mcl X(k) - \mcl X(k-1) = \theta_k  - \theta_k^- .
\eqe 
\end{lem}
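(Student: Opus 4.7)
The plan is to exploit the coalescence of leftmost $\XDP$ geodesics from Lemma~\ref{lem-geo-infty} to produce an explicit sequence of vertices $\{w_j\}_{j \in \BB N}$ that realizes the limit in the defining property~\eqref{eqn-busemann-lim} of the Busemann function from Theorem~\ref{thm-busemann}.

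The natural choice is to take the coalescence points of $P_{x_{k-1}}$ and $P_{x_k}$. Concretely, for each $j \in \BB N$, let $w_j$ denote the vertex reached after traversing $\theta_k + j$ edges along $P_{x_k}$ (equivalently, after traversing $\theta_k^- + j$ edges along $P_{x_{k-1}}$, by~\eqref{eqn-coalesce}). Since $M_{0,\infty}$ is acyclically oriented, directed paths visit each vertex at most once, so the $w_j$ are distinct vertices. By construction, each $w_j$ is reachable from both $x_{k-1}$ and $x_k$ by a directed path in $M_{0,\infty}$, so the sequence $\{w_j\}$ satisfies the hypothesis of Theorem~\ref{thm-busemann}.

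Next, I would use the geodesic property. Since $P_{x_k}$ is an infinite $\XDP$ geodesic from $x_k$ to $\infty$ (Definition~\ref{def-infinite-geo}), the restriction of $P_{x_k}$ to the segment from $x_k$ to $w_j$ is an $\XDP$ geodesic of length $\theta_k + j$, hence $\XDP_{M_{0,\infty}}(x_k, w_j) = \theta_k + j$. The analogous argument with $P_{x_{k-1}}$ gives $\XDP_{M_{0,\infty}}(x_{k-1}, w_j) = \theta_k^- + j$. Plugging these two identities into~\eqref{eqn-busemann-lim} with $k' = k$ yields
\[
\mcl X(k) - \mcl X(k-1) = \bigl(\theta_k + j\bigr) - \bigl(\theta_k^- + j\bigr) = \theta_k - \theta_k^-,
\]
as desired.

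There is no serious obstacle here: the lemma is essentially a bookkeeping consequence of the characterization of $\mcl X$ in Theorem~\ref{thm-busemann} together with coalescence (Lemma~\ref{lem-geo-infty}) and the fact that restrictions of infinite $\XDP$ geodesics are finite $\XDP$ geodesics between their endpoints. The only minor subtlety is to make sure that the paths $P_{x_{k-1}}$ and $P_{x_k}$ are interpreted as sequences of edges/vertices consistently with the indexing in~\eqref{eqn-coalesce}, which is guaranteed by the definition of $\theta_k^-$ and $\theta_k$ as the smallest integers for which the coalescence identity holds.
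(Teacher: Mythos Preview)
Your proof is correct and follows essentially the same approach as the paper: both take the sequence $\{w_j\}$ to be the vertices along the coalesced geodesic after the merge point, compute $\XDP_{M_{0,\infty}}(x_k,w_j)=\theta_k+j$ and $\XDP_{M_{0,\infty}}(x_{k-1},w_j)=\theta_k^-+j$ using the geodesic property, and subtract. The paper phrases the computation via the additivity $\XDP(x_k,w_j)=\XDP(x_k,w_0)+\XDP(w_0,w_j)$ rather than directly as $\theta_k+j$, but this is a cosmetic difference.
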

\begin{proof}
Let $w_0$ be the terminal vertex of $P_{x_k}(\theta_k)$, equivalently, the terminal vertex of $P_{x_{k-1}}(\theta_k^-)$. 
Let $\{w_j\}_{j\in\BB N}$ be the ordered sequence of vertices visited by $P_{x_k} $ (equivalently, $P_{x_{k-1}} $) after $w_0$. 
Since $P_{x_k}$ and $P_{x_{k-1}}$ are $\XDP$ geodesics, for each $j\in\BB N$, 
\eqbn
\XDP_{M_{0,\infty}}(x_k , w_j) = \XDP_{M_{0,\infty}}(x_k , w_0) + \XDP_{M_{0,\infty}}(w_0 , w_j)
\eqen
and the same is true with $x_{k-1}$ in place of $x_k$. Therefore, 
\eqbn
\XDP_{M_{0,\infty}}(x_k , w_j) - \XDP_{M_{0,\infty}}(x_{k-1} , w_j)
= \XDP_{M_{0,\infty}}(x_k , w_0) - \XDP_{M_{0,\infty}}(x_{k-1} , w_0)
= \theta_k - \theta_k^- .
\eqen
By the definition of $\mcl X$ from Theorem~\ref{thm-busemann}, this implies~\eqref{eqn-busemann-cls}.
\end{proof}

\begin{proof}[Proof of Theorem~\ref{thm-busemann-property}, Properties~\ref{item-busemann-ind} through~\ref{item-busemann-pos}]
By Lemma~\ref{lem-busemann-cls}, the increment $\mcl X(k) - \mcl X(k-1)$ is a function of the XDP geodesic slice map $G_k$ of Lemma~\ref{lem-geo-slice}. Therefore, the independent increments and stationary increments properties \ref{item-busemann-ind} and~\ref{item-busemann-stationary} follow from Lemma~\ref{lem-geo-slice}. 
To get Property~\ref{item-busemann-pos}, let $k\geq 1$. Use Theorem~\ref{thm-busemann} to find $w\in M_{0,\infty}$ such that \eqb \label{eqn-busemann-pos-dist}
\mcl X(k) - \mcl X(k-1)
= \XDP_{M_{0,\infty}}(x_k , w) - \XDP_{M_{0,\infty}}(x_{k-1} , w) .
\eqe
Since the directed edge $(x_{k-1} , x_k)$ is part of $M_{0,\infty}$, Property~\ref{item-busemann-pos} is immediate from~\eqref{eqn-busemann-pos-dist}.  
\end{proof}

\subsection{Symmetry via the UIBHBOT}
\label{sec-busemann-sym}

To prove Theorem~\ref{thm-busemann-property}, it remains to prove Property~\ref{item-busemann-sym}. 
For this purpose, we will introduce an auxiliary infinite directed planar map, which we now define. 
Let $\mcl Z^\bc = (\mcl L^\bc ,\mcl R^\bc) : \BB Z\to\BB Z^2$ be a bi-infinite walk such that $\mcl Z^\bc(0)  =(0,0)$; $\mcl Z^\bc|_{[0,\infty)} \eqD \mcl Z|_{[0,\infty)}$; and 
\[\text{$\mcl Z^\bc |_{(-\infty,0]}$ has the law of $\mcl Z|_{(-\infty,0]}$ conditioned so that $\mcl L(n) \geq 1$ for every $n \leq -1$}.\]
To be concrete, one way to construct $\mcl Z^\bc|_{(-\infty,0]}$ is as follows. Let $\{\wt{\mcl Z}_k(\cdot)\}_{k\in \BB Z}$ be an i.i.d.\ collection of random walk paths such that $\wt{\mcl Z}_k$ has the same law as $\mcl Z|_{[0,\tau_{-1}]}$, where $\tau_{-1} = \min\{n\in\BB N : \mcl L(n) = -1\}$ (as in~\eqref{eqn-bdy-vertex-kmsw}). Let $\mcl Z^\bc : \BB Z\to\BB Z^2$ be obtained by concatenating the paths $\wt{\mcl Z}_k$ end-to-end in order, in such a way that $\mcl Z^\bc(0) = (0,0)$ and 0 is the time at which we start traversing the path $\wt{\mcl Z}_0$. Then for $k\in\BB Z$, the path $\wt{\mcl Z}_k$ is the segment of $\mcl Z^\bc$ between the first time that $\mcl L^\bc$ hits $-k$ and the first time that $\mcl L^\bc$ hits $-k-1$. 
It follows from the strong Markov property that $\mcl Z^\bc|_{[0,\infty)} \eqD \mcl Z|_{[0,\infty)}$. It can be seen from Lemma~\ref{lem-quadrant-walk} below that $\mcl Z^\bc|_{(-\infty,0]}$ has the same law as the process obtained from $\mcl Z|_{(-\infty,0]}$ by conditioning $\mcl L|_{(-\infty,1]}$ to stay positive in the sense of~\cite{bd-conditioning} (but we will not need to use this latter fact).  

By Definition~\ref{def-kmsw} and Remark~\ref{remark-kmsw-infinite}, we can apply the KMSW procedure to $\mcl Z^\bc$ to produce an infinite bipolar-oriented triangulation $M^\bc$. As we will explain just below, $M^\bc$ has an infinite boundary, and all of its boundary edges are missing (i.e.\ it only has an infinite lower-left boundary); see Figure~\ref{fig-uihbot} for an illustration of $M^\bc$.

We describe $M^\bc$ more precisely.
For $k\in\BB Z$, let
\eqb \label{eqn-uihbot-bdy-time}
\tau_k^\bc := \min\left\{ n\in\BB Z  : \mcl L^\bc(n) =   k\right\}  .
\eqe 
By the definition of $\mcl Z^\bc$, $\{\tau_k^\bc\}_{k\in\BB Z}$ is a decreasing sequence of times and $\tau_0^\bc = 0$.
By Assertion~\ref{item-lower-left} of Lemma~\ref{lem-kmsw-bdy}, for each $k\in\BB Z$, the time $\tau_k^\bc$ is the time when the triangle of $M^\bc$ whose boundary includes the $k$th  lower-left (missing) boundary edge of $M^\bc$ is added to $M^\bc$ by the KMSW procedure, with the lower-left boundary edges enumerated from west to east. Hence, the (missing) boundary edges are added from east to west by the KMSW procedure.  Since, a.s.,  
\eqbn
\liminf_{n\to\infty} \mcl L^\bc(n) = \liminf_{n\to\infty} \mcl R^\bc(n) = \liminf_{n\to - \infty} \mcl R^\bc(n) = -\infty,
\eqen
Lemma~\ref{lem-kmsw-bdy} and Remark~\ref{remark-kmsw-infinite} imply that a.s.\ all of the boundary edges of $M^\bc$ belong to the lower-left boundary. 

Let $\{x^\bc_k\}_{k\in\BB Z}$ be the vertices along the (lower-left) boundary of $M^\bc$, enumerated so that $(x^\bc_{k},x^\bc_{k+1})$ is the edge corresponding to time $\tau_k^\bc$. Since, as explained above, the edges $\{(x^\bc_{k},x^\bc_{k+1})\}_{k\in\BB Z}$ are enumerated from west to east, the vertices $\{x_k^\bc\}_{k\in\BB Z}$ are also enumerated from west to east. 
We write $e^\bc = (x^\bc_{0}, x^\bc_1)$ and call it the \textbf{root edge} of $M^\bc$. Note that here we are not respecting the usual convention that maps are rooted at $\lambda_0$, the reason being that $\lambda_0$ is not a boundary edge for $M^\bc$ (rather, its initial vertex is $x^\bc_0$ and its terminal vertex is not on the boundary). 
Also note that $e^\bc$ is a missing edge of $M^\bc$.  
 
\begin{defn} \label{def-uihbot}
The \textbf{uniform infinite boundary-channeled half-plane bipolar-oriented triangulation} (UIBHBOT) is the edge-rooted triangulation with boundary $(M^\bc,e^\bc)$ defined just above.
\end{defn}

\begin{figure}[ht!]
\begin{center}
\includegraphics[width=0.6\textwidth]{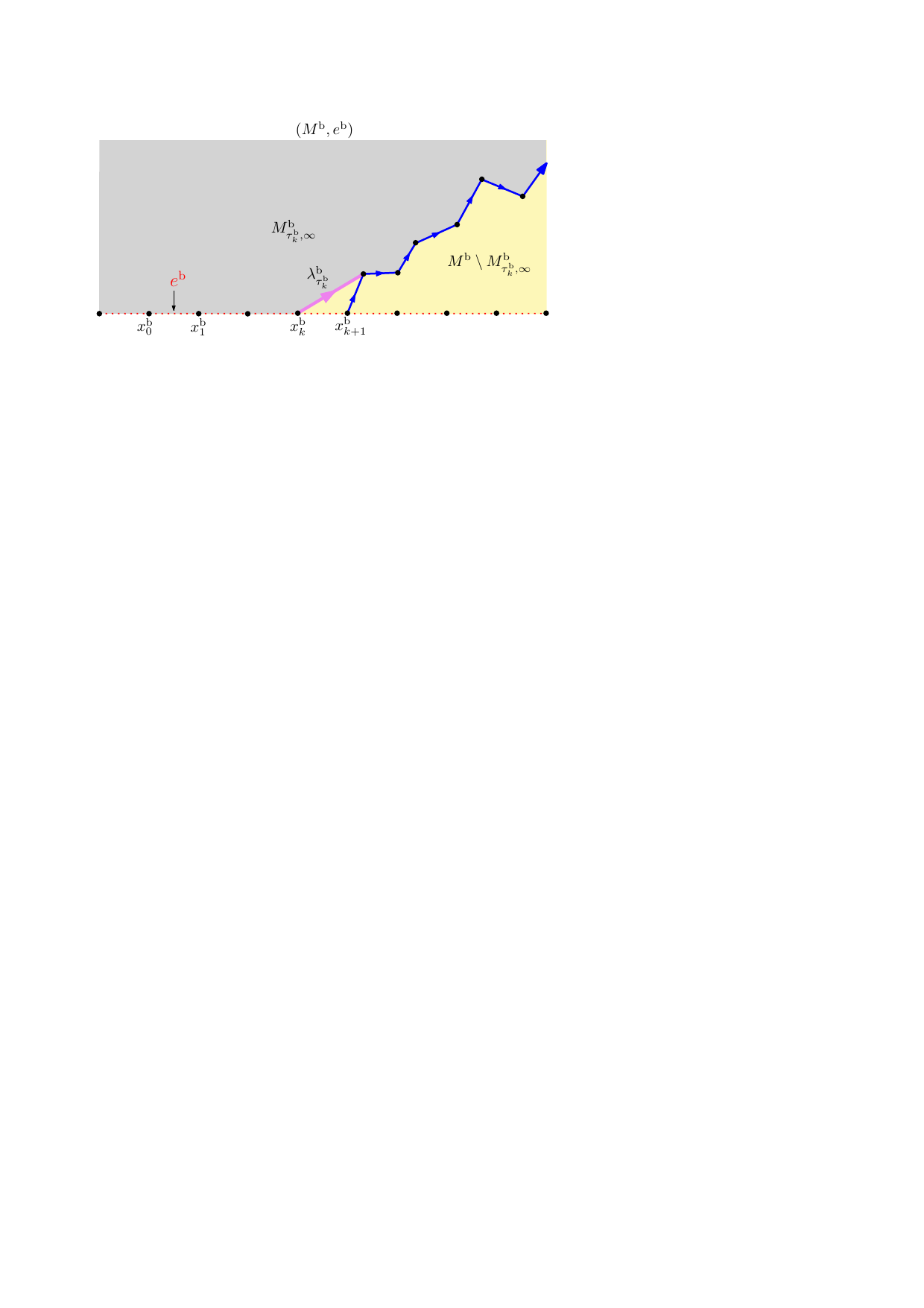}  
\caption{\label{fig-uihbot}  
The UIBHBOT $M^\bc$ and the submap $M^\bc_{\tau_k^\bc , \infty} \eqD M_{0,\infty}$ for a typical value of $k\in\BB Z$. The KMSW procedure first explores the yellow region and then the grey region. The triangles containing a boundary edge are explored in east to west order. Also shown is the active edge for the KMSW procedure at time $\tau_k^\bc$ with the corresponding triangle having a (missing) boundary edge. Note that all the boundary edges of the UIBHBOT are missing. By, e.g., Proposition~\ref{prop-disk-bs-conv} below, each of the boundary vertices of the UIBHBOT has no incoming edges. 
}
\end{center}
\end{figure}

The reason for the name is that the UIHBHBOT arises as the Benjamini-Schramm local limit of certain uniform boundary-channeled bipolar-oriented triangulations, as defined in Definition~\ref{def-reverse-map}, around a uniformly chosen boundary edge (see Proposition~\ref{prop-disk-bs-conv} below). 
For the purposes of proving Theorem~\ref{thm-busemann-property}, the main fact about the UIBHBOT which we need is the following non-trivial symmetry.

\begin{prop} \label{prop-uihbot-sym} 
Let $(M^\bc ,e^\bc)$ be the UIBHBOT.  
Let $(\wt M^\bc , \wt e^\bc)$ be the directed, edge-rooted triangulation with missing boundary edges obtained from $(M^\bc,e^\bc)$ by applying an orientation-reversing homeomorphism $\BB C\to\BB C$. 
Then $(\wt M^\bc , \wt e^\bc)$ has the same law as $(M^\bc ,e^\bc)$.  
\end{prop}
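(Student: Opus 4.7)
The approach is to realize the UIBHBOT as a Benjamini--Schramm local limit of a symmetric family of finite random boundary-channeled bipolar-oriented triangulations, rooted at a uniformly chosen boundary edge, and then transfer a reflection symmetry from the finite level to the limit.

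First, I would invoke Proposition~\ref{prop-disk-bs-conv} (stated below in the paper) to express the UIBHBOT $(M^\bc, e^\bc)$ as the Benjamini--Schramm limit (Definition~\ref{def-bs-topology}), as $\el\to\infty$, of Boltzmann-weighted finite boundary-channeled bipolar-oriented triangulations from $\mcl M_{\el,\el}^\bc$ (Definition~\ref{def-reverse-map}), with all boundary edges declared missing and rooted at a uniformly chosen missing boundary edge. The key feature of this finite model is that it treats the left and right boundaries symmetrically.

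Second, I would show that plane reflection induces a measure-preserving involution on these finite objects. The boundary-channeled condition of Definition~\ref{def-reverse-map} depends only on the directed graph structure of $\frk m$ and is manifestly invariant under swapping the roles of left and right boundaries; hence applying an orientation-reversing homeomorphism $\BB C\to\BB C$ to $\frk m\in\mcl M_{\el,\el}^\bc$ again yields a map in $\mcl M_{\el,\el}^\bc$. Since reflection preserves $\#\mcl E(\frk m)$, it preserves the Boltzmann weight $3^{-\#\mcl E(\frk m)}$, and since it is a bijection on the set of boundary edges, it preserves the uniform distribution on the root edge.

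Third, reflection is bicontinuous with respect to the local topology of Definition~\ref{def-bs-topology}, because the radius-$r$ ball of the reflected rooted map is precisely the reflection of the original radius-$r$ ball and the isomorphism class of the ball is preserved. Hence the reflection invariance at finite volume descends to the Benjamini--Schramm limit, which gives the desired equality in distribution $(\wt M^\bc, \wt e^\bc) \eqD (M^\bc, e^\bc)$.

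The main technical obstacle is reconciling the first step with the precise form of Proposition~\ref{prop-disk-bs-conv}. If that proposition provides the UIBHBOT as a limit of an asymmetric family (for instance, the maps from Lemma~\ref{lem-kmsw-left} in which only the left boundary is declared missing), then a naive reflection moves the missing edges to the opposite side, giving a rooted map of a different decorated type. One would then need either to work with a symmetric finite model (e.g., $\mcl M_{\el,\el}^\bc$ with all boundary edges declared missing, or a symmetrization over boundary-length splits) or to compose the plane reflection with the combinatorial bijection $\Phi_k$ of Lemma~\ref{lem-bdy-reverse}, which reverses a prescribed number of boundary edges and thereby relocates missing edges between the two sides. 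Verifying that this composition genuinely implements a plane reflection on rooted neighbourhoods, and that it interacts correctly with the root edge under the local-limit topology, is the delicate part of the argument.
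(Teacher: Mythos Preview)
Your three-step strategy (local limit, finite-level symmetry, pass to the limit) is exactly the paper's approach, and you correctly identify Proposition~\ref{prop-disk-bs-conv} and Lemma~\ref{lem-bdy-reverse} as the two ingredients. Two corrections, however.

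First, the paper does \emph{not} use a symmetric model $\mcl M_{\el,\el}^\bc$. Proposition~\ref{prop-disk-bs-conv} gives the UIBHBOT as the local limit of the asymmetric family $\mcl M_{\el_n+1,1}^\bc(n-1)$ (right boundary length $1$), with the first $\el_n$ left boundary edges declared missing. Your primary approach would require proving a different local-limit theorem, which the paper does not supply.

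Second, your alternative via $\Phi_k$ is precisely what the paper does, but the verification you flag as ``delicate'' is in fact immediate, and for a reason you do not quite articulate. Reflecting $(M_n^\bc,e_n^\bc)$ gives a uniform sample from $\mcl M_{1,\el_n+1}^\bc(n-1)$ with the first $\el_n$ \emph{right} boundary edges declared missing. Now observe that $\Phi_{\el_n}:\mcl M_{1,\el_n+1}^\bc\to\mcl M_{\el_n+1,1}^\bc$ reverses the orientation of exactly those $\el_n$ edges --- all of which are missing, hence unoriented, in the decorated object. Consequently, at the level of directed planar maps with missing edges, applying $\Phi_{\el_n}$ to the underlying bipolar map does not change the decorated object at all; it merely relabels which bipolar map sits underneath. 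Since $\Phi_{\el_n}$ is a bijection, this gives $(\wt M_n^\bc,\wt e_n^\bc)\eqD(M_n^\bc,e_n^\bc)$ exactly, for every $n$. There is no need to analyze rooted neighbourhoods or how $\Phi_k$ interacts with the local topology; the equality of laws is global and combinatorial, and passing to the Benjamini--Schramm limit is then routine.
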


Recall that a planar map is viewed modulo \emph{orientation-preserving} homeomorphisms from $\BB C$ to $\BB C$.
Applying an orientation-reversing homeomorphism to $(M^\bc , e^\bc)$ gives us a different planar map. 
Proposition~\ref{prop-uihbot-sym} tells us that this new planar map has the same law as $(M^\bc,e^\bc)$. 

In the setting of Proposition~\ref{prop-uihbot-sym}, each (non-missing) edge of $\wt M^\bc$ is equipped with the same orientation as the corresponding edge of $M^\bc$ (it is only the orientation of the plane which is reversed, not the orientation of the edges of the map). 

Proposition~\ref{prop-uihbot-sym} is not obvious from the definition of the UIBHBOT in terms of the KMSW procedure. 
Indeed, the KMSW encoding walk corresponding to $(\wt M^\bc , \wt e^\bc)$ is coupled with the original KMSW encoding walk $\mcl Z^\bc$ in a highly non-trivial way. 
Reversing the orientation of $\BB C$ swaps west and east, so if $\{x^\bc_k\}_{k\in\BB Z}$ is the west to east ordered sequence of boundary vertices of $M^\bc$, as in the discussion just above~\eqref{def-uihbot}, then $\{x^\bc_{-k}\}_{k\in\BB Z}$ is the west to east ordered sequence of boundary vertices of $\wt M^\bc$. 

We will prove Proposition~\ref{prop-uihbot-sym} in Section~\ref{sec-uihbot-sym} using a symmetry for finite boundary-channeled bipolar-oriented triangulations (Lemma~\ref{lem-bdy-reverse}) and a limiting argument. For the time being, we use Proposition~\ref{prop-uihbot-sym} to conclude the proof of Theorem~\ref{thm-busemann-property}. 

\begin{lem} \label{lem-uihbot-future} 
Let $(M^\bc , e^\bc)$ be the UIBHBOT (Definition~\ref{def-uihbot}). 
For $k\in\BB Z$, let $\tau_k^\bc$ be as in~\eqref{eqn-uihbot-bdy-time} and let $\{x_k^\bc\}_{k\in\BB Z}$ be the sequence of boundary vertices as in the discussion just after~\eqref{eqn-uihbot-bdy-time}.
Let $M^\bc_{\tau_k^\bc,\infty}$ be the submap of $M^\bc$ obtained by applying the KMSW procedure to $(\mcl Z^\bc(\cdot + \tau_k^\bc) - \mcl Z^\bc(\tau_k^\bc))|_{[0,\infty)}$. 
Then $M_{\tau_k^\bc , \infty} \eqD M_{0,\infty}$. 
Furthermore, for each $\el \leq k$, each directed path in $M^\bc$ from $x_{\el}^\bc$ to $\infty$ is contained in $M_{\tau_k^\bc,\infty}$. 
\end{lem}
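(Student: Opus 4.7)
The plan splits into the two assertions of the lemma: the distributional equality and the containment.

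For the distributional equality, I will exploit the explicit i.i.d.\ excursion construction of $\mcl Z^\bc$ given just before Definition~\ref{def-uihbot}. By that construction, $\mcl Z^\bc$ is built by concatenating i.i.d.\ random walk pieces $\{\wt{\mcl Z}_j\}_{j\in\BB Z}$, each distributed as $\mcl Z|_{[0,\tau_{-1}]}$ (where $\tau_{-1}=\min\{n\in\BB N:\mcl L(n)=-1\}$), with $\wt{\mcl Z}_j$ occupying the time interval $[\tau_{-j}^\bc,\tau_{-j-1}^\bc]$. Consequently, for any $k\in\BB Z$, the segment $(\mcl Z^\bc(\cdot+\tau_k^\bc)-\mcl Z^\bc(\tau_k^\bc))|_{[0,\infty)}$ is the concatenation of $\wt{\mcl Z}_{-k},\wt{\mcl Z}_{-k+1},\dots$. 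By the i.i.d.\ property, this has the same joint law as $\wt{\mcl Z}_0,\wt{\mcl Z}_1,\dots$ concatenated, which is precisely the excursion decomposition of $\mcl Z|_{[0,\infty)}$. Since the KMSW procedure depends only on increments of its input (as noted just after Definition~\ref{def-kmsw}), this immediately gives $M^\bc_{\tau_k^\bc,\infty}\eqD M_{0,\infty}$.

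For the containment, I first reduce to the case $k=\ell$: by the monotonicity $\frk m_{n,\infty}\supseteq\frk m_{n',\infty}$ for $n\leq n'$ noted in Remark~\ref{remark-kmsw-infinite}, and the fact that $\tau_\ell^\bc\geq\tau_k^\bc$ whenever $\ell\leq k$, we have $M^\bc_{\tau_\ell^\bc,\infty}\subseteq M^\bc_{\tau_k^\bc,\infty}$, so it suffices to show that every directed path in $M^\bc$ from $x_\ell^\bc$ to $\infty$ is contained in $M^\bc_{\tau_\ell^\bc,\infty}$. The key step is the stronger claim that every outgoing edge (in $M^\bc$) from any vertex of $M^\bc_{\tau_\ell^\bc,\infty}$ already lies in $M^\bc_{\tau_\ell^\bc,\infty}$; this immediately forces a directed path starting at $x_\ell^\bc$ to stay in $M^\bc_{\tau_\ell^\bc,\infty}$ forever.

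To establish the stronger claim, I will identify the boundary of $M^\bc_{\tau_\ell^\bc,\infty}$ as a submap of $M^\bc$ using (an analog of) Lemma~\ref{lem-uibot-bdy}: it is the union of the active edge $\lambda_{\tau_\ell^\bc}$, the rightmost directed path in $M^\bc$ from $\lambda_{\tau_\ell^\bc}^+$ to $\infty$, and the lower-left missing edges $\{(x_j^\bc,x_{j+1}^\bc):j\leq\ell-1\}$ meeting at the vertex $x_\ell^\bc=\lambda_{\tau_\ell^\bc}^-$. The claim is trivial at interior vertices. At a vertex $v$ on the rightmost directed path, every outgoing edge of $v$ lies weakly to the left of the path, and hence in $M^\bc_{\tau_\ell^\bc,\infty}$, by the very definition of ``rightmost''. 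At a lower-left boundary vertex $x_j^\bc$ with $j\leq\ell$, I will invoke Assertion~\ref{item-lower-left} of Lemma~\ref{lem-kmsw-bdy} to conclude that every outgoing edge from $x_j^\bc$ in $M^\bc$ is added at some KMSW time $m$ at which $x_j^\bc$ is the initial vertex of the active edge; such times $m$ are exactly those with $\mcl L^\bc(m)=j$ and $\mcl L^\bc(i)\geq j$ for all $i\leq m$, and the smallest one is $\tau_j^\bc$, so $m\geq\tau_j^\bc\geq\tau_\ell^\bc$, placing the edge in $M^\bc_{\tau_\ell^\bc,\infty}$.

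The main obstacle I anticipate is the last step: applying Assertion~\ref{item-lower-left} of Lemma~\ref{lem-kmsw-bdy} to the bi-infinite setting requires that the running minima of $\mcl L^\bc$ on $(-\infty,m]$ be well-defined and compatible with the one-sided formulation of that assertion. This relies on the fact that $\mcl L^\bc(i)\to+\infty$ as $i\to-\infty$ almost surely (a consequence of $\mcl Z^\bc|_{(-\infty,0]}$ being the time-reversal of a random walk conditioned to stay positive), together with the $\{-1,0,+1\}$-valuedness of the increments of $\mcl L^\bc$, which prevents the walk from ``jumping over'' level $j$ and forces $\mcl L^\bc(i)>j$ for all $i<\tau_j^\bc$.
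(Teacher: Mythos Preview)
Your proposal is correct and follows essentially the same approach as the paper. For the distributional equality you use the i.i.d.\ excursion decomposition exactly as the paper does; for the containment, the paper simply refers to ``a similar argument as in the $k\leq -1$ case of Lemma~\ref{lem-directed-contain}'', and your argument---that paths cannot cross the rightmost boundary $\beta$ and that $x_\ell^\bc$ has no incident edges added before time $\tau_\ell^\bc$---is precisely that argument, just packaged as an absorption property after a harmless reduction to $\ell=k$ via monotonicity.
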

\begin{proof}
In the construction of $\mcl Z^\bc$ described at the beginning of Section~\ref{sec-busemann-sym}, the process $(\mcl Z^\bc(\cdot + \tau_k^\bc)- \mcl Z^\bc(\tau_k^\bc))|_{[0,\infty)} $ is obtained by concatenating the walk paths $\wt{\mcl Z}_j$ for $j \geq -k$. By the strong Markov property of $\mcl Z$, this implies that
$(\mcl Z^\bc(\cdot + \tau_k^\bc)- \mcl Z^\bc(\tau_k^\bc))|_{[0,\infty)} \eqD \mcl Z|_{[0,\infty)}$.  Hence $M^\bc_{\tau_k^\bc , \infty} \eqD M_{0,\infty}$. 
The statement that each directed path in $M^\bc$ from $x_\el^\bc$ to $\infty$ is contained in $M_{\tau_k^\bc,\infty}$ follows from a similar argument as in the $k\leq -1$ case of Lemma~\ref{lem-directed-contain}. 
\end{proof}

From Lemma~\ref{lem-uihbot-future}, we can obtain the analog of Theorem~\ref{thm-busemann} for the UIBHBOT.

\begin{prop} \label{prop-busemann-uihbot}
Let $\{x_k^\bc\}_{k\in\BB Z}$ be the boundary vertices of the UIBHBOT as in the discussion just after~\eqref{eqn-uihbot-bdy-time}. There exists a unique function $\mcl X^\bc :  \BB Z \to \BB Z$, called the \textbf{Busemann function}, such that $\mcl X^\bc(0) = 0$ and the following is true. Let $k,k' \in \BB Z$ and let $\{w_j\}_{j\in\BB N}$ be a sequence of distinct vertices of $M^\bc$ such that for each $j\in\BB N$, each of $x_k^\bc$ and $x_{k'}^\bc$ can be joined to $w_j$ by a directed path in $M^\bc$. Then
\eqb \label{eqn-busemann-lim-uihbot}
\mcl X^\bc(k') -\mcl X^\bc(k) = \XDP_{M^\bc}(x_{k'} , w_j) - \XDP_{M^\bc}(x_{k } , w_j) ,\quad \text{for each sufficiently large $j\in\BB N$.}
\eqe
Furthermore, the increments $\mcl X^\bc(k) - \mcl X^\bc(k-1)$ are i.i.d., and if $\mcl X$ is the Busemann function for $M_{0,\infty}$ as in Theorem~\ref{thm-busemann}, then 
\eqb \label{eqn-uihbot-busemann-law} 
\mcl X^\bc(k) - \mcl X^\bc(k-1) \eqD   \mcl X(0) - \mcl X(-1) .
\eqe  
\end{prop}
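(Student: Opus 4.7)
The plan is to leverage Lemma~\ref{lem-uihbot-future}---which identifies $M^\bc_{\tau_{k_0}^\bc,\infty}$ in law with $M_{0,\infty}$ and guarantees that every directed path in $M^\bc$ from $x^\bc_{\ell}$ with $\ell\leq k_0$ stays in this submap---to transfer Theorem~\ref{thm-busemann} and Lemma~\ref{lem-geo-slice} from the UIBOT to the UIBHBOT. For existence, fix $K\in\BB N$ and $k_0\geq K$. Under the identification $M^\bc_{\tau_{k_0}^\bc,\infty}\eqD M_{0,\infty}$, the boundary vertices correspond as $x^\bc_{k_0-j}\leftrightarrow x_{-j}$ for $j\geq 0$ (since the root edge of $M^\bc_{\tau_{k_0}^\bc,\infty}$ is $\lambda_{\tau_{k_0}^\bc}^\bc$, whose initial vertex is $x^\bc_{k_0}$). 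Applying Lemma~\ref{lem-infty-cut} inside the submap produces vertices $\{v_m\}$ through which every sufficiently long directed path from any $x^\bc_k$ with $k\in[-K,K]$ must pass; setting $\mcl X^\bc(k):=\XDP_{M^\bc}(x^\bc_k,v_m)-\XDP_{M^\bc}(x^\bc_0,v_m)$ for any sufficiently large $m$ and mimicking the proof of Theorem~\ref{thm-busemann}, one verifies~\eqref{eqn-busemann-lim-uihbot} on $[-K,K]\cap\BB Z$. Consistency across different choices of $K$ follows from the uniqueness built into~\eqref{eqn-busemann-lim-uihbot}, yielding $\mcl X^\bc$ on all of $\BB Z$.

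\textbf{I.i.d.\ increments with the claimed marginal.} For each $k\in\BB Z$, let $P_{x^\bc_k}$ be the leftmost $\XDP$ geodesic from $x^\bc_k$ to $\infty$ in $M^\bc$; existence and uniqueness follow from Lemma~\ref{lem-geo-infty} applied inside $M^\bc_{\tau_k^\bc,\infty}$. Let $G^\bc_k$ be the geodesic slice between $P_{x^\bc_{k-1}}$ and $P_{x^\bc_k}$. Arguing as in Lemma~\ref{lem-busemann-cls} shows that $\mcl X^\bc(k)-\mcl X^\bc(k-1)$ depends only on $G^\bc_k$. Given any finite index set $j_1<\cdots<j_n$, pick $k_0\geq j_n$; under the identification $M^\bc_{\tau_{k_0}^\bc,\infty}\eqD M_{0,\infty}$, the slices $G^\bc_{j_1},\ldots,G^\bc_{j_n}$ correspond to $G_{j_1-k_0},\ldots,G_{j_n-k_0}$ of $M_{0,\infty}$, all with indices $\leq 0$. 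By Lemma~\ref{lem-geo-slice} they are i.i.d., each with the common law of $\{G_k\}_{k\leq 0}$. Since the $k=0$ increment in $M_{0,\infty}$ equals $\mcl X(0)-\mcl X(-1)$ by Lemma~\ref{lem-busemann-cls}, this delivers both the joint i.i.d.\ structure of the $\mcl X^\bc$ increments and the identity~\eqref{eqn-uihbot-busemann-law}.

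\textbf{Main obstacle.} The delicate point is verifying that the leftmost $\XDP$ geodesics $P_{x^\bc_k}$ and the geodesic slices $G^\bc_k$ computed inside the infinite map $M^\bc$ genuinely coincide with their counterparts computed inside the submap $M^\bc_{\tau_{k_0}^\bc,\infty}$, so that Lemma~\ref{lem-geo-slice} transfers cleanly. This hinges on the containment assertion of Lemma~\ref{lem-uihbot-future}, which prevents directed paths---and hence leftmost $\XDP$ geodesics and their slices---from escaping the submap. No symmetry input is needed at this stage; in particular, Proposition~\ref{prop-uihbot-sym} will enter only later, when deducing Property~\ref{item-busemann-sym} of Theorem~\ref{thm-busemann-property}.
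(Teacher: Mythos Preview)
Your proposal is correct and follows essentially the same approach as the paper: both arguments fix a large index $K$ (your $k_0$), use Lemma~\ref{lem-uihbot-future} to identify $M^\bc_{\tau_K^\bc,\infty}$ with $M_{0,\infty}$ and to confine all relevant directed paths to this submap, and then transport the Busemann function and its increment structure from $M_{0,\infty}$. The only cosmetic difference is that the paper cites Theorem~\ref{thm-busemann} and the already-proven Properties~\ref{item-busemann-ind}--\ref{item-busemann-stationary} of Theorem~\ref{thm-busemann-property} as black boxes, whereas you unroll them to the underlying Lemmas~\ref{lem-infty-cut}, \ref{lem-geo-slice}, and~\ref{lem-busemann-cls}; the content is identical.
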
 
\begin{proof}
For $K \in\BB Z$, define the time $\tau_K^\bc$ and the map $M_{\tau_K^\bc,\infty}^\bc$ as in Lemma~\ref{lem-uihbot-future}. By Lemma~\ref{lem-uihbot-future}, $M^\bc_{\tau_K^\bc,\infty} \eqD M_{0,\infty}$ and for $ k \leq K$, each directed path in $M^\bc$ from $x_{ k}^\bc$ to $\infty$ is contained in $M^\bc_{\tau_K^\bc,\infty}$. 
By applying Theorem~\ref{thm-busemann} to $M^\bc_{\tau_K^\bc,\infty}$, we find that there exists a unique $\mcl X^\bc : (-\infty,K]\cap\BB Z\to\BB Z$ satisfying~\eqref{eqn-busemann-lim-uihbot} for $k,k' \leq K$. 
Furthermore, by applying Properties~\ref{item-busemann-ind} and~\ref{item-busemann-stationary} of Theorem~\ref{thm-busemann-property} (which we have already proven) to $M^\bc_{\tau_K^\bc , \infty}$, we find that the increments $\mcl X^\bc(k)-\mcl X^\bc(k-1)$ for $k \leq K$ are i.i.d.\ and have the same law as the increment $\mcl X(0) - \mcl X(-1)$. Since $K$ can be made arbitrarily large, this proves the proposition.  
\end{proof}

\begin{proof}[Proof of Theorem~\ref{thm-busemann-property}, Property~\ref{item-busemann-sym}]
Let $(M^\bc,e^\bc)$ be the UIBHBOT and let $\mcl X^\bc$ be its Busemann function as in Proposition~\ref{prop-busemann-uihbot}. 
By Proposition~\ref{prop-uihbot-sym}, the law of $(M^\bc,e^\bc)$ is invariant under applying an orientation-reversing homeomorphism from $\BB C$ to $\BB C$ (while keeping the directions of the edges fixed). Applying such an orientation-reversing homeomorphism has the effect of reversing the order of the boundary vertices of $M^\bc$, which by~\eqref{eqn-busemann-lim-uihbot} has the effect of replacing $\mcl X^\bc$ by its time reversal $\mcl X^\bc(-\cdot)$. Consequently, the increments of $\mcl X^\bc$ and its time reversal have the same law, i.e., $\mcl X^\bc(k) -\mcl X^\bc(k-1) \eqD \mcl X^\bc(k-1) - \mcl X^\bc(k)$. By~\eqref{eqn-uihbot-busemann-law}, this implies that $\mcl X (0) - \mcl X (-1) \eqD \mcl X (-1) - \mcl X (0)$. Since the negative-time increments of $\mcl X$ all have the same law (Property~\ref{item-busemann-stationary}), this implies Property~\ref{item-busemann-sym}. 
\end{proof}

\subsection{Proof of Proposition~\ref{prop-future-map-law}: Relation between the UIBOT and the UIQBOT}
\label{sec-future-map-law}

The idea of the proof of Proposition~\ref{prop-future-map-law} is to look at the excursions that the KMSW exploration $\lambda_n$ makes to the left and right of the leftmost directed path $\beta_0$ of~\eqref{eqn-north-flow-line}. In Lemma~\ref{lem-flow-line-times}, we give a description in terms of the KMSW encoding walk $\mcl Z$ of the times $n$ when $\lambda_n \in \wh M_{0,\infty}$ and times $n$ when $\lambda_n \in \wh M_{0,\infty}'$ (recall the definitions of $\wh M_{0,\infty}$ and $\wh M_{0,\infty}'$ from the beginning of Section~\ref{sec-busemann-setup}). We then concatenate the increments of $\mcl Z$ corresponding to times when $\lambda_n \in \wh M_{0,\infty}$ (resp.\ $\lambda_n \in \wh M_{0,\infty}'$) to obtain two KMSW encoding walks $\wh{\mcl Z}$ and $\wh{\mcl Z}'$ for $\wh M_{0,\infty}$ and $\wh M_{0,\infty}'$, respectively (Lemma~\ref{lem-quad-kmsw}). In Lemma~\ref{lem-quadrant-walk}, we describe the joint law of $(\wh{\mcl Z}, \wh{\mcl Z}')$, with the help of an elementary lemma about lazy random walks (Lemma~\ref{lem-lazy-walk-flip}). This description immediately yields Proposition~\ref{prop-future-map-law}. 

We will often use in this section the following two aspects of the definitions of $\wh M_{0,\infty}$ and $\wh M'_{0,\infty}$: $(i)$ $\wh M_{0,\infty}$ is the submap of $M_{0,\infty}$ induced by the vertices of $M_{0,\infty}$ which are reachable by a directed path in $M_{0,\infty}$ started from the initial vertex $\lambda^-_0$; $(ii)$ the edges of the leftmost directed path $\beta_0$ of~\eqref{eqn-north-flow-line} are non-missing edges in $\wh M_{0,\infty}$ and missing edges in $\wh M'_{0,\infty}$.

\begin{figure}[ht!]
\begin{center}
\includegraphics[width=0.6\textwidth]{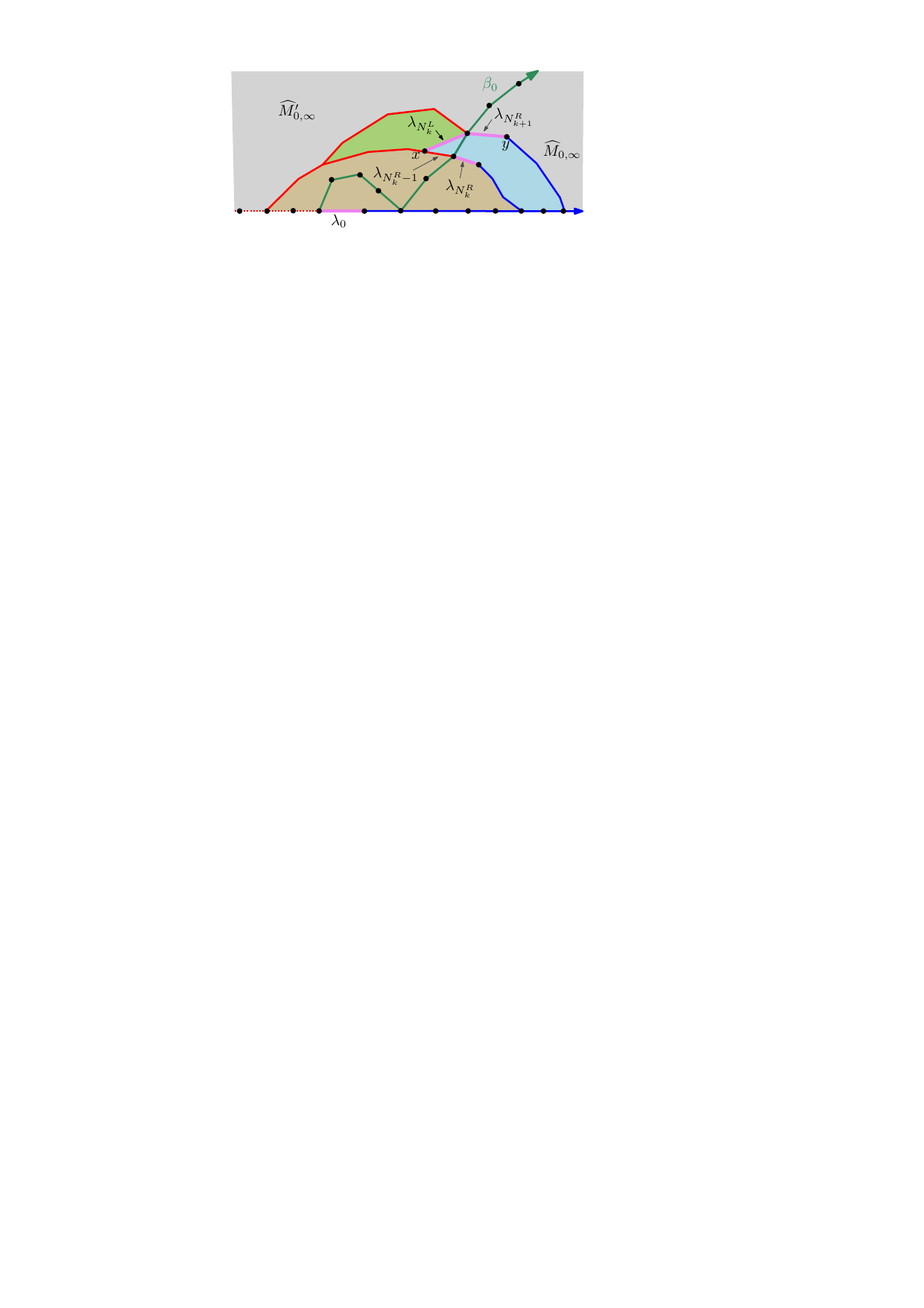}  
\caption{\label{fig-flow-line-times}  
The times $N_k^R , N_k^L,$ and $N_{k+1}^R$ appearing in Lemma~\ref{lem-flow-line-times} and the active edges at these times. The KMSW procedure explores the light brown region, then the light blue region, then the light green region, and then the gray region.  
}
\end{center}
\end{figure}

\begin{lem} \label{lem-flow-line-times}
Let $N_1^R = 0$. Inductively, for $k \in \BB N$, let $N_k^L$ be the smallest $n \geq N_k^R + 1$ for which $\lambda_n \notin \wh M_{0,\infty}$ (i.e., $\lambda_n \in \wh M_{0,\infty}'$) and let $N_{k+1}^R$ be the smallest $n \geq N_k^L + 1$ for which $\lambda_n \in \wh M_{0,\infty}$. Then
\allb \label{eqn-flow-line-times} 
N_k^L &= \min\left\{ n \geq N_k^R + 1 : \mcl L(n) = \mcl L(N_k^R) - 1 \right\} \notag\\
N_{k+1}^R &= \min\left\{ n \geq N_k^L + 1 :  \mcl R(n) = \mcl R(N_k^L) - 1 \right\} .
\alle
In particular: 
\begin{enumerate}[label=(\roman*),ref=(\roman*)]
    \item\label{item:flow-line-times-1} At each of the times $N_k^{R}$ for $k\geq 2$, 
    \begin{equation}\label{eq:ref-for-lat-disc2}
        \mcl Z(N_k^R) - \mcl Z(N_k^R-1) = (1,-1),
    \end{equation}
     and the KMSW procedure for $M_{0,\infty}$ adds at time $N_k^R$ a new single non-missing edge (rather than a triangular face) to $M_{0,N_k^R-1}$. This edge is $\lambda_{N_k^R}$.
    \item\label{item:flow-line-times-2} At each of the times $N_k^{L}$ for $k\geq 1$, 
    \begin{equation}\label{eq:ref-for-lat-disc}
        \mcl Z(N_k^L) - \mcl Z(N_k^L-1) = (-1,0),
    \end{equation}
     and the KMSW procedure for $M_{0,\infty}$ creates a triangular face $T$ contained in $\wh M_{0,\infty}'$ whose boundary consists of the edge $\lambda_{N_k^R - 1}$, the (new active) edge $\lambda_{N_k^L}$, and the (previously active) edge $ \lambda_{N_k^L-1}$ along the path $\beta_0$.
\end{enumerate}
 
\end{lem}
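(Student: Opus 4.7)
I would prove the lemma by induction on $k$, tracking for each $n$ whether the active edge $\lambda_n$ lies in $\wh M_{0,\infty}$ or in $\wh M'_{0,\infty}$ and relating the transitions to the encoding walk $\mcl Z = (\mcl L, \mcl R)$. The starting observation is that $\lambda_n \in \wh M_{0,\infty}$ if and only if the initial vertex $\lambda_n^-$ is reachable from $\lambda_0^-$ by a directed path in $M_{0,\infty}$: reachability of $\lambda_n^-$ propagates along $\lambda_n$ to $\lambda_n^+$, whereas unreachability of $\lambda_n^-$ keeps $\lambda_n$ out of $\wh M_{0,\infty}$ (which is the induced subgraph on reachable vertices). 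The steps $(1,-1)$ and $(0,1)$ produce a new active edge whose initial vertex is an endpoint of $\lambda_{n-1}$ (namely $\lambda_{n-1}^+$ and $\lambda_{n-1}^-$ respectively), so they inherit the region of $\lambda_{n-1}$ except possibly when $\lambda_{n-1}^+$ has just arrived on $\beta_0$. The only genuinely region-changing steps are therefore $(-1,0)$ on the $\wh M_{0,\infty}$ side and $(1,-1)$ on the $\wh M'_{0,\infty}$ side when the upper-right boundary of $\frk m_{n-1}$ is empty (so that a brand-new edge is added rather than an existing missing edge being converted); the main tool for analyzing them is the description of the upper-left and upper-right boundaries of the intermediate maps provided by Lemma~\ref{lem-kmsw-bdy}.

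For the inductive step at $N_k^L$, assuming $\lambda_{N_k^R} \in \wh M_{0,\infty}$, I would maintain the invariant that for $n \in [N_k^R, N_k^L - 1]$ the active edge $\lambda_n$ lies in $\wh M_{0,\infty}$ and $\mcl L(n) \geq \mcl L(N_k^R)$. The non-$(-1,0)$ steps preserve this immediately. At a $(-1,0)$ step at time $n$ with the upper-left boundary (excluding $\lambda_{n-1}$) non-empty, Lemma~\ref{lem-kmsw-bdy}(iii) identifies the consumed edge as $\lambda_{m^*}$, where $m^*$ is the most recent time with $\mcl L(m^*) = \mcl L(n-1) - 1$ and $\mcl L(j) > \mcl L(m^*)$ for $j \in (m^*, n-1]$, and one has $\lambda_n^- = \lambda_{m^*}^-$. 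Provided $\mcl L(n-1) > \mcl L(N_k^R)$, the condition $\mcl L(m^*) < \mcl L(N_k^R)$ would contradict $\mcl L(m^*) = \mcl L(n-1) - 1 \geq \mcl L(N_k^R)$, so $m^* \geq N_k^R$; by induction $\lambda_{m^*} \in \wh M_{0,\infty}$ and the invariant persists. When $\mcl L(n-1) = \mcl L(N_k^R)$ instead, either the upper-left boundary has become exhausted and the newly created $\lambda_n^-$ is a fresh lower-left boundary vertex of $M_{0,\infty}$ (hence unreachable from $\lambda_0^-$ by Lemma~\ref{lem-uibot-bdy}), or $m^* < N_k^R$ and one verifies that this earlier $\lambda_{m^*}$ lies in $\wh M'_{0,\infty}$. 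In both subcases $\lambda_n \in \wh M'_{0,\infty}$, identifying $n = N_k^L$ and giving the formula in~\eqref{eqn-flow-line-times}. The step-type claim in item~\ref{item:flow-line-times-2} is then automatic since $\mcl L$ must decrease by $1$, and the structural claim about the triangle follows because $\lambda_{N_k^L - 1}$, being the final active edge of the excursion along the left side of $\wh M_{0,\infty}$, lies on $\beta_0$.

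The induction step for $N_{k+1}^R$ is symmetric, with $\mcl R$ in place of $\mcl L$, the upper-right missing boundary in place of the upper-left non-missing boundary, and step $(1,-1)$ in place of $(-1,0)$: re-entry into $\wh M_{0,\infty}$ occurs at the first time $\mcl R$ drops to $\mcl R(N_k^L) - 1$, and the fact that this forces the upper-right boundary to be empty just before yields the ``new single non-missing edge'' statement of item~\ref{item:flow-line-times-1}. The main obstacle is the subcase where a $(-1,0)$ step at $\mcl L(n-1) = \mcl L(N_k^R)$ consumes an upper-left edge $\lambda_{m^*}$ with $m^* < N_k^R$: one must verify that any such $\lambda_{m^*}$ indeed lies in $\wh M'_{0,\infty}$, and similarly at the symmetric step for $N_{k+1}^R$. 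I expect this to be handled by propagating through the induction an auxiliary invariant on the positions of upper-left and upper-right boundary edges relative to the alternating $\wh M_{0,\infty}$- and $\wh M'_{0,\infty}$-excursions, which in turn follows from the combinatorial structure of Lemma~\ref{lem-kmsw-bdy}.
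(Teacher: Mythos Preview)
Your inductive skeleton matches the paper's, and your analysis of the $N_k^L$ side is essentially correct up to the gap you flag. The paper resolves that gap cleanly, and differently from what you propose: rather than propagating an auxiliary invariant on the region of each upper-left boundary edge, it identifies a single vertex $x$, namely the vertex immediately to the west of $\lambda_{N_k^R}^-$ on the boundary of $M_{N_k^R,\infty}$, and shows directly that $x\notin\wh M_{0,\infty}$. For $k=1$ this holds because left-boundary vertices of $M_{0,\infty}$ have no incoming edges (Lemma~\ref{lem-uibot-bdy}); for $k\ge 2$ one has $x=\lambda_{N_k^R-1}^-$, and since $\lambda_{N_k^R-1}\notin\wh M_{0,\infty}$ while $\lambda_{N_k^R}^-\in\wh M_{0,\infty}$, the induced-subgraph property forces $x\notin\wh M_{0,\infty}$. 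Then Lemma~\ref{lem-kmsw-bdy}\ref{item-lower-left}, applied to the walk shifted to start at $N_k^R$, says the candidate time is exactly the first time $\lambda_n^-=x$, so $\lambda_n\notin\wh M_{0,\infty}$ there; and for earlier $n$ the upper-left boundary of $M_{N_k^R,n}$ (which has no lower-left edges since $\mcl L$ has not yet dropped) furnishes a directed path in $M_{0,\infty}$ from $\lambda_{N_k^R}^-$ to $\lambda_n^-$, giving $\lambda_n\in\wh M_{0,\infty}$. This bypasses any need to classify $\lambda_{m^*}$.

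Where your sketch does go wrong is the claim that the $N_{k+1}^R$ argument is ``symmetric'' with $\mcl R$ replacing $\mcl L$. It is not: the definition of $\wh M_{0,\infty}$ via \emph{forward} reachability from $\lambda_0^-$ breaks the left/right symmetry. Concretely, during the $\wh M'_{0,\infty}$ phase one has $\lambda_n^-\notin\wh M_{0,\infty}$, but $\lambda_n^+$ may well lie in $\wh M_{0,\infty}$ (on $\beta_0$); a $(1,-1)$ step then gives $\lambda_{n+1}^-=\lambda_n^+$, so re-entry is governed by whether $\lambda_n^+$ sits on $\beta_0$, not by a bookkeeping of consumed upper-right edges. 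The paper handles this asymmetrically: it identifies $\lambda_{N_k^L}^+$ (which lies on $\beta_0$) as the first vertex of $M_{N_k^L,\infty}$ hit by $\beta_0$, then uses acyclicity of the orientation to show that for $n$ below the candidate time there is no directed path from $\lambda_{N_k^L}^+$ to either endpoint of $\lambda_n$, whereas at $N_{k+1}^R$ such a path necessarily exists along $\beta_0$. This is a genuinely different argument from the $N_k^L$ side. Relatedly, your assertion that the upper-right boundary of $M_{0,N_{k+1}^R-1}$ is empty is false in general (take steps $(0,1),(-1,0),(1,-1)$: then $N_2^R=3$ and the upper-right boundary of $M_{0,2}$ has one edge); fortunately item~\ref{item:flow-line-times-1} does not require this---it only asserts that a $(1,-1)$ step adds an edge rather than a face, which holds in either subcase of the KMSW rule.
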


\begin{proof}
The lemma is essentially a re-statement of~\cite[Lemma 2.1]{ghs-bipolar}, but~\cite{ghs-bipolar} has a slightly different setup as compared to this paper, so we will give a proof. Our proof is based on the forward KMSW bijection, in contrast to~\cite{ghs-bipolar} which uses the reverse KMSW bijection.  
See Figure~\ref{fig-flow-line-times} for an illustration. 

We proceed by induction on $k$. Assume that $ k\in \BB N$ and the claimed description of the time $N_k^R$ has been proven.  
Let $x$ be the vertex on the boundary of $M_{N_k^R,\infty}$ lying immediately to the west of the initial vertex $\lambda^-_{N_k^R}$ of $\lambda_{N_k^R}$. We claim that $x \notin \wh M_{0,\infty}$. 
For $k = 1$, in which case $N_1^R = 0$, the claim follows from the fact that $\beta_0$ cannot hit the boundary of $M_{0,\infty}$ strictly to the west of its initial vertex $\lambda^-_0$, which in turn follows from the fact that vertices on the left boundary of $M_{0,\infty}$ have no incoming edges in $M_{0,\infty}$ (Lemma~\ref{lem-uibot-bdy}).
  
If $k\geq 2$, then~\eqref{eqn-flow-line-times} with $k-1$ in place of $k$ implies \eqref{eq:ref-for-lat-disc2} since $\mcl Z$ has increments in $\{(1,-1), (-1,0) , (0,1)\}$.

By the KMSW procedure, this means that at time $N_k^R$ we add a new non-missing edge (rather than a triangular face) to $M_{0,N_k^R-1}$. This edge is $\lambda_{N_k^R}$.
By the definition of $N_k^R$, the edge $\lambda_{N_k^R}$ lies in $\wh M_{0,\infty}$. Furthermore, the edge on the boundary of $M_{N_k^R , \infty}$ immediately to the west of $\lambda_{N_k^R}$ is $\lambda_{N_k^R-1}$, which (by the definition of $N_k^R$) is not in $\wh M_{0,\infty}$. 
Hence $\lambda_{N_k^R-1} = (x, \lambda^-_{N_k^R}) \notin \mcl E(\wh M_{0,\infty})$ and so $x \notin \mcl V(\wh M_{0,\infty})$. 
 
By the definition of $N_k^L$, the time $N_k^L$ is at most the first time $n\geq N_k^R+1$ at which $x$ is a vertex of $\lambda_n$. From Assertion~\ref{item-lower-left} of Lemma~\ref{lem-kmsw-bdy} (applied to the walk $(\mcl Z(N_k^R+\cdot) - \mcl Z(N_k^R))|_{[0,\infty)}$ and the corresponding map $M_{N_k^R,\infty}$), this time is 
\eqb \label{eqn-flow-line-candidate-L}
 \min\left\{ n \geq N_k^R + 1 : \mcl L(n) = \mcl L(N_k^R) - 1 \right\} .
\eqe 
Conversely, assume that $n\geq N_k^R + 1$ and $n$ is strictly smaller than the time~\eqref{eqn-flow-line-candidate-L}.  
 We make the following preliminary observation: Fix $m\geq 0$. Let $N>m$ be the first time such that $\mcl L(N) < \mcl L(m)$. Thanks to Assertion~\ref{item-lower-left} of Lemma~\ref{lem-kmsw-bdy} (applied to the walk $(\mcl Z(m+\cdot) - \mcl Z(m))|_{[0,N-1)}$ and the corresponding map $M_{m,N-1}$), the map $M_{m,N-1}$ does not have any edge on the lower-left boundary.
Hence, the vertex $\lambda^-_{m}$  remains on the lower-left boundary of the map $M_{m',\infty}$ for all times $m'$ such that $m\leq m'<N$.

Since we assume that $n$ is strictly smaller than the time~\eqref{eqn-flow-line-candidate-L}, we deduce from the latter observation that $\lambda^-_{N_k^R}$ lies on the lower-left boundary of $M_{n,\infty}$, i.e.\ to the boundary to the west of $\lambda_n$ (possibly $\lambda^-_{N_k^R}$ is the initial vertex of $\lambda_{n}$).
Moreover, there is a directed path in $M_{0,\infty}$ from $\lambda^-_{N_k^R}$ to the initial vertex of $\lambda_n$, namely a segment of the boundary of $M_{n,\infty}$. Therefore, $\lambda_n \in \wh M_{N_k^R,\infty}$ because $\wh M_{N_k^R,\infty}$ is, by definition, the submap of $M_{N_k^R,\infty}$ induced by the vertices of $M_{N_k^R,\infty}$ which are reachable by a directed path in $M_{N_k^R,\infty}$ started from the initial vertex $\lambda^-_{N_k^R}$. Since $\wh M_{N_k^R,\infty}\subset \wh M_{0,\infty}$, we conclude that $n < N_k^L$. Hence $N_k^L$ is equal to the time~\eqref{eqn-flow-line-candidate-L}, as stated in \eqref{eqn-flow-line-times}, and we also proved Assertion~\ref{item:flow-line-times-1}.

\medskip

Now we consider $N_{k+1}^R$. Let $y$ be the vertex on the boundary of $M_{N_k^L,\infty}$ lying immediately to the east of the terminal vertex $\lambda^+_{N_k^L}$ of $\lambda_{N_k^L}$. By the description of $N_k^L$ proven just above, we have \eqref{eq:ref-for-lat-disc} since $\mcl Z$ has increments in $\{(1,-1), (-1,0) , (0,1)\}$.
By the KMSW procedure, at time $N_k^L$, we add a triangle with two of its edges on the boundary of $M_{N_k^L-1 ,\infty}$, and $\lambda_{N_k^L}$ is the other edge of this triangle. In particular, $\lambda^+_{N_k^L}$ is equal to $\lambda^+_{N_k^L-1}$, which is in $ \wh M_{0,\infty}$. Since $\lambda_{N_k^L} \notin \wh M_{0,\infty}$, the vertex $\lambda^+_{N_k^L}$ must in fact lie on the left boundary $\beta_0$ of $\wh M_{0,\infty}$. Furthermore, we note the following fact for future reference: since $\beta_0$ cannot cross the left or right boundaries of $M_{N_k^L,\infty}$, the part of $\beta_0$ contained in $M_{N_k^L,\infty}$ consists only of edges of $\beta_0$ traversed after it hits $\lambda^+_{N_k^L}$. 

The edge $(\lambda^+_{N_k^L} , y)$ is directed, and so, since $\lambda^+_{N_k^L} \in \wh M_{0,\infty}$, this edge also belongs to $\wh M_{0,\infty}$.
By the definition of $N_{k+1}^R$, we thus have that $N_{k+1}^R$ is at most the first time $n\geq N_k^L + 1$ for which $\lambda_n = (\lambda^+_{N_k^L} , y)$. By Assertion~\ref{item-lower-right} of Lemma~\ref{lem-kmsw-bdy} (applied to the walk $(\mcl Z(N_k^L+\cdot) - \mcl Z(N_k^L))|_{[0,\infty)}$ and the corresponding map $M_{N_k^L,\infty}$), this time is 
\eqb \label{eqn-flow-line-candidate-R}
\min\left\{ n \geq N_k^L + 1 :  \mcl R(n) = \mcl R(N_k^L) - 1 \right\} . 
\eqe

Conversely, if $n\geq N_k^L + 1$ and $n$ is strictly smaller than the time~\eqref{eqn-flow-line-candidate-R}, 
then $\lambda^+_{N_k^L}$ lies on the lower-left boundary of $M_{n ,\infty}$, i.e.\ to the boundary to the east of $\lambda_{ n }$ (possibly $\lambda^+_{N_k^L}$ is the terminal vertex of $\lambda_{n}$); this follows using the same idea we used below \eqref{eqn-flow-line-candidate-L}. 
We also make the following observation for future reference: the segment of the boundary of $M_{n  ,\infty}$ from $\lambda^+_{n }$ to $\lambda^+_{N_k^L}$ is a directed path between these two vertices in $M_{0,\infty}$. Since the orientation on $\uibot$ is acyclic, there is no directed path in $\uibot$ from $\lambda^+_{N_k^L}$ to $\lambda^+_{n }$. Since $(\lambda^-_{n} , \lambda^+_{n})$ is directed, there is also no directed path in $\uibot$ from $\lambda^+_{N_k^L}$ to $\lambda^-_{n }$.

We now show that $N_{k+1}^R> n$. Since $\lambda_{N_{k+1}^R-1} \notin \wh M_{0,\infty}$, $\lambda_{N_{k+1}^R} \in \wh M_{0,\infty}$, and $\lambda_{N_{k+1}^R-1}$ and $\lambda_{N_{k+1}^R}$ share a vertex, at least one of the vertices of $\lambda_{N_{k+1}^R}$ must lie on $\beta_0$ (we will see a posteriori that $\lambda_{N_{k+1}^R}^-$ has to be the shared vertex, but for the moment it may be $\lambda_{N_{k+1}^R}^-$ or $\lambda_{N_{k+1}^R}^+$). As we noted at the end of the paragraph after~\eqref{eq:ref-for-lat-disc}, $\lambda^+_{N_k^L}$ is the first vertex of $M_{N_k^L,\infty}$ hit by $\beta_0$. So, one of the vertices of $\lambda_{N_{k+1}^R}$ is hit by $\beta_0$ after $\lambda^+_{N_k^L}$. Since $\beta_0$ is a directed path, this gives a directed path from $\lambda^+_{N_k^L}$ to either $ \lambda^-_{N_{k+1}^R} $ or $ \lambda^+_{N_{k+1}^R} $, which combined with the result in the previous paragraph shows that $N_{k+1}^R> n$. Hence $N_{k+1}^R$ is equal to the time~\eqref{eqn-flow-line-candidate-R}, as stated in \eqref{eqn-flow-line-times}, and we also proved Assertion~\ref{item:flow-line-times-2}.
\end{proof}

We will now describe the KMSW encoding walks for the maps $\wh M_{0,\infty}$ and $\wh M_{0,\infty}'$ in terms of $\mcl Z$. 
For $m \in \BB N_0$, let $S_m , S_m' \in \BB N_0$ be the smallest integers such that
\eqb  \label{eqn-walk-quadrant-time}
m = \#\left\{ n \leq S_m : \lambda_n \in \wh M_{0,\infty} \right\} = \#\left\{ n \leq S_m' : \lambda_n \in \wh M_{0,\infty}'  \right\}  .
\eqe 
Define $\wh{\mcl Z} = (\wh{\mcl L}, \wh{\mcl R}) : \BB N_0 \to \BB Z$ by $\wh{\mcl Z}(0) =  (0,0)$ and
\eqb  \label{eqn-walk-quadrant}
\wh{\mcl Z}(m) - \wh{\mcl Z}(m-1) := \mcl Z(S_m) - \mcl Z(S_m - 1)  , \quad\forall m\in\BB N .
\eqe 
In other words, $\wh{\mcl Z}(m)$ is obtained by summing the first $m$ increments of $\mcl Z$ corresponding to times $n$ such that $\lambda_n \in \wh M_{0,\infty}$. 

Similarly, with $N_k^L$ as in Lemma~\ref{lem-flow-line-times}, we define $\wh{\mcl Z}' = (\wh{\mcl L}', \wh{\mcl R}') : \BB N_0 \to \BB Z$ so that $\wh{\mcl Z}'(0) = (0,0)$ and
\eqb  \label{eqn-walk-quadrant'}
\wh{\mcl Z}'(m) - \wh{\mcl Z}'(m-1) := 
\begin{cases}
\mcl Z(S'_m) - \mcl Z(S'_m - 1)  ,\quad  S'_m\notin \{N_k^L\}_{k \in \BB N} \\
(0,1)  ,\qquad\qquad\qquad\quad\,\,\,\,  S'_m\in \{N_k^L\}_{k \in \BB N}  ,
\end{cases}   
\quad\forall m\in\BB N .
\eqe 
In other words, $\wh{\mcl Z}'(m)$ is obtained by summing the first $m$ increments of $\mcl Z$ corresponding to times $n$ such that $\lambda_n \in \wh M_{0,\infty}'$, except that for each of the times $N_k^L$, we replace the corresponding increment 
$\mcl Z(N_k^L) - \mcl Z(N_k^L-1) = (-1,0)$ by $(0,1)$. The reason we do this is explained in the proof of the next lemma.

\begin{lem} \label{lem-quad-kmsw}
The maps $\wh M_{0,\infty}$ and $\wh M_{0,\infty}'$ are obtained by applying the KMSW procedure (Definition~\ref{def-kmsw}) to the walks $\wh{\mcl Z}$ and $\wh{\mcl Z}'$, respectively.
\end{lem}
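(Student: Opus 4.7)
The plan is to prove both assertions by induction on the restricted time index $m$, showing that the KMSW-constructed map from $\wh{\mcl Z}|_{[0,m]}$ (resp.\ $\wh{\mcl Z}'|_{[0,m]}$) -- together with its active edge and with the missing/non-missing labels on its boundary edges -- coincides with the corresponding restriction of $M_{0,S_m}$ (resp.\ $M_{0,S_m'}$) to $\wh M_{0,\infty}$ (resp.\ $\wh M_{0,\infty}'$). The induction is powered by the \emph{locality} of each KMSW rule (Definition~\ref{def-kmsw}): the update at step $j$ depends only on the active edge $\lambda_{j-1}$, on the upper-left and upper-right boundary edges adjacent to $\lambda_{j-1}$ (and on whether those are missing or non-missing), and on the increment $\mcl Z(j)-\mcl Z(j-1)$. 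Hence, whenever the inductive partial map agrees with the restriction on this local data, any common increment produces identical updates.

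For $\wh M_{0,\infty}$: within an excursion $[N_k^R, N_k^L-1]$, each $\mcl Z$-increment acts on local data lying entirely inside $\wh M_{0,\infty}$, so its effect on $\wh M_{0,\infty}$ coincides with the effect of the same increment of $\wh{\mcl Z}$ on the inductive partial map. At a return time $N_{k+1}^R$, Assertion~\ref{item:flow-line-times-1} of Lemma~\ref{lem-flow-line-times} gives $\mcl Z(N_{k+1}^R)-\mcl Z(N_{k+1}^R-1)=(1,-1)$ and shows this step adds to $M_{0,\infty}$ a single new non-missing edge with initial vertex $\lambda^+_{N_{k+1}^R-1}$. Tracing terminal vertices across the intermediate $\wh M_{0,\infty}'$-excursion (as in the proof of Lemma~\ref{lem-flow-line-times}) shows that $\lambda^+_{N_{k+1}^R-1}=\lambda^+_{N_k^L-1}$, which is the terminal vertex of the last active edge in $\wh M_{0,\infty}$ prior to the excursion. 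The corresponding $(1,-1)$ increment of $\wh{\mcl Z}$ therefore produces exactly the same edge via the KMSW rule, once we verify that the two sub-cases of the $(1,-1)$ rule (depending on whether the upper-right boundary is empty) are selected consistently in the two constructions; this in turn follows inductively from the fact that the upper-right boundary of $\wh M_{0,\infty}$ is only modified by increments at times $n$ with $\lambda_n\in\wh M_{0,\infty}$, which are precisely the increments recorded in $\wh{\mcl Z}$.

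For $\wh M_{0,\infty}'$: locality handles the times $S_m'\notin\{N_k^L\}_{k\in\BB N}$. At the substituted times $N_k^L$, Assertion~\ref{item:flow-line-times-2} of Lemma~\ref{lem-flow-line-times} describes the triangle added to $M_{0,\infty}$: one edge ($\lambda_{N_k^L}$) is in $\wh M_{0,\infty}'$, while the other two ($\lambda_{N_k^L-1}$ and the third edge on $\beta_0$) are missing from the perspective of $\wh M_{0,\infty}'$. A naive $(-1,0)$-update applied to $\wh{\mcl Z}'$ would try to splice the new triangle onto an existing upper-left boundary edge of the partial map (or attach a single new missing edge to $\lambda_{N_k^L-1}$), which is incompatible with this local structure. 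Substituting $(0,1)$ in~\eqref{eqn-walk-quadrant'} fixes this precisely: the $(0,1)$ KMSW rule creates a triangle with one new non-missing active edge sharing a vertex with the previous active edge and a \emph{new missing} third edge, which matches exactly the local picture of the triangle freshly added to $\wh M_{0,\infty}'$ at time $N_k^L$, provided the inductive identification makes the previous active edge in the KMSW-of-$\wh{\mcl Z}'$ construction correspond to the appropriate boundary edge of $\wh M_{0,\infty}'$ on $\beta_0$.

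The main technical obstacle is the bookkeeping of the partial upper-left and upper-right boundary structures and the consistent identification of active edges (including the very initial active edge of the KMSW procedure) across the two constructions. In particular, one must verify that the two sub-cases of each of the $(1,-1)$ and $(-1,0)$ rules (depending on whether the relevant boundary is empty) are selected identically in the KMSW runs for $\mcl Z$ and for $\wh{\mcl Z}$ (resp.\ $\wh{\mcl Z}'$). Once the inductive invariant is set up with care, the verification reduces to a routine finite case analysis at each type of transition, using Lemma~\ref{lem-flow-line-times} to handle the transitions between excursions.
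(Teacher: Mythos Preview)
Your approach is essentially the paper's: verify the KMSW step at each $m$ using locality within excursions and Lemma~\ref{lem-flow-line-times} at the crossover times $N_k^R$, $N_k^L$; you are just more explicit about the inductive framework and the boundary-subcase bookkeeping that the paper sweeps under ``obvious''. One small correction to your $\wh M_{0,\infty}'$ discussion: the previous active edge in the $\wh{\mcl Z}'$-run just before the $(0,1)$ step at $N_k^L$ is $\lambda_{N_k^R-1}$ (the last $\wh M_{0,\infty}'$-edge before the excursion into $\wh M_{0,\infty}$), not an edge on $\beta_0$; it is the \emph{new missing} edge created by the $(0,1)$ rule that plays the role of the $\beta_0$-edge $\lambda_{N_k^L-1}$ --- once the three edges of the triangle $T$ in Lemma~\ref{lem-flow-line-times}\ref{item:flow-line-times-2} are matched to the three edges in the $(0,1)$ rule this way, your argument and the paper's coincide.
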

\begin{proof}
The fact that $\wh{\mcl Z}$ (resp.\ $\wh{\mcl Z}'$) is the correct KMSW walk for $\wh M_{0,\infty}$ (resp.\ $\wh M_{0,\infty}'$) is obvious when $S_m\notin \{N_k^R\}_{k \geq 2}$ (resp.\ $S'_m\notin \{N_k^L\}_{k \geq 1}$), i.e. when the KMSW exploration of $M_{0,\infty}$ is not crossing $\beta_0$.
We now justify why the definitions in \eqref{eqn-walk-quadrant} and \eqref{eqn-walk-quadrant'} are the correct definitions also when $S_m\in \{N_k^R\}_{k \geq 2}$ and  $S'_m\in \{N_k^L\}_{k \geq 1}$. We first look at the case $S'_m\in \{N_k^L\}_{k \geq 1}$.

By Assertion~\ref{item:flow-line-times-2} of Lemma~\ref{lem-flow-line-times}, at each of the times $N_k^L$ for $k\geq 1$, the KMSW procedure for $M_{0,\infty}$ creates a triangular face $T$ contained in $\wh M_{0,\infty}'$ whose boundary consists of the edge $\lambda_{N_k^R - 1}$, the (new active) edge $\lambda_{N_k^L}$, and the (previously active) edge $e = \lambda_{N_k^L-1}$ along the path $\beta_0$; see the triangle to the west of $\beta_0$ in the light-blue region in Figure~\ref{fig-flow-line-times}. Now, since the edges along $\beta_0$ are missing edges in $\wh M_{0,\infty}'$, the KMSW procedure for $\wh M_{0,\infty}'$ never activates the missing edge $e$. On the other hand,  the KMSW procedure for $\wh M_{0,\infty}'$ adds the  triangular face $T$ by adding it directly to the edge $\lambda_{N^R_k-1}$ and creating the new (non-missing) edge $\lambda_{N^L_k}$ and the new (missing) edge $e$. This corresponds to a $(0,1)$ step for the KMSW encoding walk of $\wh M_{0,\infty}'$ (instead of a $(-1,0)$ step for the KMSW encoding walk of $ M_{0,\infty}$).

We finally explain why, for the walk $\wh{\mcl Z}$ in \eqref{eqn-walk-quadrant}, we do not have to make a modification similar to the one we had to make for $\wh{\mcl Z}'$ in \eqref{eqn-walk-quadrant'}.  
By Assertion~\ref{item:flow-line-times-1} of Lemma~\ref{lem-flow-line-times}, at each of the times $N_k^R$ for $k\geq 2$, the KMSW procedure for $M_{0,\infty}$ adds at time $N_k^R$ a new single non-missing edge (rather than a triangular face) to $M_{0,N_k^R-1}$. This edge is $\lambda_{N_k^R}$; see the pink edge on the lower boundary of the light-blue region in Figure~\ref{fig-flow-line-times}. Now, noting that the KMSW procedure for $\wh M_{0,\infty}$ must add the edge $\lambda_{N_k^R}$ using the same procedure, i.e.\ adding a new single non-missing edge (rather than a triangular face), we get that the step (1,-1) taken by the $\mcl Z$ walk is the correct step also for the $\wh{\mcl Z}$ walk.
This completes the proof of the lemma.
\end{proof}

The walks $\wh{\mcl Z}$ and $\wh{\mcl Z}'$ can equivalently be described in terms of the times $N_k^R , N_k^L$ for $k\geq 1$ defined in Lemma~\ref{lem-flow-line-times}. Indeed, by definition of $N_k^R $ and $ N_k^L$, the collections of times $n$ such that $\lambda_n\in\wh M_{0,\infty}$ and $\lambda_n\in\wh M'_{0,\infty}$ are respectively
\eqb  \label{eqn-quad-pre}
\bigcup_{k=1}^\infty [N_k^R , N_k^L-1]  \cap\BB Z    \quad \text{and} \quad
\bigcup_{k=1}^\infty [N_k^L , N_{k+1}^R-1] \cap\BB Z .
\eqe   
By~\eqref{eqn-walk-quadrant} and~\eqref{eqn-quad-pre}, $\wh{\mcl Z}$ can be obtained by concatenating the random walk paths 
\begin{equation}\label{eqn-quad-pre2}
    \mcl Z|_{[0,N_1^L-1]}\qquad\text{and}\qquad\left(\mcl Z(\cdot + N_k^R-1) - \mcl Z(N_k^R-1)\right)|_{[0,N_k^L - N_k^R]},\quad \text{for } k \geq 2.
\end{equation}
That is, to get $\wh{\mcl Z}(m)$ we sum the first $m$ increments of these paths in order. Similarly, by~\eqref{eqn-walk-quadrant'} and~\eqref{eqn-quad-pre},  $\wh{\mcl Z}'$ can be obtained by concatenating the random walk paths 
\begin{equation}\label{eqn-quad-pre3}
\left(\mcl Z(\cdot + N_k^L-1) - \mcl Z(N_k^L-1) + (1,1) \,\BB 1_{[1,\infty)}(\cdot)\right)|_{[0,N_{k+1}^R - N_k^L]}  \quad\text{for }k\geq 1 .
\end{equation}
Here, adding $(1,1) \,\BB 1_{[1,\infty)}(\cdot)$ is equivalent to replacing the first walk step $\mcl Z(N_k^L) - \mcl Z(N_k^L-1)\stackrel{\eqref{eq:ref-for-lat-disc}}{=}(-1,0)$ of each walk path by $(0,1)$; which accounts for the special rule at the times $N_k^L$ in~\eqref{eqn-walk-quadrant'}.

The following lemma is the key input in the proof of Proposition~\ref{prop-future-map-law}.

\begin{lem}  \label{lem-quadrant-walk} 
The walks $\wh{\mcl Z}$ and $\wh{\mcl Z}'$ of~\eqref{eqn-walk-quadrant} and~\eqref{eqn-walk-quadrant'} are independent. The walk $\wh{\mcl Z}$ has the same law as the walk obtained by conditioning $\mcl Z$ so that its first coordinate stays non-negative (defined as in~\eqref{eqn-cond-walk-rn}). As for the walk $\wh{\mcl Z}'$, we have $\wh{\mcl Z}'(0) = (0,0)$, $\wh{\mcl Z}'(1) = (1,0)$, and $(\wh{\mcl Z}'(\cdot + 1) - \wh{\mcl Z}'(1))|_{[0,\infty)}$ has the same law as the walk obtained by conditioning $\mcl Z$ so that its second coordinate stays non-negative (defined as in~\eqref{eqn-cond-walk-rn} with $\mcl R$ in place of $\mcl L$). 
\end{lem}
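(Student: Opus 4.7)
My plan is to prove the lemma in three steps.

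The first step is to establish the mutual independence of the walk pieces. For each $k \geq 1$, let $A_k$ denote the reduced walk segment $\mcl Z|_{[N_k^R-1, N_k^L-1]}$ contributing to $\wh{\mcl Z}$ (with $N_1^R - 1 = 0$), and let $\wt B_k$ denote the reduced walk segment $\mcl Z|_{[N_k^L-1, N_{k+1}^R-1]}$ with its first step (originally $(-1,0)$) replaced by $(0,1)$ as in \eqref{eqn-walk-quadrant'}. I would apply the strong Markov property of $\mcl Z$ alternately at the stopping times $N_k^L$ and $N_{k+1}^R$: at each such time the post-time fresh walk is an independent copy of $\mcl Z$, and the next piece in our decomposition is a deterministic function of this fresh walk together with the forced first step that is determined by the construction. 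A simple induction then yields mutual independence of the full collection $(A_k, \wt B_k)_{k \geq 1}$, and hence independence of $\wh{\mcl Z}$ (which is a function of the $A_k$'s alone) from $\wh{\mcl Z}'$ (which is a function of the $\wt B_k$'s alone).

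The second step is to identify the marginal law of each piece via the same strong Markov property. For $k \geq 2$, $A_k$ has the following description: it is the walk starting at $(0,0)$ whose first step is forced to be $(1,-1)$, followed by i.i.d.\ $\mcl Z$-increments, and stopped just before the first time its $\mcl L$-component returns to $0$; the piece $A_1$ is simply $\mcl Z$ started from $(0,0)$ and run until just before $\mcl L$ first hits $-1$. Symmetrically, each $\wt B_k$ is the walk starting at $(0,0)$ whose first step is forced to be $(0,1)$, followed by i.i.d.\ $\mcl Z$-increments, and stopped just before the first time its $\mcl R$-component returns to $0$.

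The third and most delicate step is to match the law of the concatenation $\wh{\mcl Z} = A_1 \cdot A_2 \cdot A_3 \cdots$ with the Doob $h$-transform of $\mcl Z$ by the harmonic function $h(x,y) = x+1$. I would do this by verifying, for every finite path $\frk z = (\frk z_0, \dots, \frk z_n)$ with $\frk z_0 = (0,0)$, $\mcl Z$-allowable increments, and $\mcl L(\frk z_i) \geq 0$ for all $i$, that
\[
\BB P\!\left[\wh{\mcl Z}|_{[0,n]} = \frk z\right] = (1/3)^n \cdot (\mcl L(\frk z_n) + 1),
\]
which is exactly the $h$-transform probability. Small cases can be checked by direct enumeration (e.g.\ from $(0,0)$: the step $(1,-1)$ in $\wh{\mcl Z}$ arises either as the first increment of $A_1$ with probability $1/3$, or from $A_1$ being empty followed by the forced $(1,-1)$ first step of $A_2$, again with probability $1/3$, giving the correct total $2/3 = h(1,-1)/h(0,0) \cdot 1/3$). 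In general, $\frk z$ can arise from the concatenation in several distinct ``interpretations,'' one for each possible placement of the completed $A$-piece boundaries within (versus beyond) $\frk z$; summing the contributions over these interpretations produces the factor $\mcl L(\frk z_n) + 1$. The statement for $\wh{\mcl Z}'$ then follows by the symmetric argument, with the initial $(0,1)$ step of $\wh{\mcl Z}'$ accounting for the substitution rule in \eqref{eqn-walk-quadrant'}.

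The main obstacle I anticipate is the combinatorial bookkeeping in the third step. The concatenated walk $\wh{\mcl Z}$ is not obviously Markovian from its definition, because which $A_k$-piece we currently sit in depends on the future of the walk (specifically, on the last-visit times of each $\mcl L$-level). Consequently, one must pool the contributions from multiple ``regimes'' at each state; once this is carried out correctly, the factor $\mcl L(\frk z_n) + 1$ emerges naturally, reflecting the fact that there are exactly $\mcl L(\frk z_n) + 1$ levels $0, 1, \dots, \mcl L(\frk z_n)$ whose ``last visit'' in the full walk $\wh{\mcl Z}$ may or may not lie within $\frk z$.
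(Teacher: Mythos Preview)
Your approach is correct and shares Steps 1--2 with the paper, but Step 3 takes a different route. The paper does not sum over decompositions. Instead, it observes that concatenating the \emph{unshifted} excursions $(\mcl Z(\cdot + N_k^R) - \mcl Z(N_k^R))|_{[0, N_k^L - N_k^R]}$ yields an ordinary walk $\rng{\mcl Z} \overset{d}{=} \mcl Z$, and that $\wh{\mcl Z}$ is obtained from $\rng{\mcl Z}$ by replacing each $(-1,0)$ step at a running minimum of $\rng{\mcl L}$ by $(1,-1)$. Since the $\mcl L$-increment determines the full step, this reduces the problem to a one-dimensional Pitman-type statement (the paper's Lemma~\ref{lem-lazy-walk-flip}): flipping the $-1$ steps of a lazy walk at its running minima produces the walk conditioned to stay non-negative, proved there by an explicit $(\ell+1)$-to-$1$ map $\Phi_n$. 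That bijection is exactly your ``$\ell+1$ interpretations'' repackaged; the two arguments have the same combinatorial core. The paper's organization buys you freedom from the bookkeeping you flagged (no piece boundaries, forced steps, or wasted steps to track), while your direct computation, if written out, would need to make explicit two facts you left implicit: that the ``wasted $(-1,0)$'' after each completed piece and the ``free forced $(1,-1)$'' of the next piece contribute offsetting factors, so every interpretation weighs exactly $(1/3)^n$; and that all $\ell+1$ interpretations are automatically valid because the last visit to any level $j<\ell$ in a nearest-neighbor path ending at level $\ell$ is necessarily followed by a $+1$ step.
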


For the proof of Lemma~\ref{lem-quadrant-walk}, we need the following random walk analog of Pitman's theorem for Brownian motion.

\begin{lem} \label{lem-lazy-walk-flip}
Let $\mcl L$ be a random walk with i.i.d.\ increments which are uniform on $\{-1,0,1\}$. For $n\in\BB N$, define
\eqb
\wh{\mcl L}(n) := \mcl L(n) - 2 \min_{ j \in [0,n]\cap\BB Z } \mcl L(j) .
\eqe 
In words, to obtain $\wh{\mcl L}$, every time $\mcl L$ attains a running minimum (at which time it has a $-1$ step), we flip the $-1$ step to a $+1$ step. 
Then $\wh{\mcl L}$ is a random walk started from 0 and conditioned to stay non-negative in the sense of~\cite{bd-conditioning} (see~\eqref{eqn-cond-walk-rn}).
\end{lem}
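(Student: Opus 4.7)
\textbf{Proof plan for Lemma~\ref{lem-lazy-walk-flip}.} The plan is first to reduce the statement to a pure counting problem. Since $\mcl L$ has i.i.d.\ increments uniform on $\{-1,0,1\}$, every specific lattice path starting at $0$ with steps in $\{-1,0,1\}$ of length $n$ has probability exactly $3^{-n}$ under the law of $\mcl L$. Writing $M(j) := -\min_{i\in[0,j]\cap\BB Z} \mcl L(i)$, we have $\wh{\mcl L}(j) = \mcl L(j) + 2 M(j) \geq M(j) \geq 0$, so $\wh{\mcl L}$ takes values in $\BB N_0$. The weighting prescription in~\eqref{eqn-cond-walk-rn} (with $x = 0$ so that the normalization is $1$) assigns probability $(w(n)+1) \cdot 3^{-n}$ to each non-negative lattice path $w$ of length $n$ starting at $0$. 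Hence it suffices to prove the combinatorial identity
\[
\#\left\{ \ell : \ell(0)=0,\ \ell(j)-\ell(j-1)\in\{-1,0,1\},\ \wh\ell|_{[0,n]} = w \right\} = w(n) + 1,
\]
where $\wh\ell(j) := \ell(j) - 2\min_{i\leq j}\ell(i)$.

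The plan is to prove this by induction on $n$ with the strengthened hypothesis that preimages of $w$ are in bijection with $\{0,1,\dots,w(n)\}$ via $\ell \mapsto M(n)$. The base case $n=0$ is trivial. The key algebraic observation for the inductive step is
\[
\wh\ell(j)-\wh\ell(j-1) = \bigl(\ell(j)-\ell(j-1)\bigr) + 2\bigl(M(j)-M(j-1)\bigr),
\]
combined with the fact that $M(j)-M(j-1) \in \{0,1\}$, equal to $1$ exactly when $\ell$ takes a $-1$ step that strictly lowers the running minimum. This gives two regimes at each step: a \emph{normal} regime where the steps of $\wh\ell$ and $\ell$ agree and $M$ is unchanged, and a \emph{flip} regime where a $-1$ step of $\ell$ produces a $+1$ step of $\wh\ell$ and $M$ increases by $1$.

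The bulk of the argument is a short case analysis on the step $w(n)-w(n-1) \in \{-1,0,1\}$ and the target value $m \in \{0,\dots,w(n)\}$ for $M(n)$. Writing $m' := M(n-1)$ so that $\ell'(n-1) = w(n-1) - 2m'$, one checks directly that the constraints $\wh\ell(n) = w(n)$ and $M(n) = m$ force a unique valid pair (previous value $m'$, step $\ell(n)-\ell'(n-1)$) in each case: the $w$-steps $-1$ and $0$ admit only the normal regime, forcing $m' = m$; the $w$-step $+1$ admits the normal regime for $m \leq w(n-1)$, forcing $m' = m$, and the flip regime for $m = w(n-1)+1$, forcing $m' = w(n-1)$ and a $-1$ step of $\ell$. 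The inductive hypothesis then supplies a unique preimage $\ell'$ of $w|_{[0,n-1]}$ realizing each such $m'$, so each $m$ yields exactly one extension $\ell$, completing the induction and the count $w(n)+1$.

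Given the count, the identity $\BB P[\wh{\mcl L}|_{[0,n]} = w] = (w(n)+1)\cdot 3^{-n}$ holds for every non-negative path $w$ and every $n$, which is precisely the description of the conditioned walk in~\eqref{eqn-cond-walk-rn}. I expect the case analysis to be the main source of bookkeeping, though no individual case is hard; the real conceptual input is the choice of the strengthened induction hypothesis tracking $M(n)$ alongside the preimage count.
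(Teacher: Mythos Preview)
Your proof is correct and follows essentially the same approach as the paper: both reduce to showing that each non-negative path $w$ has exactly $w(n)+1$ preimages under the Pitman map, parametrized by the value $M(n)$ of the running minimum. The only difference is that the paper verifies this by writing down an explicit global inverse $\Psi_{n,k}$ (for each $k\in\{0,\dots,w(n)\}$, flip back to $-1$ the $+1$ step at the last visit to each level $r<k$) and checking $\Phi_n\Psi_{n,k}=\mathrm{id}$, whereas you establish the same bijection by induction on $n$ with a case analysis on the final step.
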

\begin{proof}
Let $\mcl W_n$ be the set of $n$-step paths $\frk L : [0,n] \cap \BB Z \to \BB Z$ such that $\frk L(0) = 0$ and $\frk L(j) - \frk L(j-1) \in \{-1,0,1\}$ for each $j\in \{1,\dots,n\}$.  
Let $\wh{\mcl W}_n$ be the set of $\frk L \in  \mcl W_n$ such that $\frk L(j) \geq 0$ for each $j \in \{0,\dots,n\}$. We define a function $\Phi_n : \mcl W_n \to \wh{\mcl W}_n$ by 
\eqbn
\Phi_n\frk L(j) := \frk L(j) - 2\min_{i \in [0,j]\cap\BB Z} \frk L(i) .
\eqen
In words, to obtain $\Phi_n \frk L $, every time $\frk L$ attains a running minimum (at which time it has a $-1$ step), we flip the $-1$ step to a $+1$ step. 

For $\wh{\frk L} \in \wh{\mcl W}_n$ and $k \in \{0,\dots,\wh{\frk L}(n)\}$, we also define $\Psi_{n,k} \wh{\frk L}   \in \mcl W_n$ by 
\eqbn
\Psi_{n,k} \wh{\frk L}(j) 
:= \wh{\frk L}(j) - 2 \min\left\{k ,  \min_{i \in [j , n]\cap\BB Z} \wh{\frk L}(i) \right\} .
\eqen
In words, to obtain $\Psi_{n,k} \wh{\frk L}$, for $r \in \{0,\dots  , k-1\}$, we look at the last time that $\wh{\frk L}$ hits $r$ before time $n$, at which time it has a $+1$ step. We flip each of these $+1$ steps to a $-1$ step. 

From the step-flipping descriptions of $\Phi_n$ and $\Psi_{n,k}$, for any $\wh{\frk L} \in \wh{\mcl W}_n$ and $k \in \{0,\dots,\wh{\frk L}(k)\}$, we have $\Phi_n \Psi_{n,k} \wh{\frk L} = \wh{\frk L}$. Furthermore, if $\mcl L \in \mcl W_n$ and $k = \min_{j\in [0,n]\cap\BB Z} \mcl L(j)$, then $\Psi_{n,k} \Phi_n \frk L = \frk L$. These two facts imply that if $\wh{\frk L} \in \wh{\mcl W}_n$, then 
\eqbn
\Phi_n^{-1}\wh{\frk L} = \left\{ \Psi_{n,k} \wh{\frk L} : k \in   \{0,\dots,\wh{\frk L}(n)\} \right\} .
\eqen
Consequently, $\# \Phi_n^{-1} \wh{\frk L} = \wh{\frk L}(n)  +1$. 

For each $n\in\BB N$, the walk $\mcl L|_{[0,n]}$ is a uniform sample from $\mcl W_n$. We have $\wh{\mcl L} |_{[0,n]} = \Phi_n( \mcl L|_{[0,n]})$, so the previous paragraph implies that $\wh{\mcl L}|_{[0,n]}$ is sampled from the uniform measure on $\wh{\mcl W}_n$ weighted by $\wh{\frk L}(n) + 1$. By the definition~\eqref{eqn-cond-walk-rn}, this means that $\wh{\mcl L}$ is the walk obtained by conditioning $\mcl L$ to stay non-negative.
\end{proof}

\begin{proof}[Proof of Lemma~\ref{lem-quadrant-walk}]
By Lemma~\ref{lem-flow-line-times} and the strong Markov property, the sequences of random walk paths
\eqb \label{eqn-quad-increment}
\left\{  \left(\mcl Z(\cdot + N_k^L) - \mcl Z(N_k^L)\right)|_{[0,N_{k+1}^R - N_k^L]}  \right\}_{k\geq 1} 
\quad \text{and} \quad
\left\{  \left(\mcl Z(\cdot + N_k^R) - \mcl Z(N_k^R)\right)|_{[0,N_k^L - N_k^R]}  \right\}_{k\geq 1} 
\eqe 
are independent. By definition of $N_k^L$ and $N_k^R$ in Lemma~\ref{lem-flow-line-times}, the first (resp.\ second) sequence in~\eqref{eqn-quad-increment} has the law of a collection of i.i.d.\ samples from the law of $\mcl Z$ stopped at the first time that $\mcl R$ (resp.\ $\mcl L$) hits $-1$. 
Consequently, concatenating the elements of the first (resp.\ second) sequence in~\eqref{eqn-quad-increment} in order gives a walk $\rng{\mcl Z}' = (\rng{\mcl L}' , \rng{\mcl R}')$ (resp.\ $\rng{\mcl Z} = (\rng{\mcl L} , \rng{\mcl R})$) with the same law as $\mcl Z|_{[0,\infty)}$. Furthermore, the walks $\rng{\mcl Z}$ and $\rng{\mcl Z}'$ are independent. 

The $k$th path $\left(\mcl Z(\cdot + N_k^L) - \mcl Z(N_k^L)\right)|_{[0,N_{k+1}^R - N_k^L]}$ from the first sequence in~\eqref{eqn-quad-increment} coincides with the segment of $\rng{\mcl Z}'$ between the first time that $\rng{\mcl R}$ hits $-(k-1)$ and the first time that $\rng{\mcl R}$ hits $-k$.
Similarly, the $k$th path $\left(\mcl Z(\cdot + N_k^R) - \mcl Z(N_k^R)\right)|_{[0,N_k^L - N_k^R]}$ from the second sequence in~\eqref{eqn-quad-increment} coincides with the segment of $\rng{\mcl Z}$ between the first time that $\rng{\mcl L}$ hits $-(k-1)$ and the first time that $\rng{\mcl L}$ hits $-k$.

From \eqref{eq:ref-for-lat-disc2} and \eqref{eq:ref-for-lat-disc} in Lemma~\ref{lem-flow-line-times}, we have that the last step of the two paths  
\[\left(\mcl Z(\cdot + N_k^L) - \mcl Z(N_k^L)\right)|_{[0,N_{k+1}^R - N_k^L]}\qquad\text{and}\qquad\left(\mcl Z(\cdot + N_k^R) - \mcl Z(N_k^R)\right)|_{[0,N_k^L - N_k^R]}\] 
are respectively 
\[
\mcl Z(N_{k+1}^R) - \mcl Z(N_{k+1}^R-1) = (1,-1)\qquad\text{and}\qquad  \mcl Z(N_k^L) - \mcl Z(N_k^L-1) = (-1,0).
\] 
Hence, recalling \eqref{eqn-quad-pre3}, $\wh{\mcl Z}'$ can be obtained from $\rng{\mcl Z}'$ in the following manner. We set $\wh{\mcl Z}'$ to be the walk starting at $(0,0)$, taking a first $(1,0)$ step (this is because the first walk path in \eqref{eqn-quad-pre3} has an additional starting $(1,0)$ step compared to the first walk path in the first sequence in \eqref{eqn-quad-increment}), and then exactly following the steps of the following modified version of $\rng{\mcl Z}'$. For each of the times at which $\rng{\mcl R}'$ attains a running minimum, replace the corresponding  step $\mcl Z(N_{k+1}^R) - \mcl Z(N_{k+1}^R-1)=(1,-1)$, which is the last step of the $k$th path in the first sequence in~\eqref{eqn-quad-increment}, by the step $\mcl Z(N_{k+1}^L) - \mcl Z(N_{k+1}^L-1) + (1,1)= (0,1)$, which is the first step of the $k+1$th excursion in~\eqref{eqn-quad-pre3}.

Similarly, recalling \eqref{eqn-quad-pre2}, we find that $\wh{\mcl Z}$ can be obtained from $\rng{\mcl Z}$ in the following manner. For each of the times at which $\rng{\mcl L}$ attains a running minimum, replace the corresponding step $\mcl Z(N_k^L) - \mcl Z(N_k^L-1)=(-1,0)$, which is the last step of the $k$th excursion in the second sequence in~\eqref{eqn-quad-increment}, by the step $\mcl Z(N_{k+1}^R) - \mcl Z(N_{k+1}^R-1)=(1,-1)$, which is the first step of the $k+1$th excursion in~\eqref{eqn-quad-pre2}. Note that in this case we do not need to do any special modification for the first step of the walk since the first step of the first walk path in \eqref{eqn-quad-pre2} is $\mcl Z(1)-\mcl Z(0)$ and the first step of the first walk path in the second sequence in  \eqref{eqn-quad-increment} is $\mcl Z(N_1^R+1)-\mcl Z(N_1^R)=\mcl Z(1)-\mcl Z(0)$ since $N_1^R=0$ by Lemma~\ref{lem-flow-line-times}.

Since $\rng{\mcl Z}$ and $\rng{\mcl Z}'$ are independent and $\wh{\mcl Z}$ and $\wh{\mcl Z}'$ are functions of $\rng{\mcl Z}$ and $\rng{\mcl Z}'$, respectively, $\wh{\mcl Z}$ and $\wh{\mcl Z}'$ are independent.

The marginal law of $\rng{\mcl R}'$ is that of a random walk with i.i.d.\ increments each sampled uniformly from $\{-1,0,1\}$. Therefore, the previous paragraph together with Lemma~\ref{lem-lazy-walk-flip} implies that $\wh{\mcl R}'$ has the same law as $\mcl R'$ conditioned to stay non-negative and taking a first $0$ step (this is because of the first $(1,0)$ step of $\wh{\mcl Z}'$). Since the steps of $\wh{\mcl R}'$ determine the steps of $\wh{\mcl L}'$, this implies that $\wh{\mcl Z}'$ has the desired law. 
Similarly, we obtain that $\wh{\mcl Z}$ has the desired law.
\end{proof}

\begin{proof}[Proof of Proposition~\ref{prop-future-map-law}] 
Thanks to Lemma~\ref{lem-quad-kmsw}, the maps $\wh M_{0,\infty}$ and $\wh M_{0,\infty}'$ can be built using the KMSW procedure on the walks $\wh{\mcl Z}$ and $\wh{\mcl Z}'$ of Lemma~\ref{lem-quadrant-walk}, respectively. Hence, Lemma~\ref{lem-quadrant-walk} implies that $\wh M_{0,\infty}$ and $\wh M_{0,\infty}'$ are independent.  Furthermore, by Definition~\ref{def-uiqbot} and the description of the law of $\wh{\mcl Z}$ from Lemma~\ref{lem-quadrant-walk}, we get that $\wh M_{0,\infty}$ has the law of the UIQBOT. 
\end{proof}

\subsection{Proof of Proposition~\ref{prop-uihbot-sym}: Symmetry for the UIBHBOT}
\label{sec-uihbot-sym}

Recall the set of boundary-channeled bipolar-oriented triangulations $\mcl M_{\el,r}^\bc$ with $\el$ left boundary edges and $r$ right boundary edges from Definition~\ref{def-reverse-map}.  
For $n\in\BB N$, let $\mcl M_{\el,r}^\bc(n)$ be the set of maps in $\mcl M_{\el,r}^\bc$ having $n$ non-boundary edges. 
To prove Proposition~\ref{prop-uihbot-sym}, we will first show that the UIBHBOT arises as the local limit of uniformly sampled elements of $\mcl M_{\el,1}^\bc(n)$. Proposition~\ref{prop-uihbot-sym} will then be an easy consequence of the left/right symmetry property of $\mcl M_{\el,r}^\bc$ (Lemma~\ref{lem-bdy-reverse}). 
 
The following proposition justifies the terminology in Definition~\ref{def-uihbot}. 
 
\begin{prop}  \label{prop-disk-bs-conv}
Let $\{\el_n\}_{n\in\BB N}$ be a sequence of positive integers. Assume there exists $A > 1$ such that $\el_n \in [A^{-1} n^{1/2} , A n^{1/2}]$ for each $n\in\BB N$. 
For $n \in\BB N$, let $M_n^\bc$ be the map obtained by starting with a uniform sample from $\mcl M_{\el_n + 1 ,1}^\bc( n-1 )$ (as defined just above), then declaring all of its left boundary edges, except for the left boundary edge whose terminal vertex is the sink vertex, to be missing. 
Conditional on $M_n^\bc$, let $e_n^\bc$ be a uniform sample from the $\el_n$ missing edges on the (lower-left) boundary of $M_n^\bc$.  
Then $(M_n^\bc,e_n^\bc  )$ converges in law to the UIBHBOT of Definition~\ref{def-uihbot} with respect to the (directed) Benjamini-Schramm topology (Definition~\ref{def-uihbot}) . 
\end{prop}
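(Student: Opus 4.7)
The plan is to prove the convergence by transferring the problem to the KMSW encoding walks, where the result becomes a statement about local convergence of a uniformly chosen walk in the quadrant to the bi-infinite walk $\mcl Z^\bc$ defining the UIBHBOT. By Lemma~\ref{lem-kmsw-left}, a uniform sample from $\mcl M^\bc_{\el_n+1,1}(n-1)$ corresponds to a uniformly chosen walk $\frk Z^\bc_n$ of length $n$ with increments in $\{(1,-1),(-1,0),(0,1)\}$, starting at $(\el_n,0)$, ending at $(0,0)$, and staying in $[0,\infty)^2$; moreover, $M^\bc_n$ is exactly the map produced by applying the forward KMSW procedure to $\frk Z^\bc_n$. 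By Assertion~\ref{item-lower-left} of Lemma~\ref{lem-kmsw-bdy}, the $\el_n$ missing lower-left boundary edges of $M^\bc_n$ are added at the $\el_n$ times $0 < T^n_1 < \cdots < T^n_{\el_n}$ at which $\frk L_n$ attains a new running minimum. Hence choosing $e^\bc_n$ uniformly among those missing edges is equivalent to choosing $k_n$ uniformly in $\{1,\dots,\el_n\}$ and setting $\tau^n := T^n_{k_n}$.

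I will show that the re-centered walk $\frk Z^n(\cdot) := \frk Z^\bc_n(\tau^n + \cdot) - \frk Z^\bc_n(\tau^n)$, viewed on $[-\tau^n,n-\tau^n]\cap\BB Z$, converges in the sense of finite-dimensional distributions (and in fact locally) to the bi-infinite walk $\mcl Z^\bc$ constructed at the start of Section~\ref{sec-busemann-sym}. Recall that $\mcl Z^\bc|_{[0,\infty)}$ is an unconstrained i.i.d.\ walk, while $\mcl Z^\bc|_{(-\infty,0]}$ is built by concatenating, in time-reversed order, i.i.d.\ excursions $\wt{\mcl Z}_k$ each having the law of $\mcl Z|_{[0,\tau_{-1}]}$ where $\tau_{-1}=\min\{n\in\BB N:\mcl L(n)=-1\}$. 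On the discrete side, the walk $\frk Z^\bc_n$ admits a natural excursion decomposition into the $\el_n+1$ pieces between consecutive elements of $\{0,T^n_1,\dots,T^n_{\el_n},n\}$: conditionally on the endpoints (in the $R$-coordinate) and on the constraint of staying in $[0,\infty)^2$, these excursions are mutually independent and each is uniform on the corresponding set of constrained walk paths.

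The analysis then proceeds by the following steps. First, since $k_n/\el_n$ converges in law to a uniform random variable on $[0,1]$ and $\el_n\asymp n^{1/2}$, the running minimum time $\tau^n$ lies in the bulk of $[0,n]$, and in particular the $R$-coordinate of $\frk Z^\bc_n$ at time $\tau^n$ is of order $n^{1/2}$ and far from the boundary; this can be deduced from the Denisov-Wachtel-type scaling limit (Lemma~\ref{lem-cond-walk-bm}) applied to the walk $\frk Z^\bc_n$, noting that $(\el_n, 0)$ and $(0,0)$ are both in the typical region. Second, by a standard local limit theorem for random walks in the quadrant (which again holds thanks to $\el_n\asymp n^{1/2}$), the conditional density of the walk endpoint absorbs into the forward part of $\frk Z^n$ as a bounded Radon-Nikodym factor that converges to $1$ on any finite window; consequently the forward part $\frk Z^n|_{[0,M]}$ converges in law to an i.i.d.\ unconstrained walk $\mcl Z|_{[0,M]}$. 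Third, and symmetrically, reversing time in the backward piece and using the strong Markov property at the running minima, the backward part $\frk Z^n|_{[-M,0]}$ converges to an independent concatenation of i.i.d.\ copies of $\wt{\mcl Z}_k$, matching the construction of $\mcl Z^\bc|_{(-\infty,0]}$.

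Finally, to conclude Benjamini-Schramm convergence, observe that for each $r\in\BB N$ the ball $\mcl B_r(M^\bc,e^\bc)$ is a deterministic function of $\mcl Z^\bc|_{[-N_r,N_r]}$ for a random but a.s.\ finite $N_r$; indeed, the KMSW procedure applied to $\mcl Z^\bc$ constructs each edge in a bounded number of steps, and the ball is a finite submap. One can check that $N_r$ is tight in the discrete setting uniformly in $n$ (for example, by controlling the rate at which the KMSW procedure exhausts a neighborhood of the root, using the tightness of $\mcl R^n$ increments around $\tau^n$). Combining this tightness with the local convergence of $\frk Z^n$ to $\mcl Z^\bc$ yields $(M^\bc_n,e^\bc_n) \to (M^\bc,e^\bc)$ in law in the Benjamini-Schramm topology. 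The main technical obstacle is the local limit theorem for lazy random walks in the quadrant with prescribed endpoints, which is where the assumption $\el_n\asymp n^{1/2}$ is essential; everything else is a relatively routine combination of the excursion decomposition, the strong Markov property, and the explicit construction of $\mcl Z^\bc$.
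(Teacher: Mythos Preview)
Your high-level plan---reduce to local convergence of the re-centered KMSW encoding walk to $\mcl Z^\bc$, then transfer to Benjamini--Schramm convergence via the fact that any finite ball is determined by a bounded walk window---matches the paper exactly. The excursion decomposition at the running minima of $\frk L_n$ is also the right structure. Where you diverge is in the mechanism used to prove the walk convergence itself.

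You propose to argue via a local limit theorem for random walks in the quadrant with prescribed endpoints, so that the Radon--Nikodym derivative of the conditioned walk against the unconditioned one converges to~$1$ on finite windows around $\tau^n$. This can be made to work (Denisov--Wachtel gives the needed local limit estimates), but it is heavier than necessary and requires some care near the boundary of the parameter range (e.g.\ when $k_n$ is close to $0$ or $\el_n$). The paper instead uses a Cram\'er-type argument on the empirical distribution: truncate each block of $m$ consecutive excursions to a bounded path, observe that under the \emph{unconditioned} law these blocks are i.i.d.\ taking values in a finite set, and apply Cram\'er's large deviation theorem to show the empirical distribution is exponentially close to the true one. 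The only input needed to pass to the conditioned law is that the conditioning event $E_n=\{\text{walk stays in quadrant and ends at }(0,0)\}$ has probability decaying only polynomially in $\el_n$; this is proved separately as Lemma~\ref{lem-cone-prob-lower} ($\BB P_{(\el_n,0)}[E_n]\gtrsim n^{-3}$). Hence a uniformly chosen block still has approximately the unconditioned law, which yields the full local description of $\frk Z^n|_{[-N,N]}$ in one stroke (forward and backward directions together, in total variation). This avoids any sharp local limit theorem in cones and makes no appeal to Radon--Nikodym asymptotics; it only needs the soft lower bound of Lemma~\ref{lem-cone-prob-lower} and Cram\'er's theorem on a finite alphabet.
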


We will prove Proposition~\ref{prop-disk-bs-conv} via a standard argument based on Cram\'er's large deviation theorem~\cite[Theorem 2.1.24]{dembo-ld}; see, e.g.,~\cite[Section 4.2]{shef-kpz} for a similar argument.

Let $\mcl Z_n^\bc =(\mcl L_n^\bc ,\mcl R_n^\bc)$ be the walk which encodes $M_n^\bc$ via the KMSW bijection as in Lemma~\ref{lem-kmsw-left}.
By Lemma~\ref{lem-kmsw-left}, the walk $\mcl Z_n^\bc$ is a uniform sample from the set of walks with increments in the set $\{(1,-1), (-1,0) , (0,1)\}$ which start at $(\el_n,0)$, stay in $[0,\infty)^2$ until time $n$, and end up at $(0,0)$ at time $n$.  Let $K_n \in [1,\el_n]\cap\BB Z$ be chosen so that $e_n^\bc$ is the $K_n$th missing edge in order  on the lower-left boundary of $M_n^\bc$ (starting from the missing edge next to the unique lower-right boundary edge). Since $e_n^\bc$ is chosen uniformly from the missing boundary edges of $M_n^\bc$, the integer $K_n$ is a uniform sample from $[1,\el_n]\cap\BB Z$, independent from $\mcl Z_n^\bc$. Let $T_n$ be the stopping time
\eqb  \label{eqn-disk-walk-time}
T_n := \min\left\{ j \in [1,n]\cap\BB Z : \mcl L_n^\bc(j) -  \mcl L_n^\bc(0)  = - K_n \right\} .
\eqe  
By Assertion~\ref{item-lower-left} of Lemma~\ref{lem-kmsw-bdy}, $T_n$ is the same as the time at which the KMSW exploration associated with $\mcl Z_n^\bc$ explores the triangle of $M_n^\bc$ with $e_n^\bc$ on its boundary.

The main technical step in the proof of Proposition~\ref{prop-disk-bs-conv} is to show that the probability of the event that we condition on to get the law of $\mcl Z_n^\bc$ decays slower than exponentially in $\el_n$. We will in fact prove a power-law lower bound.

\begin{lem} \label{lem-cone-prob-lower} 
Recall from Section~\ref{sec-uiqbot} that $\BB P_{( \el , 0)}$ denotes the law of a walk $\mcl Z$ with i.i.d.\ increments sampled uniformly from $\{(1,-1), (-1,0) , (0,1)\}$, started from $(\el,0)$.
For $  n \in\BB N$, let 
\eqb \label{eqn-walk-cone-event}
E_n := \left\{ \mcl Z[0,n] \subset [0,\infty)^2 , \: \mcl Z(n) = (0,0) \right\} .
\eqe 
For each $A > 1$, there exist $c >0$ such that 
\eqb \label{eqn-cone-prob-lower} 
\BB P_{( \el_n , 0)}[E_n ] \geq c n^{ -3  } ,\quad\forall  n \in \BB N,\quad \forall \el_n \in [A^{-1} n^{1/2} , A n^{1/2} ] \cap\BB Z .
\eqe 
\end{lem}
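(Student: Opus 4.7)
The plan is to combine the Markov property with a local central limit theorem (LCLT) for random walks in cones. The key structural observation is that $\mcl Z$ has zero mean and covariance $\tfrac{1}{3}\!\left(\begin{smallmatrix}2&-1\\-1&\phantom{-}2\end{smallmatrix}\right)$, so correlation $-1/2$. Under the affine change of variables that decorrelates the rescaled walk, the quadrant $[0,\infty)^2$ becomes a planar wedge of opening angle $\alpha=\pi/3$, whose principal Dirichlet exponent is $p_1 = \pi/\alpha = 3$. This cone geometry will dictate the natural polynomial scale, which will turn out to be $n^{-5/2}$, comfortably better than the required $n^{-3}$.

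First I would reduce to a question about the walk ending at a point near (but not exactly at) the apex. Applying the Markov property at time $n-1$,
\[
\BB P_{(\el_n,0)}[E_n] \;\geq\; \tfrac{1}{3}\,\BB P_{(\el_n,0)}\bigl[\mcl Z(n-1)=(1,0),\,\mcl Z[0,n-1]\subset [0,\infty)^2\bigr],
\]
since from $(1,0)$ a single $(-1,0)$-step lands at $(0,0)$ while remaining in the quadrant. It therefore suffices to lower bound the probability on the right by a constant times $n^{-3}$.

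Next I would invoke the cone LCLT. The results of Denisov--Wachter (\cite{dw-cones}, together with \cite{dw-limit} for the invariance principle) yield, after the affine change of variables above, a sharp asymptotic of the form
\[
\BB P_{(\el_n,0)}\bigl[\mcl Z(n-1)=(1,0),\,\mcl Z[0,n-1]\subset [0,\infty)^2\bigr] \;\sim\; c\bigl(\el_n/\sqrt{n}\bigr)\,n^{-1-p_1/2} \;=\; c\bigl(\el_n/\sqrt{n}\bigr)\,n^{-5/2},
\]
where $c(\cdot)$ is a continuous, strictly positive function on $(0,\infty)$. Since $\el_n/\sqrt{n}\in[A^{-1},A]$ by hypothesis, the prefactor $c(\el_n/\sqrt{n})$ is bounded below by some $c_0>0$ uniformly in $n$; combined with $n^{-5/2}\geq n^{-3}$ for $n\geq 1$, this gives the lemma.

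The main obstacle I expect is a small mismatch between the cleanest formulation of the DW local CLT (for strictly interior lattice starts and ends) and our setup, where both $(\el_n,0)$ and $(1,0)$ lie on the boundary of the cone, the latter essentially at the apex. The hard part will be reducing our setup to the standard one without losing too much probability. My plan is to shave off one step at each end, routing through strictly interior lattice points: force the first step of $\mcl Z$ to be $(0,1)$ (probability $1/3$), reducing to an interior bulk-scale start $(\el_n,1)$, and likewise force the $(n-1)$-st increment to come from the interior point $(2,0)$ via a $(-1,0)$-step (probability $1/3$). The remaining $n-3$ steps then run between strictly interior lattice points, one at bulk scale and one at constant scale, for which the standard form of DW applies and yields the $c(\el_n/\sqrt{n})\,n^{-5/2}$ asymptotic. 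The total probability loss from this rerouting is just a constant factor $3^{-2}$, so the desired $n^{-3}$ lower bound persists.
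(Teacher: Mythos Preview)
Your approach has a genuine gap at the point where you invoke the Denisov--Wachtel local CLT. The issue is that the starting point $(\el_n,0)$ lies on the boundary of the cone $[0,\infty)^2$, and after rescaling by $n^{-1/2}$ it converges to a boundary point of the wedge. The density $c(\cdot)$ in the cone LCLT (which, after time-reversal, is essentially the meander density $u(\xi)e^{-|\xi|^2/2}$ with $u$ the cone's harmonic function) vanishes on the boundary, so your claim that $c(\el_n/\sqrt n)$ is bounded below by a positive constant fails. Your one-step shave to $(\el_n,1)$ does not fix this: $(\el_n,1)/\sqrt n$ still converges to a boundary point, so the rescaled density is still $o(1)$. (Incidentally, the point $(2,0)$ you route through at the other end is also on the boundary, not ``strictly interior''.)

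In fact, a correct boundary-refined computation shows that the density at distance $1/\sqrt n$ from the wedge boundary is of order $n^{-1/2}$, which gives overall order $n^{-5/2}\cdot n^{-1/2}=n^{-3}$ --- exactly the exponent claimed in the lemma, with no room to spare. But extracting this sharp boundary behavior is not covered by the off-the-shelf DW statements you cite; their LCLTs are formulated for starting/ending points either fixed or in compact subsets of the interior.

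The paper circumvents the boundary difficulty by a three-segment decomposition: the first segment from $(\el_n,0)$ is controlled using only the half-plane constraint $\{\mcl R\geq 0\}$ (a one-dimensional problem with cost $n^{-1/2}$, where the boundary start is classical), with the other constraint $\{\mcl L\geq 0\}$ then supplied by a support argument for the conditioned limit; the third segment uses the full cone for the time-reversed walk started from the apex (cost $n^{-3/2}$); and the middle segment is a bridge controlled by an unconstrained local limit theorem (cost $n^{-1}$). The product $n^{-1/2}\cdot n^{-3/2}\cdot n^{-1}=n^{-3}$ recovers the lemma.
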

\begin{proof}
This is a fairly straightforward consequence of the results of~\cite{dw-cones,dw-limit}, but these results do not exactly apply in our setting so we will explain how to extract~\eqref{eqn-cone-prob-lower} from the results of~\cite{dw-cones,dw-limit}. Whenever we apply results from~\cite{dw-cones,dw-limit}, we need to apply an affine transformation which maps $\mcl Z$ to an uncorrelated random walk, see~\cite[Example 2]{dw-cones}. In our setting, the correlation of the coordinates of $\mcl Z$ is $-1/2$, so the parameter $p$ from~\cite{dw-cones,dw-limit} is $3$. 

It suffices to prove the lemma only for $n\geq 100$. 
Let $n_1,n_2,n_3 \in \BB N$ be chosen so that 
\eqb \label{eqn-n-split}
n_1+n_2+n_3 = n,\quad n_1,n_2,n_3 \geq n/4,\quad n_2 \in 3 \BB Z .
\eqe 
The reason for requiring that $n_2 \in 3 \BB Z$ is that $\mcl Z$ is periodic of period 3. 

We will lower-bound the probabilities of three events, depending on the restrictions of $\mcl Z$ to intervals of length $n_1,n_2,n_3$, whose intersection is contained in $E_n$.
To this end, we define
\alb
\mcl Z^1(j)  &:= \mcl Z(j), \quad j\in [0,n_1]\cap\BB Z \\
\mcl Z^2(j)  &:= \mcl Z(j+ n_1) - \mcl Z(n_1) , \quad j\in [0,n_2]\cap\BB Z \\
\mcl Z^3(j) & := \mcl Z(n-j) - \mcl Z(n) , \quad j\in [0,n_3]\cap\BB Z 
\ale
Then $\mcl Z^1,\mcl Z^2$, and $\mcl Z^3$ are independent, and the laws of $\mcl Z^2$ and $\mcl Z^3$ do not depend on the starting point for $\mcl Z$. 

Define the events  
\alb
E_n^1 &:= \left\{\mcl Z^1(j) \in [0,\infty)^2 ,\: \forall j \in [0,n_1]\cap\BB Z \right\}
\cap  \left\{ \mcl Z^1(n_1) \in [  n^{1/2} ,  2 n^{1/2}]^{2} \right\} \notag \\
E_n^2 &:= \left\{ \mcl Z^2(j) + \mcl Z^1(n_1) \in [0,\infty)^2 ,\: \forall j\in [0,n_2] \cap\BB Z\right\} \cap \left\{ \mcl Z^2(n_2)  = \mcl Z^3(n_3) - \mcl Z^1(n_1)   \right\}  \notag\\
E_n^3 &:= \left\{  \mcl Z^3(j) \in [0,\infty)^2 ,\:\forall j \in [0,n_3] \cap \BB Z\right\} 
\cap \left\{ \mcl Z^3(n_3) \in [3 n^{1/2}  , 4 n^{1/2} ]^{2} \right\}  .
\ale
Then $E_n^1\cap E_n^2\cap E_n^3\subset E_n$. 
We also note that $E_n^1$ and $E_n^3$ are determined by $\mcl Z^1$ and $\mcl Z^3$, respectively, so these events are independent. 
 
We first consider the event $E_n^1$. By a standard estimate for one-dimensional random walk, there exists $c_1 > 0$ such that for all $\el, n\in\BB N$, 
\eqb  \label{eqn-walk-cone1'}
\BB P_{(0,0)} \left[ F_n^1 \right] \geq c_1 n^{-1/2} , \quad \text{where} \quad F_n^1 := \left\{ \mcl R(j) \geq 0 ,\: \forall j \in [0,n_1] \cap \BB Z \right\} . 
\eqe 
By~\cite[Theorem 1]{dw-limit}, applied with the cone equal to the upper half-plane, under $\BB P_{(0,0)}[\cdot \,|\, F_n^1 ]$, the re-scaled walk $n_1^{-1/2} \mcl Z(\lfloor n_1\cdot \rfloor)|_{[0,1 ]}$ converges as $n_1\to\infty$ to a correlated two-dimensional Brownian motion (with correlation $-1/2$) started at $(0,0)$ and conditioned to stay in the upper half-plane for one unit of time. 
For any $\el_n \in [A^{-1} n^{1/2} , A n^{1/2}]$ and each small enough $\ep > 0$ (depending on $A$), such a conditioned Brownian motion has a positive chance (uniformly over the choice of $\el_n$) to stay in $[-n_1^{-1/2} \el_n + \ep , \infty) \times [0,\infty)$ until time 1 and end up in $(-n_1^{-1/2} \el_n , 0) + [2(1 + \ep) ,\sqrt{2}(2 - \ep)]^2$ at time 1.  
By shifting by $(\el_n,0)$, noting that $n^{1/2}/2\leq n_1^{1/2}\leq {n^{1/2}/\sqrt{2}}$ by \eqref{eqn-n-split}, and applying the preceding sentence together with~\eqref{eqn-walk-cone1'}, we find that there exists $c_2  > 0$ such that for any $n\in\BB N$ and $\el_n\in [A^{-1} n^{1/2} , A n^{1/2} ]\cap\BB Z$, 
\eqb \label{eqn-walk-cone1}
\BB P_{(\el_n,0)} \left[ E_n^1 \right] \geq c_2 n^{-1/2} .
\eqe 

Next we consider $E_n^3$.   
By~\cite[Theorem 1]{dw-cones}, applied to the time reversal of $\mcl Z$, there exists $c_3  > 0$ such that for any $n \in\BB N$, 
\eqb  \label{eqn-reverse-cone}
\BB P\left[  \mcl Z^3(j) \in [0,\infty)^2 ,\:\forall j \in [0,n_3] \cap \BB Z  \right] \geq c_3 n^{-3/2} . 
\eqe 
By~\cite[Theorem 1]{dw-limit}, applied as in the argument just above~\eqref{eqn-walk-cone1}, this implies that we can find $c_4  > 0$ such that for any $n\in\BB N$ and $\el_n\in [A^{-1} n^{1/2} , A n^{1/2} ]\cap\BB Z$,
\eqb \label{eqn-walk-cone2}
\BB P_{(\el_n,0)} \left[ E_n^3 \right] \geq c_4 n^{-3/2} .
\eqe 

To deal with $E_n^2$, we first observe that on $E_n^1\cap E_n^3$, 
\eqbn
  \mcl Z^3(n_3)  - \mcl Z^1(n_1)  \in [n^{1/2} , 3 n^{1/2} ]^2. 
\eqen
Recall that $\mcl Z^2$ is independent from $(\mcl Z^1,\mcl Z^2)$. 
The walk $\mcl Z$ is periodic of period 3, but the walk $\mcl Z^3(3\cdot)$ is strongly aperiodic since $\BB P_{(0,0)}[ \mcl Z(3)= 0] > 0$~\cite[Section 5, P1]{spitzer-walks}.
Since $n_2$ was chosen to be a multiple of 3, we can apply a standard local limit result for unconditioned random walk (see, e.g.,~\cite[Section 7, P9]{spitzer-walks}) to get that there exists $c_5   > 0$ such that on $E_n^1\cap E_n^3$, 
\eqb  \label{eqn-walk-cone-endpt}
\BB P_{(\el_n,0)}\left[ \mcl Z^2(n_2) =  \mcl Z^3(n_3)   - \mcl Z^1(n_1)  \,\middle|\ \mcl Z^1,\mcl Z^2 \right] \geq c_5 n^{-1} .
\eqe 
We now use the convergence of random walk conditioned on its endpoints to two-dimensional Brownian bridge (see, e.g.,~\cite[Theorem 3]{liggett-bridge-vector}), together with the fact that for any $S >1$ and $z,w \in [1/S,S]^2$, a (correlated) Brownian bridge from $z$ to $w$ has a uniformly positive chance to stay in $[0,\infty)^2$.  We infer from~\eqref{eqn-walk-cone-endpt} that there exists $c_6 > 0$ such that on $E_n^1\cap E_n^3$, 
\eqb  \label{eqn-walk-cone3}
\BB P_{(\el_n,0)}\left[  E_n^2  \,\middle|\ \mcl Z^1,\mcl Z^2 \right] \geq c_6 n^{-1} .
\eqe 
Combining~\eqref{eqn-walk-cone1}, \eqref{eqn-walk-cone2}, and~\eqref{eqn-walk-cone3} and recalling that $E_n^1$ and $E_n^3$ are independent concludes the proof. 
\end{proof}

Recall the discussion above \eqref{eqn-disk-walk-time}.

\begin{lem} \label{lem-disk-walk-conv}
For $n\in \BB N$, let $E_n$ be as in~\eqref{eqn-walk-cone-event}. 
Let $\{\el_n\}_{n\in\BB N}$ be a sequence of positive integers. Assume there exists $A > 1$ such that $\el_n \in [A^{-1} n^{1/2} , A n^{1/2}]$ for each $n\in\BB N$. 
Let $\mcl Z_n^\bc = (\mcl L_n^\bc , \mcl R_n^\bc)$ be a walk from $(\el_n,0)$ to $(0,0)$ in $[0,\infty)^2$ sampled from the $\BB P_{(\el_n,0)}$-conditional law of $\mcl Z$ given $E_n$. 
Also let $K_n$ be sampled uniformly from $[1,\el_n]\cap\BB Z$, independently from $\mcl Z_n^\bc$ and let $T_n$ be as in \eqref{eqn-disk-walk-time}.
For each fixed $N\in\BB N$, the law of
\eqb \label{eqn-disk-walk-shift}
\Big( \mcl Z_n^\bc(\cdot  + T_n) - \mcl Z_n^\bc(T_n) \Big) |_{[-N,N]} 
\eqe
converges in the total variation sense as $n\to\infty $ to the law of $\mcl Z^\bc|_{[-N,N]}$, where $\mcl Z^\bc$ is the encoding walk for the UIBHBOT, as defined at the beginning of Section~\ref{sec-busemann-sym}. 
\end{lem}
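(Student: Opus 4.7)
The plan is to use the excursion decomposition of $\mcl Z_n^\bc$ and compare it to the one underlying $\mcl Z^\bc$. For $k \in \{0, 1, \dots, \el_n\}$, let $\sigma_k$ denote the first hitting time of level $\el_n - k$ by $\mcl L_n^\bc$, so that $\sigma_0 = 0$, $\sigma_{\el_n} = n$, and $T_n = \sigma_{K_n}$. Under the unconditional law $\BB P_{(\el_n, 0)}$, the strong Markov property at each $\sigma_k$ shows that the successive excursions $\mcl Z|_{[\sigma_{k-1}, \sigma_k]}$ (appropriately shifted to start at $(0,0)$) are i.i.d.\ with the same law as $\mcl Z|_{[0, \tau_{-1}]}$, where $\tau_{-1} = \min\{j : \mcl L(j) = -1\}$. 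But this is \emph{precisely} the building block of $\mcl Z^\bc$ in the construction at the start of Section~\ref{sec-busemann-sym}. So at the level of the unconditional law, centering $\mcl Z_n^\bc$ at a uniformly random excursion endpoint gives, in a bounded window, exactly the bi-infinite concatenation of excursions that defines $\mcl Z^\bc$; the content of the lemma is that the conditioning $E_n = \{\mcl Z[0,n] \subset [0, \infty)^2, \mcl Z(n) = (0, 0)\}$ has vanishing effect inside a bounded window around $T_n$.

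To make this precise, fix a cylinder event $A$ supported on the window $\{-N, \dots, N\}$ centered at $T_n$. Using the strong Markov property at $\sigma_{K_n} - N$ and $\sigma_{K_n} + N$, I would write the probability $\BB P[(\mcl Z_n^\bc(\cdot + T_n) - \mcl Z_n^\bc(T_n))|_{[-N, N]} \in A]$ as
\begin{align*}
\frac{1}{\BB P_{(\el_n, 0)}[E_n]} \cdot \frac{1}{\el_n} \sum_{k=1}^{\el_n} \sum_{t, z_-, z_+} \Pi^{\mathrm{past}}_{n, k, t}(z_-) \, q_A(z_-, z_+) \, \Pi^{\mathrm{fut}}_{n, k, t}(z_+),
\end{align*}
where $\Pi^{\mathrm{past}}_{n, k, t}(z_-)$ is the $\BB P_{(\el_n, 0)}$-probability that the walk stays in $[0, \infty)^2$ on $[0, t-N]$, has $\sigma_k = t$, and is at position $\mcl Z_n^\bc(T_n) + z_-$ at time $t - N$; $\Pi^{\mathrm{fut}}_{n, k, t}(z_+)$ is the $\BB P_{\mcl Z_n^\bc(T_n) + z_+}$-probability that the walk stays in $[0, \infty)^2$ on $[0, n - t - N]$ and ends at $(0, 0)$ at that time; and $q_A$ is the unconditional $2N$-step window kernel compatible with $A$. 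I would then show that on the high-probability event where $T_n$ is of order $n$ and $\mcl Z_n^\bc(T_n)$ has both coordinates of order $n^{1/2}$ (which follows from the scaling limit~\cite[Theorem 2]{dw-limit} applied to $\mcl Z_n^\bc$, together with a tightness argument), the ratios $\Pi^{\mathrm{fut}}_{n, k, t}(z_+) / \Pi^{\mathrm{fut}}_{n, k, t}(0)$ and $\Pi^{\mathrm{past}}_{n, k, t}(z_-) / \Pi^{\mathrm{past}}_{n, k, t}(0)$ have the desired limits. For the future factor, the limit is $\BB P_{(0,0)}[\mcl Z(N) = z_+]$ (the unconditional transition), since the cone-staying constraint is invisible $\Theta(n^{1/2})$ away from the boundary and the endpoint $(0,0)$ at time $n$ is typical for a walk starting at $\mcl Z_n^\bc(T_n) + z_+$. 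For the past factor, time reversal combined with averaging over $K_n$ (which smears the discrete level threshold at which a new minimum must be attained) yields the limit $\BB P_{(0,0)}[\mcl Z(N) = -z_-, \, \mcl L(j) \geq 1 \text{ for } j = 1, \dots, N]$, matching the marginal distribution of $\mcl Z^\bc|_{[-N, 0]}$ in reverse time by the very definition of $\mcl Z^\bc|_{(-\infty, 0]}$.

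The main obstacle is the past analysis. Specifically, I would need a uniform local central limit theorem for $\Pi^{\mathrm{past}}_{n, k, t}$ that controls the ratio over bounded $z_-$; this requires handling the constraint $\sigma_k = t$ (not just a one-sided cone-staying constraint) together with the endpoint constraint at time $t$, and the correct quantitative statement should be obtainable from the methods of~\cite{dw-limit} together with the probability lower bound in Lemma~\ref{lem-cone-prob-lower} and a standard reversal argument. Given such a local CLT, summing the factorized expression above and using the fact that $\BB P_{(\el_n, 0)}[E_n]$ has the same $n^{-3}$ order as the total mass of the numerator, gives convergence of probabilities of all cylinder events. Since the state space of $\mcl Z^\bc|_{[-N, N]}$ is countable, Scheff\'e's lemma then upgrades this pointwise convergence to convergence in total variation.
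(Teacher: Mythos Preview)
Your approach is genuinely different from the paper's, and it has a real gap.

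The paper does not use any local limit theorem or ratio argument. Instead, it groups the i.i.d.\ excursions into blocks of $m$ consecutive excursions (truncated at length $m^{100}$ so that each block takes values in a finite set), and applies Cram\'er's large deviation theorem to the empirical distribution of the $\lfloor \el_n / m \rfloor$ blocks. This empirical distribution is exponentially concentrated (in $\el_n$) around the true block law under the unconditional measure. Since $\BB P_{(\el_n, 0)}[E_n] \geq c\, n^{-3}$ by Lemma~\ref{lem-cone-prob-lower} and $\el_n \asymp n^{1/2}$, the conditioning on $E_n$ is only polynomially costly in $\el_n$, so the exponential concentration survives. It follows that the block containing $K_n$ has conditional law close in total variation to the unconditional block law, and sending $m \to \infty$ after $n \to \infty$ captures the time window $[-N, N]$ around $T_n$ and recovers exactly the construction of $\mcl Z^\bc|_{[-N,N]}$.

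Your proposal has two concrete problems. First, the factorization is not correct as written: the event $\{\sigma_k = t\}$ constrains the walk on all of $[0, t]$, in particular on the window $[t - N, t]$, so it cannot be placed entirely in a ``past factor'' on $[0, t - N]$ while keeping the window kernel $q_A$ unconditional. You would need to carry a first-passage constraint through the window and average it out over the uniform choice of $K_n$, which is more delicate than you indicate. Second, and more seriously, the ``uniform local central limit theorem'' you flag as the main obstacle is genuinely hard and is not supplied. Controlling the ratios $\Pi^{\mathrm{past}}_{n, k, t}(z_-) / \Pi^{\mathrm{past}}_{n, k, t}(0)$ uniformly over the relevant range of $k$, $t$, and endpoint positions requires quantitative estimates for walks simultaneously conditioned to stay in the quadrant, hit a specified endpoint, and satisfy a first-passage-time constraint; the results of~\cite{dw-limit} do not give this directly, and extracting it would be substantial work in its own right. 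The paper's Cram\'er argument sidesteps all of this: it needs only the polynomial lower bound on $\BB P[E_n]$ and the finite-alphabet LDP, with no fine local estimates whatsoever.
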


\begin{proof}
Fix $m    \in \BB N$. We will send $n\to\infty$ then send $m\to\infty$. We first study the unconditional law of $\mcl Z$ sampled from $\BB P_{(\el_n,0)}$.  
Let $\sigma_0 = 0$ and for $k \in \BB N$, let 
\eqb \label{eq:stop-cramer}
\sigma_k := \min\left\{ j\in\BB N : \mcl L(j)  - \mcl L(0) =  - k \right\} .
\eqe
Note that $T_n$ in \eqref{eqn-disk-walk-time} is equal to $\sigma_{K_n}$ (with $\mcl L$ replaced by $\mcl L_n^\bc$). 
Furthermore, for $s \in \BB N$, the stopped walk paths 
\eqb  \label{eqn-cramer-increment}
Z_s := \left(\mcl Z(\cdot+\sigma_{(s-1) m} ) - \mcl Z(\sigma_{(s-1) m})\right)|_{[ 0 ,  ( \sigma_{s m} - \sigma_{(s-1) m} )\wedge m^{100}  ]}
\eqe 
are i.i.d.\ paths. The reason why we stop after $ m^{100} $ steps is to ensure that $Z_s$ takes values in a finite set (namely, the set of paths in $\BB Z^2$ started from $(0,0)$ with increments in $\{(1,-1), (0,1), (-1,0)\}$ which go for at most $m^{100}$ steps). We also note that the definition of the stopping times $\sigma_k$ in~\eqref{eq:stop-cramer} and the increments in~\eqref{eqn-cramer-increment} does not depend on the starting point for $\mcl Z$. 

By Cram\'er's larger deviation theorem~\cite[Theorem 2.1.24]{dembo-ld}, the empirical distribution for $Z_1 , \dots, Z_{\lfloor \el_n / m \rfloor }$ is exponentially concentrated around the actual distribution of $Z_1$ as $n \to\infty$. That is, for any possible realization $\frk z$ of $Z_1$, let $p_{l_n}(\frk z)$ be the fraction of values of $s \in [1,\el_n / m]\cap\BB Z$ for which $Z_s = \frk z$. Then for each $\delta  > 0$, there exists $c = c( m , \delta) > 0$ such that\footnote{The law of $Z_s$ in~\eqref{eqn-cramer-increment} does not depend on $s$, so~\eqref{eqn-cramer} is equivalent to the analogous statement with $\BB P_{(r,0)}$ in place of $\BB P_{(\el_n,0)}$ for any $r \in \BB Z$.} 
\eqb  \label{eqn-cramer} 
\BB P_{(\el_n,0)} \left[ \sup_{\frk z} \left| p_{l_n}(\frk z) - \BB P_{(\el_n,0)}[Z_1 = \frk z] \right|  > \delta \right]  = O( e^{-c\, \el_n } ),\quad \text{as $n \to\infty$},   
\eqe 
where the supremum is over all possible realizations $\frk z$ of $Z_1$. 

By Lemma~\ref{lem-cone-prob-lower}, $\BB P_{(\el_n,0)} [E_n]$ decays like a negative power of $n$. In particular, since $\el_n$ is comparable to $n^{1/2}$, this probability decays slower than exponentially in $\el_n$. Hence~\eqref{eqn-cramer-cond} implies that  
\eqb  \label{eqn-cramer-cond} 
\BB P_{(\el_n,0)}\left[ \sup_{\frk z} \left| p_{l_n}(\frk z) - \BB P_{(\el_n,0)}[Z_1 = \frk z] \right| >\delta \,\middle|\, E_n \right] = O( e^{-c' \el_n } ) ,\quad \text{as $n\to\infty$},   
\eqe 
for a slightly smaller constant $c' = c'(\delta, m ) > 0$. 
 
Let $K_n$ be as in the lemma statement and let $S_n \in [1,\el_n / m]\cap\BB Z$ be chosen so that $K_n \in [(S_n-1) m , S_n m ] \cap\BB Z$ (we take $S_n$ to be a graveyard point in the probability space if $K_n \in [m \lfloor \el_n/m \rfloor + 1 , \el_n]$). 
By~\eqref{eqn-cramer-cond}, the total variation distance between the conditional law of $Z_{S_n}$ given $E_n$ and the unconditional law of $Z_1$ goes to zero as $n \to\infty$ (it would in fact decay exponentially, it not for the possibility that $K_n \in [m \lfloor \el_n/m \rfloor + 1 , \el_n]$). 

It remains to transfer from a statement about the law of $Z_{S_n}$ to a statement about the law of~\eqref{eqn-disk-walk-shift}. 
To this end, let $\ep \in (0,1)$ and $N\in\BB N$. 
Since $\sigma_{s m}$ is the hitting time of $-s m$ for the lazy random walk $\mcl L -\mcl L(0)$, if we choose $m$ large enough (depending on $\ep$) then for every $s \in [1,\el_n/m]\cap\BB Z$, the unconditional $\BB P_{(\el_n,0)}$-probability that $\sigma_{s m} - \sigma_{(s-1) m}$ is larger than $m^{100}$ is at most $\ep/2$. By the previous paragraph, if $m$ is chosen to be large enough (depending on $\ep$) and $n$ is large enough (depending on $m$ and $\ep$), then
\eqb  \label{eqn-walk-conv-trunc}
 \BB P_{(\el_n,0)} \left[ \sigma_{S_n m} - \sigma_{(S_n-1) m}  \leq m^{100} \,\middle|\, E_n \right] \geq 1 - \ep .
\eqe 
Furthermore, if $m$ is large enough (depending on $\ep$ and $N$) and $n$ is large enough (depending on $N$,  $m$, and $\ep$), then the conditional probability given $E_n$ that $[K_n- N , K_n + N]\subset [(S_n-1) m , S_n m] $ is at least $1-\ep $.  
By this and~\eqref{eqn-walk-conv-trunc}, we can choose $m = m(\ep,N) \in\BB N$ large enough so that when $n$ is sufficiently large (recall that $T_n$ in \eqref{eqn-disk-walk-time} is equal to $\sigma_{K_n}$), 
\eqbn
\BB P_{(\el_n,0)}\left[ [T_n -N,  T_n + N] \subset \left[   \sigma_{(S_n-1) m} ,    \sigma_{(S_n-1) m}   + ( \sigma_{S_n m} - \sigma_{(S_n-1) m} )\wedge m^{100} \right]  \,\middle|\, E_n   \right] \geq 1 - 2\ep .
\eqen
In other words, under the conditional law given $E_n$, it holds with probability at least $1- 2 \ep$ that the walk path~\eqref{eqn-disk-walk-shift} (defined with $\mcl Z$ in place of $\mcl Z_n^\bc$) is determined by $Z_{S_n}$ and $K_n - (S_n-1) m$.  By combining this with the previous paragraph and recalling that $\mcl Z_n^\bc$ is sampled from the conditional law of $\mcl Z$ given $E_n$, we get the following. When $n$ and $m$ are sufficiently large, the total variation distance between the laws of the following two processes is at most $2\ep$: the process~\eqref{eqn-disk-walk-shift} and the law of $(\mcl Z(\cdot + \sigma_M ) - \mcl Z(\sigma_M))|_{[-N,N]}$, where $M$ is sampled uniformly from $[0,m]\cap\BB Z$ (recall the definition of $Z_1$ from~\eqref{eqn-cramer-increment}. By the construction of $\mcl Z^\bc$ at the beginning of Section~\ref{sec-busemann-sym}, this concludes the proof.
\end{proof}

\begin{proof}[Proof of Proposition~\ref{prop-disk-bs-conv}]
This is an easy consequence of Lemma~\ref{lem-disk-walk-conv}, but we spell out the details for completeness.
Recall the discussion above \eqref{eqn-disk-walk-time}.
Let $(M^\bc , e^\bc)$ be the UIBHBOT and let $\mcl Z^\bc$ be its KMSW encoding walk, as in the discussion just above Definition~\ref{def-uihbot}.  
For $a,b\in\BB Z$ with $a < b$, let $M_{a,b}^\bc$ be the submap of $M^\bc$ obtained by applying the KMSW procedure to $(\mcl Z^\bc(\cdot + a) - \mcl Z^\bc(a))|_{[a,b]}$. 
For $r\in\BB N$, define the \textbf{graph distance ball} $\mcl B_r(M^\bc)$ to be the submap of $M^\bc$ consisting of the vertices of $M^\bc$ which lie at graph distance at most $r$ from the initial vertex of $e^\bc$, along with the edges of $M^\bc$ whose vertices both satisfy this condition, and the faces of $M^\bc$ whose boundary vertices all satisfy this condition. 
For any $\ep > 0$ and $r\in\BB N$, we can choose $N = N(r,\ep) \in\BB N$ sufficiently large so that
\eqbn
\BB P\left[ \mcl B_r(M^\bc) \subset M_{-N,N}^\bc \right] \geq 1-\ep .
\eqen
By the total variation convergence in Lemma~\ref{lem-disk-walk-conv}, this implies that for large enough $n\in\BB N$, it holds with probability at least $1-2\ep$ that the graph distance ball $\mcl B_r(M_n^\bc)$  of radius $r$ in $M_n^\bc$ centered at the initial vertex of $e_n^\bc$ is completely
explored by the KMSW procedure for $M_n^\bc$ during the time interval $[T_n-N, T_n + N]\cap\BB Z$ (the latter is determined by the  walk $ \mcl Z_n^\bc |_{[T_n-N, T_n + N]}$). Note that Lemma~\ref{lem-disk-walk-conv} guarantees that we can choose the same $N$ for all values of $n$. 
By applying Lemma~\ref{lem-disk-walk-conv} again, this implies the desired Benjamini-Schramm convergence.
\end{proof}

\begin{proof}[Proof of Proposition~\ref{prop-uihbot-sym}]
Assume that we are in the setting of Proposition~\ref{prop-disk-bs-conv}. 
Let $(\wt M_n^\bc , \wt e_n^\bc)$ be the edge-rooted directed map obtained from $(M_n^\bc , e_n^\bc)$ by applying an orientation-reversing homeomorphism from $\BB C$ to $\BB C$. 
By the definition of $M_n^\bc$ (see Proposition~\ref{prop-disk-bs-conv}), the map $\wt M_n^\bc$ can be obtained by starting with a uniform sample from $\mcl M_{1,\el_n+1}^\bc(n)$ (as defined just above Proposition~\ref{prop-disk-bs-conv}), then declaring all of the right boundary edges to be missing except for the one whose terminal vertex is the sink vertex. 
Furthermore, $\wt e_n^\bc$ is a uniformly sampled missing boundary edge of $\wt M_n^\bc$. 
By Lemma~\ref{lem-bdy-reverse} (applied with $(\el,r) = (1,\el_n+1)$ and $k = \el_n$),  
\eqb \label{eqn-use-bdy-reverse} 
(M_n^\bc , e_n^\bc) \eqD (\wt M_n^\bc , \wt e_n^\bc) .
\eqe

By Proposition~\ref{prop-disk-bs-conv}, $(  M_n^\bc ,e_n^\bc) \to (M^\bc , e^\bc)$ in law with respect to the Benjamini-Schramm topology. By applying an orientation-reversing homeomorphism, we get that also $(\wt M_n^\bc , \wt e_n^\bc) \to (\wt M^\bc , \wt e^\bc)$ in law with respect to the Benjamini-Schramm topology, where $(\wt M^\bc , \wt e^\bc)$ is as in the proposition statement.
By~\eqref{eqn-use-bdy-reverse}, this shows that $(M^\bc , e^\bc) \eqD (\wt M^\bc , \wt e^\bc)$, as required.  
\end{proof}

\section{Recursive equations, tail asymptotics, and scaling exponents} 
\label{sec-recursive}

In this section, we prove Theorem~\ref{thm-busemann-tail} about the tail asymptotics of the increments of the Busemann function $\mcl X$  when $\XDP \in\{ \op{LDP},\op{SDP}\}$. We will prove Items~\ref{item-busemann-tail-LDP}~and~\ref{item-busemann-tail-SDP} of Theorem~\ref{thm-busemann-tail} separately in Sections~\ref{sec-recursive-LDP}~and~\ref{sec-recursive-SDP}, respectively.

Recall from Lemma~\ref{lem-uibot-bdy} that for $n\in\BB Z$, the infinite rooted bipolar-oriented triangulation with missing edges $(M_{n,\infty},\lambda_n)$ is the submap of the UIBOT obtained by applying the KMSW procedure to $\mcl Z|_{[n,\infty)}$, where  $\mcl Z : \BB Z\to\BB Z^2$ is a bi-infinite random walk whose increments are i.i.d.\ uniform samples from $\{(1,-1), (-1,0) , (0,1)\}$, normalized so that $\mcl Z(0) = (0,0)$. 

By the stationarity of $\mcl Z$, we have that 
\begin{equation}\label{eq:stationarity-rel}
    M_{1,\infty} \eqD M_{0,\infty}.
\end{equation}
Therefore, the Busemann function $\mcl X^1$ associated with $M_{1,\infty}$ has the same law as $\mcl X$.  Also, recall the notation 
\eqb\label{eq:den-dist-f-g}
f(x) := \BB P\left[ \mcl X(0) - \mcl X(-1) = x \right] \quad \text{and} \quad g(y):= \BB P\left[ \mcl X(1) - \mcl X(0) = y \right] .
\eqe
Since the Busemann function $\mcl X$ introduced in Theorem~\ref{thm-busemann} is normalized so that $\mcl X(0)=0$, we equivalently have that $f(x) = \BB P\left[ - \mcl X(-1) = x \right]$ and $g(y) = \BB P\left[ \mcl X(1)  = y \right] $.
Our main goal will be to determine the tail behavior of $f$ and $g$ for $\XDP \in\{ \op{LDP},\op{SDP}\}$. 

Using the fact that $\mcl X^1 \eqD \mcl X$ and the properties in Theorem~\ref{thm-busemann-property} proved in the previous section, we will obtain a recursive equation for $f$ and $g$ (see Proposition~\ref{prop:LDP-recursive-equation} for the $\op{LDP}$ case and Proposition~\ref{prop:SDP-recursive-equation} for the $\op{SDP}$ case), which will later be converted into a functional equation for the characteristic functions $F$ and $G$ of $f$ and $g$, respectively. 
We then reach our goal by first studying the asymptotics of the characteristic functions $F$ and $G$ (see Proposition~\ref{prop:asympt-exp2} for the $\op{LDP}$ case and Proposition~\ref{prop:asympt-exp} for the $\op{SDP}$ case) and then transferring the estimates to $f$ and $g$ using the following Tauberian type result, whose proof can be found in Appendix~\ref{sect:proofTau}.

\begin{prop}\label{prop:Tauberian}
Let $X$ be a non-negative integer-valued random variable. Define the characteristic function $\varphi(t) = \BB E[e^{itX}]$. Consider the following two sets of assumptions:
\begin{enumerate}
    \item\label{ass:1} For some $c \in \BB C \setminus i \BB R$ and $\nu \in (0,1)$,  
\eqb \label{eqn-tauberian2}
\varphi(t) = 1 + (c \BB 1_{(t > 0)} + \ol c \BB 1_{(t < 0)} ) |t|^\nu  + o(t^\nu)  ,\quad \text{as $t \to 0$}.
\eqe
    \item\label{ass:2} For some $c_1 > 0$, $c_2 \in \BB C \setminus  \BB R$, and $\nu \in (0,1)$,  
\eqb \label{eqn-tauberian'2}
\varphi(t) = 1 + i c_1 t +     (c_2 \BB 1_{(t > 0)} + \ol c_2 \BB 1_{(t < 0)} ) |t|^{1+\nu}       + o(t^{1+\nu})  ,\quad \text{as $t \to 0$}. 
\eqe
\end{enumerate}
Then:
\begin{enumerate}
    \item\label{ass1-tau} In the case of Assumption~\ref{ass:1}, $\re(c)<0$ and $\im(c)>0$, and setting $a=-\frac{\re(c)}{\Gamma(1-\nu)}\sec\Big(\tfrac{\pi\nu}{2}\Big)>0$,
    \eqb \label{eqn-prob-asymp2}
\BB P[X > R] = a R^{-\nu}  +  o(R^{-\nu}) ,\quad \text{as $R\to\infty$}. 
\eqe 
    \item\label{ass2-tau} In the case of Assumption~\ref{ass:2}, $\im(c_2)<0$ and setting $a=-\frac{\nu\im(c_2)}{\Gamma(1-\nu)}\sec\left(\tfrac{\pi\nu}{2}\right)>0$,
    \eqb \label{eqn-prob-asymp'2}
\BB P[X > R] = a R^{-\nu-1}  +  o(R^{-\nu-1}) ,\quad \text{as $R\to\infty$}. 
\eqe
\end{enumerate}
\end{prop}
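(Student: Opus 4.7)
The proof starts from the Abel-summation identity
$$1 - \varphi(t) = (1 - e^{it})\,\Psi(t), \qquad \Psi(t) := \sum_{n=0}^{\infty} L(n)\,e^{itn}, \qquad L(n) := \BB P[X > n],$$
which follows by rearranging $\sum_{n \geq 0}(L(n-1) - L(n))(1 - e^{itn})$ with $L(-1) := 1$. The monotonicity of $n \mapsto L(n)$ is the structural feature that makes the classical complex Tauberian machinery applicable.

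For Assumption 1, the hypothesis on $1 - \varphi(t)$ together with $1 - e^{it} = -it + O(t^2)$ yields $\Psi(t) \sim i c\, t^{\nu-1}$ for $t \to 0^+$. The Abelian direction of the Tauberian correspondence is the integral identity $\int_0^\infty x^{-\nu} e^{itx}\,dx = \Gamma(1-\nu)\, t^{\nu-1}\, e^{i\pi(1-\nu)/2}$ for $t > 0$ (obtained by rotating the contour onto the negative imaginary axis); it shows that \emph{if} $L(R) \sim a R^{-\nu}$ with $a > 0$ then $1 - \varphi(t) \sim a\,\Gamma(1-\nu)\, e^{-i\pi\nu/2}\, t^\nu$ as $t \to 0^+$. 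Matching constants forces $c = -a\,\Gamma(1-\nu)\bigl(\cos(\pi\nu/2) - i \sin(\pi\nu/2)\bigr)$, which gives the sign conditions $\re c < 0$, $\im c > 0$ and the inversion formula $a = -\re(c)\sec(\pi\nu/2)/\Gamma(1-\nu)$. For the genuinely Tauberian direction --- deducing $L(R) = a R^{-\nu} + o(R^{-\nu})$ from only the $o(t^\nu)$ hypothesis --- the plan is to split $\Psi$ into its real and imaginary parts $\sum L(n) \cos(nt)$ and $\sum L(n) \sin(nt)$, each a Fourier series with nonnegative nonincreasing coefficients, and to invoke the classical Karamata-type Tauberian theorem for Fourier series of monotone sequences; the monotonicity of $L$ is exactly what is needed to upgrade the Abelian identity to a Tauberian statement while preserving the $o(\cdot)$ remainder.

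For Assumption 2, the fact that $1 + \nu > 1$ ensures $\BB E[X] < \infty$, and Abel summation gives $\sum_n L(n) = \BB E[X] = c_1$. Let $Y$ be the nonnegative integer-valued random variable with distribution $\BB P[Y = n] = L(n)/c_1$, so that $\varphi_Y(t) = \Psi(t)/c_1$. Substituting the hypothesis into $1 - \varphi(t) = (1 - e^{it})\Psi(t)$ yields $1 - \varphi_Y(t) = -i c_2\, t^\nu/c_1 + o(t^\nu)$ for $t \to 0^+$. This puts $\varphi_Y$ into the setting of Assumption 1 with $c \mapsto -i c_2 / c_1$; since $\re(-i c_2/c_1) = \im(c_2)/c_1$, the sign conclusion of part (1) forces $\im(c_2) < 0$ and gives
$$\sum_{n > R} L(n) = c_1\,\BB P[Y > R] = -\frac{\im(c_2)}{\Gamma(1-\nu)} \sec(\pi\nu/2)\, R^{-\nu} + o(R^{-\nu}).$$
A final application of Karamata's monotone density theorem (valid because $L$ is nonincreasing) then extracts $L(R) = a R^{-\nu-1} + o(R^{-\nu-1})$ with $a = -\nu\,\im(c_2)\sec(\pi\nu/2)/\Gamma(1-\nu)$, as claimed. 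The main technical obstacle throughout will be to ensure that the $o(\cdot)$ error terms propagate correctly through each step --- and in particular that the Tauberian theorem invoked in part (1) produces the sharp $o(R^{-\nu})$ remainder rather than a weaker $O(R^{-\nu})$ bound --- which is where the monotonicity of $L$, automatic since $L$ is a tail probability, plays its decisive role.
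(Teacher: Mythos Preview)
Your approach is correct in structure and leads to the same conclusions, but the route is genuinely different from the paper's, so a brief comparison is worthwhile.

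The paper does not stay on the unit circle. Instead, it proves a lemma (via the Poisson kernel on the disk) that converts the boundary asymptotics $\phi(e^{it}) = 1 + c\,t^\nu + o(t^\nu)$ into radial asymptotics $\phi(1-r) = 1 + b\,r^\nu + o(r^\nu)$ with $b = \re(c)\sec(\pi\nu/2)$. It then applies classical Laplace-transform Tauberian theorems (Feller; BGT~8.1.6--8.1.7). For Part~2 the paper passes to $G(z) := (\phi(z)-1)/(c_1(z-1))$, which---as your Abel identity shows---is exactly the generating function of your auxiliary variable $Y$, so the two reductions coincide. The paper then proves $\im c>0$ in a separate lemma by directly estimating $\sum_k f(k)\sin(kt)$ using the already-established tail asymptotic.

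What each route buys: the paper's Laplace detour makes the sign $\re(c)<0$ automatic, since $\sum_n L(n)(1-r)^n$ has nonnegative coefficients and blows up like $-b\,r^{\nu-1}$, forcing $b<0$. In your Fourier route, the Tauberian theorems for cosine/sine series with monotone coefficients (BGT~\S4.10, Zygmund~V) are stated for positive leading coefficients, so you need to know a priori which of $\re(c)<0$ or $\im(c)>0$ holds before you can pick the right series to invert; you should say explicitly how you break this circularity (e.g., observing that $\sum_n L(n)(1-r)^n\ge 0$ gives the sign, or invoking a two-sided version of the sine Tauberian). This is the one place your sketch is thin. A minor point: your intermediate claim $\Psi(t)\sim i c\,t^{\nu-1}$ should read $-i c\,t^{\nu-1}$; your final constants are nonetheless correct, so this is just a slip.

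A bonus of your route: applying Part~1 to $Y$ yields not only $\im(c_2)<0$ but also $\re(c_2)<0$, a conclusion the paper does not record.
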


\subsection{The LDP case}\label{sec-recursive-LDP}

We start by looking at the case when $\XDP = \op{LDP}$. Thanks to Items~\ref{item-busemann-pos}~and~\ref{item-busemann-sym} of Theorem~\ref{thm-busemann-property}, we know that $f$ is a symmetric distribution on $\BB Z$, and $g$ is a distribution on $\BB Z_{\leq -1}$. 
We introduce the three characteristic functions
 \begin{equation}\label{eq:FGdefs-new}
 	F(t):=\sum_{x\in\mathbb Z} f(x)\,e^{itx},\qquad
 	F^{-}(t):=\sum_{x \in\BB Z_{\leq -1}} f(x)\,e^{itx},\qquad
 	G(s):=\sum_{y\in \BB Z_{\leq -1}} g(y)\,e^{isy}.
 \end{equation}
 By the symmetry of $f$, we have $F(t)\in\mathbb R$. Moreover, since every characteristic function $\varphi(t)$ satisfies $\varphi(-t)=\overline{\varphi(t)}$, in what follows, we only focus on the case $t\geq 0$.

 \begin{prop}[Asymptotics of the characteristic functions, LDP case]\label{prop:asympt-exp2}
 	Setting 
 	\[\kappa:=3+f(0)>0,\] 
    it holds that as $t\downarrow 0$,
 		\begin{align*}
 			&F(t)= 1-\left(\frac{\kappa}{2}\right)^{1/3}\,t^{2/3}+o(t^{2/3}),\\
 			&F^-(t)=\frac{1}{2}(1-f(0))+\frac{-1 - i\sqrt{3}}{2} \left(\frac{\kappa}{2}\right)^{1/3}\,t^{2/3}+o(t^{2/3}),\\
 			&G(t)=1+\frac{-1 - i\sqrt{3}}{2}\left(\frac{\kappa}{2}\right)^{1/3}\,t^{2/3}+o(t^{2/3}).
 		\end{align*}
 \end{prop}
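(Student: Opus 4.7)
The starting point is the functional equation, call it \eqref{eq:big-rel}, relating $F$, $F^-$, and $G$, obtained from the pointwise recursion of Proposition~\ref{prop:LDP-recursive-equation} by multiplying by $e^{itx}$ or $e^{isy}$ and summing over $x,y$. That pointwise recursion comes from partitioning on the first KMSW increment $\mcl Z(1)-\mcl Z(0)\in\{(1,-1),(-1,0),(0,1)\}$, each case weighted by $\tfrac13$, expressing the increments $\mcl X(1)-\mcl X(0)$ and $\mcl X(0)-\mcl X(-1)$ in terms of the corresponding increments of the Busemann function $\mcl X^1$ for $M_{1,\infty}$, and then invoking $\mcl X^1\eqD\mcl X$ (from $M_{1,\infty}\eqD M_{0,\infty}$) together with Properties~\ref{item-busemann-ind}, \ref{item-busemann-stationary}, \ref{item-busemann-pos}, and \ref{item-busemann-sym} of Theorem~\ref{thm-busemann-property}. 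Throughout I will keep $t\geq 0$ (the case $t<0$ follows from $\varphi(-t)=\overline{\varphi(t)}$), and will use the identity $F(t)=f(0)+F^-(t)+\overline{F^-(t)}$, which follows immediately from the symmetry $f(-x)=f(x)$ and which forces the coefficient of $t^\nu$ in $F$ to be twice the real part of the coefficient of $t^\nu$ in $F^-$.

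First, I would plug into \eqref{eq:big-rel} the ansatz
\begin{align*}
F(t) &= 1 + a_F\, t^\nu + o(t^\nu),\\
F^-(t) &= \tfrac12(1-f(0)) + a_{F^-}\, t^\nu + o(t^\nu),\\
G(t) &= 1 + a_G\, t^\nu + o(t^\nu),
\end{align*}
with unknown exponent $\nu\in(0,1)$ and complex constants $a_F,a_{F^-},a_G$. The zeroth-order terms cancel by the normalization $\sum f=\sum g=1$ together with the split $F=f(0)+2\,\re\,F^-$. Next I would expand the three finite-shift factors (coming from $e^{it(x\pm 1)}$ and similar in the $\tfrac13$-weighted contributions of the three KMSW steps) to the appropriate order: the key observation is that some contributions produce terms of order $t$ (which are absorbed into $o(t^\nu)$ once $\nu<1$), while the \emph{singular} contributions are precisely those where $F^-$ or $G$ is evaluated at a translated argument \emph{and} carries its own $t^\nu$ tail.

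Matching at order $t^\nu$ then yields a closed linear system in $(a_F,a_{F^-},a_G)$ whose coefficient matrix depends on $\nu$. For generic $\nu$ this system has only the trivial solution; the value $\nu=2/3$ is singled out because it is the unique exponent for which the system admits a nontrivial solution subspace, and the structure of this subspace forces $a_{F^-}$ and $a_G$ to be equal and to be proportional to a nontrivial cube root of unity, while $a_F=2\,\re\,a_{F^-}$ is real. Normalizing this solution against the explicit combinatorial coefficients coming from the $\tfrac13$-weighted KMSW steps and from the boundary atom $f(0)$ yields the multiplicative constant $(\kappa/2)^{1/3}$ with $\kappa=3+f(0)$ (the $3$ from the three KMSW steps, the $f(0)$ from the boundary term produced when the recursion couples an atom of $f$ at $0$ to a unit shift of the same atom). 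The branch of the cube root is determined by the requirement that $\im a_{F^-},\,\im a_G\leq 0$, which must hold in order for $F^-$ and $G$ to be characteristic functions of distributions supported on $\BB Z_{\leq-1}$; equivalently, so that the tail asymptotics produced by Proposition~\ref{prop:Tauberian}\ref{ass1-tau} are non-negative. This singles out $e^{-2i\pi/3}=\tfrac{-1-i\sqrt3}{2}$ and gives the stated formulas.

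\textbf{Main obstacle.} Two points demand care. First, I must verify \emph{a priori} that $F$, $F^-$, $G$ genuinely admit a leading singular term of the form $c\,t^\nu+o(t^\nu)$ rather than, e.g., logarithmic corrections; this requires showing, by inspection of \eqref{eq:big-rel}, that the only self-consistent power-law behavior compatible with $F,F^-,G$ being bounded characteristic functions of integer-valued distributions is a pure $t^\nu$ with $\nu=2/3$, and that slower ansätze (any $\nu'>2/3$) force $a_{F^-}=a_G=0$ while faster ones ($\nu'<2/3$) are inconsistent with boundedness on the Fourier side. Second, the boundary atom $f(0)$ enters the recursion at a single exceptional point, and tracking its contribution correctly is what produces the ``$+f(0)$'' in $\kappa$; this bookkeeping is the most error-prone step and will be the place where the full calculation lives. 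Once the formal expansion is in hand, controlling the $o(t^\nu)$ remainder is routine: the functional equation is linear in $(F,F^-,G)$, and the error terms propagate through a contraction-type estimate near $t=0$.
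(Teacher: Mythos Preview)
Your strategy is in the right direction, but it has a genuine gap and diverges from the paper's approach in a way that matters.

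\textbf{The gap.} Your plan is to plug in a power-law ansatz and match coefficients. You correctly flag (in your ``Main obstacle'') that you have not justified the ansatz \emph{a priori}, but your proposed fix does not work: you write that ``the functional equation is linear in $(F,F^-,G)$, and the error terms propagate through a contraction-type estimate''. This is false. Equation \eqref{eq:big-rel} contains $\overline G(s)^2$ and $F(t)F^-(t)$, so it is quadratic in the unknowns, and there is no obvious contraction structure that would let you bootstrap an ansatz to a rigorous expansion. Without an external regularity input, the matching argument only tells you what the leading term \emph{would} be if a pure power-law expansion exists; it does not rule out logarithmic or oscillatory corrections.

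\textbf{What the paper does instead.} The paper does not assume an ansatz. It specializes \eqref{eq:big-rel} at $s=0$ to solve for $\overline G$ in terms of $F$ and $\widehat F^-:=F^-+c(t)$ with $c(t)=f(0)-\tfrac{\kappa}{2}e^{it}$, then specializes at $t=0$ to obtain an \emph{exact} quadratic $F^2(\widehat F^-)^2+(3F-1)\widehat F^-+F=0$. A discriminant analysis (Lemma~\ref{lem:nonreal2}, using Lemma~\ref{lem:Y-not-const} to rule out $F\equiv 1$) shows that $\widehat F^-(t)$ has nonzero imaginary part for small $t>0$. Since $F$ is real, writing $\widehat F^-=\alpha+i\beta$ and taking the imaginary part of the quadratic forces the \emph{exact} cubic $F^3-K(t)F^2+3F-1=0$ for small $t>0$, where $K(t)=f(0)-2\re c(t)=3-\tfrac{\kappa}{2}t^2+O(t^4)$. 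Now the exponent $2/3$ and the constant $(\kappa/2)^{1/3}$ fall out of a rigorous dominant-balance analysis of this cubic (write $F=1+\varepsilon$ and balance $\varepsilon^3$ against $\tfrac{\kappa}{2}t^2$), with no ansatz needed. The expansions of $F^-$ and $G$ then follow by solving the quadratic explicitly, and the branch is selected via Proposition~\ref{prop:Tauberian}. Your heuristic for $\kappa$ (``3 from three KMSW steps, $f(0)$ from a boundary atom'') is also off: the $3$ is the coefficient of $F$ in the cubic, not a count of steps.
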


Before proving Proposition~\ref{prop:asympt-exp2}, we note that it immediately implies Item~\ref{item-busemann-tail-LDP} of Theorem~\ref{thm-busemann-tail}.

\begin{proof}[Proof of Item~\ref{item-busemann-tail-LDP} of Theorem~\ref{thm-busemann-tail} (assuming Proposition~\ref{prop:asympt-exp2})]
We combine Proposition~\ref{prop:asympt-exp2} with the Tauberian result in Proposition~\ref{prop:Tauberian}. The asymptotic expansion of $F(t)$ in Proposition~\ref{prop:asympt-exp2} ensures that $f(0)\neq 1$. Recalling the notation in \eqref{eq:den-dist-f-g}, in the case of $f$, we apply Proposition~\ref{prop:Tauberian} to the random variable $X$ with $\BB P[X=x] = \frac{2}{1-f(0)} f(x)$ for all $x\geq1$, so that $X$ is non-negative (recall that $f(x) = f(-x)$). Since the characteristic function of $X$ for $t>0$ is 
\begin{align*}
    \varphi_X(t)&=\frac{2}{1-f(0)}(F(t)-F^{-}(t)-f(0))\\
    &=1+
    \frac{-1+i\sqrt{3}}{1-f(0)}\left(\frac{3+f(0)}{2}\right)^{1/3}\,t^{2/3}+o(t^{2/3}),
\end{align*}
we obtain that
\begin{align*}
    \BB P [X>R]&=\frac{1}{1-f(0)}\left(\frac{3+f(0)}{2}\right)^{1/3}\frac{1}{\Gamma(1-2/3)}\sec\Big(\tfrac{\pi}{3}\Big) R^{-2/3}+o(R^{-2/3})\\
    &=\frac{\sqrt{3}\,\Gamma(2/3)}{\pi(1-f(0))}\left(\frac{3+f(0)}{2}\right)^{1/3}R^{-2/3}+o(R^{-2/3})
\end{align*}
Hence, recalling that $f(x) = \BB P\left[ \mcl X(0) - \mcl X(-1) = x \right]$ for all $x\in \BB Z$,
\begin{equation}\label{eq:conc-asympt-1}
    \BB P \left[|\mcl X(0)-\mcl X(-1)|>R\right]=\frac{\sqrt{3}\,\Gamma(2/3)}{\pi}\left(\frac{3+f(0)}{2}\right)^{1/3}R^{-2/3}+o(R^{-2/3}).
\end{equation}

In the case of $g$, we apply Proposition~\ref{prop:Tauberian} to the random variable $Y$ with $\BB P[Y=y] = g(-y)$ for all $y\geq1$, so that $Y$ is non-negative.
Since the characteristic function of $Y$ for $t>0$ is 
\begin{equation*}
    \varphi_Y(t)=G(-t)=\overline{G(t)}=1+\frac{-1 + i\sqrt{3}}{2}\left(\frac{3+f(0)}{2}\right)^{1/3}\,t^{2/3}+o(t^{2/3})
\end{equation*}
we obtain that
\begin{align*}
    \BB P [Y>R]&=\frac{1 }{2}\left(\frac{3+f(0)}{2}\right)^{1/3}\frac{1}{\Gamma(1-2/3)}\sec\Big(\tfrac{\pi}{3}\Big) R^{-2/3}+o(R^{-2/3})\\
    &=\frac{\sqrt{3}\,\Gamma(2/3)}{2\pi}\left(\frac{3+f(0)}{2}\right)^{1/3} R^{-2/3}+o(R^{-2/3})
\end{align*}
Hence, recalling that $g(y) = \BB P\left[ \mcl X(1) - \mcl X(0) = y \right]$ for all $y\in \BB Z_{\leq -1}$,
\begin{equation}\label{eq:conc-asympt-2}
    \BB P \left[\mcl X(1)-\mcl X(0)<-R\right]=\frac{\sqrt{3}\,\Gamma(2/3)}{2\pi}\left(\frac{3+f(0)}{2}\right)^{1/3}R^{-2/3}+o(R^{-2/3}).
\end{equation}
Setting $c_1=\frac{\sqrt{3}\,\Gamma(2/3)}{2\pi}\left(\frac{3+f(0)}{2}\right)^{1/3}$ in \eqref{eq:conc-asympt-1}~and~\eqref{eq:conc-asympt-2} gives the expressions in the statement of the theorem.
\end{proof}

\begin{remark}
    Note that we do not know the exact value of $c_1=\frac{\sqrt{3}\,\Gamma(2/3)}{2\pi}\left(\frac{3+f(0)}{2}\right)^{1/3}$ because we do not know the value of $f(0)$.
\end{remark}

To prove Proposition~\ref{prop:asympt-exp2}, we first express $\mcl X$ in terms of the Busemann function $\mcl X^1$ of $M_{1,\infty}$.

\begin{lem}[Express $\mcl X$ in terms of $\mcl X^1$ in the LDP case]\label{lem:LDP-relating-X1-X}
Let $\XDP = \op{LDP}$. Define the KMSW encoding walk $\mcl Z  :\BB Z\to\BB Z^2$, the infinite directed triangulations $M_{0,\infty} \eqD M_{1,\infty}$, and their associated Busemann functions $\mcl X$ and $\mcl X^1$ as at the beginning of Section~\ref{sec-recursive}.
    \begin{itemize}
        \item If $\mcl Z(1)-\mcl Z(0)=(1,-1)$, then 
            \eqb \label{eqn-ldp-relating1}
            \Big(\mcl X(-1)\,,\,\mcl X(1)\Big)=\left(\mcl X^1(-2) - \max\{\mcl X^1(-1),1\}\,,\,  - \max\{\mcl X^1(-1),1\}\right). 
            \eqe
        \item If $\mcl Z(1)-\mcl Z(0)=(-1,0)$, then 
           \eqb \label{eqn-ldp-relating2}
           \Big(\mcl X(-1)\,,\,\mcl X(1)\Big)=\left( - \mcl X^1(1)-1\,,\,-1\right).
           \eqe
        \item If $\mcl Z(1)-\mcl Z(0)=(0,1)$, then 
           \eqb  \label{eqn-ldp-relating3}
           \Big(\mcl X(-1)\,,\,\mcl X(1)\Big)=\left(\mcl X^1(-1) \,,\, \mcl X^1(2) \right).
           \eqe
    \end{itemize}
\end{lem}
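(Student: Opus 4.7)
The proof will proceed by case analysis on $\mcl Z(1) - \mcl Z(0) \in \{(1,-1), (-1,0), (0,1)\}$. In each case, the strategy is the same: first, I will use the KMSW procedure (Definition~\ref{def-kmsw}) together with Lemma~\ref{lem-uibot-bdy} to describe how $M_{1,\infty}$ sits inside $M_{0,\infty}$ (both as submaps of $M_{-\infty,\infty}$) and to identify the boundary-vertex correspondence $\{x_k^1\} \leftrightarrow \{x_k\}$; then, I will apply the defining property of the Busemann function (Theorem~\ref{thm-busemann}) with a common target vertex $w$ chosen deep enough in $M_{1,\infty}$ to reduce each claimed identity to a short comparison between $\op{LDP}_{M_{0,\infty}}(\cdot, w)$ and $\op{LDP}_{M_{1,\infty}}(\cdot, w)$ from a handful of boundary vertices.

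For case $(0,1)$, $\lambda_1 = (x_0, y)$ for a new vertex $y$, and together with $\lambda_0$ and the new edge $e$ between $y$ and $x_1$ it forms a triangular face. The left boundaries of $M_{0,\infty}$ and $M_{1,\infty}$ will coincide, giving $x_k^1 = x_k$ for $k \leq 0$, while the right boundary of $M_{1,\infty}$ will read $y \to x_1 \to x_2 \to \cdots$, giving $x_1^1 = y$, $x_2^1 = x_1$, and $x_k^1 = x_{k-1}$ for $k \geq 2$. Since the detour $x_0 \to y \to x_1$ (using $\lambda_1$ then $e$, length $2$) strictly beats the direct route $x_0 \to x_1$ via $\lambda_0$ (length $1$) for LDP, no LDP geodesic from $x_0$ uses $\lambda_0$, so $\op{LDP}_{M_{0,\infty}}(\cdot, w) = \op{LDP}_{M_{1,\infty}}(\cdot, w)$ from each of $\{x_k : k \leq 0\}$ and $x_1$, from which $\mcl X(-1) = \mcl X^1(-1)$ and $\mcl X(1) = \mcl X^1(2)$ follow immediately. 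For case $(-1,0)$, $\lambda_1 = (x_{-1}, x_1)$: Assertion~\ref{item-lower-left} of Lemma~\ref{lem-kmsw-bdy} will force the new vertex created at time $1$ to be identified with $x_{-1}$ (since the new missing edge in the time-$1$ triangle is the first left-boundary edge of $M_{0,\infty}$), so $x_0^1 = x_{-1}$, $x_k^1 = x_{k-1}$ for $k \leq 0$, and $x_k^1 = x_k$ for $k \geq 1$. A short KMSW bookkeeping (using Assertion~\ref{item-upper-left} of Lemma~\ref{lem-kmsw-bdy} with $\mcl L(1) = -1$ to rule out $\lambda_0$ ever becoming an upper-left boundary edge, plus a direct check that $\lambda_j^- \neq x_0$ for all $j \geq 1$) will show that $x_0$ has $\lambda_0$ as its unique outgoing edge in $M_{0,\infty}$. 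Every directed path from $x_0$ must then begin with $\lambda_0$, yielding $\mcl X(1) = -1$, and combined with $\op{LDP}_{M_{0,\infty}}(x_{-1}, w) = \op{LDP}_{M_{1,\infty}}(x_{-1}, w)$ (the outgoing edges at $x_{-1}$ agree in the two maps), this will produce $\mcl X(-1) = -\mcl X^1(1) - 1$.

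Case $(1,-1)$ will be the most intricate. Here $\lambda_1 = (x_1, x_2)$, and $M_{0,\infty}$ will differ from $M_{1,\infty}$ only in the missing-versus-non-missing status of the single edge $\lambda_0$: present as the non-missing root edge in $M_{0,\infty}$ and as a missing left-boundary edge in $M_{1,\infty}$. In particular, $\lambda_0$ will be the leftmost incoming edge to $x_1$ in $M_{-\infty,\infty}$, giving $x_k^1 = x_{k+1}$ for all $k \in \BB Z$. Paths from $x_1$ to $w$ cannot traverse $\lambda_0$ (wrong orientation), and paths from $x_{-1}$ cannot reach $x_0$ (which has no incoming edges in $M_{0,\infty}$ by Lemma~\ref{lem-uibot-bdy}), so $\op{LDP}_{M_{0,\infty}}(x, w) = \op{LDP}_{M_{1,\infty}}(x, w)$ for $x \in \{x_{-1}, x_1\}$. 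From $x_0$, however, there will be two genuine options: use $\lambda_0$ to reach $x_1$ (giving $1 + \op{LDP}_{M_{1,\infty}}(x_1, w)$) or avoid $\lambda_0$ entirely (giving at most $\op{LDP}_{M_{1,\infty}}(x_0, w)$), and the LDP value will equal the maximum of the two. Translating back via $x_{-1}^1 = x_0$ and $x_{-2}^1 = x_{-1}$ will then produce exactly the two displayed formulas, with the $\max\{\mcl X^1(-1), 1\}$ term arising precisely from this binary choice at $x_0$.

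The hardest part will be rigorously establishing the structural claims about the partition of vertices and edges of $M_{-\infty,\infty}$ between $M_{1,\infty}$ and $M_{0,\infty}\setminus M_{1,\infty}$, and in particular, in case $(1,-1)$, verifying that the leftmost incoming edge to $x_1$ in $M_{-\infty,\infty}$ is indeed $\lambda_0$. This will require careful tracking of how incoming edges at a common vertex are cyclically ordered by the KMSW procedure: any later $(-1,0)$ step producing a new incoming edge at $x_1$ would require $\lambda_{j-1}^+ = x_1$ for some $j \geq 2$, which is precluded by $\lambda_1^+ = x_2$ together with the fact that the terminal vertex $\lambda_j^+$ propagates in positive time only via $(-1,0)$ steps; meanwhile, any incoming edges at $x_1$ coming from the negative-time KMSW will lie strictly east of $\lambda_0$, since $(-1,0)$ steps always insert new incoming edges to the west of the current active edge. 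Once this structural picture is in place, all of the remaining distance comparisons will be short and the Busemann formulas will follow by direct arithmetic.
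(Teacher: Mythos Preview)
Your plan is correct and essentially mirrors the paper's proof: case analysis on the first KMSW increment, choice of a common deep target $w$ via Theorem~\ref{thm-busemann}, and reduction of each Busemann identity to a short comparison of $\op{LDP}_{M_{0,\infty}}(\cdot,w)$ versus $\op{LDP}_{M_{1,\infty}}(\cdot,w)$ at a few boundary vertices. The paper streamlines the arithmetic by introducing the shifted function $\wt{\mcl X}(k) := \op{LDP}_{M_{0,\infty}}(x_k^0,w) - \op{LDP}_{M_{1,\infty}}(x_0^1,w)$ and computing $\wt{\mcl X}(-1),\wt{\mcl X}(0),\wt{\mcl X}(1)$ separately before subtracting, but this is just bookkeeping.

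One comment on your final paragraph: your proposed verification that $\lambda_0$ is the leftmost incoming edge at $x_1$ in $M_{-\infty,\infty}$ (via tracking all possible incoming edges at $x_1$, including those from negative-time KMSW) is more roundabout than necessary, and the negative-time portion is not obviously correct as stated. The paper simply reads the boundary correspondence off the KMSW construction (citing the figure), and indeed the cleaner route is a direct step-by-step comparison of the two KMSW runs: for $j\geq 1$ the constructions of $M_{0,\infty}$ and $M_{1,\infty}$ proceed identically except that the former carries $\lambda_0$ as an extra upper-left boundary edge, which plays exactly the role of the first lower-left missing edge in the latter (the first $(-1,0)$ step with active initial vertex $x_1$ consumes $\lambda_0$ in one construction and creates a missing edge identified with $\lambda_0$ in the other). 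This avoids any appeal to $M_{-\infty,\infty}$ and negative times.
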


\begin{figure}[ht!]
\begin{center}
\includegraphics[width=1\textwidth]{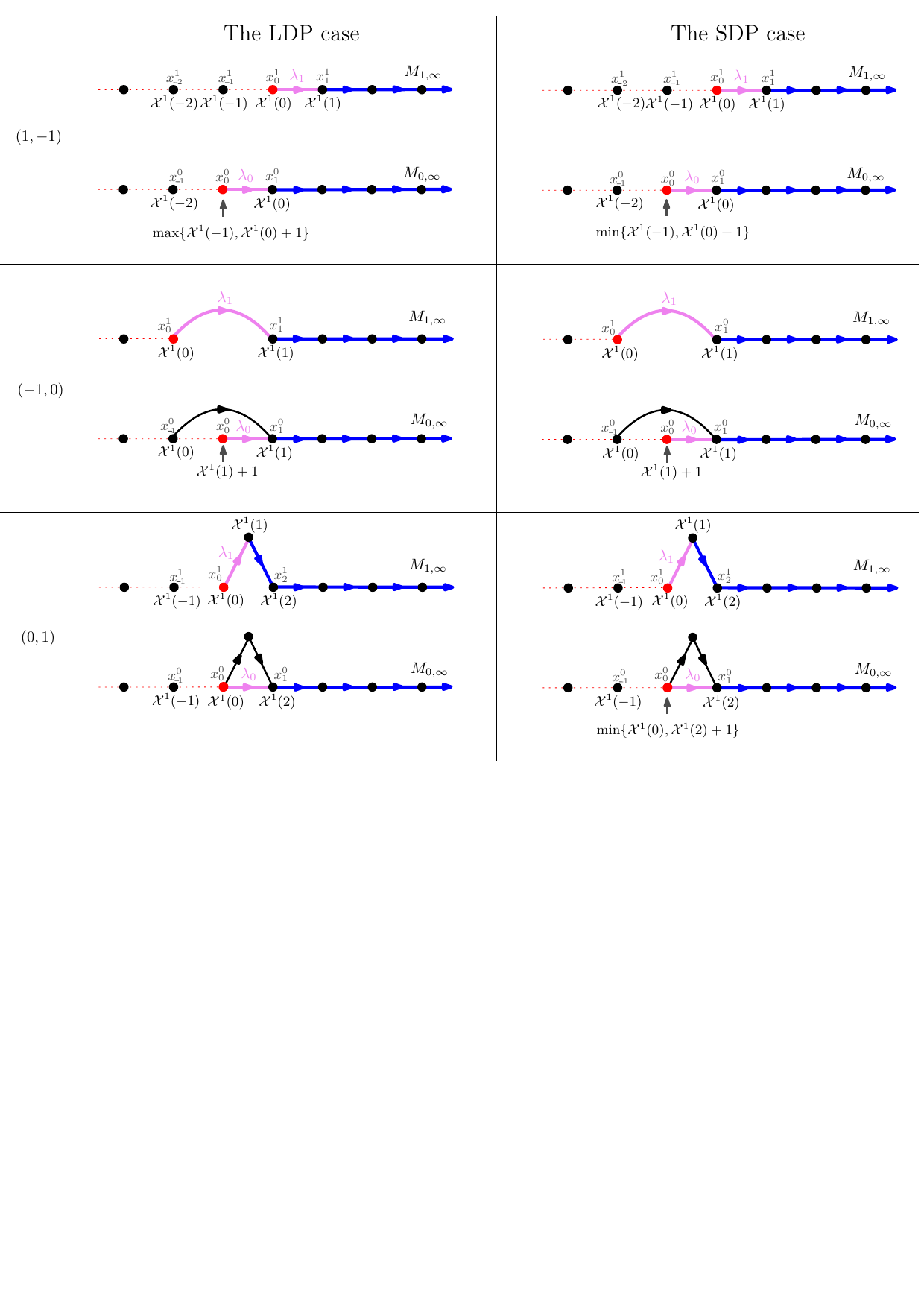}  
\caption{\label{fig-boundary-ev-LDP-SDP}  
A schema for the proof of Lemmas~\ref{lem:LDP-relating-X1-X}~and~\ref{lem:SDP-relating-X1-X}. The left-hand side is for the LDP case, while the right-hand side is for the SDP case. The three rows correspond to the three different steps $\{(1,-1), (-1,0) , (0,1)\}$ the walk $\mcl Z$ can take between time 0 and 1. The violet edges correspond to the root edges of the maps, the red vertex is the vertex $x^0_0$ (resp.\ $x^1_0$ for the map $M_{1,\infty}$), with the vertices $(x^0_i)_{i\geq 1}$ (resp.\ $(x^1_i)_{i\geq 1}$) on the right and the vertices $(x^0_i)_{i\leq 1}$ (resp.\ $(x^1_i)_{i\leq 1}$) on the left. On the maps $M_{1,\infty}$, we show some of the values of the Busemann function $\mcl X^1$ below each vertex. On the maps $M_{0,\infty}$, we show some of the values of the shifted Busemann function $\wt{\mcl X}$ of~\eqref{eqn-ldp-busemann-renormalize} in terms of $\mcl X^1$. We have $\mcl X(\cdot) = \wt{\mcl X}(\cdot) -\wt{\mcl X}(0)$.}
\end{center}
\end{figure}

\begin{proof}
Recall that $ M_{0,\infty} $ and $ M_{1,\infty} $ are constructed via the KMSW procedure from $\mcl Z|_{[0,\infty}$ and $\mcl Z|_{[1,\infty)}$, respectively. 
We label the boundary vertices of  $M_{0,\infty}$ from left to right by $\{x^0_k\}_{k\in\BB Z}$ so that $\lambda_0 = (x_0^0,x_1^0)$ and the boundary vertices of  $M_{1,\infty}$ from left to right by  $\{x^1_k\}_{k\in\BB Z}$ so that $\lambda_1 = (x_0^1,x_1^1)$. 
By Theorem~\ref{thm-busemann} (applied to, e.g., a sequence of vertices along the right boundary of $M_{1,\infty}$), there exists $w \in \mcl V(M_{1,\infty})$ such that 
\eqb \label{eqn-LDP-relating-w}
\op{LDP}_{M_{j,\infty}}(x^{ j }_{k'} , w) - \op{LDP}_{M_{j,\infty}}(x^{ j }_k , w) = \mcl X^j(k') - \mcl X^j(k) , \quad \forall j \in \{0,1\}, \quad\forall k,k' \in [-2,2]\cap\BB Z .
\eqe 
We will analyze how the Busemann function changes when we perform one backward step of the KMSW procedure (Definition~\ref{def-kmsw}) to get $M_{0,\infty}$ from $M_{1,\infty}$. 
Since $\mcl X^1$ is normalized so that $\mcl X^1(0) = 0$, to facilitate the comparison, it is convenient to introduce
\eqb  \label{eqn-ldp-busemann-renormalize}
\wt{\mcl X}(k) := \mcl X(k) + \op{LDP}_{M_{0,\infty}}(x^0_0 , w) - \op{LDP}_{M_{1,\infty}}(x^1_0 , w)
=  \op{LDP}_{M_{0,\infty}}(x^0_k , w) - \op{LDP}_{M_{1,\infty}}(x^1_0 , w) .
\eqe 
Then $\mcl X(\cdot) = \wt{\mcl X}(\cdot) - \wt{\mcl X}(0)$. We will express $\wt{\mcl X}(-1)$, $\wt{\mcl X}(0)$, and $\wt{\mcl X}(1)$ in terms of $\mcl X^1$, then subtract $\wt{\mcl X}(0)$ from  $\wt{\mcl X}(-1)$ and $\wt{\mcl X}(1)$ to get an expression for $\mcl X(-1)$ and $\mcl X(1)$ in terms of $\mcl X^1$. 

\smallskip
 
If $\mcl Z(1)-\mcl Z(0)=(1,-1)$, then the edges on the boundary of $M_{0,\infty}$ are the same as the edges on the boundary of $M_{1,\infty}$, with the only difference being that the root edge $\lambda_0$ of $M_{0,\infty}$ is the missing edge to the left of the root edge $\lambda_1$ of $M_{1,\infty}$; see the top-left diagram in \cref{fig-boundary-ev-LDP-SDP}. 
In this case, an LDP geodesic in $M_{0,\infty}$ from the vertex $x^0_{-1} = x^1_{-2}$ to $w$ must be an LDP geodesic in $M_{1,\infty}$ from the vertex $x^1_{-2}$ to $w$ because the vertex $x^0_{0}$ has only outgoing edges in $M_{0,\infty}$ thanks to the last claim in Lemma~\ref{lem-uibot-bdy}. Hence, in the notation~\eqref{eqn-ldp-busemann-renormalize}, 
\[
\wt{\mcl X}(-1) =   \mcl X^1(-2)  .
\]
An LDP geodesic in $M_{0,\infty}$ from $x_0^0 = x_{-1}^1$ to $w$ must be the longest between an LDP geodesic in $M_{1,\infty}$ from $x^1_{-1}$ to $w$ and an LDP geodesic in $M_{1,\infty}$ from $x^1_{0}$ to $w$ with the additional edge $\lambda_0$ prepended. Recalling that $\mcl X^1(0) = 0$, this gives
\[
\wt{\mcl X}(0) = \max\left\{ \mcl X^1(-1) ,   1 \right\} . 
\]
An LDP geodesic in $M_{0,\infty}$ started from $x_1^0 = x_0^1$ cannot use the edge $\lambda_0$ (which is incoming to $x_0^0$), so $\wt{\mcl X}(1) = \mcl X^1(0) = 0$. 
Writing $\mcl X(-1) = \wt{\mcl X}(-1) - \wt{\mcl X}(0)$ and $\mcl X(1) = \wt{\mcl X}(1) - \wt{\mcl X}(0)$ and plugging in the above relations gives~\eqref{eqn-ldp-relating1}.

\smallskip

If $\mcl Z(1)-\mcl Z(0)=(-1,0)$, then the edges on the boundary of $M_{0,\infty}$ are the same as the edges on the boundary of $M_{1,\infty}$, with the only difference being that the root edge $\lambda_1$ of $M_{1,\infty}$ is replaced in the boundary of $M_{0,\infty}$ by a path of length two with the following properties. The first edge in the path is a missing edge whose initial vertex coincides with the initial vertex of $\lambda_1$. The second edge in the path is the new root edge $\lambda_0$ whose terminal vertex coincides with the terminal vertex of $\lambda_1$; see the middle-left diagram in \cref{fig-boundary-ev-LDP-SDP}. From this, we obtain
\eqbn
\wt{\mcl X}(-1) = \mcl X^1(0) = 0 ,\quad \wt{\mcl X}(0) = \mcl X^1(1) + 1 , \quad \wt{\mcl X}(1) = \mcl X^1(1) 
\eqen  
where in the case of $\wt{\mcl X}(0)$ we used that an LDP geodesic in $M_{0,\infty}$ from $x_0^0$ to $w$ must be an LDP geodesic in $M_{1,\infty}$ from $x^1_{1}$ to $w$, with the additional edge $\lambda_0$ prepended. Subtracting gives~\eqref{eqn-ldp-relating2}. 

\smallskip
  
If $\mcl Z(1)-\mcl Z(0)=(0,1)$, then the edges on the boundary of $M_{0,\infty}$ are the same as the edges on the boundary of $M_{1,\infty}$, with the only difference being that the root edge $\lambda_1$ of $M_{1,\infty}$ and the edge $e$ immediately to the right of it are replaced in the boundary of $M_{0,\infty}$ by the new root edge $\lambda_0$ whose initial vertex coincides with the initial vertex of $\lambda_1$ and whose terminal vertex coincides with the terminal vertex of $e$; see the bottom-left diagram in \cref{fig-boundary-ev-LDP-SDP}. 
This leads to
\eqbn
\wt{\mcl X}(-1) = \mcl X^1(-1) ,\quad \wt{\mcl X}(0) = \mcl X^1(0) = 0,\quad \wt{\mcl X}(1) = \mcl X^1(2), 
\eqen
where here we used that there is a directed path of length two in $M_{1,\infty}$ between the two vertices of $\lambda_0$, so no LDP geodesic in $M_{0,\infty}$ can traverse $\lambda_0$. Subtracting gives~\eqref{eqn-ldp-relating3}. 
\end{proof}
 
Lemma~\ref{lem:LDP-relating-X1-X} immediately implies the following recursive equation for the distributions $f$ and $g$.
 
 \begin{prop}[LDP recursive equation]\label{prop:LDP-recursive-equation}
 	For all $x\in\mathbb Z$ and $y\in\mathbb Z_{\le -1}$,
    \begin{equation}\label{eq:master-eq2}
        3f(x)g(y) =f(x)f(y)+f(x)\sum_{j=1}^{-y-1}g(y+j)g(-j)+\BB{1}_{\{y=-1\}}\left(\sum_{j\geq0}  f(j+x+1) f(-j)+ g(-x-1)\right).
    \end{equation}
 \end{prop}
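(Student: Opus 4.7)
The plan is to exploit the stationarity $M_{1,\infty} \eqD M_{0,\infty}$ together with Lemma~\ref{lem:LDP-relating-X1-X}, which expresses $(\mcl X(-1), \mcl X(1))$ in terms of the Busemann function $\mcl X^1$ for $M_{1,\infty}$ conditionally on the first KMSW step $\mcl Z(1)-\mcl Z(0)$. Since the increments of $\mcl Z$ are i.i.d., the step $\mcl Z(1)-\mcl Z(0)$ is independent of $(\mcl Z(j)-\mcl Z(1))_{j\geq 1}$, and hence of $M_{1,\infty}$ and $\mcl X^1$. Combined with the independence of $\mcl X(-1)$ and $\mcl X(1)$ from Property~(i) of Theorem~\ref{thm-busemann-property}, this gives
\[
f(x) g(y) = \BB P[-\mcl X(-1)=x,\,\mcl X(1)=y] = \tfrac{1}{3}\sum_{c} \BB P[-\mcl X(-1)=x,\,\mcl X(1)=y \mid \mcl Z(1)-\mcl Z(0) = c],
\]
where the sum runs over $c\in\{(1,-1), (-1,0), (0,1)\}$. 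Multiplying by $3$, it remains to evaluate each conditional probability on the right-hand side.

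Under $\mcl X^1 \eqD \mcl X$, Properties~(i), (ii), (iv) of Theorem~\ref{thm-busemann-property} imply that the pair $(A,B-A):=(\mcl X^1(-1),\,\mcl X^1(-2)-\mcl X^1(-1))$ consists of two independent copies of $f$, while $\mcl X^1(1)\sim g$ is independent of $(A,B)$ and $\mcl X^1(2)=\mcl X^1(1)+(\mcl X^1(2)-\mcl X^1(1))$ is the convolution $g*g$, supported on $\BB Z_{\leq -2}$. Plugging in Lemma~\ref{lem:LDP-relating-X1-X}:
\begin{itemize}
\item The case $c=(1,-1)$ gives $\mcl X(1)=-\max\{A,1\}$ and $-\mcl X(-1)=-B+\max\{A,1\}$. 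For $y\leq -2$ we must have $A=-y\geq 2$ and $B=-x-y$, which yields $f(-y)f(-x)=f(y)f(x)$. For $y=-1$ we need $A\leq 1$ and $B=1-x$, giving $\sum_{a\leq 1} f(a) f(1-x-a) = f(1)f(x) + \sum_{k\geq 0} f(k)f(k+1-x)$ after setting $k=-a$ and using $f(-k)=f(k)$.
\item The case $c=(-1,0)$ forces $\mcl X(1)=-1$ and $-\mcl X(-1)=\mcl X^1(1)+1$, contributing $g(x-1)$ when $y=-1$ and $0$ otherwise.
\item The case $c=(0,1)$ gives $-\mcl X(-1)=-\mcl X^1(-1)\sim f$ and $\mcl X(1)=\mcl X^1(2)\sim g*g$, independent, contributing $f(x)\sum_{j=1}^{-y-1} g(-j) g(y+j)$ for $y\leq -2$ (after substituting $-j$ for the summation variable) and $0$ for $y=-1$ (empty sum).
\end{itemize}
Summing the three contributions and multiplying by $3$ produces the equation~\eqref{eq:master-eq2}.

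The only subtlety is that the natural case analysis produces the $y=-1$ correction as $\sum_{k\geq 0} f(k) f(k+1-x) + g(x-1)$, whereas~\eqref{eq:master-eq2} states it as $\sum_{j\geq 0} f(j+x+1) f(-j) + g(-x-1)$. These two expressions are equivalent: since the full left-hand side $f(x)g(-1)$ and the isolated term $f(x)f(-1)$ are both invariant under $x\mapsto -x$ (via the symmetry $f(x)=f(-x)$ from Property~(iv)), the two forms are related by $x\mapsto -x$ and either is a valid statement of the recursion. The main (and mild) technical point is the bookkeeping around the $\max$-split in the case $c=(1,-1)$, which separates the boundary case $y=-1$ from the generic case $y\leq -2$; there is no conceptual obstacle.
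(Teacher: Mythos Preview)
Your proposal is correct and follows essentially the same route as the paper: condition on the first KMSW step, apply Lemma~\ref{lem:LDP-relating-X1-X}, and use the independence and stationarity of the increments of $\mcl X^1$ from Theorem~\ref{thm-busemann-property}. The only cosmetic difference is that the paper computes $\BB P[(\mcl X(-1),\mcl X(1))=(x,y)]=f(-x)g(y)$ and then invokes $f(-x)=f(x)$ at the end, whereas you compute $\BB P[(-\mcl X(-1),\mcl X(1))=(x,y)]=f(x)g(y)$ directly; this is exactly why your $y=-1$ correction comes out as the $x\mapsto -x$ version, and your symmetry argument to reconcile the two forms is valid.
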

\begin{proof} 
    Fix $x\in\mathbb Z$ and $y\in\mathbb Z_{\le -1}$. Using the fact that $\mcl X^1 \eqD \mcl X$ (which follows from the fact that $M_{0,\infty} \eqD M_{1,\infty}$), we can write
    \begin{align*}
        \BB P &\left[ \Big(\mcl X^1(-1)\,,\,\mcl X^1(1)\Big)=(x,y)\right]=\BB P \left[ \Big(\mcl X(-1)\,,\,\mcl X(1)\Big)=(x,y)\right].
    \end{align*}
    Thanks to Lemma~\ref{lem:LDP-relating-X1-X}, we have that 
    \begin{align}\label{eq:splitting-proba}
        \BB P \left[\Big(\mcl X(-1)\,,\,\mcl X(1)\Big)=(x,y)\right]&=\frac{1}{3}\BB P \left[ \left(\mcl X^1(-2) - \max\{\mcl X^1(-1),1\}\,,\, - \max\{\mcl X^1(-1),1\}\right)
    =(x,y)\right]\notag\\
    &\,+
    \frac{1}{3}\BB P \left[ \left( - \mcl X^1(1)-1\,,\,-1\right)
    =(x,y)\right]\notag\\
    &\,+
    \frac{1}{3}\BB P \left[ \left(\mcl X^1(-1) \,,\, \mcl X^1(2) \right)
    =(x,y)\right].
    \end{align}
    By stationarity (recall \eqref{eq:stationarity-rel} and \eqref{eq:den-dist-f-g}) and the fact that $\mcl X(0) = \mcl X^1(0) = 0$,
    \eqbn
    f(x) = \BB P\left[   - \mcl X(-1) = x \right] = \BB P\left[  - \mcl X^1(-1) = x \right]
    \quad \text{and} \quad 
    g(y)= \BB P\left[ \mcl X(1)  = y \right]  = \BB P\left[ \mcl X^1(1)  = y \right]  .
    \eqen 
    Furthermore, the increments $\{\mcl X^1(k) - \mcl X^1(k-1)\}_{k\in\BB Z}$ are independent, the increments $\{\mcl X^1(k) - \mcl X^1(k-1)\}_{k \geq 1}$ all have the same distribution, and the increments $\{\mcl X^1(k) - \mcl X^1(k-1)\}_{k \leq 0}$ all have the same distribution (Properties~\ref{item-busemann-ind}~and~\ref{item-busemann-stationary} in Theorem~\ref{thm-busemann-property}). Applying these facts in~\eqref{eq:splitting-proba} gives
    \begin{align*}
        3f(-x)g(y) &=f(-x)f(y)+\BB{1}_{\{y=-1\}}\sum_{j\leq0}  f(j-x-1) f(-j)\\
        &\,\,\,\,\,+ \BB{1}_{\{y=-1\}}\,\,g(-x-1)\\
        &\,\,\,\,\,+f(-x)\sum_{j=1}^{-y-1}g(y+j)g(-j),
    \end{align*}
    where the first probability on the right-hand side of \eqref{eq:splitting-proba} involving the maximum was computed by distinguishing the cases when $\mcl X^1(-1)\leq 0$ and $\mcl X^1(-1)\geq 1$. Using the fact that $f(x) = f(-x)$, we obtain the expression stated in the proposition. 
\end{proof}

 We now proceed with the proof of Proposition~\ref{prop:asympt-exp2} by first establishing some preliminary results. 

 \begin{lem}\label{lem:Y-not-const}
    Let $Y\sim f$. Then $Y$ is not a.s.\ constant and $\BB P(Y=0)=f(0)>0$.
 \end{lem}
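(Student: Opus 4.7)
The plan is to exploit the piecewise identity in Lemma~\ref{lem:LDP-relating-X1-X}, which expresses $-\mcl X(-1) \eqD Y$ in terms of the first step $\mcl Z(1)-\mcl Z(0)$ and the Busemann function $\mcl X^1$ of $M_{1,\infty}$. Two facts make this decomposition tractable: $\mcl X^1 \eqD \mcl X$ (from~\eqref{eq:stationarity-rel}), and $\mcl X^1$ is independent of $\mcl Z(1)-\mcl Z(0)$ because $\mcl X^1$ is determined by $(\mcl Z(\cdot+1)-\mcl Z(1))|_{[0,\infty)}$, which is independent of the first increment by the i.i.d.\ property of $\mcl Z$. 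Recalling that $\mcl X(0)=0$, we have $Y=-\mcl X(-1)$ and $f(x)=\BB P[-\mcl X(-1)=x]$.

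First, I will show $g(-1)\geq 1/3$. On the probability-$1/3$ event $\{\mcl Z(1)-\mcl Z(0)=(-1,0)\}$, Equation~\eqref{eqn-ldp-relating2} gives $\mcl X(1)=-1$ deterministically; hence $g(-1)=\BB P[\mcl X(1)=-1]\geq 1/3$. Next, conditioning on the first step and splitting into the three cases of Lemma~\ref{lem:LDP-relating-X1-X},
\[
f(0)=\BB P[Y=0]=\tfrac{1}{3}\BB P\bigl[\max\{\mcl X^1(-1),1\}=\mcl X^1(-2)\bigr]+\tfrac{1}{3}\BB P[\mcl X^1(1)=-1]+\tfrac{1}{3}\BB P[\mcl X^1(-1)=0].
\]
Since $\mcl X^1\eqD \mcl X$, the second and third terms equal $\tfrac{1}{3}g(-1)$ and $\tfrac{1}{3}f(0)$, respectively. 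Dropping the non-negative first term and rearranging yields $f(0)\geq \tfrac{1}{2}g(-1)\geq 1/6>0$.

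Finally, I will show that $Y$ is not a.s.\ constant. By Properties~\ref{item-busemann-stationary} and~\ref{item-busemann-sym}, $Y\eqD -Y$, so if $Y$ were constant it would be $0$. By Properties~\ref{item-busemann-ind} and~\ref{item-busemann-stationary}, the increments $\{\mcl X^1(k)-\mcl X^1(k-1)\}_{k\leq 0}$ are i.i.d.\ with the same law as $-Y$, so $Y\equiv 0$ would force $\mcl X^1(-k)=0$ a.s.\ for every $k\geq 0$. But then on the probability-$1/3$ event $\{\mcl Z(1)-\mcl Z(0)=(1,-1)\}$, Equation~\eqref{eqn-ldp-relating1} gives $Y=\max\{\mcl X^1(-1),1\}-\mcl X^1(-2)=\max\{0,1\}-0=1$, contradicting $Y\equiv 0$.

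There is no substantial obstacle: the result is a direct consequence of Lemma~\ref{lem:LDP-relating-X1-X} combined with Theorem~\ref{thm-busemann-property}. The only delicate point is the choice of which case of the lemma to invoke at each step — specifically, the constant $1$ appearing inside the $\max$ in the $(1,-1)$ case is precisely what rules out a degenerate distribution for $Y$, while the $(-1,0)$ case is what makes $g(-1)$ (and hence $f(0)$) bounded below by an explicit positive constant.
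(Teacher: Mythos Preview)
Your proof is correct and takes a genuinely different route from the paper's. The paper first derives the recursive equation for $f$ and $g$ (Proposition~\ref{prop:LDP-recursive-equation}) from Lemma~\ref{lem:LDP-relating-X1-X}, and then argues entirely from that equation: it assumes $f(0)=1$ and shows inductively that $g(y)=0$ for all $y\le -1$, a contradiction; for $f(0)>0$, it sums the recursion over~$x$ at $y=-1$ to obtain $g(-1)\ge \tfrac12$, and then evaluates the recursion at $(x,y)=(0,-1)$ to force $f(0)>0$.

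You bypass the recursive equation and work directly with the case analysis of Lemma~\ref{lem:LDP-relating-X1-X}. This is more elementary and yields the explicit quantitative bound $f(0)\ge \tfrac16$ (the paper's route, once carried through, gives $f(0)\ge \tfrac12 g(-1)\ge \tfrac14$). Your non-constancy argument is also cleaner: rather than an induction on the recursion, you observe that $Y\equiv 0$ together with the $(1,-1)$ case of Lemma~\ref{lem:LDP-relating-X1-X} forces $Y=1$ on an event of probability $\tfrac13$. The paper's route, on the other hand, extracts more structure (the functional equation) that is needed anyway for the tail asymptotics, so there is no real cost to their detour.
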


 \begin{proof}
We first prove that $Y$ is not a.s.\ constant. Since $f(x) = f(-x)$, it is enough to show that $f(0)=1$ is not a solution to \eqref{eq:master-eq2}.
Assume for contradiction that $f(0)=1$. Then $f(k)=0$ for all $k\neq 0$.
Fixing $x=0$ in \eqref{eq:master-eq2}, we obtain that for all $y\le -1$,
\begin{equation}\label{eq:main-simpl}
3 g(y)= f(y)+ \sum_{j=1}^{-y-1} g(y+j)g(-j)
+ \mathbf{1}_{\{y=-1\}}\!\left(\sum_{j\ge 0} f(j+1)f(-j) + g(-1)\right).
\end{equation}
First, take $y=-1$. Since $f(-1)=0$ and $f(j+1)=0$ for all $j\ge 0$, \eqref{eq:main-simpl} gives $3 \,g(-1)= g(-1)$, and so $g(-1)=0$.
Now let $y\le -2$. The indicator term vanishes and $f(y)=0$ since $y\neq 0$, so \eqref{eq:main-simpl} becomes
\begin{equation}\label{eq:rec}
3 g(y)= \sum_{j=1}^{-y-1} g(y+j)g(-j).
\end{equation}
For $y=-2$, the right-hand side is $g(-1)g(-1)=0$, hence $g(-2)=0$.
Inductively, one obtains that $g(y)=0$ for all $y\le -1$. A contradiction since $g$ is a probability distribution supported on $\BB Z_{\leq -1}$.

We now prove that $f(0)>0$. Fixing $y=-1$ and summing \eqref{eq:master-eq2} over all $x\in\mathbb Z$, we get  using the symmetry of $f$,
\begin{equation}\label{eq:g-bound}
3g(-1)= f(-1) + \frac{1+f(0)}{2} + 1,
\end{equation}
and so $g(-1)\ge \frac12$.
Now take $x=0$, $y=-1$ in \eqref{eq:master-eq2}:
\[
3 f(0)g(-1) = f(0)f(-1) + \sum_{j\ge0} f(j+1)f(j) + g(-1),
\]
again using the symmetry of $f$. If $f(0)=0$, this gives $g(-1)\le 0$, resulting in a contradiction.
\end{proof}

 To simplify computations, we introduce the auxiliary characteristic function:
 \begin{equation}\label{eq:G-bar-above}
     \overline{G}(s):=\sum_{y\in \BB Z_{\geq 1}} g(-y)\,e^{isy},
 \end{equation}
 that is, $\overline{G}(-s)=G(s)$, where $G(s)$ was introduced in \eqref{eq:FGdefs-new}.
 Multiplying  \eqref{eq:master-eq2} by $e^{i(tx-sy)}$ and summing over $x\in\mathbb Z$ and $y\in\BB Z_{\le -1}$, we obtain the functional equation
 \begin{equation}\label{eq:big-rel}
 	3\overline{G}(s)F(t)
 	=F(t)\left(F^{-}(-s)+\overline{G}(s)^2\right)+e^{is-it}\left(F(t)\Big(F^{-}(t)+f(0)\Big)+\overline{G}(t)\right).
 \end{equation}  
 Setting $s=0$ in \eqref{eq:big-rel} and using $F^{-}(0)=\frac{1}{2}(1-f(0))$ and $\overline{G}(0)=1$, gives
 \begin{equation}\label{eq:s0}
 	3F(t)=F(t)\left(\tfrac{3-f(0)}{2}\right)+e^{-it}\left(F(t)\Big(F^{-}(t)+f(0)\Big)+\overline{G}(t)\right).
 \end{equation}
 Solving \eqref{eq:s0} for $\overline{G}(t)$ yields
 \begin{equation}\label{eq:Ghat}
 	\overline{G}(t)=-\,F(t)\,\widehat F^{-}(t),\qquad
 	\widehat F^{-}(t):=F^{-}(t)+c(t),\qquad
 	c(t):=f(0)-\frac{\kappa}{2}\,e^{it},
 \end{equation}
 where we recall that  $\kappa=3+f(0)>0$.
 Now set $t=0$ in \eqref{eq:big-rel} and substitute the expression for $\overline{G}(t)$ in \eqref{eq:Ghat}. After some algebra, we obtain the quadratic equation\footnote{Just performing the substitution, one gets a minus sign on the left-hand side of \eqref{eq:quadratic-new} that we removed for simplicity.} in the variable $\widehat F^{-}$: 
 \begin{equation}\label{eq:quadratic-new}
 	F^2\big(\widehat F^{-}\big)^2 + (3F-1)\,\widehat F^{-} + F=0.
 \end{equation}
 
 \begin{lem}[$\widehat{F}^{-}(t)$ has  non-trivial imaginary part]\label{lem:nonreal2}
 Let $\Delta(F):=(3F-1)^2-4F^3$ be the discriminant of \eqref{eq:quadratic-new}. Then there exists $\delta>0$ such that for all $t$ with $0<t<\delta$, 
 \[\Delta(F(t))<0,\] 
 hence the two roots of \eqref{eq:quadratic-new} are non-real complex conjugates. In particular, writing 
 \begin{equation}\label{eq:fhatexpr2}
     \widehat{F}^{-}(t)=\alpha(t)+i\beta(t)
 \end{equation} 
 with a continuous choice of branch, one has $\beta(t)\ne 0$ for $0<t<\delta$.
 \end{lem}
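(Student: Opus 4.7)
The plan is to reduce the statement to a one-variable analysis of $\Delta$ near $F = 1$. Since $f$ is symmetric (Property~\ref{item-busemann-sym} of Theorem~\ref{thm-busemann-property}), the characteristic function
$F(t)=\sum_x f(x)\cos(tx)$
is real-valued and continuous, with $F(0)=1$. A direct expansion gives
\[
\Delta(1-\eps) \;=\; (2-3\eps)^2 - 4(1-\eps)^3 \;=\; -3\eps^2 + 4\eps^3 \;=\; -\eps^2(3-4\eps),
\]
which is strictly negative for all $\eps \in (0, 3/4)$. Consequently, the lemma reduces to showing that $F(t) \in (1/4,1)$ for all sufficiently small $t>0$.

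The lower bound $F(t) > 1/4$ is immediate from the continuity of $F$ and $F(0)=1$. For the strict upper bound $F(t)<1$, I would invoke Lemma~\ref{lem:Y-not-const}: since $Y \sim f$ is not a.s.\ constant and $f(0)>0$, there exists $y_0 \geq 1$ with $f(y_0)=f(-y_0)>0$ (using symmetry). Grouping the $\pm y_0$ contributions and bounding the rest trivially, one obtains
\[
F(t) \;\leq\; 1 - 2f(y_0)\bigl(1-\cos(t y_0)\bigr),
\]
which is $<1$ for $t \in (0, 2\pi/y_0)$. Combining this with the lower bound yields $F(t) \in (1/4,1)$, hence $\Delta(F(t)) < 0$, on some interval $(0,\delta)$.

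Finally, for such $t$ the two roots of the quadratic equation~\eqref{eq:quadratic-new} are the complex conjugates
\[
\frac{-(3F(t)-1) \pm i\sqrt{-\Delta(F(t))}}{2F(t)^2},
\]
both of which have non-zero imaginary part. Since $\widehat F^-(t) = F^-(t) + c(t)$ is a continuous function of $t$ that satisfies~\eqref{eq:quadratic-new}, it must coincide at every $t \in (0,\delta)$ with one of these two roots, which forces $\beta(t) \ne 0$ there. The only mild technical point is the existence of a continuous branch for $\widehat F^-$ near $t=0$, which follows from continuity of $F^-$ and $c$ and from the fact that the two roots merge to the double root $-1$ as $t \downarrow 0$ (consistent with $\widehat F^-(0) = (1-f(0))/2 + f(0) - \kappa/2 = -1$); no real obstacle is anticipated.
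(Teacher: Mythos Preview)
Your proof is correct and follows essentially the same strategy as the paper's: show $F(t)\neq 1$ for small $t>0$ using Lemma~\ref{lem:Y-not-const}, then conclude $\Delta(F(t))<0$ from the local behavior of $\Delta$ near $F=1$. The differences are only in execution. You give the explicit factorization $\Delta(1-\eps)=-\eps^2(3-4\eps)$, whereas the paper uses $\Delta(1)=\Delta'(1)=0$, $\Delta''(1)=-6$ to get $\Delta(1+\eps)\le -C\eps^2$. To show $F(t)<1$, you pick a single $y_0\ge 1$ with $f(y_0)>0$ and use $1-F(t)\ge 2f(y_0)(1-\cos(ty_0))>0$; the paper instead invokes the standard lattice-span characterization $F(t)=1\iff t\in\tfrac{2\pi}{d}\mathbb Z$. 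Your route is slightly more elementary and self-contained; the paper's version is phrased so that the argument can be reused verbatim in the SDP case (Lemma~\ref{lem:Y-not-const2}). Both are fine.
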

 \begin{proof}
 We have $F(0)=1$, $F$ is continuous, and $\Delta(1)=\Delta'(1)=0$, $\Delta''(1)=-6<0$. Thus 
 \begin{equation}\label{eq:delta-est2}
 	\Delta(1+\eps)\le - C\,\eps^2
 \end{equation}
  for sufficiently small $\eps \in \BB R\setminus \{0\}$ and some $C>0$. Let $Y\sim f$ and $d$ be the lattice span of $Y$, i.e.
 \[
 d := \gcd\{x-y : f(x)>0,\ f(y)>0\}.
 \]
Since $Y$ is not a.s.\ constant by Lemma~\ref{lem:Y-not-const}, we have $d<\infty$. Since $f(0)>0$ thanks to Lemma~\ref{lem:Y-not-const}, a simple argument for the characteristic function of integer-valued random variables gives
 \begin{equation}\label{eq:iff-span}
      F(t)=1 \quad \text{if and only if} \quad t \in \tfrac{2\pi}{d}\mathbb{Z}.
 \end{equation}

 So there exists $\delta_1 \in (0,2\pi/d)$ such that 
 $F(t)\neq 1$ for all $0<t<\delta_1$. By continuity, there exists $\delta_2>0$ such that 
 $|F(t)-1|<\eta$ whenever $t<\delta_2$, with $\eta>0$ small enough to ensure the estimate in \eqref{eq:delta-est2} applies.
 Setting $\delta := \min\{\delta_1,\delta_2\}$, we get that for all $0<t<\delta$,
 \[
 F(t)\neq 1, \qquad \Delta(F(t)) \leq - C(F(t)-1)^2 < 0.
 \]
 Thus $\Delta(F(t))<0$ for $0<|t|<\delta$, so the two roots of \eqref{eq:quadratic-new} are non-real conjugates with nonzero imaginary part.
 \end{proof}

We are now ready to complete the proof of Proposition~\ref{prop:asympt-exp2}.

\begin{proof}[Proof of Proposition~\ref{prop:asympt-exp2}]
 	Recall that $F(t)\in\mathbb R$ and write 
 \begin{equation*}
     F(t)=f(0)+F^{-}(t)+\overline{F^{-}(t)}\stackrel{\eqref{eq:Ghat}}{=}f(0)+(\widehat{F}^{-}(t)-c(t))+\overline{(\widehat{F}^{-}(t)-c(t))}=f(0)-2\re(c(t))+2\re(\widehat{F}^{-}(t)) .
 \end{equation*}
 We set
 \[K(t):=f(0)-2\re(c(t))\stackrel{\eqref{eq:Ghat}}{=}-f(0)+\kappa\,\cos t,\]
 so that $K(t)$ encodes the part of $F(t)$ that is independent of $\widehat{F}^{-}(t)$ (which we recall is our indeterminate in \eqref{eq:quadratic-new}) and
 \begin{equation}\label{eq:real-part-fhat}
     \re(\widehat{F}^{-}(t))=\frac{F(t)-K(t)}{2}.
 \end{equation}
 Hence, thanks to Lemma~\ref{lem:nonreal2},
 \begin{equation*}
     \widehat{F}^{-}(t)\stackrel{\eqref{eq:fhatexpr2}}{=}\alpha(t)+i\beta(t)\stackrel{\eqref{eq:real-part-fhat}}{=}\frac{F(t)-K(t)}{2}+i\beta(t),
 \end{equation*} 
 with $\beta(t)\ne 0$ for $0<t<\delta$.

 Now, taking the imaginary part in \eqref{eq:quadratic-new}, we obtain the equation
 \begin{equation*}
     \beta(F^3-K\,F^2+3F-1)=0,
 \end{equation*}
 and since $\beta(t)\ne 0$ for $0<t<\delta$, we get that for $0<t<\delta$,
 \begin{equation}\label{eq:third-ord2}
     F^3-K\,F^2+3F-1=0.
 \end{equation}
 The Taylor expansion of $K(t)$ around $t=0$ is 
 \begin{equation*}
     K(t)=3-\frac{\kappa}{2}\,t^2+O(t^4).
 \end{equation*}
  Writing $F(t)=1+\eps(t)$ with $\eps(t)\to 0$ as $t\downarrow 0$, we get from \eqref{eq:third-ord2} that
 \begin{equation}\label{eq:balance-eq}
     0=\varepsilon^3+\frac{\kappa}{2}\,t^2(\varepsilon^2+2\varepsilon+1)+O\!\big(t^4\big).
 \end{equation}
Now, since $\kappa> 0$ (and so $\kappa\neq 0$),  we claim that as $t\downarrow 0$,  
\begin{equation}\label{eq:cleim-exp}
    \eps(t)= -\left(\frac{\kappa}{2}\right)^{1/3}\,t^{2/3}+o(t^{2/3}).
\end{equation}
Indeed, set $\varepsilon(t)=t^{2/3}s(t)$. Dividing \eqref{eq:balance-eq} by $t^2$ gives as $t\downarrow 0$,
\begin{equation}\label{eq:s}
0=s^3+\tfrac{\kappa}{2}\bigl(1+2s t^{2/3}+s^2 t^{4/3}\bigr)+O(t^2)=s^3+\tfrac{\kappa}{2}+o(1),
\end{equation}
where in the last equality we used that $\varepsilon(t)=t^{2/3}s(t)\to 0$ as $t\downarrow 0$. Hence $s(t)=-\left(\frac{\kappa}{2}\right)^{1/3}+o(1)$ and so \eqref{eq:cleim-exp} holds.
This completes the proof for the expansion of $F(t)$.
 
 \medskip

 We now determine the expansions of $F^-(t)$ and $\overline{G}(t)$. Solving the quadratic equation \eqref{eq:quadratic-new}, the two possible continuous choices of branch for $\widehat{F}^-(t)$ are
 \begin{equation*}
 	\widehat{F}_{\pm}^-(t)=\frac{1-3F\pm\sqrt{\Delta(F)}}{2F^2},\qquad{\text{with $\Delta(F)=(3F-1)^2-4F^3$,}}
 \end{equation*}
 and recalling \eqref{eq:Ghat}, we obtain that the two possible continuous choices of branch for $F^-(t)$ are
  \begin{equation}\label{expr-for-F-2}
 	F_{\pm}^-(t)=\frac{1-3F\pm\sqrt{\Delta(F)}}{2F^2}-c(t),\qquad{\text{with $\Delta(F)=(3F-1)^2-4F^3.$}}
 \end{equation}
 Moreover, from \eqref{eq:Ghat}, we have that the two possible continuous choices of branch for $\overline{G}(t)$ are
 \begin{equation}\label{eq:Gtexpr2}
 	\overline{G}_{\pm}(t) = -F(t)\,\widehat F^-_{\pm}(t).
 \end{equation}
 Therefore, since $F(t)= 1-\left(\frac{\kappa}{2}\right)^{1/3}\,t^{2/3}+o(t^{2/3})$ as $t\downarrow 0$, we get from \eqref{eq:Ghat} and \eqref{expr-for-F-2} that
 	\begin{equation*}
 		F_{\pm}^-(t)=\left(\frac{1-f(0)}{2}\right)+ \frac{1}{2}(-1 \pm i\sqrt{3})\left(\frac{\kappa}{2}\right)^{1/3}t^{2/3}+o(t^{2/3}),
 	\end{equation*}
 	and from \eqref{eq:Gtexpr2},
 	\begin{equation*}
 		\overline{G}_{\pm}(t)=1+\frac{1}{2}(-1 \mp i\sqrt{3})\left(\frac{\kappa}{2}\right)^{1/3}t^{2/3}+o(t^{2/3}).
 	\end{equation*}
    Noting that $\overline{G}(t)$, introduced in \eqref{eq:G-bar-above}, is the characteristic function of a non-negative integer-valued random variable, and using Proposition~\ref{prop:Tauberian}, we have that the only admissible expression is
    \begin{equation*}
 		\overline{G}(t)=1+\frac{1}{2}(-1 + i\sqrt{3})\left(\frac{\kappa}{2}\right)^{1/3}t^{2/3}+o(t^{2/3}),
 	\end{equation*}
    and so
    \begin{equation*}
 		F^-(t)=\left(\frac{1-f(0)}{2}\right)+ \frac{1}{2}(-1 - i\sqrt{3})\left(\frac{\kappa}{2}\right)^{1/3}t^{2/3}+o(t^{2/3}).
 	\end{equation*}
 This completes the proof for the expansion of $F^{-}(t)$ and $\overline{G}(t)$  as $t\downarrow0$. To recover the expansion of $G(t)$, it is enough to note that $G(t)=\overline{G}(-t)$ and the latter is just the complex conjugate of $\overline{G}(t)$.
 \end{proof}

\subsection{The SDP case}
\label{sec-recursive-SDP}

We now move to the case when $\XDP = \op{SDP}$. 
The proof is very similar to the LDP case. 
We use the same notation as in Section~\ref{sec-recursive-LDP}, but now everything is defined with $\op{SDP}$ instead of $\op{LDP}$.

Thanks to Properties~\ref{item-busemann-pos}~and~\ref{item-busemann-sym} of Theorem~\ref{thm-busemann-property} we know that $f$ is a symmetric distribution on $\BB Z$ and $g$ is a distribution on $\BB Z_{\geq -1}$. 
We introduce the three characteristic functions 
 \begin{equation}\label{eq:FGdefs}
 	F(t):=\sum_{x\in\mathbb Z} f(x)\,e^{itx},\qquad
 	F^{-}(t):=\sum_{x \in\BB Z_{ \leq -1}} f(x)\,e^{itx},\qquad
 	G(s):=\sum_{y\in \BB Z_{\geq -1}} g(y)\,e^{isy}.
 \end{equation}
 By the symmetry of $f$, we note that $F(t)\in\mathbb R$. Moreover, since every characteristic function $\varphi(t)$ satisfies $\varphi(-t)=\overline{\varphi(t)}$, in what follows, we only focus on the case $t\geq 0$.

   \begin{prop}[Asymptotics of the characteristic functions, SDP case]\label{prop:asympt-exp}
 	Setting 
 	\[\kappa:=3-6\,g(-1)^2-f(0),\] 
    it holds that  $\kappa=0$, and, as $t\downarrow 0$,
 		\begin{align*}
 			&F(t)= 1-g(-1)^{2/3}\,t^{4/3}+o(t^{4/3}),\\
 			&F^-(t)=\frac{1}{2}(1-f(0))-2i \, g(-1)^2\, t+\frac{-1 + i\sqrt{3}}{2}  g(-1)^{2/3}\,t^{4/3}+o(t^{4/3}),\\
 			&G(t)=1+\frac{- 1- i\sqrt{3}}{2}  g(-1)^{2/3}\,t^{4/3}+o(t^{4/3}).
 		\end{align*}
        Moreover, $g(-1)> 0$.
 \end{prop}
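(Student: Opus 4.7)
The proof follows the four-step strategy used for Proposition~\ref{prop:asympt-exp2}, with two new features. First, because shortest paths pick the minimum (not the maximum) of competing concatenations, every max appearing in the LDP derivation becomes a min in the SDP analog. Second, the resulting functional equation admits two a priori consistent leading-order asymptotics near $t = 0$, and one of these must be excluded by an external geometric estimate, which will be the main obstacle. Concretely, I would first prove the SDP counterpart of Lemma~\ref{lem:LDP-relating-X1-X}, expressing $(\mcl X(-1), \mcl X(1))$ in terms of $\mcl X^1$ by one backward step of the KMSW procedure; the case analysis on $\mcl Z(1) - \mcl Z(0) \in \{(1,-1), (-1,0), (0,1)\}$ is identical to the LDP case except that $\max\{\mcl X^1(-1), 1\}$ is replaced by $\min\{\mcl X^1(-1), 1\}$ in the first case (see the right-hand side of Figure~\ref{fig-boundary-ev-LDP-SDP}). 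Combining this identity with the stationarity $\mcl X^1 \eqD \mcl X$ and the independent/stationary-increments properties from Theorem~\ref{thm-busemann-property} yields Proposition~\ref{prop:SDP-recursive-equation}, the SDP master equation for $f$ and $g$.

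Multiplying this master equation by $e^{itx + isy}$ and summing over $(x, y) \in \BB Z \times \BB Z_{\geq -1}$ produces a functional equation for $F$, $F^-$, and $G$. Setting $s = 0$ solves for $G(t) = -F(t)\,\widehat F^-(t)$ with $\widehat F^-(t) := F^-(t) + c(t)$ for an explicit affine $c(t)$, and setting $t = 0$ yields a quadratic equation in $\widehat F^-$ of the form $F^2 (\widehat F^-)^2 + (3F - 1)\,\widehat F^- + F = 0$. Analogs of Lemmas~\ref{lem:Y-not-const} and~\ref{lem:nonreal2}, proved the same way from the master equation, give $f(0) > 0$, that $f$ is not a.s.\ constant, and that the two roots of the quadratic are non-real complex conjugates for small $t > 0$; taking the imaginary part then reduces the quadratic to a cubic equation $F^3 - K(t)\,F^2 + 3 F - 1 = 0$ with $K(t)$ depending on $f(0)$ and $g(-1)$. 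The Taylor expansion of $K(t)$ now contains an imaginary term of order $t$ proportional to $g(-1)$ (absent in the LDP case, because $g$ there is supported on $\BB Z_{\leq -1}$) and a real quadratic term proportional to $\kappa := 3 - 6\,g(-1)^2 - f(0)$. Writing $F(t) = 1 + \varepsilon(t)$ and balancing terms near $t = 0$ then gives two a priori consistent asymptotics: if $\kappa \neq 0$ then $\varepsilon(t) \sim c_\kappa\, t^{2/3}$, while if $\kappa = 0$ then $\varepsilon(t) = -g(-1)^{2/3}\, t^{4/3} + o(t^{4/3})$. In the $\kappa = 0$ branch, the claimed expansions of $F^-$ and $G$ follow by substituting into the SDP analogs of~\eqref{expr-for-F-2} and~\eqref{eq:Gtexpr2} and selecting the correct branch via Proposition~\ref{prop:Tauberian} (which requires $G$ to be the characteristic function of a probability distribution). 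This also forces $g(-1) > 0$.

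The hard part will be ruling out $\kappa > 0$. If $\kappa > 0$, then Proposition~\ref{prop:Tauberian} applied to the $t^{2/3}$ expansion would give $\BB P[|\mcl X(0) - \mcl X(-1)| \geq x] \sim c\, x^{-2/3}$ as $x \to \infty$. The heavy-tailed functional CLT would then yield convergence of $\{k^{-3/2}\,\mcl X(\lfloor k t \rfloor)\}_{t \in \BB R}$ in law to a $2/3$-stable process, and the geometric inputs used to establish the SDP cell exponent (Theorem~\ref{thm-cell-sdp}) would force the maximal SDP distance in the size-$n$ KMSW cell $M_{0,n}$ to be of order $n^{3/4}$ with uniformly positive probability. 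This contradicts the trivial upper bound $O(n^{1/2})$ for SDP distances in $M_{0,n}$, which follows from the fact that the KMSW exploration walk coordinates $\mcl L$ and $\mcl R$ have fluctuations of order $n^{1/2}$ over a time interval of length $n$, and directed paths in $M_{0,n}$ can be bounded in length by these fluctuations. Therefore $\kappa = 0$, completing the proof.
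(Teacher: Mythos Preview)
Your overall architecture matches the paper's closely: one-step KMSW relation, master recursion, functional equation, same quadratic in $\widehat F^-$, cubic in $F$ via the imaginary-part trick, and a dichotomy $\kappa=0$ versus $\kappa>0$. Two small corrections and one genuine gap.

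\textbf{Minor corrections.} First, $K(t) := f(0) - 2\re(c(t))$ is real by definition, so it has no imaginary term of order $t$; the $-2ig(-1)^2 t$ term you see in the expansion of $F^-$ comes from $c(t)$, not from $K(t)$. The Taylor expansion of $K$ is $K(t) = 3 - \tfrac{\kappa}{2}t^2 + (\tfrac{\kappa}{24} - g(-1)^2)t^4 + O(t^6)$, and it is the $t^4$ coefficient that drives the $t^{4/3}$ scaling when $\kappa=0$. Second, in the one-step SDP relation, the case $\mcl Z(1)-\mcl Z(0)=(0,1)$ also picks up a new $\min$, namely $\min\{0,\mcl X^1(2)+1\}$, because the new root edge $\lambda_0$ offers a shortcut of length $1$ competing against the length-$2$ boundary path; this contributes the extra $g(-1)^2 f(x+1)$ term in the master equation and is absent in the LDP case.

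\textbf{The gap: ruling out $\kappa>0$.} Your contradiction sketch is not quite right and is logically circular as written. Invoking ``the geometric inputs used to establish Theorem~\ref{thm-cell-sdp}'' cannot work, since those inputs (Lemmas~\ref{lem-cell-sdp-lower}--\ref{lem-cell-sdp-upper}) rely on Theorem~\ref{thm-busemann-conv}, hence on the present proposition. Nor does the positive-index argument go through: if $\kappa>0$, the increments $\mcl X(k)-\mcl X(k-1)$ for $k\geq 1$ are bounded below by $-1$ and have upper tail $x^{-2/3}$, so their scaling limit is a $2/3$-stable \emph{subordinator}, which is nonnegative; Lemma~\ref{lem-sdp-tri} then gives no useful lower bound on $\op{SDP}(x_0,x_r)$. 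The paper instead works on the \emph{left} boundary, where the increments are symmetric. The key step is a direct, self-contained geometric bound: the upper-right boundary of $M_{0,\tau_{-k}}$ is a directed path $\eta_k$ in $M_{0,\infty}$ from $x_{-k}$ to a vertex on the right boundary, and $|\eta_k| \leq 2 \max_{j\leq \tau_{-k}}|\mcl R(j)| + 1 = O(k)$ with high probability by a Brownian-scaling estimate on $\mcl Z$. Since any SDP geodesic $P$ from $x_0$ to a far vertex $w$ must cross $\eta_k$ at some $z$, concatenating gives $\op{SDP}(x_{-k},w)\leq |\eta_k| + \op{SDP}(z,w)$, whence $\mcl X(-k) \leq |\eta_k| = O(k)$. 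By Property~\ref{item-busemann-sym}, $\mcl X(-k)\eqD -\mcl X(-k)$, so $k^{-3/2}\mcl X(-k)\to 0$ in probability, contradicting the heavy-tailed CLT under the hypothetical $x^{-2/3}$ tail. This is the ``trivial $n^{1/2}$ bound'' made precise: it is a bound on the Busemann function via one explicit $O(k)$-length path, not a global bound on all SDPs in $M_{0,n}$.
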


 Before proving Proposition~\ref{prop:asympt-exp}, we note that it immediately implies Item~\ref{item-busemann-tail-SDP} of Theorem~\ref{thm-busemann-tail}.

\begin{proof}[Proof of Item~\ref{item-busemann-tail-SDP} of Theorem~\ref{thm-busemann-tail} (assuming Proposition~\ref{prop:asympt-exp})]
We first combine Proposition~\ref{prop:asympt-exp} with Proposition~\ref{prop:Tauberian} to prove the claimed tail behaviors. The asymptotic expansions of $F(t)$ and $G(t)$ in Proposition~\ref{prop:asympt-exp} ensures that $f(0)\neq 1$ and $g(-1)\neq 1$. Moreover, $g(-1)\neq 0$ as stated in Proposition~\ref{prop:asympt-exp}.
Recalling the notation introduced in \eqref{eq:den-dist-f-g}, in the case of $f$, we apply Proposition~\ref{prop:Tauberian} to the random variable $X$ with $\BB P[X=x] = \frac{2}{1-f(0)} f(x)$ for all $x\geq1$, so that $X$ is non-negative (recall that $f(x) = f(-x)$). Since the characteristic function of $X$ for $t>0$ is 
\begin{align*}
    \varphi_X(t)&=\frac{2}{1-f(0)}\Big(F(t)-F^{-}(t)-f(0)\Big)\\
    &=1+\frac{4i \, g(-1)^2}{1-f(0)}\, t+\frac{(-1 - i\sqrt{3})\,g(-1)^{2/3}}{1-f(0)} \,t^{4/3}+o(t^{4/3}),
\end{align*}
we obtain that
\begin{align*}
    \BB P [X>R]&= \frac{ \sqrt{3}\,g(-1)^{2/3}}{1-f(0)}\frac{4}{3\,\Gamma(1-4/3)}\sec\left(\tfrac{2\pi}{3}\right)R^{-4/3}+o(R^{-4/3})\\
    &=\frac{ 4\,\Gamma(1/3)\,g(-1)^{2/3}}{3\pi(1-f(0))} \, R^{-4/3}+o(R^{-4/3}).
\end{align*}
Hence, recalling that $f(x) = \BB P\left[ \mcl X(0) - \mcl X(-1) = x \right]$ for all $x\in \BB Z$,
\begin{equation}\label{eq:conc-asympt-3}
    \BB P \left[|\mcl X(0)-\mcl X(-1)|>R\right]=\frac{ 4\,\Gamma(1/3)\,g(-1)^{2/3}}{3\pi} \, R^{-4/3}+o(R^{-4/3}).
\end{equation}

In the case of $g$, we apply Proposition~\ref{prop:Tauberian} to the random variable $Y$ with $\BB P[Y=y] = \frac{1}{1-g(-1)} g(y)$ for all $y\geq0$, so that $Y$ is non-negative.
Since the characteristic function of $Y$ for $t>0$ is 
\begin{equation*}
    \varphi_Y(t)=\frac{1}{1-g(-1)}\left(G(t)-g(-1)e^{-it}\right)=1+\frac{i\,g(-1)}{1-g(-1)}\,t+\frac{g(-1)^{2/3}(- 1- i\sqrt{3})}{2(1-g(-1))}  \,t^{4/3}+o(t^{4/3}),
\end{equation*}
where we used that $e^{-it}=1-it+O(t^2)$, we obtain that
\begin{align*}
    \BB P [Y>R]&= \frac{\sqrt{3} \, g(-1)^{2/3}}{2(1-g(-1))}\frac{4}{3\Gamma(1-4/3)}\sec\left(\tfrac{2\pi}{3}\right)R^{-4/3}+o(R^{-4/3})\\
    &=\frac{4\,\Gamma(1/3) \, g(-1)^{2/3}}{6 \pi(1-g(-1))}R^{-4/3}+o(R^{-4/3})
\end{align*}
Hence, recalling that $g(y) = \BB P\left[ \mcl X(1) - \mcl X(0) = y \right]$ for all $y\in \BB Z_{\geq -1}$,
\begin{equation}\label{eq:conc-asympt-4}
    \BB P \left[\mcl X(1)-\mcl X(0)>R\right]=\frac{4\,\Gamma(1/3) \, g(-1)^{2/3}}{6 \pi}R^{-4/3}+o(R^{-4/3}).
\end{equation}
Setting $c_2=\frac{4\,\Gamma(1/3) \, g(-1)^{2/3}}{6 \pi}$ in \eqref{eq:conc-asympt-3}~and~\eqref{eq:conc-asympt-4} gives the expressions in the statement of the theorem.

The tail expansions~\eqref{eq:conc-asympt-3} and~\eqref{eq:conc-asympt-4}, together with the stationarity property~\ref{item-busemann-stationary} of Theorem~\ref{thm-busemann-property}, imply that $\BB E[|\mcl X(k) - \mcl X(k-1)|] < \infty$ for every $k\in\BB Z$. From the expansions of $F(t)$ and $G(t)$ in Proposition~\ref{prop:asympt-exp}, we get $  F'(0) = G'(0) = 0$, which implies that $\BB E[ \mcl X(k) - \mcl X(k-1) ] = 0$ for every $k \in \BB Z$.
\end{proof}

\begin{remark}
    Also in this case, we do not know the exact value of $c_2=\frac{4\,\Gamma(1/3) \, g(-1)^{2/3}}{6 \pi}$ because we do not know the value of $g(-1)$.
\end{remark}

As in Section~\ref{sec-recursive-LDP}, the first step in the proof of Proposition~\ref{prop:asympt-exp} is to express $\mcl X$ in terms of $\mcl X^1$.

\begin{lem}[Express $\mcl X$ in terms of $\mcl X^1$ in the SDP case]\label{lem:SDP-relating-X1-X}
Let $\XDP = \op{SDP}$. Define the KMSW encoding walk $\mcl Z  :\BB Z\to\BB Z^2$ and the infinite directed triangulations $M_{0,\infty} \eqD M_{1,\infty}$ and their associated Busemann functions $\mcl X$ and $\mcl X^1$ as at the beginning of Section~\ref{sec-recursive}.
    \begin{itemize}
        \item If $\mcl Z(1)-\mcl Z(0)=(1,-1)$, then 
            \eqb \label{eqn-sdp-relating1}
            \Big(\mcl X(-1)\,,\,\mcl X(1)\Big)=\left(\mcl X^1(-2) - \min\{\mcl X^1(-1),1\}\,,\,  - \min\{\mcl X^1(-1),1\}\right).
            \eqe
        \item If $\mcl Z(1)-\mcl Z(0)=(-1,0)$, then 
            \eqb \label{eqn-sdp-relating2}
            \Big(\mcl X(-1)\,,\,\mcl X(1)\Big)=\left( - \mcl X^1(1)-1\,,\,-1\right).
            \eqe
        \item If $\mcl Z(1)-\mcl Z(0)=(0,1)$, then 
            \eqb \label{eqn-sdp-relating3}
            \Big(\mcl X(-1)\,,\,\mcl X(1)\Big)=\left(\mcl X^1(-1)-\min\{0,\mcl X^1(2)+1\} \,,\, \mcl X^1(2)-\min\{0,\mcl X^1(2)+1\} \right).
            \eqe
    \end{itemize}
\end{lem}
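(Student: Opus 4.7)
My plan is to follow the structure of the proof of Lemma~\ref{lem:LDP-relating-X1-X} closely, replacing longest directed paths by shortest directed paths throughout. First I will invoke Theorem~\ref{thm-busemann} to fix a single vertex $w \in \mcl V(M_{1,\infty})$ such that for every $j\in\{0,1\}$ and every $k,k'\in[-2,2]\cap\BB Z$,
\eqbn
\op{SDP}_{M_{j,\infty}}(x^{j}_{k'} , w) - \op{SDP}_{M_{j,\infty}}(x^{j}_k , w) = \mcl X^j(k') - \mcl X^j(k).
\eqen
Setting
\eqbn
\wt{\mcl X}(k) := \op{SDP}_{M_{0,\infty}}(x^0_k , w) - \op{SDP}_{M_{1,\infty}}(x^1_0 , w),
\eqen
so that $\mcl X(k) = \wt{\mcl X}(k) - \wt{\mcl X}(0)$, the task reduces to expressing $\wt{\mcl X}(-1)$, $\wt{\mcl X}(0)$, and $\wt{\mcl X}(1)$ in terms of $\mcl X^1$ in each of the three cases for $\mcl Z(1) - \mcl Z(0)$, using the same local description of how $M_{0,\infty}$ is obtained from $M_{1,\infty}$ as in Lemma~\ref{lem:LDP-relating-X1-X} (see the right-hand column of Figure~\ref{fig-boundary-ev-LDP-SDP}).

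For the $(1,-1)$ case, an SDP geodesic in $M_{0,\infty}$ from the new vertex $x^0_0 = x^1_{-1}$ to $w$ takes the shorter of two alternatives: an SDP in $M_{1,\infty}$ from $x^1_{-1}$ (length $\mcl X^1(-1)$), or the root edge $\lambda_0$ prepended to an SDP in $M_{1,\infty}$ from $x^1_0$ (length $1$). This replaces the maximum appearing in the LDP analysis with a minimum and yields~\eqref{eqn-sdp-relating1}. For the $(-1,0)$ case, the vertex $x^0_0$ has only the single outgoing non-missing edge $\lambda_0$, so the same argument as in Lemma~\ref{lem:LDP-relating-X1-X} transfers verbatim to produce~\eqref{eqn-sdp-relating2}.

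The $(0,1)$ case is the substantive one. Here, the only edge in $\mcl E(M_{0,\infty}) \setminus \mcl E(M_{1,\infty})$ is the new root edge $\lambda_0 = (x_0^1, x_2^1)$, which provides a length-one shortcut for the length-two boundary path $\lambda_1 \cup e$. Since $\lambda_0$ can be traversed at most once by an acyclic directed path, every directed path in $M_{0,\infty}$ from any vertex $u$ to $w$ either lies entirely in $M_{1,\infty}$ or decomposes as an $M_{1,\infty}$-path from $u$ to $x_0^1$, followed by $\lambda_0$, followed by an $M_{1,\infty}$-path from $x_2^1$ to $w$. This gives
\eqbn
\op{SDP}_{M_{0,\infty}}(u, w) = \min\!\left\{\op{SDP}_{M_{1,\infty}}(u, w), \; \op{SDP}_{M_{1,\infty}}(u, x_0^1) + 1 + \op{SDP}_{M_{1,\infty}}(x_2^1, w)\right\}.
\eqen
Applied with $u = x^0_0 = x_0^1$, this yields $\wt{\mcl X}(0) = \min\{0, \mcl X^1(2)+1\}$. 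With $u = x^0_1 = x_2^1$, it yields $\wt{\mcl X}(1) = \mcl X^1(2)$, because $\op{SDP}_{M_{1,\infty}}(x_2^1, x_0^1) = \infty$ by acyclicity. With $u = x^0_{-1} = x^1_{-1}$, it yields $\wt{\mcl X}(-1) = \mcl X^1(-1)$, because the vertex $x_0^1 = \lambda_1^-$ lies on the left boundary of $M_{1,\infty}$ and hence has no incoming edges by Lemma~\ref{lem-uibot-bdy}, so $\op{SDP}_{M_{1,\infty}}(x_{-1}^1, x_0^1) = \infty$ as well. Subtracting $\wt{\mcl X}(0)$ from $\wt{\mcl X}(\pm1)$ then produces~\eqref{eqn-sdp-relating3}.

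The main subtlety, compared with the LDP proof, is the asymmetric way in which the shortcut $\lambda_0$ affects the three target vertices in the $(0,1)$ case: it matters for $\wt{\mcl X}(0)$ exactly when $\mcl X^1(2) < -1$, it never affects $\wt{\mcl X}(1)$ because it runs in the wrong direction, and it never affects $\wt{\mcl X}(-1)$ because its entry point $\lambda_1^-$ is unreachable from any vertex strictly to its left in $M_{1,\infty}$. It is this last ``no incoming edges on the left boundary'' property from Lemma~\ref{lem-uibot-bdy} that prevents a more complicated dependence on $\mcl X^1(-1)$ from appearing in $\mcl X(-1)$ and thereby produces the relatively clean formulas in~\eqref{eqn-sdp-relating3}.
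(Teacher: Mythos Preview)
Your proposal is correct and follows essentially the same approach as the paper's proof: fix $w$ via Theorem~\ref{thm-busemann}, introduce $\wt{\mcl X}$, and compute $\wt{\mcl X}(-1),\wt{\mcl X}(0),\wt{\mcl X}(1)$ case by case. The only cosmetic difference is in the $(0,1)$ case, where you package the analysis into a single formula $\op{SDP}_{M_{0,\infty}}(u,w)=\min\{\op{SDP}_{M_{1,\infty}}(u,w),\,\op{SDP}_{M_{1,\infty}}(u,x_0^1)+1+\op{SDP}_{M_{1,\infty}}(x_2^1,w)\}$ and then specialize, whereas the paper argues each of the three values directly; the underlying reasoning (in particular the use of Lemma~\ref{lem-uibot-bdy} to rule out incoming edges at $x_0^0=x_0^1$) is the same.
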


\begin{proof}
Recall that $ M_{0,\infty} $ and $ M_{1,\infty} $ are constructed via the KPZ procedure from $\mcl Z|_{[0,\infty)}$ and $\mcl Z|_{[1,\infty)}$, respectively.
We label the boundary vertices of  $M_{0,\infty}$ from left to right by  $\{x^0_k\}_{k\in\BB Z}$ so that $\lambda_0 = (x_0^0,x_1^0)$ and the boundary vertices of  $M_{1,\infty}$ from left to right by  $\{x^1_k\}_{k\in\BB Z}$ so that $\lambda_1 = (x_0^1,x_1^1)$.
    By Theorem~\ref{thm-busemann} (applied to, e.g., a sequence of vertices along the right boundary of $M_{1,\infty}$), there exists $w \in \mcl V(M_{1,\infty})$ such that 
    \eqb \label{eqn-SDP-relating-w}
        \op{SDP}_{M_{j,\infty}}(x^{j}_{k'} , w) - \op{SDP}_{M_{j,\infty}}(x^{j}_k , w) = \mcl X^j(k') - \mcl X^j(k) , \quad \forall j \in \{0,1\}, \quad\forall k,k' \in [-2,2]\cap\BB Z .
    \eqe 
    We analyze how the Busemann function changes when we perform one backward step of the KMSW procedure (Definition~\ref{def-kmsw}), to get $M_{0,\infty}$ from $M_{1,\infty}$. Exactly as in the LDP case~\eqref{eqn-ldp-busemann-renormalize}, we introduce
\eqb  \label{eqn-sdp-busemann-renormalize}
\wt{\mcl X}(k) := \mcl X(k) + \op{SDP}_{M_{0,\infty}}(x^0_0 , w) - \op{SDP}_{M_{1,\infty}}(x^1_0 , w)
=  \op{SDP}_{M_{0,\infty}}(x^0_k , w) - \op{SDP}_{M_{1,\infty}}(x^1_0 , w) .
\eqe 
Then $\mcl X(\cdot) = \wt{\mcl X}(\cdot) - \wt{\mcl X}(0)$. We will express $\wt{\mcl X}(-1) ,\wt{\mcl X}(0)$, and $\wt{\mcl X}(1)$ in terms of $\mcl X^1$, then subtract $\wt{\mcl X}(0)$ from  $\wt{\mcl X}(-1)$ and $\wt{\mcl X}(1)$ to get an expression for $\mcl X(-1)$ and $\mcl X(1)$ in terms of $\mcl X^1$. 

\smallskip
 
If $\mcl Z(1)-\mcl Z(0)=(1,-1)$, then the edges on the boundary of $M_{0,\infty}$ are the same as the edges on the boundary of $M_{1,\infty}$, with the only difference being that the root edge $\lambda_0$ of $M_{0,\infty}$ is the missing edge to the left of the root edge $\lambda_1$ of $M_{1,\infty}$; see the top-right diagram in \cref{fig-boundary-ev-LDP-SDP}. 
In this case, an SDP geodesic in $M_{0,\infty}$ from the vertex $x^0_{-1} = x_{-2}^1$ to $w$ must be an SDP geodesic in $M_{1,\infty}$ from the vertex $x^1_{-2}$ to $w$ because the vertex $x^0_{0}$  has only outgoing edges in $M_{0,\infty}$ thanks to the last claim in Lemma~\ref{lem-uibot-bdy}. Hence, in the notation~\eqref{eqn-sdp-busemann-renormalize},
\eqbn
\wt{\mcl X}(-1) = \mcl X^1(-2) .
\eqen 
An SDP geodesic in $M_{0,\infty}$ from $x_0^0 = x_{-1}^1$ to $w$ must be the shortest between an SDP geodesic in $M_{1,\infty}$ from $x^1_{-1}$ to $w$ and an SDP geodesic in $M_{1,\infty}$ from $x^1_{0}$ to $w$ with the additional edge $\lambda_0$ prepended. Recalling that $\mcl X^1(0) = 0$, this gives
\eqbn
\wt{\mcl X}(0) = \min\left\{\mcl X^1(-1) , 1 \right\} .
\eqen
Since the edge $\lambda_0$ is incoming to $x_1^0 = x_0^0$, we also have $\wt{\mcl X} (1) = \mcl X^1(0)  = 0$. 
Subtracting now gives~\eqref{eqn-sdp-relating1}.

\smallskip
   
If $\mcl Z(1)-\mcl Z(0)=(-1,0)$, then the edges on the boundary of $M_{0,\infty}$ are the same as the edges on the boundary of $M_{1,\infty}$, with the only difference being that the root edge $\lambda_1$ of $M_{1,\infty}$ is replaced in the boundary of $M_{0,\infty}$ by a path of length two with the following properties. The first edge in the path is a missing edge whose initial vertex coincides with the initial vertex of $\lambda_1$. The second edge in the path is the new root edge $\lambda_0$ whose terminal vertex coincides with the terminal vertex of $\lambda_1$; see the middle-right diagram in \cref{fig-boundary-ev-LDP-SDP}. In this case,
\eqbn
\wt{\mcl X}(-1) = \mcl X^1(0) =0 ,\quad
\wt{\mcl X}(0) = \mcl X^1(1) + 1,\quad
\wt{\mcl X}(1)=  \mcl X^1(1),
\eqen 
  where for $\wt{\mcl X}(0)$ we used that an SDP geodesic in $M_{0,\infty}$ from $x_0^0$ to $w$ must be an SDP geodesic in $M_{1,\infty}$ from $x^1_{1}$ to $w$ with the additional edge $\lambda_0$ prepended. Subtracting gives~\eqref{eqn-sdp-relating2}. 

    \smallskip
  
  If $\mcl Z(1)-\mcl Z(0)=(0,1)$, then the edges on the boundary of $M_{0,\infty}$ are the same as the edges on the boundary of $M_{1,\infty}$, with the only difference being that the root edge $\lambda_1$ of $M_{1,\infty}$ and the edge $e$ immediately to the right of it are replaced in the boundary of $M_{0,\infty}$ by the new root edge $\lambda_0$ whose initial vertex coincides with the initial vertex of $\lambda_1$ and whose terminal vertex coincides with the terminal vertex of $e$; see the bottom-right diagram in \cref{fig-boundary-ev-LDP-SDP}. In this case, an SDP geodesic in $M_{0,\infty}$ from the vertex $x^0_{-1} = x_{-1}^1$ to $w$ must be an SDP geodesic in $M_{1,\infty}$ from the vertex $x^1_{-1}$ to $w$ because the vertex $x^0_{0}$ has only outgoing edges in $M_{0,\infty}$ thanks to the last claim in Lemma~\ref{lem-uibot-bdy}. Hence,
  \eqbn
  \wt{\mcl X}(-1) = \mcl X^1(-1) .
  \eqen 
  An SDP geodesic in $M_{0,\infty}$ from $x_0^0 = x_0^1$ to $w$ must be the shortest between an SDP geodesic in $M_{1,\infty}$ from $x^1_{0}$ to $w$ and an SDP geodesic in $M_{1,\infty}$ from $x^1_{2}$ to $w$ with the additional edge $\lambda_0$ prepended. Recalling that $\mcl X^1(0) =0$, this gives
 \eqbn
  \wt{\mcl X}(0) = \min\left\{ 0 , \mcl X^1(2) +1\right\}.
  \eqen
  Since $\lambda_0$ is incoming to $x_1^0 = x_2^1$, we also have $\wt{\mcl X}(1) = \mcl X^1(2)$. Subtracting gives~\eqref{eqn-sdp-relating3}. 
\end{proof}
 
Lemma~\ref{lem:SDP-relating-X1-X} immediately implies the following recursive equation for the distributions $f$ and $g$.

 \begin{prop}[SDP recursive equation]\label{prop:SDP-recursive-equation} 
 For all $x\in\mathbb Z$ and $y\in\mathbb Z_{\ge-1}$.
 \begin{equation}\label{eq:master-eq}
 \;3\,f(x) g(y) 
 = f(x)\Bigg( f(y) + \sum_{j=-1}^{y+1} g(j)g\big(y-j\big) \Bigg)
 + \BB{1}_{\{y=-1\}}\Bigg( \sum_{j\le-2} f(j) f\big(x-j-1\big) + g(-1)^2 f(x+1) + g(x-1) \Bigg).\;
 \end{equation}
 \end{prop}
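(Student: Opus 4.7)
The plan is to follow the same strategy as the LDP case (Proposition~\ref{prop:LDP-recursive-equation}), using Lemma~\ref{lem:SDP-relating-X1-X} in place of Lemma~\ref{lem:LDP-relating-X1-X}. First, I would use $M_{1,\infty} \eqD M_{0,\infty}$, which gives $\mcl X^1 \eqD \mcl X$, together with the independent/stationary increments of $\mcl X^1$ (Properties~\ref{item-busemann-ind}~and~\ref{item-busemann-stationary} of Theorem~\ref{thm-busemann-property}) and the symmetry of $f$, to write
\[
\BB P\bigl[(\mcl X^1(-1), \mcl X^1(1)) = (-x, y)\bigr] = f(x)\, g(y).
\]
Conditioning on the first step $\mcl Z(1) - \mcl Z(0)$, which is uniform on $\{(1,-1), (-1,0), (0,1)\}$ and independent of $M_{1,\infty}$ (and hence of $\mcl X^1$), Lemma~\ref{lem:SDP-relating-X1-X} splits the left-hand side as the average of the three conditional probabilities corresponding to each value of $\mcl Z(1) - \mcl Z(0)$. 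I would compute each of these in terms of $f$ and $g$ by expressing it as a sum over the joint values of the four relevant increments of $\mcl X^1$, which are independent, with the two negative-side increments of law $f$ and the two positive-side increments of law $g$.

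The $(-1,0)$ case is immediate: setting $\mcl X(-1) = -x$ and $\mcl X(1) = -1$ in the formula $(-\mcl X^1(1) - 1, -1)$ forces $\mcl X^1(1) = x - 1$, producing the contribution $g(x-1)$ at $y = -1$. For the $(1,-1)$ case, I would resolve $\min\{\mcl X^1(-1), 1\}$ by splitting into the subcases $\mcl X^1(-1) \leq 1$ and $\mcl X^1(-1) \geq 2$; solving for the two relevant increments yields the main term $f(x) f(y)$ valid for all $y \geq -1$ (the boundary $\mcl X^1(-1) = 1$ producing the $y = -1$ instance via $f(-1) = f(1)$) together with, at $y = -1$, the residual $\sum_{j \leq -2} f(j)\, f(x - j - 1)$, obtained after reindexing by the symmetry of $f$.

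The $(0,1)$ case is the heart of the SDP-specific structure. Because $g$ is supported on $\BB Z_{\geq -1}$, the random variable $\mcl X^1(2) - \mcl X^1(0)$ is always at least $-2$, and $\min\{0, \mcl X^1(2)+1\}$ equals $0$ except on the single boundary event where both positive-side increments equal $-1$, which has probability $g(-1)^2$. On the complementary event I would read off the contribution $f(x) \sum_{j=-1}^{y+1} g(j) g(y-j)$ valid for all $y \geq -1$, recovering the main convolutional term in~\eqref{eq:master-eq}. On the exceptional boundary event the formula collapses to $\mcl X(1) = -1$ and $\mcl X(-1) = -\mcl X^1(-1) + 1$; requiring $\mcl X(-1) = -x$ then forces $\mcl X^1(-1) = x + 1$, which, via the symmetry of $f$, contributes exactly $g(-1)^2 f(x+1)$. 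Summing all three cases each weighted by $1/3$ and multiplying by $3$ gives~\eqref{eq:master-eq}. The main obstacle will be the bookkeeping in this third case: it is precisely the one-sided support of $g$ (in contrast to the fully symmetric $f$) that creates the extra boundary event and the $g(-1)^2 f(x+1)$ correction absent from the LDP equation, and one must take care neither to drop it nor to double-count it against the generic convolutional contribution.
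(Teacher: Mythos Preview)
Your proposal is correct and follows essentially the same approach as the paper's proof: both condition on the first KMSW step, apply Lemma~\ref{lem:SDP-relating-X1-X}, and use the independent stationary increments of $\mcl X^1$ together with the symmetry of $f$ to unwind each case. The only cosmetic differences are that the paper works with $(\mcl X(-1),\mcl X(1))=(x,y)$ and then substitutes $-x\mapsto x$ at the end, and in the $(1,-1)$ case splits at $\mcl X^1(-1)\le 0$ versus $\ge 1$ rather than your $\le 1$ versus $\ge 2$; also, in your exceptional $(0,1)$ subcase the formula should read $\mcl X(-1)=\mcl X^1(-1)+1$ (not $-\mcl X^1(-1)+1$), but the conclusion $g(-1)^2 f(x+1)$ is unaffected because $f$ is symmetric.
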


\begin{proof}
    Fix $x\in\mathbb Z$ and $y\in\mathbb Z_{\ge -1}$. Using the fact that $\mcl X^1 \eqD \mcl X$ (which follows from the fact that $M_{0,\infty} \eqD M_{1,\infty}$), we can write
    \begin{align*}
        \BB P &\left[ \Big(\mcl X^1(-1)\,,\,\mcl X^1(1)\Big)=(x,y)\right]=\BB P \left[ \Big(\mcl X(-1)\,,\,\mcl X(1)\Big)=(x,y)\right].
    \end{align*}
    Thanks to Lemma~\ref{lem:SDP-relating-X1-X}, we have that 
    \begin{align}\label{eq:splitting-proba-in-3}
        \BB P \left[ \Big(\mcl X(-1)\,,\,\mcl X(1)\Big)=(x,y)\right)&=\frac{1}{3}\BB P \left( \left(\mcl X^1(-2) - \min\{\mcl X^1(-1),1\}\,,\, - \min\{\mcl X^1(-1),1\}\right]
        =(x,y)\right)\notag\\
        &\,+
        \frac{1}{3}\BB P \left[ \left( - \mcl X^1(1)-1\,,\,-1\right)
        =(x,y)\right]\\
        &\,+
        \frac{1}{3}\BB P \left[ \left(\mcl X^1(-1)-\min\{0,\mcl X^1(2)+1\} \,,\, \mcl X^1(2)-\min\{0,\mcl X^1(2)+1\} \right)
        =(x,y)\right].\notag
    \end{align}
    By stationarity (recall \eqref{eq:stationarity-rel} and \eqref{eq:den-dist-f-g}) and the fact that $\mcl X(0) = \mcl X^1(0) = 0$,
    \eqbn
    f(x) = \BB P\left[   - \mcl X(-1) = x \right] = \BB P\left[  - \mcl X^1(-1) = x \right]
    \quad \text{and} \quad 
    g(y)= \BB P\left[ \mcl X(1)  = y \right]  = \BB P\left[ \mcl X^1(1)  = y \right]  .
    \eqen 
    Furthermore, the increments $\{\mcl X^1(k) - \mcl X^1(k-1)\}_{k\in\BB Z}$ are independent, the increments $\{\mcl X^1(k) - \mcl X^1(k-1)\}_{k \geq 1}$ all have the same distribution, and the increments $\{\mcl X^1(k) - \mcl X^1(k-1)\}_{k \leq 0}$ all have the same distribution (thanks to Items~\ref{item-busemann-ind}~and~\ref{item-busemann-stationary} in Theorem~\ref{thm-busemann-property}). Applying these facts in \eqref{eq:splitting-proba-in-3} gives
    \begin{align*}
        3f(-x)g(y) &=f(-x)f(y)+\BB{1}_{\{y=-1\}}\sum_{j\leq-2}  f(-x-j-1) f(j)\\
        &\,\,\,\,\,+ \BB{1}_{\{y=-1\}}g(-x-1)\\
        &\,\,\,\,\,+f(-x)\sum_{j=-1}^{y+1} g(j)g\big(y-j\big)
        +\BB{1}_{\{y=-1\}}g(-1)^2f(-x+1),
    \end{align*}
    where the first probability on the right-hand side of \eqref{eq:splitting-proba-in-3} involving the minimum was computed by distinguishing the cases when $\mcl X^1(-1)\leq 0$ and $\mcl X^1(-1)\geq 1$, and the second probability involving the minimum was computed by distinguishing the cases when $\mcl X^1(2)\geq -1$ and $\mcl X^1(2)= -2$ (since we must have that $\mcl X^1(2)\geq -2$ because $g$ is supported on $\BB Z_{\geq -1}$). Changing $-x$ to $x$, we obtain the expression in the statement of the lemma.
\end{proof}

 We now proceed with the proof of Proposition~\ref{prop:asympt-exp} by first establishing some preliminary results.

\begin{lem}\label{lem:Y-not-const2}
    Let $Y\sim f$. Then $Y$ is not a.s.\ constant and $\BB P(Y=0)=f(0)>0$. Moreover, $g(-1)> 0$.
 \end{lem}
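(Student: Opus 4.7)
The plan is to follow the same strategy as for the LDP statement Lemma~\ref{lem:Y-not-const}: extract contradictions from the SDP recursive identity \eqref{eq:master-eq} by evaluating it at a few well-chosen pairs $(x,y)$ and by summing the row $\{y=-1\}$ over $x\in\BB Z$.

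First I would show that $f(0)<1$, which implies (by the symmetry $f(x)=f(-x)$ coming from Theorem~\ref{thm-busemann-property}~\ref{item-busemann-sym}) that $Y$ is not a.s.\ constant. Assume for contradiction that $f(0)=1$, so $f(k)=0$ for every $k\neq 0$. Plugging $(x,y)=(1,-1)$ into \eqref{eq:master-eq}, the left-hand side vanishes and on the right-hand side the $f(x)$-prefactor, the convolution $\sum_{j\le -2}f(j)f(x-j-1)$ (in which both factors involve $f$ at nonzero arguments), and the term $g(-1)^{2}f(x+1)$ all vanish, leaving only $g(x-1)=g(0)$. Hence $g(0)=0$. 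Now plugging $(x,y)=(0,0)$ into \eqref{eq:master-eq}, the left-hand side vanishes while the right-hand side equals
\begin{equation*}
f(0)\Bigl(f(0)+\sum_{j=-1}^{1}g(j)g(-j)\Bigr)=1+g(0)^{2}+2g(-1)g(1)\ \ge\ 1,
\end{equation*}
a contradiction. This shows $f(0)<1$.

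The key computational input for the rest is the scalar identity obtained by fixing $y=-1$ in \eqref{eq:master-eq} and summing over $x\in\BB Z$. Using $\sum_x f(x)=1$, the identities $\sum_x f(x-j-1)=\sum_x f(x+1)=1$, the bookkeeping $\sum_x g(x-1)=\sum_{y\ge -1}g(y)=1$, and the symmetry $\sum_{j\le -2}f(j)=(1-f(0))/2-f(-1)$ (so that the two copies of $f(-1)$ appearing on the right-hand side cancel), a routine rearrangement yields
\begin{equation}\label{eq:sdp-sum-identity-plan}
3\,g(-1)\ =\ \tfrac{3-f(0)}{2}+g(-1)^{2}+2\,g(-1)\,g(0).
\end{equation}

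From \eqref{eq:sdp-sum-identity-plan} both remaining claims are immediate. If $g(-1)=0$, then \eqref{eq:sdp-sum-identity-plan} collapses to $0=(3-f(0))/2$, forcing $f(0)=3$, which is impossible since $f$ is a probability distribution; hence $g(-1)>0$. Likewise, if $f(0)=0$, evaluating \eqref{eq:master-eq} at $(x,y)=(0,-1)$ gives a vanishing left-hand side and a right-hand side that is a sum of non-negative terms including $g(-1)$, so $g(-1)=0$; feeding $f(0)=g(-1)=0$ back into \eqref{eq:sdp-sum-identity-plan} yields $0=3/2$, a contradiction. Hence $f(0)>0$.

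The only delicate step is deriving \eqref{eq:sdp-sum-identity-plan}: one must track the three distinct contributions of the $\BB{1}_{\{y=-1\}}$ bracket in \eqref{eq:master-eq} (the convolution tail $\sum_{j\le -2}f(j)f(x-j-1)$, the weighted delta $g(-1)^{2}f(x+1)$, and the shift $g(x-1)$) and verify, via the symmetry of $f$, that the $f(-1)$ contributions from the two brackets cancel exactly. Once \eqref{eq:sdp-sum-identity-plan} is in hand, the three claims follow from the short plug-in arguments outlined above, exactly paralleling the LDP proof.
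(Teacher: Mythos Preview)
Your proof is correct. The derivation of the summed identity \eqref{eq:sdp-sum-identity-plan} and the deductions of $g(-1)>0$ and $f(0)>0$ from it (via the pointwise evaluation at $(x,y)=(0,-1)$) are exactly the paper's argument.

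The one genuine difference is in the proof that $f(0)<1$. The paper sets $x=0$ and varies $y$: it first shows $g(-1)=0$ (ruling out $g(0)=1$ via $y=0$), then solves the quadratic $3g(0)=1+g(0)^2$ to get $g(0)\le\tfrac12$, and finally runs an induction on $y\ge1$ to force $g(y)=0$ for all $y\ge1$, contradicting $\sum_k g(k)=1$. Your route is shorter: evaluating at $(x,y)=(1,-1)$ kills every term except $g(x-1)=g(0)$, giving $g(0)=0$ in one line, and then $(x,y)=(0,0)$ yields the immediate contradiction $0=1+2g(-1)g(1)\ge1$. This bypasses the induction entirely. The paper's argument, on the other hand, extracts more structural information about $g$ under the (false) hypothesis $f(0)=1$, which is not needed here but illustrates how the recursion propagates.

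One minor redundancy: once you have established $g(-1)>0$, the contradiction in the $f(0)=0$ case is already complete at the point where the $(0,-1)$ evaluation forces $g(-1)=0$; the additional step of feeding $f(0)=g(-1)=0$ back into \eqref{eq:sdp-sum-identity-plan} is unnecessary.
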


 \begin{proof}
We first prove that $Y$ is not a.s.\ constant. Since $f(x) = f(-x)$, it is enough to prove that $f(0)<1$. Assume for contradiction that $f(0)=1$.
Setting $x=0$ in \eqref{eq:master-eq} gives, for all $y\ge -1$,
\begin{equation}\label{eq:fwjbwif}
3g(y)
= f(y)+\sum_{j=-1}^{y+1} g(j)g(y-j)
+\BB 1_{\{y=-1\}}\,g(-1).
\end{equation}
Take $y=-1$ in \eqref{eq:fwjbwif}. Noting $f(-1)=0$ (since $f(0) =1$) and that the sum reduces to
$2g(-1)g(0)$, we obtain
\[
3g(-1)=2g(-1)g(0)+g(-1)
\quad\Longrightarrow\quad
g(-1)=g(-1)g(0).
\]
Hence either $g(-1)=0$ or $g(0)=1$.
If $g(0)=1$, taking $y=0$ in \eqref{eq:fwjbwif} yields $3=1+1$, a contradiction;
thus $g(-1)=0$. Taking $y=0$ in \eqref{eq:fwjbwif}:
\[
3g(0)=1+g(0)^2\quad\Longrightarrow\quad g(0) = \frac12(3-\sqrt 5)   \le \tfrac12.
\]
Next take $y=1$ in \eqref{eq:fwjbwif}. The only nonzero terms in the sum are
$j=0,1$, so $3g(1)=2g(0)g(1)$, which forces $g(1)=0$ because $g(0)\leq \tfrac{1}{2}$.
An induction using \eqref{eq:fwjbwif} shows that for every $y\ge2$ the sum again
reduces to $2g(0)g(y)$ and hence $g(y)=0$. Therefore
$g(k)=0$ for all $k\ge1$, and since also $g(-1)=0$ we have
$\sum_{k}g(k)=g(0)\le\tfrac12\neq 1$; a contradiction.

\smallskip

We now prove that $f(0)>0$ and $g(-1)> 0$.
Fixing $y=-1$ in \eqref{eq:master-eq} and summing over $x\in\mathbb Z$, we obtain
\begin{equation}\label{eq:wekfbwebfqo}
3g(-1)= f(-1)+2g(-1)g(0)+\sum_{j\le-2} f(j)+g(-1)^2+1.
\end{equation}
By symmetry of $f$, we have that $f(-1)+\sum_{j\le-2} f(j)=\frac{1-f(0)}{2}$,
so from \eqref{eq:wekfbwebfqo},
\begin{equation}\label{eq:B-lb}
3g(-1)=\frac{3-f(0)}{2}+2g(-1)g(0)+g(-1)^2
\ \ge\ \frac{3-f(0)}{2}.
\end{equation}
In particular, $g(-1)>0$. Now, evaluate \eqref{eq:master-eq} at $(x,y)=(0,-1)$:
\begin{equation*}
3f(0)g(-1)
= f(0)\big(f(-1)+2g(-1)g(0)\big)
   +\sum_{j\le-2} f(j)f(-j-1)+g(-1)^2 f(1)+g(-1)
\ge g(-1).
\end{equation*}
If $f(0)=0$, the last equation gives a contradiction because $g(-1)>0$.
\end{proof}
 
 Now, multiplying  \eqref{eq:master-eq} by $e^{i(tx+sy)}$ and summing over $x\in\mathbb Z$ and $y\in\BB Z_{\ge-1}$, we get the equation
 \begin{align}\label{eq:big-rel2}
     3F(t)G(s) &= F(t)\left[ F(s) - F^{-}(s) + f(-1)e^{-is} + G^{2}(s) - g(-1)^{2}e^{-2is} \right]\notag\\
     &\quad+ e^{-is}\Bigl[ e^{it} F^{-}(t)F(t) - f(-1)F(t) + g(-1)^{2} F(t) e^{-it} + G(t) e^{it} \Bigr].
 \end{align}
 Setting $t=0$ and using that $F(0)=1$,  $F^{-}(0)=(1-f(0))/2$ and $G(0)=1$, we get that
 \begin{equation}\label{eq:first-sub}
     3G(s) = \left[ F(s) - F^{-}(s) +   G^{2}(s) - g(-1)^{2}e^{-2is} \right]+ e^{-is}\Bigl[  (1-f(0))/2  + g(-1)^{2}   + 1  \Bigr].
 \end{equation}
 Similarly, setting $s=0$, we get that
 \begin{equation*}
     3F(t) = F(t)\left[ 2 - (1-f(0))/2   - g(-1)^{2} \right]+\Bigl[ e^{it} F^{-}(t)F(t)  + g(-1)^{2} F(t) e^{-it} + G(t) e^{it} \Bigr].
 \end{equation*}
 This latter equation allows us to write $G(t)$ in terms of $F(t)$ and $F^{-}(t)$. More precisely, we have that 
 \begin{equation}\label{eq:expr-G}
     G(t) = -F(t)\,\widehat F^-(t)
 \end{equation}
 where 
 \begin{equation}\label{eq:fhatmin}
     \widehat{F}^{-}(t):=F^{-}(t)+c(t)
 \end{equation}
 and 
 \begin{equation}\label{eq:defn-ct}
     c(t):=\frac{2g(-1)^2+e^{it}(-3-2g(-1)^2+f(0))}{2e^{2it}}.
 \end{equation}
 Substituting \eqref{eq:expr-G} in \eqref{eq:first-sub}, one obtains the quadratic equation\footnote{Just performing the substitution, one gets a minus sign on the left-hand side of \eqref{eq:quadratic} that we removed for simplicity.} in $\widehat{F}^{-}$:
 \begin{equation}\label{eq:quadratic}
     F^2 (\widehat{F}^{-})^2+(3F-1) \widehat{F}^{-}+F=0.
 \end{equation}
 Note that this is exactly the same equation that we obtained in~\eqref{eq:quadratic-new} in the LDP case (but $c(t)$ is defined differently).

We are now ready to complete the proof of Proposition~\ref{prop:asympt-exp}.

 \begin{proof}[Proof of Proposition~\ref{prop:asympt-exp}]
 The fact that $g(-1)> 0$ follows from Lemma~\ref{lem:Y-not-const2}. We now determine the asymptotic expansions.
 	Recalling that $F(t)\in\mathbb R$ and writing
 \begin{equation*}
     F(t)=f(0)+F^{-}(t)+\overline{F^{-}(t)}\stackrel{\eqref{eq:fhatmin}}{=}f(0)+(\widehat{F}^{-}(t)-c(t))+\overline{(\widehat{F}^{-}(t)-c(t))}=f(0)-2\re(c(t))+2\re(\widehat{F}^{-}(t)),
 \end{equation*}
 we set 
 \[K(t):=f(0)-2\re(c(t))\stackrel{\eqref{eq:defn-ct}}{=}f(0)+(3+2g(-1)^2-f(0))\cos(t)-2g(-1)^2\cos(2t),\]
 so that $K(t)$ encodes the part of $F(t)$ that is independent of $\widehat{F}^{-}(t)$ (which we recall is our indeterminate in \eqref{eq:quadratic}) and 
 \begin{equation}\label{eq:real-part-fhat2}
     \re(\widehat{F}^{-}(t))=\frac{F(t)-K(t)}{2}.
 \end{equation}
 Hence, thanks to Lemma~\ref{lem:nonreal2} (which is valid also in this SDP case thanks to Lemma~\ref{lem:Y-not-const2}),
 we can write
 \begin{equation*}
     \widehat{F}^{-}(t)=\alpha(t)+i\beta(t)\stackrel{\eqref{eq:real-part-fhat2}}{=}\frac{F(t)-K(t)}{2}+i\beta(t),
 \end{equation*} 
 and there exists $\delta>0$ such that $\beta(t)\ne 0$ for $0<t<\delta$.

 Now, taking the imaginary part in \eqref{eq:quadratic}, we obtain the equation
 \begin{equation*}
     \beta(F^3-K\,F^2+3F-1)=0,
 \end{equation*}
 and since $\beta(t)\ne 0$ for $0<t<\delta$, we get that for $0<t<\delta$,
 \begin{equation}\label{eq:third-ord}
     F^3-K\,F^2+3F-1=0,
 \end{equation}
 The Taylor expansion of $K(t)$ around $t=0$ is 
 \begin{equation*}
     K(t)=3-\frac{\kappa}{2}\, t^2+\left(\frac{\kappa}{24}-g(-1)^2\right)\,t^4+O(t^6),
 \end{equation*}
 where we recall that  $\kappa=3-6g(-1)^2-f(0)$. Writing $F(t)=1+\eps(t)$ with $\eps(t)\to 0$ as $t\downarrow 0$, we get from \eqref{eq:third-ord} that
 \begin{equation*}
     0=\eps^3+\frac{\kappa}{2}\,t^2 (\eps^2+2\eps+1)-\left(\frac{\kappa}{24}-g(-1)^2\right)\,t^4 (\eps^2+2\eps+1)+O(\eps^4+t^6+t^4\eps).
 \end{equation*}
Now, to ensure that the right-hand side of the latter equation tends to zero as $t$ decreases to zero:
 \begin{itemize}
     \item If $\kappa=0$, we must have that $\eps^3 (1 + o(1)) = -g(-1)^2 t^4 ( 1+o(1))$ as $t\downarrow 0$, and so $\eps(t)= -g(-1)^{2/3}\,t^{4/3}+o(t^{4/3})$ as $t\downarrow 0$.
     \item If $\kappa\neq 0$, we must have that $\eps^3 (1+o(1))=-\frac{\kappa}{2}\,t^2 (1+o(1))$ as $t\downarrow 0$. Since $F$ is real, $F(0)=1$, and $|F(t)|\leq 1$ for all $t\in \BB R$, it must be that $\kappa>0$ and so $\eps(t)= -(\frac{\kappa}{2})^{1/3}\,t^{2/3}+o(t^{2/3})$ as $t\downarrow 0$.
 \end{itemize}
 The above two items complete the proof for the expansion of $F(t)$.
 
 \medskip

 We now determine the expansions of $F^-(t)$ and $G(t)$. Solving the quadratic equation \eqref{eq:quadratic}, the two possible continuous choices of branch for $\widehat{F}^-(t)$ are
 \begin{equation*}
 	\widehat{F}_{\pm}^-(t)=\frac{1-3F\pm\sqrt{\Delta(F)}}{2F^2},\qquad{\text{with $\Delta(F)=(3F-1)^2-4F^3$,}}
 \end{equation*}
 and recalling \eqref{eq:fhatmin}, we obtain that the two possible continuous choices of branch for $F^-(t)$ are
  \begin{equation}\label{expr-for-F-}
 	F_{\pm}^-(t)=\frac{1-3F\pm\sqrt{\Delta(F)}}{2F^2}-c(t),\qquad{\text{with $\Delta(F)=(3F-1)^2-4F^3.$}}
 \end{equation}
 Moreover, from \eqref{eq:expr-G}, we have that the two possible continuous choices of branch for $G(t)$ are
 \begin{equation}\label{eq:Gtexpr}
 	G_{\pm}(t) = -F(t)\,\widehat F^-_{\pm}(t).
 \end{equation}
 Therefore,
 \begin{itemize}
 	\item If $\kappa=0$, since $F(t)= 1-g(-1)^{2/3}\,t^{4/3}+o(|t|^{4/3})$ as $t\downarrow 0$, we get from \eqref{eq:defn-ct} and \eqref{expr-for-F-} that
 	\begin{equation*}
 		F_{\pm}^-(t)=\left(\frac{1-f(0)}{2}\right)-2 i g(-1)^2 t+\frac{1}{2} (-1 \pm i\sqrt{3}) g(-1)^{2/3}t^{4/3}+o(t^{4/3}),
 	\end{equation*}
 	and from \eqref{eq:Gtexpr},
 	\begin{equation*}
 		G_{\pm}(t)=1+\frac{1}{2}\left(-g(-1)^{2/3} \mp i\sqrt{3} g(-1)^{2/3}\right)t^{4/3}+o(t^{4/3}).
 	\end{equation*}
 	\item If $\kappa>0$, since $F(t)= 1-\left(\frac{\kappa}{2}\right)^{1/3}\,t^{2/3}+o(t^{2/3})$ as $t\downarrow0$, we get from \eqref{eq:defn-ct} and \eqref{expr-for-F-} that
 	\begin{equation}\label{eq:wrong-F-asympt}
 		F_{\pm}^-(t)=\left(\frac{1-f(0)}{2}\right)+\frac{1}{2} (-1 \pm i\sqrt{3})\left(\frac{\kappa}{2}\right)^{1/3}t^{2/3}+o(t^{2/3})
 	\end{equation}
 	and from \eqref{eq:Gtexpr},
 	\begin{equation*}
 		G_{\pm}(t)=1+\frac{1}{2}(-1 \mp i\sqrt{3})\left(\frac{\kappa}{2}\right)^{1/3}t^{2/3}+o(t^{2/3}).
 	\end{equation*} 
 \end{itemize}
 Noting that the conjugate of $F^{-}(t)$ is the characteristic function of a non-negative integer-valued random variable, and using Proposition~\ref{prop:Tauberian}, we have that the only admissible expressions when $\kappa=0$ are the ones in the proposition statement. 

 \bigskip
 
 Hence, it remains to prove that the case $\kappa>0$ is not possible. This does not appear to follow from the recursive equation of Proposition~\ref{prop:SDP-recursive-equation}, and it is the only part of the proof that is not directly parallel to the LDP case. If $\kappa>0$, then the asymptotics in \eqref{eq:wrong-F-asympt} combined with Proposition~\ref{prop:Tauberian} give that as $x\to\infty$,
 \begin{equation}\label{eq:incorrect-tail}
     \BB P\left[ |\mcl X(0) - \mcl X(-1)| \geq x \right] = c \, x^{-2/3} + o(x^{-2/3}),
 \end{equation}
 for some $c>0$.

 Recall from Theorem~\ref{thm-busemann} that $\{x_k\}_{k\in\BB Z} $ is the sequence of boundary vertices of $M_{0,\infty}$, ordered from left to right so that $\lambda_0 = (x_0,x_1)$. 
The idea of the proof is to show that $\mcl X(-k)$ is bounded above by $O(k)$ with high probability, which will violate the heavy-tailed central limit theorem if $\kappa > 0$. We will do this using the triangle inequality and the fact that the rightmost directed path started from $x_{-k}$ typically takes $O(k)$ units of time to coalesce into the rightmost directed path started from $x_1$ (i.e., the right boundary of $M_{0,\infty}$). See Figure~\ref{fig-sdp-identify} for an illustration.

\begin{figure}[ht!]
\begin{center}
\includegraphics[width=0.5\textwidth]{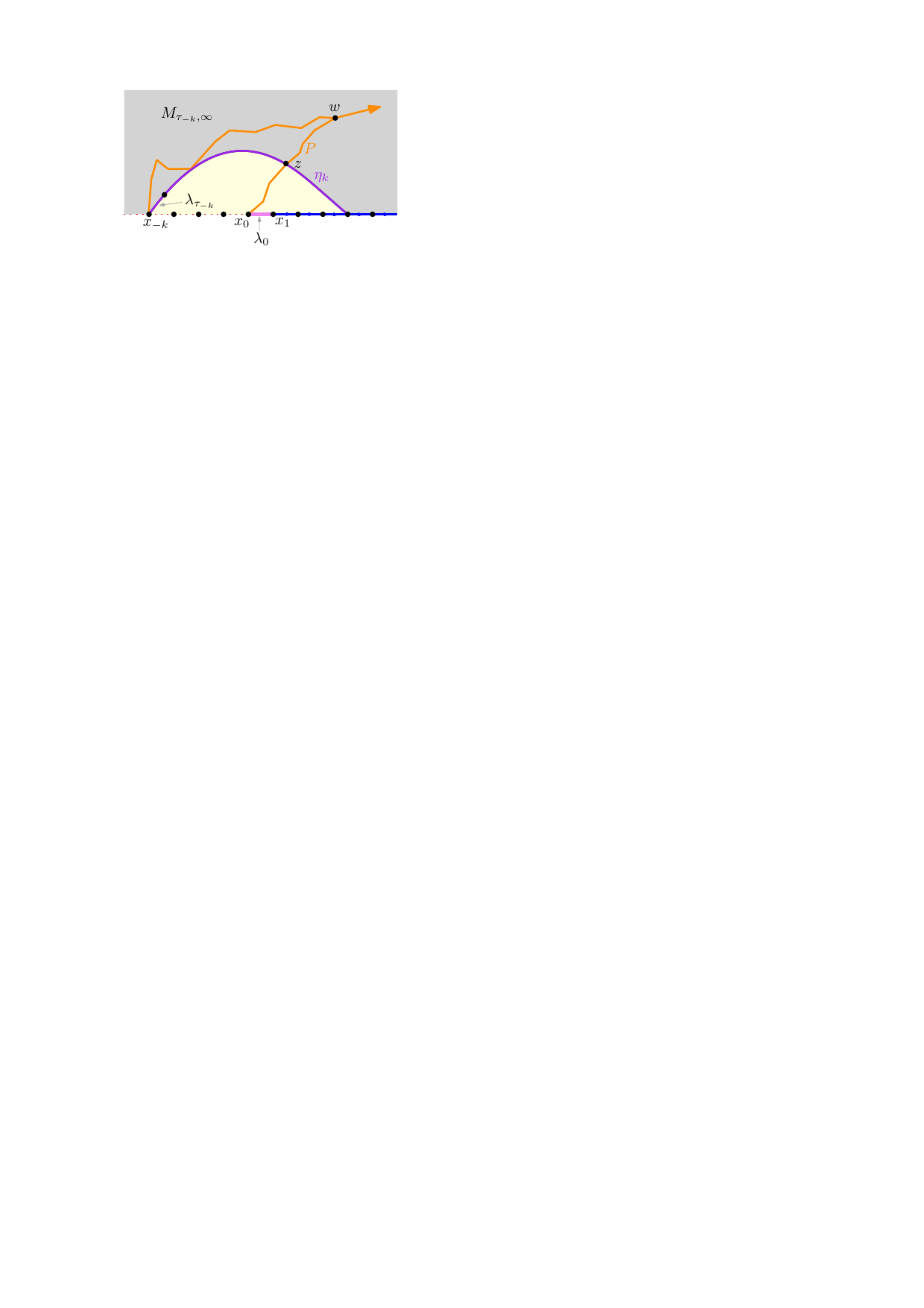}  
\caption{\label{fig-sdp-identify}  
Illustration for the proof that $\kappa>0$ is not admissible. 
}
\end{center}
\end{figure}

For $k\in \BB N$, let $\tau_{-k}$ be the smallest $n\in\BB N$ which $x_{-k}$ is the initial vertex of $\lambda_n$, as in~\eqref{eqn-bdy-vertex-hit}. By~\eqref{eqn-bdy-vertex-kmsw}, $\tau_{-k}$ can equivalently be defined as the smallest $n\in\BB N$ for which $\mcl L(n) = -k$. By the convergence of the re-scaled encoding walk $k^{-1} \mcl Z( \lfloor k^2 \cdot \rfloor )$ to a (correlated) two-dimensional Brownian motion, for each $\delta \in (0,1)$, there exists $A  > 1$ such that for every $k\in\BB N$, it holds with probability at least $1-\delta$ that 
\eqb  \label{eqn-sdp-identify-max}
 \max_{n \in [0,\tau_{-k}]\cap\BB Z} |\mcl R(n)| \leq A  k .
\eqe 

Let $\eta_k$ be the path on the boundary of $M_{\tau_{-k},\infty}$ from $x_{-k}$ to the point where the right boundaries of $M_{\tau_{-k},\infty}$ and $M_{0,\infty}$ meet. Equivalently, $\eta_k$ consists of the edge $\lambda_{\tau_{-k}}$ followed by the upper-right boundary of the submap of $M_{0,\infty}$ obtained by applying the KMSW procedure to $\mcl Z|_{[0,\tau_{-k}]}$.  By Lemma~\ref{lem-uibot-bdy}, $\eta_k$ is a directed path in $M_{\tau_{-k},\infty}$ started from $x_k$.  By~\eqref{eqn-kmsw-bdy} of Lemma~\ref{lem-kmsw-bdy}, if~\eqref{eqn-sdp-identify-max} holds, then the length of $\eta_k$ is at most $2 A k +1$. Hence
\eqb  \label{eqn-sdp-identify-bdy}
\BB P\left[ |\eta_k| \leq 2 A k + 1\right] \geq 1 -\delta . 
\eqe  

By Theorem~\ref{thm-busemann}, there exists a vertex $w = w(k) \in \mcl V( M_{\tau_{-k} ,\infty} ) $ such that the SDP Busemann function satisfies
\eqb  \label{eqn-sdp-identify-busemann} 
  \op{SDP}_{M_{0,\infty}}(x_{-k}, w)   -  \op{SDP}_{M_{0,\infty}}(x_0 , w)   =  \mcl X(-k)  - \mcl X(0) =   \mcl X(-k)   .
\eqe  
Let $P$ be an SDP geodesic from $x_0$ to $w$ in $M_{0,\infty}$. Since $\eta_k$ is a path in $M_{0,\infty}$ from $x_{-k}$ to $x_r$ for some $r\geq 0$, topological considerations imply that $P$ must hit a vertex on $\eta_k$. Let $z$ be such a vertex. 
Then
\eqb  \label{eqn-sdp-identify-tri} 
 \op{SDP}_{M_{0,\infty}}(x_0 , w) = \op{SDP}_{M_{0,\infty}}(x_0 , z) + \op{SDP}_{M_{0,\infty}}(z , w) .
\eqe 
On the other hand, concatenating the segment of $\eta_k$ from $x_{-k}$ to $z$ with the segment of $P$ from $z$ to $w$ gives a directed path from $x_{-k}$ to $w$. Hence
\eqb  \label{eqn-sdp-identify-bound} 
  \op{SDP}_{M_{0,\infty}}(x_{-k}, w)  \leq |\eta_k| +  \op{SDP}_{M_{0,\infty}}(z , w) .
\eqe
We therefore obtain that with probability at least $1-\delta$, 
\begin{align*}
   \mcl X(-k) 
&=   \op{SDP}_{M_{0,\infty}}(x_{-k}, w)   -  \op{SDP}_{M_{0,\infty}}(x_0 , w) \quad &&\text{by~\eqref{eqn-sdp-identify-busemann}} \notag\\ 
&\leq   |\eta_k|  -  \op{SDP}_{M_{0,\infty}}(x_0 , z)  &&\text{by~\eqref{eqn-sdp-identify-tri} and~\eqref{eqn-sdp-identify-bound}} \notag\\
&\leq   |\eta_k|  \leq  2 A k + 1 &&\text{by~\eqref{eqn-sdp-identify-bdy}} . 
\end{align*}
Since $\delta \in (0,1)$ is arbitrary, any possible limit in law of the random variable $ k^{-3/2} \mcl X(-k) $ is a.s.\ non-positive. 
Since $\mcl X(-k) \eqD -\mcl X(-k)$ by Item~\ref{item-busemann-sym} of Theorem~\ref{thm-busemann-property}, this implies that $k^{-3/2} \mcl X(-k)$ converges in probability to zero. By the heavy-tailed central limit theorem, this shows that the increments $\mcl X(-k+1) - \mcl X(-k) \eqD \mcl X(0) - \mcl X(-1)$ for $k\leq 0$ cannot have the tail asymptotic in \eqref{eq:incorrect-tail}. This shows that the case $\kappa>0$ is not admissible, concluding the entire proof.
\end{proof}

\begin{proof}[Proof of Theorem~\ref{thm-busemann-conv}]
This is immediate from Theorems~\ref{thm-busemann-property}~and~\ref{thm-busemann-tail} and the heavy-tailed functional central limit theorem (see, e.g., \cite[Chapter VII]{js-limit-thm}). 
 \end{proof}

\section{The case of finite triangulations}
\label{sec-finite}

The goal of this section is to prove the finite-volume versions of our results, i.e.\ Theorems~\ref{thm-boltzmann-ldp}--\ref{thm-cell-sdp}.
We will first prove the statements for size-$n$ KMSW cells (Theorems~\ref{thm-cell-ldp} and \ref{thm-cell-sdp}), then use these to deduce the statement for Boltzmann bipolar-oriented triangulations (Theorem~\ref{thm-boltzmann-ldp}).

\subsection{LDP in size-$n$ KMSW cells}
\label{sec-cell-ldp}

Recall that $M_{-\infty,\infty}$ denotes the UIBOT, the random walk $\mcl Z =(\mcl L,\mcl R): \BB Z \to\BB Z^2$ is its KMSW encoding walk as in Proposition~\ref{prop-kmsw-uibot}, $\{x_k\}_{k\in\BB Z}$ is the ordered sequence of boundary vertices of $M_{0,\infty}$ (as in Theorem~\ref{thm-busemann}), and $\tau_k$ (introduced in~\eqref{eqn-bdy-vertex-hit}) is the smallest $n\in\BB N$ such that $x_k$ is the initial vertex of the active edge $\lambda_n$. 

For $n\in\BB N$, let $M_{0,n}$ be the submap of $M_{0,\infty}$ obtained by applying the KMSW procedure (Definition~\ref{def-kmsw}) to $\mcl Z|_{[0,n]}$, as in Section~\ref{sec-cell-intro}. In this subsection we will deduce Theorem~\ref{thm-cell-ldp} from our results for the UIBOT. 

We will only be working with longest directed paths in this subsection, so in what follows we assume that $\mcl X$ is the Busemann function in Theorem~\ref{thm-busemann} with $\XDP = \op{LDP}$. 
The following deterministic lemma is the main input in the proof of the lower bound for LDP distances in $M_{0,n}$.

\begin{figure}[ht!]
\begin{center}
\includegraphics[width=0.5\textwidth]{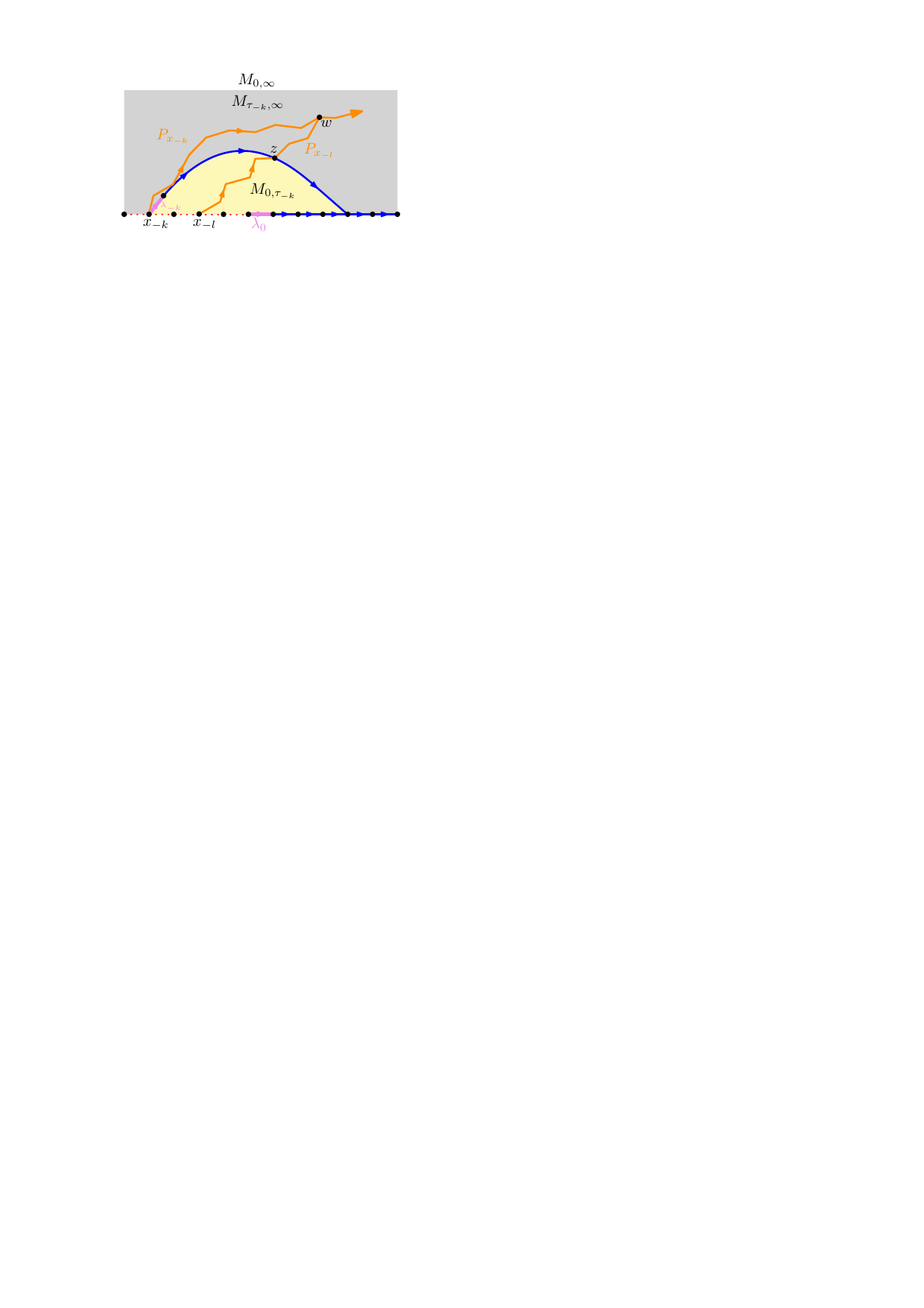}  
\caption{\label{fig-long-path}  
Illustration of the proof of Lemma~\ref{lem-long-path}. We prove a lower bound for the length of the segment of the leftmost LDP geodesic $P_{x_{-\el}}$ before it hits $z$. 
}
\end{center}
\end{figure}

\begin{lem} \label{lem-long-path}
Let $\el ,k \in\BB N$ with $\el < k$ and recall that $\tau_{-k}$ is the time from~\eqref{eqn-bdy-vertex-hit}. There is a directed path contained in $M_{0,\tau_{-k}}$ with length at least 
\eqbn
\max\left\{ \mcl X(-\el) - \mcl X(-k) , 0\right\} .
\eqen
\end{lem}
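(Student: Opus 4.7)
I plan to exhibit the desired directed path as an initial segment of a leftmost LDP geodesic. The case $\mcl X(-\el) - \mcl X(-k) \leq 0$ is immediate: the single vertex $x_{-\el}$ lies in $M_{0,\tau_{-k}}$ (on its lower-left boundary, since $\tau_{-\el} < \tau_{-k}$) and gives a trivial directed path of length $0$. So I may assume $\mcl X(-\el) - \mcl X(-k) > 0$.

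Let $P_{x_{-\el}}$ and $P_{x_{-k}}$ be the leftmost LDP geodesics from $x_{-\el}$ and $x_{-k}$ to $\infty$ in $M_{0,\infty}$; these exist and coalesce by Lemma~\ref{lem-geo-infty}. Let $z$ denote their first common vertex and let $s_\el,s_k \in \BB N_0$ be the times at which $P_{x_{-\el}}$ and $P_{x_{-k}}$ respectively hit $z$. Since any finite initial segment of an infinite leftmost LDP geodesic realizes the LDP distance between its endpoints (Definition~\ref{def-infinite-geo}), $s_\el = \op{LDP}_{M_{0,\infty}}(x_{-\el},z)$ and $s_k = \op{LDP}_{M_{0,\infty}}(x_{-k},z)$. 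I now apply Theorem~\ref{thm-busemann} to a sequence $\{w_j\}_{j\in\BB N}$ of distinct vertices along the common tail of the two geodesics past $z$. Since $P_{x_{-\el}}$ and $P_{x_{-k}}$ both visit $z$ on their way to $w_j$, additivity of LDP along each geodesic passing through $z$ gives
\[
\mcl X(-\el) - \mcl X(-k) = \op{LDP}_{M_{0,\infty}}(x_{-\el},w_j) - \op{LDP}_{M_{0,\infty}}(x_{-k},w_j) = s_\el - s_k,
\]
so $s_\el \geq \mcl X(-\el)-\mcl X(-k)$ because $s_k \geq 0$.

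The remaining step is to show that the segment $P_{x_{-\el}}([1,s_\el])$ is contained in $M_{0,\tau_{-k}}$; this planar-topology step is where I expect the main work. By Lemma~\ref{lem-directed-contain} applied to the index $-k$, the path $P_{x_{-k}}$ lies in $\wh M_{\tau_{-k},\infty}$, i.e.\ in the part of $M_{0,\infty}$ that the KMSW procedure explores strictly after time $\tau_{-k}$. The three directed paths $P_{x_{-\el}}([1,s_\el])$, $P_{x_{-k}}([1,s_k])$, and the missing lower-left boundary segment $x_{-k},x_{-k+1},\ldots,x_{-\el}$ of $M_{0,\infty}$ together enclose a bounded planar region $R$: they do not cross before reaching $z$ by the leftmost property of $P_{x_{-\el}}$ and by the definition of $z$ as the first coalescence point. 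I then argue that $R \subset M_{0,\tau_{-k}}$ via the KMSW exploration order: any vertex of $R$ failing to lie in $M_{0,\tau_{-k}}$ would have to sit in the interior of $\wh M_{\tau_{-k},\infty}$, and would therefore be reachable from $x_{-k}$ by a directed path in $\wh M_{\tau_{-k},\infty}$ whose position relative to the leftmost LDP geodesic $P_{x_{-k}}$ (which forms one side of $R$) contradicts the fact that $P_{x_{-k}}$ cannot be exited without crossing itself. Consequently $P_{x_{-\el}}([1,s_\el]) \subset R \subset M_{0,\tau_{-k}}$, producing the desired directed path of length $s_\el \geq \mcl X(-\el) - \mcl X(-k)$.
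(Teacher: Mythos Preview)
Your length computation via the coalescence point is fine, but the containment step has a genuine gap. You assert that the segment $P_{x_{-\el}}([1,s_\el])$ up to the \emph{coalescence point} $z$ lies in $M_{0,\tau_{-k}}$; this is false in general. The geodesic $P_{x_{-k}}$ lives in $\wh M_{\tau_{-k},\infty} \subset M_{\tau_{-k},\infty}$ (the part of $M_{0,\infty}$ explored \emph{after} time $\tau_{-k}$), so the coalescence point $z$, which lies on $P_{x_{-k}}$, typically sits in the interior of $M_{\tau_{-k},\infty}$ rather than in $M_{0,\tau_{-k}}$. Your region $R$ has $P_{x_{-k}}([1,s_k])$ as one of its sides, and that side runs into $M_{\tau_{-k},\infty}$; hence $R \not\subset M_{0,\tau_{-k}}$. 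The contradiction you sketch collapses exactly at $z$: it is in $\wh M_{\tau_{-k},\infty}$, it is reachable from $x_{-k}$ by a directed path there (namely $P_{x_{-k}}$ itself), and nothing is violated. In short, $P_{x_{-\el}}$ will ordinarily exit $M_{0,\tau_{-k}}$ through the right boundary of $M_{\tau_{-k},\infty}$ well before it ever meets $P_{x_{-k}}$.

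The paper reverses the roles of the two ingredients: it defines $z$ as the \emph{first exit point} of $P_{x_{-\el}}$ from $M_{0,\tau_{-k}}$, so containment of $P_{x_{-\el}}|_{[0,T]}$ is automatic, and then deduces the length bound. The key observation is that this exit point $z$ lies on the right boundary of $M_{\tau_{-k},\infty}$, so the boundary segment from $x_{-k}$ to $z$ is a directed path, giving $\op{LDP}(z,w) \leq \op{LDP}(x_{-k},w)$ for any $w$ past the coalescence point. Combined with additivity along $P_{x_{-\el}}$ (which passes through both $z$ and $w$) and the Busemann relation at $w$, this yields $\op{LDP}(x_{-\el},z) \geq \mcl X(-\el)-\mcl X(-k)$. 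Your approach can be repaired by truncating $P_{x_{-\el}}$ at this exit point instead of at the coalescence point, but then you need precisely the paper's monotonicity argument to recover the length bound.
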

\begin{proof} 
This is an elementary geometric argument. 
See Figure~\ref{fig-long-path} for an illustration. 
To lighten notation, we abbreviate $\op{LDP} = \op{LDP}_{M_{0,\infty}}$. 
Let $P_{x_{-\el}}$ and $P_{x_{-k}}$ be the leftmost LDP geodesics in $M_{0,\infty}$ from $x_{-\el}$ and $x_{-k}$ to $\infty$, as in Lemma~\ref{lem-geo-infty}. 

Let $T$ be the smallest time for which the edge $P_{x_{-\el}}(T+1)$ does not belong to $  M_{0,\tau_{-k}}$ and let $z$ be the initial vertex of $P_{x_{-\el}}(T+1)$. Then $P_{x_{-\el}}|_{[0,T]}$ is a directed path in $M_{0,\tau_{-k}}$ from $x_{-\el}$ to $z$, of length $T = \op{LDP}(x_{-\el} , z)$. We will prove a lower bound for this length. 

Let $w$ be any vertex with the property that the segments of $P_{x_{-\el}}$ and $P_{x_{-k}}$ after they hit $w$ are identical (such a vertex exists by Lemma~\ref{lem-geo-infty}). By the definition of the Busemann function in Theorem~\ref{thm-busemann}, we can choose $w$ so that
\eqb \label{eqn-long-path-busemann}
  \op{LDP}(x_{-\el} , w )  - \op{LDP}(x_{-k}, w)  = \mcl X(-\el) - \mcl X(-k) .
\eqe  
The vertices $x_{-k}$ and $z$ each lie on the right boundary of $M_{\tau_{-k},\infty}$. The segment of this right boundary between $x_{-k}$ and $z$ is a directed path from $x_{-k}$ to $z$ in $M_{0,\infty}$. Therefore,
\eqb  \label{eqn-long-path-mono}
\op{LDP}(z,w) \leq \op{LDP}(x_{-k} , w) .
\eqe 
Since $z$ and $w$ are both hit by the LDP geodesic $P_{x_{-\el}}$, 
\eqb \label{eqn-long-path-geo}
 \op{LDP}(x_{-\el} , w)  =  \op{LDP}(x_{-\el} ,z )  + \op{LDP}(z,w)  .
\eqe
We have
\allb
 \op{LDP}(x_{-\el} ,z )
&=  \op{LDP}(x_{-\el} , w)  -  \op{LDP}(z,w) \quad &&\text{by~\eqref{eqn-long-path-geo}} \notag\\
&=   \mcl X(-\el) - \mcl X(-k) +   \op{LDP}(x_{-k} , w)  -   \op{LDP}(z,w) \quad &&\text{by~\eqref{eqn-long-path-busemann}} \notag\\
&\geq   \mcl X(-\el) - \mcl X(-k)   \quad &&\text{by~\eqref{eqn-long-path-mono}} .
\alle
As we saw above, the directed path $P_{x_{-\el}}|_{[0,T]}$ is a directed path in $M_{0,\tau_{-k}}$ with length $ \op{LDP}(x_{-\el} ,z )$, so this concludes the proof.
\end{proof}

We can now prove the lower bound in Theorem~\ref{thm-cell-ldp}. We actually obtain a slightly more quantitative statement. 

\begin{lem} \label{lem-cell-ldp-lower}
For each $n\in\BB N$ and $A> 1$, 
\eqb \label{eqn-cell-ldp-lower}
\BB P\left[ \text{$M_{0,n}$ contains a directed path of length at least $A^{-1} n^{3/4}$}  \right] 
\geq 1 -  A^{-2/3 + o(1)} ,
\eqe
where the $o(1)$ goes to zero as $A\to\infty$, at a rate which is uniform in $n$. 
\end{lem}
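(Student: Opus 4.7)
Proof plan:

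The idea is to apply Lemma~\ref{lem-long-path} to a single large increment of the Busemann function, thereby producing a long directed path in $M_{0,n}$. Given $A > 1$, I will introduce a small parameter $\epsilon = \epsilon(A) > 0$ (which will go to $0$ as $A \to \infty$, and which will constitute the $o(1)$ in the exponent) and set $K := \lfloor A^{-2/3+\epsilon} n^{1/2}\rfloor$. We may assume $A^{-1}n^{3/4} > 1$ (equivalently $n > A^{4/3}$), since otherwise the conclusion follows trivially: $M_{0,n}$ always contains the root edge $\lambda_0$, which is by itself a directed path of length $1$.

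Next, by a standard fluctuation-theoretic estimate for the lazy walk $\mcl L$ (for instance, reflection together with CLT, or optional stopping applied to $\mcl L(n \wedge \tau_{-K})$ combined with $\mathrm{Var}(\mcl L(1))=2/3$), we obtain a universal constant $C_0$ such that $\BB P[\tau_{-K} > n] \leq C_0(K+1)/\sqrt n \leq 2C_0 A^{-2/3+\epsilon}$ in the regime considered. On the event $\{\tau_{-K} \leq n\}$, the monotonicity $\tau_{-1} \leq \tau_{-2} \leq \dots \leq \tau_{-K}$ (which holds because $\mcl L$ has increments in $\{-1,0,1\}$) ensures that $M_{0,\tau_{-k}} \subseteq M_{0,n}$ for every $k \leq K$. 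Applying Lemma~\ref{lem-long-path} with $\ell = k-1$ to each $k \in [2,K]\cap\BB Z$, we conclude that $M_{0,n}$ contains a directed path of length at least $\Delta_k := \mcl X(-k+1) - \mcl X(-k)$; hence
$$\sup\{\op{LDP}_{M_{0,n}}(x,y) : x,y \in \mcl V(M_{0,n})\} \geq \max_{k\in [2,K]\cap \BB Z}\Delta_k\quad\text{on }\{\tau_{-K}\leq n\}.$$

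By Properties \ref{item-busemann-ind} and \ref{item-busemann-stationary} of Theorem~\ref{thm-busemann-property}, the random variables $\{\Delta_k\}_{k\geq 1}$ are i.i.d.; by Property \ref{item-busemann-sym} they are symmetric, and by Theorem~\ref{thm-busemann-tail}\ref{item-busemann-tail-LDP}, $\BB P[\Delta_1 \geq x] = c_1 x^{-2/3}(1+o(1))$ as $x \to \infty$. Hence there is a universal threshold $x_0$ such that $\BB P[\Delta_1 \geq x] \geq (c_1/2) x^{-2/3}$ for $x \geq x_0$; in the (compact) intermediate regime $A^{-1}n^{3/4}\in (1,x_0]$ the same lower bound holds with $c_1/2$ replaced by a smaller universal constant since $\BB P[\Delta_1 > 0] > 0$. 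Plugging in $x = A^{-1}n^{3/4}$ and using $(K-1)A^{2/3}n^{-1/2} \asymp A^\epsilon$,
$$\BB P\bigl[\max_{k \in [2,K]\cap\BB Z} \Delta_k < A^{-1}n^{3/4}\bigr] \leq \bigl(1 - \tfrac{c_1}{2}A^{2/3} n^{-1/2}\bigr)^{K-1} \leq \exp(-C_1 A^{\epsilon})$$
for some $C_1 > 0$ and $A$ large. Combining the two displays, the total failure probability is at most $2C_0 A^{-2/3+\epsilon} + \exp(-C_1 A^{\epsilon})$. For any $\eta > 0$, taking $\epsilon = \eta/2$ and $A$ sufficiently large (depending on $\eta$), this is bounded by $A^{-2/3+\eta}$; since $\eta>0$ is arbitrary, we obtain the stated $A^{-2/3+o(1)}$ bound, with the $o(1)$ uniform in $n$.

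The main obstacle is making all of the above estimates uniform in $n$, rather than merely in a large-$n$ limit. In particular, the random walk bound $\BB P[\tau_{-K}>n] \leq C_0(K+1)/\sqrt n$ and the one-sided tail lower bound for $\Delta_1$ must both be established with constants that do not depend on $n$ or $A$, and the several regimes of $n$ (the trivial regime $n \leq A^{4/3}$, the intermediate regime where $A^{-1}n^{3/4}$ is of order~$1$, and the large-$n$ regime where the sharp Theorem~\ref{thm-busemann-tail} asymptotic applies) must be glued together. Each regime is straightforward individually, so this is primarily a bookkeeping issue rather than a new probabilistic estimate.
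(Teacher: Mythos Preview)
Your proposal is correct and follows essentially the same approach as the paper's proof: both set $K=\lfloor A^{-2/3+\epsilon}n^{1/2}\rfloor$, bound $\BB P[\tau_{-K}>n]$ by a constant times $A^{-2/3+\epsilon}$ via a standard lazy-walk estimate, use the i.i.d.\ structure and tail asymptotics of the negative-index Busemann increments to show $\BB P[\max_k\Delta_k<A^{-1}n^{3/4}]\le e^{-cA^{\epsilon}}$, and then invoke Lemma~\ref{lem-long-path} to produce the long path inside $M_{0,\tau_{-K}}\subset M_{0,n}$. Your treatment is in fact slightly more careful than the paper's in two minor respects: you explicitly restrict to $k\in[2,K]$ so that Lemma~\ref{lem-long-path} applies with $\ell=k-1\in\BB N$, and you spell out the trivial/intermediate regimes needed to make the bound uniform in $n$.
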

\begin{proof}
Fix $\ep > 0$, which we will eventually send to zero to get the $o(1)$ error in~\eqref{eqn-cell-ldp-lower}.  
By Theorem~\ref{thm-busemann-property}, the increments $\mcl X(-k+1) -\mcl X(-k)$ for $k\geq 1$ are i.i.d. 
By Theorem~\ref{thm-busemann-tail}, there is a universal constant $c_1 >0$ such that for all $k\geq 1$
\eqb \label{eqn-use-lower-tail}
\BB P\left[ \mcl X(-k+1) - \mcl X(-k) \geq A^{-1} n^{3/4} \right] 
\geq c_1 A^{ 2/3} n^{-1/2} .
\eqe
By multiplying the bound~\eqref{eqn-use-lower-tail} over $\lfloor A^{-2/3+\ep} n^{1/2}\rfloor$ values of $k$, we obtain 
\allb \label{eqn-lower-tail-prod}
&\BB P\left[  \max\left\{ \mcl X(-k+1) - \mcl X(-k) : k\in [1,A^{-2/3+\ep} n^{1/2} ] \cap\BB Z \right\}  \geq A^{-1} n^{3/4} \right]  \notag\\
&\qquad\qquad\qquad\qquad \geq 1 - \left( 1 - c_1 A^{ 2/3} n^{-1/2} \right)^{ \lfloor A^{-2/3+\ep} n^{1/2}\rfloor} \notag\\
&\qquad\qquad\qquad\qquad \geq  1 - c_2 e^{-c_3 A^{ \ep}} 
\alle
for constants $c_2,c_3 >0$ depending only on $\ep$. 

The process $\mcl L$ is a random walk with i.i.d.\ increments sampled uniformly from $\{-1,0,1\}$. By~\eqref{eqn-bdy-vertex-kmsw}, the time $\tau_{-k}$ is the first positive time at which $\mcl L$ hits $-k$. By standard estimates for random walk, 
\eqb \label{eqn-lower-tail-time} 
\BB P\left[ \tau_{-\lfloor A^{-2/3+\ep} n^{1/2}\rfloor} \leq n \right] \geq 1 - c_4 A^{-2/3 + \ep} ,
\eqe
for a universal constant $c_4 > 0$. 

By Lemma~\ref{lem-long-path}, if the event in~\eqref{eqn-lower-tail-prod} occurs, then the map $M_{0,\tau_{-\lfloor A^{-2/3+\ep} n^{1/2}\rfloor}}$ contains a directed path of length at least $A^{-1} n^{3/4}$. If the event in~\eqref{eqn-lower-tail-time} also occurs, then $M_{0,n}$ also contains a directed path of length at least $A^{-1} n^{3/4}$. These events occur simultaneously with probability at least $1 - O(A^{-2/3+\ep})$. Since $\ep > 0$ is arbitrary, this gives~\eqref{eqn-cell-ldp-lower}.
\end{proof}

We now turn our attention to the upper bound. The main input is the following elementary estimate for the maximum of the Busemann function.

\begin{lem} \label{lem-ldp-fluctuation}
For each $k\in\BB N$ and $A> 1$,  
\eqb \label{eqn-cell-ldp-fluctuation}
\BB P\left[ \max_{\el \in [-k,k] \cap \BB Z} |\mcl X(\el)| \leq  A k^{3/2}  \right] 
\geq 1 - O(A^{-2/3}),
\eqe
with the implicit constant in the $O(1)$ uniform in $k$. 
\end{lem}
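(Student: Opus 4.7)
The plan is to split the range $[-k,k]$ into the non-negative part $[0,k]$ and the non-positive part $[-k,0]$, handle each with the corresponding properties from Theorems~\ref{thm-busemann-property} and~\ref{thm-busemann-tail}, and union-bound the two. The key inputs are: (a) the independent-increments and piecewise-stationarity properties~\ref{item-busemann-ind}--\ref{item-busemann-stationary}, so that $\mcl X$ restricted to either side is a sum of i.i.d.\ increments; (b) the tail estimate $\BB P[|\mcl X(0)-\mcl X(-1)|\geq x] = 2c_1 x^{-2/3}+o(x^{-2/3})$ in the LDP case; and (c) the monotonicity property~\ref{item-busemann-pos}, namely $\mcl X(\el)-\mcl X(\el-1)\leq -1$ for $\el\geq 1$.

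For the right side $\el\in[0,k]$, property~\ref{item-busemann-pos} says the increments are strictly negative, so $\mcl X(\el)$ is decreasing and $|\mcl X(\el)|=-\mcl X(\el)$ is increasing in $\el$. Thus $\max_{\el\in[0,k]}|\mcl X(\el)|=|\mcl X(k)|=\sum_{i=1}^{k}Y_i$ where $Y_i:=-(\mcl X(i)-\mcl X(i-1))\geq 1$ are i.i.d.\ non-negative with $\BB P[Y_i\geq x]\leq C_1 x^{-2/3}$ (from Theorem~\ref{thm-busemann-tail}~\ref{item-busemann-tail-LDP}; this is also what gives $\BB P[\mcl X(1)-\mcl X(0)\leq -x]= c_1x^{-2/3}+o(x^{-2/3})$). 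A standard truncation bound for sums of i.i.d.\ heavy-tailed non-negative variables in the domain of attraction of a $2/3$-stable law then gives $\BB P[\sum_{i\leq k} Y_i\geq Ak^{3/2}]=O(A^{-2/3})$ uniformly in $k$: split on whether $\max Y_i>Ak^{3/2}/2$ (contribution $\leq k C_1(Ak^{3/2}/2)^{-2/3}=O(A^{-2/3})$), else apply Markov to $\sum_i Y_i\BB 1_{Y_i\leq Ak^{3/2}/2}$ using the bound $\BB E[Y_i\BB 1_{Y_i\leq t}]\leq C\,t^{1/3}$ that follows from integrating the tail (valid since $2/3<1$).

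For the left side $\el\in[-k,0]$, the increments $\mcl X(-j+1)-\mcl X(-j)$ for $j\geq 1$ are i.i.d.\ and symmetric by properties~\ref{item-busemann-ind},~\ref{item-busemann-stationary}, and~\ref{item-busemann-sym}. Writing $S_j:=\mcl X(-j)=-\sum_{i=1}^j(\mcl X(-i+1)-\mcl X(-i))$, L\'evy's maximal inequality for sums of independent symmetric random variables gives
\[
\BB P\Bigl[\max_{0\leq j\leq k}|S_j|\geq Ak^{3/2}\Bigr]\leq 2\,\BB P\bigl[|S_k|\geq Ak^{3/2}\bigr].
\]
The right-hand side is bounded by $O(A^{-2/3})$ by the same truncation argument applied to $|Y_i|$, whose tail is still $O(x^{-2/3})$ by Theorem~\ref{thm-busemann-tail}~\ref{item-busemann-tail-LDP}. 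Combining with the right-side bound via a union bound yields the claim.

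The main technical content is the uniform-in-$k$ heavy-tailed maximal inequality $\BB P[S_k\geq Ak^{3/2}]=O(A^{-2/3})$, but this is essentially textbook for tails of index $\alpha\in(0,1)$ and follows from the elementary truncation argument sketched above; none of the steps requires anything delicate beyond what Theorems~\ref{thm-busemann-property} and~\ref{thm-busemann-tail} already provide. The only subtlety on the negative side is that the mean of the increments is not defined (tails $x^{-2/3}$), so one cannot center and apply variance-based inequalities; L\'evy's inequality sidesteps this because it only requires symmetry and independence.
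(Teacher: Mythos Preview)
Your proof is correct but takes a different route from the paper. The paper dispenses with the split into $[0,k]$ and $[-k,0]$: it observes that each $|\mcl X(\el)-\mcl X(\el-1)|$ is stochastically dominated by a positive $2/3$-stable random variable (uniformly in $\el$, by the tail bound and stationarity), couples with an i.i.d.\ family $\{Y_\el\}$ of such variables, and bounds $\max_{|\el|\leq k}|\mcl X(\el)|\leq\sum_{\el=-k+1}^{k}Y_\el$. The stability property then gives $\sum Y_\el\overset{d}{=}(2k)^{3/2}Y_0$, and the standard stable tail yields $O(A^{-2/3})$ in one line. Your approach is more hands-on: you exploit the monotonicity on the right side to reduce to the terminal value, use L\'evy's maximal inequality on the symmetric left side, and then run an elementary truncation/Markov argument instead of invoking stable domination. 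The paper's argument is shorter and treats both sides uniformly; yours avoids the stochastic-domination-by-stable step and is arguably more robust (it would still work if the tail were only regularly varying of index $-2/3$, where exact stable domination is less clean), at the cost of a few extra lines.
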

\begin{proof}
By Theorems~\ref{thm-busemann-property} and~\ref{thm-busemann-tail}, the increments $\mcl X(\el) - \mcl X(\el-1)$ are independent and each $|\mcl X(\el) - \mcl X(\el-1)|$ is stochastically dominated by a positive $2/3$-stable random variable (the scale parameter for this stable random variable can be taken to be uniform in $\el$ by the stationarity property in Theorem~\ref{thm-busemann-property}). 
Hence, we can couple $\mcl X$ with a collection $\{Y_\el\}_{\el\in\BB Z}$ of i.i.d.\ stable random variables in such a way that $|\mcl X(\el) - \mcl X(\el-1)| \leq Y_\el$ for each $\el\in\BB Z$. Under this coupling, 
\eqbn
\max_{\el \in [-k,k] \cap \BB Z} |\mcl X(\el)| \leq \sum_{\el = -k+1}^k Y_\el , 
\eqen
which has the same law as $(2k)^{3/2} Y_0$ by the stability property. The bound~\eqref{eqn-cell-ldp-fluctuation} then follows from the standard tail estimate for the stable distribution.
\end{proof}

\begin{lem} \label{lem-cell-ldp-upper}
For each $n\in\BB N$ and $A >1$, 
\eqb  \label{eqn-cell-ldp-upper}
\BB P\left[ \max\left\{ \op{LDP}_{M_{0,n}}(x,y)  : x,y\in\mcl V(M_{0,n}) \right\} \leq A n^{3/4} \right] \geq 1 - A^{-2/3 + o(1)}
\eqe 
where the $o(1)$ goes to zero as $A\to\infty$, uniformly in $n$, and we set $\op{LDP}_{M_{0,n}}(x,y) = -\infty$ if there is no directed path in $M_{0,n}$ from $x$ to $y$. 
\end{lem}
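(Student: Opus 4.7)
The plan is to bound the length of any directed path in $M_{0,n}$ using an extended version of the Busemann function $\mcl X$, and then apply the fluctuation estimate from Lemma~\ref{lem-ldp-fluctuation}. Specifically, for each $z \in \mcl V(M_{0,\infty})$, define
\[
\Phi(z) := \lim_{j \to \infty} \Big[ \op{LDP}_{M_{0,\infty}}(z, w_j) - \op{LDP}_{M_{0,\infty}}(x_0, w_j) \Big],
\]
where $\{w_j\}$ is any sequence of distinct vertices (for instance, successively further along the right boundary of $M_{0,\infty}$) that both $z$ and $x_0$ can reach. The existence of the limit is obtained by a direct adaptation of the cut-vertex argument used in the proof of Theorem~\ref{thm-busemann}, and $\Phi$ agrees with $\mcl X$ at boundary vertices: $\Phi(x_k) = \mcl X(k)$ for all $k\in\BB Z$. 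For any directed edge $(u,v)$ of $M_{0,\infty}$, one has $\op{LDP}_{M_{0,\infty}}(u, w_j) \geq 1 + \op{LDP}_{M_{0,\infty}}(v,w_j)$, hence $\Phi(v) \leq \Phi(u) - 1$; consequently, every directed path in $M_{0,\infty}$ from $x$ to $y$ of length $\ell$ satisfies $\ell \leq \Phi(x) - \Phi(y)$, and the same bound holds a fortiori for paths in $M_{0,n}$.

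Next, fix $\ep > 0$ and set $K := A^\ep n^{1/2}$. By standard Gaussian-type tail bounds for the random walk $\mcl Z$, with probability at least $1 - C e^{-c A^{2\ep}}$ one has $\max_{j \in [0,n]} |\mcl Z(j)| \leq K$. On this event, the plan is to show that one can find an enlargement $K' = K \cdot n^\ep$ so that for each $x \in \mcl V(M_{0,n})$ there exist boundary indices $k^-(x) \leq 0$ and $k^+(x) \geq 1$ with $|k^\pm(x)| \leq K'$ such that $x_{k^-(x)}$ reaches $x$ and $x$ reaches $x_{k^+(x)}$ by directed paths in $M_{0,\infty}$. Granting this, the monotonicity of $\Phi$ yields $\mcl X(k^+(x)) \leq \Phi(x) \leq \mcl X(k^-(x))$, so that for all $x,y \in \mcl V(M_{0,n})$,
\[
\op{LDP}_{M_{0,n}}(x,y) \leq \Phi(x) - \Phi(y) \leq 2 \max_{|k| \leq K'} |\mcl X(k)| .
\]
Applying Lemma~\ref{lem-ldp-fluctuation} with $K'$ in place of $k$ and suitably chosen multiplicative constant gives $\max_{|k| \leq K'} |\mcl X(k)| \leq A n^{3/4}/2$ with probability $\geq 1 - O(A^{-2/3 + O(\ep)})$. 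Since $\ep > 0$ is arbitrary, this yields the claimed $1 - A^{-2/3 + o(1)}$ bound.

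The main obstacle is establishing the bound on $|k^\pm(x)|$ for vertices $x$ on the upper boundary of $M_{0,n}$. For such $x$, the incoming edges in $M_{0,\infty}$ (and analogously the outgoing edges needed to reach the right boundary of $M_{0,\infty}$) may be added to $M_{0,\infty}$ by the KMSW procedure at times arbitrarily later than $n$, so a naive backward trace from $x$ can reach left boundary vertices $x_{-k}$ with $k$ outside the range explored by $\mcl Z|_{[0,n]}$. To handle this, I would exploit the Markov property of $\mcl Z$ at time $n$: the continuation $\mcl Z(\cdot + n) - \mcl Z(n)|_{[0,\infty)}$ has the same law as $\mcl Z|_{[0,\infty)}$ and is independent of $\mcl Z|_{[0,n]}$. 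Consequently, the portion of the backward reach of $x$ that lies outside $M_{0,n}$ can be controlled via the analogue of $\mcl X$ associated with $M_{n,\infty}$, and the required bound on $|k^-(x)|$ (and symmetrically on $k^+(x)$) then follows by combining the fluctuation estimates for these two independent Busemann functions on ranges of boundary indices of size $O(K)$, together with a union bound whose contribution is absorbed into the $o(1)$ in the final exponent.
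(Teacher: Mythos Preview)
Your overall strategy --- extend the Busemann function to a potential $\Phi$ on all of $\mcl V(M_{0,\infty})$, use $\Phi(v)\le\Phi(u)-1$ along directed edges, and then trap $\Phi(x)-\Phi(y)$ between values of $\mcl X$ --- is essentially the right idea, and matches the paper's approach in spirit. But the way you carry out the ``trapping'' step has two genuine gaps.

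First, the enlargement $K'=K\cdot n^{\ep}$ is fatal for uniformity in $n$. Applying Lemma~\ref{lem-ldp-fluctuation} with $k=K'$ means you need the multiplicative constant $B$ in that lemma to satisfy $B(K')^{3/2}\le An^{3/4}/2$, which forces $B\asymp A^{1-3\ep/2} n^{-3\ep/2}$. The resulting failure probability is then of order $B^{-2/3}\asymp A^{-2/3+\ep}n^{\ep}$, which blows up with $n$. The statement requires the $o(1)$ in $A^{-2/3+o(1)}$ to be uniform in $n$, so no power of $n$ can appear in $K'$; you must work with $K'=O(A^{\ep}n^{1/2})$.

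Second, and more substantively, the claim that every $x\in\mcl V(M_{0,n})$ reaches some $x_{k^+(x)}$ with $1\le k^+(x)\le K'$ on $\bdy M_{0,\infty}$ is not established, and your proposed fix via $\mcl X_n$ does not give it. For an upper-left boundary vertex $x=x_{-j}^n$ of $M_{0,n}$, the rightmost path from $x$ lives in $M_{n,\infty}$ and merges with the right boundary of $M_{0,\infty}$ at an index determined by the walk $\mcl Z|_{[n,\infty)}$; there is no a priori reason this index is $O(n^{1/2})$, and the Busemann function $\mcl X_n$ controls LDP \emph{distances}, not the boundary \emph{index} at which such a path lands. The paper avoids this entirely: rather than returning to $\bdy M_{0,\infty}$, it uses $\mcl X_n$ \emph{directly} on $\bdy M_{n,\infty}$. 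Concretely, for any $y\in\mcl V(M_{0,n})$ one finds a directed path in $M_{0,n}$ from $y$ to some upper-boundary vertex $x_k^n$ with $|k|\le\sigma_n^L\vee\sigma_n^R=O(A^{\ep}n^{1/2})$, giving $\Phi(y)\ge\Phi(x_k^n)$. Similarly $\Phi(x)\le\mcl X(\ell)$ for some lower-boundary index $|\ell|\le\sigma_0^L\vee\sigma_0^R$. The remaining work is to compare the two normalizations: $\Phi(x_k^n)$ equals $\mcl X_n(k)$ up to a shift $\op{LDP}_{M_{0,\infty}}(x_0,w)-\op{LDP}_{M_{n,\infty}}(x_0^n,w)$, and this shift is bounded by $|\mcl X(\sigma_0^R)|+|\mcl X_n(\sigma_n^R)|$ using the single shared vertex $x_{\sigma_0^R}=x_{\sigma_n^R}^n$ where the two right boundaries merge. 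All four quantities are then controlled by Lemma~\ref{lem-ldp-fluctuation} at scale $A^{\ep}n^{1/2}$, with no $n^{\ep}$ inflation.
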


\begin{figure}[ht!]
\begin{center}
\includegraphics[width=0.99\textwidth]{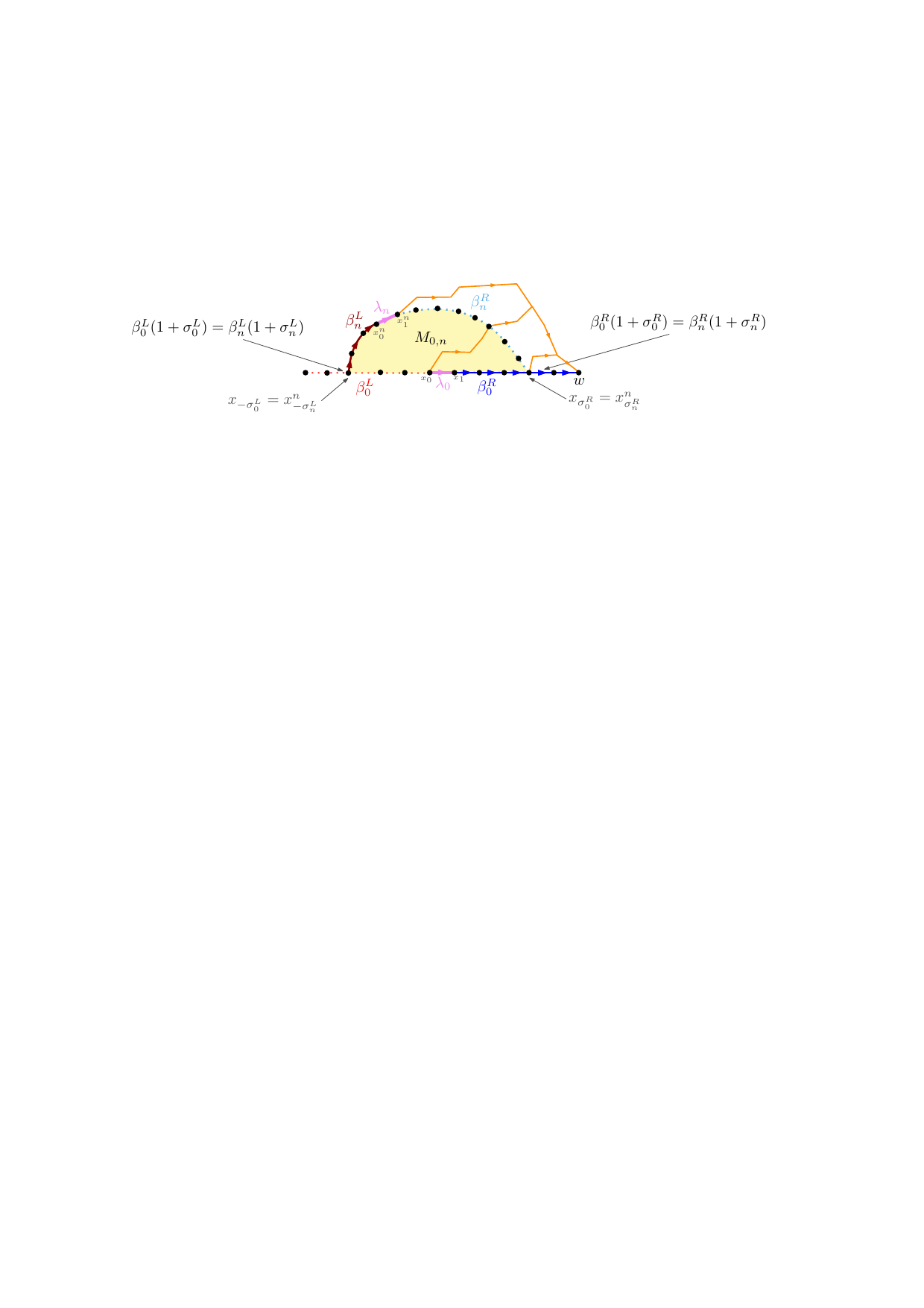}  
\caption{\label{fig-long-path2}  
Illustration of the proofs of Lemmas~\ref{lem-cell-ldp-upper}~and~\ref{lem-cell-sdp-upper}. The edge $\lambda_0$ is part of $\beta_0^R$ and the edge $\lambda_n$ is part of $\beta_n^R$. 
}
\end{center}
\end{figure}

\begin{proof} 
Recall that $M_{n,\infty}$ is the submap of $M_{-\infty,\infty}$ obtained by applying the KMSW procedure to $(\mcl Z(\cdot+n) - \mcl Z(n))|_{[0,\infty)}$. 
Let $\mcl X_n$ be the Busemann function for $M_{n,\infty}$, defined in the same way as $\mcl X$ but with $M_{n,\infty}$ in place of $M_{0,\infty}$. Since $M_{n,\infty} \eqD M_{0,\infty}$, we have $\mcl X \eqD \mcl X_n$. The basic idea of the proof is as follows. By~\eqref{eqn-kmsw-bdy} of Lemma~\ref{lem-kmsw-bdy}, the number of edges on the boundary of $M_{0,n}$ is bounded above by $2\max_{j \in [0,n]\cap\BB Z} |\mcl Z(j)| +1$ (with $|\mcl Z(j)|$ denoting the Euclidean norm), which is typically of order $n^{1/2}$. 
Using Lemma~\ref{lem-ldp-fluctuation}, we can therefore upper-bound the maximal values of each of $|\mcl X|$ and $|\mcl X_n|$ at the times corresponding to vertices on the boundary of $M_{0,n}$ by $O(n^{3/4})$ (see~\eqref{eqn-cell-bdy-sup}). If there is a long directed path between two vertices of $M_{0,n}$, then by concatenating paths we would find that the values of $|\mcl X_n|$ on the upper boundary of $M_{0,n}$ are much larger than the values of $|\mcl X|$ on the lower boundary of $M_{0,n}$. This would violate our bounds for $\mcl X$ and $\mcl X_n$. 
We now proceed with the details. See Figure~\ref{fig-long-path2} for an illustration.
\medskip

\noindent\textit{Step 1: bounds for $\mcl X$ and $\mcl X_n$.} Recall that $M_{0,n}=M_{0,\infty}\setminus M_{n+1,\infty}$.
Let $\beta_0^R$ (resp.\ $\beta_n^R$) be the directed path which traverses the portion of the boundary of $M_{0,\infty}$ (resp.\ $M_{n,\infty}$) lying to the right of the initial vertex of $\lambda_0$ (resp.\ initial vertex of $\lambda_n$). Let $\sigma_0^R$ and $\sigma_n^R$ be chosen so that $\beta_0^R|_{[0,\sigma_0^R]}$ and $\beta_n^R|_{[0,\sigma_n^R]}$ traverse the entirety of the upper-right and lower-right boundaries of $M_{0,n}$, respectively (recall Definition~\ref{def-kmsw}). Then
\eqb \label{eqn-cell-merge-time-R}
\beta_0^R(k  + \sigma_0^R) = \beta_n^R(k + \sigma_n^R) ,\quad \forall k \geq 1 . 
\eqe  
In an exactly analogous manner, we also define the left boundary paths $\beta_0^L$ and $\beta_n^L$ and the times $\sigma_0^L$ and $\sigma_n^L$, for which~\eqref{eqn-cell-merge-time-R} holds with $L$ in place of $R$. 

By~\eqref{eqn-kmsw-bdy} of Lemma~\ref{lem-kmsw-bdy}, $\sigma_0^R, \sigma_n^R , \sigma_0^L, \sigma_n^L$ are each bounded above by $2\max_{j\in [0,n]\cap\BB Z} |\mcl Z(j)| +1$. 
By Hoeffding's inequality for sums of i.i.d.\ bounded random variables, for any fixed $\ep \in (0,1)$,
\eqb \label{eqn-cell-bdy-tail}
\BB P\left[ \max\left\{ \sigma_0^R, \sigma_n^R , \sigma_0^L, \sigma_n^L \right\} \leq A^\ep n^{1/2} \right] \geq 1  -  c_1 e^{-c_2 A^{2\ep}} 
\eqe
for universal constants $c_1,c_2 > 0$. 

By~\eqref{eqn-cell-bdy-tail} combined with Lemma~\ref{lem-ldp-fluctuation} (applied once for each of $\mcl X$ and $\mcl X_n$ and with $k = A^\ep n^{1/2}$), it holds with probability at least $1 - O(A^{-2(1-\ep)/3}) $ that 
\eqb \label{eqn-cell-bdy-sup}
  \max\left\{ \max_{\el \in [-\sigma_0^L ,\sigma_0^R ] \cap \BB Z} |\mcl X(\el)| ,  \max_{\el \in [-\sigma_n^L ,\sigma_n^R ] \cap \BB Z} |\mcl X_n(\el)| \right\}  \leq  A n^{3/4}    .
\eqe  
\medskip

\noindent\textit{Step 2: relating $\mcl X$ and $\mcl X_n$ to longest directed paths.}
Henceforth assume that the event in~\eqref{eqn-cell-bdy-sup} occurs.  
Let $\{x^n_k\}_{k\in\BB Z}$ (resp.\ $\{x_k\}_{k\in\BB Z}$) be the vertices on the boundary of $M_{n,\infty}$ (resp.\ $M_{0,\infty}$), enumerated in west to east order so that $\lambda_n =  (x_{0}^n , x_1^n) $ (resp.\ $\lambda_0 = (x_{0} , x_1)$). By~\eqref{eqn-cell-merge-time-R} and its analog with $L$ in place of $R$,
\eqb \label{eqn-cell-vertex-merge} 
x_{k + \sigma_0^R} = x_{k + \sigma_n^R}^n \quad \text{and} \quad x_{-k-\sigma_0^L} = x_{-k-\sigma_n^L}^n ,\quad\forall k \geq 0.
\eqe 

By the definition of the Busemann function in Theorem~\ref{thm-busemann}, applied, e.g., to a sequence of points $\{w_j\}_{j\in\BB N}$ on the right boundary of $M_{0,\infty}$, we can find a vertex $w$ of $M_{0,\infty}$ such that
\allb \label{eqn-cell-ldp-infty}
\mcl X(\el) &= \op{LDP}_{M_{0,\infty}}(x_\el , w) - \op{LDP}_{M_{0,\infty}}(x_0 , w)  , \quad\forall \el \in [-\sigma_0^L , \sigma_0^R] \cap\BB Z \quad \text{and} \notag\\
\mcl X_n(\el) &= \op{LDP}_{M_{n,\infty}}(x_\el^n , w) - \op{LDP}_{M_{n,\infty}}(x_0^n , w)  , \quad\forall \el \in [-\sigma_n^L , \sigma_n^R] \cap\BB Z  .
\alle
We claim that every directed path in $M_{0,\infty}$ started from $x_{\sigma_n^R}^n$ must stay in $M_{n,\infty}$. Indeed, such a path cannot hit the boundary of $M_{n,\infty}$ to the west of $x_{\sigma_n^R}^n$ since this would create a directed cycle in $M_{-\infty,\infty}$. Such a path cannot cross the boundary of $M_{n,\infty}$ to the east of $x_{\sigma_n^R}^n$ since by the definition of $\sigma_n^R$, the portion of the boundary of $M_{n,\infty}$ to the right of $x_{\sigma_n^R}^n$ is also part of the boundary of $M_{0,\infty}$. By~\eqref{eqn-cell-vertex-merge}, $x^n_{\sigma_n^R}  =x_{\sigma_0^R}$. Hence 
\eqb \label{eqn-cell-ldp-agree}
 \op{LDP}_{M_{n,\infty}}(x_{\sigma_n^R}^n , w) =  \op{LDP}_{M_{0,\infty}}(x_{\sigma_0^R} , w).
\eqe
We thus obtain that with probability at least $1 - O(A^{-2(1-\ep)/3}) $
\allb \label{eqn-ldp-upper-shift}
&\left| \op{LDP}_{M_{n,\infty}}(x_0^n , w) - \op{LDP}_{M_{0,\infty}}(x_0 , w) \right| \notag\\
&\qquad \leq \left|   \op{LDP}_{M_{n,\infty}}(x_{\sigma_n^R}^n , w) -  \op{LDP}_{M_{0,\infty}}(x_{\sigma_0^R} , w)  \right| 
+  \left|    \mcl X_n(\sigma_n^R) - \mcl X(\sigma_0^R) \right| \quad &&\text{by~\eqref{eqn-cell-ldp-infty}} \notag\\
&\qquad = \left|    \mcl X_n(\sigma_n^R) - \mcl X(\sigma_0^R) \right|  \quad &&\text{by~\eqref{eqn-cell-ldp-agree}} \notag\\
&\qquad \leq 2 A n^{3/4}  \quad &&\text{by~\eqref{eqn-cell-bdy-sup}}  . 
\alle
\medskip

\noindent\textit{Step 3: bound for paths in $M_{0,n}$.}
Now let $x,y \in \mcl V(M_{0,n})$ be vertices which can be joined by a directed path in $M_{0,n}$. There is a vertex on $\bdy M_{0,n} \cap \bdy M_{0,\infty}$, i.e., a vertex $x_\el$ for $\el \in [-\sigma_0^L ,\sigma_0^R]\cap\BB Z$, which can be joined to $x$ by a directed path in $M_{0,n}$. Indeed, it is enough to consider a segment of the upper-left boundary of $M_{0,j}$ (whose edges are included in $M_{0,j}$), where $j \in [0,n]\cap\BB Z$ is chosen so that $x$ is one of the vertices of $\lambda_j$. 
Similarly, there exists $k\in [-\sigma_n^L ,\sigma_n^R]$ such that $y$ can be joined to $x_k^n$ by a directed path in $M_{0,n}$. By concatenating paths from $x_\el$ to $x$ to $y$ to $x_k^n$ to $w$, we get 
\eqbn
\op{LDP}_{M_{0,\infty}}(x_\el , w) 
\geq \op{LDP}_{M_{0,n}}(x,y) + \op{LDP}_{M_{n,\infty}}(x_k^n ,w) .
\eqen
Re-arranging gives that with probability at least $1 - O(A^{-2(1-\ep)/3}) $
\begin{align*}
\op{LDP}_{M_{0,n}}(x,y) 
&\leq \op{LDP}_{M_{0,\infty}}(x_\el , w)  - \op{LDP}_{M_{n,\infty}}(x_k^n ,w) \notag\\ 
&= \op{LDP}_{M_{0,\infty}}(x_0 , w) - \op{LDP}_{M_{n,\infty}}(x_0^n , w) + \mcl X(\el) - \mcl X_n(k) \quad &&\text{by~\eqref{eqn-cell-ldp-infty}} \notag\\
&\leq 4 A n^{3/4}   \quad &&\text{by~\eqref{eqn-cell-bdy-sup} and~\eqref{eqn-ldp-upper-shift}}  .
\end{align*}
Since 
$\ep > 0$ is arbitrary, this gives~\eqref{eqn-cell-ldp-upper}.
\end{proof}

\begin{proof}[Proof of Theorem~\ref{thm-cell-ldp}]
Combine Lemmas~\ref{lem-cell-ldp-lower} and~\ref{lem-cell-ldp-upper}. 
\end{proof}

\subsection{SDP in size-$n$ KMSW cells}
\label{sec-cell-sdp}

In this section, we prove Theorem~\ref{thm-cell-sdp}.  
Throughout, we assume that $\mcl X$ is the Busemann function of Theorem~\ref{thm-busemann} with $\XDP = \op{SDP}$. 
We also let $\{x_k\}_{k\in\BB Z}$ be the ordered sequence of vertices on the boundary of $M_{0,\infty}$, as in Theorem~\ref{thm-busemann}, so that $x_0$ is the initial vertex of the root edge. 
We begin with a lower bound for shortest directed paths.

\begin{lem} \label{lem-sdp-tri}
Almost surely, for each $\el , k \in \BB Z_{\geq 0}$ with $\el < k$,
\eqb  \label{eqn-sdp-tri}
\op{SDP}_{M_{0,\infty}}(x_\el,x_k) \geq   \mcl X(\el) - \mcl X(k) .
\eqe 
\end{lem}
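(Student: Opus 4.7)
The plan is to combine the triangle inequality for shortest directed paths with the defining limiting property of the Busemann function from Theorem~\ref{thm-busemann}. The statement is really just the SDP analog of the trivial one-sided bound that Busemann functions always satisfy, and the proof should be short.

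First, I would produce a suitable sequence $\{w_j\}_{j\in\BB N}$ of vertices to feed into Theorem~\ref{thm-busemann}. Since $0 \leq \ell < k$, both $x_\ell$ and $x_k$ lie on the right boundary of $M_{0,\infty}$. By the coalescence statement at the end of Lemma~\ref{lem-uibot-bdy}, the rightmost directed paths from $x_\ell$ and from $x_k$ eventually merge and thereafter share every vertex. Taking $\{w_j\}_{j\in\BB N}$ to be the ordered sequence of those shared vertices gives distinct vertices, each reachable by a directed path from both $x_\ell$ and $x_k$.

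Second, I would apply Theorem~\ref{thm-busemann} with $\XDP = \op{SDP}$ to this sequence. This yields, for all sufficiently large $j$,
\begin{equation*}
\mcl X(\ell) - \mcl X(k) = \op{SDP}_{M_{0,\infty}}(x_\ell , w_j) - \op{SDP}_{M_{0,\infty}}(x_k , w_j).
\end{equation*}

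Third, I would observe that because $0 \leq \ell < k$, the directed segment of the right boundary of $M_{0,\infty}$ from $x_\ell$ to $x_k$ is a finite directed path, so $\op{SDP}_{M_{0,\infty}}(x_\ell, x_k) < \infty$. Concatenating any directed path from $x_\ell$ to $x_k$ with any directed path from $x_k$ to $w_j$ produces a directed path from $x_\ell$ to $w_j$, whence the triangle inequality
\begin{equation*}
\op{SDP}_{M_{0,\infty}}(x_\ell , w_j) \leq \op{SDP}_{M_{0,\infty}}(x_\ell , x_k) + \op{SDP}_{M_{0,\infty}}(x_k , w_j).
\end{equation*}
Rearranging and substituting the previous display gives~\eqref{eqn-sdp-tri}.

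There is no real obstacle here; the only points to be careful about are verifying that the hypotheses of Theorem~\ref{thm-busemann} apply to the chosen sequence $\{w_j\}$ and that a directed path from $x_\ell$ to $x_k$ actually exists (so that the triangle inequality is not vacuous), both of which follow immediately from Lemma~\ref{lem-uibot-bdy} and the assumption $0 \leq \ell < k$. Notably, the symmetric statement for $\ell < k \leq 0$ is \emph{not} accessible by this argument, since then there is in general no directed path from $x_\ell$ to $x_k$, reflecting the intrinsic asymmetry between the left and right boundaries of $M_{0,\infty}$.
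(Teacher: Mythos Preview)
Your proposal is correct and follows essentially the same approach as the paper: invoke Theorem~\ref{thm-busemann} to realize $\mcl X(\el)-\mcl X(k)$ as a difference of SDP distances to a common target, then apply the triangle inequality using the right-boundary path from $x_\el$ to $x_k$. The only cosmetic difference is that the paper directly picks a single vertex $w$ witnessing~\eqref{eqn-busemann-lim}, whereas you first build the sequence $\{w_j\}$ and then fix a large $j$.
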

\begin{proof}
By the definition of the Busemann function (Theorem~\ref{thm-busemann}), we can find $w\in M_{0,\infty}$ (depending on $\el$ and $k$) such that 
\eqb  \label{eqn-sdp-lower-busemann}
\mcl X(k) - \mcl X(\el) = \op{SDP}_{M_{0,\infty}}(x_k , w) - \op{SDP}_{M_{0,\infty}}(x_\el, w) .
\eqe 
Concatenating a directed path from $x_\el$ to $x_k$ with a directed path from $x_k$ to $w$ gives a directed path from $x_\el$ to $w$, so
\eqb  \label{eqn-sdp-conc} 
 \op{SDP}_{M_{0,\infty}}(x_\el, w) \leq  \op{SDP}_{M_{0,\infty}}(x_k , w)  + \op{SDP}(x_\el , x_k) .
\eqe 
(Note that there is at least one directed path from $x_\el$ to $x_k$, namely a segment of the right boundary of $M_{0,\infty}$.)
By re-arranging~\eqref{eqn-sdp-conc}, then applying~\eqref{eqn-sdp-lower-busemann}, we get~\eqref{eqn-sdp-tri}.
\end{proof}

\begin{lem} \label{lem-cell-sdp-lower}
Recall the definition of the lower-right boundary of $M_{0,n}$ from Definition~\ref{def-kmsw}, and recall that $M_{0,n}$ contains all of the edges on its lower-right boundary. 
For each $\delta > 0$, there exists $A = A(\delta) > 1$ 
such that for each $n \in \BB N$, the following is true. With probability at least $1-\delta$, there exists a vertex $y$ on the lower-right boundary of $M_{0,n}$ such that
\eqbn
\op{SDP}_{M_{0,n}}(x_0 , y) \geq A^{-1} n^{3/8}   .
\eqen 
\end{lem}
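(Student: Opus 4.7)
The plan is to use the pointwise lower bound from Lemma~\ref{lem-sdp-tri} together with the scaling limit in Theorem~\ref{thm-busemann-conv} to compare $\op{SDP}_{M_{0,n}}(x_0, y)$ with the SDP Busemann function evaluated on the lower-right boundary of $M_{0,n}$. Since $M_{0,n}$ is a subgraph of $M_{0,\infty}$, every directed path in $M_{0,n}$ is a directed path in $M_{0,\infty}$ of the same length, so $\op{SDP}_{M_{0,n}}(x_0, x_k) \geq \op{SDP}_{M_{0,\infty}}(x_0, x_k)$; Lemma~\ref{lem-sdp-tri} together with $\mcl X(0) = 0$ then gives $\op{SDP}_{M_{0,n}}(x_0, x_k) \geq -\mcl X(k)$ for every boundary vertex $x_k$ of $M_{0,\infty}$ which also lies on the lower-right boundary of $M_{0,n}$. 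The problem thus reduces to finding some such $k \geq 1$ for which $\mcl X(k) \leq -A^{-1} n^{3/8}$.

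By Lemma~\ref{lem-kmsw-bdy}\ref{item-lower-right} and~\eqref{eqn-kmsw-bdy}, the lower-right boundary of $M_{0,n}$ contains the vertices $x_0, x_1, \ldots, x_K$ with $K = -\min_{j \in [0,n]\cap \BB Z} \mcl R(j)$, and standard diffusive estimates for the lazy random walk $\mcl R$ (whose increments are i.i.d.\ uniform on $\{-1,0,1\}$) will give $B = B(\delta) > 1$ such that $K \geq k_0 := \lceil B^{-1} n^{1/2} \rceil$ with probability at least $1 - \delta/2$, uniformly in $n$. On this event it therefore suffices to show that $\min_{k \in [1, k_0] \cap \BB Z} \mcl X(k) \leq -A^{-1} n^{3/8}$ with probability at least $1 - \delta/2$. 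To this end I will invoke Theorem~\ref{thm-busemann-conv}: the rescaled process $\{k_0^{-3/4} \mcl X(\lfloor k_0 t \rfloor)\}_{t \in [0,1]}$ converges in law in the Skorokhod topology to the restriction to $[0,1]$ of a $4/3$-stable L\'evy process $X$ with only upward jumps and, by Theorem~\ref{thm-busemann-tail}\ref{item-busemann-tail-SDP}, zero-mean increments. For such a spectrally positive stable process of index $\alpha \in (1,2)$ started at zero, the negative drift arising from the L\'evy--Khintchine compensator forces $X(t) < 0$ for all small $t > 0$ almost surely, so $\inf_{t \in [0,1]} X(t)$ is almost surely strictly negative with a nondegenerate distribution on $(-\infty, 0)$; hence there is $\varepsilon = \varepsilon(\delta) > 0$ with $\BB P[\inf_{[0,1]} X \leq -\varepsilon] \geq 1 - \delta/4$. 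Since all jumps of $X$ are positive, the infimum is a.s.\ not attained at a jump time, so the infimum functional is continuous at $X$ in the $J_1$-topology, and the continuous mapping theorem then yields $\min_{k \in [1, k_0]} \mcl X(k) \leq -(\varepsilon/2) k_0^{3/4} \leq -(\varepsilon/2) B^{-3/4} n^{3/8}$ with probability at least $1 - \delta/2$ for all sufficiently large $n$. Setting $A := 2 B^{3/4}/\varepsilon$, and enlarging $A$ if necessary to cover finitely many small $n$ via the trivial bound $\op{SDP}_{M_{0,n}}(x_0, x_1) \geq 1$, completes the argument.

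The main technical point I anticipate is the continuity of the infimum functional at the L\'evy-process limit, which is not automatic in the Skorokhod $J_1$-topology but holds here because the limit has only positive jumps (so its infimum is attained away from the jump set almost surely). Beyond that, the strict negativity and nondegeneracy of the infimum of a spectrally positive $4/3$-stable process on $[0,1]$ is classical, and everything else is a routine combination of Lemma~\ref{lem-sdp-tri}, Lemma~\ref{lem-kmsw-bdy}, and diffusive random walk estimates for $\mcl R$.
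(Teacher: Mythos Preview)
Your proof is essentially the same as the paper's: both combine Lemma~\ref{lem-sdp-tri} (giving $\op{SDP}_{M_{0,\infty}}(x_0,x_k)\ge -\mcl X(k)$), a diffusive lower bound on the lower-right boundary length of $M_{0,n}$, and the Skorokhod convergence of Theorem~\ref{thm-busemann-conv} to find some $k$ on the lower-right boundary with $\mcl X(k)\le -A^{-1}n^{3/8}$. Your bookkeeping via $K=-\min_{[0,n]}\mcl R$ is equivalent to the paper's use of $\tau_{\lceil A^{-1/2}n^{1/2}\rceil}\le n$, and you are more explicit than the paper about passing to the limit (continuous mapping for the infimum).

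One inaccuracy: for a spectrally positive $\alpha$-stable process with $\alpha\in(1,2)$ and zero mean, it is \emph{not} true that $X(t)<0$ for all small $t>0$ almost surely. In fact $0$ is regular for both half-lines (the process has unbounded variation, so it oscillates about $0$ immediately; equivalently, $\BB P[X_t>0]=\BB P[X_1>0]\in(0,1)$ by scaling, and Rogozin's criterion applies). What you actually need, and what the paper states, is that $X$ a.s.\ enters $(-\infty,0)$ immediately, hence $\inf_{[0,1]}X<0$ a.s.; this is the correct (and sufficient) fact. Your subsequent use of the continuous mapping theorem is fine---indeed the infimum on $[0,1]$ is $J_1$-continuous, so the positive-jumps remark, while correct, is not strictly needed.
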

\begin{proof}
By Theorem~\ref{thm-busemann-conv}, the re-scaled Busemann function $k^{-3/4} \mcl X(\lfloor k \cdot \rfloor)$ restricted to positive times converges in the scaling limit when $k\to\infty$ to a $4/3$-stable Lévy process with only positive jumps. Such a process has probability zero to stay positive during any positive-length time interval. From this, we get that there exists $A = A(\delta) > 1$ and $k_* = k_*(\delta) \in\BB N$ such that for each $k\geq k_*$, (the choice of $A^{-1/2}$ is for later convenience) 
\eqb \label{eqn-cell-sdp-lower-stable}
\BB P\left[ \text{$\exists r \in [0,k]\cap\BB Z$ such that $\mcl X(r) \leq -  A^{-1/2} k^{3/4} $}  \right] \geq 1 - \delta/2 .
\eqe 

The second coordinate $\mcl R$ of the KMSW encoding function is a random walk with i.i.d.\ increments sampled uniformly from $\{-1,0,1\}$. 
By~\eqref{eqn-bdy-vertex-kmsw}, when $k\geq 1$, the time $\tau_k$ (which is the smallest time $j$ that $x_k$ is the initial vertex of $\lambda_j$) is the first positive time at which $\mcl R$ it hits $-k$. By standard estimates for random walks, there exists $A = A(\delta) > 1$ such that~\eqref{eqn-cell-sdp-lower-stable} holds and for each large enough $n\in\BB N$ (how large depends only on $\delta$), 
\eqb \label{eqn-cell-sdp-lower-bdy}
\BB P\left[ \tau_{ \lceil A^{-1/2} n^{1/2}\rceil} \leq n  \right] \geq 1-\delta /2 .
\eqe 

If $\tau_{\lceil A^{-1/2} n^{1/2} \rceil} \leq n$, then the vertices $x_1,\dots, x_{\lceil A^{-1/2} n^{1/2} \rceil}$ all lie on the lower-right boundary of $M_{0,n}$. 
By~\eqref{eqn-cell-sdp-lower-stable} (with $k = \lceil A^{-1/2} n^{1/2} \rceil$) and~\eqref{eqn-cell-sdp-lower-bdy}, for each sufficiently large $n\in\BB N$ (how large depends only on $\delta$), the following is true.
It holds with probability at least $1-\delta$ that there exists $r \in \BB N$ such that $x_r$ lies on the lower-right boundary of $M_{0,n}$ and $\mcl X(r) \leq -  A^{-1} n^{3/8}$. By Lemma~\ref{lem-sdp-tri} (applied with $(0,r)$ in place of $(\el,k)$), this implies that
\eqb \label{eqn-cell-sdp-lower-end}
\op{SDP}_{M_{0,\infty}}(x_0 , x_r )  \geq  A^{-1} n^{3/8}  .
\eqe 
Since $M_{0,n} \subset M_{0,\infty}$, we have $\op{SDP}_{M_{0,n}}(x_0 , x_r ) \geq \op{SDP}_{M_{0,\infty}}(x_0 , x_r)$, which concludes the proof after enlarging the constant $A$ so that the bound in \eqref{eqn-cell-sdp-lower-end} holds for all $n\in \BB N$ (this is always possible because $\op{SDP}_{M_{0,n}}(x_0 , x_1 )\geq 1 $ since $\lambda_0=(x_0,x_1)$ is in $M_{0,n}$ for all $n\in\BB N$). 
\end{proof}

Next we prove an upper bound for the SDP in $M_{0,n}$, which will be done via a similar argument as in the LDP case (Lemma~\ref{lem-cell-ldp-upper}).

\begin{lem} \label{lem-sdp-fluctuation}
For each $k\in\BB N$, it holds with probability tending to 1 as $A\to \infty$ (uniformly in $k$) that 
\eqb \label{eqn-cell-sdp-fluctuation}
  \max_{\el \in [-k,k] \cap \BB Z} |\mcl X(\el)| \leq  A k^{3/4} . 
\eqe 
\end{lem}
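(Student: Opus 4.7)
Plan: This is the SDP analog of Lemma~\ref{lem-ldp-fluctuation}. In the LDP case one could dominate $|\mcl X(\el)-\mcl X(\el-1)|$ by a positive $2/3$-stable random variable and invoke the stability relation to control the sum of $2k$ such variables. That route is not available here because the SDP increments have mean zero with symmetric $4/3$-stable tails rather than being sign-definite, so a naive bound by positive stable variables would have infinite expectation and no useful scaling. My strategy is instead to read the bound off Theorem~\ref{thm-busemann-conv} via a tightness argument.

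First I would collect the probabilistic inputs. By Properties~\ref{item-busemann-ind}--\ref{item-busemann-stationary} of Theorem~\ref{thm-busemann-property} together with Item~\ref{item-busemann-tail-SDP} of Theorem~\ref{thm-busemann-tail}, both families $\{\mcl X(\el)-\mcl X(\el-1)\}_{\el\ge 1}$ and $\{\mcl X(\el)-\mcl X(\el-1)\}_{\el\le 0}$ are i.i.d.\ mean zero with tails of order $x^{-4/3}$, so they lie in the domain of normal attraction of a $4/3$-stable law with norming sequence $k^{3/4}$. Theorem~\ref{thm-busemann-conv} then asserts that, in the Skorokhod topology on $D([-1,1])$, the rescaled process $\{k^{-3/4}\mcl X(\lfloor kt\rfloor)\}_{t\in[-1,1]}$ converges in law, as $k\to\infty$, to the restriction $X|_{[-1,1]}$ of the limiting $4/3$-stable L\'evy process.

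Since convergence in law on the Polish space $D([-1,1])$ implies tightness (Prokhorov), and since Skorokhod-compact subsets of $D([-1,1])$ are uniformly bounded in sup norm, the family is in particular tight in sup norm: for every $\delta>0$ there exists $A_0=A_0(\delta)>0$ such that
\begin{equation*}
\sup_{k\ge 1}\,\BB P\!\left[\,\sup_{t\in[-1,1]} k^{-3/4}\,|\mcl X(\lfloor kt\rfloor)|>A_0\,\right]<\delta.
\end{equation*}
Because $\{\lfloor kt\rfloor : t\in[-1,1]\}=[-k,k]\cap\BB Z$, the inner supremum equals $k^{-3/4}\max_{\el\in[-k,k]\cap\BB Z}|\mcl X(\el)|$, which is exactly the quantity we wish to bound. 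Taking $A=A_0$ therefore concludes the proof.

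The only technical point is the passage from Skorokhod tightness to uniform sup-norm tightness, which is the main (mild) obstacle. It is standard from the characterization of relatively compact subsets of $D([-1,1])$ as those satisfying both a uniform sup-norm bound and a modulus-of-continuity condition, so no additional argument is needed; in particular one does not need to separate out small values of $k$, since the uniformity is already built into the compactness characterization.
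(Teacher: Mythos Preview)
Your proof is correct and follows the same approach as the paper, which simply states that the bound is ``immediate from the fact that $k^{-3/4} \mcl X(\lfloor k \cdot \rfloor)$ converges in law to a stable L\'evy process (Theorem~\ref{thm-busemann-conv}).'' You have spelled out the passage from functional convergence to uniform sup-norm tightness that the paper leaves implicit.
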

\begin{proof}
This is immediate from the fact that $k^{-3/4} X(\lfloor k \cdot \rfloor)$ converges in law to a stable Lévy process (Theorem~\ref{thm-busemann-conv}). 
\end{proof}

\begin{lem} \label{lem-cell-sdp-upper} 
For each $n\in\BB N$, it holds with probability tending to 1 as $A\to\infty$ (uniformly in $n$) that the following is true:
\begin{itemize}
\item For every vertex $x$ on the lower boundary of $M_{0,n}$, there exists a vertex $y$ on the upper boundary of $M_{0,n}$ such that $\op{SDP}_{M_{0,n}}(x , y) \leq A n^{3/8}$.
\item For every vertex $y$ on the upper boundary of $M_{0,n}$, there exists a vertex $x$ on the lower boundary of $M_{0,n}$ such that $\op{SDP}_{M_{0,n}}(x , y) \leq A n^{3/8}$. 
\end{itemize}
\end{lem}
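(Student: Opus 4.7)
My plan is to adapt the strategy of Lemma~\ref{lem-cell-ldp-upper}, with the essential difference that SDP upper bounds cannot be extracted from the concatenation arguments used in the LDP case (those yield only lower bounds on SDP distances, as exploited in Lemma~\ref{lem-sdp-tri}). The required short directed paths must instead be produced explicitly as initial segments of leftmost infinite SDP geodesics, and the bulk of the work lies in controlling their length via the Busemann function.

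Steps 1--2 from the proof of Lemma~\ref{lem-cell-ldp-upper} carry over with only cosmetic changes: Hoeffding's inequality (as in~\eqref{eqn-cell-bdy-tail}) controls $\sigma_0^R,\sigma_n^R,\sigma_0^L,\sigma_n^L \leq A^\varepsilon n^{1/2}$ with high probability, and applying Lemma~\ref{lem-sdp-fluctuation} in place of Lemma~\ref{lem-ldp-fluctuation} then gives $|\mcl X(\el)|$ and $|\mcl X_n(\el)|$ bounds of $O(n^{3/8})$ on the corresponding boundary ranges. The SDP version of the shift bound~\eqref{eqn-ldp-upper-shift}, $|\op{SDP}_{M_{0,\infty}}(x_0,w) - \op{SDP}_{M_{n,\infty}}(x_0^n,w)| \leq O(n^{3/8})$, follows from the identical topological observation used in the LDP proof: directed paths in $M_{0,\infty}$ starting at the right-boundary corner $x_{\sigma_n^R}^n = x_{\sigma_0^R}$ must remain in $M_{n,\infty}$, so the two SDP values agree at this vertex, and the bound transfers back to the roots via the Busemann relations.

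For the first bullet I would then fix $x = x_\el$ on the lower boundary and let $P_{x_\el}$ denote the leftmost infinite SDP geodesic from $x_\el$ in $M_{0,\infty}$ supplied by Lemma~\ref{lem-geo-infty}. Because $M_{0,n}$ is finite and the lower portion of its boundary is shared with the impenetrable boundary of $M_{0,\infty}$, $P_{x_\el}$ must first exit $M_{0,n}$ at a vertex $y = x_k^n$ on the upper boundary. The prefix $P'$ of $P_{x_\el}$ up to $y$ lies in $M_{0,n}$, and as a subpath of an SDP geodesic it is itself an SDP geodesic in $M_{0,\infty}$, so $\op{SDP}_{M_{0,n}}(x_\el,y) \leq |P'| = \op{SDP}_{M_{0,\infty}}(x_\el,y)$. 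Prolonging $P_{x_\el}$ past $y$ to a distant vertex $w$ on the common right boundary (chosen to realize both Busemann functions) and writing $|P'| = \op{SDP}_{M_{0,\infty}}(x_\el,w) - \op{SDP}_{M_{0,\infty}}(y,w)$, substitution of the Busemann expansions yields, modulo a shortcut correction comparing $\op{SDP}_{M_{0,\infty}}(y,w)$ with $\op{SDP}_{M_{n,\infty}}(y,w)$, a bound of the form $|P'| \leq |\mcl X(\el) - \mcl X_n(k)| + |\op{SDP}_{M_{0,\infty}}(x_0,w)-\op{SDP}_{M_{n,\infty}}(x_0^n,w)| + (\text{correction})$, each of the first two terms being $O(n^{3/8})$ by Steps 1--2. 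The second bullet is handled by the symmetric argument, reversing the orientation and tracking the last entry into $M_{0,n}$ of a leftmost SDP geodesic ending at the prescribed upper-boundary vertex $y$.

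The main technical obstacle is the shortcut correction, namely ruling out significant savings in $\op{SDP}_{M_{n,\infty}}(y,w) - \op{SDP}_{M_{0,\infty}}(y,w)$ at the first-exit vertex $y = x_k^n$. When $k \geq 0$, so that $y$ lies on the right boundary of $M_{n,\infty}$, the LDP-style topological argument gives equality immediately. When $k < 0$, a directed path from $y$ to $w$ in $M_{0,\infty}$ could in principle re-enter $M_{0,n}$ through its left boundary and produce a shortcut; I would rule out a significant saving by showing that any such excursion must eventually return to $M_{n,\infty}$ via a vertex near the right-boundary corner $x_{\sigma_n^R}^n$, so that the saved length is controlled by the SDP fluctuation of $\mcl X_n$ along this boundary segment, again of order $O(n^{3/8})$ via Lemma~\ref{lem-sdp-fluctuation}. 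This last step is where most of the new work relative to the LDP proof will concentrate.
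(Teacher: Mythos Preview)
Your overall plan matches the paper's proof: follow SDP geodesics from the lower boundary until they first exit $M_{0,n}$, and bound the length of the initial segment via the Busemann functions $\mcl X$ and $\mcl X_n$. You are also right to flag the ``shortcut correction'' $\op{SDP}_{M_{n,\infty}}(z,w)-\op{SDP}_{M_{0,\infty}}(z,w)$ as the delicate point; indeed, the paper's Step~3 invokes~\eqref{eqn-cell-sdp-infty} to pass from $\op{SDP}_{M_{0,\infty}}(x_k^n,w)$ to $\mcl X_n(k)$ without justifying that these agree, so the issue is real.

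However, your proposed handling of this correction has two inaccuracies. First, the case split at $k=0$ is not quite right: the topological argument from Step~2 only gives $\op{SDP}_{M_{0,\infty}}=\op{SDP}_{M_{n,\infty}}$ at the corner $k=\sigma_n^R$. For $k\geq 1$ one can still get equality, but it requires checking that upper-right boundary vertices of $M_{0,n}$ have \emph{no} outgoing edges in $M_{0,n}\setminus M_{n,\infty}$ (the unique missing edge with such a vertex as its west endpoint is exactly $(x_k^n,x_{k+1}^n)$, which by hypothesis is never activated before time $n$), and that a path in $M_{n,\infty}$ cannot reach a left-boundary vertex (these have no incoming edges in $M_{n,\infty}$). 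For $k\leq 0$ the equality genuinely fails. Second, your assertion that the excursion into $M_{0,n}$ must return ``near the right-boundary corner $x_{\sigma_n^R}^n$'' is unjustified and in general false. The correct and simpler observation, valid uniformly in $k$, is this: any SDP geodesic from $z=x_k^n$ to $w$ in $M_{0,\infty}$ has a \emph{last} exit from $M_{0,n}$ at some upper-boundary vertex $z''=x_{k''}^n$, after which it lies entirely in $M_{n,\infty}$; hence $\op{SDP}_{M_{0,\infty}}(z,w)\geq \op{SDP}_{M_{n,\infty}}(z'',w)$ and the correction is at most $\mcl X_n(k)-\mcl X_n(k'')\leq 2A n^{3/8}$ on the good event~\eqref{eqn-cell-bdy-sdp}. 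No control on the location of $z''$ is needed.

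For the second bullet, the paper bypasses your proposed symmetric argument entirely: since $(\mcl Z(n-\cdot)-\mcl Z(n))|_{[0,n]}$ has the same law as $(\mcl R,\mcl L)|_{[0,n]}$, the law of $M_{0,n}$ is invariant under reversing all edge orientations and applying an orientation-reversing homeomorphism, which swaps the two bullets.
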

\begin{proof} 
The proof is very similar to that of Lemma~\ref{lem-cell-ldp-upper}, except for some minor changes in Step 3. So, we will be brief. See again Figure~\ref{fig-long-path2} for an illustration.
Let $\mcl X_n$ be the Busemann function for $M_{n,\infty}$, defined in the same way as $\mcl X$ but with $M_{n,\infty}$ in place of $M_{0,\infty}$. Since $M_{n,\infty} \eqD M_{0,\infty}$, we have $\mcl X \eqD \mcl X_n$.  
\medskip

\noindent\textit{Step 1: bounds for $\mcl X$ and $\mcl X_n$.}
Let $\beta_0^R$ (resp.\ $\beta_n^R$) be the directed path which traverses the portion of the boundary of $M_{0,\infty}$ (resp.\ $M_{n,\infty}$) lying to the right of the initial vertex of $\lambda_0$ (resp.\ $\lambda_n$). Let $\sigma_0^R$ and $\sigma_n^R$ be chosen so that $\beta_0^R|_{[0,\sigma_0^R]}$ and $\beta_n^R|_{[0,\sigma_n^R]}$ traverse the entirety of the upper-right and lower-right boundaries of $M_{0,n}$, respectively (recall Definition~\ref{def-kmsw}). Then
\eqb \label{eqn-cell-merge-time-sdp}
\beta_0^R(k  + \sigma_0^R) = \beta_n^R(k + \sigma_n^R) ,\quad \forall k \geq 1. 
\eqe  
In an exactly analogous manner, we also define the left boundary paths $\beta_0^L$ and $\beta_n^L$ and the times $\sigma_0^L$ and $\sigma_n^L$, for which~\eqref{eqn-cell-merge-time-R} holds with $L$ in place of $R$. 
 
By Lemma~\ref{lem-sdp-fluctuation}, applied exactly as in the proof of~\eqref{eqn-cell-bdy-sup} from Lemma~\ref{lem-cell-ldp-upper}, it holds with probability tending to 1 as $A\to\infty$ (uniformly in $n$) that
\eqb \label{eqn-cell-bdy-sdp}
  \max\left\{ \max_{\el \in [-\sigma_0^L ,\sigma_0^R ] \cap \BB Z} |\mcl X(\el)| ,  \max_{\el \in [-\sigma_n^L ,\sigma_n^R ] \cap \BB Z} |\mcl X_n(\el)| \right\}  \leq  A n^{3/8}    .
\eqe 
\medskip

\noindent\textit{Step 2: relating $\mcl X$ and $\mcl X_n$ to longest directed paths.}
Henceforth assume that the event in~\eqref{eqn-cell-bdy-sdp} occurs. 
Let $\{x_k\}_{k\in\BB Z}$ be the vertices on the boundary of $M_{0,\infty}$, enumerated in west to east order so that $\lambda_0 = (x_{0} , x_1)$. Similarly, let $\{x^n_k\}_{k\in\BB Z}$ be the vertices on the boundary of $M_{n,\infty}$, enumerated in west to east order so that $\lambda_n = (x_{0}^n , x_1^n)$. By~\eqref{eqn-cell-merge-time-R} and its analog with $L$ in place of $R$,
\eqb \label{eqn-cell-vertex-sdp} 
x_{k + \sigma_0^R} = x_{k + \sigma_n^R}^n \quad \text{and} \quad x_{-k-\sigma_0^L} = x_{-k-\sigma_n^L}^n ,\quad\forall k \geq 0.
\eqe 
By the definition of the Busemann function in Theorem~\ref{thm-busemann}, applied, e.g., to a sequence of points $\{w_j\}_{j\in\BB N}$ on the right boundary of $M_{0,\infty}$, we can find a vertex $w$ of $M_{0,\infty}$ such that
\allb \label{eqn-cell-sdp-infty}
\mcl X(\el) &= \op{SDP}_{M_{0,\infty}}(x_\el , w) - \op{SDP}_{M_{0,\infty}}(x_0 , w)  , \quad\forall \el \in [-\sigma_0^L , \sigma_0^R] \cap\BB Z \quad \text{and} \notag\\
\mcl X_n(\el) &= \op{SDP}_{M_{n,\infty}}(x_\el^n , w) - \op{SDP}_{M_{n,\infty}}(x_0^n , w)  , \quad\forall \el \in [-\sigma_n^L , \sigma_n^R] \cap\BB Z  .
\alle
By the same argument leading to~\eqref{eqn-cell-ldp-agree}, 
\eqb \label{eqn-cell-sdp-agree}
 \op{SDP}_{M_{n,\infty}}(x_{\sigma_n^R}^n , w) =  \op{SDP}_{M_{0,\infty}}(x_{\sigma_0^R} , w) .
\eqe
By exactly the same calculation as in~\eqref{eqn-ldp-upper-shift}, we then obtain with probability tending to 1 as $A\to\infty$ (uniformly in $n$)
\allb \label{eqn-sdp-upper-shift}
&\left| \op{SDP}_{M_{n,\infty}}(x_0^n , w) - \op{SDP}_{M_{0,\infty}}(x_0 , w) \right| \notag\\
&\qquad \leq \left|   \op{SDP}_{M_{n,\infty}}(x_{\sigma_n^R}^n , w) -  \op{SDP}_{M_{0,\infty}}(x_{\sigma_0^R} , w)  \right| 
+  \left|    \mcl X_n(\sigma_n^R) - \mcl X(\sigma_0^R) \right| \quad &&\text{by~\eqref{eqn-cell-sdp-infty}} \notag\\
&\qquad= \left|    \mcl X_n(\sigma_n^R) - \mcl X(\sigma_0^R) \right|  \quad &&\text{by~\eqref{eqn-cell-sdp-agree}} \notag\\
&\qquad\leq 2 A n^{3/8}  \quad &&\text{by~\eqref{eqn-cell-bdy-sdp}}  . 
\alle
\medskip

\noindent\textit{Step 3: bound for paths in $M_{0,n}$.}
Let $x  $ be a vertex on the lower boundary $ M_{0,n}$. 
Let $P_x : \BB N \to\mcl E(M_{0,\infty})$ be an SDP geodesic from $x$ to $\infty$ in $M_{0,\infty}$ (which exists by Lemma~\ref{lem-geo-infty}). 
Let $T$ be the smallest time for which $P_x(T+1) \notin M_{0,n}$ and let $z$ be the initial vertex of $P_x(T+1)$. Then $z$ lies on the upper boundary $M_{0,n}$. 
Since $z$ is hit by the SDP geodesic $P_x$, 
\eqb   \label{eqn-sdp-upper-geo}
\op{SDP}_{M_{0,n}}(x,z) = \op{SDP}_{M_{0,\infty}}(x,z) = \op{SDP}_{M_{0,\infty}}(x,w) - \op{SDP}_{M_{0,\infty}}(z,w) .
\eqe 
We can write $x = x_\el$ and $z = x_k^n$ for some $\el \in [-\sigma_0^L , \sigma_0^R]\cap\BB Z$ and $k \in [-\sigma_n^L , \sigma_n^R]\cap\BB Z$. We then have with probability tending to 1 as $A\to\infty$ (uniformly in $n$)
\begin{align*}
    \op{SDP}_{M_{0,n}}(x,z) 
&=   \op{SDP}_{M_{0,\infty}}(x_\el ,w) - \op{SDP}_{M_{0,\infty}}(x^n_k ,w) \quad &&\text{by~\eqref{eqn-sdp-upper-geo}} \notag\\
&=  \mcl X(\el) - \mcl X_n(k) + \op{SDP}_{M_{0,\infty}}(x_0, w) - \op{SDP}_{M_{0,\infty}}(x_0^n , w)\quad &&\text{by~\eqref{eqn-cell-sdp-infty}} \notag\\
&\leq 4 A n^{3/8} \quad &&\text{by~\eqref{eqn-cell-bdy-sdp} and~\eqref{eqn-sdp-upper-shift}} .
\end{align*}
Since $A>1$ is arbitrary, we get that with probability tending to 1 as $n\to\infty$, each vertex on the lower boundary of $M_{0,n}$ can be joined to the upper boundary by a directed path of length at most $A n^{3/8}$. 

The statement with the upper and lower boundaries revered follows since the law of $M_{0,n}$ is invariant under reversing the orientations of the edges and applying an orientation-reversing homeomorphism $\BB C\to\BB C$. This, in turn, follows since $\mcl Z(n-\cdot) -\mcl Z(n)$ has the same law as $(\mcl R(\cdot), \mcl L(\cdot))$.  
\end{proof}

\begin{proof}[Proof of Theorem~\ref{thm-cell-sdp}]
Combine Lemma~\ref{lem-cell-sdp-upper} and Lemma~\ref{lem-cell-sdp-lower}. 
\end{proof}

\subsection{Boltzmann bipolar-oriented triangulations}
\label{sec-boltzmann}

In this section we will prove Theorem~\ref{thm-boltzmann-ldp}. The key idea is to find a submap $\wh M(r) \subset M_{0,\infty}$ with a marked left boundary vertex $\wh z(r)$ such that $(\wh M(r) , \wh z(r))$ has the same law as the pair $(M(r), z(r))$ from the theorem statement; see Lemma~\ref{lem-uiqbot-boltzmann}. Once we have done this, the proof of Theorem~\ref{thm-boltzmann-ldp} will proceed by comparing the map $\wh M(r)$ to the map $M_{0,n}$ from Theorems~\ref{thm-cell-ldp} and~\ref{thm-cell-sdp}, with $n$ chosen to be comparable to $r^2$. 

To find the map $\wh M(r)$, we first let $\wh M_{0,\infty} \subset M_{0,\infty}$ be as in Proposition~\ref{prop-future-map-law}, so that $\wh M_{0,\infty}$ has the law of the UIQBOT (Definition~\ref{def-uiqbot}). 
Recall that $\mcl Z$ is the KMSW encoding walk for the UIBOT $M_{-\infty,\infty}$ as in Proposition~\ref{prop-kmsw-uibot}. 
Let $\wh{\mcl Z} = (\wh{\mcl L} , \wh{\mcl R})$ be the KMSW encoding walk for $\wh M_{0,\infty}$, so that $\wh{\mcl Z}$ is obtained by conditioning $\mcl Z$ on the event that its first coordinate stays non-negative for all time (Definition~\ref{def-uiqbot}).  

We couple $\wh{\mcl Z}$ and $\mcl Z$ together as in~\eqref{eqn-walk-quadrant}: that is, for $m\in\BB N$ let $S_m \in \BB N_0$ be the smallest number for which
\eqb \label{eqn-boltzmann-uiqbot-time}
m = \# \left\{ n \leq S_m : \lambda_n \in \wh M_{0,\infty} \right\}  . 
\eqe
Then we can take $\wh{\mcl Z}(0) = (0,0)$ and
\eqb \label{eqn-boltzmann-uiqbot-coupling}
\wh{\mcl Z}(m) - \wh{\mcl Z}(m-1) = \mcl Z(S_m) - \mcl Z(S_m-1) ,\quad \forall m \geq 1 .  
\eqe
By Lemma~\ref{lem-quad-kmsw},  the walk defined by~\eqref{eqn-boltzmann-uiqbot-coupling} is indeed the KMSW encoding walk for $\wh M_{0,\infty}$. 
  
We now identify a submap of $\wh M_{0,\infty}$ which has the same law as the Boltzmann map of Theorem~\ref{thm-boltzmann-ldp}. See Figure~\ref{fig-boltzmann-subset} for an illustration.

\begin{figure}[ht!]
\begin{center}
\includegraphics[width=0.6\textwidth]{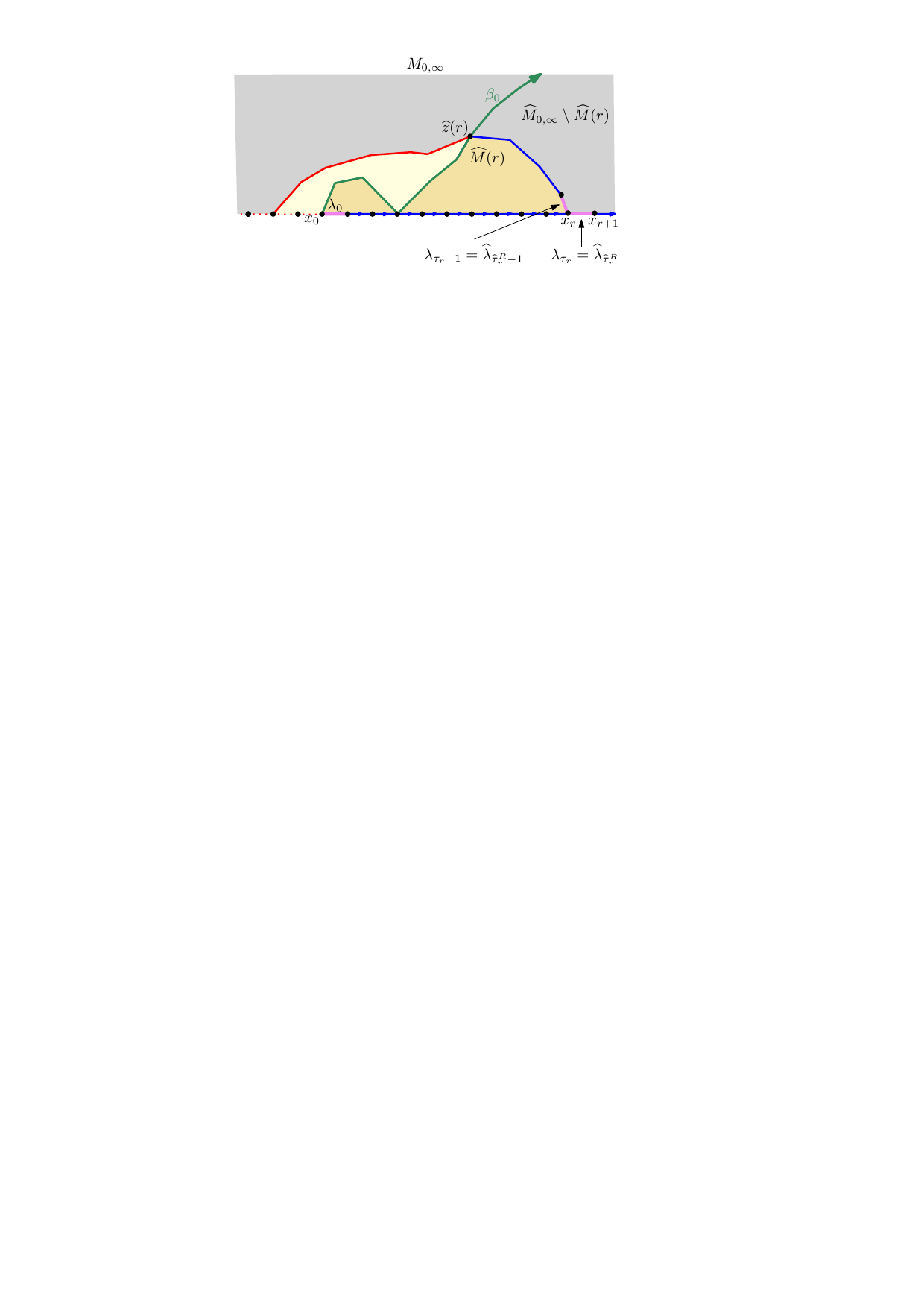}  
\caption{\label{fig-boltzmann-subset}  
The map $\wh M(r) \subset \wh M_{0,\infty}$, which has sink $x_0$, source $x_{r}$, and marked left boundary vertex $\wh z(r)$. We show in Lemmas~\ref{lem-uiqbot-boltzmann} 
that $(\wh M(r) , \wh z(r))$ has the same law as a Boltzmann bipolar-oriented triangulation with right boundary length $r$ and a marked left boundary vertex. 
}
\end{center}
\end{figure}

\begin{lem} \label{lem-uiqbot-boltzmann}
For $r\geq 1$, let
\eqb \label{eqn-uiqbot-boltzmann}
\wh\tau_r^R := \min\left\{ n\in\BB N : \wh{\mcl R}(n) = -r \right\}. 
\eqe
Let $\wh M(r)$ be the submap of $\wh M_{0,\infty}$ obtained by applying the KMSW procedure to $\wh{\mcl Z}|_{[0,\wh\tau_r^R-1]}$. 
Also, let $\wh z(r)$ be the last (in directed order) vertex on the left boundary of $\wh M(r)$ which also lies on the left boundary of $\wh M_{0,\infty}$. 
Then the law of $(\wh M(r) , \wh z(r))$ is the Boltzmann distribution on bipolar-oriented triangulations with right boundary length $r$ and a marked left boundary vertex (Definition~\ref{def-boltzmann-right}). In particular, $\wh M(r)$ is a bipolar-oriented map with no missing edges. 
Moreover,  with probability tending to one as $A\to\infty$, uniformly in the choice of $r$,
\begin{equation}\label{eq:edges-in-Mr}
    A^{-1}r^{2} \leq \#\mcl E(\wh M(r)) \leq Ar^2.
\end{equation}
\end{lem}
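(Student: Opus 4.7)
Plan. The lemma has three claims: the identification of the law of $(\wh M(r),\wh z(r))$, the absence of missing edges in $\wh M(r)$, and the two-sided edge-count bound. The latter two are the straightforward part. By the definition of $\wh\tau_r^R$, the walk $\wh{\mcl Z}|_{[0,\wh\tau_r^R-1]}$ starts at $(0,0)$, takes $\wh\tau_r^R-1$ steps staying in $[0,\infty)\times[-r+1,\infty)$, and ends at $(\wh{\mcl L}(\wh\tau_r^R)-1,-r+1)$, since the final step bringing $\wh{\mcl R}$ to $-r$ must be a $(1,-1)$ step. Theorem~\ref{thm-kmsw} then identifies $\wh M(r)$ as a genuine bipolar-oriented triangulation with no missing edges, having $\wh\tau_r^R$ total edges, $\wh{\mcl L}(\wh\tau_r^R)$ left boundary edges, and $r$ right boundary edges. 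Hence $\#\mcl E(\wh M(r))=\wh\tau_r^R$, and the two-sided bound follows from Lemma~\ref{lem-cond-walk-bm}: the rescaled walks $r^{-1}\wh{\mcl Z}(\lfloor r^2\cdot\rfloor)$ converge to a non-degenerate 2D diffusion conditioned to stay in the right half-plane, whose second coordinate hits $-1$ at an a.s.\ strictly positive and finite time, yielding tightness of $r^{-2}\wh\tau_r^R$ from above and below, uniformly in $r$.

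For the law identification, recall that $\wh{\mcl Z}$ is the Doob $h$-transform of $\mcl Z$ (killed upon $\mcl L<0$) with harmonic function $h(x,y)=x+1$. Applying the Doob formula at the a.s.\ finite stopping time $\tau_r^R:=\min\{n:\mcl R(n)=-r\}$ and recognizing the map via Lemma~\ref{lem-kmsw-boltzmann}, one obtains, for any bipolar-oriented triangulation $\frk m$ with right boundary length $r$, left boundary length $\ell$, and $n$ edges,
\eqbn
\wh{\BB P}[\wh M(r)=\frk m]\;=\;(\ell+1)\cdot(1/3)^n.
\eqen
As explained after Definition~\ref{def-boltzmann-right}, this is exactly the marginal law of $M(r)$ under the Boltzmann distribution with marked left-boundary vertex (obtained from the unmarked Boltzmann distribution by reweighting by left boundary length plus one). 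It remains to show that, conditional on $\wh M(r)$, the vertex $\wh z(r)$ is uniform on the $\ell+1$ vertices of its left boundary. By the strong Markov property of $\wh{\mcl Z}$, given $\wh{\mcl Z}|_{[0,\wh\tau_r^R]}$ (which determines $\wh M(r)$ and $\ell=\wh{\mcl L}(\wh\tau_r^R)$), the process $\wh{\mcl L}(\cdot+\wh\tau_r^R)$ is a Doob-transformed lazy random walk started at $\ell$. Set $M^*:=\inf_{n\geq\wh\tau_r^R}\wh{\mcl L}(n)\in\{0,\dots,\ell\}$. A direct computation verifies that $n\mapsto 1/(\wh{\mcl L}(n)+1)$ is a martingale for this Markov chain as long as $\wh{\mcl L}\geq 1$, and optional stopping at the hitting time of each level $k-1\in\{0,\dots,\ell-1\}$, combined with the fact that $\wh{\mcl L}(n)\to\infty$ a.s., yields
\eqbn
\wh{\BB P}\bigl[M^*\leq k-1\,\bigm|\,\wh{\mcl Z}|_{[0,\wh\tau_r^R]}\bigr]\;=\;\frac{k}{\ell+1},\qquad k=1,\dots,\ell,
\eqen
so that $M^*$ is uniformly distributed on $\{0,1,\dots,\ell\}$ given $\wh M(r)$.

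It then remains to identify $\wh z(r)$ as the height-$M^*$ vertex on the left boundary of $\wh M(r)$, which I expect to be the main technical hurdle. Let $v_0,v_1,v_2,\dots$ denote the left-boundary vertices of $\wh M_{0,\infty}$ and $v_0^{(r)},\dots,v_\ell^{(r)}$ those of $\wh M(r)$, both indexed by height with $v_0=v_0^{(r)}$ the source. By Assertion~\ref{item-upper-left} of Lemma~\ref{lem-kmsw-bdy} (applied with $n=\infty$), the leftmost outgoing edge from $v_k$ in $\wh M_{0,\infty}$ is the active edge $\wh\lambda_{m(k)}$, where $m(k):=\max\{j:\wh{\mcl L}(j)=k\}$. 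Since $\wh M(r)\subset\wh M_{0,\infty}$, the leftmost outgoing edges from a common left-boundary vertex in the two maps coincide precisely when the one in $\wh M_{0,\infty}$ belongs to $\wh M(r)$, i.e., when $m(k)\leq\wh\tau_r^R-1$. Unpacking the future-walk description of $M^*$, this is equivalent to $\wh{\mcl L}$ not returning to the value $k$ after time $\wh\tau_r^R$, which holds iff $k<M^*$. Hence the two left boundaries agree from $v_0$ through $v_{M^*}$ and then diverge, so $\wh z(r)=v_{M^*}$. Combined with the uniformity of $M^*$ on $\{0,\dots,\ell\}$, this gives the desired conditional uniformity of $\wh z(r)$, completing the proof. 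The main point requiring care is the identification of $\wh\lambda_{m(k)}$ as the leftmost outgoing edge from $v_k$ and the careful tracking of when the two leftmost directed paths agree and when they diverge, which requires unpacking Assertion~\ref{item-upper-left} of Lemma~\ref{lem-kmsw-bdy} together with the notion of leftmost outgoing edges in sub-maps.
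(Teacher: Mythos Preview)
Your proof is correct and follows essentially the same route as the paper: the marginal law of $\wh M(r)$ via the $(x+1)$-weighting of the conditioned walk (exactly the paper's Radon--Nikodym argument), the identification of the height of $\wh z(r)$ with $M^*=\min_{j\ge \wh\tau_r^R}\wh{\mcl L}(j)$, and the uniformity of $M^*$ on $\{0,\dots,\ell\}$; the edge-count bound via Lemma~\ref{lem-cond-walk-bm} is identical. The only cosmetic differences are that the paper cites Lemma~\ref{lem-walk-pos-prob} for the uniformity (whereas you re-derive it via the martingale $1/(\wh{\mcl L}+1)$), and the paper identifies $\wh\el_r=M^*$ by applying Assertion~\ref{item-lower-left} of Lemma~\ref{lem-kmsw-bdy} to the \emph{future} walk $\wh{\mcl Z}|_{[\wh\tau_r^R-1,\infty)}$ (so that the portion of $\partial_L\wh M(r)$ covered by the future map is read off directly) rather than your Assertion~\ref{item-upper-left} on the full walk --- this sidesteps the ``do the leftmost paths re-merge'' issue you flagged.
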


\begin{proof}
Recalling the comment immediately after Definition~\ref{def-boltzmann-right}, it is enough to show the following:
\begin{itemize}
\item  $\wh M(r)$ is sampled from the Boltzmann distribution on bipolar-oriented triangulations with right boundary length $r$ (Definition~\ref{def-boltzmann-right}), weighted by (a constant times) $L+1$, where $L$ is the left boundary length.  
\item Conditional on $\wh M(r)$,  $\wh z(r)$ is uniformly sampled from the set of left boundary vertices of $\wh M(r)$. 
\end{itemize}

We start by determining the marginal law of  $\wh M(r)$. For $r\geq 1$, as in~\eqref{eqn-bdy-vertex-hit} and~\eqref{eqn-bdy-vertex-kmsw}, let $\tau_r$ be the smallest $n \in \BB N$ for which the unconditioned walk $\mcl Z$ satisfies $\mcl R(n) = -r$. 
By the definition of $\wh{\mcl Z}$ (recall~\eqref{eqn-cond-walk-rn}), the law of $\wh{\mcl Z}|_{[0,\wh\tau_r^R]}$ is the same as the law of $\mcl Z|_{[0,\tau_r ]}$ weighted by the Radon-Nikodym derivative 
\eqbn
(\mcl L(\tau_r) + 1) \BB 1\left\{ \mcl L(j) \geq 0 ,\: \forall j \in [0,\tau_r]\cap\BB Z\right\} .
\eqen
Since the event in the indicator function is the same as the conditioning event in Lemma~\ref{lem-kmsw-boltzmann}, this lemma implies that the marginal law of $\wh M(r)$ is as described in the first item above. 

We now show that the conditional law of $\wh z(r)$ given $\wh M(r)$ is uniform on the set of left boundary vertices of $\wh M(r)$.  The map $\wh M(r)$ has $\wh{\mcl L}(\wh \tau_r^R)+1$ vertices on its left boundary.
Let $\wh\el_r \in \BB N_0$ be chosen so that $\wh z(r)$ is the $\wh\el_r$th vertex in order along the left boundary of $\wh M(r)$. By Assertion~\ref{item-lower-left} of Lemma~\ref{lem-kmsw-bdy} applied to the walk $\wh{\mcl Z}|_{[\wh\tau_r^R-1,\infty)}$ and since $\wh{\mcl Z}(\wh \tau_r^R)-\wh{\mcl Z}(\wh \tau_r^R -1 )=(1,-1)$, i.e.\ $\wh{\mcl L}(\wh \tau_r^R)-\wh{\mcl L}(\wh \tau_r^R -1 )=1$,  
\eqb  \label{eqn-left-bdy-time}
\wh\el_r = \min\left\{ \wh{\mcl L}(j) : j \geq \wh\tau_r^R-1 \right\} = \min\left\{ \wh{\mcl L}(j) : j \geq \wh\tau_r^R \right\} .
\eqe 
By the strong Markov property, the conditional law of $\wh{\mcl Z}(\cdot+\wh\tau_r^R)|_{[0 ,\infty)}$ given $\wh{\mcl Z}|_{[0,\wh\tau_r^R]}$ is that of a random walk with the same increment distribution as $\mcl Z$ started at $(\wh{\mcl L}(\wh\tau_r^R) , -r)$ and conditioned so that its first coordinate stays non-negative. 
By Lemma~\ref{lem-walk-pos-prob} and \eqref{eqn-left-bdy-time}, for each $s\in [0,\wh{\mcl L}(\wh\tau_r^R)] \cap\BB Z$, 
\begin{align*}
    \BB P\left[ \wh\el_r  \leq s \,\middle|\,  \wh{\mcl Z}|_{[0,\wh\tau_r^R]} \right]&=1-\BB P\left[ \wh\el_r  > s \,\middle|\,  \wh{\mcl Z}|_{[0,\wh\tau_r^R]} \right]\\
&=1-\BB P\left[ \wh{\mcl L}(j)  \geq \wh{\mcl L}(\wh\tau_r^R)- (\wh{\mcl L}(\wh\tau_r^R)-s-1), \forall j\geq \tau_r^R \,\middle|\,  \wh{\mcl Z}|_{[0,\wh\tau_r^R]} \right]\\
&= 1-\frac{\wh{\mcl L}(\wh\tau_r^R)-s}{\wh{\mcl L}(\wh\tau_r^R) + 1}= \frac{s+1}{\wh{\mcl L}(\wh\tau_r^R) + 1}  .
\end{align*}
Hence, the conditional law of $\wh\el_r$ given $\wh{\mcl Z}|_{[0,\wh\tau_r^R]}$ is uniform on $[0,\wh{\mcl L}(\wh\tau_r^R)] \cap\BB Z$, which implies that the conditional law of $\wh z(r)$ given $\wh M(r)$ is uniform on the set of left boundary vertices of $\wh M_r$.

It remains to show~\eqref{eq:edges-in-Mr}. By the convergence of $n^{-1/2} \wh{\mcl Z}(\lfloor n \cdot \rfloor)$ to a correlated two-dimensional Brownian motion started at $(0,0)$ and conditioned to stay in the upper half-plane (Lemma~\ref{lem-cond-walk-bm}), with probability tending to one as $A\to\infty$, uniformly in the choice of $r$, 
\eqbn  
A^{-1}  r^2 \leq \wh\tau_r^R\leq A r^2.
\eqen 
Hence, with probability tending to one as $A\to\infty$, uniformly in the choice of $r$, the map $\wh M(r)$ (which is obtained by applying the KMSW procedure to $\wh{\mcl Z}|_{[0,\wh\tau_r^R-1]}$) satisfies~\eqref{eq:edges-in-Mr}.
\end{proof}

Let $(\wh M(r), \wh z(r))$ be as in Lemma~\ref{lem-uiqbot-boltzmann}. Since $(\wh M(r), \wh z(r))$ has the same law of $( M(r),  z(r))$ from Theorem~\ref{thm-boltzmann-ldp}, it will suffice to prove Theorem~\ref{thm-boltzmann-ldp} with $(\wh M(r), \wh z(r))$ in place of $( M(r),  z(r))$.

The vertices along the right boundary of the UIQBOT $\wh M_{0,\infty}$, starting from the source vertex, are the same as the vertices along the right boundary of $M_{0,\infty}$, starting from the initial vertex of $\lambda_0$. As in Theorem~\ref{thm-busemann}, we call these vertices $x_0,x_1,\dots$. 
By Assertion~\ref{item-lower-right} of Lemma~\ref{lem-kmsw-bdy}, the time $\wh\tau_r^R$ of~\eqref{eqn-uiqbot-boltzmann} is the smallest $m \in\BB N_0$ for which the initial vertex of $\wh\lambda_m$ is $x_r$ in the map $\wh M(r)$. 
In particular, the source and sink vertices of $\wh M(r)$ are the vertices $x_0$ and $x_{r}$, respectively.
 
Let $\tau_r$ be as in~\eqref{eqn-bdy-vertex-hit}, so that $\tau_r$ is the smallest $n\in\BB N_0$ for which the initial vertex of $\lambda_n$ is $x_r$. By the coupling~\eqref{eqn-boltzmann-uiqbot-coupling},
\eqb \label{eqn-hit-coupling}
\tau_r = S_{\wh\tau_r^R} .
\eqe 

We will now prove upper and lower bounds for LDPs and SDPs in $\wh M(r)$.

\begin{lem} \label{lem-boltzmann-ldp-upper} 
With probability tending to one as $A\to\infty$, uniformly in the choice of $r$, 
\eqb  \label{eqn-boltzmann-ldp-upper} 
 \op{LDP}_{\wh M(r)} \left(  x_0 , x_{r} \right)  \leq A  r^{3/2} .
\eqe 
\end{lem}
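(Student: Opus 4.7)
The plan is to compare $\wh M(r)$ to a size-$n$ KMSW cell $M_{0,n}$ with $n$ of order $r^2$ and then invoke Theorem~\ref{thm-cell-ldp}. First, I would establish the containment $\wh M(r) \subset M_{0, \tau_r}$ as a directed submap, where $\tau_r = S_{\wh\tau_r^R}$ by \eqref{eqn-hit-coupling}. Indeed, by the coupling \eqref{eqn-boltzmann-uiqbot-coupling} and Lemma~\ref{lem-quad-kmsw}, each edge and triangle added at step $m \leq \wh\tau_r^R - 1$ of the KMSW procedure for $\wh{\mcl Z}$ coincides with the edge/triangle added at step $S_m$ of the KMSW procedure for $\mcl Z$, and $S_m < S_{\wh\tau_r^R} = \tau_r$. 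Consequently,
\eqbn
\op{LDP}_{\wh M(r)}(x_0, x_r) \leq \max_{x, y \in \mcl V(M_{0, \tau_r})} \op{LDP}_{M_{0, \tau_r}}(x, y).
\eqen

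Next, I would control $\tau_r$ using standard random-walk estimates. By \eqref{eqn-bdy-vertex-kmsw}, $\tau_r$ is the first hitting time of $-r$ by the lazy random walk $\mcl R$, whose increments are i.i.d.\ uniform on $\{-1, 0, 1\}$. By Donsker's invariance principle, $r^{-2} \tau_r$ converges in distribution as $r \to \infty$ to the hitting time of $-1$ by $\sqrt{2/3}$ times a standard Brownian motion, which is a.s.\ finite. Hence for each $\delta > 0$ there exists $C = C(\delta) > 0$ such that $\BB P[\tau_r \leq C r^2] \geq 1 - \delta/2$, uniformly in $r \in \BB N$. On this event, $M_{0, \tau_r} \subset M_{0, \lceil C r^2 \rceil}$, so the right-hand side of the previous display is bounded by the maximum $\op{LDP}$ over all pairs of vertices in $M_{0, \lceil C r^2 \rceil}$.

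Finally, I would apply Theorem~\ref{thm-cell-ldp} with $n = \lceil C r^2 \rceil$ to obtain $A_0 = A_0(\delta) > 1$ such that, with probability at least $1 - \delta/2$, this maximum is bounded by $A_0 \lceil C r^2 \rceil^{3/4} \leq 2 A_0 C^{3/4} r^{3/2}$ (where we absorb the ceiling using $r \geq 1$). Combining with the first two steps, this yields $\op{LDP}_{\wh M(r)}(x_0, x_r) \leq A r^{3/2}$ with probability at least $1 - \delta$, where $A := 2 A_0(\delta) C(\delta)^{3/4}$. As $\delta \to 0$, both $A_0(\delta)$ and $C(\delta)$ diverge, so $A \to \infty$, proving the statement uniformly in $r$.

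The main subtlety will be step 1, the containment $\wh M(r) \subset M_{0, \tau_r}$, which requires carefully tracking the correspondence between the KMSW procedures for $\wh{\mcl Z}$ and $\mcl Z$ under the coupling \eqref{eqn-boltzmann-uiqbot-coupling}; given Lemma~\ref{lem-quad-kmsw}, this amounts to bookkeeping, but one must verify that both the newly added active edges and the newly added triangles are matched correctly at matching steps of the two procedures.
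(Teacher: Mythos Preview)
Your proposal is correct and follows essentially the same approach as the paper: contain $\wh M(r)$ in a KMSW cell $M_{0,n}$ with $n$ of order $r^2$ (via the hitting-time estimate for the lazy walk $\mcl R$ and the coupling~\eqref{eqn-hit-coupling}), then invoke Theorem~\ref{thm-cell-ldp}. The only cosmetic difference is that the paper parametrizes directly in $A$ (taking $n=\lfloor A^{2/3}r^2\rfloor$ and applying Theorem~\ref{thm-cell-ldp} with $A^{1/2}$ so the exponents combine to give exactly $A r^{3/2}$), whereas you work with a separate $\delta$ and constants $C(\delta),A_0(\delta)$; also, the containment you flag as the ``main subtlety'' is treated by the paper as immediate from~\eqref{eqn-hit-coupling}, and indeed it is, since the edges of $\wh M(r)$ are added at the times $S_0,\dots,S_{\wh\tau_r^R-1}<\tau_r$ of the KMSW procedure for $\mcl Z$.
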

\begin{proof} 
By~\eqref{eqn-bdy-vertex-kmsw}, the time $\tau_r$ is the smallest $n\in\BB N$ such that $\mcl R(n) \leq -r$. 
Since $\mcl R$ is a lazy random walk, it holds with probability tending to one as $A\to\infty$, uniformly in the choice of $r$, that $\tau_r \leq A^{2/3}  r^2$ (the reason for the choice of $A^{2/3}$ will be clearer later).
By~\eqref{eqn-hit-coupling}, if this is the case, then 
\eqb  \label{eqn-boltzmann-ldp-contain}
\wh M(r) \subset M_{0,\lfloor A^{2/3} r^2 \rfloor},   
\eqe 
where here we recall that $M_{0,n} $ is the submap of $M_{0,\infty}$ obtained by applying the KMSW procedure to $\mcl Z|_{[0,n]}$. 
By Theorem~\ref{thm-cell-ldp} (with $A^{1/2}$ in place of $A$ and  $\lfloor A^{2/3}r^2 \rfloor$ in place of $n$), it holds with probability tending to one as $A\to\infty$, uniformly in the choice of $r$, that every directed path in $M_{0,\lfloor A^{2/3} r^2 \rfloor}$ has length at most $A r^{3/2}$. Since also~\eqref{eqn-boltzmann-ldp-contain} holds with probability tending to one as $A\to\infty$, uniformly in the choice of $r$, we obtain the lemma statement.
\end{proof}

\begin{lem} \label{lem-boltzmann-ldp-lower} 
With probability tending to one as $A\to\infty$, uniformly in the choice of $r$, 
\eqb  \label{eqn-boltzmann-ldp-lower} 
 \op{LDP}_{\wh M(r)} \left(  x_0 , x_{r} \right)  \geq A^{-1}  r^{3/2} .
\eqe 
\end{lem}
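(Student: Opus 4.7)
The plan has three parts: a planar reduction, an application of Lemma~\ref{lem-long-path} to a chosen starting vertex, and a tightness argument for the Busemann function near the origin.

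First I would prove the reduction $\op{LDP}_{\wh M(r)}(x_0,x_r) = \op{LDP}_{M_{0,\infty}}(x_0,x_r)$. The inequality $\leq$ is automatic. For $\geq$, note that any directed path $P$ from $x_0$ in $M_{0,\infty}$ lies in $\wh M_{0,\infty}$ by definition of the latter. If in addition $P$ ends at $x_r$, then every vertex $u$ of $P$ satisfies $u\leq x_r$ in the partial order induced by the bipolar orientation, and since in the KMSW procedure each vertex's incoming edges are added strictly before the terminal vertex is activated, a simple induction along $P$ shows that every edge of $P$ is added at some time $<\tau_r$. Hence $P\subset M_{0,\tau_r-1}\cap \wh M_{0,\infty} = \wh M(r)$ by the coupling~\eqref{eqn-hit-coupling}.

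Next I would apply Lemma~\ref{lem-long-path} with the left-starting vertex replaced by $x_0$. Its proof goes through essentially verbatim because the only facts used are that the leftmost LDP geodesic $P_{x_0}$ is an infinite directed path in $M_{0,\infty}$ and that its first exit vertex $z$ from $M_{0,\tau_{-k}}$ lies on the right boundary of $M_{\tau_{-k},\infty}$, both of which continue to hold when $\ell=0$. This yields: for each $k\in\BB N$, the map $M_{0,\tau_{-k}}$ contains a directed path from $x_0$ of length at least $\max\{-\mcl X(-k),0\}$.

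Third, I would extract a large value of $-\mcl X(-k)$ from the scaling limit. Fix a small constant $c>0$. By Theorem~\ref{thm-busemann-conv}, the process $\{r^{-3/2}\mcl X(-\lfloor rs\rfloor)\}_{s\in[0,c]}$ converges in law to a symmetric $2/3$-stable Lévy process $\{X(-s)\}_{s\in[0,c]}$, for which $\sup_{s\in[0,c]}(-X(-s))$ is almost surely strictly positive. Independently, since $\tau_{-\lfloor cr\rfloor}/r^{2}$ converges in law to $c^{2}$ times the hitting time of $-1$ by a Brownian motion while $\tau_r/r^{2}$ has a non-degenerate positive limit, choosing $c$ sufficiently small makes $\BB P[\tau_{-\lfloor cr\rfloor}\leq \tau_r-1]$ arbitrarily close to $1$ uniformly in $r$. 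Combining these, for each $\delta>0$ one can choose $c=c(\delta)$ and $A=A(\delta)$ so that with probability at least $1-\delta$ there exists $k^*\in[1,\lfloor cr\rfloor]$ with $-\mcl X(-k^*)\geq A^{-1}r^{3/2}$ and $\tau_{-k^*}\leq \tau_r-1$. (Small $r$ is handled trivially by enlarging $A$, since $\op{LDP}_{\wh M(r)}(x_0,x_r)\geq r$ always.)

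Finally I would combine the three steps: the directed path produced by Step~2 with $k=k^*$ lies in $M_{0,\tau_{-k^*}}\subset M_{0,\tau_r-1}$, starts at $x_0$ (so it stays in $\wh M_{0,\infty}$, hence in $\wh M(r)$), and has length at least $A^{-1}r^{3/2}$; its endpoint $z$ is a vertex of the bipolar-oriented triangulation $\wh M(r)$, which has sink $x_r$, so $z$ can be extended to $x_r$ by any directed path in $\wh M(r)$, yielding~\eqref{eqn-boltzmann-ldp-lower}. The main technical point I expect to need careful verification is the extension of Lemma~\ref{lem-long-path} to $\ell=0$: one must check that the proof's identification of the exit vertex as lying on the right boundary of $M_{\tau_{-k},\infty}$ (and hence reachable by a directed path from $x_{-k}$) survives when the geodesic starts at the root rather than at a strict left-boundary vertex.
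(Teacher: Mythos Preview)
Your approach is correct and genuinely different from the paper's. The paper works directly with the conditioned walk $\wh{\mcl Z}$: it looks at the segment of $\wh{\mcl Z}$ starting at $\wh\tau_{\lfloor r/2\rfloor}^R$ (where $\wh{\mcl L}$ is typically of order $r$), controls the Radon--Nikodym derivative between this segment and the unconditioned walk via Cauchy--Schwarz, and then transfers the cell estimate of Theorem~\ref{thm-cell-ldp} from $\mcl Z$ to $\wh{\mcl Z}$. You instead stay entirely on the $\mcl Z$ side, using the edge-level identification $\mcl E(\wh M(r)) = \mcl E(M_{0,\tau_r-1}) \cap \mcl E(\wh M_{0,\infty})$ that comes directly from~\eqref{eqn-boltzmann-uiqbot-coupling}--\eqref{eqn-hit-coupling}, together with Lemma~\ref{lem-long-path} at $\ell=0$ and the elementary comparison $\tau_{-\lfloor cr\rfloor} \leq \tau_r-1$. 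The extension of Lemma~\ref{lem-long-path} to $\ell=0$ does go through verbatim: since left-boundary vertices $x_{-j}$ ($j\geq 1$) have no incoming edges in $M_{0,\infty}$, the exit vertex $z$ of $P_{x_0}$ from $M_{0,\tau_{-k}}$ must lie on the right boundary of $M_{\tau_{-k},\infty}$, and the rest of the proof is unchanged. One comment on your Step~1: the full reduction $\op{LDP}_{\wh M(r)}(x_0,x_r) = \op{LDP}_{M_{0,\infty}}(x_0,x_r)$ and its ``activation'' justification are stronger than what Step~4 actually uses --- there you only need the edge-level identity $\mcl E(\wh M(r)) = \mcl E(M_{0,\tau_r-1}) \cap \mcl E(\wh M_{0,\infty})$, which is immediate from the coupling. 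Your route avoids the absolute-continuity bookkeeping and is arguably more direct; the paper's route, on the other hand, is phrased so that it only invokes Theorem~\ref{thm-cell-ldp} as a black box rather than re-opening the proof of Lemma~\ref{lem-long-path}.
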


\begin{proof}
Fix $\delta \in (0,1)$. We will show that if $A$ is chosen to be large enough (depending only on $\delta$), then~\eqref{eqn-boltzmann-ldp-lower} holds with probability at least $1-\delta$. 

Recall that $\wh\tau_r^R$ is the first time that $\wh{\mcl R}$ reaches $-r$. 
The idea of the proof is to argue that for a suitably chosen $C > 1$, depending on $\delta$, the law of the walk segment
\eqb  \label{eqn-boltzmann-ldp-lower-shift} 
\left( \wh{\mcl Z}(\cdot  + \wh\tau_{\lfloor r / 2 \rfloor}^R) - \wh{\mcl Z}( \wh\tau_{\lfloor r / 2 \rfloor}^R)    \right) |_{[0, \lfloor r^2 / C \rfloor ] }
\eqe 
is absolutely continuous with respect to the law of $\mcl Z|_{[0,\lfloor r^2/C \rfloor]}$, with a Radon-Nikodym derivative which can be controlled independently of $r$. We will then apply the lower bound in~Theorem~\ref{thm-cell-ldp} to get that, if $A$ and $C$ are chosen appropriately, then with high probability, the submap of $\wh M_{0,\infty}$ obtained by applying the KMSW procedure to~\eqref{eqn-boltzmann-ldp-lower-shift} contains a directed path of length at least $A^{-1} r^{3/2}$ and is contained in $\wh M(r)$. This will give us a directed path of length at least $A^{-1} r^{3/2}$ contained in $\wh M(r)$, which is sufficient since any vertex of $\wh M(r)$ can be joined by directed paths from the source and to the sink. 
\medskip

\noindent\textit{Step 1: reducing to a lower bound for an event depending on a segment of $\wh{\mcl Z}$.}
Let $\ep \in (0,1)$ be a small number, which we will eventually choose to be small enough depending only on $\delta$. 
For $C > 1$, let $G_r = G_r(C)$ be the event that $\tau_{\lceil r/2\rceil }  \geq  r^2 / C $ and there is a directed path in $M_{0, \lfloor r^2 /C \rfloor}$ with length at least $A^{-1} r^{3/2}$. By an elementary estimate for the lazy random walk $\mcl R$, followed by Theorem~\ref{thm-cell-ldp}, we can find $C = C(\ep)  > 1$ and $A = A(\ep) > 1$ such that 
\eqb \label{eqn-boltzmann-ldp-lower-cell}
\BB P[G_r] \geq 1-\ep ,\quad \forall r \in \BB N .
\eqe
 
Let $\wh G_r$ be defined in the same manner as the event $G_r$ of~\eqref{eqn-boltzmann-ldp-lower-cell} but with the walk~\eqref{eqn-boltzmann-ldp-lower-shift} in place of $\mcl Z|_{[0,\tau_{\lceil r/2 \rceil}]}$. 
That is, $\wh G_r$ is the event that $\wh\tau_r^R -  \wh\tau_{\lfloor r / 2 \rfloor}^R \geq   r^2 / C$ and the map obtained by applying the KMSW procedure to the walk~\eqref{eqn-boltzmann-ldp-lower-shift} contains a directed path of length at least $A^{-1} r^{3/2}$. 

If $\wh G_r$ occurs, then $\wh\tau_r^R -  \wh\tau_{\lfloor r / 2 \rfloor}^R \geq   r^2 / C$, which implies that the map in the definition of $\wh G_r$ is a submap of $\wh M(r)$. Hence on $\wh G_r$, the map $\wh M(r)$ contains a directed path of length at least $A^{-1} r^{3/2}$. The longest directed path in a bipolar-oriented map is between the source and the sink, so if $\wh G_r$ occurs, then~\eqref{eqn-boltzmann-ldp-lower} holds. 

The left side of~\eqref{eqn-boltzmann-ldp-lower} is always at least one, so it suffices to prove the lemma statement only for sufficiently large values of $r$ (finitely many small values of $r$ can be dealt with by increasing $A$). 
Hence it suffices to prove that for each sufficiently large $r\in\BB N$ (how large can depend on $\delta$), 
\eqb \label{eqn-boltzmann-ldp-lower-show}
 \BB P[\wh G_r] \geq 1- \delta .
\eqe

\smallskip

\noindent\textit{Step 2: conditional law given $\wh{\mcl Z}|_{[0,\tau_{\lfloor r/2\rfloor}]}$.}
For $\el\in\BB N$, by the strong Markov property, the conditional law of $\wh{\mcl Z}(\cdot  + \wh\tau_{\lfloor r / 2 \rfloor}^R)|_{[0,\infty)}$ given $\wh{\mcl Z}|_{[0,\wh\tau_{\lfloor r / 2 \rfloor}^R]}$ on the event $\{\wh{\mcl L}(\wh\tau_{\lfloor r / 2 \rfloor}^R)  = \el\}$ is that of a random walk with the same increment distribution as $\mcl Z$ started from $(\el , -\lfloor r/2 \rfloor)$ and conditioned so that its first coordinate is non-negative for all time. 
 
By combining this with~\eqref{eqn-cond-walk-rn}, we find that the conditional law of $\wh{\mcl Z}(\cdot  + \wh\tau_{\lfloor r / 2 \rfloor}^R)|_{[0, \lfloor r^2 /C \rfloor ] }$ given $\wh{\mcl Z}|_{[0,\wh\tau_{\lfloor r / 2 \rfloor}^R]}$ on the event $\{\wh{\mcl L}(\wh\tau_{\lfloor r / 2 \rfloor}^R)  = \el\}$ is described as follows. Let $\mcl Z' = (\mcl L',\mcl R')$ be a random walk with the same increment distribution as $\mcl Z$ but started from $(\el , -\lfloor r/2\rfloor)$. Consider the law of $\mcl Z'|_{[0,r^2/C]}$ weighted by 
\eqbn
(\el+1)^{-1} (\mcl L'(\lfloor r^2/C \rfloor)+1) \BB 1\left\{ \mcl L'(j) \geq 0,\: \forall j \in [0,r^2/C]\cap\BB Z\right\} .
\eqen

By shifting space by $-(\el , -\lfloor r/2\rfloor)$, we get that the conditional law of the walk~\eqref{eqn-boltzmann-ldp-lower-shift} given $\wh{\mcl Z}|_{[0,\wh\tau_{\lfloor r / 2 \rfloor}^R]}$ on the event $\{\wh{\mcl L}(\wh\tau_{\lfloor r / 2 \rfloor}^R)  = \el\}$ is the same as the law of the walk $\mcl Z|_{[0, \lfloor r^2 /C \rfloor ]}$ weighted by $(\el+1)^{-1}  (\mcl L(\lfloor r^2 /C \rfloor) + 1) \BB 1_{F_{\el,r}}$, where 
\eqb \label{eqn-boltzmann-ldp-lower-cond} 
F_{\el,r} := \left\{ \mcl L(j) \geq - \el , \: \forall j \in [0,   r^2 /C  ]\cap\BB Z\right\}  .
\eqe  
\smallskip

\noindent\textit{Step 3: bounding the Radon-Nikodym derivative.}
Recall the events $G_r$ and $\wh G_r$ defined immediately before and after~\eqref{eqn-boltzmann-ldp-lower-cell}.
By the previous paragraph, on the event $\{\wh{\mcl L}(\wh\tau_{\lfloor r / 2 \rfloor}^R)  = \el\}$, 
\eqb  \label{eqn-boltzmann-ldp-lower-rn}
\BB P\left[ \wh G_r^c \,\middle|\, \wh{\mcl Z}|_{[0,\wh\tau_{\lfloor r / 2 \rfloor}^R]}  \right] 
= (\el+1)^{-1} \BB E\left[   (\mcl L(\lfloor r^2/C\rfloor) +1)  \BB 1_{F_{\el,r}} \BB 1_{G_r^c}  \right] .
\eqe  
Since $\mcl L$ is a lazy random walk and $C > 1$,
\eqb  \label{eqn-second-moment}
 \BB E\left[   (\mcl L(\lfloor r^2/C\rfloor) +1)^2  \right] \leq 2 r^2 .
\eqe  
We apply the Cauchy-Schwarz inequality to the right-hand side of~\eqref{eqn-boltzmann-ldp-lower-rn}, followed by~\eqref{eqn-boltzmann-ldp-lower-cell} and~\eqref{eqn-second-moment}. 
 We get that, on $\{\wh{\mcl L}(\wh\tau_{\lfloor r / 2 \rfloor}^R)  = \el\}$, 
\allb \label{eqn-boltzmann-ldp-lower-cs}
\BB P\left[ \wh G_r^c \,\middle|\, \wh{\mcl Z}|_{[0,\wh\tau_{\lfloor r / 2 \rfloor}^R]}  \right] 
&\leq (\el+1)^{-1}  \BB E\left[   (\mcl L(\lfloor r^2/C\rfloor) +1)^2  \BB 1_{F_{\el,r}} \right]^{1/2}  \BB P\left[  G_r^c \right]^{1/2}  \notag \\
&\leq  2 (\el+1)^{-1} r   \ep^{1/2}    . 
\alle
 
By the convergence of $ n^{-1/2} \wh{\mcl Z}(\lfloor n \cdot \rfloor)$ to conditioned Brownian motion (Lemma~\ref{lem-cond-walk-bm}), there exists $c_\delta > 0$ such that for each sufficiently large $r\in\BB N$ (how large depends only on $\delta$), 
\eqb  \label{eqn-boltzmann-ldp-lower-half}
\BB P\left[ \wh{\mcl L}(\wh\tau_{\lfloor r / 2 \rfloor}^R) \geq c_\delta r  \right] \geq 1 - \delta/2 . 
\eqe 
By~\eqref{eqn-boltzmann-ldp-lower-cs} and~\eqref{eqn-boltzmann-ldp-lower-half}, for each such $r$, 
\allb  \label{eqn-boltzmann-ldp-lower-end}
\BB P\left[ \wh G_r^c    \right] 
&\leq \BB P\left[ \wh G_r^c \,\middle|\, \wh{\mcl L}(\wh\tau_{\lfloor r / 2 \rfloor}^R) \geq c_\delta  r  \right] 
 +   \BB P\left[   \wh{\mcl L}(\wh\tau_{\lfloor r / 2 \rfloor}^R) <  c_\delta  r  \right]   \notag\\
&\leq 2 (c_\delta r + 1)^{-1} r \ep^{1/2} +  \delta/2  .
\alle 
If we choose $\ep$ to be small enough (and hence $A$ and $C$ to be large enough), depending on $\delta$, then the right side of~\eqref{eqn-boltzmann-ldp-lower-end} is at most $\delta$ for every $r \in \BB N$. This implies~\eqref{eqn-boltzmann-ldp-lower-show} for all sufficiently large $r\in\BB N$, as required.
\end{proof}

\begin{lem} \label{lem-boltzmann-sdp-lower}
With probability tending to one as $A\to\infty$, uniformly in the choice of $r$, there exists a vertex $y$ on the right boundary of $\wh M(r)$ such that 
\eqb  \label{eqn-boltzmann-sdp-lower} 
 \op{SDP}_{\wh M(r)} \left(  x_0 ,  y\right)  \geq A^{-1}  r^{3/4} .
\eqe 
\end{lem}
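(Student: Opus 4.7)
The plan is to find, with high probability, an index $k \in [1, r] \cap \BB Z$ such that the $\op{SDP}$ Busemann function $\mcl X$ on $M_{0,\infty}$ (Theorem~\ref{thm-busemann}) satisfies $\mcl X(k) \leq -A^{-1} r^{3/4}$, and then take $y = x_k$. Recall from the paragraph immediately after Lemma~\ref{lem-uiqbot-boltzmann} that the source and sink of $\wh M(r)$ are $x_0$ and $x_r$, respectively, and the right boundary of $\wh M(r)$ is the directed path through $x_0, x_1, \dots, x_r$; thus any $x_k$ with $k \in [1, r] \cap \BB Z$ lies on the right boundary of $\wh M(r)$.

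Once such a $k$ is located, the transfer to $\wh M(r)$ is immediate. Lemma~\ref{lem-sdp-tri} with $\ell = 0$, using $\mcl X(0) = 0$, gives
\[\op{SDP}_{M_{0,\infty}}(x_0, x_k) \geq -\mcl X(k) \geq A^{-1} r^{3/4}.\]
Since $\wh M(r) \subset \wh M_{0,\infty} \subset M_{0,\infty}$ (by Lemma~\ref{lem-uiqbot-boltzmann} and Proposition~\ref{prop-future-map-law}), every directed path from $x_0$ to $x_k$ in $\wh M(r)$ is also a directed path in $M_{0,\infty}$, and hence $\op{SDP}_{\wh M(r)}(x_0, x_k) \geq \op{SDP}_{M_{0,\infty}}(x_0, x_k) \geq A^{-1} r^{3/4}$. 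In contrast with the $\op{LDP}$ lower bound of Lemma~\ref{lem-boltzmann-ldp-lower}, no Radon--Nikodym comparison with the unconditioned walk is required: for SDP lower bounds, passing to a submap can only increase the distance, which is the direction we need.

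To produce $k$, I would apply Theorem~\ref{thm-busemann-conv} (SDP case), which asserts that $\{r^{-3/4} \mcl X(\lfloor r t \rfloor)\}_{t \in \BB R}$ converges in law (in the local Skorokhod topology) to a process $X$ whose restriction to $[0, \infty)$ is a $4/3$-stable L\'evy process with only upward jumps. Such an $X$ is a nondegenerate mean-zero spectrally positive stable process, so $\inf_{t \in [0, 1]} X(t) < 0$ almost surely, and therefore for each $\delta > 0$ there exists $c = c(\delta) > 0$ with $\BB P[\inf_{t \in [0, 1]} X(t) \leq -c] \geq 1 - \delta/2$. The only mild technical point is that the infimum over a fixed compact interval must be passed through the scaling limit; this follows from the continuous mapping theorem, since a L\'evy process has no fixed times of discontinuity and the infimum functional over a compact interval is a.s.\ continuous on such paths in the Skorokhod topology. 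This yields, for all sufficiently large $r$, $\BB P\big[\min_{k \in [1, r] \cap \BB Z} \mcl X(k) \leq -c\, r^{3/4}\big] \geq 1 - \delta$. Setting $A \geq c^{-1}$ gives the claim for all large $r$; the remaining small values of $r$ are handled trivially, since $\op{SDP}_{\wh M(r)}(x_0, x_1) = 1$ while $A^{-1} r^{3/4} \leq 1$ once $A$ is enlarged to exceed $r^{3/4}$ on this finite range of small $r$.
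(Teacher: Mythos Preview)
Your proof is correct and rests on the same core mechanism as the paper's: use the $4/3$-stable scaling limit of the SDP Busemann function (Theorem~\ref{thm-busemann-conv}) to locate an index $k$ with $\mcl X(k)\le -A^{-1}r^{3/4}$, invoke Lemma~\ref{lem-sdp-tri} to turn this into a lower bound on $\op{SDP}_{M_{0,\infty}}(x_0,x_k)$, and then pass to the submap $\wh M(r)$ by monotonicity of SDP.

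The paper organizes the argument slightly differently. It first shows (via a random-walk estimate on $\tau_r$) that $M_{0,\lfloor A^{-1/2}r^2\rfloor}\cap\wh M_{0,\infty}\subset \wh M(r)$ with high probability, and then invokes the proof of Lemma~\ref{lem-cell-sdp-lower} (specifically~\eqref{eqn-cell-sdp-lower-end}) to produce the desired vertex $y$ on the lower-right boundary of $M_{0,\lfloor A^{-1/2}r^2\rfloor}$. Your route is more direct: you use the deterministic fact that the right boundary of $\wh M(r)$ is exactly $x_0,\dots,x_r$, so any $k\in[1,r]$ already gives a vertex on the correct boundary, and no containment step is needed. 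This avoids the detour through the size-$n$ cell. The paper's phrasing has the advantage of reusing Lemma~\ref{lem-cell-sdp-lower} verbatim, keeping Section~\ref{sec-boltzmann} parallel to the earlier cell arguments; your version trades that parallelism for a shorter self-contained proof. Your handling of the continuous mapping step (infimum over $[0,1]$ through Skorokhod convergence) is also a touch more explicit than the paper's.
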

\begin{proof} 
Since $\mcl R$ is a lazy random walk and $\tau_r$ is the first time it hits $-r$, it holds with probability tending to one as $A\to\infty$, uniformly in the choice of $r$, that $\tau_r \geq A^{-1/2}  r^2$.
By~\eqref{eqn-hit-coupling}, if this is the case, then 
\eqb  \label{eqn-boltzmann-sdp-contain}
 M_{0,\lfloor A^{-1/2} r^2 \rfloor} \cap \wh M_{0,\infty}  \subset \wh M(r) .    
\eqe  
Since the lower-right boundary of $ M_{0,\lfloor A^{-1/2} r^2 \rfloor}$ is a subset of the right boundary of $M_{0,\infty}$, which equals the right boundary of $\wh M_{0,\infty}$, if~\eqref{eqn-boltzmann-sdp-contain} holds, then the right boundary of $ M_{0,\lfloor A^{-1/2} r^2 \rfloor} $ is a subset of the right boundary of $\wh M(r)$. 
 
By the proof of Lemma~\ref{lem-cell-sdp-lower} (with $A^{1/2}$ in place of $A$), see in particular~\eqref{eqn-cell-sdp-lower-end}, it holds with probability tending to one as $A\to\infty$, uniformly in the choice of $r$, that there exists a vertex $y$ on the right boundary of $ M_{0,\lfloor A^{-1/2} r^2 \rfloor} $ such that 
\eqbn   
 \op{SDP}_{M_{0, \infty}} \left(  x_0 ,  y\right)  \geq A^{-1}  r^{3/4} . 
\eqen 
Since $ \op{SDP}_{\wh M(r)} \left(  x_0 ,  y\right) \geq  \op{SDP}_{M_{0, \infty}} \left(  x_0 ,  y\right)$, 
combining this with the previous paragraph concludes the proof. 
\end{proof}

\begin{lem} \label{lem-boltzmann-sdp-upper}
With probability tending to one as $A\to\infty$, uniformly in the choice of $r$, 
\eqb  \label{eqn-boltzmann-sdp-upper} 
 \op{SDP}_{\wh M(r)} \left(  x_0 , x_r \right)  \leq A  r^{3/4} .
\eqe 
\end{lem}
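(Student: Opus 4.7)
The plan is to reduce the bound on $\op{SDP}_{\wh M(r)}(x_0,x_r)$ to a bound on $\op{SDP}$ in a KMSW cell of size comparable to $r^2$, and then apply Theorem~\ref{thm-cell-sdp}. I would first argue that with probability tending to one as $A\to\infty$ uniformly in $r$, the time $\tau_r$ satisfies $\tau_r\le A^{2/3}r^2$, using elementary estimates for the lazy walk $\mcl R$. Since the KMSW procedure for $M_{0,\infty}$ explores edges in an order compatible with the directed order of the map (via the counterclockwise contour of the upper-left tree from Definition~\ref{defn:KMSW-inv}), every directed path in $M_{0,\infty}$ from $x_0$ to $x_r=\lambda_{\tau_r}^-$ is already contained in $M_{0,\tau_r-1}$: no vertex explored after time $\tau_r$ has a directed path to $x_r$. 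Combined with the coupling~\eqref{eqn-boltzmann-uiqbot-coupling} and $\tau_r=S_{\wh\tau_r^R}$ from~\eqref{eqn-hit-coupling}, which give $\wh M(r)\subset M_{0,\tau_r-1}$, this yields
\[
\op{SDP}_{\wh M(r)}(x_0,x_r)\;=\;\op{SDP}_{M_{0,\tau_r-1}}(x_0,x_r).
\]

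Next, I apply Theorem~\ref{thm-cell-sdp} to $M_{0,\tau_r-1}$. In this KMSW cell, $x_0$ lies on the lower boundary, and $x_r$ lies on the upper boundary since by Lemma~\ref{lem-uibot-bdy} it is shared between $M_{0,\tau_r-1}$ and $M_{\tau_r,\infty}$. By the first bullet of Theorem~\ref{thm-cell-sdp} (with $A^{1/2}$ in place of $A$), on the event $\{\tau_r\le A^{2/3}r^2\}$ there exists $y$ on the upper boundary of $M_{0,\tau_r-1}$ with $\op{SDP}_{M_{0,\tau_r-1}}(x_0,y)\le A^{1/2}(\tau_r-1)^{3/8}\le A'r^{3/4}$. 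Lemma~\ref{lem-uibot-bdy} identifies this upper boundary as (a portion of) the leftmost directed path in $M_{-\infty,\infty}$ ending at $x_r$, so concatenating the SDP geodesic from $x_0$ to $y$ with the segment of this leftmost directed path from $y$ to $x_r$ produces a directed path from $x_0$ to $x_r$.

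The main obstacle is controlling the length of the extension segment from $y$ to $x_r$ along this upper boundary, which a priori could be as long as the upper-left boundary length of $M_{0,\tau_r-1}$, namely $O(r)$ by~\eqref{eqn-kmsw-bdy} applied to $\mcl L$. To obtain the correct $O(r^{3/4})$ bound, I plan to iterate the construction: after obtaining $y$, apply Theorem~\ref{thm-cell-sdp} again to a suitable sub-cell in which $y$ plays the role of a lower-boundary vertex (for example the KMSW cell starting at the time when $y$ first becomes active), yielding a new vertex $y'$ further along the upper boundary toward $x_r$ with a short connecting SDP, and so on. Repeating this procedure yields a chain of short SDPs whose combined length should telescope to $O(r^{3/4})$ by the $4/3$-stable Lévy scaling of the SDP Busemann function from Theorem~\ref{thm-busemann-conv}. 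The hardest step is organizing the iteration so that the successive SDP lengths form a rapidly decaying sequence whose total is the desired $O(r^{3/4})$; this should follow from a careful choice of sub-cells together with the Busemann-function fluctuation estimates of Section~\ref{sec-cell-sdp}.
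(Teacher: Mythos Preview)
Your opening reduction is essentially correct, but the heart of the argument---the iteration---has a real gap. You correctly observe that the upper-left boundary of $M_{0,\tau_r-1}$ is a directed path terminating at $x_r$ and that its length is of order $r$, not $r^{3/4}$; this is indeed the obstacle. However, your iteration is not well-posed. After the first step, $y$ lies on the upper-left boundary of $M_{0,\tau_r-1}$, hence on the left (missing-edge) boundary of $M_{\tau_r,\infty}$; it is not on the lower boundary of any KMSW cell $M_{0,n'}$ to which Theorem~\ref{thm-cell-sdp} directly applies. Even if one shifts to a cell $M_{m,\tau_r-1}$ with $m$ chosen so that $y=\lambda_m^-$, the size and law of this cell are entangled with the choice of $y$, and there is no evident mechanism forcing the residual boundary distance to $x_r$ to shrink geometrically. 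The appeal to the $4/3$-stable scaling of the Busemann function does not by itself produce such decay. There is also a technical issue: Theorem~\ref{thm-cell-sdp} is stated for deterministic $n$, and you apply it to $M_{0,\tau_r-1}$ with $\tau_r$ random; conditioning on $\tau_r$ alters the law of the cell.

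The paper bypasses iteration entirely by exploiting a symmetry you do not use. Let $I_r$ and $J_r$ be the portions of the left boundary of $\wh M(r)$ from $x_0$ to the marked vertex $\wh z(r)$ and from $\wh z(r)$ to $x_r$, respectively. Since $\wh z(r)$ lies on the left boundary of $\wh M_{0,\infty}$, the path $J_r$ runs from the left boundary of $\wh M_{0,\infty}$ to $x_r$; any short directed path from $x_0$ to the upper boundary of a \emph{deterministic} cell $M_{0,\lfloor A^{4/3}r^2\rfloor}\supset\wh M(r)$ (supplied by Theorem~\ref{thm-cell-sdp}) must by planarity hit $J_r$, yielding some $y\in J_r$ with $\op{SDP}_{\wh M(r)}(x_0,y)\le A\,r^{3/4}$. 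The key observation is that the Boltzmann law of $(\wh M(r),\wh z(r))$ is invariant under reversing all edge orientations and applying an orientation-reversing homeomorphism of $\BB C$; this swaps $(x_0,I_r)$ with $(x_r,J_r)$, so symmetrically there exists $y'\in I_r$ with $\op{SDP}_{\wh M(r)}(y',x_r)\le A\,r^{3/4}$. By planarity the two paths cross, giving $\op{SDP}_{\wh M(r)}(x_0,x_r)\le 2A\,r^{3/4}$. The marked vertex $\wh z(r)$, which you ignore, is precisely what makes this planarity-plus-symmetry argument go through.
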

\begin{proof}
Let $I_r$ (resp.\ $J_r$) be the set of vertices and edges along the left boundary of $\wh M(r)$ which lie between $x_0$ and $\wh z(r)$ (resp.\ $\wh z(r)$ and $x_{r}$), including the endpoints. 
\medskip

\noindent
\textit{Step 1: path from $x_0$ to $J_r$.} 
We first argue that with probability tending to one as $A\to\infty$, uniformly in the choice of $r$,  
\eqb \label{eqn-boltzmann-sdp-upper-half} 
\text{$\exists y\in J_r$ such that $\op{SDP}_{\wh M(r)} \left(  x_0 ,  y \right)  \leq A  r^{3/4}$} . 
\eqe 
We will eventually combine~\eqref{eqn-boltzmann-sdp-upper-half} with a forward/reverse symmetry argument for $\wh M(r)$ to obtain~\eqref{eqn-boltzmann-sdp-upper}. 

As in the proof of Lemma~\ref{lem-boltzmann-ldp-upper}, see \eqref{eqn-boltzmann-ldp-contain}, it holds with probability tending to one as $A\to\infty$, uniformly in the choice of $r$, that 
\eqb  \label{eqn-boltzmann-sdp-contain-upper}
\wh M(r) \subset M_{0,\lfloor A^{4/3} r^2 \rfloor}    . 
\eqe  
By the first item of Theorem~\ref{thm-cell-sdp} (applied with $A^{1/2}$ in place of $A$ and with $\lfloor A^{4/3}r^2 \rfloor$ in place of $n$) it holds with probability tending to one as $A\to\infty$, uniformly in the choice of $r$, that there exists a directed path $P$ from $x_0$ to the upper boundary of $M_{0,\lfloor A^{4/3} r^2 \rfloor} $ with length at most $A r^{3/4}$. 
Now assume that this is the case and~\eqref{eqn-boltzmann-sdp-contain-upper} holds. We will show that~\eqref{eqn-boltzmann-sdp-upper-half} holds. 

The path $P$ is a directed path in $M_{0,\infty}$ started from $x_0$, so by the definition of $\wh M_{0,\infty}$ (third paragraph of Section~\ref{sec-busemann-setup}), $P$ is contained in $\wh M_{0,\infty}$. 
By the definition of $\wh z(r)$ in Lemma~\ref{lem-uiqbot-boltzmann}, the vertices in $J_r$ form a (directed) path in $\wh M_{0,\infty}$ from the left boundary of $\wh M_{0,\infty}$ to $x_{r}$. By planarity and~\eqref{eqn-boltzmann-sdp-contain-upper}, the path $P$ must intersect $J_r$. 
Thus~\eqref{eqn-boltzmann-sdp-upper-half} holds. 
\medskip

\noindent
\textit{Step 2: path from $x_0$ to $x_{r}$ via symmetry.}
Recall from Lemma~\ref{lem-uiqbot-boltzmann} that the law of $(\wh M(r) , \wh z(r))$ is the Boltzmann distribution on bipolar-oriented triangulations with right boundary length $r$ and a marked left boundary vertex.
By this and Definition~\ref{def-boltzmann-right}, reversing all of the orientations of the edges of $\wh M(r)$, then applying an orientation-reversing homeomorphism $\BB C\to\BB C$, gives us another decorated map with the same law as $(\wh M(r) , \wh z(r))$. 
This procedure has the effect of switching the source and the sink and also switching $I_r$ with $J_r$. Hence the main result of Step 1 (see~\eqref{eqn-boltzmann-sdp-upper-half}) implies that with probability tending to one as $A\to\infty$, uniformly in the choice of $r$,  
\eqb \label{eqn-boltzmann-sdp-upper-reverse}
\text{$\exists y' \in I_r $ such that $ \op{SDP}_{\wh M(r)} \left( y' , x_r \right)  \leq A  r^{3/4} $} .
\eqe 

By planarity, any path from $x_0$ to a vertex of $J_r$ and any path from a vertex of $I_r$ to $x_r$ must have a vertex in common. By concatenating a segment of the first path with a segment of the second path, we obtain from~\eqref{eqn-boltzmann-sdp-upper-half} and \eqref{eqn-boltzmann-sdp-upper-reverse} that with probability tending to one as $A\to\infty$, uniformly in the choice of $r$,  
 \eqb \label{eqn-boltzmann-sdp-upper0}
  \op{SDP}_{\wh M(r)} \left(  x_0 ,  x_r \right)  \leq 2 A  r^{3/4}  .
\eqe 
Applying this with $A/2$ in place of $A$ concludes the proof.
\end{proof}

\begin{proof}[Proof of Theorem~\ref{thm-boltzmann-ldp}]
Recall from Lemma~\ref{lem-uiqbot-boltzmann} that the map $\wh M(r)$ has the same law as the map $M(r)$ in the theorem statement. The theorem therefore follows from Lemmas~\ref{lem-boltzmann-ldp-upper}, \ref{lem-boltzmann-ldp-lower}, \ref{lem-boltzmann-sdp-lower}, and~\ref{lem-boltzmann-sdp-upper}. 
\end{proof}

\section{Open problems}
\label{sec-open-problems}

We discuss some additional open problems besides the ones already mentioned in Section~\ref{sec-related}. 
As discussed in Section~\ref{sec-lqg}, uniform bipolar-oriented triangulations can be viewed as discrete analogs of a hypothetical directed version of the $\sqrt{4/3}$-LQG metric. 

\begin{prob} \label{prob-limit}
Construct two directed metrics on $\BB C$ which describe the scaling limits of longest and shortest directed path distances in the UIBOT. Obtain a natural coupling of these directed metrics with a $\sqrt{4/3}$-LQG cone together with a pair of coupled SLE$_{12}$ curves. 
\end{prob}

A potential approach to constructing a full scaling limit for directed distances on the UIBOT is to understand the joint scaling limit of the Busemann functions at different times. 
For $n\in\BB Z$, let $\mcl X_n$ be the $\XDP$ Busemann function on the boundary of the map $M_{n,\infty} $ obtained by applying the KMSW procedure (Definition~\ref{def-kmsw}) to $(\mcl Z(\cdot+n) - \mcl Z(n))|_{[0,\infty)}$. We have $M_{n,\infty}\eqD M_{0,\infty}$, so $\mcl X_n \eqD \mcl X_0$. Hence, Theorem~\ref{thm-busemann-conv} implies that each $\mcl X_n$ converges in law to a stable Lévy process of index $2/3$ (in the LDP case) or $4/3$ (in the SDP case).  

\begin{prob} \label{prob-busemann-joint}
Describe the scaling limit of the joint law of the collection of Busemann functions $\{\mcl X_n\}_{n\in\BB Z}$ as a collection of coupled stable Lévy processes. Even a non-trivial joint scaling limit of two of these Busemann functions would be interesting. 
\end{prob}

It is possible to show that a variety of different random planar maps in the $\gamma$-LQG universality have the same exponent for undirected graph distances. This can be done using a strong coupling of their encoding walks with Brownian motion~\cite{kmt,zaitsev-kmt}; see~\cite{ghs-map-dist,dg-lqg-dim}. Moreover, this exponent is known to be the reciprocal of the dimension of the $\gamma$-LQG metric space~\cite[Corollary 1.7]{gp-kpz}. It would be interesting to prove analogous results for directed distances (the argument of~\cite{ghs-map-dist} does not carry over in an obvious way). 

\begin{prob} \label{prob-universal}
Prove that the exponents for LDP and SDP distances obtained in this paper are equal to analogous exponents for other models of bipolar-oriented planar maps in the $\sqrt{4/3}$-LQG universality class (e.g., quadrangulations or maps with unconstrained face degree). Also relate the exponents in this paper to exponents for continuum models. E.g., the longest increasing subsequence for a size-$n$ permutation sampled from the skew Brownian permuton of parameters $(\rho,q) = (-1/2,1/2)$~\cite{borga-skew-permuton} should grow like $n^{3/4}$.
\end{prob}
 
There is a substantial literature concerning bounds for Hausdorff dimensions of special sets associated with LQG or with the directed landscape, see, e.g., the surveys~\cite{ddg-metric-survey, gm-dl-survey}. One can ask analogous questions for uniform bipolar-oriented triangulations.

\begin{prob} \label{prob-sets}
Compute the scaling exponents for the sizes of various special sets associated with longest and shortest directed paths on uniform bipolar-oriented triangulations, e.g. the following. 
\begin{itemize}
\item The intersection of an $\XDP$ geodesic with a rightmost (or leftmost) directed path.  
\item The set of \textbf{$k$-star points} for $k\geq 2$, defined as vertices which are the starting points of $k$ macroscopically distinct $\XDP$ geodesics.  
\item For two specified vertices $x$ and $y$, the set of vertices $z$ for which the $\XDP$s from $x$ to $z$ and from $y$ to $z$ are equal. 
\end{itemize}
\end{prob}

Theorem~\ref{thm-busemann-conv} gives an exact scaling limit result for directed distances in the UIBOT, but in the finite-volume setting we only have up-to-constants bounds (Theorems~\ref{thm-boltzmann-ldp}, \ref{thm-cell-ldp}, and~\ref{thm-cell-sdp}).

\begin{prob} \label{prob-finite-lim}
Prove a scaling limit result for some natural observable related to directed distances in finite uniform bipolar-oriented triangulations. 
\end{prob}

The most important combinatorial tool for analyzing distance on uniform planar maps (or triangulations, quadrangulations, etc.) is the Schaeffer bijection~\cite{cv-bijection,schaeffer-bijection}. This bijection encodes planar maps via labeled trees, where the labels exactly correspond to graph distances to a special root vertex. 
It is the key tool in the proof that uniform random planar maps converge in the scaling limit to the Brownian map~\cite{legall-uniqueness,miermont-brownian-map}.

\begin{prob} \label{prob-schaffer}
Is there an analog of the Schaeffer bijection for uniform bipolar-oriented triangulations (or bipolar-oriented maps with other face degree distributions), where the labels correspond to the lengths of longest (or shortest) directed paths? 
\end{prob}

\appendix

\section{Proof of the Tauberian result}\label{sect:proofTau}

We prove the Tauberian result in Proposition~\ref{prop:Tauberian}. Most of our arguments apply to general non-negative real-valued random variables; therefore, we will consider this general setting and restrict ourselves to integer-valued random variables (as stated in Proposition~\ref{prop:Tauberian}) only when necessary, i.e.\ in Section~\ref{sect:neg-img}.

Throughout this section, let $X$ be a non-negative real-valued random variable. For $z \in \BB C$ with $|z| \leq 1$, define 
\eqb \label{eqn-gen-func-def}
\phi(z) = \BB E[z^{X}]
\eqe
 so that the characteristic function of $X$ (as in the statement of Proposition~\ref{prop:Tauberian}) is $\varphi(t) = \phi(e^{it})$. 

\subsection{Proofs for the tail exponent asymptotics}

In this section, after proving the preliminary result of Lemma~\ref{lem-holo-asymp}, we prove in Lemmas~\ref{lem-tauberian}~and~\ref{lem-tauberian'} the two main claims of Proposition~\ref{prop:Tauberian}, i.e.\ the ones in \eqref{eqn-prob-asymp2} and \eqref{eqn-prob-asymp'2}, postponing the justification of the claim about the positivity of the imaginary part of the constant $c$ in \eqref{eqn-prob-asymp2} to Section~\ref{sect:neg-img}.

We start with the following lemma. 
 
\begin{lem} \label{lem-holo-asymp}
Let $\BB D = \{z\in\BB C : |z| < 1\}$. Let $\psi : \ol{\BB D} \to \BB C$ be holomorphic on $\BB D$ and continuous on $\ol{\BB D}$.
Assume that there exists $\nu \in (0,1)$ and $c \in \BB C \setminus i \BB R$ such that
\eqb \label{eqn-holo-asymp-bdy1}
\psi(e^{i t}) = 1 + (c \BB 1_{(t > 0)} + \ol c \BB 1_{(t < 0)} ) |t|^\nu  + o(t^\nu)  ,\quad \text{as $t \to 0$}. 
\eqe 
Then setting $b  = \re(c)\sec\Big(\tfrac{\pi\nu}{2}\Big)  \in \BB R\setminus \{0\}$,
\eqb \label{eqn-holo-asymp-bdy2}
\psi(1-r) = 1  + b r^\nu + o(r^\nu)  ,\quad \text{as $r \to 0$}. 
\eqe 
\end{lem}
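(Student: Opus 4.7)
The plan is to apply the Poisson integral representation to $\psi$ and extract the desired asymptotic by a rescaling argument. Since $\psi$ is continuous on $\ol{\BB D}$ and holomorphic on $\BB D$, both $\re \psi$ and $\im \psi$ are the Poisson integrals of their boundary values (by uniqueness of the solution to the Dirichlet problem for continuous data), and hence so is $\psi$. Writing $P(z, e^{i\theta}) = (1-|z|^2)/|e^{i\theta}-z|^2$ for the Poisson kernel and subtracting the constant $1$, I would start from the identity
\[
\psi(1-r) - 1 = \frac{1}{2\pi}\int_{-\pi}^{\pi} P(1-r, e^{i\theta})\, \big[\psi(e^{i\theta}) - 1\big]\, d\theta,
\]
and evaluate its $r \downarrow 0$ asymptotic.

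First I would split the integration into $|\theta| \leq \delta$ and $|\theta| > \delta$ for some fixed small $\delta > 0$. On $|\theta| > \delta$, the elementary bound $P(1-r, e^{i\theta}) = O(r)$ (uniformly in $\theta$) shows that this piece contributes $O(r) = o(r^\nu)$ since $\nu < 1$. For the main piece $|\theta| \leq \delta$, I would substitute $\theta = r u$; a direct expansion gives $|e^{iru} - (1-r)|^2 = r^2(1+u^2) + O(r^3 u^2)$, so the Poisson kernel multiplied by the Jacobian $r$ converges pointwise to $2/(1+u^2)$. Plugging in the boundary asymptotic~\eqref{eqn-holo-asymp-bdy1}, dominated convergence (with integrable majorant proportional to $|u|^\nu/(1+u^2)$, valid since $\nu \in (0,1)$) yields
\[
\psi(1-r) - 1 = \frac{r^\nu}{\pi} \int_0^\infty \frac{(c + \ol c)\, u^\nu}{1 + u^2}\, du + o(r^\nu) = \re(c) \sec\Big(\tfrac{\pi\nu}{2}\Big)\, r^\nu + o(r^\nu),
\]
via the classical identity $\int_0^\infty u^\nu/(1+u^2)\, du = (\pi/2)\sec(\pi\nu/2)$. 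Since $c \notin i\BB R$ forces $\re(c) \neq 0$, the coefficient $b$ indeed lies in $\BB R \setminus \{0\}$ as claimed.

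The main technical obstacle I anticipate is converting the $o(|\theta|^\nu)$ boundary remainder into an $o(r^\nu)$ interior remainder, rather than only an $O(r^\nu)$ bound. The plan for this is a standard $\epsilon$-$\delta$ argument: given $\epsilon > 0$, choose $\delta = \delta(\epsilon) > 0$ small enough that the boundary error is pointwise bounded by $\epsilon |\theta|^\nu$ on $|\theta| \leq \delta$; then the same rescaling computation applied with $|\theta|^\nu$ as the boundary data gives the auxiliary estimate $\int_{-\pi}^\pi P(1-r, e^{i\theta})\, |\theta|^\nu\, d\theta = O(r^\nu)$ uniformly in $r$, so the error contributes at most $\epsilon \cdot O(r^\nu)$. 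Letting $\epsilon \downarrow 0$ after the $r$-asymptotic completes the argument.
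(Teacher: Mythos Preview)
Your proposal is correct and follows essentially the same approach as the paper: Poisson integral representation, split into near/far pieces, rescale the near piece, and handle the $o(|\theta|^\nu)$ remainder by an $\epsilon$--$\delta$ argument. The only difference is cosmetic: the paper first conformally maps to the upper half-plane (via $z\mapsto(i-z)/(i+z)$) so that the Poisson kernel $y/(x^2+y^2)$ has \emph{exact} scaling and the substitution $u=x/y$ gives $h(y)=y^\nu h(1)$ identically, whereas you work directly on the disk and instead use dominated convergence to pass from the disk kernel to its half-plane limit $2/(1+u^2)$ after the substitution $\theta=ru$.
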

\begin{proof} 
It is convenient to change coordinate to the upper half-plane so that we can use a scaling argument. 
So, we define
\eqb
\wt \psi(z) := \psi\left(  \frac{i - z}{i + z}  \right) .
\eqe
The function $\wt \psi$ is bounded, holomorphic on the upper-half plane $\BB H := \{z \in \BB C : \im z > 0\}$, and continuous on $\ol{\BB H}$. The fractional linear transformation with which we pre-composed $\psi$ maps 0 to 1, maps the real line to the unit circle, and is smooth in a neighborhood of the origin. From this, we see that~\eqref{eqn-holo-asymp-bdy1} is equivalent to
\eqb \label{eqn-tauberian-H}
\wt \psi(x) = 1 + \chi(x)  |x|^\nu  + o(x^\nu)  ,\quad \text{as $\BB R\ni x \to 0$} ,\quad \text{where} \quad \chi(x) :=   (c 2^{\nu} \BB 1_{(x > 0)} + \ol c 2^{\nu}  \BB 1_{(x < 0)} ) 
\eqe  
since if $t \in (-\pi , \pi)$ and $e^{it}=\frac{i-x}{i+x}$ then $t=2\arctan(x)\sim2x$; 
and~\eqref{eqn-holo-asymp-bdy2} is equivalent to
\eqb \label{eqn-laplace-H}
\wt \psi(i y) = 1  + b 2^{\nu}  y^\nu  + o(y^\nu)  ,\quad \text{as $\BB R_+ \ni y \to 0$}  
\eqe
since if $1-r=\frac{i-iy}{i+iy}$ then $r=\frac{2y}{1+y}\sim2y$. 

Since $\wt \psi$ is holomorphic, its real and imaginary parts are harmonic. We may therefore use the formula for the Poisson kernel on the half-plane to write
\eqb \label{eqn-poisson-kernel} 
\wt \psi(i y) = \frac{1}{\pi} \int_{-\infty}^\infty \frac{  y }{ x^2  + y^2   } \wt \psi(x) \,dx  .
\eqe
Let 
\eqb  \label{eqn-weighted-kernel}
h(y) := \frac{1}{\pi} \int_{-\infty}^\infty \frac{ \chi(x) |x|^\nu y }{ x^2  + y^2   }   \,dx  
\eqe 
with $\chi(x)$ as in~\eqref{eqn-tauberian-H}. 
By making the change of variables $u = y x$, $du = y\,dx$, we see that $h(y) = y^\nu h(1)$.  
We also note that $h(y)$ is real (the imaginary parts of $c'$ and $\ol c'$ cancel by symmetry) and non-zero (since $\re c'$ is non-zero).

We are going to compare $\wt \psi(i y)$ to $1 + h(y)$. 
Given $\ep > 0$, we use~\eqref{eqn-tauberian-H} to choose $\delta >0$ small enough so that 
\eqb\label{eq:as-bnd-sim}
|\wt \psi(x) - 1 - \chi(x) x^\nu| < \ep |x|^\nu, \quad \forall x \in [-\delta,\delta] .
\eqe
By~\eqref{eqn-poisson-kernel}, writing  $\|\wt \psi\|_\infty$ for the (finite) $L^\infty$ norm of $\wt\psi$, we have 
\allb
    \label{eqn-poisson-decomp}
\left| \wt \psi(i y) -   \frac{1}{\pi}  \int_{-\delta}^\delta  \frac{(1 + \chi(x) |x|^\nu)  y}{x^2 + y^2} \,dx \right| 
&=
\left| \frac{1}{\pi} \int_{\BB R\setminus [-\delta,\delta]} \frac{  y }{ x^2  + y^2   } \wt \psi(x) \,dx -   \frac{1}{\pi}  \int_{-\delta}^\delta  \frac{(1+\chi(x) |x|^\nu- \wt \psi(x))  y}{x^2 + y^2} \,dx \right|\notag\\
&\leq
\frac{ \|\wt \psi\|_\infty }{\pi} \int_{\BB R\setminus [-\delta,\delta]} \frac{y}{x^2 + y^2} \,dx + \frac{\ep}{\pi} \int_{-\delta}^\delta \frac{|x|^\nu y}{x^2 +y^2} \,dx,
\alle
where the last estimate follows from \eqref{eq:as-bnd-sim}.
For a fixed value of $\delta>0$,  
\eqb  \label{eqn-complement-int}
  \int_{\BB R\setminus [-\delta,\delta]} \frac{(1 + |x|^\nu) y}{x^2 + y^2} \,dx = O( y)  .
\eqe 
By~\eqref{eqn-complement-int} and the fact that $\frac{1}{\pi} \int_{-\infty}^\infty  \frac{y}{x^2 + y^2} \,dx=1$, the integral inside the absolute value on the left side of~\eqref{eqn-poisson-decomp} satisfies
\allb \label{eqn-inside-int}
    \frac{1}{\pi}  \int_{-\delta}^\delta  \frac{(1 + \chi(x) |x|^\nu)  y}{x^2 + y^2} \,dx  
    &=\frac{1}{\pi}  \int_{-\infty}^\infty  \frac{(1 + \chi(x) |x|^\nu)  y}{x^2 + y^2} \,dx -\frac{1}{\pi}  \int_{\BB R\setminus [-\delta,\delta]}  \frac{(1 + \chi(x) |x|^\nu)  y}{x^2 + y^2} \,dx  \notag\\
    &= 1 + h(y) + O(y).
\alle
By~\eqref{eqn-complement-int}, the first term on the right side of~\eqref{eqn-poisson-decomp} is at most $O(y)$. 
As for the second term on the right side, we bound it above by $C\ep h(y) = C\ep   h(1)  y^\nu$, where $h(y)$ is as in~\eqref{eqn-weighted-kernel} and $C>0$ is a constant (coming from the factor $\chi(x)$ in \eqref{eqn-weighted-kernel}). Plugging \eqref{eqn-inside-int} into the left-hand side of~\eqref{eqn-poisson-decomp} and using these estimates for the right-hand side of~\eqref{eqn-poisson-decomp} gives 
\eqb\label{eq:final-bnd}
\left| \wt \psi(i y) -  1 -  h(y)  \right|   \leq   \ep h(1) y^\nu  + O(y)    .
\eqe
Since $\ep$ is arbitrary and $h(y) = y^\nu h(1)$, this gives~\eqref{eqn-laplace-H} with $b 2^{\nu}=   h(1) \stackrel{\eqref{eqn-weighted-kernel}}{=}\frac{1}{\pi}\int_{-\infty}^{\infty}
   \frac{\chi(x)\,|x|^{\nu}}{1+x^2}\,dx $.
Using the symmetry of $\chi(x)$, this becomes
\[
b
= \frac{1}{\pi}\big(c + \overline{c}\big)
   \int_{0}^{\infty}\frac{x^{\nu}}{1+x^2}\,dx
= \frac{2\re(c)}{\pi}
   \int_{0}^{\infty}\frac{x^{\nu}}{1+x^2}\,dx .
\]
The remaining integral is classical:
\[
\int_{0}^{\infty}\frac{x^{\nu}}{1+x^2}\,dx
= \frac{\pi}{2}\csc\Big(\tfrac{\pi(\nu+1)}{2}\Big)
= \frac{\pi}{2}\sec\Big(\tfrac{\pi\nu}{2}\Big),
\qquad 0<\nu<1.
\]
Therefore
\[
b  = \re(c)\sec\Big(\tfrac{\pi\nu}{2}\Big),
\]
which is real and nonzero since $c \notin i\mathbb{R}$ and
$\cos(\tfrac{\pi\nu}{2})\neq0$ for $0<\nu<1$.
\end{proof}

We now prove the main claim in \eqref{eqn-prob-asymp2}.

\begin{lem} \label{lem-tauberian} 
Let $\phi$ be as in~\eqref{eqn-gen-func-def}. Assume that for some $c \in \BB C\setminus i \BB R$ and $\nu \in (0,1)$,  
\eqb \label{eqn-tauberian}
\phi(e^{i t}) = 1 + (c \BB 1_{(t > 0)} + \ol c \BB 1_{(t < 0)} ) |t|^\nu  + o(t^\nu)  ,\quad \text{as $t \to 0$}. 
\eqe 
Then $\re(c)<0$ and setting $a=-\frac{\re(c)}{\Gamma(1-\nu)}\sec\Big(\tfrac{\pi\nu}{2}\Big)>0$,
\eqb \label{eqn-prob-asymp}
\BB P[X > R] = a R^{-\nu}  +  o(R^{-\nu}) ,\quad \text{as $R\to\infty$}. 
\eqe 
\end{lem}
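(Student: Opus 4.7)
The plan is to reduce the problem to a standard Karamata Tauberian theorem for power series with monotone coefficients, using Lemma~\ref{lem-holo-asymp} as a black box to convert tangential boundary information into radial boundary information. Since $X$ is a non-negative integer-valued random variable, $\phi(z) = \sum_{k \geq 0} \mathbb{P}[X=k]\, z^k$ is holomorphic on $\mathbb{D}$ and continuous on $\overline{\mathbb{D}}$ (the latter by dominated convergence, since $\sum_k \mathbb{P}[X=k] = 1 < \infty$). Therefore Lemma~\ref{lem-holo-asymp} applies directly to $\psi = \phi$ and yields the radial asymptotic
\[
\phi(1-r) = 1 + b r^\nu + o(r^\nu),\quad r \downarrow 0,\qquad b = \re(c)\sec\!\Big(\tfrac{\pi\nu}{2}\Big).
\]

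Next, I would use probabilistic positivity to pin down the sign of $b$. For $r \in (0,1)$, $\phi(1-r) = \mathbb{E}[(1-r)^X] \in [0,1]$, hence $1 - \phi(1-r) \ge 0$. Combined with the displayed expansion this forces $b \le 0$, and since the hypothesis $c \notin i\mathbb{R}$ gives $\re(c) \ne 0$, in fact $b < 0$. Thus $\re(c) < 0$ and the constant $a = -\re(c)\sec(\pi\nu/2)/\Gamma(1-\nu)$ is strictly positive, as claimed.

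Then I would translate the expansion of $\phi(1-r)$ into an asymptotic for the generating function of the tail probabilities. Writing $1 - (1-r)^k = r \sum_{j=0}^{k-1} (1-r)^j$ and taking expectations yields the Abel-style identity
\[
1 - \phi(1-r) \;=\; r \sum_{j \ge 0} (1-r)^j \, \mathbb{P}[X > j],
\]
so setting $s = 1-r$ and using the radial expansion,
\[
\sum_{j \ge 0} s^j\, \mathbb{P}[X > j] \;=\; \frac{-b}{(1-s)^{1-\nu}}\,(1+o(1)),\quad s \uparrow 1.
\]

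The final step is a direct application of Karamata's Tauberian theorem for power series: since the coefficients $\mathbb{P}[X > j]$ are non-negative and non-increasing, the displayed behavior of their generating function near $s = 1$ (with the slowly varying factor being the constant $-b$ and the exponent being $1-\nu \in (0,1)$) gives $\sum_{j=0}^{n} \mathbb{P}[X > j] \sim \frac{-b}{\Gamma(2-\nu)}\, n^{1-\nu}$. Applying the monotone differencing corollary of Karamata (or, equivalently, using that non-increasing regularly varying sequences differ from the derivative of their partial sums only by an $o(1)$ factor) converts this into $\mathbb{P}[X > n] \sim \frac{-b(1-\nu)}{\Gamma(2-\nu)}\, n^{-\nu} = \frac{-b}{\Gamma(1-\nu)}\, n^{-\nu} = a\, n^{-\nu}$, which is exactly~\eqref{eqn-prob-asymp}. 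I do not expect any serious obstacle here: the only point requiring care is the invocation of Karamata's theorem in the monotone-coefficient form (this is where the integer-valuedness of $X$ is used, via the monotonicity of $j \mapsto \mathbb{P}[X > j]$ on $\mathbb{N}_0$), and the elementary arithmetic identity $(1-\nu)\Gamma(1-\nu) = \Gamma(2-\nu)$ that matches the constants.
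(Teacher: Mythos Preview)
Your proof is correct and follows essentially the same approach as the paper: apply Lemma~\ref{lem-holo-asymp} to convert the tangential asymptotic into a radial one, then invoke a standard Tauberian theorem. The paper cites a single black-box reference (Feller or Bingham--Goldie--Teugels) for the last step, whereas you unpack it into the Abel identity plus Karamata plus the monotone density theorem, but this is exactly what those references contain; your sign argument for $\re(c)<0$ is also the same as the paper's, phrased slightly more explicitly.
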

\begin{proof}
By a standard Tauberian theorem for the Laplace transform (see, e.g.,~\cite[Example (c), Page 447]{feller-book} or~\cite[Corollary 8.1.7]{reg-var-book}), for any constant $a > 0$, the asymptotics 
\eqb \label{eqn-laplace-asymp}
\phi(1-r) = 1 - \Gamma(1-\nu) a r^\nu  + o(r^\nu) ,\quad (0,1) \ni r \to 0  
\eqe 
implies~\eqref{eqn-prob-asymp}. So, we need to show that~\eqref{eqn-tauberian} implies~\eqref{eqn-laplace-asymp} with $a=-\frac{\re(c)}{\Gamma(1-\nu)}\sec\Big(\tfrac{\pi\nu}{2}\Big)>0$. This, in turn, follows from Lemma~\ref{lem-holo-asymp} applied to the holomorphic function $\phi$ (note that, since $0<\nu<1$, $a$ is positive only if $\re(c)<0$).  
\end{proof}

We now prove the main claim in \eqref{eqn-prob-asymp'2}.

\begin{lem} \label{lem-tauberian'} 
Let $\phi$ be as in~\eqref{eqn-gen-func-def}.
Assume that for some $c_1 > 0$, $c_2 \in \BB C \setminus  \BB R$, and $\nu \in (0,1)$,  
\eqb \label{eqn-tauberian'}
\phi(e^{i t}) = 1 + i c_1 t +     (c_2 \BB 1_{(t > 0)} + \ol c_2 \BB 1_{(t < 0)} ) |t|^{1+\nu}       + o(t^{1+\nu})  ,\quad \text{as $t \to 0$}. 
\eqe 
Then $\im(c_2)<0$ and setting $a=-\frac{\nu}{\Gamma(1-\nu)}\sec\left(\tfrac{\pi\nu}{2}\right)\im(c_2)>0$, 
\eqb \label{eqn-prob-asymp'}
\BB P[X > R] = a R^{-\nu-1}  +  o(R^{-\nu-1}) ,\quad \text{as $R\to\infty$}. 
\eqe 
\end{lem}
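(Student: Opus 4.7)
The plan is to reduce Lemma~\ref{lem-tauberian'} to Lemma~\ref{lem-tauberian} by passing to an integrated tail. The expansion~\eqref{eqn-tauberian'} implies in particular that $\BB E[X]=c_1<\infty$, so I would introduce the auxiliary non-negative integer-valued random variable $Y$ with
\[\BB P[Y=n] \;=\; \frac{\BB P[X>n]}{c_1},\qquad n\in\BB N_0,\]
whose probability generating function is $\phi_Y(z):=\BB E[z^Y]=U(z)/c_1$, where $U(z)=(1-\phi(z))/(1-z)=\sum_{n\ge 0}\BB P[X>n]\,z^n$. The key point of this reduction is that the tail of $Y$ is exactly $1/c_1$ times the \emph{integrated} tail of $X$; so a $\nu$-tail for $Y$ (the kind of estimate produced by Lemma~\ref{lem-tauberian}) will upgrade, via the monotonicity of $n\mapsto \BB P[X>n]$, to a $(\nu+1)$-tail for $X$ itself.

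First I would check that $\phi_Y$ fits the hypothesis of Lemma~\ref{lem-tauberian} for a suitable constant $c_Y$. Using $1-e^{it}=-it(1-it/2+O(t^2))$, so that $1/(1-e^{it})=(i/t)(1+it/2+O(t^2))$, together with~\eqref{eqn-tauberian'}, a direct computation yields
\[\phi_Y(e^{it}) \;=\; 1 \;+\; \bigl(c_Y\,\BB 1_{(t>0)}+\overline{c_Y}\,\BB 1_{(t<0)}\bigr)|t|^\nu + o(t^\nu),\qquad c_Y:=-\frac{ic_2}{c_1},\]
as $t\to 0$. Here the $-ic_1 t$ piece of $1-\phi(e^{it})$ cancels against the $-it$ factor of $1-e^{it}$ to produce the constant $c_1$ after division; the $|t|^{1+\nu}$ piece combines with the $i/t$ factor to yield the stated $|t|^\nu$ term; and the $O(t)$ remainder from the geometric expansion of $1/(1-e^{it})$ is $o(t^\nu)$ because $\nu<1$ forces $t=o(t^\nu)$. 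The hypothesis $c_2\notin\BB R$ gives $\re(c_Y)=\im(c_2)/c_1\ne 0$, so $c_Y\notin i\BB R$ and Lemma~\ref{lem-tauberian} applies.

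Applying Lemma~\ref{lem-tauberian} to $Y$ then forces $\re(c_Y)<0$, equivalently $\im(c_2)<0$ as asserted, and gives
\[\BB P[Y>R] \;=\; a_Y R^{-\nu} + o(R^{-\nu}),\qquad a_Y \;=\; -\frac{\re(c_Y)}{\Gamma(1-\nu)}\sec\!\Big(\tfrac{\pi\nu}{2}\Big) \;=\; -\frac{\im(c_2)}{c_1\,\Gamma(1-\nu)}\sec\!\Big(\tfrac{\pi\nu}{2}\Big) \;>\;0.\]
Finally I would invert this to an estimate for $\BB P[X>R]$. By definition $c_1\,\BB P[Y>R]=\sum_{n>R,\,n\in\BB Z}\BB P[X>n]$, the summand is non-increasing, and the sum itself has been shown to be $\nu$-regularly varying. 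By the monotone density theorem (or, more elementarily, the squeeze $h\,\BB P[X>R+h]\le \sum_{n=R+1}^{R+h}\BB P[X>n]\le h\,\BB P[X>R+1]$ applied with $h$ of order $R^{1-\alpha}$ for small $\alpha>0$) this transfers to
\[\BB P[X>R] \;=\; \nu\, c_1\, a_Y\, R^{-\nu-1} + o(R^{-\nu-1}) \;=\; -\frac{\nu\,\im(c_2)}{\Gamma(1-\nu)}\sec\!\Big(\tfrac{\pi\nu}{2}\Big) R^{-\nu-1} + o(R^{-\nu-1}),\]
which is exactly~\eqref{eqn-prob-asymp'} with the stated constant $a$. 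The step requiring the most care is this final monotone-density inversion, since asymptotics of integrated tails do not in general transfer to asymptotics of the tail itself; fortunately $n\mapsto\BB P[X>n]$ is manifestly non-increasing here, so the inversion is standard, and the rest of the argument is routine manipulation of generating functions.
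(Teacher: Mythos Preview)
Your argument is correct and is very close to the paper's. Both proofs pass to the same auxiliary function: your $\phi_Y(z)=(1-\phi(z))/(c_1(1-z))$ is exactly the paper's $G(z)=(\phi(z)-1)/(c_1(z-1))$, and your $c_Y=-ic_2/c_1$ is the paper's $c_3=c_2/(ic_1)$. The only difference is the Tauberian black box used afterward. The paper applies Lemma~\ref{lem-holo-asymp} to $G$ to obtain the expansion of $\phi(1-r)$ and then quotes the higher-order Laplace Tauberian theorem \cite[Theorem 8.1.6]{reg-var-book} (the $n=1$ case) to read off $\BB P[X>R]$ directly. You instead interpret $G=\phi_Y$ probabilistically, apply Lemma~\ref{lem-tauberian} to get the tail of $Y$, and then use the monotone density theorem to descend to the tail of $X$. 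Your route has the virtue of reusing Lemma~\ref{lem-tauberian} rather than citing a stronger external result; the paper's route avoids the monotone-density inversion.

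Two small points. First, the assertion ``the expansion~\eqref{eqn-tauberian'} implies $\BB E[X]=c_1$'' is true but not automatic (differentiability of a characteristic function at $0$ does not in general give a first moment); the paper establishes it via \cite{pitman-cf-derivative}, and you should at least cite something here. Second, your construction of $Y$ via $\BB P[Y=n]=\BB P[X>n]/c_1$ tacitly assumes $X$ is integer-valued, whereas the paper proves Lemma~\ref{lem-tauberian'} for general non-negative $X$; this is harmless for the application to Proposition~\ref{prop:Tauberian}, but worth noting. (Also, in your squeeze for the monotone-density step, $h=\epsilon R$ followed by $\epsilon\to 0$ is cleaner than $h=R^{1-\alpha}$, where the $o(R^{-\nu})$ error does not immediately disappear.)
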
 
\begin{proof}
By a standard Tauberian theorem for the Laplace transform (see, e.g.,~\cite[Theorem 8.1.6]{reg-var-book} with $n=1$), for any $b \in\BB R \setminus \{0\}$, the asymptotics 
\eqb \label{eqn-laplace-asymp'}
\phi(1-r) = 1 -  c_1  r  + b  r^{1+\nu}  + o(r^{1+\nu} ) ,\quad (0,1) \ni r \to 0  
\eqe 
implies~\eqref{eqn-prob-asymp'} with $a=\frac{b\nu}{\Gamma(1-\nu)}$, which is necessarily positive, since $b\not=0$ and probabilities are non-negative. 
So, we need to show that~\eqref{eqn-tauberian'} implies~\eqref{eqn-laplace-asymp'} with the constant $b=-\im(c_2)\sec\left(\tfrac{\pi\nu}{2}\right)$. Then the fact that $\im (c_2)<0$ immediately follows from the fact that $a>0$. 
 
The asymptotics~\eqref{eqn-tauberian'} implies that $\frac{d}{dt} \phi(e^{i t})$ exists and equals $i c_1$. 
By the main theorem of~\cite{pitman-cf-derivative}, this implies that $\lim_{T \to \infty} \BB E[X \BB 1_{\{|X| \leq T\}}] = c_1$. Since $X$ is non-negative, this implies that $\lim_{T \to \infty} \BB E[X \BB 1_{(X \leq T)}] = c_1 $. By the monotone convergence theorem, this implies that $\BB E[X] = c_1$. This, in turn, implies that the complex derivative
\eqbn
\phi'(z) = \lim_{\ol{\BB D} \ni z} \frac{\phi(z) -1}{z-1} 
\eqen
exists and equals $c_1$.

Let
\eqbn
G(z) := \frac{\phi(z) -1}{c_1 ( z-1 )} .
\eqen
The previous paragraph implies that $G$ is continuous on $\ol{\BB D}$ and holomorphic on $\BB D$. 
By~\eqref{eqn-tauberian'} and since $e^{i t } = 1 + i t + O(t^2)$ as $t\to 0$,
\eqbn
G(e^{i t}) = 1 +     (c_3 \BB 1_{(t > 0)} + \ol c_3 \BB 1_{(t < 0)} ) |t|^{ \nu}       + o(t^{ \nu})  ,\quad \text{as $t \to 0$}  
\eqen
where $c_3 = c_2 /(i c_1) \in \BB C\setminus i\BB R$. By Lemma~\ref{lem-holo-asymp} applied to $G$,  
\eqbn
G(1-r) = 1 +\left(\re(c_3)\sec\left(\tfrac{\pi\nu}{2}\right)\right) r^\nu  + o(r^\nu) ,\quad \text{as $r\to 0$} .
\eqen
Re-arranging this implies~\eqref{eqn-laplace-asymp'} with $b = - c_1\re(c_2 /(ic_1))\sec\left(\tfrac{\pi\nu}{2}\right)=-\im(c_2)\sec\left(\tfrac{\pi\nu}{2}\right)$, where we used that $c_1$ is real.
\end{proof}

\begin{proof}[Proof of Proposition~\ref{prop:Tauberian}]
In the case of Assumption~\ref{ass:1}, the asymptotics~\eqref{eqn-prob-asymp2} and the fact that $\re(c) < 0$ follow from Lemma~\ref{lem-tauberian}. The fact that $\im(c) > 0$ is justified in Lemma~\ref{lem:bound-im-part} below.
The case of Assumption~\ref{ass:2} follows directly from Lemma~\ref{lem-tauberian'}. 
\end{proof}

\subsection{Proof that the imaginary part is positive for small $t$ when $0<\nu<1$}\label{sect:neg-img}

Here we restrict to the case where $X$ is a non-negative \emph{integer}-valued random variable with probability mass function $f$ and prove the claims about the positivity of the imaginary part of the constant $c$ appearing in \eqref{eqn-tauberian2}. Since 
\[
\im(\phi(e^{it}))=\sum_{k\geq 0}f(k)\im(e^{itk})=\sum_{k\geq 0}f(k)\sin(tk),
\]
and Assumption~\ref{ass:1} guarantees that for some $c \in \BB C \setminus i \BB R$ and $\nu \in (0,1)$,   
\eqb 
\varphi(t) = \phi(e^{i t}) = 1 + (c \BB 1_{(t > 0)} + \ol c \BB 1_{(t < 0)} ) |t|^\nu  + o(t^\nu)  ,\quad \text{as $t \to 0$},
\eqe
it is sufficient to establish the following result.

\begin{lem}\label{lem:bound-im-part}
Let $f:\mathbb{N}_0\to[0,\infty)$ and set $S(k):=\sum_{y\ge k} f(y)$.
Assume that for some $c'>0$ and $0<\nu<1$,
\begin{equation}\label{eq:tail}
S(k)=c'\,k^{-\nu}+o(k^{-\nu}),\qquad \text{as }k\to\infty.
\end{equation}
Then there exists $t_0>0$ such that
\[
\sum_{k\ge 0} f(k)\,\sin(tk)>0\qquad\text{for all }t\in(0,t_0].
\]
\end{lem}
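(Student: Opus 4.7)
The plan is to apply Abel summation to rewrite the series, then extract its leading positive asymptotic. Writing $f(k)=S(k)-S(k+1)$ and using $\sin(tk)-\sin(t(k-1))=2\cos(t(k-\tfrac12))\sin(t/2)$, one obtains the identity
\[
\sum_{k\ge 0} f(k)\sin(tk)\;=\;2\sin(t/2)\,T(t),\qquad T(t):=\sum_{k\ge 1}S(k)\cos\!\bigl(t(k-\tfrac12)\bigr),
\]
where convergence of $T(t)$ is guaranteed by the Dirichlet kernel identity $\sum_{j=1}^n\cos(t(j-\tfrac12))=\sin(nt)/(2\sin(t/2))$ (which bounds partial sums of cosines) together with $S(k)\downarrow 0$. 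Since $\sin(t/2)>0$ on $(0,2\pi)$, it suffices to prove $T(t)>0$ for all sufficiently small $t>0$, and in fact I will show the stronger statement
\[
T(t)\;\sim\; c'\,\Gamma(1-\nu)\,\sin(\pi\nu/2)\,t^{\nu-1}\qquad\text{as }t\to 0^+,
\]
whose prefactor is strictly positive for $0<\nu<1$ and $c'>0$.

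To prove this, decompose $S(k)=c'k^{-\nu}+R(k)$ with $R(k)=o(k^{-\nu})$ by hypothesis, and analyze the two contributions to $T(t)$ separately. For the main piece $c'\sum_{k\ge 1}k^{-\nu}\cos(t(k-\tfrac12))$, expand $\cos(t(k-\tfrac12))=\cos(tk)\cos(t/2)+\sin(tk)\sin(t/2)$ and invoke the classical Hurwitz/Lerch expansion of the polylogarithm $\operatorname{Li}_\nu(e^{it})=\Gamma(1-\nu)(-it)^{\nu-1}+O(1)$ as $t\to 0^+$; taking real parts and using $(-it)^{\nu-1}=e^{i\pi(1-\nu)/2}t^{\nu-1}$ gives $\sum_{k\ge 1}k^{-\nu}\cos(tk)=\Gamma(1-\nu)\sin(\pi\nu/2)t^{\nu-1}+O(1)$, while the $\sin(t/2)$-term contributes only $O(t^\nu)$. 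Equivalently, the main-term asymptotic can be derived from the Fourier integral $\int_0^\infty x^{-\nu}\cos(tx)\,dx=\Gamma(1-\nu)\sin(\pi\nu/2)\,t^{\nu-1}$ via the change of variable $x=y/t$ and a monotonicity-based comparison.

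The main obstacle is to show that the error contribution $T_R(t):=\sum_{k\ge 1}R(k)\cos(t(k-\tfrac12))$ satisfies $T_R(t)=o(t^{\nu-1})$, which is delicate because $R(k)$ need not be monotone or absolutely summable. My plan is a two-scale split at a threshold $k=A/t$ for a parameter $A$ to be tuned against the smallness of $R$. For $k\le A/t$ I use the pointwise bound $|R(k)|\le \epsilon k^{-\nu}$ (valid for $k\ge K_0(\epsilon)$) together with $\sum_{k\le A/t}k^{-\nu}\lesssim (A/t)^{1-\nu}/(1-\nu)$ to get a contribution bounded by $C\epsilon A^{1-\nu}t^{\nu-1}$. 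For $k>A/t$ I write $R=S-c'k^{-\nu}$ and apply Abel summation \emph{separately} to each of the two monotone non-increasing sequences, using the uniform partial-sum bound $|\sum_{j=1}^n\cos(t(j-\tfrac12))|\le 1/\sin(t/2)\lesssim 1/t$; this yields a tail bound of the form $C'A^{-\nu}t^{\nu-1}$. Balancing by choosing $A=1/\epsilon$ gives $|T_R(t)|\lesssim \epsilon^{\nu}t^{\nu-1}+O_{K_0(\epsilon)}(1)$, and letting first $t\to 0^+$ and then $\epsilon\to 0^+$ yields $T_R(t)=o(t^{\nu-1})$. Combined with the main-term asymptotic this gives $T(t)>0$ for all sufficiently small $t>0$, completing the proof.
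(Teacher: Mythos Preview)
Your argument is correct and follows the same high-level architecture as the paper: Abel-sum to convert $\sum f(k)\sin(tk)$ into a sum over $S(k)$, split $S(k)=c'k^{-\nu}+R(k)$, show the main term gives the positive leading order $c'\Gamma(1-\nu)\sin(\pi\nu/2)\,t^{\nu}$, and show the remainder is $o(t^{\nu})$ via a cutoff at scale $1/t$. The differences are in execution. For the main term, the paper compares $(\lfloor u/t\rfloor+1)^{-\nu}$ to $(u/t)^{-\nu}$ pointwise via the mean value theorem, whereas you invoke the polylogarithm expansion $\operatorname{Li}_\nu(e^{it})=\Gamma(1-\nu)(-it)^{\nu-1}+O(1)$; both are standard. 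For the tail $k>A/t$ of the remainder, the paper uses a weighted Cauchy--Schwarz, while you write $R(k)=S(k)-c'k^{-\nu}$ as a difference of two monotone non-increasing sequences and apply Dirichlet's test with the uniform bound $\bigl|\sum_{j=m}^{n}\cos(t(j-\tfrac12))\bigr|\le 1/\sin(t/2)$. Your Dirichlet-test tail bound is the more natural device here and gives directly $|T_R^{\mathrm{tail}}(t)|\lesssim A^{-\nu}t^{\nu-1}$; the subsequent balancing $A=1/\epsilon$ to get $|T_R(t)|\lesssim \epsilon^{\nu}t^{\nu-1}+O_{K_0(\epsilon)}(1)$ is clean and makes the $o(t^{\nu-1})$ conclusion transparent.
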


We first prove two preliminary lemmas.

\begin{lem}\label{lem:abel}
For every $t>0$,
\begin{equation}\label{eq:abel}
\sum_{k\ge0} f(k)\sin(tk)
= \sum_{k\ge0} S(k+1)\big(\sin((k+1)t)-\sin(kt)\big).
\end{equation}
Equivalently,
\begin{equation}\label{eq:int-repr}
\sum_{k\ge0} f(k)\sin(tk)
= \int_{0}^{\infty} S\big(\lfloor u/t\rfloor+1\big)\cos u\,du .
\end{equation}
\end{lem}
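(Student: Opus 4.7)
The plan is to derive~\eqref{eq:abel} by a direct Abel summation (summation by parts) and then rewrite each telescoping increment $\sin((k+1)t)-\sin(kt)$ as the integral $\int_{kt}^{(k+1)t}\cos u\,du$ to obtain~\eqref{eq:int-repr}. The main technical points are justifying convergence of the resulting series and integral, and justifying the passage to the limit in the boundary term.

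First, I would write $f(k)=S(k)-S(k+1)$ and apply summation by parts to the partial sums: for every $N\ge 1$,
\begin{equation*}
\sum_{k=0}^{N} f(k)\sin(tk)
= \sum_{k=0}^{N-1} S(k+1)\bigl(\sin((k+1)t)-\sin(kt)\bigr) \;-\; S(N+1)\sin(tN).
\end{equation*}
This is a purely algebraic identity (reindex the two sums obtained from expanding $f(k)=S(k)-S(k+1)$). By the tail hypothesis~\eqref{eq:tail}, $S(N+1)\to 0$, so the boundary term vanishes as $N\to\infty$. At the same time, the series $\sum_k S(k+1)(\sin((k+1)t)-\sin(kt))$ converges: the increments satisfy $|\sin((k+1)t)-\sin(kt)|\le t$, but more usefully we can bound $|\sin((k+1)t)-\sin(kt)|\le 2$, and since $S(k+1)=O(k^{-\nu})$ with $\nu\in(0,1)$ is not summable, we instead use Dirichlet's test: the partial sums $\sum_{k=0}^{N}(\sin((k+1)t)-\sin(kt))=\sin((N+1)t)$ are bounded by $1$, and $S(k+1)$ decreases to $0$ (or can be handled by splitting into monotone pieces using the $o(k^{-\nu})$ error). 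Taking $N\to\infty$ on both sides of the Abel identity then yields~\eqref{eq:abel} and simultaneously guarantees that the left-hand side of~\eqref{eq:abel} converges.

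To obtain the integral representation~\eqref{eq:int-repr}, I would observe the elementary identity
\begin{equation*}
\sin((k+1)t)-\sin(kt) \;=\; \int_{kt}^{(k+1)t}\cos u\,du
\end{equation*}
for every $k\ge 0$ and $t>0$. Substituting this into~\eqref{eq:abel} and noting that for $u\in [kt,(k+1)t)$ we have $\lfloor u/t\rfloor=k$, so $S(k+1)=S(\lfloor u/t\rfloor+1)$, gives
\begin{equation*}
\sum_{k\ge 0}f(k)\sin(tk)
= \sum_{k\ge 0}\int_{kt}^{(k+1)t} S\bigl(\lfloor u/t\rfloor+1\bigr)\cos u\,du
= \int_{0}^{\infty} S\bigl(\lfloor u/t\rfloor+1\bigr)\cos u\,du,
\end{equation*}
where the interchange of sum and integral is justified because on each interval $[kt,(k+1)t)$ the integrand equals the single bounded quantity $S(k+1)\cos u$, so the identity between the series and the improper integral (interpreted as the limit of $\int_0^{Nt}$) holds term by term. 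Convergence of this improper integral is equivalent, via the same interval decomposition, to convergence of the series in~\eqref{eq:abel}, which was established in the previous step.

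The only real obstacle is the convergence issue at infinity: since $S(k)$ decays only polynomially with exponent $\nu<1$, none of the series involved converges absolutely, so one cannot simply swap sums or pass to the limit by dominated convergence. I would resolve this by keeping the boundary term $S(N+1)\sin(tN)$ explicit in the finite-$N$ Abel identity and then using~\eqref{eq:tail} to send it to zero, together with a Dirichlet-type argument (or equivalently a second Abel summation on the telescoping sines, which are bounded) to see that the resulting series converges. Everything else is routine manipulation.
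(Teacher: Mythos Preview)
Your proposal is correct and follows essentially the same approach as the paper: write $f(k)=S(k)-S(k+1)$, apply Abel summation to the partial sums, note that the boundary term $S(N+1)\sin(Nt)$ vanishes since $S(N+1)=O(N^{-\nu})$, and then rewrite $\sin((k+1)t)-\sin(kt)=\int_{kt}^{(k+1)t}\cos u\,du$ to obtain the integral form. Your added discussion of convergence via a Dirichlet-type argument is more detailed than the paper's terse treatment; note that since $f\ge 0$ the tails $S(k)$ are automatically non-increasing, so Dirichlet's test applies without any need to split into monotone pieces.
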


\begin{proof}
For $N\ge1$, by telescoping,
\[
\sum_{k=0}^{N} f(k)\sin(tk)=\sum_{k=0}^{N-1} S(k+1)\big(\sin((k+1)t)-\sin(kt)\big)-S(N+1)\sin(Nt),
\]
because $f(k)=S(k)-S(k+1)$. Letting $N\to\infty$, since $S(N+1)=O(N^{-\nu})$ and $\nu>0$, we get that the boundary term vanishes and \eqref{eq:abel} follows. Using
$\sin((k+1)t)-\sin(kt)=\int_{kt}^{(k+1)t}\cos u\,du$ yields \eqref{eq:int-repr}.
\end{proof}

\begin{lem}\label{lem:riemann}
Let
\[
I_\nu(t):=\int_{0}^{\infty}\big(\lfloor u/t\rfloor+1\big)^{-\nu}\cos u\,du.
\]
If $0<\nu<1$, then as $t\downarrow0$,
\begin{equation}\label{eq:Ialpha<1}
I_\nu(t)=t^{\nu}\int_{0}^{\infty}u^{-\nu}\cos u\,du+O(t)
= \Gamma(1-\nu)\sin\frac{\pi\nu}{2}\,t^{\nu}+o(t^{\nu}).
\end{equation}
\end{lem}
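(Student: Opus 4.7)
The plan is to compare the step function $(\lfloor u/t\rfloor+1)^{-\nu}$ appearing in $I_\nu(t)$ with its continuous analog $(u/t)^{-\nu} = t^{\nu} u^{-\nu}$, and to control the resulting error uniformly in $t$ by summing contributions from the unit-length intervals $[kt,(k+1)t]$. Concretely, I would write
\begin{equation*}
I_\nu(t) - t^\nu\int_0^\infty u^{-\nu}\cos u\,du
= \int_0^t\Bigl(1 - t^\nu u^{-\nu}\Bigr)\cos u\,du
+ \int_t^\infty\Bigl((\lfloor u/t\rfloor+1)^{-\nu} - t^\nu u^{-\nu}\Bigr)\cos u\,du,
\end{equation*}
noting that on $[0,t]$ we have $\lfloor u/t\rfloor+1=1$.

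For the integral over $[0,t]$, both pieces are $O(t)$ separately: $\int_0^t\cos u\,du=\sin t=O(t)$, and $t^\nu\int_0^t u^{-\nu}\cos u\,du = O(t^\nu\cdot t^{1-\nu}) = O(t)$ since $\cos u$ is bounded and $\nu<1$. For the integral over $[t,\infty)$, I split it as $\sum_{k\ge 1}\int_{kt}^{(k+1)t}$. On $[kt,(k+1)t]$ the step function equals $(k+1)^{-\nu}$ and $t^\nu u^{-\nu}=(u/t)^{-\nu}\in[(k+1)^{-\nu},k^{-\nu}]$, so the pointwise difference has absolute value at most $k^{-\nu}-(k+1)^{-\nu}=O(k^{-\nu-1})$ by the mean value theorem. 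Since $|\cos u|\le 1$ and the interval has length $t$, each summand contributes $O(tk^{-\nu-1})$, and the sum converges as $\sum_{k\ge 1}k^{-\nu-1}<\infty$ for $\nu>0$. This yields the overall bound $O(t)$ for the integral over $[t,\infty)$, hence
\begin{equation*}
I_\nu(t) = t^\nu\int_0^\infty u^{-\nu}\cos u\,du + O(t).
\end{equation*}

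The proof then concludes by invoking the classical evaluation $\int_0^\infty u^{-\nu}\cos u\,du=\Gamma(1-\nu)\cos(\pi(1-\nu)/2)=\Gamma(1-\nu)\sin(\pi\nu/2)$ (valid for $0<\nu<1$) and observing that, since $\nu<1$, the error $O(t)$ is $o(t^\nu)$, yielding the claimed asymptotics. The only mildly delicate point is verifying that the near-origin contribution from $[0,t]$ is indeed $O(t)$ rather than swamping the $t^\nu$ main term; this is handled by the explicit estimates above, which only use that $\cos u$ is bounded near $0$. No step is a serious obstacle; the argument is essentially a uniform Riemann-sum comparison exploiting the integrability of $u^{-\nu-1}$ at infinity.
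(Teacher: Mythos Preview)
Your proof is correct and essentially identical to the paper's: both split the difference $I_\nu(t)-t^\nu\int_0^\infty u^{-\nu}\cos u\,du$ at $u=t$, bound the near-origin piece directly as $O(t)$, and control the tail via the mean value theorem estimate $|(\lfloor u/t\rfloor+1)^{-\nu}-(u/t)^{-\nu}|=O((u/t)^{-\nu-1})$ (the paper integrates this bound after the substitution $x=u/t$, whereas you sum it over the intervals $[kt,(k+1)t]$, which amounts to the same thing). The concluding evaluation of $\int_0^\infty u^{-\nu}\cos u\,du$ is also handled identically.
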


\begin{proof}
For each $\nu>0$,
\[
I_\nu(t)-t^{\nu}\int_{0}^{\infty}u^{-\nu}\cos u\,du
=\int_{0}^{\infty}\Big[\big(\lfloor u/t\rfloor+1\big)^{-\nu}-(u/t)^{-\nu}\Big]\cos u\,du .
\]
Split the integral at $u=t$:
\[
\Big|I_\nu(t)-t^{\nu}\int_{0}^{\infty}u^{-\nu}\cos u\,du\Big|
\le t^{\nu}\int_{0}^{t}u^{-\nu}\,du
+\int_{t}^{\infty}\Big|\big(\lfloor u/t\rfloor+1\big)^{-\nu}-(u/t)^{-\nu}\Big|\,du .
\]
The first term is
\[
t^{\nu}\int_{0}^{t}u^{-\nu}\,du=\frac{t}{1-\nu}.
\]
For the second term, set $x=u/t\ge1$. By the mean value theorem for $x\mapsto x^{-\nu}$, there is a constant $C=C(\nu)$ such that for $x\ge1$,
\[
\big|(\lfloor x\rfloor+1)^{-\nu}-x^{-\nu}\big|\le C\,x^{-\nu-1}.
\]
Hence
\[
\int_{t}^{\infty}\Big|\big(\lfloor u/t\rfloor+1\big)^{-\nu}-(u/t)^{-\nu}\Big|\,du
\le C\int_{t}^{\infty}(u/t)^{-\nu-1}\,du
= C t .
\]
Combining the two bounds gives
\[
I_\nu(t)=t^{\nu}\int_{0}^{\infty}u^{-\nu}\cos u\,du+O(t).
\]
Since $0<\nu<1$, we have $t=o(t^{\nu})$, so the remainder is $o(t^{\nu})$. Finally, using the classical identity (valid for $0<\nu<1$)
\[
\int_{0}^{\infty}u^{-\nu}\cos u\,du=\Gamma(1-\nu)\sin\frac{\pi\nu}{2},
\]
we obtain \eqref{eq:Ialpha<1}.
\end{proof}

\begin{proof}[Proof of Lemma~\ref{lem:bound-im-part}]
Write $F(t):=\sum_{k\ge0} f(k)\sin(tk)$. By Lemma~\ref{lem:abel},
\[
F(t)=\int_{0}^{\infty} S\big(\lfloor u/t\rfloor+1\big)\cos u\,du .
\]
Decompose $S(n)=c'n^{-\nu}+r(n)$ with $r(n)=o(n^{-\nu})$, and set
\[
I_\nu(t):=\int_{0}^{\infty}\big(\lfloor u/t\rfloor+1\big)^{-\nu}\cos u\,du,
\qquad
R(t):=\int_{0}^{\infty} r\big(\lfloor u/t\rfloor+1\big)\cos u\,du .
\]
Then $F(t)=c'I_\nu(t)+R(t)$. By Lemma~\ref{lem:riemann},
\[
c'I_\nu(t)=c'\,\Gamma(1-\nu)\sin\Big(\frac{\pi\nu}{2}\Big)\,t^{\nu}+o(t^{\nu}).
\]
Since $\Gamma(1-\nu)\sin\Big(\frac{\pi\nu}{2}\Big) > 0$, it remains to show $R(t)=o(t^{\nu})$.
Fix $\varepsilon>0$ and choose $K_0$ large enough so that $|r(n)|\le\varepsilon n^{-\nu}$ for all $n\ge K_0$.
Write
\[
R(t)=\sum_{k\ge0} r(k+1)\int_{kt}^{(k+1)t}\cos u\,du
=:\,R_{<K}(t)+R_{\ge K}(t),
\]
where $K:=\max\{K_0,\lceil 1/t\rceil\}$, and the sums defining $R_{<K}$ and $R_{\ge K}$ run over $k<K$ and $k\ge K$, respectively.

Since $\Big|\int_{kt}^{(k+1)t}\cos u\,du\Big|\le t$, we have
\[
|R_{<K}(t)|\le t\sum_{k<K}|r(k+1)|.
\]
Given $\delta>0$, pick $N$ such that $|r(n)|\le\delta n^{-\nu}$ for $n\ge N$. Split the sum at $N$:
\[
t\sum_{k<K}|r(k+1)|\le t\sum_{k<N}|r(k+1)|+ \delta\,t\sum_{N\le k<K}(k+1)^{-\nu}
=O( t+\delta\,t\,K^{1-\nu}).
\]
Since $K \asymp 1/t$,  this yields $|R_{<K}(t)|=O( t+\delta\,t^\nu)=o(t^{\nu})$ because $t=o(t^{\nu})$ when $0<\nu<1$, and $\delta>0$ is arbitrary.

Now we look at $R_{\geq K}(t)$. Let $b_k(t):=\int_{kt}^{(k+1)t}\cos u\,du=\sin((k+1)t)-\sin(kt)$. Then $|b_k(t)|\le 2|\sin(t/2)|\le t$. By Cauchy--Schwarz,
\[
|R_{\ge K}(t)| \le
\Big(\sum_{k\ge K} (k+1)^{2}\,r(k+1)^2\Big)^{1/2}
\Big(\sum_{k\ge K} (k+1)^{-2}\,b_k(t)^2\Big)^{1/2}.
\]
For $k\ge K\ge K_0$, we have $|r(k+1)|\le \varepsilon (k+1)^{-\nu}$, hence
\[
\sum_{k\ge K} (k+1)^{2}\,r(k+1)^2 \le \varepsilon^2\sum_{k\ge K} (k+1)^{2-2\nu}=O( \varepsilon^2\,K^{3-2\nu}).
\]
Also,
\[
\sum_{k\ge K} (k+1)^{-2}\,b_k(t)^2 \le t^2\sum_{k\ge K} (k+1)^{-2}=O( t^2\,K^{-1}).
\]
Therefore $|R_{\ge K}(t)| =O( \varepsilon\, t\, K^{1-\nu})$.
With $K \asymp 1/t$ this gives $|R_{\ge K}(t)|=O( \varepsilon\, t^{\nu})$.

Combining the two bounds and letting $\varepsilon\to0$ shows $R(t)=o(t^{\nu})$
which completes the proof.
\end{proof}

\bibliography{cibib,cibib2}
\bibliographystyle{hmralphaabbrv}

\end{document}